\theoremstyle{plain}
\def\amsbb{\use@mathgroup \M@U \symAMSb}
\numberwithin{equation}{section}
\renewcommand{\to}{%
   \ifbool{@display}{\longrightarrow}{\rightarrow}%
   }
\let\shortmapsto\mapsto
\renewcommand{\mapsto}{%
   \ifbool{@display}{\longmapsto}{\shortmapsto}%
   }
\newcommand{\lra}{%
   \ifbool{@display}{\longleftrightarrow}{\leftrightarrow}%
   }
\newcommand{\sC}{\ensuremath{\mathscr{C}}\xspace}
\newcommand{\sD}{\ensuremath{\mathscr{D}}\xspace}
\newcommand{\sE}{\ensuremath{\mathscr{E}}\xspace}
\newcommand{\sF}{\ensuremath{\mathscr{F}}\xspace}
\newcommand{\sG}{\ensuremath{\mathscr{G}}\xspace}
\newcommand{\sL}{\ensuremath{\mathscr{L}}\xspace}
\newcommand{\sM}{\ensuremath{\mathscr{M}}\xspace}
\newcommand{\sW}{\ensuremath{\mathscr{W}}\xspace}
\newcommand{\fkg}{\ensuremath{\mathfrak{g}}\xspace}
\newcommand{\fkm}{\ensuremath{\mathfrak{m}}\xspace}
\newcommand{\fkS}{\ensuremath{\mathfrak{S}}\xspace}
\newcommand{\BB}{\ensuremath{\amsbb{B}}\xspace}
\newcommand{\BD}{\ensuremath{\amsbb{D}}\xspace}
\newcommand{\BF}{\ensuremath{\amsbb{F}}\xspace}
\newcommand{\BG}{\ensuremath{\amsbb{G}}\xspace}
\newcommand{\BH}{\ensuremath{\amsbb{H}}\xspace}
\newcommand{\BL}{\ensuremath{\amsbb{L}}\xspace}
\newcommand{\BN}{\ensuremath{\amsbb{N}}\xspace}
\newcommand{\BP}{\ensuremath{\amsbb{P}}\xspace}
\newcommand{\BQ}{\ensuremath{\amsbb{Q}}\xspace}
\newcommand{\BS}{\ensuremath{\amsbb{S}}\xspace}
\newcommand{\BT}{\ensuremath{\amsbb{T}}\xspace}
\newcommand{\BV}{\ensuremath{\amsbb{V}}\xspace}
\newcommand{\BX}{\ensuremath{\amsbb{X}}\xspace}
\newcommand{\BZ}{\ensuremath{\amsbb{Z}}\xspace}
\newcommand{\bD}{\ensuremath{\mathbf{D}}\xspace}
\newcommand{\bE}{\ensuremath{\mathbf{E}}\xspace}
\newcommand{\bF}{\ensuremath{\mathbf{F}}\xspace}
\newcommand{\bG}{\ensuremath{\mathbf{G}}\xspace}
\newcommand{\bH}{\ensuremath{\mathbf{H}}\xspace}
\newcommand{\bM}{\ensuremath{\mathbf{M}}\xspace}
\newcommand{\bN}{\ensuremath{\mathbf{N}}\xspace}
\newcommand{\bV}{\ensuremath{\mathbf{V}}\xspace}
\newcommand{\CA}{\ensuremath{\mathcal{A}}\xspace}
\newcommand{\CB}{\ensuremath{\mathcal{B}}\xspace}
\newcommand{\CC}{\ensuremath{\mathcal{C}}\xspace}
\newcommand{\CD}{\ensuremath{\mathcal{D}}\xspace}
\newcommand{\CE}{\ensuremath{\mathcal{E}}\xspace}
\newcommand{\CF}{\ensuremath{\mathcal{F}}\xspace}
\newcommand{\CG}{\ensuremath{\mathcal{G}}\xspace}
\newcommand{\CH}{\ensuremath{\mathcal{H}}\xspace}
\newcommand{\CL}{\ensuremath{\mathcal{L}}\xspace}
\newcommand{\CM}{\ensuremath{\mathcal{M}}\xspace}
\newcommand{\CO}{\ensuremath{\mathcal{O}}\xspace}
\newcommand{\CX}{\ensuremath{\mathcal{X}}\xspace}
\newcommand{\rmB}{{\ensuremath{\rm{B}}\xspace}}
\newcommand{\rmC}{{\ensuremath{\rm{C}}\xspace}}
\newcommand{\rmD}{{\ensuremath{\rm{D}}\xspace}}
\newcommand{\rmH}{{\ensuremath{\rm{H}}\xspace}}
\newcommand{\rmL}{{\ensuremath{\rm{L}}\xspace}}
\newcommand{\rmM}{{\ensuremath{\rm{M}}\xspace}}
\newcommand{\rmO}{{\ensuremath{\rm{O}}\xspace}}
\newcommand{\rmR}{{\ensuremath{\rm{R}}\xspace}}
\newcommand{\rmU}{{\ensuremath{\rm{U}}\xspace}}
\newcommand{\rmW}{{\ensuremath{\rm{W}}\xspace}}
\newcommand{\rmb}{{\ensuremath{\rm{b}}\xspace}}
\newcommand{\rmm}{{\ensuremath{\rm{m}}\xspace}}
\newcommand{\cf}{{\it cf.}\ }
\newcommand{\ie}{{\it i.e.}\ }
\newcommand{\wt}{\widetilde}
\newcommand{\wh}{\widehat}
\newcommand{\intn}[1]{\big ( {#1} \big )}
\newcommand{\ov}{\overline}
\newcommand{\ud}{\underline}
\newcommand{\incl}{\hookrightarrow}
\newcommand{\presp}[1]{\prescript{p}{}{#1}}
\newcommand{\pbrv}[1]{{}^{p}\brv{#1}}
\newcommand{\bPi}{{\boldsymbol{\Pi}\xspace}}
\newcommand{\conv}{{ \rm conv}}
\newcommand{\alg}{{ \rm alg}}
\newcommand{\lalg}{{ \rm lalg}}
\newcommand{\abe}{{ \rm ab}}
\newcommand{\cri}{{\rm cr}}
\newcommand{\cris}{{\rm cr}}
\newcommand{\st}{ \rm st}
\newcommand{\pst}{ \rm pst}
\newcommand{\et}{\acute{\mathrm{e}}\mathrm{t}}
\newcommand{\pet}{\mathrm{p}\et}
\newcommand{\an}{{\rm an}}
\newcommand{\lan}{{\rm lan}}
\newcommand{\DR}{{\rm DR}}
\newcommand{\Op}{{\rm Op}}
\newcommand{\dR}{{\rm dR}}
\newcommand{\Dr}{{ \rm Dr}}
\newcommand{\nr}{{\rm nr}}
\newcommand{\HK}{{\rm HK}}
\newcommand{\FF}{{\rm FF}}
\newcommand{\syn}{{\rm syn}}
\newcommand{\rig}{{\rm rig}}
\newcommand{\deR}[1]{\Xunders{#1}{\dR}}
\newcommand{\decris}[1]{\Xunders{#1}{\cris}}
\newcommand{\dest}[1]{\Xunders{#1}{\st}}
\newcommand{\xdecris}[2]{\Xunders{#1}{ \cris,#2}}
\newcommand{\funp}[2]{{#1}^{+}_{#2}}
\newcommand{\decrisp}[1]{\funp{#1}{\cris}}
\newcommand{\destp}[1]{\funp{#1}{\st}}
\newcommand{\deRp}[1]{\funp{#1}{\dR}}
\newcommand{\Aut}{{\rm Aut}}
\DeclareMathOperator{\Coker}{Coker}
\DeclareMathOperator{\Ker}{Ker}
\newcommand{\coim}{\mathrm{coim}}
\newcommand{\coker}{\mathrm{coker}}
\newcommand{\Isom}{\mathrm{Isom}}
\newcommand{\Gr}{{\rm Gr}\xspace}
\newcommand{\Fil}{{\rm Fil}\xspace}
\newcommand{\Filb}{{\rm Fil}^{\bullet}\xspace}
\newcommand{\Ind}{{\rm Ind}}
\DeclareMathOperator{\rang}{rang}
\DeclareMathOperator{\Sym}{Sym}
\newcommand{\End}{{\rm End}}
\newcommand{\Lie}{{\rm Lie}\xspace}
\newcommand{\Pic}{{\rm Pic\xspace}}
\newcommand{\Spf}{{\rm Spf\xspace}}
\newcommand{\Sp}{{\rm Sp\xspace}}
\newcommand{\Spa}{{\rm Spa\xspace}}
\newcommand{\Ext}{{\rm Ext}}
\newcommand{\holim}{{\rm holim}}
\newcommand{\hocolim}{{\rm hocolim}}
\newcommand{\Nilp}{ {\rm Nilp}}
\newcommand{\Vect}{\mathrm{Vect}}
\newcommand{\Loc}{{\rm Loc}}
\newcommand{\Li}{{\rm Litc}}
\newcommand{\bVectd}[1]{\bVectd{#1}}
\newcommand{\bVectdf}[1]{\bVectdf{#1}}
\newcommand{\Iso}{{\rm Iso}}
\newcommand{\Hom}{{\rm Hom}}
\newcommand{\bHom}{{\mathbf{Hom}}}
\newcommand{\CHom}{{\mathcal Hom}}
\newcommand{\Xunders}[2]{{#1}_{#2}}
\newcommand{\Filt}[1]{\ensuremath{\mathrm{Fil^{#1}}}\xspace}
\newcommand{\rmOD}{\rmO_{\rmD}}
\newcommand{\rmODt}{\tim{\rmO}_{\rmD}}
\newcommand{\invp}[1]{\big [ \tfrac 1 {#1}\big ] }
\newcommand{\id}{\mathrm{id}}
\newcommand{\tim}[1]{{#1}^{\times}}
\newcommand{\wotimes}{\wh{\otimes}}
\newcommand{\Rep}{{\rm Rep}}
\newcommand{\Gal}{{\rm Gal}}
\newcommand{\GQp}{\sG_{\BQ_p}}
\newcommand{\XBGm}[1]{\BG_{\rmm,#1}}
\newcommand{\GL}{{\rm GL}}
\newcommand{\PSL}{{\rm PSL}}
\newcommand{\PGL}{{\rm PGL}}
\newcommand{\B}{{\rm B}}
\newcommand{\Bcris}{\decris{\B}}
\newcommand{\Bdr}{\deR \B}
\newcommand{\Bst}{\dest{\B}}
\newcommand{\Bcrisp}{\decrisp{\B}}
\newcommand{\Bdrp}{\deRp {\B}}
\newcommand{\Bstp}{\destp{\B}}
\newcommand{\whBstp}{\destp{\wh{\B}}}
\newcommand{\BBcris}{\decris{\BB}}
\newcommand{\BBdr}{\deR \BB}
\newcommand{\BBdrp}{\deRp {\BB}}
\newcommand{\OBBdr}{\CO\deR \BB}
\newcommand{\OBBdrp}{\CO\BBdrp}
\newcommand{\XBBcris}[1]{\xdecris{\BB}{#1}}
\newcommand{\XBBcrisp}[1]{\funp{\BB}{\cris,#1}}
\newcommand{\XOBBdr}[1]{\CO\BB_{\dR, #1}}
\newcommand{\Harg}[2]{\! \big ( #1\, ; \,  #2 \big ) }
\newcommand{\Hargg}[2]{\! \left ( #1\, ; \,  #2 \right ) }
\newcommand{\intnn}[2]{\! \big ( #1\, , \,  #2 \big) }
\newcommand{\Fpbar}{\bar{\BF}_p}
\newcommand{\Qp}{\BQ_p}
\newcommand{\Qpt}{\Qp^{\times}}
\newcommand{\Qpd}{\BQ_{p^2}}
\newcommand{\Qpbr}{\breve{\BQ}_p}
\newcommand{\Qpbar}{\bar{\BQ}_p}
\newcommand{\Zp}{\BZ_p}
\newcommand{\Zpd}{\BZ_{p^2}}
\newcommand{\Zpbr}{\breve{\BZ}_p}
\newcommand{\Zpt}{\Zp^{\times}}
\newcommand{\JL}{{\rm JL}}
\newcommand{\LL}{{\rm LL}}
\newcommand{\WD}{{\rm WD}}
\newcommand{\sWD}{\sW\!\sD}
\newcommand{\Pilan}{\prescript{\lan}{}{\Pi}}
\newcommand{\nrd}{\mathrm{nrd}}
\newcommand{\Res}{{\rm Res}}
\newcommand{\czG}{\check {G}}
\newcommand{\itemb}{\item[$\bullet$]}
\newcommand{\cz}[1]{{\check{#1}}}
\newcommand{\rec}{{\rm{rec}}}
\newcommand{\inv}{{\rm inv}}
\newcommand{\brv}[1]{\breve{#1}}
\newcommand{\St}{{\rm St }}
\newcommand{\Frac}{\rm Frac\xspace}
\newcommand\restr[2]{{
  \left.\kern-\nulldelimiterspace 
  #1 
  \vphantom{\big|} 
  \right|_{#2} 
  }}
\title[Cohomologie de la tour de Drinfeld]{Cohomologie de systèmes locaux $p$-adiques sur les revêtements du demi-plan de Drinfeld}
\author{Arnaud Vanhaecke}
\address{DMA ENS, 45 rue d'Ulm, 75005, Paris}
\email{arnaud.vanhaecke@ens.fr}
\date{\today}
\begin{document}
\maketitle
\begin{abstract}
Colmez, Dospinescu et Nizio{\l} démontrent que les seules représentations $p$-adiques de $\rm{Gal}(\bar{\mathbb{Q}}_p/\mathbb{Q}_p)$ qui apparaissent dans la cohomologie étale $p$-adique de la tour de revêtements du demi-plan de Drinfeld sont les représentations cuspidales (\ie potentiellement semi-stables, dont la représentation de Weil-Deligne associée est irréductible) de dimension $2$, à poids de Hodge-Tate $0$ et $1$ et que leur multiplicité est donnée par la correspondance de Langlands $p$-adique. Nous étendons ce résultat en poids quelconque, en considérant la cohomologie étale $p$-adique à coefficients dans les puissances symétriques du système local universel sur la tour de Drinfeld. Une différence notable est que toutes les représentations potentiellement semi-stables de dimension $2$ non cristabélines, pas seulement les cuspidales, apparaissent avec les multiplicités attendues. Le point clé est que les systèmes locaux que l'on considère sont particulièrement simples : ce sont des \og opers isotriviaux\fg sur une courbe. Nous donnons une recette pour calculer la cohomologie proétale de tels systèmes locaux à partir de la cohomologie de Hyodo-Kato de la courbe et du complexe de de Rham du fibré plat filtré associé au système local.
\end{abstract}
\begin{altabstract}
Colmez, Dospinescu and Nizio{\l} have shown that the only $p$-adic representations of $\rm{Gal}(\bar{\mathbb{Q}}_p/\mathbb{Q}_p)$ appearing in the $p$-adic étale cohomology of the coverings of Drinfeld's half-plane are the $2$-dimensional cuspidal representations (\ie potentially semi-stable, whose associated Weil-Deligne representation is irreducible) with Hodge-Tate weights $0$ and $1$ and their multiplicities are given by the $p$-adic Langlands correspondence. We generalise this result to arbitrary weights, by considering the $p$-adic étale cohomology with coefficients in the symmetric powers of the universal local system on Drinfeld's tower. A novelty is the appearance of potentially semistable $2$-dimensional non-cristabelian representations, with expected multiplicity. The key point is that the local systems we consider turn out to be particularly simple: they are ``isotrivial opers" on a curve. We develop a recipe to compute the proétale cohomology of such a local system using the Hyodo-Kato cohomology of the curve and the de Rham complex of the flat filtered bundle associated to the local system.
\end{altabstract}
\newpage
\setcounter{tocdepth}{2}
\tableofcontents
\newpage
\section{Introduction}
Soit $p$ un nombre premier, soient $\sG_{\Qp}\coloneqq \Gal(\Qpbar/\Qp)$ le groupe de Galois absolu de $\Qp$, $\sW_{\Qp}\subset \GQp$ le groupe de Weil, $\sWD_{\Qp}$ le groupe de Weil-Deligne et $G\coloneqq \GL_2(\Qp)$. Soit $L$ une extension finie de $\Qp$, que l'on suppose assez grande\footnote{Dans ce texte, ceci signifiera que $L$ contient toutes les extensions quadratiques de $\Qp$.} et qui sera notre corps des coefficients ; on parle d'une \emph{$L$-représentation} pour désigner une représentation sur un $L$-espace vectoriel. La correspondance de Langlands locale $p$-adique fait correspondre les $L$-représentations continues de dimension $2$ de $\sG_{\Qp}$ et les $L$-représentations Banachiques de~$G$. Cet article est consacré à la poursuite du programme de géométrisation de cette correspondance, initié par Colmez, Dospinescu et Niziol dans \cite{codoni}, sur le modèle de la correspondance classique. Plus précisément, on étend les résultats de \cite{codoni} aux représentations de de Rham de poids quelconque en utilisant des coefficients non triviaux.

\Subsection{Les revêtements du demi-plan de Drinfeld et son système local}
On note $C$ le complété d'une cloture algébrique de $\Qp$, $\Qpbr$ le complété de l'extension maximale non ramifiée de $\Qp$ et $\Zpbr$ son anneau des entiers.

Soit $\rmD$ l'algèbre de quaternion non scindée sur $\Qp$, \ie  l'unique corps gauche tel que \hbox{$\inv_{\Qp}\rmD=\frac 1 2$}. On note $\rmOD\subset \rmD$ l'unique ordre maximal de $\rmD$, $\varpi_{\rmD}\in \rmOD$ une uniformisante et $\czG\coloneqq \rmD^{\times}$ le groupe des éléments inversibles de $\rmD$.

Le \emph{demi-plan $p$-adique de Drinfeld} est défini comme 
$$
\BH_{\Qp}\coloneqq \BP^1_{\Qp}\setminus \BP^1(\Qp).
$$ 
Cet espace admet naturellement une structure d'espace analytique rigide sur $\Qp$ et une action de~$G$ par homographie, compatible avec cette structure. Le théorème de représentabilité de Drinfeld (\cf\cite{drin}) donne une interprétation modulaire de cet espace. Plus précisément, il définit un espace formel $\sM_{\Dr}$ sur $\Zpbr$ à partir d'un problème de modules de quasi-isogénies de certains $\rmOD$-modules formels, les \emph{$\rmOD$-modules formels spéciaux}. On note $\brv{\rmM}_{C}\coloneqq \brv{\rmM}_{\Qpbr}\otimes_{\Qpbr}C$ où $\brv{\rmM}_{\Qpbr}$ est la fibre générique  de $\sM_{\Dr}$. D'après le théorème de Drinfeld, les composantes connexes de $\brv{\rmM}_{C}$ s'identifient alors à $\BH_{C}\coloneqq \BH_{\Qp}\otimes_{\Qp}C$  ; plus précisément, $\brv{\rmM}_{C}\cong \BH_{C}\times \BZ$ en décomposant suivant la hauteur de la quasi-isogénie du problème de modules. 

Le problème de modules fournit alors un $\rmOD$-module formel spécial universel sur $\sM_{\Dr}$ dont le module de Tate définit un $\Zp$-système local étale de rang $4$ sur $\brv{\rmM}_{C}$ que l'on note $\BV_{\Dr}^+$. Pour~$n\geqslant 1$ un entier, on définit $\brv{\rmM}_{C}^n$, le $n$-ième étage de la tour de Drinfeld qui trivialise le système local de torsion $\BV_{\Dr}^+/\varpi_D^n$. C'est un revêtement galoisien de $\brv{\rmM}_{C}^0\coloneqq \brv{\rmM}_{C}$ de groupe de Galois $\rmODt/(1+\varpi_D^n\rmOD)$ et donc en particulier c'est un $C$-espace analytique rigide qui est Stein. On note $\brv{\rmM}_{C}^{\infty}$ le système projectif non complété des $\brv{\rmM}_{C}^n$. L'espace $\brv{\rmM}_{C}^n$ est muni d'une action de $\sW_{\Qp}$ compatible avec son action sur $C$ ; il est donc muni d'une action de~$\sW_{\Qp}\times G\times \czG$. De plus, les flèches de transition $\brv{\rmM}_{C}^{n+1}\rightarrow \brv{\rmM}_{C}^{n}$ sont équivariantes sous l'action de ces trois groupes.

On a un morphisme $\nu\colon  \sW_{\Qp}\times G\times\czG\rightarrow \Qpt$ donné par $\rec\otimes{\det}^{-1}\otimes \nrd $ où $\det$ est le déterminant de $G$, $\nrd$ la norme réduite de $\czG$ et $\rec \colon \sW_{\Qp}\rightarrow \Qpt$ le morphisme de réciprocité donné par le quotient $\sW_{\Qp}\rightarrow \sW_{\Qp}^{\abe}$ et l'isomorphisme du corps de classe $\sW_{\Qp}^{\abe}\cong \Qpt$ normalisé de sorte que le Frobenius arithmétique soit envoyé sur $p$. Rappelons que l'ensemble prodiscret $\pi_0(\brv{\rmM}_{C}^{\infty})=\varprojlim_n \pi_0(\brv{\rmM}_{C}^{n})$ des composantes connexes de la tour est un espace homogène principal sous l'action de $\Qpt$ et que l'action de $ \sW_{\Qp}\times G\times\czG$ sur ces composantes connexes est donnée par $\nu$ (\cf le numéro \ref{subsubsec:compco}).

Pour $\rmH^{\bullet}$ une théorie cohomologique (\ie  contravariante) raisonnable, en particulier celle de de Rham ou de Hyodo-Kato, on pose
$$
\rmH^{\bullet}(\brv{\rmM}_{C}^{\infty})\coloneqq \varinjlim_n \rmH^{\bullet}(\brv{\rmM}_{C}^{n}).
$$

En inversant $p$, on définit un $\Qp$-système local étale sur $\brv{\rmM}_{C}$ et ses revêtements, par $\BV_{\Dr}\coloneqq \BV_{\Dr}^+\otimes_{\Zp}\Qp$. Ce système local est $\sW_{\Qp}\times G\times \czG$-équivariant. Le système local qui nous intéresse est la $\Qp$-algèbre symétrique associée à ce système local, soit 
$$
\Sym \BV_{\Dr}\coloneqq \bigoplus_{k\geqslant 0}\Sym^k_{\Qp}\BV_{\Dr}.
$$
Notons qu'il admet un réseau $\Sym \BV^+_{\Dr}$ défini de façon similaire, mais qui n'est pas stable sous l'action de $G$ et $\czG$. On définit finalement $\BV$, puis sa $L$-algèbre symétrique  $\Sym \BV$, en appliquant $L\otimes_{\Qp}\cdot$ à $\BV_{\Dr}$ puis en tordant par un $L$-caractère lisse approprié pour rendre l'action de $p\in \czG$ triviale. Ces systèmes locaux ont naturellement des réseaux que l'on note $\BV^+$ et $\Sym \BV^+$ respectivement. On peut alors définir la cohomologie (pro)étale de $\Sym \BV$, qui est naturellement un $L$-espace vectoriel muni d'une action de $\sW_{\Qp}\times G\times\czG$.

\Subsection{Cohomologie étale du système local $p$-adique et correspondance de Langlands $p$-adique}\label{subsec:intcolang}
\subsubsection{Représentations de de Rham et objets associés}
Rappelons que l'on note $L$ notre corps des coefficients, qui est une extension finie de $\Qp$. On suppose que $L$ est assez gros, en particulier, qu'il contient l'extension quadratique non-ramifiée de $\Qp$ et on fixe une injection $\rmD\incl M_2(L)$ donnant $\czG\incl \GL_2(L)$. On définit 
$$
P_+\coloneqq \{(\lambda_1,\lambda_2)\in \BN^2\mid \lambda_2>\lambda_1\geqslant 0\},
$$
que l'on appelle, dans notre contexte, l'ensemble des \emph{poids positifs réguliers}. À $\lambda \in P_+$ on associe $\lvert \lambda\rvert = \lambda_1+\lambda_2\in \BN$ et $w(\lambda)=\lambda_2-\lambda_1\in \BN$. Si $V$ est une $L$-représentation de de Rham de $\GQp$ de dimension $2$, dont les poids de Hodge-Tate $\lambda_1$ et $\lambda_2$ sont tels que $\lambda =(\lambda_1,\lambda_2)\in P_+$, on dit simplement que $V$ est à poids $\lambda$ et on lui associe les objets suivants :
\begin{itemize}
\itemb Un $L$-$(\varphi,N,\GQp)$ module de rang $2$ sur $L\otimes_{\Qp}\Qp^{\nr}$, défini par
$$
\bD_{\pst}(V)\coloneqq\varinjlim_{{ [K:\Qp]<\infty }}\intn{V\otimes_{\Qp}\Bst}^{\sG_K}.
$$
\itemb Une $L$-représentation $\WD(V)\coloneqq \WD(\bD_{\pst}(V)[1])$ de $\sWD_{\Qp}$ obtenue par la recette de Fontaine (\cf  \cite{fonl}), où $[k]$ signifie que l'on multiplie l'action du Frobenius $\varphi$ par $p^{k}$.
\itemb Une $L$-représentation localement algébrique irréductible $\LL^{\lalg}(V)$ de $G$ définie de la façon suivante : par la correspondance de Langlands locale, on associe à $\WD(V)$ une $L$-représentation lisse irréductible (de dimension infinie) de $G$ notée $\LL^{\infty}(V)\coloneqq \LL(\WD(V))$ puis, à partir du poids $\lambda$, on pose 
$$
W_{\lambda}^*\coloneqq \Sym_L^{w(\lambda)-1}\otimes_L{\det}^{\lambda_1},\quad \LL^{\lalg}(V)\coloneqq \LL^{\infty}(V)\otimes_L W_{\lambda}^*,
$$
où $\Sym_L^{k}$ est la puissance symétrique $k$-ième de la $L$-représentation régulière de~$G$, \ie  celle obtenue en faisant agir naturellement $G$ sur $L^2$.
\itemb Une $L$-représentation localement algébrique irréductible $\JL^{\lalg}(V)$ de $\czG$ définie de la façon suivante : par la correspondance de Jacquet-Langlands locale, on associe à $\LL^{\infty}(V)$ une $L$-représentation lisse irréductible (de dimension finie) de $\czG$ notée $\JL^{\infty}(V)\coloneqq \JL(\LL^{\infty}(V))$ puis, à partir du poids $\lambda$, on pose 
$$
\cz{W}_{\lambda}\coloneqq \Sym_L^{w(\lambda)-1}\otimes_L{\nrd}^{\lambda_1},\quad \JL^{\lalg}(V)\coloneqq \JL^{\infty}(V)\otimes_{L} \cz{W}_{\lambda},
$$
où $\Sym_L^{k}$ est la puissance symétrique $k$-ième de la $L$-représentation régulière de $\czG$, identifiée à un sous-groupe de $\GL_2(L)$.
\itemb Une $L$-représentation unitaire continue $\bPi(V)$ de $G$ obtenue par la correspondance de Langlands $p$-adique dont on note $\bPi(V)'$ le dual (stéréotypique).
\end{itemize}
Notons que les représentations $\LL^{\lalg}(V)$ et $\JL^{\lalg}(V)$ ne dépendent que du $L$-$(\varphi,N,\GQp)$-module $\bD_{\pst}(V)$ et du poids $\lambda$. Un point fondamental est la relation entre $\bPi(V)$ et $\LL^{\lalg}(V)$ : les vecteurs localement algébriques de $\bPi(V)$ sont précisément $\LL^{\lalg}(V)$, \ie 
$$
\bPi(V)^{\lalg}=\LL^{\lalg}(V).
$$
\goodbreak
On continue à supposer que $V$ est à poids réguliers positifs. On dit que  :
\begin{itemize}
\itemb $V$ est \emph{cuspidale} si $\WD(V)$ est absolument irréductible ou, de manière équivalente, si $\LL^{\infty}(V)$ est cuspidale en tant que $L$-représentation lisse. Dans ce cas, $N=0$ sur $\bD_{\pst}(V)$, ce qui signifie que $V$ est potentiellement cristalline, 
\itemb $V$ est \emph{spéciale} si $N\neq 0$ (ce qui signifie que $N$ est d'ordre maximal puisque $\bD_{\pst}(V)$ est de rang $2$) ou, de manière équivalente, si $\LL^{\infty}(V)$ est le tordue par un caractère lisse de la $L$-représentation de Steinberg lisse de $G$. Dans ce cas, $V$ est semi-stable à torsion près, non cristabéline et $\JL^{\infty}(V)$ est un caractère lisse : ces dernières propriétés impliquent elles aussi que $V$ est spéciale.
\end{itemize}
Notons que si $V$ est cuspidale ou spéciale, alors $V$ est absolument irréductible sauf si $V$ est spéciale et $w(\lambda)=1$ .
\subsubsection{Résultat principal}
On peut maintenant énoncer le résultat principal :
\medskip
\begin{theo}\label{thm:princ}
Soit $V$ une $L$-représentation absolument irréductible de $\GQp $ de dimension~\hbox{$\geqslant 2$}. 
\begin{enumerate}
\item Si $V$ est de dimension $2$ et spéciale ou cuspidale alors, en tant que $L$-représentation de $G\times \czG$, on a un isomorphisme topologique
$$
\Hom_{\sW_{\Qp}}\intnn{V}{\rmH^1_{\et}\Harg{\brv{\rmM}_{C}^{\infty}}{\Sym\BV(1)}} \cong \bPi(V)'\otimes_{L}\JL^{\lalg}(V).
$$
\item Dans tous les autres cas
$$
\Hom_{\sW_{\Qp}}\intnn{V}{\rmH^1_{\et}\Harg{\brv{\rmM}_{C}^{\infty}}{\Sym\BV(1)}} = 0.
$$
\end{enumerate}
\end{theo}
\medskip
\begin{rema}
\
\begin{itemize}
\itemb  On définit $\rmH^1_{\et}\Harg{\brv{\rmM}_{C}^{\infty}}{\Sym\BV(1)}\coloneqq \bigoplus_{k\in \BN}\varinjlim_n\rmH^1_{\et}\Harg{\brv{\rmM}_{C}^{n}}{\Sym^k_L\BV(1)}$ plutôt que la cohomologie étale prise directement dans $\Sym\BV(1)$. De plus, on se restreint ici aux poids positifs mais en considérant $\varinjlim_m\rmH^1_{\et}\Harg{\brv{\rmM}_{C}^{\infty}}{\Sym\BV(1-m)}$, on pourrait supprimer cette hypothèse.
\itemb Pour $\Sym^0_{L}\BV(1)\cong \Qp(1)$ (i.e\ pour des coefficients constants) ce résultat a été démontré dans \cite{codoni}. Le théorème \ref{thm:princ} répond à la question posée dans \cite[Remarque 0.3(ii)]{codoni} ; une différence importante est ce qui se passe en niveau $0$ où l'on voit apparaître des représentations spéciales irréductibles au lieu de caractères. Le traitement de ce cas utilise des techniques dérivées comme on l'explique au paragraphe \ref{subsubsec:lecaspeint}.
\itemb On retrouve le fait frappant de \cite[Remarque 0.3(i)]{codoni} que la cohomologie étale du système local $\rmH^1_{\et}\Harg{\brv{\rmM}_{C}^{\infty}}{\Sym\BV(1)}$ est de Hodge-Tate avec les poids attendus, comme si $\brv{\rmM}_{C}^{\infty}$ était une courbe algébrique.
\itemb En suivant \cite{codonifac}, il est probablement possible de déterminer la structure complète du $ \sWD_{\Qp}\times G\times \czG$-module $\rmH^1_{\et}\Harg{\brv{\rmM}_{\Qpbar}^{\infty}}{\Sym\BV(1)}$ sous la forme d'une décomposition factorisée à la Emerton (\cf  \cite{emecomp}). Nous espérons y revenir dans un travail ultérieur.
\end{itemize}
\end{rema}
\smallskip
La structure de la preuve est similaire à celle de \cite{codoni} sauf dans le cas spécial (\cf la section~\ref{sec:caspe})~; une différence essentielle est que l'on doit remplacer les théorèmes de comparaison pour les coefficients triviaux par des théorèmes de comparaison pour les systèmes locaux (nos systèmes locaux sont très particuliers, ce qui nous permet de nous rattacher au cas des coefficients triviaux plutôt que de prouver un théorème de comparaison à partir de zéro). Les étapes sont les suivantes~: 
\begin{itemize}
\itemb La flèche naturelle $\rmH^1_{\et}\Harg{\brv{\rmM}_{C}^{\infty}}{\Sym\BV(1)}\incl\rmH^1_{\pet}\Harg{\brv{\rmM}_{C}^{\infty}}{\Sym\BV(1)}$ identifie la cohomologie étale au sous-espace des vecteurs $G$-bornés ; en particulier, cette flèche est injective. Symboliquement, on écrira 
\begin{equation}\label{eq:Gbd}
\rmH^1_{\et}\Harg{\brv{\rmM}_{C}^{\infty}}{\Sym\BV(1)}\cong \rmH^1_{\pet}\Harg{\brv{\rmM}_{C}^{\infty}}{\Sym\BV(1)}^{G\text{-}\rmb}.
\end{equation}
L'argument est très similaire à celui pour les coefficients triviaux (\cf  \cite[Proposition 2.12]{codoni}), sur lequel il repose, mais une petite différence est que $\rmH^1_{\et}\Harg{\brv{\rmM}_{C}^{\infty}}{\Sym\BV^+(1)}$ contient de la torsion.
\itemb On a une décomposition du $L$-système local
\begin{equation}\label{eq:decint}
\Sym\BV=\bigoplus_{\lambda\in P^+}\BV_{\lambda}\otimes_L \cz{W}_{\lambda},
\end{equation}
obtenue à partir de la combinatoire des foncteurs de Schur. Dans cette décomposition, les $\BV_{\lambda}$ sont des $L$-systèmes locaux $G$-équivariants sur lesquels $\czG$ opère trivialement. On obtient donc une décomposition similaire de la cohomologie proétale.
\itemb Les $L$-systèmes locaux $\BV_{\lambda}$ sont des \emph{opers $p$-adiques isotriviaux}, ce qui permet de décrire leur cohomologie proétale en termes de formes différentielles et de la relier aux cohomologies de de Rham et de Hyodo-Kato de l'espace. Ce \emph{diagramme fondamental} reliant les différentes cohomologies sera décrit plus en détail dans la section \ref{subsec:diagfond} de l'introduction. 
\end{itemize}
Le premier point nous ramène à calculer $\Hom_{\sW_{\Qp}}\intnn{V}{\rmH^1_{\pet}\Harg{\brv{\rmM}_{C}^{\infty}}{\Sym\BV(1)}} $ dont on considère ensuite les vecteurs $G$-bornés pour obtenir le premier point du théorème \ref{thm:princ}. Le calcul de cette multiplicité dans la cohomologie proétale est extrêmement différent dans le cas spécial et cuspidal. Pour le second point du théorème \ref{thm:princ}, on suit l'argument de \cite[Théorème~5.10]{codonifac} qui est un peu moins technique que la preuve de \cite[Théorème~2.15]{codoni}. Cet argument repose sur une variante du théorème \ref{thm:princ} où on calcule une multiplicité $G$-équivariante plutôt que galoisienne (théorème \ref{thm:princ2} ci-dessous). De plus, ce théorème est une étape essentielle dans le calcul de la multiplicité d'une représentation spéciale dans la cohomologie proétale. Si~$\Pi$ est une $L$-représentation unitaire de Banach de $G$ de longueur finie,  on note $\bV(\Pi)$ la représentation de $\GQp$ qui lui est associée par le foncteur $\bV$ de Colmez (\cf \cite{colp}). Supposons que $\Pi$ est telle que ses vecteurs localement algébriques s'écrivent
$$
\Pi^{\lalg}\cong W\otimes_L\Pi^{\infty},
$$
où $W$ est une $L$-représentation algébrique de $G$ et $\Pi^{\infty}$ une $L$-représentation lisse de $G$, alors on dit que 
\begin{itemize}
\itemb $\Pi$ est \emph{cuspidale} (ou cuspidale) si $\Pi^{\infty}$ n'est pas nul et cuspidale ; ceci est équivalent à ce que $\bV(\Pi)$ soit cuspidale,
\itemb $\Pi$ est \emph{spéciale} si $\Pi^{\infty}$ est une représentation de Steinberg lisse tordue par un caractère lisse.
\end{itemize}
En particulier, si $\Pi$ est spéciale ou cuspidale, alors $\Pi^{\lalg}\neq 0$ et donc $\bV(\Pi)$ est de Rham à poids de Hodge-Tate distincts.
\medskip
\begin{theo}\label{thm:princ2}
Soit $\Pi$ une $L$-représentation de Banach unitaire admissible absolument irréductible de $G$.
\begin{enumerate}
\item Si $\Pi$ est spéciale ou cuspidale, on a un isomorphisme de $L$-représentations de $\sW_{\Qp}\times \czG$,
$$
\Hom_{G}\intnn{\Pi'}{\rmH^1_{\et}\Harg{\brv{\rmM}_{C}^{\infty}}{\Sym \BV(1)}}\cong \bV(\Pi)\otimes_{L}\JL^{\lalg}(\bV(\Pi)).
$$
\item Dans tous les autres cas,
$$
\Hom_{G}\intnn{\Pi'}{\rmH^1_{\et}\Harg{\brv{\rmM}_{C}^{\infty}}{\Sym \BV(1)}}=0.
$$
\end{enumerate}
\end{theo}
\medskip

Le calcul dans le cas cuspidal est à peu de choses près le même que celui de \cite{codoni}. Premièrement, la cohomologie de Hyodo-Kato de la tour est décrite dans \cite{codoni}. À partir de \cite{dolb}, on utilise un argument de changement de poids expliqué dans \cite{colw} pour obtenir la conjecture de Breuil-Strauch en poids supérieurs, ce qui permet de décrire le complexe de de Rham associé à $\BV_{\lambda}$ en tant que représentation de $G\times \czG$. Le calcul se fait ensuite à partir du diagramme fondamental. 
\subsubsection{Le cas spécial}\label{subsubsec:lecaspeint}
Le calcul dans le cas spécial est très différent. 

Tout d'abord, on utilise le résultat de la thèse de Schraen (\cf  \cite{sch}) pour obtenir la multiplicité dérivée du dual des vecteurs localement analytiques d'une représentation de la forme $\bPi(V)$, pour~$V$ une représentation spéciale, dans le complexe de de Rham. Définissons cette catégorie dérivée. On note $\sD(G,L)$ l'anneau des distributions localement analytiques sur $G$ à coefficients dans $L$ et on considère la catégorie dérivée des $\sD(G,L)$-modules à caractère central fixé ; on note alors $\bHom\ $ les homorphismes dans cette catégorie dérivée.

Pour $\Pi$ une  $L$-représentation de $G$ on note $\Pi^{\lan}$ la sous-représentation des vecteurs localement analytiques et $(\Pi^{\lan})'$ son dual stéréotypique, qui est un $\sD(G,L)$-module. On note $(\Pi^{\lan})'[-1]$ l'objet de cette catégorie dérivée concentré en degrée $1$ et on note $\rmR\Gamma_{\!\dR}\Harg{\brv{\rmM}^{\infty}_{C}}{\Sym \BV(1)}$ le complexe de de Rham de la tour de Drinfeld associé au système local ; ce complexe définit un élément de la catégorie dérivée en question. La version du résultat de Schraen que l'on démontre est la suivante. 
\medskip
\begin{prop}\label{prop:entrederint}
Soit $V$ une $L$-représentation spéciale de $\GQp$. Alors, on a un isomorphisme de $(\varphi,N)$-modules filtrés munis d'une action de $\sWD_{\Qp}\times\czG$
$$
\bHom\ \intnn{(\bPi(V)^{\lan})'[-1]}{\rmR\Gamma_{\!\dR}\Harg{\brv{\rmM}^{\infty}_{C}}{\Sym \BV(1)}}\cong \bD_{\pst}(V)\otimes_L \JL^{\lalg}(V).
$$
\end{prop}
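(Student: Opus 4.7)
La stratégie est de se ramener au résultat de la thèse de Schraen (\cite{sch}) pour les coefficients triviaux, en exploitant la structure d'oper isotrivial des systèmes locaux $\BV_{\lambda}$. Premièrement, on utilise la décomposition en foncteurs de Schur (\ref{eq:decint}) pour scinder l'action de $\czG$ sur le complexe de de Rham, ce qui donne un isomorphisme $G\times\czG$-équivariant
$$
\rmR\Gamma_{\!\dR}\Harg{\brv{\rmM}^{\infty}_{C}}{\Sym\BV(1)} \cong \bigoplus_{\lambda\in P^+} \rmR\Gamma_{\!\dR}\Harg{\brv{\rmM}^{\infty}_{C}}{\BV_{\lambda}(1)}\otimes_L \cz{W}_{\lambda},
$$
où $\czG$ agit uniquement par le facteur $\cz{W}_{\lambda}$. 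Le $\bHom$ cherché se scinde alors selon $\lambda$ et il s'agit d'identifier, pour chaque $\lambda$, le facteur correspondant.

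Deuxièmement, la propriété d'oper isotrivial des $\BV_{\lambda}$, via la conjecture de Breuil-Strauch en poids supérieur rappelée dans l'introduction et démontrée à partir de \cite{dolb} par changement de poids (\cite{colw}), fournit une description du complexe de de Rham à coefficients dans $\BV_{\lambda}$ en tant que complexe de $G\times\czG$-représentations : ce complexe s'identifie au complexe à coefficients triviaux tordu par la représentation algébrique $W_{\lambda}^{*}$ de $G$, avec un décalage approprié de la filtration de Hodge et des opérateurs $(\varphi,N)$. Par adjonction, le $\bHom$ en question se récrit
$$
\bHom\bigl((\bPi(V)^{\lan}\otimes_L W_{\lambda}^{*,\vee})'[-1],\ \rmR\Gamma_{\!\dR}(\brv{\rmM}^{\infty}_{C})(1)\bigr).
$$

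Troisièmement, on applique le résultat central de la thèse de Schraen. Dans le cas spécial, $(\bPi(V)^{\lan})'$ n'est pas projectif dans la catégorie des $\sD(G,L)$-modules à caractère central fixé : il admet une résolution explicite par des induites localement analytiques (essentiellement duale d'une résolution de type BGG pour les induites depuis le Borel), et Schraen calcule les $\Ext$ correspondants avec le complexe de de Rham de la tour à coefficients triviaux. Modulo le changement de poids \cite{colw} (pour passer du poids $(0,1)$ au poids $\lambda$), ce calcul donne un isomorphisme $\sWD_{\Qp}$-équivariant de $(\varphi,N)$-modules filtrés
$$
\bHom\bigl((\bPi(V)^{\lan}\otimes_L W_{\lambda}^{*,\vee})'[-1],\ \rmR\Gamma_{\!\dR}(\brv{\rmM}^{\infty}_{C})(1)\bigr)\cong \bD_{\pst}(V)\otimes_L \JL^{\infty}(V).
$$
En recombinant avec le facteur $\cz{W}_{\lambda}$ issu de la première étape et l'égalité $\JL^{\lalg}(V)=\JL^{\infty}(V)\otimes_L \cz{W}_{\lambda}$, on obtient l'énoncé.

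L'obstacle principal sera de suivre précisément la compatibilité des structures supplémentaires — filtration de Hodge, Frobenius $\varphi$, monodromie $N$, et actions de $\sWD_{\Qp}$ et de $\czG$ — à travers ces trois étapes. En particulier, les twists cyclotomiques, le twist de Tate $(1)$, et les caractères $\det^{\lambda_1}$ et $\nrd^{\lambda_1}$ intervenant dans les définitions de $W_{\lambda}^{*}$ et $\cz{W}_{\lambda}$ doivent s'accorder précisément, sans quoi l'isomorphisme obtenu différerait de celui de l'énoncé par un caractère. Par ailleurs, même dans le cas $\lambda=(0,1)$, l'adaptation du résultat de Schraen pour qu'il rende l'action galoisienne et la structure $(\varphi,N)$ -- et non seulement l'action de $G\times\czG$ -- demande une analyse supplémentaire via le diagramme fondamental de la section \ref{subsec:diagfond}.
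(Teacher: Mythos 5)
Your skeleton (decompose by Schur functors, reduce to Schraen, then worry about structures) matches the paper's plan, but the detailed route diverges and, more importantly, the two steps that carry the real technical weight are absent.

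First, the misattributions. You invoke the higher-weight Breuil--Strauch statement (\cite{dolb}, \cite{colw}) to describe $\rmR\Gamma_{\!\dR}(\cdot,\CE_{\lambda})$. That material is only needed in the \emph{cuspidal} case, where one must understand sections of $\Gr\,\omega$ and $\Gr\,\CO$ on the full tower as $G\times\czG$-representations. In the special case, $\JL_M$ is a character, so everything reduces (after a smooth twist) to a single connected component $\BH_{\Qp}$; there the non-filtered identification $\rmR\Gamma_{\!\dR}\Harg{\BH_{\Qp}}{\CE_{\lambda}}\cong\rmR\Gamma_{\!\dR}(\BH_{\Qp})\otimes_{\Qp}W_{\lambda}$ is simply strong isotriviality (Lemme~\ref{lem:isotdr}), and the filtered description is the Schneider--Stuhler/oper analysis (Proposition~\ref{prop:equifil}), not Breuil--Strauch. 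Likewise, you propose to use weight change to pass from $(0,1)$ to $\lambda$; this is unnecessary, since Schraen's theorem — restated here as Proposition~\ref{prop:schra} — is already proven at arbitrary regular weight $\lambda$. (Also note that after the isotrivial splitting the correct twisting representation is $W_{\lambda}$, not $W_{\lambda}^{*}$.)

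Second, the two genuine gaps. Schraen's computation is carried out with the three-step representation $\Sigma_{\CL}^{\lambda}$, whereas the statement involves $\bPi(V)^{\lan}=\Pilan_{\CL}^{\lambda}$, which has a fourth Jordan--Hölder constituent $\wt{B}^{\lambda}$ (a locally analytic principal series). Passing from $\Sigma_{\CL}^{\lambda}$ to $\Pilan_{\CL}^{\lambda}$ requires showing that $\wt{B}^{\lambda}$ contributes nothing to the derived $\Hom$ with the split de Rham complex; this boils down to the vanishing $\Ext^1_{L[\bar G]}\intnn{W_{\lambda}^*}{\wt{B}^{\lambda}}=0$ (Lemme~\ref{lem:anuprinc}), proved via Shapiro and a cohomology computation for non-algebraic characters of the torus (Lemme~\ref{lem:anucar}). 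Your proposal never addresses this. Second, Schraen equips his $\Hom$-space with a $(\varphi,N)$-structure by hand, whereas the Proposition asserts the filtered $(\varphi,N)$-module structure \emph{induced by Hyodo--Kato cohomology}. One must verify these coincide — in particular that the Hyodo--Kato monodromy on $\rmR\Gamma_{\HK}(\BH_{\Qp})$ is nontrivial and matches Schraen's, which the paper does via Schottky quotients $X_\Gamma=\BH_{\Qp}/\Gamma$ and de Shalit's theorem (paragraph~\ref{subsub:nontrivmono}). Your closing caveat gestures at "une analyse supplémentaire via le diagramme fondamental" but points to the wrong tool: the fundamental diagram of~\ref{subsec:diagfond} is not used here; the needed argument is the monodromy comparison just described. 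Without these two steps, the proof is not complete.
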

\medskip
\begin{rema}
\begin{itemize}
\itemb La structure de $(\varphi,N)$-module filtré sur l'entrelacement dérivé provient du complexe de de Rham. L'isomorphisme de Hyodo-Kato munit le complexe de de Rham d'une structure de $(\varphi,N)$-module filtré (dérivé) et la filtration est la filtration de de Rham.
\itemb L'action de $\czG$ apparaît ici naturellement contrairement à \cite{sch}. Par ailleurs, la partie algébrique provient du système local, et considérer la tour $\brv{\rmM}^{\infty}_{C}$ plutôt que le demi-plan permet d'obtenir les torsions par les caractères lisses de $\czG$.
\itemb En réalité, ce ne sont pas vraiment les vecteurs localement analytiques de $\bPi(V)$ qui apparaissent dans les calculs de Schraen, mais une représentation plus petite. Lorsque $V$ est spéciale, $\bPi(V)^{\lan}$ a $4$ composantes de Jordan-Hölder. Schraen considère une $L$-représentation de $G$ faisant intervenir $3$ de ces $4$ composantes et on montre que la dernière composante (une série principale localement analytique) n'intervient pas dans la cohomologie, ce qui permet d'adapter le résultat comme déjà annoncé dans \cite[Remarque 5.5]{sch}. 
\end{itemize}
\end{rema}
On utilise le théorème de comparaison proétale-syntomique à coefficients isotriviaux pour se ramener à un calcul syntomique (le complexe syntomique est construit à partir du complexe de Hyodo-Kato et du complexe de de Rham par une version dérivée de la recette de Fontaine (\cf  \cite{cofo}) pour construire une représentation galoisienne à partir d'un $(\varphi,N)$-module filtré). On déduit de la proposition \ref{prop:entrederint} le résultat suivant : 
\medskip
\begin{theo}\label{thm:speder}
Soit $V$ une $L$-représentation spéciale de $\GQp$, on a un isomorphisme de $L$-représentations de $\sW_{\Qp}\times \czG$
$$
\bHom\ \intnn{\intn{\bPi(V)^{\lan}}'[-1]}{\rmR\Gamma_{\!\pet}\Harg{\brv{\rmM}^{\infty}_{C}}{\Sym \BV(1)}}\cong V\otimes_L \JL^{\lalg}(V).
$$
\end{theo}
\medskip
Enfin, en considérant les vecteurs $G$-bornés et en invoquant (\ref{eq:Gbd}), on obtient le théorème suivant, où $\bHom\ $ est cette fois considéré dans la catégorie dérivée des $L[G]$-modules topologiques à caractère central fixé :
\medskip
\begin{theo}\label{thm:speder}
Soit $V$ une $L$-représentation spéciale de $\GQp$, on a un isomorphisme de $L$-représentations de $\sW_{\Qp}\times \czG$
$$
\bHom\ \intnn{\bPi(V)'[-1]}{\rmR\Gamma_{\!\et}\Harg{\brv{\rmM}^{\infty}_{C}}{\Sym \BV(1)}}\cong V\otimes_L \JL^{\lalg}(V).
$$
\end{theo}
\medskip
\begin{rema}
\begin{itemize}
\itemb Si $w(\lambda)\neq1$, on a
$$
\rmH^0_{\pet}\Harg{\brv{\rmM}^{\infty}_{C}}{\BV_{\lambda}(1)}=0,
$$
et dans ce cas le premier point du théorème \ref{thm:princ2} est équivalent au théorème \ref{thm:speder}.
\itemb Si $w(\lambda)=1$, ce qui correspond au cas des coefficients triviaux (à torsion près par un caractère), le théorème \ref{thm:speder} est un analogue d'un résultat de Dat (\cf \cite{dat}) selon lequel les représentations réductibles sont encodées dans le complexe et non dans un seul groupe de cohomologie. On obtient donc un renforcement de \cite{codoni} puisque les méthodes dérivées font sortir des représentations réductibles de la bonne dimension au lieu des caractères.
\end{itemize}
\end{rema}

\Subsection{Opers $p$-adiques isotriviaux sur les courbes Stein}
Discutons maintenant de la première partie de cet article, qui concerne la cohomologie de certains systèmes locaux $p$-adiques sur les courbes Stein sur $K$ ; \og Stein sur $K$ \fg signifie \og rigide Stein lisse sur $K$\fg. On se restreint à une classe très restrictive de systèmes locaux dont on sait définir une cohomologie de Hyodo-Kato et donc une cohomologie syntomique qui se compare à la cohomologie proétale du système local.

Soit $\rmO_K$ un anneau à valuation discrète complet de caractéristique mixte. On note $K$ son corps des fractions, $k \coloneqq \rmO_K/\fkm_K$ son corps résiduel que l'on suppose parfait et $K_0\subset K$ la sous-extension maximale non-ramifiée. On note $\Bdrp$ l'anneau des périodes de de Rham et $\Bdr\coloneqq\Frac(\Bdrp)$ son corps des fractions. C'est un anneau filtré muni d'une action de $\GQp $. La filtration est donnée par une uniformisante $t\in \Fil^1\Bdrp$ qui est définie à partir de $\varepsilon =(1,\zeta_p,\zeta_{p^2},\dots)$, une suite compatible de racines $p^n$-ièmes de l'unité, par $t=\log([\varepsilon])$. De même, $\Bcrisp$ désigne l'anneau des périodes cristallines : il est muni d'un Frobenius $\varphi$ et d'une action de $\GQp $. On a $\Bstp=\Bcrisp[u]$ et on étend le Frobenius par $\varphi(u)=pu$ et on définit la monodromie par $N(u)=-1$. On définit le morphisme $\iota\colon \Bstp\hookrightarrow \Bdrp$ par $u\mapsto u_{p}=\log([p^{\flat}]/p)$ et ce plongement munit $\Bstp$ d'une action de $\GQp $. Ces anneaux admettent des versions faisceautiques pour la topologie proétale qui sont définies dans \cite{schphdg} et que l'on utilisera librement dans ce texte.
\subsubsection{Systèmes locaux isotriviaux}

Soit $X$ un schéma formel semi-stable sur $\rmO_K$, on note $X_K$ l'espace rigide associé que l'on suppose lisse sur $K$ et $X_k$ la fibre spéciale de $X$. Soit $\BV$ un $\Qp$-système local proétale de Rham sur $X_K$. D'après Scholze et Brinon (\cf  \cite{schphdg} et \cite{bri}), on associe à $\BV$ un \emph{fibré plat filtré} $\CE=(\CE,\nabla,\Filb)$ (\cf définition \ref{def:fibplafilt}).

On note $\BBcris\coloneqq \XBBcris{X_K}$ le faisceau proétale des périodes cristallines sur $X_K$ (\cf \cite{tantong}). On dit que~$\BV$ est \emph{isotrivial cristallin} s'il existe un isocristal $D$ sur $k$ et un isomorphisme $\varphi$-équivariant, de faisceaux proétales sur $X_K$,
$$
\BV\otimes_{\Qp}\BBcris\cong D\otimes_{K_0} \BBcris.
$$
Dans ce cas, $\CE\cong D\otimes_{K_0}\CO_{X_K}$ et donc les seules données qui varient sont la filtration et la connexion sur~$\CE$. Sous l'isomorphisme $\CE\cong D\otimes_{K_0}\CO_{X_K}$, si la connexion prend la forme $\nabla = \id\otimes d$, on dit que le système local est \emph{fortement isotrivial}. L'intérêt est que les systèmes locaux universels sur les espaces de Rapoport-Zink sont fortement isotriviaux : c'est une réinterprétation d'un théorème de Rapoport et Zink (\cf \cite[Proposition 5.15]{razi}). En particulier, ce sont ces exemples qui ont motivé notre définition d'isotrivial. On démontre que les systèmes locaux isotriviaux forment une catégorie Tannakienne pour le foncteur fibre $\BV\rightsquigarrow D$, ce qui permet de considérer des produits tensoriels et des foncteurs de Schur de systèmes locaux isotriviaux. 

Pour associer une suite exacte fondamentale à $\BV$, on définit le faisceau proétale
$$
\BX_{\cris}(D)\coloneqq \intn{D\otimes_{K_0} \BBcris}^{\varphi=1},
$$
que l'on peut voir comme un espace de Banach-Colmez relatif. Pour la partie de Rham, on a le faisceau des périodes $\BBdr$ et on a déjà mentionné la construction de Scholze et Brinon $\BV\rightsquigarrow \CE=(\CE,\nabla, \Filb)$. À partir de là, on peut définir le faisceau proétale
$$
\BX_{\dR}(\CE)\coloneqq D\otimes_{K_0} \BBdr,
$$
qui est muni d'une filtration introduite par celle de $\CE$ et que l'on peut définir à partir de $\OBBdr$, le gros faisceau des périodes. On a une inclusion $\BX_{\cris}(D)\incl \BX_{\dR}(\CE,\nabla,\Filb)$.
\medskip 
\begin{prop}\label{prop:isot1}
Soit $\BV$ un système local étale $p$-adique fortement isotrivial sur $X_K$. Soit $D$ le $\varphi$-module sur $K_0$ associé à $\BV$ et $(\CE,\nabla,\Filb)$ le fibré plat filtré associé à $\BV$. Alors, on a une suite exacte de faisceaux proétales sur $X_K$
$$
0 \rightarrow \BV\rightarrow \BX_{\cris}(D)\rightarrow \BX_{\dR}(\CE)/\Fil^0\rightarrow 0.
$$
\end{prop}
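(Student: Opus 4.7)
Le plan est de partir de la suite exacte fondamentale classique sur le site proétale, de la tensoriser avec $\BV$, puis d'identifier chacun des termes au moyen de l'isotrivialité forte.

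Plus précisément, la première étape consisterait à utiliser la suite exacte fondamentale de faisceaux proétales sur $X_K$
$$0 \to \Qp \to \BBcris^{\varphi=1} \to \BBdr/\BBdrp \to 0,$$
disponible dans le cadre de \cite{schphdg}. Puisque $\BV$ est un $\Qp$-système local, donc localement libre de rang fini et plat sur $\Qp$, le produit tensoriel par $\BV$ est exact et l'on obtient
$$0 \to \BV \to \BV \otimes_{\Qp} \BBcris^{\varphi=1} \to \BV \otimes_{\Qp} \BBdr/\BBdrp \to 0.$$

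La deuxième étape serait l'identification du terme central. Comme $\BV$ ne porte pas de Frobenius, $\varphi$ n'agit que sur le facteur $\BBcris$, d'où $\BV \otimes_{\Qp} \BBcris^{\varphi=1} = (\BV \otimes_{\Qp} \BBcris)^{\varphi=1}$. L'isomorphisme définissant l'isotrivialité $\BV \otimes_{\Qp} \BBcris \cong D \otimes_{K_0} \BBcris$ étant $\varphi$-équivariant, la prise des invariants sous $\varphi = 1$ donne
$$\BV \otimes_{\Qp} \BBcris^{\varphi=1} \cong (D \otimes_{K_0} \BBcris)^{\varphi=1} = \BX_{\cris}(D).$$

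Pour le troisième terme, j'invoquerais le formalisme de Scholze--Brinon pour les systèmes locaux de Rham : le fibré plat filtré $(\CE,\nabla,\Fil^\bullet)$ associé à $\BV$ vient avec un isomorphisme canonique, compatible aux filtrations et aux connexions, $\BV \otimes_{\Qp} \OBBdr \cong \CE \otimes_{\CO_{X_K}} \OBBdr$. Dans le cas fortement isotrivial, $\CE \cong D \otimes_{K_0} \CO_{X_K}$ avec $\nabla = \id \otimes d$, de sorte que la prise des sections $\nabla$-horizontales donne
$$\BV \otimes_{\Qp} \BBdr \cong D \otimes_{K_0} \BBdr = \BX_{\dR}(\CE).$$
Le point principal à vérifier, qui constituera l'étape technique de la preuve, est que sous cet isomorphisme $\BV \otimes_{\Qp} \BBdrp$ s'identifie bien à $\Fil^0 \BX_{\dR}(\CE)$, puisque la filtration sur $\BX_{\dR}(\CE)$ est définie à partir de la filtration tensorielle sur $\CE \otimes_{\CO_{X_K}} \OBBdr$. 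Cette identification des filtrations est essentiellement la version faisceautique de la propriété de de Rham de $\BV$, combinée à la compatibilité des foncteurs de Brinon--Scholze aux produits tensoriels. Une fois cet énoncé établi, le passage au quotient $\BV \otimes_{\Qp} \BBdr/\BBdrp \cong \BX_{\dR}(\CE)/\Fil^0$ recolle les trois termes en la suite exacte annoncée.
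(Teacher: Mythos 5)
Votre preuve est correcte et emprunte une route différente de celle du papier. Le papier s'appuie sur sa proposition précédente (reconstruction via la courbe de Fargues--Fontaine relative, à la Kedlaya--Liu) : il définit $\BV' \coloneqq \Ker\bigl(\BX_{\cris}(D)\to \BX_{\dR}^-(\CE)\bigr)$, en déduit que $\BV'$ est un $\Qp$-système local du bon type et que la dernière flèche est surjective, puis identifie $\wh{\BV}\cong\BV'$ en comparant les germes aux points géométriques à l'aide du cas ponctuel (admissible $=$ faiblement admissible). Vous partez au contraire de la suite exacte fondamentale de faisceaux proétales $0\to\Qp\to\BBcris^{\varphi=1}\to\BBdr/\BBdrp\to 0$, vous la tensorisez par $\BV$ (plat, étant localement libre de rang fini sur $\wh{\BQ}_p$), puis vous identifiez le terme du milieu par l'isotrivialisation (le passage $\BV\otimes\BBcris^{\varphi=1}=(\BV\otimes\BBcris)^{\varphi=1}$ est légitime car le noyau de $\varphi-1$ commute au produit tensoriel par un module plat) et le terme de droite via le théorème de Scholze (la compatibilité aux filtrations de l'isomorphisme $\BV\otimes\OBBdr\cong\CE\otimes\OBBdr$, jointe au lemme de Poincaré filtré, donne $\BV\otimes\BBdrp\cong\BX_{\dR}^+(\CE)$ et donc le quotient voulu). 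Les deux arguments sont essentiellement de même profondeur : la suite exacte fondamentale faisceautique, en particulier la surjectivité $\BBcris^{\varphi=1}\twoheadrightarrow\BBdr/\BBdrp$, se démontre elle-même par la courbe de Fargues--Fontaine relative ; c'est d'ailleurs le cas particulier $D=K_0$ (filtration triviale) du papier. Votre route est peut-être un peu plus directe pour cette proposition précise, tandis que celle du papier a l'avantage d'extraire au passage le résultat de reconstruction (toute filtration faiblement admissible sur un fibré isotrivial provient d'un système local), qui est utile en soi ; veillez simplement à citer ou justifier explicitement l'exactitude faisceautique de la suite fondamentale, qui n'est pas purement formelle.
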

\medskip

\subsubsection{Cohomologie syntomique géométrique}
On garde les notations précédentes et on suppose que $X_K$ est Stein sur $K$. On considère $\BV$ un système local proétale fortement isotrivial sur $X_K$ à poids de Hodge-Tate positifs\footnote{Ceci signifie que les sauts de la filtration sont en degrés négatifs.}, d'isocristal associé $D$ et de fibré plat filtré $(\CE,\nabla,\Filb)$. Dans tous les cas on peut définir un complexe de de Rham $\rmR\Gamma_{\!\dR}\Harg{X_K}{\CE}$ qui est naturellement filtré, puis l'isotrivialité forte donne un isomorphisme 
$$
\rmR\Gamma_{\!\dR}\Harg{X_K}{\CE}\cong\rmR\Gamma_{\!\dR}(X_K)\otimes_{K_0}D.
$$ 
Prenons garde que cet isomorphisme n'est pas filtré. Le \emph{complexe de Hyodo-Kato}, défini comme le complexe de cohomologie rationnelle log-cristallin $\rmR\Gamma_{\!\HK}(X_K)\coloneqq\rmR\Gamma_{\!\cri}(X_k/\rmO^{0}_{K_0})\otimes_{\rmO_{K_0}}K_0$ où $\rmO^{0}_{K_0}$ est le log-schéma formel $\Spf(\rmO_{K_0})$ muni de la structure logarithmique induite par $\BN\rightarrow \rmO_{K_0}$, $1\mapsto 0$. Le complexe $\rmR\Gamma_{\!\HK}(X_K)$ est muni d'un Frobenius, d'un opérateur de monodromie et comme la notation le suggère, ne dépend pas du modèle $X$ (\cf \cite{colniz}). On peut définir le \emph{complexe de Hyodo-Kato isotrivial}
$$
\rmR\Gamma_{\!\HK}\Harg{X_K}{D}\coloneqq\rmR\Gamma_{\!\HK}(X_K)\otimes_{K_0}D,
$$
muni du Frobenius produit et de l'opérateur de monodromie provenant de $\rmR\Gamma_{\!\HK}(X_K)$. On récupère gratuitement un isomorphisme de Hyodo-Kato isotrivial à partir de l'isomorphisme de Hyodo-Kato usuel (cf. \ref{subsubsec:hkisot}), \ie 
$$
\rmR\Gamma_{\!\HK}\Harg{X_K}{D}\otimes_{K_0}K\cong\rmR\Gamma_{\!\dR}\Harg{X_K}{\CE}.
$$
À partir de là, on est en mesure de définir un complexe syntomique en suivant \cite{codoniste} comme la fibre 
$$
\rmR\Gamma_{\!\syn}\Harg{X_C}{\BV}\coloneqq \left [ \left [ \rmR\Gamma_{\!\HK}\Harg{X_K}{D}\wotimes_{K_0}^R\whBstp \right ]^{N=0,\varphi=1}\xrightarrow{\iota_{\HK}\otimes \iota} \intn{\rmR\Gamma_{\!\dR}\Harg{X_K}{\CE}\wotimes_K^R\Bdrp}/\Fil^{0}\right].
$$
Cette cohomologie s'inscrit naturellement dans un diagramme, que l'on décrira dans un cas particulier au paragraphe suivant : on peut la comparer à la cohomologie proétale du système local, ce qui donne une méthode pour la calculer. Le théorème de comparaison que l'on obtient est le suivant :
\medskip
\begin{theo}\label{thm:compint}
Supposons que $X_K$ soit Stein. Soit $i\geqslant 0$ un entier. Il existe, dans\footnote{On note $\rmC_K$ la catégorie des espaces vectoriels topologiques localement convexes sur $K$ et $\sD(\rmC_K)$ la $\infty$-catégorie associée.} $\sD(\rmC_{\Qp})$ un morphisme
$$
\alpha_{\BV(i)}\colon\rmR\Gamma_{\!\syn}\Harg{X_C}{\BV(i)}\rightarrow \rmR\Gamma_{\!\pet}\Harg{X_C}{\BV(i)},
$$
qui est un quasi-isomorphisme strict après troncation par $\tau_{\leqslant i}$. En particulier, on a des isomorphismes topologiques
$$
\rmH^j_{\syn}\Harg{X_C}{\BV(i)}\cong\rmH_{\pet}^j\Harg{X_C}{\BV(i)},
$$
pour tout entier $j$ tel que $0\leqslant j \leqslant i$.
\end{theo}
\medskip
On déduit ce résultat des théorèmes de Bosco (\cf  \cite{bosc1} et \cite{bosc2}). Remarquons que, contrairement à \cite{codoniste}, on ne définit pas la cohomologie syntomique de manière surconvergente ; celle-ci est cachée dans ce que comme $X$ est Stein, $\rmR\Gamma_{\!\HK}(X_K)$ s'identifie à la cohomologie rationnelle surconvergente rigide de $X_k$ (\cf lemme \ref{lemm:hksurconv}).

\subsubsection{Le diagramme fondamental pour les opers}
On suppose maintenant que $X_K$ est une courbe Stein géométriquement connexe. On note 
$$
D_K\coloneqq D\otimes_{K_0}K,\quad \Omega_{\CE}^1\coloneqq \CE\otimes_{\CO}\Omega^1, \quad \rmB_k\coloneqq \Bdrp/t^k,\quad k\geqslant 0.
$$
Notons $\CE_{\lambda}$ le fibré plat filtré associé au système local $\BV_{\lambda}$ apparaissant dans la décomposition~(\ref{eq:decint}). Schneider-Stuhler (\cf \cite[5]{scst}) calculent les connexions induites sur les gradués de $\CE_{\lambda}$ et montrent que ce sont des isomorphismes sur les gradués intermédiaires. Cette condition apparaît dans la littérature sous le nom \emph{d'oper}, formalisée par Beilinson-Drinfeld (\cf\cite{bedr}), fondamentale dans la correspondance de Langlands géométrique : les opers permettent par exemple de décrire le centre de l'algèbre enveloppante universelle d'une algèbre de Kac-Moody.

D'après Beilinson-Drinfeld, si $(\CE,\nabla,\Filb)$ est un oper de poids $(a,b)$ (ces entiers correspondent aux sauts extrémaux de la filtration) on lui associe un couple de fibrés en droites, muni d'un opérateur différentiel d'ordre $b-a+1$, soit\footnote{$\Gr_i$ désigne ici le $(-i)$-ième gradué. On a préféré cette convention pour éviter d'alourdir tous les indices avec un signe $-$ redondant.}$ (\Gr_b\CE,\Gr_a\Omega_{\CE},L_{\nabla})$ ; alors, si on définit
$$
\rmR\Gamma_{\!\Op}\Harg{X_K}{\CE}\coloneqq \big ( \Gr_b\CE\xrightarrow{L_{\nabla}} \Gr_a\Omega_{\CE}\big ) ,
$$
muni de la filtration induite, on a un quasi-isomorphisme filtré $\rmR\Gamma_{\!\Op}\Harg{X_K}{\CE}\cong \rmR\Gamma_{\!\dR}\Harg{X_K}{\CE}$. Pour $\CE_{\lambda}$ ce complexe est introduit dans \cite[5]{scst} sous le nom de \emph{complexe de de Rham réduit} et Schneider-Stuhler démontrent le quasi-isomorphisme filtré dans ce cas. 

Si $(\CE,\nabla,\Filb)$, associé à un système local $\BV$, est un oper on dit simplement que $\BV$ est un \emph{$\Qp$-oper}. Dans cette situation, on obtient la proposition suivante :
\medskip
\begin{prop}\label{prop:diagfondopint}
Soit $\BV$ un $\Qp$-oper de poids $(a,b)$ fortement isotrivial d'isocristal associé~${D}$ et de fibré associé $\CE=(\CE,\nabla,\Filb)$. Supposons que $a\geqslant 0$. Avec les mêmes notations que précédemment, on a une application naturelle entre suites exactes strictes~:
{\small
\begin{center}
\begin{tikzcd}[column sep=small]
t^aX_{\cri}^{1}(D[a])\ar[d]\ar[r]&\Gr_b\CE(X_K)\wotimes_K t^a\rmB_{b+1} \ar[r]\ar[d,equal]& \rmH_{\pet}^1\Harg{X_C}{\BV(1)}\ar[r]\ar[d]&t^aX_{\st}^{1}\intn{\rmH^1_{\HK}\Harg{X_K}{D}[a]}\ar[r]\ar[d,"\gamma"]&0\\
D_K\otimes_Kt^a\rmB_{b+1} \ar[r]& \Gr_b\CE(X_K)\wotimes_Kt^a\rmB_{b+1} \ar[r,"L_{\nabla}"]&\Gr_a\Omega_{\CE}(X_K)\wotimes_Kt^a\rmB_{b+1}\ar[r]&\rmH^1_{\dR}\Harg{X_K}{\CE}\wotimes_K t^a\rmB_{b+1}\ar[r]&0
\end{tikzcd}
\end{center}}
\noindent Ce diagramme est $\sG_K$-équivariant et on a noté :
$$
\begin{gathered}
X_{\st}^{1}\intn{\rmH^1_{\HK}\Harg{X_K}{D}[a]}\coloneqq  \intn{\rmH^1_{\HK}\Harg{X_K}{D}\otimes_{K_0}\Bstp}^{N=0,\varphi=p^{1-a}}, \\
X_{\cri}^{1}(D[a])\coloneqq  \intn{D\otimes_{K_0}\Bcrisp}^{\varphi=p^{1-a}}.
\end{gathered}
$$
De plus, les flèches verticales sont strictes d'images fermées et 
$$
\Ker(\gamma) = t^{a+b+1}\intn{\rmH^1_{\HK}\Harg{X_K}{D}\otimes_{K_0}\Bstp}^{N=0,\varphi=p^{-a-b}}.
$$
\end{prop}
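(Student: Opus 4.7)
Le plan consiste à identifier la ligne du haut comme un fragment de la suite exacte longue associée à la suite fibre définissant $\rmR\Gamma_{\!\syn}\Harg{X_C}{\BV(1)}$, à reconnaître la ligne du bas comme la suite à quatre termes associée au complexe d'oper tensorisé avec le $K$-module plat $t^a\rmB_{b+1}$, et à construire les flèches verticales à partir du morphisme naturel du complexe syntomique vers sa composante de Rham.

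Tout d'abord, le théorème~\ref{thm:compint} appliqué avec $i=1$ fournit un isomorphisme topologique $\rmH^1_{\et}\Harg{X_C}{\BV(1)}\cong\rmH^1_{\syn}\Harg{X_C}{\BV(1)}$. En substituant dans la définition du complexe syntomique le quasi-isomorphisme filtré d'oper, \ie $\rmR\Gamma_{\!\dR}\Harg{X_K}{\CE}\simeq[\Gr_b\CE(X_K)\xrightarrow{L_{\nabla}}\Gr_a\Omega_{\CE}(X_K)]$, ainsi que l'identification $\rmR\Gamma_{\!\HK}\Harg{X_K}{D}\cong\rmR\Gamma_{\!\HK}(X_K)\otimes_{K_0}D$ issue de l'isotrivialité forte, on obtient la fibre homotopique d'une application explicite entre complexes concentrés en degrés $0$ et $1$ (on utilise que $X_K$ est une courbe Stein connexe, donc $\rmH^0_{\HK}(X_K)\cong K_0$ et $\rmH^0_{\dR}\Harg{X_K}{\CE}\cong D_K$). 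En prenant la cohomologie de cette fibre, et en tenant compte de ce que le twist $(1)$ remplace les conditions $\varphi=1$ et $\Fil^0$ par $\varphi=p$ et $\Fil^1$, on lit directement la suite à cinq termes formant la ligne du haut.

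L'identification des termes extrêmes demande une inspection minutieuse des décalages~: puisque $\Gr_b\CE$ vit en degré $-b$ de la filtration de Hodge, le quotient par $\Fil^1$ après tensorisation avec $\Bdrp$ donne $\Gr_b\CE(X_K)\wotimes_K\rmB_{b+1}$ ; le facteur $t^a$ intervient parce qu'une classe $c\in(D\otimes_{K_0}\Bcrisp)^{\varphi=p}$ dont l'image dans $\Gr_b\CE(X_K)\wotimes_K\rmB_{b+1}$ constitue un cocycle syntomique doit s'écrire $c=t^ac'$ avec $c'\in X_{\cri}^1(D[a])$, la relation $\varphi(t)=pt$ reliant $\varphi=p$ à $\varphi=p^{1-a}$ après division par $t^a$. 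Le même raisonnement appliqué au $\rmH^1$ de la composante Hyodo-Kato fait apparaître $t^aX_{\st}^1(\rmH^1_{\HK}\Harg{X_K}{D}[a])$, la monodromie $N$ venant désormais de $\whBstp$. La ligne du bas se déduit alors en brisant le complexe d'oper en suites exactes courtes utilisant $\Ker(L_{\nabla})\cong D_K$ et $\Coker(L_{\nabla})\cong\rmH^1_{\dR}\Harg{X_K}{\CE}$, puis en tensorisant avec le $K$-module plat $t^a\rmB_{b+1}$ ; les flèches verticales proviennent du morphisme syntomique-de Rham, après décalage par $t^a$.

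La stricticité et le caractère fermé des images reposent sur le fait que tous les objets en jeu sont soit des espaces de Fréchet issus de la structure Stein de $X_K$, soit des variantes relatives d'espaces de Banach-Colmez construites à partir de $D$ et $\rmH^1_{\HK}\Harg{X_K}{D}$, et sur la stricticité du morphisme $\iota_{\HK}\otimes\iota$. Enfin, pour calculer $\Ker(\gamma)$, on remarque que $\gamma$ est induite par la comparaison Hyodo-Kato$\to$de Rham composée avec la projection $\Bdrp\twoheadrightarrow t^a\rmB_{b+1}$~; une classe $c$ de $t^aX_{\st}^1(\rmH^1_{\HK}\Harg{X_K}{D}[a])$ appartient à $\Ker(\gamma)$ ssi son image dans $\rmH^1_{\dR}\Harg{X_K}{\CE}\wotimes_Kt^a\rmB_{b+1}$ s'annule, \ie ssi elle est divisible par $t^{b+1}$ dans $\rmH^1_{\HK}\Harg{X_K}{D}\otimes_{K_0}\Bstp$. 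Combinant cette divisibilité avec la condition $\varphi=p^{1-a}$ sur $c/t^a$ et l'équation $\varphi(t)=pt$, on extrait un facteur $t^{b+1}$ et obtient $\varphi=p^{-a-b}$ sur le quotient, d'où la formule annoncée. L'obstacle principal sera la comptabilité rigoureuse des twists de Tate, des valeurs propres de Frobenius et des degrés de filtration distincts de $\Gr_b\CE$ et $\Gr_a\Omega_{\CE}$~; tous les calculs deviennent transparents dès que l'on suit systématiquement les pièces filtrées et Frobenius-graduées à travers les isomorphismes d'oper et de Hyodo-Kato.
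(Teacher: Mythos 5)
Votre approche est essentiellement celle du papier (qui démontre l'énoncé pour $\rmH^1_{\syn}$ et transfère ensuite à $\rmH^1_{\pet}$ via le théorème de comparaison) : passage au syntomique, substitution du petit complexe d'oper et de la factorisation isotriviale, lecture de la suite exacte longue de la fibre, correction des termes extrêmes par les suites exactes $t^a X^1_{\cris}(D[a]) \hookrightarrow X^1_{\cris}(D) \twoheadrightarrow D_K\otimes_K\rmB_a$ et son analogue pour $\rmH^0\,\mathrm{DR}$, définition de $\beta$ et $\gamma$ via réduction modulo $t^{b+1}$, et calcul de $\Ker(\gamma)$ par divisibilité. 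Le seul point que vous traitez trop rapidement est la commutativité du carré de droite, que le papier établit par un diagramme explicite de triangles distingués reliant $\rmR\Gamma_{\!\syn}$, $\Fil^1(\rmR\Gamma_{\!\Op}\wotimes_K^R\Bdrp)$ et leur réduction modulo $t^{b+1}$, mais l'architecture globale de la preuve est la même.
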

\medskip
\begin{rema}
\begin{itemize}
\itemb Il n'est pas tout à fait clair comment obtenir un diagramme similaire pour les espaces Stein de dimension supérieure par la même méthode. De plus, on peut définir des $\BV_{\lambda}$  pour $\GL_n(\Qp)$ avec $n>2$, mais ce ne sont plus des opers si on recopie naïvement la définition au delà des courbes.
\itemb Une différence avec le cas des coefficients triviaux est que les images de $t^aX_{\cri}^{1}(D[a])$ et $D_K\otimes_Kt^a\rmB_{b+1}$ par les flèches horizontales de gauche ne sont pas forcément les mêmes. C'est cette différence qui explique la différence du traitement du cas spécial.
\end{itemize}
\end{rema}
\subsection{La cohomologie proétale de l'espace de Drinfeld}\label{subsec:intproco}
Pour finir cette introduction on va présenter ce que donnent les calculs de la section précédente dans le cas de l'espace de Drinfeld avant de discuter comment l'invariant $\CL$, qui encode la filtration, apparaît dans ce diagramme. Pour le diagramme, on commence par considérer $\presp{\rmM}_{\Qp}^n$ un modèle sur $\Qp$ du quotient $\brv{\rmM}_{\Qpbr}^n/p^{\BZ}$ de l'espace de Drinfeld; on note $\presp{\rmM}_{C}^n\coloneqq \presp{\rmM}^n_{\Qp}\otimes_{\Qp}C$. On se restreint au système local $\BV_{\lambda}$ sur $\presp{\rmM}_{\Qp}^n$ où on a fixé $\lambda\in P_+$. Comme évoqué plus haut, c'est un $L$-oper isotrivial de poids $(\lambda_1,\lambda_2-1)$. On rappelle que l'on note $\CE_{\lambda}$ le fibré plat filtré associé à $\BV_{\lambda}$, $\Omega_{\lambda}\coloneqq \CE_{\lambda}\otimes_{\CO} \Omega^1_{\presp{\rmM}_{\Qp}^n}$ et de plus
$$
\omega_{\lambda}^{\infty}\coloneqq\Gr_{\lambda_1}{\Omega}_{\lambda}(\presp{\rmM}_{\Qp}^{\infty}),\quad \CO_{\lambda}^{\infty}\coloneqq\Gr_{\lambda_2-1}\CE_{\lambda}(\presp{\rmM}_{\Qp}^{\infty}).
$$
Ces notations sont motivées par l'idée que $\CO_{\lambda}^{\infty}$ et $\omega_{\lambda}^{\infty}$ sont respectivement les fonctions et les formes différentielles rigides analytiques sur la tour mais où l'action de $G$ est tordue par un facteur d'automorphie. Dans le cas spécial, à torsion par un caractère lisse près, seule la restriction de ces faisceaux à $\BH_{\Qp}$ intervient. On note simplement
$$
\omega_{\lambda}\coloneqq\Gr_{\lambda_1}\Omega_{\lambda}(\BH_{\Qp}),\quad \CO_{\lambda}\coloneqq\Gr_{\lambda_2-1}\CE_{\lambda}(\BH_{\Qp}).
$$
De plus, l'isomorphisme de Morita s'écrit $\omega_{\lambda}\cong \intn{{\St}^{\lan}_{\lambda}}'$ où ${\St}^{\lan}_{\lambda}$ est une Steinberg localement analytique. On considère les vecteurs localement algébriques ${\St}^{\lalg}_{\lambda}\incl {\St}^{\lan}_{\lambda}$ dont la surjection duale correspond alors à la surjection sur la cohomologie de de Rham de $\CE_{\lambda}$, \ie
$$
\intn{{\St}^{\lan}_{\lambda}}'\rightarrow \intn{{\St}^{\lalg}_{\lambda}}'\cong \rmH^1_{\dR}\Harg{\BH_{\Qp}}{\CE_{\lambda}}.
$$
\subsubsection{Le diagramme fondamental pour l'espace de Drinfeld}\label{subsec:diagfond}
On fixe $M$ un $L$-$(\varphi,N,\GQp)$-module de rang $2$ qui est spécial ou cuspidal. Au début du numéro \ref{subsec:intcolang}, on a défini $\LL^{\lalg}(V)$ pour $V$ une représentation de $\GQp$ mais cette construction ne dépend que des poids et du $L$-$(\varphi,N,\GQp)$-module $M=\bD_{\pst}(V)[1]$ ; ainsi, on définit la $L$-représentation localement algébrique de $\LL_M^{\lambda}$ de $G$ par la même recette. De même, on a $\JL_M$, la $L$-représentation lisse de $\czG$ obtenue par la correspondance de Jacquet-Langlands ; on ne considère pas la représentation localement algébrique puisque on a déjà traité la partie algébrique à l'aide de (\ref{eq:decint}). On définit un foncteur sur les $L[\czG]$-modules 
$$
X\mapsto X[M]\coloneqq \Hom_{L[\czG]}\intnn{\JL_M}{X},
$$
que l'on va appliquer au diagramme pour traiter la partie $\czG$-équivariante. De plus, on note 
$$
M_{\dR}\coloneqq\intn{M\otimes_{\Qp^{\nr}}C}^{\GQp},\quad X_{\st}^{1}(M[\lambda_1])\coloneqq (M\otimes_{\Qp^{\nr}}\Bstp)^{N=0,\varphi=p^{1-{\lambda_1}}}.
$$
Alors, $M_{\dR}$ est un $L$-module libre de rang $2$. Pour alléger le diagramme, on définit pour $\lambda\in P_+$, les espaces de Banach-Colmez
$$
\rmB_{\lambda}\coloneqq t^{\lambda_1}\Bdrp/t^{\lambda_2}\Bdrp,\quad \rmU_{\lambda}^0\coloneqq t^{\lambda_1}\intn{\Bcrisp}^{\varphi^2=p^{w(\lambda)+1}},\quad \rmU_{\lambda}^1\coloneqq t^{\lambda_1}\intn{\Bcrisp}^{\varphi^2=p^{w(\lambda)-1}}.
$$
Notons que les $\rmU_{\lambda}^0$ et $\rmU_{\lambda}^1$ sont naturellement des $\intn{\Bcrisp}^{\varphi^2=1}\cong \Qpd$-espaces vectoriels où $\Qpd$ est l'extension quadratique non ramifiée de $\Qp$. 
\medskip
\begin{theo}\label{thm:diagfondrint}
Soit $\lambda\in P_+$ et soit $M$ un $L$-$(\varphi,N,\GQp)$-module de rang $2$ absolument indécomposable. Quitte a tordre $M$ par un caractère on peut supposer que 
$$
\rmH^1_{\pet}\Harg{\brv{\rmM}_{C}^{\infty}}{\BV_{\lambda}(1)}[M]=\rmH^1_{\pet}\Harg{\presp{\rmM}_{C}^{\infty}}{\BV_{\lambda}(1)}[M].
$$
\begin{itemize}
\itemb Si $M$ est cuspidal on a un diagramme commutatif à lignes exactes d'espaces de Fréchet $\sW_{\Qp}\times G$-equivariants
{
\begin{center}
\begin{tikzcd}[column sep=small]
0\ar[r]&\rmB_{\lambda}\wotimes_{\Qp}\CO_{\lambda}^{\infty}[M]\ar[r]\ar[d,equal]&\rmH^1_{\pet}\Harg{\presp{\rmM}_{C}^{\infty}}{\BV_{\lambda}(1)}[M]\ar[r]\ar[d,hook]&t^{\lambda_1}X_{\st}^{1}(M[\lambda_1])\wotimes_L{\LL_M^{\lambda}}'\ar[r]\ar[d,hook]&0\\
0\ar[r]&\rmB_{\lambda}\wotimes_{\Qp}\CO_{\lambda}^{\infty}[M]\ar[r]&\rmB_{\lambda}\wotimes_{\Qp}\omega_{\lambda}^{\infty}[M]\ar[r]&(\rmB_{\lambda}\otimes_{\Qp}M_{\dR})\wotimes_L{\LL_M^{\lambda}}'\ar[r]&0
\end{tikzcd}
\end{center}}
et les flèches verticales sont injectives.
\itemb Si $M$ est spécial, alors $\rmH^1_{\pet}\Harg{\presp{\rmM}^{\infty}_{C}}{\BV_{\lambda}(1)}[M]\cong\rmH^1_{\pet}\Harg{\BH_{C}}{\BV_{\lambda}(1)}\otimes \chi_M$, où $\chi_M$ est le caractère de $\sW_{\Qp}\times G$ défini par $M$ et on a
{
\begin{center} 
\begin{tikzcd}[column sep=small]
&\rmU_{\lambda}^0\otimes_{\Qpd}{W}_{\lambda} \ar[d]\ar[r]&\rmB_{\lambda}\wotimes_{\Qp}\CO_{\lambda}\ar[r]\ar[d,equal]&\rmH^1_{\pet}\Harg{\BH_{C}}{\BV_{\lambda}(1)}\ar[r]\ar[d]&\rmU_{\lambda}^1\wotimes_{\Qpd}\intn{{\St}^{\lalg}_{\lambda}}'\ar[r]\ar[d,hook] &0\\
 0\ar[r]&\rmB_{\lambda}\otimes_{\Qp}{W}_{\lambda}\ar[r]&\rmB_{\lambda}\wotimes_{\Qp}\CO_{\lambda}\ar[r] & \rmB_{\lambda}\wotimes_{\Qp}\intn{{\St}^{\lan}_{\lambda}}'\ar[r]& \rmB_{\lambda}\wotimes_{\Qp}\intn{{\St}^{\lalg}_{\lambda}}'\ar[r]& 0
\end{tikzcd}
\end{center}}
et les premières flèches, horizontale et verticale, sont injectives si et seulement si $w(\lambda)\neq1$.
\end{itemize}
De plus, dans les deux diagrammes, les flèches verticales sont d'images fermées.
\end{theo}
\medskip
\subsubsection{L'invariant $\CL$}
À partir du $L$-$(\varphi,N,\GQp)$-module $M$ et d'une filtration sur $M_{\dR}$ on peut reconstruire la représentation $V$ par la recette de Fontaine. Comme on est en rang~$2$, cette filtration est caractérisée par un poids (régulier) et une droite dans $M_{\dR}$ que l'on appelle \emph{l'invariant~$\CL$ galoisien}. Côté automorphe, cet invariant n'apparaît pas directement dans $\bPi(V)$ mais dans~$\bPi(V)^{\lan}$ : cette représentation localement analytique est réductible et les extensions entre les constituants de Jordan-Hölder sont caractérisées par un paramètre que l'on appelle \emph{l'invariant~$\CL$ automorphe}. On peut retrouver $\bPi(V)$ à partir de $\bPi(V)^{\lan}$ en prenant son complété unitaire universel.

Dans le cas cuspidal, $\bPi(V)^{\lan}$ a deux constituants de Jordan-Hölder : le socle est constitué des vecteurs localement algébriques, \ie $\LL_M^{\lambda}$ et le cosocle est le dual de $\CO_{\lambda}^{\infty}[M]$. L'extension entre ces deux composantes est paramétrée par l'invariant $\CL$ automorphe. Dans la ligne du bas du premier diagramme du théorème \ref{thm:diagfondrint} on voit apparaître le dual de l'extension universelle et il s'agit de montrer que lorsqu'on fixe une droite $\CL\subset M_{\dR}$, l'extension obtenue est exactement celle de paramètre $\CL$ (\cf proposition \ref{prop:invlcusp}).

Dans le cas spécial, $\bPi(V)^{\lan}$ a quatre constituants de Jordan-Hölder dont une série principale qui ne joue aucun rôle. La représentation $\bPi(V)^{\lan}$ est complètement déterminée par son invariant $\CL$ automorphe \ie une droite de l'espace $\Ext^1_G\intnn{{W}^*_{\lambda}}{{\St}^{\lan}_{\lambda}}$ qui est de dimension $2$. Dans le second diagramme du théorème \ref{thm:diagfondrint} le dual de l'extension universelle apparaît dans la flèche horizontale du milieu \ie deux copies du dual de ${W}^*_{\lambda}$ apparaissent dans le noyau de l'application 
$$
\rmH^1_{\pet}\Harg{\BH_{C}}{\BV_{\lambda}(1)}\rightarrow \rmB_{\lambda}\wotimes_{\Qp}\intn{{\St}^{\lan}_{\lambda}}'.
$$
Dans cette extension fournie par la cohomologie, il s'agit d'identifier l'invariant $\CL$ automorphe à l'invariant $\CL$ galoisien (\cf théorème \ref{thm:mugalspe}).
\Subsection{Plan de l'article}
Cet article est constituée de deux parties faisant appel à des techniques indépendantes : la première traite des systèmes locaux isotriviaux sur les espaces rigides et leur cohomologie ; la seconde concerne le calcul de la cohomologie étale et proétale de la tour de Drinfeld à coefficients dans le système local universel.

\noindent La première partie est constituée de cinq sections : 
\begin{itemize}
\itemb la section $2.$ est consacrée à des rappels sur les systèmes locaux étales $p$-adiques  sur les espaces rigides,
\itemb dans la section $3.$ on définit les systèmes locaux (fortement) isotriviaux et on montre la proposition \ref{prop:isot1} puis on décrit l'exemple des groupes $p$-divisibles sur les espaces de Rapoport-Zink,
\itemb la section $4.$ est consacrée aux cohomologies de de Rham et de Hyodo-Kato isotriviales,
\itemb dans la section $5.$ on définit la cohomologie syntomique et on démontre le diagramme fondamental de la proposition \ref{prop:diagfondopint},
\itemb dans la section $6.$ on démontre le théorème de comparaison \ref{thm:compint}.
\end{itemize}

\noindent La seconde partie est constituée de sept sections :
\begin{itemize}
\itemb dans la section $7.$ on fait quelques rappels sur la tour de Drinfeld et le système local universel,
\itemb  la section $8.$ est consacrée à des rappels sur les représentations de $G$ et $\czG$ : les représentations algébriques, les représentations de Steinberg et la série spéciale localement analytique,
\itemb dans la section $9.$ on définit les cohomologies qu'on va étudier et on les décompose en des morceaux plus simples ; en particulier on relit les torsions aux composantes connexes et on démontre la décomposition (\ref{eq:decint}) puis réinterprète les calculs de Schneider-Stuhler pour établir, en partie, le théorème \ref{thm:diagfondrint},
\itemb on démontre l'isomorphisme (\ref{eq:Gbd}) dans la section $10.$,
\itemb dans la section $11.$, on démontre le cas cuspidal du théorème \ref{thm:diagfondrint} avant de calculer la multiplicité des représentations cuspidales dans la cohomologie proétale du système local ; on en déduit le cas cuspidal des premiers points des théorèmes \ref{thm:princ} et \ref{thm:princ2},
\itemb dans la section $12.$, on démontre le cas spécial des premiers points théorèmes \ref{thm:princ} et \ref{thm:princ2} en suivant ce qu'on a expliqué dans le paragraphe \ref{subsubsec:lecaspeint},
\itemb la section $13.$ est consacrée à la preuve du second point du théorème \ref{thm:princ}, ce qui complète sa démonstration.
\end{itemize}
\Subsection{Remerciements}
Cet article est issu de ma thèse que j'ai réalisée sous la direction de Pierre Colmez, que je remercie vivement pour son soutien indéfectible et sa patience. Je remercie aussi les rapporteurs, Matthew Emerton et Benjamin Schraen dont les retours éclairés ont grandement amélioré la qualité de ce texte. 
\cleardoublepage
\part{Cohomologie $p$-adique à coefficients isotriviaux}
Soit $\rmO_K$ un anneau à valuation discrète, complet et de caractéristique mixte. On note $\fkm_K\subset \rmO_K$ son idéal maximal, $k \coloneqq \rmO_K/\fkm_K$ son corps résiduel que l'on suppose parfait, $K$ son corps des fractions et $\pi\in \fkm_K$ une uniformisante de $K$. On note de plus $\rmO_{K_0}\coloneqq \rmW(k)$ l'anneau des vecteurs de Witt de $k$. Son idéal maximal est engendré par $p\in\rmO_{K_0}$ et son corps résiduel est $k$. On note $K_0$ son corps des fractions qui est muni d'un automorphisme de Frobenius $\sigma \colon K_0\rightarrow K_0$. On suppose que la valuation sur $K$ est la valuation $p$-adique $v_p\colon \tim{K} \rightarrow \BQ$, normalisée par $v_p(p)=1$. On note $C= \wh{\ov{K}}$ le complété d'une clôture algébrique de $K$, $\rmO_C$ son anneau des entiers et $\fkm_C$ son idéal maximal. Le corps résiduel $k_C=\rmO_C/\fkm_C$ est algébriquement clos.

Soit $X$ un schéma formel sur $\rmO_K$, $X_K$ l'espace rigide associé sur $K$ et $X_k$ la fibre spéciale de $X$. Dans cette partie on suppose toujours que $X$ est semi-stable et que $X_K$ est lisse sur $K$. On note $X_C\coloneqq X \otimes_K C$ l'extension des scalaires à $C$. De plus, $X$ sera souvent une courbe Stein, qui est le cas qui nous intéresse pour la suite.


On note $\rmC_K$ la catégorie des $K$-espaces vectoriels localement convexes, qui est une catégorie semi-abélienne. Rappelons qu'un morphisme est dit \emph{strict} s'il est relativement ouvert. On note $\sD(\rmC_K)$ la $\infty$-catégorie dérivée bornée à gauche associée, construite à partir de la catégorie des complexes $\sC(\rmC_K)$, et on note $\rmD(\rmC_K)$ la catégorie homotopique de $\sD(\rmC_K)$. On renvoit à \cite[2.1]{codoniste} pour plus de détails sur ces objets que l'on utilisera librement, faisons néanmoins quelques rappels. Pour $E^{\bullet}=(\cdots \rightarrow E^{n}\xrightarrow{d_n}E^{n+1}\rightarrow \cdots ) \in \sC(\rmC_K)$, on définit les foncteurs de troncations usuelles et bêtes sur $\sC(C_K)$ par
$$
\begin{gathered}
\tau_{\leqslant n} E^{\bullet} \coloneqq \cdots \rightarrow E^{n-2} \rightarrow E^{n-1}\rightarrow\ker(d_n) \rightarrow 0 \rightarrow \cdots \\
\tau_{\geqslant n} E^{\bullet} \coloneqq \cdots \rightarrow 0 \rightarrow \coim(d_{n-1}) \rightarrow E^{n}\rightarrow E^{n+1} \rightarrow \cdots \\
\sigma_{\leqslant n} E^{\bullet} \coloneqq \cdots \rightarrow E^{n-1} \rightarrow E^{n}\rightarrow 0 \rightarrow 0 \rightarrow \cdots \\
\sigma_{\geqslant n} E^{\bullet} \coloneqq \cdots \rightarrow 0 \rightarrow 0 \rightarrow E^{n}\rightarrow E^{n+1} \rightarrow \cdots.
\end{gathered}
$$
On dit que $E^{\bullet}$ est \emph{strict} si ses différentiels, les $d_n$, sont strictes et on dit qu'un morphisme de $\sD(\rmC_K)$ est un \emph{quasi-isomorphisme strict} si son cône est strict et exacte. On définit la \emph{cohomologie algébrique} de $E^{\bullet}$ comme la cohomologie usuelle $\rmH^n(E^{\bullet})$ dans la catégorie des $K$-espaces vectoriels ; elle sera munie de la topologie sous-quotient si nécessaire. On définit le complexe $\wt{\rmH}^n(E^{\bullet})\coloneqq \tau_{\leqslant n}\tau_{\geqslant n}(E^{\bullet})=(\coim(d_{n-1})\rightarrow \ker(d_n))$. Précisons la catégorie d'arrivé de ce foncteur (\cf \cite[2.1.1]{codoniste}). Les foncteurs de troncations $(\tau_{\leqslant n},\tau_{\geqslant n})$ définissent une $t$-structure sur $\rmD(C_K)$. Le coeur à gauche de cette $t$-structure sera noté $\rmL\rmH(\rmC_K)$ : tout objet de $\rmL\rmH(\rmC_K)$ est représenté, à équivalence près, par un monomorphisme $f\colon E\rightarrow F$  où $F$ est en degré $0$. On a un plongement naturel $I\colon C_K \incl \rmL\rmH(\rmC_K)$ donné par $E\mapsto (0\rightarrow E)$ et qui induit une équivalence $\rmD(\rmC_K)\xrightarrow{\sim}\rmD(\rmL\rmH(\rmC_K))$ compatible à la $t$-structure. Ces $t$-structures se relèvent au niveau des dg catégories dérivées ce qui promeut l'équivalence précédente en une équivalence $\sD(\rmC_K)\xrightarrow{\sim}\sD(\rmL\rmH(\rmC_K))$. Réciproquement, on a le foncteur \emph{partie classique} $C\colon \rmL\rmH(\rmC_K)\rightarrow \rmC_K$ qui envoie un monomorphisme $f\colon E\rightarrow F$ sur $\coker(f)$. Le foncteur $\wt{\rmH}^n$ s'interprète alors comme un foncteur $\sD(\rmC_K)\xrightarrow{\sim}\sD(\rmL\rmH(\rmC_K))$. On remarque que $C\wt{\rmH}^n=\rmH^n$ et on a un épimorphisme naturel $\wt{\rmH}^n\rightarrow I\rmH^n$. Si l'évaluation en $E^{\bullet}$ de cet épimorphisme, \ie $\wt{\rmH}^n(E^{\bullet})\rightarrow I\rmH^n(E^{\bullet})$, est un isomorphisme, alors on dit que la cohomologie $\wt{\rmH}^i(E^{\bullet})$ est \emph{classique}. 

Pour la définition et les propriétés du produit tensoriel complété dérivé à droite $\wotimes_K^R$ on renvoit à \cite[2.1.5]{codoniste}.

\clearpage
\section{Systèmes locaux}

\Subsection{Systèmes locaux}
Dans ce numéro on fait quelques rappels sur les systèmes locaux (pro)étales et on introduit quelques notations. Soit $X_K$ un espace rigide sur $K$ que l'on suppose lisse. On note $X_{K,\et}$ (resp. $X_{K, \pet}$) le site étale (resp. proétale) associé. On va considérer des sous-catégories de topos associés à ces sites.
\subsubsection{Systèmes locaux étales}
On suit la définition de de Jong des systèmes locaux étales. Pour plus de détails on renvoie à \cite{dej} mais aussi à \cite[1.4]{keli1}.
\medskip
\begin{defi}\label{def:syslocet}
\begin{itemize}
 \itemb On fixe $m\geqslant 1$ un entier. Soit $\Loc\Hargg{X_{K,\et}}{\BZ/p^m}$ la catégorie des faisceaux en $\BZ/p^m$-modules qui sont localement constants pour la topologie étale et de rang fini.
 \itemb Soit $\Loc_p^+\intn{X_{K,\et}} \coloneqq \varprojlim_m\Loc\Hargg{X_{K,\et}}{\BZ/p^m}$, où les morphismes de transition sont donnés par la multiplication par $p$, la catégorie des \emph{systèmes locaux étales en $\Zp$-réseaux}. 
 \itemb Soit $\Loc_p\intn{X_{K,\et}}$ le champ étale associé au localisé en $p^{\BZ}$ de $\Loc_p^+\intn{X_{K,\et}}$. Ces objets sont appelés des \emph{$\Qp$-systèmes locaux étales}. 
\end{itemize}
\end{defi}
La catégorie $\Loc_p\intn{X_{K,\et}}$ (resp. $\Loc_p^+\intn{X_{K,\et}}$) est une catégorie $\Qp$-linéaire (resp. $\BZ_p$-linéaire) muni d'un produit tensoriel et d'un $\Hom$ interne. Soit $\bar{x}\colon \Spa(C,C^+)\rightarrow X_K$ un point géométrique. On note $\pi_1^{\et}\intnn{X_K}{\bar x}$ le groupe fondamental étale introduit par de Jong \cite[2]{dej}, qui est un groupe topologique séparé et prodiscret (\cf \cite[Lemma 2.7]{dej}), à l'instar du groupe fondamental algébrique $\pi_1^{\alg}(X_K,\bar x)$, défini à partir des revêtements étales finis, qui est un groupe topologique séparé profini (\cf \cite[Theorem 2.10]{dej}). Rappelons que, d'après de Jong, le foncteur fibre est à valeurs dans la catégorie des $\Qp$-représentations de dimension finie du groupe fondamental, soit 
$$
\omega_{\bar x} \colon \Loc_p(X_{K,\et}) \rightarrow \Rep_{\Qp}\pi_1^{\et}\intnn{X_K}{\bar x}
$$
qui est une équivalence de catégories si $X_K$ est géométriquement connexe. Pour $\BV$ un $\Qp$-système local étale, on note $\BV(\bar x)\coloneqq \omega_{\bar x}\BV$ sa fibre en $\bar x$. On déduit de la construction de de Jong que la catégorie $\Loc_p(X_{K,\et})$ est naturellement une catégorie Tannakienne. De plus, ce foncteur se restreint en une équivalence de catégories 
$$
\omega_{\bar x} \colon \Loc_p^+(X_{K,\et}) \rightarrow \Rep_{\Zp}\pi_1^{\alg}\intnn{X_K}{\bar x}.
$$
\medskip
\begin{rema}\label{rem:nolatt}
On insiste que le localisé en $p^{\BZ}$ de $\Loc_p^+\intn{X_{K,\et}}$ n'est pas un champ, qu'un $\Qp$-système local est en réalité une donné de descente et que tout $\Qp$-système local ne provient pas d'un réseau (\cf \cite[Example 1.4.5]{keli1}). En terme de représentations des groupes fondamentaux, on voit qu'une $\Qp$-représentation d'un groupe prodiscret qui n'est pas profini n'admet pas nécessairement de réseau stable.
\end{rema}
\medskip
\subsubsection{Systèmes locaux proétales}
Le cas proétale est moins délicat. Premièrement, on peut recopier la définition \ref{def:syslocet} en remplaçant $X_{K,\et}$ par $X_{K,\pet}$ pour obtenir $\Loc_p^+(X_{K,\pet})$, la catégorie des \emph{$\Zp$-systèmes locaux proétales}, puis $\Loc_p(X_{K,\pet})$ la catégorie des \emph{$\Qp$-systèmes locaux proétales}. Une différence est que $\Loc_p(X_{K,\pet})$ est bien le localisé en $p^{\BZ}$ de $\Loc_p^+(X_{K,\pet})$, puisque c'est déjà un champ (\cf \ref{lem:keliproetoet}). Le $\BZ_p$-système local proétale constant $\wh{\BZ}_p \coloneqq \varprojlim_m \BZ/p^m$ permet de définir $\Loc_p^+(X_{K,\pet})$ comme la catégorie des $\wh{\BZ}_p$-modules localement constants de rang fini. De même, on définit $\Loc_p(X_{K,\pet})$ comme la catégorie des $\wh{\BQ}_p$-modules localement constant de rang fini, où $\wh{\BQ}_p\coloneqq \wh{\BZ}_p\!\invp{p}$. Ces deux définitions sont équivalentes et on obtient directement que ces catégories sont munis de produits tensoriels et de $\Hom$ internes.

On note $\nu \colon X_{K,\pet}\rightarrow X_{K,\et}$ le morphisme de sites naturel. Ce morphisme induit un foncteur $\Loc_p(X_{K,\et})\rightarrow \Loc_p(X_{K,\pet})$, donné par $\BV\mapsto \wh{\BV}\coloneqq \nu^{*}\BV$. Rappelons un lemme de Kedlaya et Liu (\cf  \cite[Lemma 9.1.11]{keli1}) qui nous assure que sur un espace rigide, les catégories des systèmes locaux étales et proétales coïncident ; notons que cette équivalence de catégories n'est pas exacte. 
\medskip
\begin{lemm}\label{lem:keliproetoet}
Le foncteur naturel $\Loc_p(X_{K,\et})\rightarrow \Loc_p(X_{K,\pet})$ est une équivalence de catégorie.
\end{lemm}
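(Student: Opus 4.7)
The plan is a three-step reduction: first establish the equivalence for $\BZ/p^m$-coefficients, then pass to the inverse limit to obtain the $\Zp$-statement, and finally handle $\Qp$-coefficients, where Remark \ref{rem:nolatt} signals the need for a descent argument rather than a direct comparison.

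For the torsion step, the site morphism $\nu\colon X_{K,\pet}\to X_{K,\et}$ yields an adjunction $(\nu^*,\nu_*)$. Full faithfulness on locally constant finite $\BZ/p^m$-sheaves reduces to $\nu_*\nu^*\cong\id$, which I would check étale-locally: after trivialization the sheaves become constant and the identity holds because sections of a constant finite sheaf over any connected qcqs cover, proétale or étale, coincide. Essential surjectivity is the core topological assertion: a proétale sheaf of $\BZ/p^m$-modules that is proétale-locally constant of finite rank is already étale-locally constant. I would use the structural description of the proétale site of a qcqs rigid space, where any proétale cover is refined by an étale cover composed with a cofiltered inverse limit of finite étale surjections; since $\BZ/p^m$ is finite and discrete, a locally constant section over such a limit is already determined at some finite étale stage, hence local constancy descends to an étale cover. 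Taking $\varprojlim_m$ then gives $\Loc_p^+(X_{K,\et})\xrightarrow{\sim}\Loc_p^+(X_{K,\pet})$ directly, since both sides are, by definition, inverse systems of torsion local systems with the same transition maps.

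For the $\Qp$-case, given $\BV\in\Loc_p(X_{K,\pet})$, local trivializations yield a proétale cover $U\to X_K$ on which $\BV$ is constant; choosing any lattice gives a $\Zp$-local system on $U$, which by the previous step is already étale-locally constant. The descent data for $\BV$ with respect to $U\times_{X_K} U \rightrightarrows U$ then refines to étale descent data of $\Qp$-local systems, where different local lattice choices are patched by powers of $p$; this reconstructs precisely the descent presentation underlying the definition of $\Loc_p(X_{K,\et})$. Full faithfulness follows analogously by clearing denominators in $\Hom$-sheaves and applying the $\Zp$-statement. The main obstacle is the topological step for $\BZ/p^m$-coefficients, namely making the slogan ``proétale local constancy of a finite discrete sheaf is étale local constancy'' rigorous via the explicit fiber structure of the proétale site on rigid spaces; the remainder is a formal descent, though in the $\Qp$-step one has to be careful because globally there need not exist a lattice (Remark \ref{rem:nolatt}), so the descent really has to be carried out at the level of $\Qp$-coefficients after matching lattices locally.
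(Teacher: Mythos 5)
The paper itself supplies no proof; it cites Kedlaya--Liu, Lemma 9.1.11, so you are in effect reproving a result from the literature. Your first two steps (torsion coefficients, then the $\Zp$-limit) follow the right strategy. The only thing worth spelling out in step one is why constancy over $U=\varprojlim U_i$ descends to a finite level: after refining so that $U_0\to X_K$ is \'etale and each $U_{i+1}\to U_i$ is finite \'etale Galois, the descent datum for $\sF|_{U_0}$ along $\varprojlim U_i$ is a continuous action of the profinite group $\varprojlim\Aut(U_i/U_0)$ on a finite set, which therefore factors through a finite quotient.

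The genuine gap is in the $\Qp$-step. The sentence ``the descent data \ldots then refines to \'etale descent data'' is precisely the content that must be proved, and the mechanism you offer --- ``different local lattice choices are patched by powers of $p$'' --- is not the right one: lattices in $\Qp^n$ are not permuted by $p^{\BZ}$, and the cocycle for $\BV|_U$ is a locally constant $\GL_n(\Qp)$-valued function on $U\times_{X_K}U$ which a priori stabilizes no lattice whatsoever. What actually makes the refinement work is a compactness argument. Writing $U=\varprojlim U_i$ as above, $\BV|_{U_0}$ corresponds to a continuous representation $\rho$ of the profinite deck group $G=\varprojlim\Aut(U_i/U_0)$ on $\Qp^n$; since $G$ is compact, $\rho(G)$ is a compact subgroup of $\GL_n(\Qp)$, hence conjugate into $\GL_n(\Zp)$, hence stabilizes a $\Zp$-lattice. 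This is what produces a $\Zp$-local system \'etale-locally (over $U_0$), which is exactly what the \'etale stackification in Definition~\ref{def:syslocet} requires, and it is what reconciles the statement with Remark~\ref{rem:nolatt}: the compact group here is $G$, not the full prodiscrete $\pi_1^{\et}$, so no global lattice is needed or claimed. Without this compactness (or, equivalently, the openness of $\rho^{-1}(\GL_n(\Zp))$ for a continuous $\rho$) your ``refines'' is a restatement of the theorem rather than a step in its proof.
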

\medskip
\qed

On définit la torsion de Tate pour les systèmes locaux proétales. Posons $\wh{\BZ}_p(1)\coloneqq \varprojlim_n \mu_{p^n}$ qui est un élément de $\Loc^+_p(X_{K,\pet})$ dont le $\Qp$-système local proétale associé est noté $\wh{\BQ}_p(1)$. On note $\wh{\BQ}_p(-1)\coloneqq\wh{\BQ}_p (1)^{\vee}$ le dual et pour $k\in \BZ$, $\wh{\BQ}_p(k)\coloneqq\wh{\BQ}_p(1)^{\otimes k}$ si $k\geqslant 0$ et $\wh{\BQ}_p(k)\coloneqq \wh{\BQ}_p(-1)^{\otimes -k}$ si~$k\leqslant 0$. 
\Subsection{Fibrés}
Dans ce numéro on garde les notations précédentes. En particulier, $X_K$ désigne un espace rigide lisse sur $K$. On renvoit à \cite[Definition 7.4]{schphdg} pour la définition suivante :
\medskip
\begin{defi}\label{def:fibplafilt}
Un \emph{fibré plat filtré} est un triplet $(\CE,\nabla,\Filb)$ où : 
\begin{itemize}
\itemb $\CE$ est un fibré vectoriel sur $X_K$, \ie un $\CO_{X_K}$-modules localement libres et de rang fini,
\itemb $\nabla \colon \CE \rightarrow \CE \otimes_{\CO_{X_K}} \Omega^1_{X_K}$ est une connection plate, \ie une application $K$-linéaire satisfaisant la règle de Leibniz et telle que $\nabla^{(2)}\circ\nabla$ = 0 où $\nabla^{(2)}\colon  \CE \otimes_{\CO_{X_K}} \Omega^1_{X_K}\rightarrow  \CE \otimes_{\CO_{X_K}} \Omega^2_{X_K}$ est l'extension de la connexion,
\itemb $\Filb$ une filtration sur $\CE$ qui est finie, décroissante, exhaustive et séparée, \ie  une suite de sous-$\CO_{X_K}$-modules $\{\Fil^{i}\}_{i\in \BZ}$ d'un fibré $\CE$ telle qu'il existe deux entiers $a\leqslant b$ tels que $\CE=\Fil^{a}\supseteq \dots \supseteq \Fil^{b}=0$ et telle que pour tout $i\in \llbracket a,b\rrbracket$, $\Fil^{i}$ admet Zariski localement un supplémentaire\footnote{Insistons qu'a priori ceci est moins fort que de demander qu'il existe un supplémentaire qui soit un fibré vectoriel.} $\Fil_{i}$ tel que $\CE = \Fil^{i}\oplus \Fil_{i}$,
\end{itemize}
le tout tel que la connexion satisfait à la \emph{transversalité de Griffiths} par rapport à la filtration, \ie  pour tout $i\in \BZ$,
$$
\nabla \Fil^{i}\subseteq\Fil^{i-1}\otimes_{\CO_{X_K}}\Omega_{X_K}^1.
$$
On note $\Vect_{\nabla}^{\bullet}(X_K)$ la catégorie des fibrés plats filtrés.
\end{defi}
\medskip
\begin{rema}
Rappelons que ce qui nous permet d'omettre la topologie en indice, sans ambiguité, est que la catégorie $\Vect_{\nabla}^{\bullet}(X_K)$ ne dépend pas de la topologie (\cf  \cite[Lemma 7.3]{schphdg}), au sens où on a des équivalences naturelles
$$
\Vect_{\nabla}^{\bullet}(X_{K,\an})\xrightarrow{\sim}\Vect_{\nabla}^{\bullet}(X_{K,\et})\xrightarrow{\sim} \Vect_{\nabla}^{\bullet}(X_{K,\pet}).
$$
La seconde équivalence est notée $\CE \rightarrow \wh{\CE}\coloneqq \nu^*\CE$ : on prendra garde, ce n'est pas un foncteur exact. 
\end{rema}

\subsubsection{$\BBdrp$-systèmes locaux}
On renvoit à \cite[Definition 6.1]{schphdg} pour la définition de $\BB_{\dR}^{(+)}\coloneqq\BB_{\dR,X_K}^{(+)}$ et à \cite[Definition 6.8]{schphdg} pour la définition de $\CO\BB_{\dR,X_K}^{(+)}\coloneqq\CO\BB_{\dR,X_K}^{(+)}$ (on rappelle que le produit tensoriel dans la définition doit être complété).
\medskip
\begin{defi}
\
\begin{itemize}
\itemb Un \emph{$\BBdrp$-système local} est un faisceau proétale en $\BBdrp$-modules qui est localement libre et de rang fini.
\itemb Un \emph{$\OBBdrp$-module plat} est un faisceau proétale en $\OBBdrp$-modules libre et de rang fini muni d'une connexion plate.
\end{itemize}
Un $\OBBdrp$-module plat $(\CM,\nabla)$ est dit \emph{associé} à un élément $(\CE,\nabla,\Filb)$ de $\Vect_{\nabla}^{\bullet}(X_K)$ s'il existe un isomorphisme de faisceaux proétales 
\begin{equation}\label{eq:assoc}
\CM\otimes_{\OBBdrp  }\OBBdr \cong \CE \otimes_{\CO_{X_K}}\OBBdr,
\end{equation}
compatible aux filtrations et connexions. Les filtrations sont les filtrations produits où $\CM$ est muni de la filtration triviale, $\CE$ est muni de la filtration $\Filb$ et $\OBBdrp$ de sa filtration canonique.
\end{defi}
Les $\BBdrp$-systèmes locaux et les $\OBBdrp$-modules plats définissent naturellement des catégories et, d'après \cite[Theorem 7.2]{schphdg}, ces catégories sont équivalentes ; cette équivalence rappelle la correspondance de Riemann-Hilbert classique. 
Pour $\CE=(\CE,\nabla,\Filb)$ un fibré plat filtré, on définit un $\BBdrp$-système local par
$$
\BX_{\dR}^+(\CE)\coloneqq \Filt{0}\intn{\CE \otimes_{\CO_{X_K}}\OBBdr }^{\nabla=0}.
$$
Cette formule définie un foncteur pleinement fidèle ; en effet, on a en tant que faisceaux étales $\CE_{\et} = \nu_*\intn{\BX_{\dR}^+(\CE)\otimes_{\BBdrp}\OBBdr }$ qui est muni d'une connexion et d'une filtration induites par $\OBBdr$ : ceci fournit la construction réciproque pour les fibrés associés. C'est essentiellement le théorème \cite[Theorem 7.6]{schphdg}. 

\medskip
\begin{defi}
Soit $\BV$ un $\Qp$-système local (pro)étale. On dit que $\BV$ est \emph{de Rham} si $\wh{\BV}\otimes_{\Qp}\OBBdrp$ est associé à un fibré plat filtré $\CE=(\CE,\nabla,\Filb)$ au sens de (\ref{eq:assoc}). Dans ce cas, les \emph{poids de Hodge-Tate} de $\BV$ sont les opposés des sauts de la filtration $\Filb$ sur $\CE$. Plus précisément, $i$ est un poids de Hodge-Tate si et seulement si 
 $$
 \Fil^{-i}\CE\neq \Fil^{-i+1}\CE.
 $$
\end{defi}
\medskip
\begin{rema}
On pourrait définir les poids de Hodge-Tate de manière différente en utilisant les opérateurs de Sen (\cf \cite{shi}); d'après \cite[Proposition 7.9]{schphdg} ces définitions sont équivalentes.
\end{rema}
\Subsection{Opers}\label{subsec:opers}
En général, les opers sont des fibrés plats filtrés sur les courbes munis de l'action d'une algèbre de Lie satisfaisant une certaine condition de transversalité. On renvoie à \cite{bedr} pour la définition générale et plusieurs propriétés que l'on rappellera. Techniquement, on ne s'intéresse ici qu'aux opers pour $\GL_n$ mais on va s'autoriser à décaler la filtration pour faire apparaître les poids de Hodge-Tate.
\subsubsection{Définition}
\medskip
\begin{defi}\label{def:oper}
Soit $X_K$ une courbe rigide lisse sur $K$. Soit $\CE=(\CE,\nabla,\Filb)$ un fibré plat filtré. On dit que $\CE$ est un \emph{oper} de poids $(a,b)$, avec $a,b\in\BZ$ tels que $b\geqslant a$, s'il est de rang $b-a+1$ et si
\begin{itemize}
\itemb $\CE=\Fil^{-b}\supsetneq\dots\supsetneq\Fil^{-a}\supsetneq 0$,
\itemb pour tout entier $i$ tel que $a\leqslant i \leqslant b$, le quotient $\Gr_i\CE\coloneqq \Fil^{-i}/\Fil^{-i+1}$ est un fibré en droites,
\itemb l'application naturelle\footnote{Cette application est appelée le \emph{champ de Higgs}.} induite par $\nabla$,
$$
\theta_i\colon \Gr_i\CE\rightarrow \Gr_{i+1}\CE\otimes_{\CO_{X_K}}\Omega^1_{X_K},
$$
est un isomorphisme pour tout entier $i$ tel que $a\leqslant i\leqslant b-1$.
\end{itemize}
\end{defi}
\medskip
Ainsi, si $\BV$ est un $\Qp$-système local de de Rham proétale, on dit que $\BV$ est un \emph{$\Qp$-oper proétale} si le fibré plat filtré associé est un oper. Notons que si cet oper est de poids $(a,b)$ alors les poids de Hodge-Tate de $\BV$ sont précisément $a,a+1,\dots, b$. Remarquons que si $a=b$, $\CE$ est un fibré en droite et la filtration est caractérisée par le cran $a\in \BZ$ ; réciproquement, tout fibré plat filtré de rang $1$ est un oper.

Faisons quelques rappels de \cite[\S 2]{bedr}, notamment comment associer à $\nabla$ un opérateur différentiel $L_{\nabla}$ d'ordre $b-a+1$ entre fibrés en droites. Dans la théorie des équations différentielles ordinaires on sait comment associer à une équation linéaire d'ordre $n\geqslant 1$ un système linéaire de $n$ équations différentielles d'ordre $1$ : c'est précisément cette idée que les opers géométrisent.

Notons $\CO=\CO_{X_K}$, $\Omega=\Omega^1_{X_K}$ et $\Omega_{\CE}\coloneqq \CE\otimes_{\CO}\Omega$. Soit $\CD=\CD_{X_K}$ le faisceau des opérateurs différentiels sur $X_K$ et $\CD_k\subset \CD$ le sous-faisceau des opérateurs différentiels d'ordre $\leqslant k$ pour $k\geqslant 0$ un entier. Notons que $\CD$ est naturellement muni d'une structure de faisceau en anneaux filtré par les $\CD_k$. La connexion $\nabla$ définit une structure de $\CD$-module sur $\CE$ et on obtient une application $\CD \otimes_{\CO}\Gr_a\CE\rightarrow \CE$ qui induit un isomorphisme $\CD_{b-a}\otimes_{\CO}\Gr_a\CE\xrightarrow{\sim} \CE$ d'après la troisième condition dans la définition \ref{def:oper}. Ceci donne un diagramme
\begin{equation}\label{eq:splitopdiag}
\begin{tikzcd}
0\ar[r]&\CD_{b-a}\otimes_{\CO}\Gr_a\CE\ar[r]\ar[rd,"\sim"]&\CD_{b-a+1}\otimes_{\CO}\Gr_a\CE\ar[r]\ar[d]&\Omega^{\otimes (a-b-1)}\otimes_{\CO}\Gr_a\CE\ar[r]&0\\
& & \CE & &
\end{tikzcd}
\end{equation}
qui définit un scindage $ \Omega^{\otimes (a-b-1)}\otimes_{\CO}\Gr_a\CE\rightarrow \CD_{b-a+1}\otimes_{\CO}\Gr_a\CE$. Puisque la troisième condition dans la définition \ref{def:oper} fournit un isomorphisme $\Omega^{\otimes (a-b)}\otimes_{\CO}\Gr_a\CE\cong \Gr_b\CE$, on obtient une application $\Omega^{\otimes (-1)}\otimes_{\CO}\Gr_b\CE\rightarrow \CD_{b-a+1}\otimes_{\CO}\Gr_a\CE$ et donc une application $\intn{\Gr_a\Omega_{\CE}}^{\otimes(-1)}\rightarrow \otimes_{\CO}\intn{\Gr_b\CE}^{\otimes(-1)}\otimes_{\CO}\CD_{b-a+1}$ qui définit un opérateur différentiel $\CB^{\otimes(-1)} \rightarrow \CA^{\otimes(-1)}$ où $\CB\coloneqq \Gr_a\Omega_{\CE}$ et $\CA\coloneqq \Gr_b\CE$. Or, comme $\CA$ et $\CB$ sont des faisceaux inversibles, on a un isomorphisme canonique entre les opérateurs différentiels $\CA\rightarrow \CB$ et les opérateurs différentiels $\CB^{\otimes(-1)}\rightarrow \CA^{\otimes(-1)}$. En effet, puisqu'on a un isomorphisme canonique $\CB^{\otimes(-1)}\otimes_{\CO}\CA\cong \CA\otimes_{\CO}\CB^{\otimes(-1)}$, on obtient
$$
\CB\otimes_{\CO}\CD\otimes_{\CO}\CA^{\otimes(-1)}\cong \CHom_{\CO}\intnn{\CB^{\otimes(-1)}\otimes_{\CO}\CA}{\CD}\cong \CHom_{\CO}\intnn{\CA\otimes_{\CO}\CB^{\otimes(-1)}}{\CD}\cong \CA^{\otimes(-1)}\otimes_{\CO}\CD\otimes_{\CO}\CB
$$
Ainsi, on obtient\footnote{La construction de l'opérateur à partir de la connexion diffère légèrement dans ces dernières étapes de celle de \cite[\S 2]{bedr}. En effet, le scindage (\ref{eq:splitopdiag}) nous donne directement un opérateur différentiel $\intn{\Gr_a{\CE}}^{\otimes(-1)}\rightarrow \intn{\Gr_b{\CE}}^{\otimes(-1)}\otimes_{\CO}\Omega$. Il est alors tentant de passer à la transposé pour obtenir un opérateur $\CA\rightarrow \CB$ mais, comme le montrerons les calculs locaux, ce n'est pas l'opérateur que l'on veut considérer : on veut plutôt utiliser l'inversion \og naïve \fg que l'on propose. } finalement un opérateur différentiel 
\begin{equation}\label{eq:opdiffop}
L_{\nabla}\colon \Gr_b\CE \rightarrow \Gr_a\Omega_{\CE}.
\end{equation}
C'est un opérateur différentiel d'ordre $b-a+1$ et $\sigma_{\nabla}^{b-a+1}=1$ d'après \cite[\S 2.1]{bedr}, où $\sigma_{\nabla}^{b-a+1}$ est le \emph{symbole principal d'ordre $b-a+1$} définit naturellement comme la section 
$$
\sigma_{\nabla}^{b-a+1}\in\rmH^0\Harg{X_K}{\CA^{\otimes(-1)}\otimes_{\CO}\CB\otimes_{\CO}\Omega^{\otimes (a-b-1)}}.
$$
Réciproquement, à partir d'un opérateur différentiel d'ordre $b-a+1$, entre des faisceaux inversibles, de symbole principal inversible, on peut construire un oper de poids $(a,b)$. 
Notons que $\Gr_b\CE$ et $\Gr_a\Omega_{\CE}$ sont munis des filtrations à un cran
\begin{equation}\label{eq:filtop}
\Fil^{i}\Gr_b\CE=
\begin{cases}
\Gr_b\CE&\text{ si } i\leqslant -b,\\
0 & \text{ si } i\geqslant -b+1,
\end{cases}
\quad 
\Fil^{i}\Gr_a\Omega_{\CE}=
\begin{cases}
\Gr_a\Omega_{\CE}&\text{ si } i\leqslant -a,\\
0 & \text{ si } i\geqslant -a+1.
\end{cases}
\end{equation}
\subsubsection{Calculs locaux}\label{subsub:locop}
On va détailler la forme local des opers et l'opérateur différentiel associé. Supposons que $X$ est un affinoïde tel qu'on peut trivialiser $\CE$ et $\Omega$, \ie tel que
$$
\begin{gathered}
\CE=\bigoplus_{i=a}^{b}\CO e_i,\quad \Omega = \CO dt, \\
\Fil^{-k}\CE = \bigoplus_{i=a}^{k}\CO e_i,\ a\leqslant k \leqslant b,  \quad \Omega^{\otimes (-1)}= \CO \partial. 
\end{gathered}
$$
Ici, $dt$ est une section de $\Omega$ et $\partial\in\Omega^{\otimes (-1)}$ est la section duale, vue comme dérivation, définie pour $g\in \CO$ par $dg = \partial g dt$. Dans ces conditions, on peut écrire $\nabla$ comme dérivation sous la forme $\nabla = \partial-Q$ avec $Q$ une matrice de taille $(b-a+1)\times (b-a+1)$ ; de plus après transformation de jauge (équivalence de connexions) on peut supposer que $Q$ est la matrice compagnon associée à des fonctions $q_a,\dots, q_b\in \CO$, \ie
$$
Q=
\begin{pmatrix}
 q_b & q_{b-1}&  \cdots & \cdots & q_a\\
 1 & 0 &  \cdots & \cdots & 0\\
 0 & 1 & \ddots & &\vdots\\
 \vdots & \ddots & \ddots &\ddots &\vdots\\
 0 & \cdots & 0 & 1& 0
\end{pmatrix}.
$$
%

Soit $P=\sum_{i=0}^{\infty}a_i\partial^i$ une section de $\CD$, \ie un opérateur différentiel, sous l'application $\CD\otimes_{\CO}\Gr_a\CE\rightarrow \CE$ on obtient
$$
P\otimes e_a\mapsto \sum_{i=0}^{\infty}a_i(-\nabla)^ie_a=\sum_{i=0}^{\infty}a_iQ^ie_a.
$$
Ceci permet d'expliciter la section $\Omega^{\otimes (-1)}\otimes_{\CO}\Gr_b\CE\rightarrow \CD_{b-a+1}\otimes_{\CO}\Gr_a\CE$ de la suite exacte (\ref{eq:splitopdiag}) :
$$
\partial\otimes e_b\mapsto \partial^{b-a+1}\otimes e_a -\sum_{i=a}^{b}q_i\partial^{i-a}\otimes e_a = \colon L\otimes e_a.
$$
En effet, il est clair que $L\otimes e_a$ est bien une section de $\partial\otimes e_b$ et il suffit de justifier que son image dans $\CE$ par l'action $\CD\otimes_{\CO}\Gr_a\CE\rightarrow \CE$ est nulle, or l'image de $L\otimes e_a$ est
$$
Q^{b-a+1}-\sum_{i=a}^{b}q_iQ^{i-a}e_a=0,
$$
puisque l'on reconnait exactement le polynôme caractéristique de $Q$. Finalement, on a montré que $L_{\nabla}(fe_b)=L(f)e_adt$ pour $f\in\CO$ où
\begin{equation}\label{eq:locexpcon}
L =  \partial^{b-a+1} -\sum_{i=a}^{b}q_i\partial^{i-a}.
\end{equation}
\subsubsection{Scindage de la filtration}

\medskip

\begin{lemm}\label{lem:opdec}
Supposons que $X_K$ soit une courbe rigide lisse sur $K$. Soit $\CE=(\CE,\nabla,\Filb)$ un oper de poids $(a,b)$ sur $X_K$. Alors il existe des scindages $K$-linéaires
$$
\begin{gathered}
\CE\cong \Fil^{-b}\oplus \Gr_b\CE,\\
\Omega_{\CE}\cong \Gr_a\Omega_{\CE}\oplus \Omega_{\CE}/\Fil^{-a},
\end{gathered}
$$
tel que les projections/inclusions de ces scindages induisent des diagrammes commutatifs
\begin{center}
\begin{tikzcd}
\CE \ar[r]\ar[d,"\nabla"]& \Gr_b\CE\ar[d,"L_{\nabla}"] & & &\CE \ar[d,"\nabla"]& \Gr_b\CE\ar[l]\ar[d,"L_{\nabla}"] \\
\Omega_{\CE}\ar[r] & \Gr_a\Omega_{\CE}& & &\Omega_{\CE}&\ar[l] \Gr_a\Omega_{\CE}
\end{tikzcd}
\end{center}

\end{lemm}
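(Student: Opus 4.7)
\medskip\noindent\textit{Proof proposal.} The plan is to work in a local trivialization as in \S\ref{subsub:locop}, construct both splittings by a direct differential calculation, and then globalize by invoking the uniqueness forced by the commutativity conditions (there is a small typo in the statement: the first splitting should read $\CE\cong \Fil^{-b+1}\oplus \Gr_b\CE$, since $\Fil^{-b}=\CE$).

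Fix a local frame $(e_a,\dots,e_b)$ of $\CE$ adapted to the filtration and a coordinate $t$ so that $\nabla=\partial-Q$ with $Q$ the companion matrix of (\ref{eq:locexpcon}). I would define the $K$-linear section
$$
i\colon \Gr_b\CE\longrightarrow \CE,\qquad f\,\bar e_b\longmapsto \sum_{j=a}^{b}\partial^{b-j}(f)\,e_j,
$$
which is not $\CO$-linear. Using $\nabla(e_k)=-(q_{b-k+a}e_a+e_{k+1})\otimes dt$ for $a\leqslant k<b$ and $\nabla(e_b)=-q_a e_a\otimes dt$, the coefficients of $e_{a+1},\dots,e_b$ in $\nabla(i(f\bar e_b))$ telescope to zero, and the remaining $e_a\otimes dt$ coefficient reassembles precisely into $L(f)$ with $L$ the operator of (\ref{eq:locexpcon}). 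Hence $\nabla\circ i=L_\nabla$ as maps into $\Gr_a\Omega_\CE\hookrightarrow \Omega_\CE$; taking $i'$ to be this canonical inclusion yields the right-hand diagram.

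The structural point behind this calculation is that $\nabla$ restricted to $\Fil^{-b+1}\CE$ meets $\Gr_a\Omega_\CE$ trivially inside $\Omega_\CE$: writing $r=\sum_{j=a}^{b-1} g_j e_j$ and demanding $\nabla(r)\in\CO e_a\otimes dt$ successively forces $g_{b-1}=0$ and $g_{j-1}=\partial g_j$, hence $g_j\equiv 0$. In particular $\nabla|_{\Fil^{-b+1}\CE}$ is injective, so any $K$-linear section of the projection satisfying $\nabla\circ i=L_\nabla$ is unique; the local formula therefore glues into a well-defined global section on $X_K$ without further hypothesis. The same triviality allows one to choose a $K$-linear complement $C\subset\Omega_\CE$ of $\Gr_a\Omega_\CE$ that contains $\nabla(\Fil^{-b+1}\CE)$, giving $\Omega_\CE\cong\Gr_a\Omega_\CE\oplus C$; the associated projection $p'$ automatically satisfies $p'\circ\nabla=L_\nabla\circ p$, because any $v=i(p(v))+r$ with $r\in\Fil^{-b+1}\CE$ satisfies $\nabla(v)=L_\nabla(p(v))+\nabla(r)$ with $\nabla(r)\in C=\ker p'$, which gives the left-hand diagram.

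The main obstacle is the bookkeeping identity that the telescoping of the $e_{a+1},\dots,e_b$ coefficients in $\nabla(i(f\bar e_b))$ reproduces exactly the operator $L$ of (\ref{eq:locexpcon}); this is really the content of the cyclic-vector description of the oper. Once this computation is in hand, uniqueness of $i$, existence of the complement $C$, and compatibility of the two diagrams all follow formally.
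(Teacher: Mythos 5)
Your proposal reproduces the same key local computation as the paper (the companion-basis telescoping showing $\nabla\circ i = L_{\nabla}$, and the observation that $\nabla(r)\in\Gr_a\Omega_{\CE}$ forces $r=0$ on $\Fil^{-b+1}\CE$), but it reorganizes the construction: the paper first builds the global $K$-linear isomorphism $u\colon\Fil^{-b+1}\CE\xrightarrow{\sim}\Omega_{\CE}/\Fil^{-a}$ directly from the Higgs-field isomorphisms $\theta_i$, then reads off both the retraction $r=u^{-1}\circ(\nabla\bmod\Fil^{-a})$ and the section $\sigma=\nabla\circ u^{-1}$, and only afterwards uses the companion-basis formulas to verify commutativity. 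You instead build the section $i$ locally and glue by a uniqueness argument. For the splitting of $\CE$ this is legitimate, since your injectivity computation pins $i$ down and the local formulas therefore agree on overlaps. You are also right that the statement has a typo: it should read $\Fil^{-b+1}$, since $\Fil^{-b}=\CE$.

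The gap is in the second splitting. You write that "the same triviality allows one to choose a $K$-linear complement $C\subset\Omega_{\CE}$ of $\Gr_a\Omega_{\CE}$ containing $\nabla(\Fil^{-b+1}\CE)$." In a sheaf setting one cannot simply "choose" a complement, and the trivial-intersection property alone does not produce one: you have shown $\nabla(\Fil^{-b+1}\CE)\cap\Gr_a\Omega_{\CE}=0$, but not that $\nabla(\Fil^{-b+1}\CE)+\Gr_a\Omega_{\CE}=\Omega_{\CE}$. Without the latter, arbitrary local enlargements of $\nabla(\Fil^{-b+1}\CE)$ to a complement need not glue, and $C$ is not determined. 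The missing ingredient is precisely that $\nabla$ induces a $K$-linear \emph{bijection} $\Fil^{-b+1}\CE\xrightarrow{\sim}\Omega_{\CE}/\Fil^{-a}$ — this is the paper's $u$, an immediate consequence, by induction, of the third (Higgs-field) condition in the definition of an oper. It is also visible in your own local formulas: for any target $(h_{a+1},\dots,h_b)$ the downward recursion $g_{b-1}=-h_b$, $g_{k-1}=\partial g_k-h_k$ produces a preimage in $\Fil^{-b+1}\CE$. Once this surjectivity is recorded, $C\coloneqq\nabla(\Fil^{-b+1}\CE)$ is the canonical complement, no choice is involved, the construction glues for free, and the rest of your argument for the left-hand diagram goes through.
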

\begin{proof}
On commence par décrire les scindages $K$-linéaire. La connexion induit un isomorphisme $K$-linéaire 
$$
u\colon \Fil^{-b}\xrightarrow{\sim}\Omega_{\CE}/\Fil^{-a},
$$
que l'on obtient par récurrence à partir du troisième point dans la définition \ref{def:oper}. On a un diagramme commutatif à lignes exactes
\begin{equation}\label{eq:splitopde}
\begin{tikzcd}
0\ar[r] & \Fil^{-b+1}\ar[r]\ar[d,"\cong"]& \CE \ar[r]\ar[d,"\nabla"]& \Gr_b\CE\ar[r]& 0\\
0 &\ar[l] \Omega_{\CE}/\Fil^{-a} &\ar[l] \Omega_{\CE} &\ar[l] \Gr_a\Omega_{\CE}&\ar[l] 0.
\end{tikzcd}
\end{equation}
Notons que la flèche $L_{\nabla}\colon \Gr_b\CE\rightarrow \Gr_a\Omega_{\CE}$ qui vient compléter le diagramme ne commute pas aux autres applications du diagramme (il suffit de remarquer que $\Fil^{-b+1}$ n'est pas dans le noyau de $\nabla$).

La composée
$$
\CE \xrightarrow{\nabla} \Omega_{\CE}\xrightarrow{\mod \Fil^{-a}}\Omega_{\CE}/\Fil^{-a} \xrightarrow{u^{-1}}\Fil^{-b+1},
$$
nous donne une application $r\colon \CE \rightarrow \Fil^{-b+1}$. Comme le diagramme (\ref{eq:splitopde}) commute, $r$ est une rétraction de l'inclusion $\Fil^{-b}\rightarrow \CE$ ce qui scinde la suite exacte du haut. Ceci définit une section $K$-linéaire $s\colon \Gr_b\CE \rightarrow \CE$ et un isomorphisme 
$$
\CE\cong \Fil^{-b+1}\oplus \Gr_b\CE.
$$ 

Pour la suite exacte du bas, la composée
$$
\Omega_{\CE}/\Fil^{-a}\xrightarrow{u^{-1}}\Fil^{-b+1}\xrightarrow{}\CE\xrightarrow{\nabla}\Omega_{\CE}
$$
nous donne une application $\sigma \colon \Omega_{\CE}/\Fil^{-a}\rightarrow \Omega_{\CE}$. Comme le diagramme (\ref{eq:splitopde}) commute, $\sigma$ est une section de la projection $\Omega_{\CE}\rightarrow\Omega_{\CE}/\Fil^{-a}$ et scinde la suite exacte du bas. Ceci définit une rétraction $K$-linéaire $\rho \colon \Omega_{\CE}\rightarrow \Gr_a\Omega_{\CE}$ et donc un isomorphisme 
$$
\Omega_{\CE}\cong \Gr_a\Omega_{\CE}\oplus \Omega_{\CE}/\Fil^{-a}.
$$ 

Pour montrer que les diagrammes du lemme commutent, quitte à se restreindre à un ouvert convenable, on peut supposer que l'on est dans les conditions du numéro \ref{subsub:locop} dont on conserve les notations. On va calculer localement les sections/rétractions que l'on a défini. Pour $i=a,\dots, b$ et $f_{i}\in \CO$, on a (avec la convention $e_{b+1}=0$) :
$$
\nabla(f_ie_i)=(\partial f_i e_i-f_ie_{i+1}-q_{b+a-i}f_ie_a)dt\equiv (\partial f_i e_i-f_ie_{i+1})dt \mod \Fil^{-a}.
$$
Ainsi $\Ker(r) = \{\sum_{i=a}^{b}\partial^{b-i}f e_i \ ; \ f\in \CO\}$ et la composée $\Gr_b\CE\xrightarrow{s}\CE\xrightarrow{\nabla} \Omega_{\CE}$ s'explicite localement pour $f\in \CO$ par 
$$
fe_b\mapsto \sum_{i=a}^{b}\partial^{b-i}f e_i \mapsto \partial^{b-a+1}fe_a-\sum_{i=a}^{b}q_{b+a-i}\partial^{b-i}f e_a.
$$
L'image est bien dans $\Gr_a\Omega_{\CE}$ et en changeant les indices de la somme par $j=a+b-i$ on reconnait l'expression de $L$ (\cf (\ref{eq:locexpcon})) \ie on trouve que la composée précédente s'écrit $fe_b\mapsto L(f)e_a=L_{\nabla}(fe_b)$ comme attendu. Ainsi, on a montré que le second diagramme du lemme commute, \ie
\begin{center}
\begin{tikzcd}
\CE \ar[d,"\nabla"]& \Gr_b\CE\ar[l,"s"]\ar[d,"L_{\nabla}"] \\
\Omega_{\CE}&\ar[l] \Gr_a\Omega_{\CE}.
\end{tikzcd}
\end{center}
On montre maintenant que le premier diagramme du lemme commute. Soit $f=\sum_{i=a}^bf_ie_i\in \CE$ qui s'écrit dans le scindage $\CE\cong \Fil^{-b+1}\oplus \Gr_b\CE$ sous la forme 
$$
f= \underbrace{\sum_{i=a}^b(f_i-\partial^{b-a}f_b)}_{=g\in \Fil^{-b+1}}+\underbrace{\sum_{i=a}^b\partial^{b-i}f_be_i}_{=h\in \Gr_{b}\CE}.
$$
Comme on a défini le scindage $\Omega_{\CE}\cong \Omega_{\CE}/\Fil^{-a}\oplus \Gr_a\Omega_{\CE}$ à partir de l'image par $\nabla$ du scindage $\CE\cong \Fil^{-b+1}\oplus \Gr_b\CE$ le scindage de $\nabla f$ s'écrit
$$
\nabla f = \nabla g + \nabla h\in \Omega_{\CE}/\Fil^{-a}\oplus \Gr_a\Omega_{\CE}.
$$
L'image par $\rho \colon \Omega_{\CE}\rightarrow \Gr_a\Omega_{\CE}$ de $\nabla h$ donne
$$
(\rho\circ \nabla)(f) = \partial^{b-a+1}f_be_a-\sum_{i=a}^{b}q_{b+a-i}\partial^{b-i}f_b e_a = L_{\nabla}(f_be_b).
$$
Ainsi on a montré que le premier diagramme du lemme commute, \ie
\begin{center}
\begin{tikzcd}
\CE \ar[r]\ar[d,"\nabla"]& \Gr_b\CE\ar[d,"L_{\nabla}"] \\
\Omega_{\CE}\ar[r,"\rho"] & \Gr_a\Omega_{\CE}.
\end{tikzcd}
\end{center}

\end{proof}
\begin{rema}
Le lemme \ref{lem:opdec} nous donne une autre façon de construire un opérateur différentiel à partir de la connexion $\nabla$ et montre que cet opérateur différentiel est exactement $\nabla$. Il est important de noter que les scindages du lemme \ref{lem:opdec} ne sont pas $\CO$-linéaires puisqu'ils dépendent de la connexion. Je remercie Y. Zhou de m'avoir aidé à corriger et améliorer le lemme \ref{lem:opdec}.
\end{rema}

\section{Systèmes locaux isotriviaux}
\Subsection{Définitions et foncteurs fibres}
\subsubsection{Définition}
Rappelons qu'un \emph{$\varphi$-module sur $K_0$} est un $K_0$-espace vectoriel muni d'une application $\sigma$-linéaire bijective. Un \emph{isocristal} sur $k$ est alors un $\varphi$-module sur $K_0$ de dimension finie. Les isocristaux sur un corps parfait $k$ forment une catégorie Tannakienne que l'on note~$\Iso_k$. On renvoit à \cite{tantong} pour la définition du \emph{faisceau proétale des périodes cristallines} $\BBcris\coloneqq\XBBcris{X_K}$ (\cf \cite[Definition 2.1, Definition 2.4]{tantong}).
\medskip
\begin{defi}
Un $\Qp$-système local (pro)étale $\BV$ est \emph{isotrivial cristallin} s'il existe un isocristal~$D$ sur $k$ et un isomorphisme de faisceaux proétales,
$$
\psi \colon \wh{\BV}\otimes_{\Qp}\BBcris \cong D\otimes_{K_0}\BBcris,
$$ 
compatible avec l'endomorphisme de Frobenius. On appelle le couple $(D,\psi)$ une \emph{isotrivialisation} de $\BV$. 
\end{defi}
\medskip
\begin{rema}\label{rem:isogen}
\begin{enumerate}
\item Soit $\BV$ un $\Qp$-système local isotrivial (cristallin), on note que l'on a $\rang_{\Qp}\BV=\rang_{K_0}D$, donc $\BV$ est cristallin (au sens de Faltings \cf \cite[1.0.2]{tantong}).
\item On peut prendre une définition légèrement plus générale, en remplaçant ${D}$ par un faisceau de $\varphi$-modules localement constant pour la topologie de Zariski. Tout ce que l'on fera devrait facilement s'adapter à ce cas.
\end{enumerate}
\end{rema}
\subsubsection{Foncteur fibre cristalline}
On note $\Li_p(X_{K,\et})\subset \Loc_p(X_{K,\et})$ la sous-catégorie pleinement fidèle des $\Qp$-systèmes locaux étales qui sont isotriviaux cristallins. Le lemme suivant résume les propriétés de cette sous-catégorie :
\medskip
\begin{lemm}
\
\begin{enumerate}
\item La sous-catégorie $\Li_p(X_{K,\et})$ de  $\Loc_p(X_{K,\et})$ est une sous-catégorie Tannakienne.
\item Le foncteur $\BD \colon \Li_p(X_{K,\et})\rightarrow \Iso_k$, qui associe à $\BV$ d'isotrivialisation $(D,\psi)$ l'isocristal $D$, est un foncteur tensoriel, bien défini comme foncteur fibre. 
\end{enumerate}
\end{lemm}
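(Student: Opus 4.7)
The plan is to verify the closure properties of $\Li_p(X_{K,\et})$ under the Tannakian operations inherited from $\Loc_p(X_{K,\et})$, and then to check that the assignment $\BV\mapsto D$ is well-defined, tensorial and exact. Since $\Loc_p(X_{K,\et})$ is already Tannakian via de Jong's theory, it suffices to establish that $\Li_p(X_{K,\et})$ is a full subcategory stable under the unit object, tensor products, duals, and subquotients.

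The first step is stability under the tensor operations. The trivial local system $\wh{\Qp}$ is isotrivialised by $(K_0,\id)$. Given isotrivialisations $(D_i,\psi_i)$ of $\BV_i$ for $i=1,2$, the composition $\psi_1\otimes_{\BBcris}\psi_2$ produces a Frobenius-equivariant isomorphism
\[
\wh{\BV_1\otimes_{\Qp}\BV_2}\otimes_{\Qp}\BBcris \;\cong\; (D_1\otimes_{K_0}D_2)\otimes_{K_0}\BBcris,
\]
and the analogous computation with the internal $\CHom$ takes care of duals. This already shows, on the level of objects, that the functor $\BD$ (once defined) will respect tensor products and duals.

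The second step is closure under subquotients. Given a short exact sequence $0\to \BV'\to \BV\to \BV''\to 0$ with $\BV$ isotrivialised by $(D,\psi)$, tensoring with $\BBcris$ (which is $\Qp$-flat as a proétale sheaf of rings) yields an exact sequence of $\varphi$-modules over $\BBcris$, with middle term $D\otimes_{K_0}\BBcris$. One then invokes the full faithfulness of the base change $D\mapsto D\otimes_{K_0}\BBcris$ from isocrystals on $k$ to $\varphi$-modules on $X_{K,\pet}$ to descend $\wh{\BV'}\otimes\BBcris$ uniquely to a sub-isocrystal $D'\subset D$, whence isotrivialisations of $\BV'$ and $\BV''$ by $D'$ and $D/D'$ respectively.

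Well-definedness of $\BD$ on objects reduces to the same full faithfulness: if $(D,\psi)$ and $(D',\psi')$ are two isotrivialisations of $\BV$, then $\psi'\circ\psi^{-1}$ is a Frobenius-equivariant $\BBcris$-linear isomorphism $D\otimes\BBcris\cong D'\otimes\BBcris$ which descends canonically to an isomorphism $D\cong D'$ in $\Iso_k$. Tensoriality and exactness of $\BD$ then follow from the constructions of the first two steps, and faithfulness is automatic since an isotrivial local system whose isocrystal vanishes is trivialised by the zero sheaf, hence is zero. The one genuinely substantial point in all of this is the full faithfulness of $D\mapsto D\otimes_{K_0}\BBcris$, which can be established by noting that taking Frobenius invariants of $\BBcris$ locally recovers $\wh{\Qp}$ and using flatness to reduce a $\BBcris$-linear Frobenius equivariant map to a $K_0$-linear Frobenius equivariant map between the isocrystals themselves.
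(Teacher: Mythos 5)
Your strategy (reduce everything to a "full faithfulness" statement for $D\mapsto D\otimes_{K_0}\BBcris$, then read off well-definedness, closure under subquotients, and faithfulness of $\BD$) is a reasonable way to organize the argument, and is in the same spirit as the paper's. But the technical input you offer to justify that full faithfulness is not the right one, and this is a genuine gap. You say the key is that \emph{``taking Frobenius invariants of $\BBcris$ locally recovers $\wh{\Qp}$''} together with some flatness. That statement, even if true, does not by itself give what you need. The $\BBcris$-linear sheaf morphisms $D\otimes_{K_0}\BBcris\to D'\otimes_{K_0}\BBcris$ are (on a connected $X_K$) a priori $\Hom_{K_0}(D,D')\otimes_{K_0}\rmH^0(X_{K,\pet},\BBcris)$, and the Frobenius on this space mixes the Frobenius of $D^\vee\otimes D'$ with that of $\BBcris$. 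Taking $\varphi$-invariants of this \emph{tensor product} does not give $\Hom_\varphi(D,D')$ merely from $\BBcris^{\varphi=1}=\wh{\Qp}$; in the proétale setting, $(\BBcris)^{\varphi=p^k}$ is large for $k\neq 0$, so a priori $D$ and a Tate twist $D[k]$ could become isomorphic after $\otimes\BBcris$ unless one knows that the \emph{$\BBcris$-linear} Hom is already small. The ingredient that kills this problem, and which the paper actually uses (Tan--Tong, Corollaire~2.26), is that the direct image $\nu_*\BBcris$ along $\nu\colon X_{K,\pet}\to X_{K,\et}$ is the \emph{constant} sheaf $\underline{K_0}$. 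This is a strictly different, and stronger, statement than the Frobenius-invariant computation you invoke: it tells you that the global sections of $\BBcris$ (before taking $\varphi$-invariants!) are just $K_0$, so a $\BBcris$-linear sheaf map $D\otimes\BBcris\to D'\otimes\BBcris$ is automatically $K_0$-linear on global sections, and $\varphi$-equivariance then reduces exactly to an isocrystal morphism. The paper packages this by \emph{defining} $\BD(\BV)\coloneqq\nu_*(\BBcris\otimes_{\Qp}\BV)_x$, which is manifestly independent of the chosen isotrivialisation and equals $D$ by $\nu_*\BBcris=\underline{K_0}$.

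To repair your argument, replace the appeal to Frobenius invariants by the constancy $\nu_*\BBcris=\underline{K_0}$: from it, $\nu_*\CHom_{\BBcris}(D\otimes\BBcris,D'\otimes\BBcris)=\Hom_{K_0}(D,D')\otimes_{K_0}\underline{K_0}$, so sheaf morphisms are $K_0$-linear maps $D\to D'$, and then $\varphi$-equivariance cuts down exactly to $\Hom_{\Iso_k}(D,D')$. The rest of your plan (unit, tensor, duals, subquotients, faithfulness via reflecting zero together with exactness) then goes through, and is in fact more explicit than the paper's proof, which leaves closure under duals and subquotients implicit.
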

\medskip
\begin{proof}
La structure de catégorie Tannakienne sur $\Li_p(X_{K,\et})$ est donnée par le foncteur fibre $\BD$. Premièrement, notons que si $\BV$ est un $\Qp$-système local isotrivial cristallin, alors l'isocristal associé est unique à isomorphisme près. En effet, d'après \cite[Corollary 2.26]{tantong}, on obtient $\nu_{*}\BBcris = \ud{K_0}$, le faisceau constant associé à $K_0$ où l'on rappelle que $\nu \colon X_{K,\pet}\rightarrow X_{K,\et}$ est le changement de site. Ainsi, on définit 
$$
\BD\colon \BV\mapsto \nu_*(\BBcris\otimes_{\Qp}\BV)_x,
$$
où $x\in X_K$ est un point quelconque. Ceci donne $\BD(\BV)=\nu_*(\BBcris)_x\otimes_{K_0}D=D$ et montre que l'association $\BV\mapsto D$ est bien définie. Il est clair que $\BD$ est tensoriel puisque 
$$
\BD(\BV_1\otimes_{\Qp}\BV_2)=\nu_*(\BBcris)_x\otimes_{K_0}(\BD(\BV_1)\otimes_{K_0}\BD(\BV_2))=\BD(\BV_1)\otimes_{K_0}\BD(\BV_2).
$$
\end{proof}
\begin{rema}
Cette construction n'est qu'un cas très particulier de  \cite[Definition 3.12]{tantong}.
\end{rema}
\medskip
Un corollaire direct est que les constructions tensorielles appliqué à un système local isotriviales $\BV$ sont aussi isotriviales. Plus précisément, soient $\mu$ une partition d'un entier $n$ et $s_{\mu}$ le projecteur de Schur associé. Alors $s_{\mu}\cdot\BV$ est isotrivial d'isocristal associé $s_{\mu}\cdot{D}$. En particulier,  pour tout entier $k\in \BN$, $\Sym_{\Qp}^k\BV$ est un système local isotrivial de $\varphi$-module associé $\Sym^k_{K_0}{D}$. 

Pour $\BV$ dans $\Li_p(X_{K,\et})$ on appellera simplement $\BD(\BV)$ l'isocristal associé à $\BV$. Soit $D$ un isocristal sur $k$. On définit les faisceaux proétales
$$
\BX_{\cri}(D)\coloneqq\intn{{D}\otimes_{K_0}\BBcris }^{\Phi=\id},\quad \BX_{\cri}^+(D)\coloneqq\intn{{D}\otimes_{K_0}\XBBcrisp {X_K} }^{\Phi=\id},
$$
où le Frobenius $\Phi$ est le produit tensoriel du Frobenius sur $D$ et de celui sur $\BBcris$. On rappelle que la torsion à la Tate pour les isocristaux est donnée par $K_0[-1]$ qui est l'isocristal de dimension~$1$, muni du Frobenius $\varphi=\frac 1 p \sigma$. En d'autres termes, $\Qp(k)$ pour $k\in \BZ$ définit un système local isotrivial et $\BD(\Qp(k))=K_0[-k]$.

\subsubsection{Foncteur fibre de Rham}

Soit $\BV$ un objet de $\Li_p(X_{K,\et})$ et soit $D=\BD(\BV)$ son isocristal associé. On a alors une inclusion $\BV\rightarrow \BX_{\cri}(D)$. On veut décrire le quotient qui fait intervenir la partie de Rham. On commence par un lemme qui explicite le foncteur fibre $\Li_p(X_{K,\et})\rightarrow \Vect_{\nabla}^{\bullet}(X_K)$.
\medskip
\begin{lemm}
Le système local $\BV$ est de Rham et le fibré associé $\CE=(\CE,\nabla,\Filb)$ est muni d'une trivialisation $\CE\cong D_K\otimes_K \CO_{X_K}$, où $D_K\coloneqq D\otimes_{K_0}K$. Cette trivialisation ne dépend que de l'isotrivialisation $(D,\psi)$ de $\BV$. 
\end{lemm}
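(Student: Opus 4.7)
The plan is to push the isotrivialization $\psi$ from $\BBcris$ up to the big de~Rham period sheaf $\OBBdr$ and then invoke the Scholze--Brinon correspondence recalled in the previous subsection.

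First I would tensor the isotrivialization with $\OBBdr$ via the natural chain of maps $\BBcris\to\BBdr\hookrightarrow\OBBdr$ (the first arrow is the factor $\iota$ of the period morphism used in the construction of $\BBcris$; the second is Scholze's canonical embedding). This produces an isomorphism of proétale $\OBBdr$-modules
$$
\psi_{\OBBdr}\colon \wh{\BV}\otimes_{\Qp}\OBBdr\xrightarrow{\ \sim\ }D\otimes_{K_0}\OBBdr
=(D_K\otimes_K\CO_{X_K})\otimes_{\CO_{X_K}}\OBBdr,
$$
which is automatically compatible with the connections (the connection on $D\otimes_{K_0}\OBBdr$ is $\id_D\otimes d$, and on the left it is $\id_{\BV}\otimes d$; these match because $\psi$ itself is defined with values in a horizontal period sheaf) and with the filtrations, once we endow $D_K\otimes_K\CO_{X_K}$ with the trivial filtration and take the product filtration with $\OBBdr$.

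Second, I would appeal to Scholze's theorem (cited in the excerpt as \cite[Theorem~7.6]{schphdg}): a $\Qp$-local system $\BV$ is de~Rham exactly when $\wh{\BV}\otimes_{\Qp}\OBBdr$ is associated with some filtered flat bundle. The isomorphism $\psi_{\OBBdr}$ provides such an association for the candidate bundle $\CE\coloneqq D_K\otimes_K\CO_{X_K}$, equipped with the filtration and connection inherited from $\OBBdr$ through the Scholze--Brinon equivalence. Hence $\BV$ is de~Rham, and by the uniqueness part of the equivalence the filtered flat bundle associated with $\BV$ is canonically isomorphic to $\CE$; in particular one obtains an $\CO_{X_K}$-linear trivialization $\CE\cong D_K\otimes_K\CO_{X_K}$.

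Finally I would note that this trivialization is manufactured only out of $\psi$ and the fixed embeddings of period sheaves, so it indeed depends only on the pair $(D,\psi)$. The main technical point to verify carefully is the second step, namely that the base-changed morphism $\psi_{\OBBdr}$ really is compatible with the connection in the sense required by the Scholze--Brinon correspondence; once this is granted, the trivialization of $\CE$ falls out of the uniqueness of the associated bundle and no further computation is needed. The filtration and connection on $\CE$, however, are \emph{not} trivial under this identification: they will be analyzed in the subsequent results, and this dependence is precisely what makes the isotriviality data interesting.
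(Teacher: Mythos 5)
The paper records this lemma without a proof (it is followed directly by~$\square$), so what you have supplied is a plausible reconstruction of the standard argument rather than a comparison target. Your overall plan — base change the isotrivialization $\psi$ to $\OBBdr$, invoke Scholze's de~Rham criterion, and push forward via $\nu_*$ using $\nu_*\OBBdr\cong\CO_{X_K}$ to extract the trivialization of the vector bundle — is indeed the intended route, and the final conclusion (de~Rhamness, $\CE\cong D_K\otimes_K\CO_{X_K}$ as $\CO_{X_K}$-modules, dependence only on $(D,\psi)$) is correct.

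There is, however, a genuine internal inconsistency that you should resolve. In your first paragraph you assert that $\psi_{\OBBdr}$ is \emph{horizontal} (because the matrix of $\psi$ lives in $\BBcris$, whose image in $\OBBdr$ lies in the flat sections $\BBdr=(\OBBdr)^{\nabla=0}$), so that both sides carry the connection $\id\otimes\nabla_{\OBBdr}$. If that is accepted, then after applying $\nu_*$ the resulting isomorphism $\CE\cong D_K\otimes_K\CO_{X_K}$ is automatically \emph{connection-preserving}, which forces $\nabla_{\CE}=\id\otimes d$. But your closing sentence declares that "the filtration and connection on $\CE$ are \emph{not} trivial under this identification." These two statements cannot both be correct. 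Note also that a flat trivialization of $\CE$ would directly establish \emph{strong} isotriviality, whereas the paper introduces that as a separate hypothesis (and cites Rapoport--Zink for the nontrivial verification in the Drinfeld case); so either your horizontality argument proves more than the lemma states and makes a later definition vacuous — something to be checked against the paper's intent — or the horizontality claim needs a more careful justification. The safest formulation of the lemma's proof avoids invoking horizontality altogether: the $\OBBdr$-linearity of $\psi_{\OBBdr}$ already gives $\nu_*(\BV\otimes_{\Qp}\OBBdr)\cong D\otimes_{K_0}\nu_*\OBBdr\cong D_K\otimes_K\CO_{X_K}$, of the correct rank, whence de~Rhamness and the trivialization of the bundle, with no commitment about what $\nabla_{\CE}$ becomes under that trivialization.

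Finally, the sentence about $\psi_{\OBBdr}$ being "compatible with the filtrations once we endow $D_K\otimes_K\CO_{X_K}$ with the trivial filtration" is vacuously true (any $\OBBdr$-linear map respects the $\OBBdr$-filtration), but it is a red herring: it is not the filtered compatibility demanded by the Scholze--Brinon association, for which one uses the genuine Hodge filtration $\Filb\CE$. You should either delete that remark or explicitly say it plays no role in recovering $\Filb\CE$, which instead is extracted from the $\OBBdrp$-lattice $\wh{\BV}\otimes\OBBdrp\subset\wh{\BV}\otimes\OBBdr$ via $\nu_*$.
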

\medskip
\qed

Rappelons qu'on a défini pour la partie de Rham, à partir d'un élément $\CE=(\CE,\nabla,\Filb)$ de $\Vect_{\nabla}^{\bullet}(X_K)$, des faisceaux proétales 
$$
\BX_{\dR}(\CE)\coloneqq\intn{\wh{\CE}\otimes_{\wh{\CO}_{X_K}}\XOBBdr {X_K}}^{\nabla=0},\quad \BX_{\dR}^+(\CE)\coloneqq\Filt 0 \intn{\wh{\CE}\otimes_{\wh{\CO}_{X_K}}\XOBBdr {X_K}}^{\nabla=0}.
$$
On a alors un morphisme naturel $\BX_{\cri}(D)\rightarrow\BX_{\dR}(\CE)$ qui est injectif. On note de plus le faisceau quotient
$$
 \BX_{\dR}^-(\CE)\coloneqq  \BX_{\dR}(\CE)/\BX_{\dR}^+(\CE).
$$
\Subsection{Fibrés fortements isotriviaux et la suite exacte fondamentale}
\subsubsection{Fibrés fortement isotriviaux}
On donne maintenant une condition plus forte que l'isotrivialité sur un fibré portant sur la connexion. On dit qu'un fibré vectoriel $\CE$ sur $X_K$ est \emph{isotrivial} s'il existe un $K$-espace vectoriel $D_K$, de dimension finie tel que $\CE\cong\CO_{X_K}\otimes_KD_K$. Par abus, on dit alors que $D_K$ est associé à $\CE$.
\medskip
\begin{defi}
Soit $\CE$ un fibré vectoriel isotrivial sur $X_K$ d'espace vectoriel associé $D_K$. Supposons $\CE$ muni d'une connexion $\nabla\colon D_K\otimes_K\CO_{X_K}\rightarrow D_K\otimes_{K}\Omega_{X_K}$. On note
$$
D^{\nabla}_K=\Ker\left [\nabla \colon D_K\otimes\CO(X_K)\rightarrow D_K\otimes_{K}\Omega_{X_K}(X_K)\right ],
$$
le $K$-espace vectoriel des \emph{sections globales horizontales}. On dit que $(\CE,\nabla)$ est \emph{fortement isotrivial} si l'inclusion naturelle $D_K^{\nabla}\incl D_K\otimes \CO_{X_K}$ induit un isomorphisme $D_K\otimes_K\CO_{X_K}\cong D_K^{\nabla}\otimes_K\CO_{X_K}$. De même, on dit d'un $\Qp$-système local isotrivial cristallin $\BV$ qu'il est \emph{fortement isotrivial (cristallin)} si le fibré associé muni de sa connexion, $(\CE,\nabla)$ est fortement isotrivial.
\end{defi}
\medskip
\begin{rema}
Si la connexion $\nabla$ est unipotente, il suffit que $\dim_K D_K=\dim_KD^{\nabla}_K$ pour que le fibré muni de sa connexion soit fortement isotrivial. En effet, dans ce cas, la matrice de passage entre $D_K^{\nabla}\otimes_K\CO_{X_K}$ et $D_K\otimes_K\CO_{X_K}$ d'une base compatible avec $D_K^{\nabla}$ vers une base compatible avec $D_K$, est unipotente, donc de déterminant $1$. L'inclusion naturelle $D_K^{\nabla}\otimes_K\CO_{X_K}\rightarrow D_K\otimes_K\CO_{X_K}$ est alors un isomorphisme de $\CO_{X_K}$-modules.
\end{rema}
\medskip
La condition pour un fibré plat d'être fortement isotrivial est très restrictive : cette condition implique que la connexion se trivialise  :
\medskip
\begin{lemm}
Soit $(\CE,\nabla)$ un fibré plat isotrivial de $K$-espace vectoriel associé $D_K$. Alors $(\CE,\nabla)$ est fortement isotrivial si et seulement si $\nabla=\id\otimes d\colon D_K^{\nabla}\otimes_K\CO_{X_K}\rightarrow D_K^{\nabla}\otimes_K\Omega_{X_K}$.
\end{lemm}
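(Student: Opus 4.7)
The equivalence is a short bookkeeping argument based on the Leibniz rule, once the horizontal sections of $D_K^{\nabla}$ are identified with an $\CO_{X_K}$-basis of $\CE$. I would treat the two implications in parallel.

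For the ``only if'' direction, I assume $(\CE,\nabla)$ strongly isotrivial, so that the $\CO_{X_K}$-linear extension
\[\iota\colon D_K^{\nabla}\otimes_K\CO_{X_K}\xrightarrow{\;\sim\;}D_K\otimes_K\CO_{X_K}=\CE\]
of the inclusion $D_K^{\nabla}\hookrightarrow D_K\otimes_K\CO(X_K)$ is an isomorphism. Pick a $K$-basis $v_1,\dots,v_n$ of $D_K^{\nabla}$; via $\iota$, these become an $\CO_{X_K}$-basis of $\CE$, and each $v_i\otimes 1$ is horizontal by the very definition of $D_K^{\nabla}$. For an arbitrary local section $s=\sum_i f_i\,(v_i\otimes 1)$ with $f_i\in\CO_{X_K}$, the Leibniz rule combined with $\nabla(v_i\otimes 1)=0$ gives
\[\nabla s \;=\; \sum_i (df_i)\,(v_i\otimes 1),\]
which is precisely the formula for $\id\otimes d$ written in the trivialization $D_K^{\nabla}\otimes_K\CO_{X_K}$. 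Transporting back along $\iota$, this shows $\nabla=\id\otimes d$ on $D_K^{\nabla}\otimes_K\CO_{X_K}$.

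For the converse, assume $\nabla=\id\otimes d$ on $D_K^{\nabla}\otimes_K\CO_{X_K}$. The very writing of this equality entails an identification $\CE\cong D_K^{\nabla}\otimes_K\CO_{X_K}$, and I would check that this identification coincides with the $\CO_{X_K}$-linear extension of the inclusion $D_K^{\nabla}\hookrightarrow D_K\otimes_K\CO(X_K)$: indeed on the generating set $D_K^{\nabla}\otimes 1$ both maps agree tautologically, and both are $\CO_{X_K}$-linear. Hence $\iota$ is an isomorphism, which is exactly the defining condition of strong isotriviality.

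There is no substantive obstacle; the whole content is the Leibniz rule applied in a basis of horizontal sections. The only point that deserves explicit verification is, in the converse direction, the compatibility between the implicit identification carried by the notation ``$\nabla=\id\otimes d$'' and the canonical inclusion $D_K^{\nabla}\hookrightarrow D_K\otimes_K\CO(X_K)$, to make sure one is really recovering strong isotriviality and not merely the existence of some trivializing frame of horizontal sections.
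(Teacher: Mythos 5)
Your argument is correct and is exactly the one the paper leaves implicit: the lemma appears in the paper with no written proof (just a \qed), and your two directions — Leibniz rule in a basis of horizontal global sections for one implication, and the observation that the notation $\nabla=\id\otimes d$ on $D_K^{\nabla}\otimes_K\CO_{X_K}$ presupposes the canonical $\CO_{X_K}$-linear extension of the inclusion $D_K^{\nabla}\hookrightarrow\CE(X_K)$ is an isomorphism for the other — spell out precisely the content of the statement. You are also right to flag the one point worth making explicit, namely the compatibility between the implicit trivialization carried by the formula and the canonical map $\iota$; this is a matter of reading the statement correctly rather than a mathematical obstacle.
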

\medskip
\qed

Pour un fibré vectoriel muni d'une connexion fortement isotrivial on note parfois $(\CE,\nabla,\Filb)=(D_K,d\otimes\id,\Filb)$, ce qui sous-entend que $D_K=D_K^{\nabla}$. 

\subsubsection{Suite exacte fondamentale}
La proposition suivante est une reformulation d'un théorème de Kedlaya-Liu (\cf \cite[Corollary 8.7.10]{keli1}) :
\medskip
\begin{prop}\label{prop:reconstr}
Soit $D$ un isocristal sur $k$ et soit $\Filb$ une filtration décroissante finie et exhaustive sur le fibré vectoriel $\CE = D_K\otimes_K\CO_{X_K}$, que l'on munit de la connexion triviale $\id\otimes d$ pour définir un fibré plat filtré $(\CE,\nabla,\Filb)$. Supposons qu'en tout point géométrique de $X_K$ la filtration soit faiblement admissible. Le noyau de l'application naturelle 
$$
\BV \coloneqq \Ker\left [ \BX_{\cris}(D)\rightarrow \BX^-_{\dR}(\CE)\right ]
$$
définit un $\Qp$-système local proétale fortement isotrivial d'isocristal associé $D$. De plus, l'application $\BX_{\cris}(D)\rightarrow \BX^-_{\dR}(\CE)$ est surjective.
\end{prop}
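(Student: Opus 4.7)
Le plan est de se ramener ponctuellement à la construction classique de Colmez-Fontaine \og faiblement admissible implique admissible \fg, puis d'invoquer le théorème d'équivalence de Kedlaya-Liu pour globaliser ces données ponctuelles en un véritable système local proétale.

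Premièrement, je travaillerais point par point. Pour $\bar x \colon \Spa(C,C^+) \to X_K$ un point géométrique, la restriction de $\Filb$ en $\bar x$ munit $(D,\Filb_{\bar x})$ d'une structure de $(\varphi,N=0)$-module filtré sur $K$, faiblement admissible par hypothèse. D'après Colmez-Fontaine, il existe alors une unique $\sG_K$-représentation cristalline $V_{\bar x}$ s'insérant dans la suite exacte fondamentale de Fontaine
$$
0 \to V_{\bar x} \to \intn{D\otimes_{K_0}\Bcris}^{\Phi=\id} \to \intn{D_K\otimes_K\Bdr}/\Fil^0 \to 0.
$$
Les fibres en $\bar x$ des faisceaux proétales $\BX_\cris(D)$ et $\BX_\dR^-(\CE)$ sont respectivement les termes du milieu et de droite (la trivialisation $\CE = D_K\otimes_K\CO_{X_K}$ et la nullité de la connexion identifient $\wh{\CE}\otimes_{\wh{\CO}_{X_K}}\OBBdr$ à $D_K\otimes_K \OBBdr$, dont les sections horizontales sont $D_K\otimes_K\BBdr$ par le lemme de Poincaré proétale). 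On obtient en particulier la surjectivité ponctuelle ; comme le conoyau du morphisme $\BX_\cris(D) \to \BX_\dR^-(\CE)$ a alors toutes ses fibres nulles, il est nul, ce qui fournit la surjectivité globale annoncée.

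Deuxièmement, pour voir que $\BV \coloneqq \Ker\left [ \BX_\cris(D) \to \BX_\dR^-(\CE)\right ]$ est effectivement un $\Qp$-système local proétale (et pas seulement un faisceau abélien), j'invoquerais le corollaire 8.7.10 de \cite{keli1}, qui donne une équivalence entre la catégorie des $\varphi$-modules filtrés sur $X_K$ satisfaisant la faible admissibilité point par point et celle des $\Qp$-systèmes locaux cristallins. Notre donnée $(D,\id\otimes d,\Filb)$ est dans la source, et le système local cristallin associé est précisément $\BV$, muni de l'isomorphisme de comparaison cristalline qui redonne la construction ponctuelle du paragraphe précédent. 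L'isotrivialité forte est alors immédiate : par construction l'isocristal associé à $\BV$ est $D$, et le fibré plat filtré associé est $(D_K\otimes_K\CO_{X_K}, \id\otimes d, \Filb)$, pour lequel $D_K^\nabla = D_K$ réalise la trivialisation requise.

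L'obstacle principal sera de vérifier que la forme du théorème de Kedlaya-Liu dont on a besoin s'applique exactement à notre situation : en particulier, il faut s'assurer que la faible admissibilité en les points géométriques (et non en les points classiques) est la bonne condition pour appliquer leur équivalence, et que la structure de $\varphi$-module constant avec connexion triviale est bien admise en entrée. Une fois ce point technique établi, tout le reste est formel : la construction ponctuelle de Colmez-Fontaine donne l'exactitude, et Kedlaya-Liu fournit la structure globale de système local.
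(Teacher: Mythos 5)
Votre stratégie générale — réduction à un argument ponctuel puis globalisation via Kedlaya-Liu — est dans le même esprit que la preuve du papier, mais l'énoncé que vous attribuez à \cite[Corollary 8.7.10]{keli1} est inexact, et cette inexactitude escamote tout le contenu géométrique de la démonstration. Le corollaire 8.7.10 de Kedlaya-Liu n'est \emph{pas} une équivalence directe entre la catégorie des $\varphi$-modules filtrés sur $X_K$ faiblement admissibles point par point et celle des systèmes locaux cristallins ; c'est une équivalence entre les fibrés vectoriels semi-stables, ponctuellement de pente $0$, sur la courbe de Fargues-Fontaine relative $\CX_{\FF,S}$ (pour $S$ perfectoïde) et les $\Qp$-systèmes locaux proétales sur $S$. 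Pour s'en servir, il faut d'abord se restreindre à un $S$ perfectoïde proétale sur $X_K$, puis réaliser $\BX_{\cris}(D)$ comme faisceau des sections du fibré $\sE(D)$ sur $\CX_{\FF,S}$ et $\BX_{\dR}^-(\CE)$ comme faisceau des sections d'un gratte-ciel porté par $\iota_{\infty,S}\colon S\incl \CX_{\FF,S}$ ; le noyau apparaît alors comme le faisceau des sections d'une modification de $\sE(D)$, et la faible admissibilité ponctuelle est exactement la condition pour que cette modification soit semi-stable de pente $0$. C'est à ce stade seulement qu'on applique Kedlaya-Liu. Votre proposition suppose acquise cette traduction, ce qui revient à supposer le résultat.

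Le point par point a aussi une faiblesse : au-dessus d'un point géométrique $\bar x \colon \Spa(C,C^+)\to X_K$ il n'y a pas d'action galoisienne, donc parler d'une \og $\sG_K$-représentation cristalline $V_{\bar x}$\fg\ n'a pas de sens. La bonne version de \og faiblement admissible implique admissible\fg\ au-dessus de $C$ est précisément celle de Fargues-Fontaine, formulée en termes de modifications de fibrés sur $\CX_{\FF}$ — donc exactement l'ingrédient que votre globalisation cherche ensuite à utiliser, et non un analogue séparé de Colmez-Fontaine. Vous aviez d'ailleurs bien repéré que le verrou principal est de vérifier la forme exacte du théorème de Kedlaya-Liu : c'est là que le gap se trouve, et une fois ce point élucidé, le reste de votre argument se confond avec la preuve du texte.
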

\medskip
\begin{proof}
Comme la question est locale sur le site proétale, on se restreint à $S\rightarrow X_K$ un espace adique perfectoïde sur $X_K$ et on va montrer que $\BV_S$ définit un système local isotrivial. Comme le site proétale de $S$ et de son basculé $S^{\flat}$ sont équivalents on peut supposer que $S$ est perfectoïde de caractéristique $p$.

On va maintenant considérer la courbe de Fargues-Fontaine relative sur $S$, $\CX_{\FF,S}$. Alors, un théorème\footnote{C'est la version relative d'un théorème classique de Fargues et Fontaine (\cf  \cite[Théorème 9.3.1]{fafo}).} de Kedlaya et Liu (\cf  \cite[Corollary 8.7.10]{keli1}) nous assure que les fibrés vectoriels semi-stables, ponctuellement de pente $0$ sur $\CX_{\FF,S}$ sont équivalents aux $\Qp$-systèmes locaux proétales sur $S$. Or, $\BX_{\cris}(D)$ sont les sections du fibré vectoriel $\sE(D)$ associé à $D$ sur $\CX_{\FF,S}$ et $\BX^-_{\dR}(\CE)$ sont les sections d'un faisceau gratte-ciel en la section $\iota_{\infty,S}\colon S\incl \CX_{\FF,S}$. Comme on considère précisément une modification faiblement admissible de $\sE(D)$, le noyau est un $\Qp$-système local proétale et on a directement que
$$
\BV\otimes_{\Qp}\BBcris\cong {D}\otimes_{K_0}\BBcris,
$$
ce qui permet de conclure que $\BV$ est isotrivial cristallin d'isocristal associé $D$. On conclut de même que le fibré associé est bien $(\CE,\nabla,\Filb)=(D_K\otimes_K\CO_{X_K},\id\otimes d,\Filb)$ et donc que $\BV$ est fortement isotrivial. 
\end{proof}
\medskip
\begin{prop}\label{prop:suitexfond}
 Soient $\BV$ un système local fortement isotrivial, $D=\BD(\BV)$ et $(\CE,\nabla,\Filb)$ le fibré plat filtré associé. Alors on a une suite exacte de faisceaux proétales
 $$
 0\rightarrow \BV\rightarrow \BX_{\cri}(D)\rightarrow \BX_{\dR}^-(\CE)\rightarrow 0.
 $$
\end{prop}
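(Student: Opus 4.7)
The plan is to reduce to Proposition~\ref{prop:reconstr}, which already provides a local system as the kernel of $\BX_{\cri}(D)\to \BX_{\dR}^-(\CE)$ once one knows that the filtration is ponctuellement faiblement admissible, and then to identify that kernel with $\BV$ via the isotrivialisation. Since $\BV$ is fortement isotrivial, the associated fibré plat filtré has the form $(\CE,\nabla,\Filb)=(D_K\otimes_K\CO_{X_K},\id\otimes d,\Filb)$, so we are precisely in the situation of Proposition~\ref{prop:reconstr} provided the admissibility hypothesis holds.

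The first step will be the verification of pointwise weak admissibility. Fix a point géométrique $\bar x\colon \Spa(C',C'^+)\to X_K$. Specialising the isotrivialisation
$$\psi\colon \wh{\BV}\otimes_{\Qp}\BBcris\;\cong\; D\otimes_{K_0}\BBcris$$
at $\bar x$ produces a $\varphi$-equivariant isomorphism $\BV_{\bar x}\otimes_{\Qp}\B_{\cri}\cong D\otimes_{K_0}\B_{\cri}$, which exhibits $\BV_{\bar x}$ as a représentation cristalline of the local Galois group at $\bar x$ with $\bD_{\cri}(\BV_{\bar x})=D$ endowed with the induced filtration $\Fil^{\bullet}_{\bar x}$ on $D_{K(\bar x)}$. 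By Colmez--Fontaine, the filtered $\varphi$-module attached to a crystalline representation is weakly admissible, so the hypothesis of Proposition~\ref{prop:reconstr} is satisfied. Applying it, we obtain a $\Qp$-système local proétale $\BV'\coloneqq \Ker[\BX_{\cri}(D)\to \BX_{\dR}^-(\CE)]$, and the map $\BX_{\cri}(D)\to \BX_{\dR}^-(\CE)$ is surjective; this already yields the short exact sequence with $\BV'$ in place of $\BV$.

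It remains to identify $\BV$ with $\BV'$. The isotrivialisation $\psi$ defines a natural morphism $\BV\to D\otimes_{K_0}\BBcris$ by $v\mapsto \psi(v\otimes 1)$; because $\psi$ is $\varphi$-équivariant and $v\otimes 1$ is $\varphi$-invariant, this lands in $\BX_{\cri}(D)$. Extending scalars to $\BBdr$, the isotrivialisation identifies the image of $v\otimes 1$ in $D\otimes_{K_0}\BBdr=\BX_{\dR}(\CE)$ with a section of $\Fil^0$: indeed $\BV$ carries the trivial filtration, and $v\otimes 1$ lies in $\Fil^0$ of $\wh{\BV}\otimes\BBdrp\subset \wh{\BV}\otimes \BBdr$ since $1\in \BBdrp=\Fil^0\BBdr$, so its image vanishes in $\BX_{\dR}^-(\CE)$. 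Consequently the morphism $\BV\to \BX_{\cri}(D)$ factors through $\BV'$.

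Finally, the map $\BV\to \BV'$ is an isomorphism of $\Qp$-systèmes locaux. One checks this pointwise: at any point géométrique $\bar x$, both $\BV_{\bar x}$ and $\BV'_{\bar x}$ are the crystalline $\Qp$-représentations attached by Fontaine's functor to the same filtered $\varphi$-module $(D,\Fil^{\bullet}_{\bar x})$, and the map between them is the one induced by the isotrivialisations; it is therefore an isomorphism. The main (minor) obstacle is really the identification in this last step — convincing oneself that the map $\BV\to \BV'$ is not just a comparison of abstract local systems of the same rank but is induced by the canonical inclusion into $\BX_{\cri}(D)$ — which, as above, is immediate from unwinding the construction of $\BV'$ and the Fargues--Fontaine-type description used in the proof of Proposition~\ref{prop:reconstr}.
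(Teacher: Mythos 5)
Your proposal is correct and follows the same route as the paper: invoke Proposition~\ref{prop:reconstr} to produce the kernel $\BV'$ together with the surjectivity, then identify $\BV$ with $\BV'$ by checking at each geometric point that both are the crystalline representation attached by Colmez--Fontaine to the same weakly admissible filtered $\varphi$-module. The one place where you add genuine value is in making explicit the verification that the hypothesis of Proposition~\ref{prop:reconstr} (pointwise weak admissibility of the filtration) actually holds, via the observation that the isotrivialisation specialises at each $\bar x$ to a crystalline structure on $\BV_{\bar x}$, so Colmez--Fontaine applies; the paper relies on this implicitly through its opening remark that the statement is the admissible/weakly admissible equivalence when $X_K$ is a point, but does not spell it out as a separate step. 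Your careful construction of the comparison map $\BV\to\BX_{\cri}(D)$ and the check that it lands in $\Fil^0$ likewise matches the content the paper compresses into the phrase ``on sait qu'on a une inclusion''.
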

\medskip
\begin{proof}
Premièrement, notons que le théorème est vrai si $X_K$ est un point : c'est l'équivalence entre admissible et faiblement admissible (\cf  \cite{cofo}, voir aussi \cite[Chapitre 10]{fafo}). D'après la proposition précédente, on sait que 
$$
\BV'\coloneqq \Ker\left [ \BX_{\cri}(D)\rightarrow \BX_{\dR}^-(\CE)\right ]
$$
définit un $\Qp$-système local proétale isotrivial de $\varphi$-module associé $D$. Elle permet de plus d'obtenir la surjectivité de la dernière application. Il suffit de justifier que $\wh{\BV}\cong \BV'$. On sait qu'on a une inclusion $\wh{\BV}\incl \BV'\otimes_{\Qp}\BBcris$. De plus, comme le théorème est vérifié pour les points, le germe de cette inclusion est un isomorphisme sur le germe de~$\BV'$ en tout point proétale de~$X_K$. On en déduit que l'inclusion induit un isomorphisme $\wh{\BV}\cong \BV$.
\end{proof}
\subsubsection{Groupes $p$-divisibles}\label{subsubsec:grppd} 
Soit $\CG\rightarrow X$ un groupe $p$-divisible obtenu par déformation d'un groupe $p$-divisible, \ie  il existe un groupe $p$-divisible $G_0$ sur $k$ et une quasi-isogénie $\rho \colon G_0\times_k X_k \dashrightarrow \CG\times_X X_k$. Notons ${D}_0^+ \coloneqq {D}^+(G_0)$ le module de Dieudonné covariant de $G_0$ sur $\rmW(k)$ et ${D}_0\coloneqq {D}_0^+\otimes_{\rmW(k)}K_0$ l'isocristal associé de Frobenius $\varphi_0\colon {D}_0\rightarrow {D}_0$. On définit de plus le \emph{module de Tate entier} de $\CG$ par 
$$
\BT_p(\CG)\coloneqq \varprojlim_n \CG[p^n]_K
$$
qui définit un $\Zp$-faisceau (pro)étale sur $X_K$ et $\BV_p(\CG)\coloneqq \BT_p(\CG)\otimes_{\Qp}\BQ_p$ le \emph{module de Tate rationnel} qui définit un $\Qp$-faisceau (pro)étale sur $X_K$. Rappelons que les poids de Hodge-Tate de ce système local sont contenus dans $\{0,1\}$, \ie la filtration est donnée par $0=\Fil^1\CE\subset\Fil^0\CE\subset\Fil^{-1}\CE=\CE$.
\medskip
\begin{lemm}\label{lem:pdivisoc}
Le module de Tate $\BV_p(\CG)$ définit un $\Qp$-système local fortement isotrivial sur $X_K$, d'isocristal associé ${D}_0[-1]$.
\end{lemm}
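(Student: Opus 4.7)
The plan is to combine the crystalline comparison theorem for $p$-divisible groups with Rapoport--Zink's theorem on the universal connection in order to obtain both isotriviality and its strong version. The argument is essentially a translation of standard facts into the sheaf-theoretic framework developed above.

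First, I would apply the crystalline comparison for $p$-divisible groups (Faltings, Scholze--Weinstein) to produce a natural, Frobenius-equivariant isomorphism of proétale sheaves on $X_K$
\[
\BV_p(\CG) \otimes_{\Qp} \BBcris \;\cong\; \BD(\CG) \otimes_{\CO} \BBcris,
\]
where $\BD(\CG)$ denotes the rational covariant Dieudonné crystal of $\CG \times_X X_k$, with the Frobenius normalized (by a Tate twist) so as to be compatible with the convention $\BD(\Qp(k)) = K_0[-k]$ fixed earlier. The quasi-isogeny $\rho$ induces, after inverting $p$, an isomorphism of rational Dieudonné crystals on $X_k$
\[
\BD(\CG) \;\cong\; D_0 \otimes_{K_0} \CO^{\cris}_{X_k/K_0},
\]
the right-hand side being the constant isocrystal with fibre $D_0$. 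Substituting back yields the isotrivialization
\[
\BV_p(\CG) \otimes_{\Qp} \BBcris \;\cong\; D_0[-1] \otimes_{K_0} \BBcris,
\]
which exhibits $\BV_p(\CG)$ as isotrivial crystalline with associated isocrystal $D_0[-1]$.

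It remains to upgrade this to strong isotriviality. Let $(\CE, \nabla, \Filb)$ denote the flat filtered bundle on $X_K$ attached to $\BV_p(\CG)$. The previous step already gives a canonical identification $\CE \cong D_{0,K} \otimes_K \CO_{X_K}$, so what must be proved is that, under this identification, the connection becomes the trivial one $\id \otimes d$. This is exactly the content of Rapoport--Zink's theorem \cite[Proposition~5.15]{razi}, which computes the Gauss--Manin connection on the de Rham realization of the universal $p$-divisible group on the deformation space: the trivialization of $\CE$ coming from the quasi-isogeny $\rho$ diagonalizes the connection, so that the horizontal sections $D_K^\nabla$ already contain $D_{0,K}$ of full rank, and the natural map $D_K^\nabla \otimes_K \CO_{X_K} \hookrightarrow \CE$ is an isomorphism.

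The main technical point is simply the careful bookkeeping of Frobenius and Tate-twist normalizations required to make the crystalline comparison match the convention $\BD(\Qp(k)) = K_0[-k]$; no deeper ingredient is needed. Once this is done, the combination of the crystalline comparison, the triviality of the Dieudonné crystal afforded by $\rho$, and the Rapoport--Zink calculation of the connection gives both assertions of the lemma.
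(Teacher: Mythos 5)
Your proof is correct and follows the same route as the paper, which simply cites \cite[Proposition 5.15]{razi} as the source of the statement. Your expansion — crystalline comparison, trivialization of the Dieudonné crystal via the quasi-isogeny $\rho$, and the Rapoport--Zink computation of the Gauss--Manin connection to get strong isotriviality — is exactly the intended reinterpretation, with a welcome extra word on the Tate-twist bookkeeping behind the $[-1]$.
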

\medskip
\begin{proof}
C'est une réinterprétation de \cite[Proposition 5.15]{razi}.
\end{proof}
\medskip
\begin{rema}
D'après \cite{scwe}, le faisceau proétale $\BX_{\cri}^+({D}_0(1))$ est le revêtement universel du groupe $p$-divisible \ie $\wt{\CG}\coloneqq \varprojlim_{[p]}\CG\cong \BX_{\cri}^+({D}_0(1))$. Ainsi le foncteur $\BV\mapsto \BX_{\cri}^+(\BD(\BV(1)))$ définit un \emph{revêtement universel} du système local. La suite exacte de \cite[Proposition 3.4.2 $(v)$]{scwe},
$$
0\rightarrow \BV_p(\CG)\rightarrow \wt{\CG}\rightarrow \Lie(\CG)\rightarrow 0,
$$
n'est autre que la partie $(\ \cdot \ )^+$ de la suite exacte fondamentale de la proposition \ref{prop:suitexfond}.
\end{rema}

\section{Cohomologies à coefficients}
\Subsection{Cohomologies (pro)étale des systèmes locaux}
Soit $\BV^+$ un système local étale en $\BZ_p$-réseaux sur $X_K$ un espace rigide lisse sur $K$. Par définition, $\BV^+=\{ \BV^+[p^m]\}_{m\in \BN}$ et on définit 
$$
\rmR\Gamma_{\!\et}\Harg{X_C}{\BV^+}\coloneqq \holim_m\rmR\Gamma_{\!\et}\Harg{X_C}{\BV^+[p^m]},
$$
où les morphismes de transition dans la limite homotopique sont donnés par la multiplication par $p$. Pour le $\Qp$-système local associé, $\BV=\BV^+\otimes_{\BZ_p}\Qp$, on pose,
$$
\rmR\Gamma_{\!\et}\Harg{X_C}{\BV}\coloneqq\rmR\Gamma_{\!\et}\Harg{X_C}{\BV^+}\otimes_{\Zp}\Qp.
$$
En particulier, pour tout entier $n\geqslant 0$,
$$
\rmH^n_{\et}\Harg{X_K}{\BV}\coloneqq \intn{\varprojlim_m\rmH_{\et}^n\Harg{X_K}{\BV^+[p^m]}\otimes_{\Zp}\Qp}.
$$
\begin{rema}
On peut définir la cohomologie étale d'un système local qui n'admet pas de réseau. Soit $\BV$ un $\Qp$-système local sur un espace rigide $X_K$ lisse sur $K$. D'après la définition de de Jong (\cf \cite{dej}), comme on peut trivialiser le système local sur un revêtement étale, on sait qu'il existe un recouvrement étale galoisien $\pi\colon Y_K\rightarrow X_K$, tel que $\BV_{Y_K}\coloneqq\pi^*\BV$ admet un réseau $\BV_{Y_K}^+$. Notons $H=\Aut(Y_K/X_K)$. Il est alors raisonnable de définir
$$
\rmR\Gamma_{\!\et}\Harg{X_K}{\BV}\coloneqq \rmR\Gamma\Harg{H}{\rmR\Gamma_{\!\et}\Harg{Y_K}{\BV_{Y_K}}}.
$$
En effet, on vérifie facilement à l'aide de la suite spectrale de Hochschild-Serre que cette définition est indépendante du recouvrement choisi.
\end{rema}
La définition de la cohomologie proétale est plus directe, puisqu'on peut définir, pour $\BV$ un $\Qp$-système local proétale sur $X_K$, le complexe $\rmR\Gamma_{\!\pet}\Harg{X_K}{\BV}$, qui est un élément de $\sD(\rmC_{\Qp})$. Le foncteur $\BV\mapsto \rmR\Gamma_{\!\pet}\Harg{X_K}{\BV}$ est le foncteur dérivé des sections globales $\BV\mapsto \BV(X_K)$. Rappelons le lemme suivant, qui est une conséquence de \cite[Corollary 3.17 (ii)]{schphdg} :
\medskip
\begin{lemm}
Supposons que $X_K$ soit un espace rigide lisse sur $K$, quasi-compact ou propre et soit $\BV$ un $\Qp$-système local sur $X_K$. On a un quasi-isomorphisme
$$
\rmR\Gamma_{\!\pet}\Harg{X_K}{\BV}\cong\rmR\Gamma_{\!\et}\Harg{X_K}{\BV}.
$$
\end{lemm}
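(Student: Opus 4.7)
Le plan est de déduire ce résultat du cas des coefficients de torsion, pour lequel l'isomorphisme est contenu dans \cite[Corollary 3.17 (ii)]{schphdg}, puis de passer au cas $p$-adique par un argument de limite homotopique et, lorsque $\BV$ n'admet pas de réseau, par un argument de descente étale.

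Première étape : je commencerais par traiter le cas où $\BV$ admet un réseau $\BV^+$. Par définition,
$$
\rmR\Gamma_{\!\et}\Harg{X_K}{\BV^+}=\holim_m\rmR\Gamma_{\!\et}\Harg{X_K}{\BV^+/p^m},
$$
et une description analogue vaut du côté proétale en écrivant $\BV^+\cong \varprojlim_m \BV^+/p^m$ comme faisceau proétale. Pour chaque $m$, le théorème de Scholze fournit un quasi-isomorphisme naturel $\rmR\Gamma_{\!\et}\Harg{X_K}{\BV^+/p^m}\xrightarrow{\sim}\rmR\Gamma_{\!\pet}\Harg{X_K}{\BV^+/p^m}$, compatible aux flèches de transition. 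En passant à la limite homotopique, puis en inversant $p$, on obtient le quasi-isomorphisme recherché.

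Seconde étape : dans le cas où $\BV$ n'admet pas de réseau (\cf la remarque \ref{rem:nolatt}), je prendrais un recouvrement étale galoisien $\pi\colon Y_K\rightarrow X_K$ qui trivialise $\BV$ (un tel revêtement existe d'après la définition de de Jong, \cf la définition \ref{def:syslocet}). Alors $\pi^*\BV$ admet un réseau sur $Y_K$, et la suite spectrale de Hochschild-Serre pour le groupe $H=\Aut(Y_K/X_K)$, appliquée aux deux cohomologies, permet de se ramener au cas précédent sur $Y_K$ ; c'est précisément de cette manière que la cohomologie étale d'un tel $\BV$ a été définie en amont.

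Le point le plus délicat est la compatibilité de la limite homotopique avec l'isomorphisme de Scholze : il faut s'assurer que les systèmes projectifs $\{\rmH^i_{\et}\Harg{X_K}{\BV^+/p^m}\}_{m}$ satisfont à la condition de Mittag-Leffler, de sorte que le $\varprojlim^1$ s'annule et que la limite homotopique calcule bien la cohomologie continue sur chacun des deux sites. C'est là qu'interviennent les hypothèses de quasi-compacité ou de propreté sur $X_K$, qui assurent la finitude (ou au moins la condition de ML) requise pour les groupes de cohomologie étale à coefficients de torsion.
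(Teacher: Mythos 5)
Ta démarche est en substance celle du texte, qui se borne à renvoyer à \cite[Corollary 3.17 (ii)]{schphdg} sans autre détail : comparaison étale/proétale au niveau des systèmes de torsion, passage à la limite homotopique, inversion de $p$, et réduction au cas où $\BV$ admet un réseau par descente le long d'un revêtement galoisien --- ce qui est précisément la façon dont $\rmR\Gamma_{\!\et}\Harg{X_K}{\BV}$ a été définie en amont pour un $\BV$ sans réseau. Les deux premières étapes, ainsi que l'étape de descente, sont correctes et au bon endroit.

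Le dernier paragraphe, en revanche, est mal orienté. Pour le quasi-isomorphisme de \emph{complexes} annoncé par le lemme, la condition de Mittag-Leffler sur les systèmes $\{\rmH^i_{\et}\Harg{X_K}{\BV^+/p^m}\}_m$ n'est pas requise : on compare $\holim_m\rmR\Gamma_{\!\et}\Harg{X_K}{\BV^+/p^m}$ --- qui \emph{est}, par définition, $\rmR\Gamma_{\!\et}\Harg{X_K}{\BV^+}$ --- avec $\rmR\Gamma_{\!\pet}\Harg{X_K}{\BV^+}$, et le terme $\varprojlim^1$ éventuel est présent des deux côtés de l'isomorphisme de Scholze ; c'est exactement ce que fournit \cite[Corollary 3.17 (ii)]{schphdg} pour les $\wh{\BZ}_p$-faisceaux. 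Mittag-Leffler interviendrait si l'on voulait identifier $\rmH^n(\holim)$ à $\varprojlim_m\rmH^n$, mais ce n'est pas l'énoncé du lemme. Là où l'hypothèse \og quasi-compact ou propre \fg\ sert réellement, c'est pour l'inversion de $p$ que tu mentionnes sans la détailler : pour obtenir $\rmR\Gamma_{\!\pet}\Harg{X_K}{\BV^+}\otimes_{\Zp}\Qp\cong\rmR\Gamma_{\!\pet}\Harg{X_K}{\BV}$, il faut commuter $\rmR\Gamma_{\!\pet}$ à la colimite filtrée $\BV^+\xrightarrow{p}\BV^+\xrightarrow{p}\cdots$, ce qui demande une dimension cohomologique bornée et donc la quasi-compacité (que la propreté implique). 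C'est cette étape qu'il faudrait expliciter plutôt que d'invoquer Mittag-Leffler.
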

\medskip
\qed
 
\Subsection{Cohomologie de de Rham}
\subsubsection{Le complexe de de Rham à coefficients}
Soit $X_K$ un espace rigide lisse sur $K$. Soit $\CE=(\CE,\nabla,\Filb)\in \Vect_{\nabla}^{\bullet}(X_K)$ un fibré plat filtré. On définit le \emph{complexe de de Rham}  :
$$
\rmR\Gamma_{\!\dR}\Harg{X_K}{\CE}\coloneqq \Omega^{\bullet}_{\CE}= \intn{\CE \xrightarrow{\nabla}\CE \otimes_{\CO_{X_K}}\Omega^1_{X_K}\xrightarrow{\nabla^{(2)}}\dots}.
$$
Ce complexe définit un élément de $\sD(\rmC_K)$ que l'on note toujours $\rmR\Gamma_{\!\dR}\Harg{X_K}{\CE}$ et son hypercohomologie définit des espaces que l'on note, pour tout entier $n\geqslant 0$, $\rmH^n_{\dR}\Harg{X_K}{\CE}$. Si $X_K$ est un espace Stein alors $\rmH^n_{\dR}\Harg{X_K}{\CE}$ est un espace de Fréchet. Le complexe de de Rham est muni d'une filtration
$$
\Fil^k \rmR\Gamma_{\!\dR}\Harg{X_K}{\CE}\coloneqq \intn{\Fil^k \CE \xrightarrow{\nabla}\Fil^{k-1}\CE \otimes_{\CO_{X_K}}\Omega^1_{X_K}\xrightarrow{\nabla^{(2)}}\dots},
$$
qui est bien définie par la transversalité de Griffiths. Cette filtration définit une filtration sur $\rmH^n_{\dR}\Harg{X_K}{\CE}$ pour tout entier $n\geqslant 0$. Par exemple, la filtration sur $\rmH^0_{\dR}\Harg{X_K}{\CE}$ est définie directement à partir de la filtration sur $\CE$. Si $(\CE,\nabla)=(\CO_X,d)$ on note simplement $\rmR\Gamma_{\!\dR}(X_K)$ le complexe de de Rham. Comme la cohomologie cohérente d'un fibré vectoriel s'annule en degré $n>0$ sur un espace Stein, on obtient le lemme suivant : 
\medskip
\begin{lemm}
Supposons que $X_K$ soit un espace rigide Stein lisse sur $K$. On a un quasi-isomorphisme strict
$$
\rmR\Gamma_{\!\dR}\Harg{X_K}{\CE}\cong \Omega^{\bullet}_\CE(X_K)\cong\intn{\CE(X_K)\xrightarrow{\nabla}\Omega^1(X_K)\wotimes_{\CO(X_K)}\CE(X_K)\xrightarrow{\nabla^{(2)}}\cdots}.
$$
De plus, si $\Fil^{1}\CE=0$, on a
$$
\Fil^{1}\rmR\Gamma_{\!\dR}\Harg{X_K}{\CE}\cong \intn{0\rightarrow\Omega^1(X_K)\wotimes_{\CO(X_K)}\Fil^0\CE(X_K)\rightarrow\cdots}.
$$
Les termes de droite sont des complexes d'espaces de Fréchet.
\end{lemm}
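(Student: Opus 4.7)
L'idée est de calculer l'hypercohomologie du complexe de de Rham des faisceaux via la suite spectrale standard
$$
E_2^{p,q}=\rmH^q\Harg{X_K}{\Omega^p_{\CE}} \Longrightarrow \BH^{p+q}\Harg{X_K}{\Omega^{\bullet}_{\CE}},
$$
et d'en démontrer la dégénérescence au niveau $E_2$ sur la ligne $q=0$. L'ingrédient essentiel est l'analogue rigide analytique du théorème B de Cartan dû à Kiehl : sur un espace Stein rigide, tout faisceau cohérent a une cohomologie nulle en degré strictement positif. Or, chaque terme $\Omega^p_{\CE}=\CE\otimes_{\CO_{X_K}}\Omega^p_{X_K}$ est localement libre de rang fini, donc cohérent, de sorte que le théorème de Kiehl s'applique et la suite spectrale se concentre sur l'arête horizontale, ce qui donne le premier quasi-isomorphisme annoncé avec le complexe des sections globales. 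L'identification de $\Omega^p_{\CE}(X_K)$ avec $\Omega^p(X_K)\wotimes_{\CO(X_K)}\CE(X_K)$ provient d'une exhaustion Stein $X_K=\bigcup_n U_n$ par affinoïdes : sur chaque $U_n$ on a $(\CE\otimes\Omega^p)(U_n)=\CE(U_n)\otimes_{\CO(U_n)}\Omega^p(U_n)$ puisque le produit tensoriel de faisceaux cohérents commute aux sections sur les affinoïdes, et le passage à la limite projective fournit le produit tensoriel complété.

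Pour l'énoncé filtré, la transversalité de Griffiths signifie précisément que $\Fil^k\rmR\Gamma_{\!\dR}\Harg{X_K}{\CE}$ est calculé par le sous-complexe dont le terme de degré $p$ est $\Fil^{k-p}\CE\otimes_{\CO_{X_K}}\Omega^p_{X_K}$. Puisque chaque $\Fil^j\CE$ admet Zariski-localement un supplémentaire dans $\CE$, c'est un sous-faisceau cohérent de $\CE$, et le même argument de suite spectrale s'applique en remplaçant $\Omega^p_{\CE}$ par $\Fil^{k-p}\CE\otimes\Omega^p_{X_K}$. Lorsque $\Fil^1\CE=0$, seuls les termes avec $p\geqslant 1$ contribuent au sous-complexe $\Fil^1$, ce qui fournit la seconde formule.

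Pour promouvoir le quasi-isomorphisme en un quasi-isomorphisme strict dans $\sD(\rmC_K)$, on munit chaque $\Omega^p_{\CE}(X_K)$ de sa structure naturelle de Fréchet, en tant que limite projective des espaces de Banach $\Omega^p_{\CE}(U_n)$ le long de l'exhaustion Stein, et on vérifie la continuité des différentielles $\nabla^{(p)}$. L'obstacle principal réside dans cette amélioration topologique : il faut vérifier que les différentielles sont strictes, ce qui revient à montrer que leurs images sont fermées dans la topologie de Fréchet. Cela se ramène à une condition de Mittag-Leffler sur les morphismes de transition du système projectif $\{U_n\}$, combinée à un renforcement du théorème B de Kiehl dans la catégorie des espaces de Fréchet. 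Une fois ces faits classiques mis en place, les structures algébrique et topologique s'alignent et l'on obtient le quasi-isomorphisme strict voulu, ainsi que le caractère Fréchet des complexes d'arrivée.
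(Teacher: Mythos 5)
Your approach is essentially the paper's: invoke Kiehl's Theorem B (vanishing of higher coherent cohomology on Stein spaces), so the hypercohomology spectral sequence of the de Rham complex degenerates on the row $q=0$, identify the global-section terms with completed tensor products via a Stein exhaustion, and handle the filtered part by noting that each $\Fil^j\CE$ is again a coherent sheaf to which the same vanishing applies. The paper states the lemma with no written proof precisely because it follows directly from this vanishing, so your detailed reconstruction matches the intended argument.

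One small imprecision in your last paragraph: you write that promoting the quasi-isomorphism to a \emph{strict} quasi-isomorphism requires showing that the differentials $\nabla^{(p)}$ of the global-sections complex have closed image. That is the condition for the \emph{complex} to be strict, which is a different (and in general stronger) property than the morphism $\rmR\Gamma_{\!\dR}\Harg{X_K}{\CE}\to\Omega^{\bullet}_\CE(X_K)$ being a strict quasi-isomorphism — the latter, by the paper's conventions, means that the cone is a strict exact complex. What is actually needed here is rather the observation (used repeatedly in \cite[Lemme 2.1]{codoniste}) that an algebraic quasi-isomorphism between complexes of Fréchet spaces with continuous differentials is automatically strict, by an open mapping argument; the Mittag--Leffler property of the Stein exhaustion then enters to compute the source side. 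The rest — closedness of images of the $\nabla^{(p)}$, used elsewhere in the paper — is not the obstruction to this particular statement.
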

\qed

Soit $\Bdrp$ \emph{l'anneau des périodes de de Rham} et $\Bdr\coloneqq\Frac(\Bdrp)$ son corps des fractions. On note $t\in\Bdrp$ l'uniformisante définie par $t\coloneqq \log [\varepsilon]$ où $\varepsilon$ est une suite compatible de racines $p^n$-ièmes de l'unité. L'anneau $\Bdrp$ est un espace de Fréchet sur $K$ et donc, d'après \cite[Lemma 2.3]{codoniste}, la flèche naturelle
$$
\rmR\Gamma_{\!\dR}\Harg{X_K}{\CE}\wotimes_K\Bdrp\rightarrow \rmR\Gamma_{\!\dR}\Harg{X_K}{\CE}\wotimes_K^R\Bdrp,
$$
où le terme de gauche est le complexe de de Rham dont on a tensorisé les termes par $\Bdrp$ au-dessus de $K$, définit un quasi-isomorphisme strict dans $\sD(\rmC_K)$. De plus, sur le membre de gauche la filtration est définie par 
$$
\Fil^k \intn{\rmR\Gamma_{\!\dR}\Harg{X_K}{\CE}\wotimes_K\Bdrp}\coloneqq\hocolim_{i+j\geqslant k}\Fil^j \rmR\Gamma_{\!\dR}\Harg{X_K}{\CE}\wotimes_Kt^i\Bdrp.
$$
\medskip
La preuve du lemme suivant est la même que \cite[Example 3.30]{codoniste} : 
\begin{lemm}\label{lem:calcfilun}
Supposons que $X_K$ soit un espace rigide Stein lisse sur $K$. On a un quasi-isomorphisme strict
{
$$
\Fil^{1}\intn{\rmR\Gamma_{\!\dR}\Harg{X_K}{\CE}\wotimes_K^{R}\Bdrp}\cong\intn{\sum_{i\in\BN}\Fil^{-i}\CE(X_K)\wotimes_{K}t^{i+1}\Bdrp\xrightarrow{\nabla}\sum_{i\in\BN}\Fil^{-i}\Omega_{\CE}^1(X_K)\wotimes_Kt^{i}\Bdrp\rightarrow\cdots}.
$$}
Le terme de gauche est naturellement un complexe d'espaces de Fréchet.
\end{lemm}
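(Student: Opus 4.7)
The plan is to adapt the argument of \cite[Example 3.30]{codoniste}, which treats the case of trivial coefficients $(\CE,\nabla) = (\CO_{X_K}, d)$, to an arbitrary filtered flat bundle. Since the final expression involves a concrete Fréchet complex, the proof has two sides: showing that it computes $\Fil^{1}$ of the completed tensor product, and showing that the identification is strict.

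First, because $\Bdrp$ is a Fréchet space over $K$, the already-remarked strict quasi-isomorphism $\rmR\Gamma_{\!\dR}\Harg{X_K}{\CE}\wotimes_K\Bdrp \to \rmR\Gamma_{\!\dR}\Harg{X_K}{\CE}\wotimes_K^R\Bdrp$ reduces the problem to the underived tensor product. Combining this with the preceding lemma (the Stein property makes the de Rham complex strictly quasi-isomorphic to the complex of global sections $\Omega^{\bullet}_{\CE}(X_K)$, and likewise for its filtered pieces), I obtain a concrete description of each $\Fil^{j}\rmR\Gamma_{\!\dR}\Harg{X_K}{\CE}\wotimes_K t^{i}\Bdrp$ as an honest subcomplex of Fréchet spaces inside $\Omega^{\bullet}_{\CE}(X_K)\wotimes_K\Bdr$.

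Next, I would unwind the homotopy colimit
$$
\Fil^{1}\intn{\rmR\Gamma_{\!\dR}\Harg{X_K}{\CE}\wotimes_K\Bdrp} = \hocolim_{i+j\geq 1}\Fil^{j}\rmR\Gamma_{\!\dR}\Harg{X_K}{\CE}\wotimes_K t^{i}\Bdrp
$$
as follows. The filtration $\Fil^{\bullet}\CE$ is finite, so the index set $\{(i,j) : i+j\geq 1,\ i\geq 0\}$ contributes non-trivially only for $j$ in a finite range, and all transition maps are inclusions of closed Fréchet subspaces. For each fixed $i\geq 0$, the partial colimit over admissible $j$ collapses to the single largest subspace $\Fil^{1-i}\rmR\Gamma_{\!\dR}\Harg{X_K}{\CE}\wotimes_K t^{i}\Bdrp$. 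Summing these inside $\Omega^{\bullet}_{\CE}(X_K)\wotimes_K\Bdr$ yields exactly the displayed complex after reindexing: in degree $n$ one gets the sum of $\Fil^{1-i-n}\CE(X_K)\otimes\Omega^{n}(X_K)\wotimes_K t^{i}\Bdrp$ for $i\geq 0$, which, upon setting $-k = 1-i-n$ (and using that $\Fil^{k}\CE$ vanishes for $k$ above the top of the filtration on $\CE$), recovers the term $\Fil^{-k}\Omega^{n}_{\CE}(X_K)\wotimes_K t^{k+1-n}\Bdrp$ appearing in the statement.

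The main obstacle, as always in this setting, is strictness: the sum of an increasing chain of closed Fréchet subspaces of a given Fréchet space need not itself be closed. What saves us is the combination of (i) the finiteness of $\Fil^{\bullet}\CE$, which makes the filtered colimit effectively finite in the filtration direction, and (ii) the Stein hypothesis, which supplies the vanishing of higher coherent cohomology needed to commute global sections with the remaining filtered colimit in powers of $t$. Together these guarantee that each successive inclusion is strict and that the global expression in the statement defines a subcomplex of closed Fréchet subspaces, yielding the desired strict quasi-isomorphism.
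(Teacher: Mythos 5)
Your proposal follows the same route the paper takes (it simply adapts \cite[Example 3.30]{codoniste} to non-trivial coefficients): pass to the underived tensor product using \cite[Lemma 2.3]{codoniste}, use Steinness to replace the de Rham complex by its global-sections complex, collapse the $j$-direction of the homotopy colimit to the single term $\Fil^{1-i}$, reindex, and argue strictness. One imprecision worth flagging: your point (ii) is not really what makes the sum closed --- once the $j$-direction has collapsed there is no remaining colimit in powers of $t$ to commute with global sections; rather, the finiteness of $\Fil^{\bullet}\CE$ reduces the sum over $i\in\BN$ to a finite one, and the Stein hypothesis gives a \emph{global} splitting of $\Fil^{\bullet}\CE(X_K)$ into graded pieces, turning that finite sum into a finite direct sum $\bigoplus_{q}\Gr_q\CE(X_K)\wotimes_K t^{q+1}\Bdrp$, which is manifestly a closed Fr\'echet subspace.
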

\medskip
\qed

\subsubsection{Le petit complexe de de Rham des opers}\label{subsub:lepetitcom}
Supposons que $X_K$ est une courbe sur $K$ et soit $\CE=(\CE,\nabla,\Filb)$ un oper de poids $(a,b)$. On définit le \emph{petit complexe de de Rham} par 
$$
\rmR\Gamma_{\!\Op}\Harg{X_K}{\CE}\coloneqq \big ( \Gr_b\CE\xrightarrow{L_{\nabla}} \Gr_a\Omega_{\CE} \big ),
$$ 
(\cf (\ref{eq:opdiffop})) muni de la filtration induite par la filtration sur les gradués, explicité en (\ref{eq:filtop}). Du lemme \ref{lem:opdec}, on déduit la proposition suivante :
\medskip
\begin{prop}\label{prop:drtoop}
Supposons que $X_K$ soit une courbe rigide lisse sur $K$. Soit $\CE=(\CE,\nabla,\Filb)$ un oper de poids $(a,b)$. On a un quasi-isomorphisme strict filtré
$$
\rmR\Gamma_{\!\Op}\Harg{X_K}{\CE}\cong\rmR\Gamma_{\!\dR}\Harg{X_K}{\CE}.
$$
\end{prop}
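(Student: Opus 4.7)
The plan is to construct an explicit filtered morphism of complexes $f \colon \rmR\Gamma_{\!\Op}\Harg{X_K}{\CE} \to \rmR\Gamma_{\!\dR}\Harg{X_K}{\CE}$ using the $K$-linear splittings from Lemma \ref{lem:opdec}, and then verify that it is a strict filtered quasi-isomorphism by computing its cofiber and its graded pieces.

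First I would define $f$ at the level of terms by setting $f^0 = s \colon \Gr_b\CE \to \CE$, the $K$-linear section from Lemma \ref{lem:opdec}, and $f^1 = i \colon \Gr_a\Omega_\CE = \Fil^{-a}\CE \otimes \Omega^1 \hookrightarrow \CE \otimes \Omega^1 = \Omega_\CE$, the tautological inclusion. The identity $\nabla \circ s = i \circ L_\nabla$ that makes $f$ a morphism of complexes is precisely the second commutative diagram of Lemma \ref{lem:opdec}. Filtered compatibility is immediate from \eqref{eq:filtop}: $s(\Fil^k \Gr_b\CE)$ is zero for $k > -b$ and contained in $\CE = \Fil^{-b}\CE$ for $k \leqslant -b$; similarly $i(\Fil^{k-1} \Gr_a\Omega_\CE)$ is zero for $k > -a+1$ and equal to $\Gr_a\Omega_\CE \subseteq \Fil^{k-1}\CE \otimes \Omega^1$ for $k \leqslant -a+1$.

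Next I would show $f$ is a quasi-isomorphism by computing its cofiber. The $K$-linear decompositions $\CE \cong \Fil^{-b+1} \oplus s(\Gr_b\CE)$ and $\Omega_\CE \cong i(\Gr_a\Omega_\CE) \oplus \sigma(\Omega_\CE/\Fil^{-a})$ from Lemma \ref{lem:opdec}, combined with the first commutative diagram of that lemma ($\rho \circ \nabla = L_\nabla \circ \pi$), show that $\nabla$ is block-antidiagonal in this basis: the $(\Gr_b\CE, \Gr_a\Omega_\CE)$-block is $L_\nabla$, and the $(\Fil^{-b+1}, \Omega_\CE/\Fil^{-a})$-block is the $K$-linear map $u$ appearing in the proof of Lemma \ref{lem:opdec}. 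The oper condition that $\theta_i$ is an isomorphism for $a \leqslant i \leqslant b-1$ implies by filtered induction that $u$ is itself an isomorphism, so the cofiber of $f$ is the acyclic two-term complex $\bigl(\Fil^{-b+1} \xrightarrow{u} \Omega_\CE/\Fil^{-a}\bigr)$.

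To upgrade this to a strict filtered quasi-iso, I would check each graded piece. In degree $k = -b$, both $\Gr^{-b}\rmR\Gamma_{\!\Op}$ and $\Gr^{-b}\rmR\Gamma_{\!\dR}$ reduce to $(\Gr_b\CE \to 0)$ (using $\Gr_{b+1}\CE = 0$), and $\Gr^{-b}\!f$ is the identity. In degree $k = -a+1$, both reduce to $(0 \to \Gr_a\Omega_\CE)$ (using $\Fil^{-a+1}\CE = 0$), and $\Gr^{-a+1}\!f$ is again the identity. For $-b < k < -a+1$, $\Gr^k\rmR\Gamma_{\!\Op} = 0$ while $\Gr^k\rmR\Gamma_{\!\dR} \cong \bigl(\Gr_{-k}\CE \xrightarrow{\theta_{-k}} \Gr_{-k+1}\CE \otimes \Omega^1\bigr)$ is acyclic, once more by the oper condition. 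Strictness follows since on each graded piece the map is either an identity or the zero map between an acyclic complex and zero, and the filtrations involved are finite. The main subtlety to check carefully is globality: the splittings $s, \sigma, \rho$ of Lemma \ref{lem:opdec} are built intrinsically from $\nabla$ and the natural projection/inclusion maps, so they globalize without issue, and the bijectivity of $u$ is exactly the non-trivial input from the oper structure.
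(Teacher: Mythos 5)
Your proof is correct and is precisely the intended deduction from Lemma \ref{lem:opdec}; the paper itself gives no further argument beyond ``Du lemme \ref{lem:opdec}, on déduit la proposition suivante'' followed by the statement and a q.e.d., so your write-up supplies exactly the details left implicit. One small remark: your ``block-antidiagonal'' phrasing depends on how you order the two summands in each of $\CE$ and $\Omega_\CE$; the mathematical content (namely that $\nabla$ decomposes as the direct sum of $L_\nabla\colon s(\Gr_b\CE)\to i(\Gr_a\Omega_\CE)$ and of $u$ on the complementary summands, so that $\rmR\Gamma_{\!\dR}$ splits as $\rmR\Gamma_{\!\Op}\oplus(\Fil^{-b+1}\xrightarrow{u}\Omega_\CE/\Fil^{-a})$ with acyclic second summand) is exactly right, and it is obtained from both commutative diagrams of Lemma \ref{lem:opdec} just as you say. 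Your cross-check on graded pieces is also correct: $\Gr^k f$ is the identity at $k=-b$ and $k=-a+1$, is the zero map onto the strictly acyclic two-term complex $(\Gr_{-k}\CE\xrightarrow{\theta_{-k}}\Gr_{-k+1}\CE\otimes\Omega^1)$ for $-b+1\leqslant k\leqslant -a$, and is zero elsewhere; since the filtration is finite this yields strictness.
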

\qed
\medskip
Ce résultat et la description de la filtration (\ref{eq:filtop}) permet de calculer la filtration précédente : 
\medskip
\begin{lemm}\label{lem:compfildr}
Supposons que $X_K$ soit une courbe rigide Stein lisse sur $K$. On a un quasi-isomorphisme strict
{
$$
\Fil^{1}\intn{\rmR\Gamma_{\!\Op}\Harg{X_K}{\CE}\wotimes_K\Bdrp}\cong\left (\Gr_b\CE(X_K)\wotimes_Kt^{b+1}\Bdrp\xrightarrow{L_{\nabla}}\Gr_a\Omega_{\CE}(X_K)\wotimes_Kt^{a}\Bdrp\right ).
$$}
\end{lemm}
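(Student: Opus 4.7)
The plan is to proceed by direct computation, entirely parallel to the proof of Lemma \ref{lem:calcfilun} (which follows \cite[Example 3.30]{codoniste}). First, I would invoke Proposition \ref{prop:drtoop}, which provides a filtered strict quasi-isomorphism $\rmR\Gamma_{\!\Op}\Harg{X_K}{\CE}\cong \rmR\Gamma_{\!\dR}\Harg{X_K}{\CE}$; completing with $\Bdrp$ preserves this in $\sD(\rmC_K)$, so the problem reduces to an explicit computation of $\Fil^1$ of the convolution filtration applied to the two-term complex $\big(\Gr_b\CE\xrightarrow{L_\nabla}\Gr_a\Omega_{\CE}\big)$.

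Next, by the definition of the convolution filtration recalled before Lemma \ref{lem:calcfilun}, at cohomological degree $n$ one has
\[
\Fil^1\bigl(\rmR\Gamma_{\!\Op}\Harg{X_K}{\CE}\wotimes_K\Bdrp\bigr)^n \;=\; \sum_{i+j\geqslant 1}\Fil^{j-n}F^n(X_K)\wotimes_K t^i\Bdrp,
\]
where $F^n$ is the $n$-th term of the petit complex and the shift by $n$ incorporates the Griffiths convention. Using the one-step filtrations of (\ref{eq:filtop}): at degree $0$, $\Fil^j\Gr_b\CE$ equals the full $\Gr_b\CE$ exactly when $j\leqslant -b$, so the maximal admissible index is $j=-b$, forcing $i\geqslant b+1$, and one gets $\Gr_b\CE(X_K)\wotimes_K t^{b+1}\Bdrp$. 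At degree $1$, the Griffiths shift makes the relevant filtration piece $\Fil^{j-1}\Gr_a\Omega_{\CE}$, which equals $\Gr_a\Omega_{\CE}$ precisely when $j\leqslant -a+1$; the maximal $j$ is therefore $-a+1$ and $i\geqslant a$, producing $\Gr_a\Omega_{\CE}(X_K)\wotimes_K t^{a}\Bdrp$. The differential is inherited from $L_{\nabla}\otimes\id$.

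Finally, to promote this to a strict quasi-isomorphism in $\sD(\rmC_K)$, I would use that $X_K$ is Stein, so $\Gr_b\CE(X_K)$ and $\Gr_a\Omega_{\CE}(X_K)$ are Fréchet spaces; combined with the fact that $\Bdrp$ is Fréchet over $K$, \cite[Lemma 2.3]{codoniste} ensures that the non-derived completed tensor product computes the derived one, and the chain-level model we wrote down is already strict. The main point to be careful with is the Griffiths shift in degree $1$: the filtration on $\Gr_a\Omega_{\CE}$ recorded in (\ref{eq:filtop}) is the intrinsic Hodge filtration of that line bundle, whereas its placement in cohomological degree $1$ of $\rmR\Gamma_{\!\Op}$ shifts it by one, which is precisely what produces the exponent $t^a$ rather than $t^{a+1}$.
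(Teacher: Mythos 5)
Your proof is correct and takes essentially the same route the paper intends (the paper gives no explicit argument for this lemma, leaving it as an immediate analogue of Lemma \ref{lem:calcfilun}): you compute the convolution filtration directly on the two-term oper complex using the one-step filtrations of (\ref{eq:filtop}), and you correctly identify the Griffiths shift in cohomological degree $1$ as the source of the exponent $t^a$ rather than $t^{a+1}$. The only cosmetic point is that invoking Proposition \ref{prop:drtoop} at the outset is not really needed, since the filtration on $\rmR\Gamma_{\!\Op}$ is already specified by (\ref{eq:filtop}) and one can compute on the oper complex directly; the Steinness/Fréchet strictness argument via \cite[Lemma 2.3]{codoniste} is the right finishing step.
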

\medskip
\qed

\subsubsection{Cohomologie de de Rham isotriviale}
Le complexe de de Rham des fibrés plats fortement isotriviaux est particulièrement simple :
\medskip
\begin{lemm}\label{lem:isotdr}
Soit $X_K$ un espace rigide lisse sur $K$ et $(\CE,\nabla,\Filb)=(D_K\otimes_K\CO_{X_K},\id\otimes d,\Filb)$ un fibré plat filtré fortement isotrivial. On a un quasi-isomorphisme entre complexes 
$$
\rmR\Gamma_{\!\dR}\Harg{X_K}{\CE}\cong \rmR\Gamma_{\!\dR}(X_K)\otimes_KD_K.
$$
En particulier, pour tout entier $n\geqslant 0$, on a $\rmH^n_{\dR}\Harg{X_K}{\CE}\cong \rmH^n_{\dR}(X_K)\otimes_KD_K$.
\end{lemm}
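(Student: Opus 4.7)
The statement is essentially tautological once the definition of strongly isotrivial is unwound, so the plan is short and the main work is just bookkeeping.

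First I would use the definition: since $(\CE,\nabla)$ is strongly isotrivial with associated $K$-vector space $D_K = D_K^{\nabla}$, we have an identification $\CE = D_K \otimes_K \CO_{X_K}$ with connection $\nabla = \id_{D_K} \otimes d$. Because $D_K$ is a finite-dimensional $K$-vector space, for every $i \geqslant 0$ there is a canonical isomorphism of $\CO_{X_K}$-modules
$$
\CE \otimes_{\CO_{X_K}} \Omega^i_{X_K} \;=\; (D_K \otimes_K \CO_{X_K}) \otimes_{\CO_{X_K}} \Omega^i_{X_K} \;\cong\; D_K \otimes_K \Omega^i_{X_K}.
$$
Under these identifications, the extended differentials $\nabla^{(i+1)}$ built from $\nabla = \id_{D_K}\otimes d$ correspond to $\id_{D_K}\otimes d^i_{X_K}$, where $d^\bullet_{X_K}$ is the de~Rham differential of $X_K$. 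Hence as complexes of sheaves of $K$-vector spaces on $X_K$,
$$
\Omega^\bullet_\CE \;\cong\; D_K \otimes_K \Omega^\bullet_{X_K}.
$$

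Next I would take hypercohomology on both sides. Since $\dim_K D_K < \infty$, the functor $(-) \otimes_K D_K$ is exact and commutes with arbitrary limits and colimits used in computing $\rmR\Gamma$; equivalently it is just a finite direct sum of copies of the identity functor after choosing a $K$-basis of $D_K$. Therefore the natural map
$$
\rmR\Gamma_{\!\dR}(X_K) \otimes_K D_K \;\longrightarrow\; \rmR\Gamma(X_K, D_K \otimes_K \Omega^\bullet_{X_K}) \;=\; \rmR\Gamma_{\!\dR}(X_K, \CE)
$$
is a quasi-isomorphism, giving the first assertion. Passing to cohomology and again invoking flatness (exactness) of $(-)\otimes_K D_K$ over $K$ yields $\rmH^n_{\dR}(X_K, \CE) \cong \rmH^n_{\dR}(X_K) \otimes_K D_K$ for every $n \geqslant 0$.

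There is no real obstacle; the only subtlety worth noting is that the isomorphism is \emph{not} filtered — the filtration $\Fil^\bullet$ on $\CE$ need not split as a tensor product with a filtration on $D_K$, which is exactly the remark made in the excerpt just before Proposition~\ref{prop:drtoop} and the reason one must treat Hyodo--Kato and de~Rham structures separately in the isotrivial setting.
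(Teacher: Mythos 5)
Your proof is correct, and since the paper omits the proof (leaving it as essentially immediate from the definitions), yours fills in exactly the expected argument: identify $\Omega^\bullet_\CE$ with $D_K\otimes_K\Omega^\bullet_{X_K}$ term-by-term using $\nabla=\id\otimes d$, then use finite-dimensionality of $D_K$ to pull the tensor through $\rmR\Gamma$. Your closing remark that the isomorphism is not filtered is also the right caveat and matches the paper's own warning just before Proposition~\ref{prop:drtoop}.
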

\medskip

\Subsection{Cohomologie de Hyodo-Kato}
Soit $X$ un schéma formel et semi-stable sur $\rmO_K$. On définit la cohomologie de Hyodo-Kato comme cohomologie rationnelle log-cristalline de la fibre spéciale $X_k$ sur $\rmO^{0}_{K_0}$ où $\rmO^{0}_{K_0}$ désigne le log-schéma formel $\Spf(\rmO_{K_0})$ muni de la structure logarithmique induite par $\BN\rightarrow \rmO_{K_0}$, $1\mapsto 0$:
\begin{equation}\label{eqn:hkdef}
\rmR\Gamma_{\!\HK}(X_K)\coloneqq \rmR\Gamma_{\!\cris}(X_k/\rmO^{0}_{K_0})\otimes_{\rmO_{K_0}}K_0.
\end{equation}
D'après \cite[Proposition 4.11]{colniz} cette cohomologie ne dépend que de la fibre générique $X_K$ (comme on le suggère la notation) et peut être définit indépendamment de l'existence d'un modèle semi-stable (\cf \cite{colniz}). Le complexe (\ref{eqn:hkdef}) est muni d'un \emph{endomorphisme de Frobenius} $\varphi$ et d'un endomorphisme $N$, \emph{l'opérateur de monodromie}. Ces endomorphismes vérifient la relation $N\varphi=p\varphi N$. 

\subsubsection{Cohomologie de Hyodo-Kato isotriviale} 
Si $D$ un un isocristal sur $k$. On définit la \emph{cohomologie de Hyodo-Kato isotriviale} 
\begin{equation}\label{eq:defhkisot}
\rmR\Gamma_{\!\HK}\Harg{X_K}{D}\coloneqq \rmR\Gamma_{\!\HK}(X_K)\otimes_{K_0}D
\end{equation}
Comme pour (\ref{eqn:hkdef}) cette cohomologie est indépendante du modèle de $X_K$. Pour $n\geqslant 0$ on note 
$$
\wt{\rmH}^n_{\HK}\Harg{X_K}{D}\coloneqq \wt{\rmH}^n\rmR\Gamma_{\!\HK}\Harg{X_K}{D},\ \rmH^n_{\HK}\Harg{X_K}{D}\coloneqq \rmH^n\rmR\Gamma_{\!\HK}\Harg{X_K}{D}.
$$
\subsubsection{Isomorphisme de Hyodo-Kato isotrivial}\label{subsubsec:hkisot}
Pour définir l'isomorphisme de Hyodo-Kato, on doit faire une digression par la géométrie surconvergente. Il s'avère que  la cohomologie de Hyodo-Kato des espaces Stein est naturellement surconvergente. Soit $\wt{X}$ un schéma faiblement formel semi-stable sur $\rmO_K$ et $\wt{X}_K$ l'espace surconvergent associé que l'on suppose lisse sur $K$ (\cf \cite{gkrig}). On définit la \emph{cohomologie de Hyodo-Kato surconvergente} comme la cohomologie surconvergente rigide de la fibre spécial $X_k=\wt{X}\otimes k$ sur $\rmO_{K_0}^0$ (\cf \cite[3.1.2]{codoniste}):
$$
\rmR\Gamma_{\HK}(\wt{X}_K)\coloneqq\rmR\Gamma_{\! \rig}(X_k/\rmO_{K_0}^0). 
$$
Cette cohomologie est indépendante du modèle (\cf \cite[Proposition 5.3]{colniz}), comme le suggère la notation. On peut associer à $\wt{X}_K$ son \emph{complété}, qui un espace rigide et que l'on note $X_K$. 
\medskip
\begin{lemm}\label{lemm:hksurconv}
Soit $\wt{X}_K$ un espace surconvergent Stein lisse sur $K$. On a un quasi-isomorphisme stricte
$$
\rmR\Gamma_{\! \HK}(\wt{X}_K)\rightarrow \rmR\Gamma_{\! \HK}(X_K).
$$
\end{lemm}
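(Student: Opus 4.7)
Le plan est de passer par une exhaustion Stein puis d'appliquer le théorème de comparaison entre cohomologie rigide surconvergente et cohomologie log-cristalline au niveau des affinoïdes. Explicitement, comme $\wt{X}_K$ est Stein surconvergent, on écrit $\wt{X}_K=\bigcup_n \wt{U}_n$ comme réunion croissante d'affinoïdes surconvergents avec $\wt{U}_n\Subset \wt{U}_{n+1}$, admettant des modèles faiblement formels semi-stables $\wt{\sU}_n$ sur $\rmO_K$. Les complétés fournissent une exhaustion Stein $X_K=\bigcup_n U_n$ par des affinoïdes, de fibres spéciales identiques à celles de $\wt{\sU}_n$. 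La fonctorialité donne alors un morphisme naturel entre les deux cohomologies, compatible à la restriction.

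D'abord, je traiterais le cas affinoïde. Sur chaque $\wt{U}_n$, le complexe de de Rham du modèle faiblement complet calcule $\rmR\Gamma_{\! \rig}(U_{n,k}/\rmO^0_{K_0})$ tandis que celui du complété calcule $\rmR\Gamma_{\!\cris}(U_{n,k}/\rmO^0_{K_0})\otimes_{\rmO_{K_0}}K_0$. L'inclusion entre ces deux complexes est un quasi-isomorphisme strict par le théorème de comparaison de Grosse-Klönne dans le cadre semi-stable logarithmique, tel qu'il est utilisé dans \cite[Proposition 5.3]{colniz} pour établir l'indépendance du modèle. C'est le contenu mathématique principal, et il faudrait justifier soigneusement que ce résultat s'étend au cadre log-semi-stable dont on a besoin ici.

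Ensuite, je passerais à la limite. Les deux membres s'obtiennent comme $\rmR\varprojlim_n$ des cohomologies affinoïdes correspondantes. Comme $\wt{U}_n\Subset \wt{U}_{n+1}$ et que les espaces en jeu sont des Fréchet nucléaires, les flèches de restriction sont compactes, donc d'image dense ; les systèmes vérifient la condition de Mittag-Leffler topologique. Ceci garantit que $\rmR\varprojlim_n$ se réduit à $\varprojlim_n$ dans $\sD(\rmC_{K_0})$ et que l'isomorphisme niveau par niveau fournit bien un quasi-isomorphisme à la limite.

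L'obstacle principal sera la strictitude au niveau topologique : il faut contrôler que la flèche induite entre espaces de Fréchet respecte les topologies et que la stricte quasi-isomorphie se préserve sous $\rmR\varprojlim$. La nucléarité des complexes et la densité des flèches de restriction devraient suffire à garantir cela via les résultats classiques sur les limites de Fréchet rappelés dans \cite[2.1]{codoniste}, mais vérifier que la différentielle induit un morphisme strict sur les sections globales exige probablement une comparaison explicite des topologies définies par les deux structures.
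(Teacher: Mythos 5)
Votre argument comporte une lacune essentielle à l'étape affinoïde. Vous affirmez que pour chaque affinoïde $\wt{U}_n$, l'inclusion du complexe de de Rham faiblement complet (calculant $\rmR\Gamma_{\!\rig}(U_{n,k}/\rmO^0_{K_0})$) dans le complexe complété (calculant $\rmR\Gamma_{\!\cri}(U_{n,k}/\rmO^0_{K_0})\otimes_{\rmO_{K_0}}K_0$) serait un quasi-isomorphisme strict. C'est faux au niveau d'un affinoïde : la cohomologie rigide surconvergente y est en général strictement plus petite que la cohomologie convergente. Les théorèmes de comparaison de type rigide-cristallin valent pour les variétés propres, non pour les affinoïdes ; des formes comme $t^{p^k-1}\,dt$ s'intègrent dans l'anneau surconvergent mais pas dans son complété $p$-adique, si bien que les $\rmH^1$ diffèrent. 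De plus, \cite[Proposition 5.3]{colniz} concerne l'indépendance du modèle, non la comparaison que vous invoquez.

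L'argument de l'article, tiré de la preuve de \cite[Theorem 4.1]{codoniste}, se factorise par la cohomologie convergente,
$$
\rmR\Gamma_{\!\rig}(X_k/\rmO^0_{K_0})\to\rmR\Gamma_{\!\conv}(X_k/\rmO^0_{K_0})\cong\rmR\Gamma_{\!\cri}(X_k/\rmO^0_{K_0})\otimes_{\rmO_{K_0}}K_0,
$$
où le second isomorphisme est valable sans hypothèse (\cf \cite[Remark 4.4]{codoniste}), tandis que la première flèche n'est un quasi-isomorphisme strict que sur l'espace Stein tout entier. Le mécanisme est un argument d'entrelacement : puisque $U_n\Subset U_{n+1}$, les sections convergentes sur $U_{n+1}$ se restreignent en sections surconvergentes sur $U_n$, ce qui fournit des flèches $\rmR\Gamma_{\!\conv}(U_{n+1})\to\rmR\Gamma_{\!\rig}(U_n)$ entrelaçant les deux systèmes projectifs et les rendant cofinaux l'un dans l'autre, donc de mêmes limites homotopiques. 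C'est cette cofinalité, et non un isomorphisme niveau par niveau, qui donne le quasi-isomorphisme. Votre discussion de Mittag-Leffler et de nucléarité reste pertinente pour les questions topologiques de strictitude, mais ne peut suppléer à cette étape manquante.
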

\medskip
\begin{proof}
D'après la preuve de \cite[Theorem 4.1]{codoniste}, ce quasi-isomorphisme strict pour les espace Stein découle de la comparaison avec la cohomologie convergente (\cf \cite[Remark 4.4]{codoniste}) ; 
$$
\rmR\Gamma_{\! \rig}(X_k/\rmO_{K_0}^0)\rightarrow\rmR\Gamma_{\! \conv}(X_k/\rmO_{K_0}^0)\cong \rmR\Gamma_{\! \cri}(X_k/\rmO_{K_0}^0)\otimes_{\rmO_{K_0}}K_0.
$$
On montre ensuite que la première flèche est un quasi-isomorphisme stricte en utilisant un recouvrement Stein.
\end{proof}
Comme les espaces Stein sont partiellement propre ils admettent un modèle faiblement formel (\cf \cite[Theorem 2.27]{gkrig}) ; et donc les espaces rigides lisses Stein admettent des modèles surconvergents. Si on se donne $X_K$ un espace rigide Stein lisse sur $K$ alors il existe $\wt{X}_K$ un espace surconvergent Stein lisse sur $K$ d'espace rigide associé $X_K$ et on va supposer qu'ils ont des modèles semi-stables sur $\rmO_K$ (mais ce n'est pas nécessaire).

On construit le morphisme de Hyodo-Kato à partir du morphisme de Hyodo-Kato surconvergent. Soient $D$ un isocristal sur $k$ et $(\CE,\nabla)\coloneqq(D_K\otimes_K\CO_{X_K},d\otimes \id)$ un fibré plat fortement isotrivial où $D_K\coloneqq D\otimes_{K_0}K$. On a un \emph{morphisme de Hyodo-Kato isotrivial} 
$$
\iota_{\HK}\colon\rmR\Gamma_{\!\HK}\Harg{X_K}{D}\rightarrow\rmR\Gamma_{\!\dR}\Harg{X_K}{\CE}.
$$
En effet, on commence par considérer le morphisme de Hyodo-Kato surconvergent (\cf \cite[3.1.3]{codoniste}) :
$$
\wt{\iota}_{0}\colon \rmR\Gamma_{\!\HK}(\wt{X}_K)\rightarrow\rmR\Gamma_{\!\dR}(\wt{X}_K).
$$
où le terme de droite est la cohomologie de de Rham surconvergente de $\wt{X}_K$. Par \cite[Theorem 2.26]{gkrig} (\cf le premier point de la preuve de \cite[Theorem 4.1]{codoniste}), comme $X_K$ est Stein, on a un quasi-isomorphisme strict $\rmR\Gamma_{\!\dR}(\wt{X}_K)\cong \rmR\Gamma_{\!\dR}(X_K)$. On définit le morphisme de Hyodo-Kato comme la composée
$$
\iota_{\HK}\colon \rmR\Gamma_{\!\HK}(X_K)\cong\rmR\Gamma_{\!\HK}(\wt{X}_K)\xrightarrow{\wt{\iota}_{0}}\rmR\Gamma_{\!\dR}(\wt{X}_K)\cong\rmR\Gamma_{\!\dR}(X_K),
$$
où le premier isomorphisme provient du lemme \ref{lemm:hksurconv}. On applique maintenant le produit tensoriel par $\cdot\ \otimes_{K_0}D$ à ce morphisme, qui, comme $D\otimes_{K_0}K=D_K$, définit une application 
$$
\iota_{\HK} \colon\rmR\Gamma_{\!\HK}(X_K)\otimes_{K_0}{D}\rightarrow\rmR\Gamma_{\!\dR}(X_K)\otimes_KD_K.
$$ 
Comme c'est le cas pour $\iota_{0}$, $\iota_{\HK}$ devient un quasi-isomorphisme strict lorsqu'on applique $\otimes_{K_0}K$ au terme de gauche. D'après le lemme \ref{lem:isotdr} et (\ref{eq:defhkisot}), cette flèche définit
$$
\iota_{\HK}\colon \rmR\Gamma_{\!\HK}\Harg{X_K}{D}\rightarrow\rmR\Gamma_{\!\dR}\Harg{X_K}{\CE},
$$
qui induit quasi-isomorphisme strict :
$$
\iota_{\HK}\colon \rmR\Gamma_{\!\HK}\Harg{X_K}{D}\otimes_{K_0}K\xrightarrow{\sim}\rmR\Gamma_{\!\dR}\Harg{X_K}{\CE}.
$$

\section{Cohomologies syntomiques isotriviales}
Soit $X_K$ un espace rigide Stein lisse sur $K$. On définit la cohomologie syntomique (géométrique) d'un $\Qp$-système local fortement isotrivial $\BV$ d'isocristal associé $D$ et de fibré associé $(\CE,\nabla,\Filb)$. Supposons que les pentes de $D$ sont toutes négatives. On a une application $\iota \colon \whBstp\hookrightarrow \Bdrp$ (\cf \cite[3.2.1]{codoniste} pour la définition de $\whBstp$ et de $\iota$) et on définit dans $\sD(\rmC_{\Qp})$ le complexe

$$
\rmR\Gamma_{\!\syn}\Harg{X_C}{\BV}\coloneqq \left [ \left [ \rmR\Gamma_{\!\HK}\Harg{X_K}{D}\wotimes_{K_0}^R\whBstp \right ]^{N=0,\varphi=1}\xrightarrow{\iota_{\HK}\otimes \iota} \intn{\rmR\Gamma_{\!\dR}\Harg{X_K}{\CE}\wotimes_K^R\Bdrp}/\Fil^{0}\right].
$$
Rappelons que $[-\xrightarrow{\iota_{\HK}\otimes \iota} -]\coloneqq \holim (-\xrightarrow{\iota_{\HK}\otimes \iota} -\leftarrow 0)$ désigne la fibre dans $\sD(\rmC_{\Qp})$ de l'application $\iota_{\HK}\otimes \iota$ et $\left [ \rmR\Gamma_{\!\HK}\Harg{X_K}{D}\wotimes_{K_0}^R\whBstp \right ]^{N=0,\varphi=p^i}$, pour $i\in \BN$ désigne l'espace propre dérivé de $N$ et $\varphi$ \ie la limite homotopique du diagramme
\begin{center}
\begin{tikzcd}
\rmR\Gamma_{\!\HK}\Harg{X_K}{D}\wotimes_{K_0}^R\whBstp \ar[r, "\varphi - p^i"]\ar[d, "N"] & \rmR\Gamma_{\!\HK}\Harg{X_K}{D}\wotimes_{K_0}^R\whBstp \ar[d, "N"]\\
\rmR\Gamma_{\!\HK}\Harg{X_K}{D}\wotimes_{K_0}^R\whBstp \ar[r, "p\varphi - p^i"] & \rmR\Gamma_{\!\HK}\Harg{X_K}{D}\wotimes_{K_0}^R\whBstp
\end{tikzcd}
\end{center}
Pour tout entier $i\geqslant 0$, posons
$$
\begin{gathered}
\HK(X_C,{D},i)\coloneqq \left [ \rmR\Gamma_{\!\HK}\Harg{X_K}{D}\wotimes_{K_0}^R\whBstp \right ]^{N=0,\varphi=p^{i}}, \\
{\DR}(X_C,\CE,i)\coloneqq\intn{\rmR\Gamma_{\!\dR}\Harg{X_K}{\CE}\wotimes_K^R\Bdrp}/\Fil^{i},
\end{gathered}
$$
de sorte que 
$$
\rmR\Gamma_{\!\syn}\Harg{X_C}{\BV(i)}=\left [ \HK(X_C,{D},i)\rightarrow {\DR}(X_C,\CE,i) \right ].
$$
Lorsque $i=1$, on omet simplement le $i$ pour alléger les notations et on note simplement $\HK(X_C,{D})\coloneqq\HK(X_C,{D},1)$ et ${\DR}(X_C,\CE)\coloneqq {\DR}(X_C,\CE,1)$.
\medskip
\begin{exem}\label{ex:h0syn}
Supposons que $X_K$ est un espace rigide Stein lisse sur $K$ et géométriquement irréductible. Alors
$$
\intn{\rmH_{\HK}^0\Harg{X_K}{D}\wotimes_{K_0}\Bstp}^{\varphi=1,N=0}=\intn{{D}\otimes_{K_0}\Bcrisp}^{\varphi=1}\cong X^{+}_{\st}({D}).
$$
De plus, $D_K=\rmH^0_{\dR}\Harg{X_K}{\CE}$, qui est muni de la filtration induite. Donc $\wt{\rmH}^0_{\syn}\Harg{X_C}{\BV}$ est le noyau du morphisme naturel
$$
X^{+}_{\st}({D})\rightarrow \intn{D_K\otimes_K\Bdrp}/\Fil^{0}.
$$
La filtration sur $D_K$ est admissible donc $\wt{\rmH}^0_{\syn}\Harg{X_C}{\BV}$ est de dimension finie sur $\Qp$ et $\Fil^0(D_K\otimes_K\Bdrp)\cap X_{\st}({D})$ est fermé dans $X^{+}_{\st}({D})$ ; ainsi $\wt{\rmH}^0_{\syn}\Harg{X_C}{\BV}$ est classique.
\end{exem}
\medskip
\Subsection{La partie Hyodo-Kato}
 En suivant la preuve de \cite[Lemma 3.28]{codoniste} et le lemme \ref{lemm:hksurconv}, on obtient :
\medskip
\begin{lemm}\label{lem:steinhk}
Supposons que $X$ est un schéma formel Stein semi-stable sur $\rmO_K$, alors pour tous $n\geqslant 0$ la cohomologie de $\HK(X_C,D)$ est classique et on a un isomorphisme topologique
$$
\rmH^n \HK(X_C,{D})\cong \intn{\rmH_{\HK}^n\Harg{X_K}{D}\wotimes_{K_0}\whBstp}^{N=0,\varphi=p}.
$$
En particulier, $\rmH^n \HK(X_C,{D})$ est un espace de Fréchet.
\end{lemm}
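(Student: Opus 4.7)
The plan is to adapt the proof of \cite[Lemma~3.28]{codoniste} to the isotrivial setting. The key observation is that since $D$ is a finite-dimensional isocristal over the field $K_0$, the functor $(-)\otimes_{K_0}D$ is exact and commutes with all the topological completions and cohomological operators appearing in the definition of $\HK(X_C,D)$; this allows one to deduce the statement from the analogous statement with trivial coefficients.

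The first step is to establish the classicality of $\rmR\Gamma_{\!\HK}\Harg{X_K}{D}$. By the definition (\ref{eq:defhkisot}) one has $\rmH^n_{\HK}\Harg{X_K}{D}\cong \rmH^n_{\HK}(X_K)\otimes_{K_0}D$, so it suffices to treat trivial coefficients. Since $X$ is Stein, Lemma~\ref{lemm:hksurconv} identifies $\rmR\Gamma_{\!\HK}(X_K)$ with the rational surconvergent rigid cohomology of $X_k/\rmO_{K_0}^0$, whose cohomology on a Stein space is known to be Fréchet. Hence $\rmH^n_{\HK}\Harg{X_K}{D}$ is Fréchet and $\rmR\Gamma_{\!\HK}\Harg{X_K}{D}$ has classical cohomology.

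The second step is to check that $\rmR\Gamma_{\!\HK}\Harg{X_K}{D}\wotimes^R_{K_0}\whBstp$ still has classical cohomology, equal to $\rmH^n_{\HK}\Harg{X_K}{D}\wotimes_{K_0}\whBstp$. Since $K_0$ is a field, the only obstruction to the collapse of $\wotimes^R_{K_0}$ onto $\wotimes_{K_0}$ is topological strictness; it is removed by the Fréchet structure of both factors, together with topological flatness arguments of the kind used in \cite[Lemma~2.3]{codoniste}.

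The last, and most delicate, step is to identify $\rmH^n\HK(X_C,D)$ with the naive $(N=0,\varphi=p)$-eigenspace. This comes down to showing that $N$ and $\varphi-p$ act as strict surjections on $\rmH^n_{\HK}\Harg{X_K}{D}\wotimes_{K_0}\whBstp$, so that the two-column homotopy limit defining the eigenspace has no higher derived contribution; once established, the spectral sequence computing the fiber degenerates and yields the stated formula. The surjectivity of $N$ is immediate from the exact sequence $0\to\Bcrisp\to\Bstp\xrightarrow{N}\Bstp\to 0$ and Fréchet flatness, reducing the problem to the surjectivity of $\varphi-p$ on $\rmH^n_{\HK}\Harg{X_K}{D}\wotimes_{K_0}\Bcrisp$. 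This last point is the main obstacle: one uses the negativity of the slopes of $D$ to reduce to the surjectivity of $\varphi-p^j$ on $\Bcrisp$ for appropriate $j$, and propagates it through the completed tensor product by exhausting $\Bcrisp$ by its Banach subspaces and invoking strictness arguments in $\sD(\rmC_K)$, exactly as in the proof of \cite[Lemma~3.28]{codoniste}.
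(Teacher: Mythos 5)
Your proposal is correct and follows exactly the route the paper intends: the paper's proof is a one-line pointer to \cite[Lemma 3.28]{codoniste} plus Lemma~\ref{lemm:hksurconv}, and your three steps (Fr\'echet classicality via surconvergent HK, collapse of the completed derived tensor, and surjectivity of the operators defining the derived eigenspace) are precisely the ingredients of that argument, with the slope hypothesis on $D$ playing the same role as in CDN.

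One small imprecision is worth flagging in your third step. The monodromy operator on $\rmH^n_{\HK}\Harg{X_K}{D}\wotimes_{K_0}\whBstp$ is not $1\otimes N_{\Bst}$ but $N_{\HK}\otimes 1 + 1\otimes N_{\Bst}$: the isocristal $D$ carries trivial monodromy, but the Hyodo--Kato cohomology $\rmH^n_{\HK}(X_K)$ generally does not (indeed the paper relies on $N\neq 0$ on $\rmH^1_{\HK}(\BH_{\Qp})$ in \S\ref{subsub:nontrivmono}). Thus the exact sequence $0\to\Bcrisp\to\Bstp\xrightarrow{N}\Bstp\to 0$ tensored with a Fr\'echet space is not by itself enough to give surjectivity of $N$, and the kernel of $N$ is not literally $\rmH^n_{\HK}\Harg{X_K}{D}\wotimes_{K_0}\Bcrisp$. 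What saves the argument is that $N_{\HK}$ is nilpotent on each cohomology group, so the combined operator remains surjective and the usual $e^{uN_{\HK}}$-twist identifies $\ker N$ topologically with $\rmH^n_{\HK}\Harg{X_K}{D}\wotimes_{K_0}\Bcrisp$ in a way compatible with $\varphi$. After this observation the reduction to $\varphi-p$ on $\rmH^n_{\HK}\Harg{X_K}{D}\wotimes_{K_0}\Bcrisp$ is correct, and the remainder of your argument goes through as stated.
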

\medskip
\qed

\Subsection{La partie de Rham}
Si $(D_K,\Filb)$ est un $K$-espace de Fréchet filtré dont les sauts de la filtration sont négatifs, soit $X^+_{\dR}(D_K)\coloneqq D_K\wotimes_K \Bdrp$ et, pour $k\in \BZ$, posons 
$$
X^+_{\dR}(D_K)_k\coloneqq X^+_{\dR}(D_K)/\Fil^k\left ( D_K\wotimes_K \Bdrp \right ).
$$
Une conséquence du lemme \ref{lem:calcfilun} est le lemme suivant :
\medskip
\begin{lemm}\label{lem:filtdr}
Supposons que $X$ est un espace rigide Stein lisse sur $K$ et soit $(\CE,\nabla,\Filb)$ un fibré plat filtré fortement isotrivial dont les sauts de la filtration sont $\leqslant 0$. On a un quasi-isomorphisme strict
$$
{\DR}(X_C,\CE)\cong X_{\dR}^+(\CE(X_K))_{1}\rightarrow X_{\dR}^+(\Omega^1_{\CE}(X_K))_{0}\rightarrow \dots,
$$
où le complexe de droite est un complexe d'espaces de Fréchet.
\end{lemm}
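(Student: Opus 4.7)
The plan is to derive this directly from Lemma \ref{lem:calcfilun} by specialising the formula for $\Fil^1$ to our hypothesis and then passing termwise to the quotient.

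First, since $X$ is Stein, the Cartan B vanishing theorem gives a strict quasi-isomorphism between $\rmR\Gamma_{\!\dR}\Harg{X_K}{\CE}$ and the Fréchet-space complex of global sections $\Omega^{\bullet}_{\CE}(X_K)$; by \cite[Lemma 2.3]{codoniste}, the derived completed tensor product with the Fréchet space $\Bdrp$ can then be computed termwise as $\Omega^n_{\CE}(X_K)\wotimes_K\Bdrp$, which is again a complex of Fréchet spaces. This reduces the lemma to a statement about explicit Fréchet complexes.

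Second, I exploit the hypothesis on filtration jumps. Since all jumps of $\Filb$ on $\CE$ lie in degrees $\leqslant 0$, one has $\Fil^{-i}\CE = \CE$ for every $i\geqslant 0$, and consequently $\Fil^{-i}\Omega^n_{\CE}(X_K)=\Omega^n_{\CE}(X_K)$ for all such $i,n$. Plugging this into the formula of Lemma \ref{lem:calcfilun}, the expression $\sum_{i\in\BN}\Fil^{-i}\Omega^n_{\CE}(X_K)\wotimes_K t^{i+1-n}\Bdrp$ appearing in degree $n$ collapses to the closed $\Bdrp$-submodule $\Omega^n_{\CE}(X_K)\wotimes_K t^{\max(1-n,0)}\Bdrp$ of $\Omega^n_{\CE}(X_K)\wotimes_K\Bdrp$.

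Third, I take the termwise quotient to obtain ${\DR}(X_C,\CE)$. In degree $n$ this reads
$$
\Omega^n_{\CE}(X_K)\wotimes_K\Bdrp \,\big/\, \Omega^n_{\CE}(X_K)\wotimes_K t^{\max(1-n,0)}\Bdrp,
$$
and, since $\Omega^n_{\CE}$ carries a trivial filtration under our hypothesis, this is exactly $X^+_{\dR}(\Omega^n_{\CE}(X_K))_{1-n}$ by the very definition of the latter. The differentials of the quotient complex are those induced by the connection $\nabla$, which gives the target complex on the right-hand side.

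The main technical point to verify will be strictness in $\sD(\rmC_K)$: one needs that each closed subspace $\Omega^n_{\CE}(X_K)\wotimes_K t^{\max(1-n,0)}\Bdrp$ remains closed inside the Fréchet space $\Omega^n_{\CE}(X_K)\wotimes_K\Bdrp$, so that the quotients are themselves Fréchet and the induced morphism of complexes is a strict quasi-isomorphism. This follows from the closedness of $t^k\Bdrp$ in $\Bdrp$ for every $k$, together with the good behaviour of $\wotimes_K$ on the nuclear Fréchet spaces $\Omega^n_{\CE}(X_K)$, exactly as in the analogous strictness argument for \cite[Example 3.30]{codoniste}.
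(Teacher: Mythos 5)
Your overall plan—reduce termwise via Lemma \ref{lem:calcfilun} and identify each degree with the quotient defining $X^+_{\dR}(\cdot)_{1-n}$—is the right one, and your first and fourth paragraphs are fine. But the second paragraph contains a genuine error that invalidates the argument as written.

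The hypothesis \og les sauts de la filtration sont $\leqslant 0$\fg{} does \emph{not} say $\Fil^{-i}\CE=\CE$ for all $i\geqslant 0$; it only says that there is no jump in positive degree, i.e.\ $\Fil^{1}\CE=0$ (equivalently, $\Fil^{j}\CE=\Fil^{j+1}\CE$ for $j>0$, hence both vanish by separatedness). The filtration in nonpositive degrees can still be nontrivial: e.g.\ a rank-$2$ bundle with $\CE=\Fil^{-1}\supsetneq\Fil^{0}\supsetneq\Fil^{1}=0$ satisfies the hypothesis, yet $\Fil^{0}\CE\neq\CE$. You appear to have read \og jumps $\leqslant 0$\fg{} as \og jumps $\geqslant 0$\fg{}, which would indeed force $\Fil^{-i}\CE=\CE$ for $i\geqslant 0$, but that is the opposite hypothesis. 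As a result, the \og collapse\fg{} of the degree-$n$ term of $\Fil^{1}$ to the single summand $\Omega^n_{\CE}(X_K)\wotimes_K t^{\max(1-n,0)}\Bdrp$ is false (it is strictly larger than the correct $\Fil^{1}$), and the statement that \og $\Omega^n_{\CE}$ carries a trivial filtration under our hypothesis\fg{} is also false. The two errors are the same error and so your Step 3 is internally self-consistent, but what it actually computes is the quotient of $\Omega^n_{\CE}(X_K)\wotimes_K\Bdrp$ by the \emph{trivially}-filtered $\Fil^{1-n}$, which is not the $X^+_{\dR}(\Omega^n_{\CE}(X_K))_{1-n}$ of the lemma, since the latter is defined using the genuine filtration inherited from $\CE$.

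The fix is simpler than what you attempted: do not collapse at all. The degree-$n$ term of $\Fil^{1}$ given by Lemma \ref{lem:calcfilun}, namely $\sum_{i\in\BN}\Fil^{-i}\Omega^n_{\CE}(X_K)\wotimes_K t^{i+1-n}\Bdrp$ (inside $\Omega^n_{\CE}(X_K)\wotimes_K\Bdrp$), is \emph{by definition} the step $\Fil^{1-n}$ of the product filtration on $\Omega^n_{\CE}(X_K)\wotimes_K\Bdrp$; the hypothesis $\Fil^{1}\CE=0$ is exactly what makes the product-filtration sum reindex to a sum over $i\in\BN$ in this form. Hence the termwise quotient is $X^+_{\dR}(\Omega^n_{\CE}(X_K))_{1-n}$ with the correct (generally nontrivial) filtration, which is what the lemma asserts. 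Your strictness discussion in the last paragraph then applies verbatim.
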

\medskip
\qed

\subsubsection{Le cas des opers}
Supposons que $X_K$ soit une courbe rigide Stein lisse sur $K$ et que $\CE$ soit un oper de poids $(a,b)$ avec $a\geqslant 0$. L'intérêt des opers est que l'on peut calculer la cohomologie de ${\DR}(X_C,\CE)$ qui apparaît dans le diagramme fondamental.
\medskip
\begin{lemm}\label{lem:calch0}
On a une suite exacte stricte d'espaces de Fréchet
$$
0\rightarrow \Gr_b\CE(X_K)\wotimes_Kt^a\rmB_{b+1}\rightarrow\rmH^0{\DR}(X_C,\CE)\rightarrow \rmH^0_{\dR}\Harg{X_K}{\CE}\wotimes_K\rmB_a\rightarrow 0.
$$
De plus,
$$
\rmH^1{\DR}(X_C,\CE)\cong \rmH^1_{\dR}\Harg{X_K}{\CE}\wotimes_K\rmB_a.
$$
\end{lemm}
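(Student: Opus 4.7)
The plan is to make $\DR(X_C,\CE)$ explicit as a two-term complex of Fréchet spaces via the oper structure, and then deduce both statements simultaneously from the long exact sequence attached to a short exact sequence of such complexes.

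First I combine Proposition~\ref{prop:drtoop} with Lemma~\ref{lem:compfildr}. The filtered quasi-isomorphism $\rmR\Gamma_{\!\Op}\Harg{X_K}{\CE}\cong \rmR\Gamma_{\!\dR}\Harg{X_K}{\CE}$ allows me to replace, in the definition of $\DR(X_C,\CE)$, the de Rham complex by the two-term oper complex $\Gr_b\CE(X_K)\xrightarrow{L_\nabla}\Gr_a\Omega_\CE(X_K)$. Since both terms are Fréchet over $K$, the derived completed tensor with $\Bdrp$ coincides with the termwise one, and quotienting by the filtered piece $\Fil^1$ computed explicitly in Lemma~\ref{lem:compfildr} yields a strict quasi-isomorphism
\[
\DR(X_C,\CE)\cong\bigl[\Gr_b\CE(X_K)\wotimes_K\rmB_{b+1}\xrightarrow{L_\nabla}\Gr_a\Omega_\CE(X_K)\wotimes_K\rmB_a\bigr].
\]

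Next, starting from the short exact sequence of finite-dimensional $K$-vector spaces
\[
0\to t^a\rmB_{b+1}\to \rmB_{b+1}\to \rmB_a\to 0
\]
(where $t^a\rmB_{b+1}\coloneqq t^a\Bdrp/t^{b+1}\Bdrp$; this makes sense because $a\leqslant b+1$), I obtain a short exact sequence of two-term complexes of Fréchet spaces
\[
0\to \bigl[\Gr_b\CE(X_K)\wotimes_K t^a\rmB_{b+1}\to 0\bigr]\to \DR(X_C,\CE)\to \bigl[\Gr_b\CE(X_K)\wotimes_K\rmB_a\xrightarrow{L_\nabla}\Gr_a\Omega_\CE(X_K)\wotimes_K\rmB_a\bigr]\to 0.
\]
The subcomplex has zero differential because $L_\nabla$ is $\Bdrp$-linear in the scalar factor: elements of $\Gr_b\CE(X_K)\wotimes_K t^a\rmB_{b+1}$ are sent to $\Gr_a\Omega_\CE(X_K)\wotimes_K t^a\rmB_{b+1}$, which vanishes in the quotient $\Gr_a\Omega_\CE(X_K)\wotimes_K\rmB_a$. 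Applying once more Proposition~\ref{prop:drtoop}, the quotient complex identifies with $\rmR\Gamma_{\!\dR}\Harg{X_K}{\CE}\wotimes_K\rmB_a$.

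The final step is to take the long exact sequence of cohomology. The subcomplex contributes $\Gr_b\CE(X_K)\wotimes_K t^a\rmB_{b+1}$ in degree $0$ and $0$ in degree $1$; since $\rmB_a$ is finite-dimensional over $K$, the quotient complex has cohomology $\rmH^i_{\dR}\Harg{X_K}{\CE}\wotimes_K\rmB_a$ for $i=0,1$. The six-term sequence therefore splits into exactly the short exact sequence for $\rmH^0\DR(X_C,\CE)$ and the isomorphism for $\rmH^1\DR(X_C,\CE)$ claimed in the lemma. There is no real obstacle: the only care needed is topological bookkeeping (strictness, closedness of the subspace), which reduces immediately to the finite-dimensionality over $K$ of all the $\rmB_\bullet$ factors involved, so tensoring preserves strictness and the Fréchet property throughout.
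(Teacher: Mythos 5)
Your argument follows the same route as the paper: reduce $\DR(X_C,\CE)$ to the two-term oper complex $\Gr_b\CE(X_K)\wotimes_K\rmB_{b+1}\to\Gr_a\Omega_\CE(X_K)\wotimes_K\rmB_a$ via Proposition~\ref{prop:drtoop} and Lemma~\ref{lem:compfildr}, then extract the statement from the short exact sequence of two-term complexes (equivalently, the paper's snake-lemma diagram) with sub\-complex $[\Gr_b\CE(X_K)\wotimes_K t^a\rmB_{b+1}\to 0]$ and quotient $\rmR\Gamma_{\!\Op}\Harg{X_K}{\CE}\wotimes_K\rmB_a$.

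One point should be corrected. You assert, twice, that the spaces $t^a\rmB_{b+1}$, $\rmB_{b+1}$, $\rmB_a$ are ``finite-dimensional over $K$'' and lean on this to conclude that tensoring by them commutes with taking cohomology and preserves strictness. This is false: $\rmB_a=\Bdrp/t^a\Bdrp$ is a finite free $C$-module, hence a $K$-Banach space, but $C$ has infinite degree over $K$, so $\rmB_a$ is not finite-dimensional over $K$. The conclusion you need does hold, but for the correct (and weaker) reason that these spaces are Banach over $K$, and completed tensoring of a strict exact sequence of Fréchet $K$-spaces by a $K$-Banach space preserves strictness and exactness --- exactly the fact the paper invokes at the corresponding step of the proof of Proposition~\ref{prop:diagfondop} (``on tensorise-complète une suite exacte de Fréchet par un Banach''). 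With ``finite-dimensional over $K$'' replaced by ``Banach over $K$'', the rest of your argument --- the vanishing of the differential on the subcomplex because $t^a\rmB_{b+1}$ maps to $t^a\rmB_a=0$, the identification of the quotient, and the resulting six-term exact sequence --- goes through as stated and reproduces the paper's proof.
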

\medskip
\qed

\begin{proof}
Rappelons que d'après le lemme \ref{lem:calch0} $\rmH^0\coloneqq\rmH^0{\DR}\intn{X_C,\CE}$ est le noyau de la connexion
$$
(\CE(X_K)\wotimes_K \Bdrp)/\Fil^1\xrightarrow{ \nabla \otimes \id} (\Omega^1_{\CE}(X_K)\wotimes_K \Bdrp)/\Fil^0
$$
et $\rmH^1\coloneqq \rmH^1{\DR}(X_C,\CE)$ son conoyau. On commence par montrer que ce noyau est isomorphe au noyau de 
$$
\Gr_b\CE(X_K)\wotimes_K\rmB_{b+1}\xrightarrow{L_{\nabla}}\Gr_a\Omega_{\CE}(X_K)\wotimes_K\rmB_a.
$$
On a des décompositions $K$-linéaires du lemme \ref{lem:opdec}
$$
\begin{gathered}
\CE(X_K)\wotimes_K \Bdrp \cong \Gr_b\CE(X_K)\wotimes_K\Bdrp\oplus \Fil^{-b+1}\wotimes_K\Bdrp,\\
\Omega^1_{\CE}(X_K)\wotimes_K \Bdrp \cong \Gr_a\Omega_{\CE}(X_K)\wotimes \Bdrp\oplus \tfrac{\Omega_{\CE}(X_K)}{\Fil^{-a}}\wotimes_K \Bdrp ,
\end{gathered}
$$
et par la définition des opers (cf. définition \ref{def:oper}), la différentielle induit un isomorphisme
$$
\Fil^{-b+1}\wotimes_K\Bdrp\xrightarrow{\sim}  \tfrac{\CE(X_K)}{\Fil^{-a}}\wotimes_K \Bdrp.
$$
De plus, on a des décompositions $K$-linéaires
$$
\begin{gathered}
\Gr_b\CE(X_K)\wotimes_K\Bdrp\cap \Fil^1 = \Gr_b\CE(X_K)\wotimes_Kt^{b+1}\Bdrp,\\
\Gr_a\CE(X_K)\wotimes_K \Bdrp\cap \Fil^0 = \Gr_a\CE(X_K)\wotimes_K t^a\Bdrp,
\end{gathered}
$$
et comme, par définition, $\nabla$ induit des isomorphismes $\Fil^{-q}/\Fil^{-q+1}\xrightarrow{\sim}(\Fil^{-q-1}/\Fil^{-q})\otimes_{\CO(X_K)}\Omega^1(X_K)$ pour tout entier $q$ tel que $a\leqslant q \leqslant b-1$, on en déduit que $d$ induit un isomorphisme 
$$
(\Fil^{-b+1}\wotimes_K\Bdrp)\cap \Fil^1\xrightarrow{\sim}(\tfrac{\CE(X_K)}{\Fil^{-a}}\wotimes_K \Bdrp)\cap \Fil^0.
$$
Ainsi, comme annoncé, on a bien 
$$
\rmH^0 =\Ker\intn{\Gr_b\CE(X_K)\wotimes_K\rmB_{b+1}\xrightarrow{L_{\nabla}}\Gr_a\Omega_{\CE}(X_K)\wotimes_K\rmB_a},
$$
et de même $\rmH^1=\Coker \intn{\Gr_b\CE(X_K)\wotimes_K\rmB_{b+1}\xrightarrow{L_{\nabla}}\Gr_a\Omega_{\CE}(X_K)\wotimes_K\rmB_a}$.

En appliquant le lemme du serpent au diagramme 
\begin{center} 
\begin{tikzcd}
0\ar[r]&\Gr_b\CE(X_K)\wotimes_Kt^a\rmB_{b+1}\ar[r]\ar[d]&\Gr_b\CE(X_K)\wotimes_K\rmB_{b+1}\ar[r]\ar[d,"L_{\nabla}"]&\Gr_b\CE(X_K)\wotimes_K\rmB_a\ar[r]\ar[d,"L_{\nabla}"]& 0\\
&0\ar[r]&\Gr_a\Omega_{\CE}(X_K)\wotimes_K\rmB_a\ar[r,equal]&\Gr_a\Omega_{\CE}(X_K)\wotimes_K\rmB_a&
\end{tikzcd}
\end{center}
on obtient la suite exacte
$$
0\rightarrow \Gr_b\CE(X_K)\wotimes_Kt^a\rmB_{b+1}\rightarrow\rmH^0\rightarrow \rmH^0_{\dR}\Harg{X_K}{\CE}\wotimes_K\rmB_a\rightarrow 0,
$$
et l'isomorphisme 
$$
\rmH^1\cong \rmH^1_{\dR}\Harg{X_K}{\CE}\wotimes_K\rmB_a.
$$
\end{proof}
\Subsection{Le diagramme fondamental pour les opers}
Avant d'énoncer le théorème principal de cette section, on donne le lemme suivant qui est une généralisation du lemme \cite[Lemma 3.30]{codoniste} : 
\medskip
\begin{lemm}\label{lem:suitexho}
Soit $M$ un $(\varphi,N)$-module effectif (toutes les pentes sont $\geqslant 0$) fini sur $K_0$ et soient $a,b\in\BN$ des entiers tels que $b\geqslant a\geqslant 0$, alors
$$
0\rightarrow\intn{M\otimes_{K_0} \whBstp}^{\varphi=p^a,N=0}\xrightarrow{t^{b+1}} \intn{M\otimes_{K_0}\whBstp}^{\varphi=p^{a+b+1},N=0}\xrightarrow{} M\otimes_{K_0}\rmB_{b+1}
$$
est exacte. De plus, la dernière application est surjective si les pentes de $M$ sont $\leqslant a$.
\end{lemm}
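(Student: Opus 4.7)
The plan is to prove the three statements—injectivity, exactness at the middle, and surjectivity under the slope hypothesis—in that order, after first reducing the monodromy condition to a purely crystalline question.

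Since $N_M$ is nilpotent on the finite $(\varphi,N)$-module $M$, the exponential $\exp(u_p N_M) \in \End(M)\otimes_{K_0}\whBstp$ is well-defined, and multiplication by it intertwines the total monodromy on $M\otimes_{K_0}\whBstp$ with $1\otimes N_{\whBstp}$ alone. Using the identity $N\varphi=p\varphi N$, one checks that this intertwining is $\varphi$-equivariant, so it induces, for every integer $r$, an identification
$$
\intn{M\otimes_{K_0}\whBstp}^{N=0,\varphi=p^r}\ \cong\ \intn{M\otimes_{K_0}\Bcrisp}^{\varphi=p^r}.
$$
Since $\exp(u_p N_M)$ reduces modulo $t^{b+1}$ to an automorphism of $M\otimes_{K_0}\rmB_{b+1}$ (a finite sum involving the nilpotent $N_M$), the sequence to be proved is equivalent, under these identifications, to
$$
0\to \intn{M\otimes_{K_0}\Bcrisp}^{\varphi=p^a}\xrightarrow{t^{b+1}}\intn{M\otimes_{K_0}\Bcrisp}^{\varphi=p^{a+b+1}}\to M\otimes_{K_0}\rmB_{b+1}.
$$

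Injectivity is then immediate, as $t$ is a non-zero-divisor on $\Bcrisp$ (equivalently, on $\Bdrp$). For exactness at the middle, assume $x$ in the middle space maps to zero in $M\otimes\rmB_{b+1}$. Then $x$ lies in $M\otimes_{K_0}(\Bcrisp\cap t^{b+1}\Bdrp)$. The standard identity $\Bcrisp\cap t^{b+1}\Bdrp=t^{b+1}\Bcrisp$ inside $\Bdrp$ (which reflects the fact that the $t$-adic filtration on $\Bcrisp$ is induced from the one on $\Bdrp$, and that $\Bcrisp$ is $t$-adically separated) then produces $y\in M\otimes_{K_0}\Bcrisp$ with $t^{b+1}y=x$; because $\varphi(t)=pt$, the condition $\varphi(x)=p^{a+b+1}x$ translates into $\varphi(y)=p^{a}y$, as required.

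For the surjectivity claim, I would proceed by devissage using the slope filtration of $M$, followed by a reduction to the rank-one case. Writing $M$ as an iterated extension of isoclinic pieces, and further breaking those down (after a finite unramified base change, harmless here by Galois descent of the statement), one is reduced to $M=K_0(-i)$ for some $i$ with $0\leqslant i\leqslant a$. The question then becomes the surjectivity of the reduction
$$
\intn{\Bcrisp}^{\varphi=p^{a+b+1-i}}\longrightarrow \rmB_{b+1},
$$
with kernel $t^{b+1}\intn{\Bcrisp}^{\varphi=p^{a-i}}$; the hypothesis $a-i\geqslant 0$ (and hence $a+b+1-i\geqslant b+1\geqslant 1$) is precisely what makes the fundamental exact sequence of $p$-adic Hodge theory applicable in the required range. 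The main technical obstacle is executing the devissage cleanly: while the tensor products with $\Bcrisp$ and the passage to $\rmB_{b+1}$ preserve short exact sequences, the operations $(-)^{\varphi=p^r}$ are only left exact, so one must combine the rank-one surjectivity with a five-lemma argument and the exactness at the middle term established in the previous paragraph to promote surjectivity up the slope filtration.
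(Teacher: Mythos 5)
Your reduction to the crystalline case via conjugation by $\exp(uN_M)$ is a nice idea and is correct as far as it goes (modulo bookkeeping between $\whBstp$ and the kernel of its monodromy operator), but the argument for exactness at the middle term hinges on the identity $\Bcrisp\cap t^{b+1}\Bdrp = t^{b+1}\Bcrisp$, and this identity is \emph{false}. The filtration on $\Bcrisp$ induced from $\Bdrp$ (which is precisely how $\Fil^{\bullet}\Bcrisp$ is defined) is strictly coarser than the $t$-adic one, i.e.\ $\Fil^{r}\Bcrisp \supsetneq t^{r}\Bcrisp$ for $r\geqslant 1$; it is exactly this defect of $\Bcris$ that led Colmez to introduce $\Bmax$, for which the analogous identity does hold. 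What is true, and what the lemma actually needs, is the $\varphi$-decorated version $(\Bcrisp)^{\varphi=p^{s}}\cap t^{r}\Bdrp = t^{r}(\Bcrisp)^{\varphi=p^{s-r}}$ for $0\leqslant r\leqslant s$, which is a reformulation of the fundamental exact sequence and is a genuine theorem rather than a formal consequence of $t$-adic separatedness. The paper's induction on $b$ with dévissage is precisely the mechanism for reducing to (and repeatedly invoking) that weight-one fundamental exact sequence; in replacing it by a formal-looking commutative-algebra statement you have inadvertently discarded the actual content of the lemma.

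The surjectivity argument has a separate problem: over $K_0=\Qpbr$ the simple $\varphi$-modules are the isoclinic pieces $M_{s/r}$ of rank $r$, which are of the form $K_0(-i)$ only when $r=1$, i.e.\ when the slope is an integer. Isoclinic modules with fractional slope are simple of rank $>1$ and admit no further dévissage into rank-one pieces after any unramified base change (nor does a non-algebraically-closed residue field make the situation better), and fractional slopes do occur for the Hyodo--Kato cohomology the lemma is applied to. The dimension count of Banach--Colmez spaces used in the paper (following Lemma 3.30 of Colmez--Dospinescu--Nizio{\l}) is slope-agnostic and avoids this obstruction entirely, which is why it is the route taken there.
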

\medskip
La première partie est obtenue par récurrence sur $b$ en utilisant un dévisage. La surjectivité se démontre comme dans la preuve de \cite[Lemma 3.30]{codoniste} en calculant la Dimension.
\medskip
\begin{rema}\label{rem:suitexho}
Comme dans  \cite{codoniste}, on a besoin d'une version du lemme précédent pour $M = \varprojlim M_i$ où $\{M_i\}_i$ est un système projectif de $(\varphi,N)$-modules effectifs finis sur $K_0$. Comme la limite projective est exacte à gauche, on obtient directement pour  $a,b\in\BN$ des entiers tel que $b\geqslant a\geqslant 0$, la suite exacte 
$$
0\rightarrow\intn{M\wotimes_{K_0} \whBstp}^{\varphi=p^a,N=0}\xrightarrow{t^{b+1}} \intn{M\wotimes_{K_0}\whBstp}^{\varphi=p^{a+b+1},N=0}\xrightarrow{} M\wotimes_{K_0}\rmB_{b+1}.
$$
De plus, on peut montrer que si les pentes des $M_i$ sont $\leqslant a$ alors la dernière flèche est surjective mais on ne l'utilisera pas. La surjectivité de la dernière flèche dans le cas $b=0$ est utilisé dans \cite[Proposition 3.36]{codoniste} et repose sur un argument donné dans la preuve de \cite[Lemme 3.28]{codoniste}. L'argument dans le cas où $b>0$ est le même. En particulier, pour $D$ un isocristal effectif, cette situation s'applique à $M=\rmH^j_{\HK}\Harg{X_K}{D}=\varprojlim_i \rmH^j_{\HK}\Harg{\wt{U}_{i,K}}{D}$ où $\{\wt{U}_{i,K}\}$ est un recouvrement Stein par des affinoides surconvergents d'un modèle sur convergent de $X_K$ (\cf le lemme \ref{lemm:hksurconv} et \cite[Lemme 5.3]{van}).
\end{rema}
\medskip
\medskip
\begin{prop}\label{prop:diagfondop}
Soit $X_K$ une courbe rigide Stein lisse sur $K$. Soit $\BV$ un $\Qp$-oper de poids $(a,b)$ fortement isotrivial d'isocristal associé ${D}$ et de fibré plat filtré associé $\CE=(\CE,\nabla,\Filb)$. Supposons que $a\geqslant 0$. On a une application naturelle entre suites exactes strictes d'espaces de Fréchet :
{\Small
\begin{center}
\begin{tikzcd}
t^aX_{\cri}^{1}(D[a])\ar[d]\ar[r]&\Gr_b\CE(X_K)\wotimes_K t^a\rmB_{b+1} \ar[r]\ar[d,equal]& \rmH_{\syn}^1\Harg{X_C}{\BV(1)}\ar[r]\ar[d, "\beta"]&t^aX_{\st}^{1}\intn{\rmH^1_{\HK}\Harg{X_K}{D}[a]}\ar[r]\ar[d, "\gamma"]&0\\
D_K\otimes_Kt^a\rmB_{b+1} \ar[r]& \Gr_b\CE(X_K)\wotimes_Kt^a\rmB_{b+1} \ar[r,"L_{\nabla}"]&\Gr_a\Omega_{\CE}(X_K)\wotimes_Kt^a\rmB_{b+1}\ar[r]&\rmH^1_{\dR}\Harg{X_K}{\CE}\wotimes_K t^a\rmB_{b+1}\ar[r]&0
\end{tikzcd}
\end{center}}
\noindent Ce diagramme est $\sG_K$-équivariant et on a noté :
$$
\begin{gathered}
X_{\st}^{1}\intn{\rmH^1_{\HK}\Harg{X_K}{D}[a]}\coloneqq  \intn{\rmH^1_{\HK}\Harg{X_K}{D}\otimes_{K_0}\Bstp}^{N=0,\varphi=p^{1-a}} \\
X_{\cri}^{1}(D[a])\coloneqq  \intn{D\otimes_{K_0}\Bcrisp}^{\varphi=p^{1-a}}.
\end{gathered}
$$
De plus, $\beta$ et $\gamma$ sont strictes, d'images fermées et $\Ker(\gamma) = t^{a+b+1}\intn{\rmH^1_{\HK}\Harg{X_K}{D}\otimes_{K_0}\Bstp}^{N=0,\varphi=p^{-a-b}}$. En particulier, si les pentes de $D$ sont $>-a-b$, alors $\Ker(\gamma)=0$.
\end{prop}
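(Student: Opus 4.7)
Le plan est d'exploiter la suite exacte longue issue du triangle de fibre définissant le complexe syntomique, à savoir $\rmR\Gamma_{\!\syn}\Harg{X_C}{\BV(1)}\to\HK(X_C,D,1)\to\DR(X_C,\CE,1)$, qui donne en bas degré
$$
\rmH^0\HK(X_C,D,1)\to\rmH^0\DR(X_C,\CE,1)\to\rmH^1_{\syn}\Harg{X_C}{\BV(1)}\to\rmH^1\HK(X_C,D,1)\to\rmH^1\DR(X_C,\CE,1).
$$
Les quatre termes encadrant $\rmH^1_{\syn}$ s'identifient à l'aide des résultats déjà établis : l'exemple \ref{ex:h0syn} et le lemme \ref{lem:steinhk} donnent $\rmH^0\HK=\intn{D\otimes_{K_0}\Bcrisp}^{\varphi=p}$ (la condition $N=0$ sur $D$ ramenant $\Bstp$ à $\Bcrisp$) et $\rmH^1\HK=\intn{\rmH^1_{\HK}\Harg{X_K}{D}\wotimes_{K_0}\whBstp}^{N=0,\varphi=p}$ ; le lemme \ref{lem:calch0} fournit la suite exacte courte $0\to\Gr_b\CE(X_K)\wotimes_Kt^a\rmB_{b+1}\to\rmH^0\DR\to D_K\wotimes_K\rmB_a\to 0$ (en utilisant $\rmH^0_{\dR}\Harg{X_K}{\CE}=D_K$ puisque $\CE$ est fortement isotrivial et $X_K$ est supposé géométriquement connexe) et l'isomorphisme $\rmH^1\DR\cong\rmH^1_{\dR}\Harg{X_K}{\CE}\wotimes_K\rmB_a$.

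L'étape clé consiste à analyser séparément les deux bords de la suite exacte longue. Pour le bord gauche, on considère la composée $f\colon\rmH^0\HK\to\rmH^0\DR\twoheadrightarrow D_K\wotimes_K\rmB_a$ : son noyau est exactement $t^aX_{\cri}^1(D[a])$, l'inclusion découlant directement de la relation $\varphi(t^az)=pt^az$ pour $z\in D\otimes\Bcrisp$ tel que $\varphi(z)=p^{1-a}z$, et la surjectivité résultant d'une variante du lemme \ref{lem:suitexho} (cf. la remarque \ref{rem:suitexho}). Un lemme du serpent appliqué au diagramme commutatif reliant $f$ à la suite exacte courte de $\rmH^0\DR$ identifie alors $\Coker(\rmH^0\HK\to\rmH^0\DR)$ au cokernel $\Coker\intn{t^aX_{\cri}^1(D[a])\to\Gr_b\CE(X_K)\wotimes_Kt^a\rmB_{b+1}}$. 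Pour le bord droit, un argument analogue basé sur la remarque \ref{rem:suitexho} appliquée à $M=\rmH^1_{\HK}\Harg{X_K}{D}$ et sur l'isomorphisme de Hyodo-Kato $M\otimes_{K_0}K\cong\rmH^1_{\dR}\Harg{X_K}{\CE}$ identifie $\Ker(\rmH^1\HK\to\rmH^1\DR)$ à $t^aX_{\st}^1(M[a])$. L'assemblage dans la suite exacte longue donne la ligne du haut du diagramme.

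Les flèches verticales s'obtiennent ensuite naturellement : $\gamma$ est la composée évidente $t^aX_{\st}^1(M[a])\hookrightarrow(M\wotimes_{K_0}\Bstp)^{N=0,\varphi=p}\xrightarrow{\iota_{\HK}\otimes\iota}\rmH^1_{\dR}\Harg{X_K}{\CE}\wotimes_Kt^a\rmB_{b+1}$, tandis que $\beta$ provient de l'extension $\Bdrp$-linéaire de $L_\nabla$, qui passe au quotient par l'image de $t^aX_{\cri}^1(D[a])$ grâce au lemme \ref{lem:opdec} (qui assure que $L_\nabla$ s'annule sur les sections horizontales $D_K\hookrightarrow\Gr_b\CE$, image qui contient celle de $t^aX_{\cri}^1(D[a])$). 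La commutativité des carrés résulte alors de la compatibilité du morphisme de Hyodo-Kato avec la connexion. Le calcul de $\Ker(\gamma)$ se déduit à nouveau de la remarque \ref{rem:suitexho} : pour $y\in X_{\st}^1(M[a])$, on a $t^ay\equiv 0$ dans $M\wotimes_{K_0}t^a\rmB_{b+1}$ si et seulement si $y=t^{b+1}z$ avec $\varphi(z)=p^{-a-b}z$, d'où la formule annoncée pour $\Ker(\gamma)$ après multiplication par $t^a$. L'obstacle principal est double : il faut d'une part étendre le lemme \ref{lem:suitexho} au-delà de ses hypothèses explicites sur les indices pour établir la surjectivité cruciale de $f$ (probablement via un dévissage par le polygone de Newton de $D$ ou par un contrôle direct sur la Dimension de Banach-Colmez), et d'autre part vérifier systématiquement la stricteté et le caractère fermé des images, ce qui repose sur la classicité des cohomologies intervenant (lemmes \ref{lem:steinhk} et \ref{lem:calch0}) combinée à des arguments topologiques standards entre espaces de Fréchet.
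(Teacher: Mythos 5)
Ta stratégie globale est la même que celle du papier : partir de la suite exacte longue du triangle de fibre définissant $\rmR\Gamma_{\syn}$, identifier les termes à l'aide de l'exemple~\ref{ex:h0syn}, du lemme~\ref{lem:steinhk}, du lemme~\ref{lem:calch0}, du lemme~\ref{lem:suitexho} et de la remarque~\ref{rem:suitexho}, puis construire les flèches verticales. Ta dérivation de la ligne du haut, l'identification de $\gamma$ comme composée de l'inclusion et de $\iota_{\HK}\otimes\iota$ suivie de la réduction modulo $t^{a+b+1}$, et le calcul de $\Ker(\gamma)$ sont corrects et cohérents avec l'argument du papier.

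En revanche, ta construction de $\beta$ comporte une vraie lacune. Tu définis $\beta$ comme \og l'extension $\Bdrp$-linéaire de $L_\nabla$, qui passe au quotient par l'image de $t^aX_{\cri}^1(D[a])$\fg. Or ceci ne définit une application que sur le sous-espace $Q\coloneqq\coker\bigl(t^aX_{\cri}^1(D[a])\to\Gr_b\CE(X_K)\wotimes t^a\rmB_{b+1}\bigr)$ de $\rmH^1_{\syn}\Harg{X_C}{\BV(1)}$ : la ligne du haut montre précisément que $\rmH^1_{\syn}$ est une \emph{extension} de $t^aX^1_{\st}\intn{\rmH^1_{\HK}\Harg{X_K}{D}[a]}$ (qui est non nul en général) par $Q$, et non le quotient $Q$ lui-même. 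Ta description ne dit rien de la valeur de $\beta$ sur l'autre morceau, ni de la raison pour laquelle le prolongement existe de façon compatible au carré de droite ; tu écartes la commutativité des carrés en une phrase (\og résulte de la compatibilité du morphisme de Hyodo-Kato avec la connexion\fg) sans argument. Dans le papier, $\beta$ est construit au niveau des complexes comme la composée $\rmR\Gamma_{\syn}\to\Fil^1(\rmR\Gamma_{\Op}\wotimes^R_K\Bdrp)\xrightarrow{\bmod t^{b+1}}\cdots$, flèche entre fibres issue d'un morphisme de triangles distingués (où intervient le calcul de $\Fil^1$ du lemme~\ref{lem:compfildr}) ; c'est ce même diagramme de triangles qui fournit simultanément la commutativité du carré de droite. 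Cet ingrédient de la construction au niveau dérivé manque entièrement à ta preuve, et sans lui le diagramme que tu proposes n'est pas établi.
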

\medskip
\begin{proof}
Notons 
$$
\begin{gathered}
\rmR\Gamma_{\!\syn}\coloneqq \rmR\Gamma_{\!\syn}\Harg{X_C}{\BV(1)},\quad \rmR\Gamma_{\!\HK}\coloneqq \rmR\Gamma_{\!\HK}\Harg{X_K}{D}\\
\rmR\Gamma_{\!\Op}\coloneqq \rmR\Gamma_{\!\Op}\Harg{X_K}{\CE},\quad \rmR\Gamma_{\!\dR}\coloneqq \rmR\Gamma_{\!\dR}\Harg{X_K}{\CE},
\end{gathered}
$$
et pour $n\geqslant 0$ un entier, $\rmH^n_{\star}$, où $\star \in \{\syn, \HK, \Op, \dR\}$, le $n$-ième groupe de cohomologie du complexe correspondant.

En suivant le début de la preuve de \cite[Proposition 3.36]{codoniste}, notons qu'on peut remplacer $\Bstp$ par $\whBstp$ dans le diagramme, ce qui facilite les questions topologiques. On va commencer par définir les deux suites exactes du diagramme. La ligne du dessous est donnée par le petit complexe de de Rham auquel on applique $\cdot \wotimes_Kt^a\rmB_{b+1}$, soit
{\small
$$
0\rightarrow D_K\otimes_Kt^a\rmB_{b+1} \rightarrow \Gr_b\CE(X_K)\wotimes_Kt^a\rmB_{b+1}\xrightarrow{L_{\nabla} \otimes \id}\Gr_a\Omega_{\CE}(X_K)\wotimes_Kt^a\rmB_{b+1}\rightarrow \rmH^1_{\dR}\wotimes_K t^a\rmB_{b+1}\rightarrow 0.
$$
}
La suite reste exacte puisque on tensorise-complète une suite exacte de Fréchet par un Banach.

D'après la proposition \ref{prop:drtoop}, on a
$$
\rmR\Gamma_{\!\syn} = \left [ \left [ \rmR\Gamma_{\!\HK}\wotimes_{K_0}^R\whBstp \right ]^{N=0,\varphi=p}\xrightarrow{\iota_{\HK}\otimes \iota} \intn{\rmR\Gamma_{\!\Op}\wotimes_K^R\Bdrp}/\Fil^{1}\right].
$$
On obtient une suite exacte 
{
\begin{equation}\label{eq:suitexsyn0}
X_{\cri}^{1}(D)\rightarrow\rmH^0{\DR}(X_K,\CE)  \rightarrow  \rmH_{\syn}^1\rightarrow\intn{\rmH^1_{\HK}\wotimes_{K_0}\Bstp}^{N=0,\varphi=p} \xrightarrow{\iota_{\HK}\otimes\iota} \rmH^1{\DR}(X_C,\CE).
\end{equation}}
C'est presque la suite exacte du dessus dans le diagramme de la proposition, mais il faut corriger les termes extrémaux. Notons que $X_{\cris}^1(D)\cong\intn{D[a]\otimes_{K_0}\Bcrisp}^{\varphi=p^{a+1}}$.
\begin{itemize}
\itemb Pour la première flèche de (\ref{eq:suitexsyn0}), en comparant la suite exacte du lemme \ref{lem:suitexho} et la première partie du lemme \ref{lem:calch0}, on obtient le diagramme commutatif suivant : 
\begin{center}
\begin{tikzcd}
0 \ar[r]& t^aX^1_{\cris}(D[a])\ar[r]\ar[d]& X_{\cris}^1(D)\ar[r]\ar[d]& D_K\otimes_K\rmB_a\ar[r]\ar[d,equal]&0\\
0\ar[r]&\Gr_b\CE(X_K)\wotimes_K t^a\rmB_{b+1}\ar[r]&\rmH^0{\DR}(X_K,\CE)\ar[r]&\rmH^0_{\dR} \otimes_K\rmB_a\ar[r]& 0.
\end{tikzcd}
\end{center}
\itemb Pour la dernière flèche de (\ref{eq:suitexsyn0}), comme $ \rmH^1{\DR}(X_C,\CE)\cong \rmH^1_{\dR}\otimes_K \rmB_a$ d'après le lemme \ref{lem:calch0}, on utilise la remarque \ref{rem:suitexho} qui permet de calculer 
$$
\Ker\left [\intn{\rmH^1_{\HK}\wotimes_{K_0}\Bstp}^{N=0,\varphi=p} \xrightarrow{\iota_{\HK}\otimes\iota} \rmH^1{\DR}(X_C,\CE)\right ]\cong t^aX_{\st}^{1}\intn{\rmH^1_{\HK}[a]},
$$ 
\end{itemize}
Finalement, de ces deux calculs et de la suite exacte (\ref{eq:suitexsyn0}) on obtient la suite exacte suivante :
\begin{equation}\label{eq:suitexsyn1}
t^aX_{\cri}^{1}(D[a])\rightarrow\Gr_b\CE(X_K)\wotimes_K t^a\rmB_{b+1}  \rightarrow  \rmH_{\syn}^1 \rightarrow t^aX_{\st}^{1}\intn{\rmH^1_{\HK}[a]}\rightarrow 0.
\end{equation}
Dans le diagramme, la définition du carré de gauche et sa commutativité sont claire. On définit maintenant les applications $\beta$ et $\gamma$ puis on montre que le carré qu'elles forment dans le diagramme commute.
\begin{itemize}
\itemb L'application $\gamma$ est induite par la composée
$$
\rmR\Gamma_{\!\HK}\wotimes_{K_0}\Bstp\xrightarrow{\iota_{\HK}\otimes\iota}{\rmR\Gamma_{\!\Op}\wotimes_K\Bdrp}\xrightarrow{\mod t^{b+1} }\rmR\Gamma_{\!\Op}\wotimes_K\rmB_{b+1}.
$$
On déduit de la remarque \ref{rem:suitexho} le calcul de $\Ker(\gamma)$.
\itemb Le lemme \ref{lem:compfildr} nous donne l'isomorphisme
$$
\Fil^{1}\intn{\rmR\Gamma_{\!\Op}\wotimes_K^R\Bdrp}\cong \Gr_b\CE(X_K)\wotimes_Kt^{b+1}\Bdrp\rightarrow \Gr_a{\CE}(X_K)\wotimes_Kt^{a}\Bdrp,
$$
 et l'application $\beta$ est induite par la composée
{
$$
\rmR\Gamma_{\!\syn}\rightarrow\Fil^{1}\intn{\rmR\Gamma_{\!\Op}\wotimes_K^R\Bdrp}\xrightarrow{\mod t^{b+1}}\intn{0\rightarrow \Gr_a\Omega_{\CE}(X_K)\wotimes_K t^{a+1}\rmB_{b+1}\rightarrow 0}.
$$}
\end{itemize}
Finalement, le diagramme suivant, de morphismes entre triangles distingués, justifie que le carré de droite dans le diagramme commute :
{
\begin{center}
\begin{tikzcd}
\rmR\Gamma_{\!\syn}\ar[r]\ar[d,"\wt{\beta}"]&\left [ \rmR\Gamma_{\!\HK}\wotimes_{K_0}\Bstp \right ]^{\varphi=p,N=0}\ar[r,"\iota_{\HK}\otimes\iota"]\ar[d,"\iota_{\HK}\otimes\iota"]&\rmR\Gamma_{\!\Op}\wotimes_K\Bdrp /\Fil^{1}\ar[d,equal]\\
\Fil^{1}\intn{\rmR\Gamma_{\!\Op}\wotimes_K^R\Bdrp}\ar[r]\ar[d,"\mod t^{b+1}"]&\rmR\Gamma_{\!\Op}\wotimes_K\Bdrp\ar[r,"\mod \Fil^1"]\ar[d,"\mod t^{b+1}"]&\rmR\Gamma_{\!\Op}\wotimes_K\Bdrp /\Fil^{1}\ar[d]\\
\sigma_{\geqslant 1}\rmR\Gamma_{\!\Op}\wotimes_K\rmB_{b+1} \ar[r]&\rmR\Gamma_{\!\Op}\wotimes_K\rmB_{b+1}\ar[r]&\Gr_b\CE(X_K)\wotimes_K\rmB_{b+1}
\end{tikzcd}
\end{center}}
En effet, en degré $1$, on a $\beta \equiv \wt{\beta} \mod t^{b+1}$ et $\gamma\equiv \iota_{\HK}\otimes \iota \mod t^{b+1}$.
\end{proof}

\section{Théorèmes de comparaison}
Le but  de cette section est de démontrer le théorème de comparaison syntomique-proétale. L'énoncé est le suivant :
\medskip
\begin{theo}\label{thm:petsyn}
Soit $X_K$ une courbe rigide Stein lisse sur $K$. Soit $\BV$ un $\Qp$-système local isotrivial et $i\geqslant 0$ un entier et supposons que les poids de Hodge-Tate de $\BV$ soient tous positifs. Il existe, dans $\sD(\rmC_{\Qp})$, un morphisme
$$
\alpha_{\BV(i)}\colon\rmR\Gamma_{\!\syn}\Harg{X_C}{\BV(i)}\rightarrow \rmR\Gamma_{\!\pet}\Harg{X_C}{\BV(i)},
$$
qui est un quasi-isomorphisme strict après troncation par $\tau_{\leqslant i}$. En particulier, on a des isomorphismes topologiques,
$$
\rmH^j_{\syn}\Harg{X_C}{\BV(i)}\cong\rmH_{\pet}^j\Harg{X_C}{\BV(i)}
$$
pour tout entier $j$ tel que $0\leqslant j \leqslant i$.
\end{theo}
\medskip
\begin{rema}
Notons que dans le théorème \ref{thm:petsyn}, si $a\geqslant 0$ est le poids de Hodge-Tate minimal de $\BV$, alors on peut remplacer la troncation par $\tau_{\leqslant i+a}$.
\end{rema}
\medskip
Soit $D\coloneqq \BD(\BV)$ et $\CE=(\CE,\nabla,\Filb)$ le fibré plat filtré associés à $\BV$. Pour démontrer le théorème, on va appliquer $\rmR\Gamma_{\!\pet}\Harg{X_C}{\cdot}$ à la suite exacte fournit par la proposition \ref{prop:suitexfond} :
$$
0\rightarrow \BV\rightarrow \BX_{\cri}(D)\rightarrow \BX_{\dR}^-(\CE)\rightarrow 0,
$$
On obtient un triangle distingué
$$
\rmR\Gamma_{\!\pet}\Harg{X_C}{\BV}\rightarrow \rmR\Gamma_{\!\pet}\Harg{X_K}{\BX_{\cri}({D})}\rightarrow \rmR\Gamma_{\!\pet}\Harg{X_C}{\BX_{\dR}^-(\CE)}.
$$
La proposition suivante en calcul les termes :
\medskip
\begin{prop}\label{prop:compaff}
\
\begin{itemize}
\itemb Il existe un quasi-isomorphisme strict
$$
\rmR\Gamma_{\!\pet}\Harg{X_C}{\BX_{\dR}^-(\CE)}\cong\frac{\intn{\rmR\Gamma_{\!\dR}\Harg{X_K}{\CE}\wotimes^R_K\Bdr}}{\Fil^0}.
$$
\itemb Il existe un quasi-isomorphisme strict
$$
\rmR\Gamma_{\!\pet}\Harg{X_C}{\BX_{\cris}(D)}\cong\left [\rmR\Gamma_{\HK}\Harg{X_K}{D}\wotimes^R_{K_0}\Bst\right ]^{\varphi=1,N=0}.
$$
\end{itemize}
\end{prop}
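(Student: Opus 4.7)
Le principe général sera d'exploiter l'isotrivialité forte de $\BV$ pour se ramener aux théorèmes de Bosco (\cite{bosc1}, \cite{bosc2}) qui calculent la cohomologie proétale à coefficients dans les faisceaux de périodes $\BBcris$ et $\OBBdr$ sur les espaces Stein. L'hypothèse d'isotrivialité forte assure en effet que l'on peut écrire $\CE\cong D_K\otimes_K\CO_{X_K}$ avec connexion $\id\otimes d$, et que $\wh{\BV}\otimes_{\Qp}\BBcris\cong D\otimes_{K_0}\BBcris$ de manière $\varphi$-équivariante, ce qui permet dans tous les calculs proétales de \og sortir \fg{} le facteur constant $D$ hors des foncteurs cohomologiques.

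Pour le second point, on part de la définition $\BX_{\cris}(D)=(D\otimes_{K_0}\BBcris)^{\varphi=1}$. Comme $D$ est de dimension finie sur $K_0$, le foncteur $\rmR\Gamma_{\!\pet}\Harg{X_C}{\cdot}$ commute au produit tensoriel par $D$ et à l'équaliseur homotopique définissant $\varphi=1$ ; on obtient ainsi
$$
\rmR\Gamma_{\!\pet}\Harg{X_C}{\BX_{\cris}(D)}\cong\bigl[D\wotimes^R_{K_0}\rmR\Gamma_{\!\pet}\Harg{X_C}{\BBcris}\bigr]^{\varphi=1}.
$$
D'après Bosco (\cite{bosc1}), on dispose sur un Stein d'un quasi-isomorphisme strict
$$
\rmR\Gamma_{\!\pet}\Harg{X_C}{\BBst}\cong \rmR\Gamma_{\!\HK}(X_K)\wotimes^R_{K_0}\Bst,
$$
compatible à $\varphi$ et $N$, dont on déduit l'analogue cristallin par passage à $N=0$ (via $\Bst^{N=0}=\Bcris$). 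Tensoriser par $D$ et prendre les points fixes sous $\varphi=1$ donnera alors, compte tenu de la définition (\ref{eq:defhkisot}) de $\rmR\Gamma_{\!\HK}\Harg{X_K}{D}$, le quasi-isomorphisme strict annoncé.

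Pour le premier point, plus délicat à cause de la filtration, on s'appuiera sur le lemme de Poincaré filtré pour $\OBBdr$ (\cf \cite[Corollary 6.13]{schphdg}) étendu aux coefficients dans un fibré plat filtré, qui fournit une résolution
$$
\BX_{\dR}(\CE)\to \wh{\CE}\otimes_{\wh{\CO}_{X_K}}\OBBdr\otimes_{\CO_{X_K}}\Omega^{\bullet}_{X_K},
$$
compatible aux filtrations (où la filtration à gauche provient de $\BX_{\dR}^+(\CE)$). En appliquant $\rmR\Gamma_{\!\pet}\Harg{X_C}{\cdot}$ terme à terme, en utilisant la trivialisation de $\CE$ et de $\nabla$ fournie par l'isotrivialité forte, puis le calcul de Bosco identifiant $\rmR\Gamma_{\!\pet}\Harg{X_C}{\OBBdr}$ à $\CO(X_K)\wotimes^R_K\Bdr$, on obtient un quasi-isomorphisme strict
$$
\rmR\Gamma_{\!\pet}\Harg{X_C}{\BX_{\dR}(\CE)}\cong \rmR\Gamma_{\!\dR}\Harg{X_K}{\CE}\wotimes^R_K\Bdr,
$$
ainsi que sa variante positive avec $\BBdrp$ et $\BX_{\dR}^+(\CE)$ à gauche, $\Fil^0$ à droite. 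Le passage au quotient définissant $\BX_{\dR}^-(\CE)$ fournira alors l'identification voulue.

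Le point technique principal sera de maintenir la strictitude des quasi-isomorphismes et la cohérence des filtrations lors des manipulations avec les produits tensoriels complétés dérivés, notamment pour s'assurer que le foncteur $(-)/\Fil^0$ préserve effectivement l'équivalence dans $\sD(\rmC_{\Qp})$ ; le cadre de \cite[2.1]{codoniste} ainsi que les résultats de finitude de \cite{bosc2} sur les espaces Stein fourniront les ingrédients nécessaires.
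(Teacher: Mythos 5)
Votre approche est globalement dans l'esprit de la preuve du papier : on s'appuie sur Bosco, et on manipule les faisceaux de périodes en exploitant l'isotrivialité. Mais il y a deux points où votre argument est soit différent de celui du papier, soit lacunaire.

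Pour la partie de Rham, le papier ne passe pas par une résolution de Poincaré pour $\BX_{\dR}(\CE)$ : il applique directement le théorème de comparaison de Bosco (\cite[Theorem 6.5]{bosc1}) au faisceau $\BV_{\dR}=\BV\otimes_{\Qp}\BBdr$, ce qui donne $\rmR\Gamma_{\!\pet}\Harg{X_C}{\BV_{\dR}}\cong\rmR\Gamma_{\!\dR}\Harg{X_K}{\CE}\wotimes^R_K\Bdr$ de manière filtrée, puis déduit l'énoncé par la suite exacte $0\to\BV_{\dR}^+\to\BV_{\dR}\to\BX_{\dR}^-(\CE)\to 0$. Votre route plus \og calculatoire \fg{} via un lemme de Poincaré pour $\BX_{\dR}(\CE)$ aboutit probablement au même résultat, mais elle est moins économe et demande un contrôle supplémentaire de la filtration que vous ne faites que suggérer.

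Pour la partie cristalline, il y a une vraie lacune. Vous écrivez que l'on peut \og commuter $\rmR\Gamma_{\!\pet}$ avec l'équaliseur définissant $\varphi=1$ \fg{} et invoquer un théorème de Bosco calculant $\rmR\Gamma_{\!\pet}\Harg{X_C}{\BBst}$. Le problème est que $\varphi-1$ n'est pas surjectif sur $\BBcris$ : la suite exacte de faisceaux dont on a besoin est $0\to(\BBcris)^{\varphi=1}\to\BB[1/t]\xrightarrow{\varphi-1}\BB[1/t]\to 0$ (\cite[Corollary 2.26]{bosc2}), où le faisceau $\BB[1/t]$ est précisément celui que calcule Bosco (\cite[Theorem 4.1]{bosc2}, et non \cite{bosc1}). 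Le papier exploite ensuite l'existence d'une $\GQp$-représentation cristalline $V$ avec $\bD_{\cris}(V)=D$ : en tensorisant la suite exacte ci-dessus par $V$, on obtient la suite exacte analogue pour $\BX_{\cris}(D)$, d'où le triangle distingué désiré. Votre version en \og $D\wotimes^R_{K_0}\rmR\Gamma_{\!\pet}\Harg{X_C}{\BBcris}$ puis équaliseur $\varphi=1$ \fg{} occulte ce point précis et ne peut pas être menée à bien telle quelle : il faut passer par $\BB[1/t]$ et la surjectivité de $\varphi-1$ sur ce faisceau, qui est l'ingrédient technique central de la preuve.
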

\medskip
\begin{proof}
Notons $\rmR\Gamma_{\!\dR}\coloneqq \rmR\Gamma_{\!\dR}\Harg{X_K}{\CE}$ et $\rmR\Gamma_{\HK}\coloneqq \rmR\Gamma_{\HK}\Harg{X_K}{D}$. On commence par le premier point. 

Soit $\BV_{\dR}^{(+)}\coloneqq \BV\otimes_{\Qp}\rmB_{\dR}^{(+)}$. D'après le théorème \cite[Theorem 6.5]{bosc1}, en considérant les points des complexes solides, on a un quasi-isomorphisme 
$$
\rmR\Gamma_{\!\pet}\Harg{X_C}{\BV_{\dR}}\cong{\intn{\rmR\Gamma_{\!\dR}\wotimes^R_K\Bdr}},
$$
compatible avec la filtration, ce qui signifie en particulier qu'on a un quasi-isomorphisme 
$$
\rmR\Gamma_{\!\pet}\Harg{X_C}{\BV_{\dR}^+}\cong\Fil^0{\intn{\rmR\Gamma_{\!\dR}\wotimes^R_K\Bdr}}.
$$
Justifions que le second quasi-isomorphisme est strict, le premier s'en déduit en inversant $t$. Par le lemme \ref{lem:calcfilun}, le terme de droite est un complexe d'espaces de Fréchet. D'après le lemme de Poincaré (\cf \cite[Corollary 6.13]{schphdg}), le terme de gauche se représente par un complexe d'espaces de Fréchet. On conclut alors par le second point du lemme \cite[Lemme 2.1]{codoniste}. Ainsi, le triangle distingué obtenu en appliquant $\rmR\Gamma_{\!\pet}\Harg{X_C}{\ \cdot \ }$ à la suite exacte
$$
0\rightarrow \BV_{\dR}^+\rightarrow\BV_{\dR}\rightarrow \BX_{\dR}^-(\CE)\rightarrow 0
$$
fournit un quasi-isomorphisme strict
$$
\rmR\Gamma_{\!\pet}\Harg{X_C}{\BV_{\dR}/\BV_{\dR}^+}\cong\frac{\intn{\rmR\Gamma_{\!\dR}\otimes^R_K\Bdr}}{\Fil^0},
$$
ce qui démontre le premier point.
Pour le second point, on utilise \cite[Theorem 4.1 ]{bosc2} (voir aussi \cite[Proposition 3.11]{leb}) qui, après avoir inversé $t$ pour se débarrasser du foncteur de décalage et après avoir pris les points des complexes solides, donne un quasi-isomorphisme, (\cf \cite[Definition 2.23]{bosc2} pour la définition de $\BB$),
$$
\rmR\Gamma_{\!\pet}\Harg{X_C}{\BB[1/t]}\cong \intn{\rmR\Gamma_{\!\HK}(X_K)\wotimes^R_{K_0}\Bst}^{N=0}.
$$
On utilise ensuite la suite exacte de faisceaux proétales (\cf \cite[Corollary 2.26]{bosc2}),
$$
0\rightarrow (\BBcris)^{\varphi=1}\rightarrow \BB[1/t]\xrightarrow{\varphi-1}\BB[1/t]\rightarrow 0.
$$
Soit $V$ une représentation cristalline de $\sG_K$ telle que $\bD_{\cris}(V)=D$, en tensorisant la suite ci-dessus par $V$ on obtient
$$
0\rightarrow \BX_{\cris}(D)\rightarrow \BB[1/t]\otimes_{K_0}D\xrightarrow{\varphi-1}\BB[1/t]\otimes_{K_0}D\rightarrow 0.
$$
Ainsi, on obtient un triangle distingué
$$
\rmR\Gamma_{\!\pet}\Harg{X_C}{\BX_{\cris}(D)}\rightarrow \rmR\Gamma_{\!\pet}\Harg{X_C}{\BB[1/t]\otimes_{K_0}D}\rightarrow \rmR\Gamma_{\!\pet}\Harg{X_C}{\BB[1/t]\otimes_{K_0}D}
$$
ce qui permet de conclure qu'on a un quasi-isomorphisme entre les fibres
$$
\rmR\Gamma_{\!\pet}\Harg{X_C}{\BX_{\cris}(D)}\cong\left [\rmR\Gamma_{\HK}\otimes^R_{K_0}\Bst\right ]^{\varphi=1,N=0}.
$$
Ce quasi-isomorphisme est stricte pour les même raisons que le premier, les deux termes s'écrivent comme limite inductive d'espaces de Fréchet, où on utilise le lemme de Poincaré cristallin (\cf \cite[Corollary 2.17]{tantong}) pour le terme de gauche, le lemme \ref{lem:steinhk} pour le terme de droite, puis on applique le lemme \cite[Lemme 2.1]{codoniste}.
\end{proof}
Ainsi, on obtient un triangle distingué
$$
\rmR\Gamma_{\!\pet}\Harg{X_C}{\BV}\rightarrow\left [\rmR\Gamma_{\HK}\Harg{X_K}{D}\wotimes^R_{K_0}\Bst\right ]^{\varphi=1,N=0}\rightarrow\frac{\intn{\rmR\Gamma_{\!\dR}\Harg{X_K}{\CE}\wotimes^R_K\Bdr}}{\Fil^0},
$$
et comme on travail dans $\sD(\rmC_{\Qp})$, le terme de gauche est bien la fibre de la flèche de droite. On obtient le théorème en remplaçant $\Bdr$ par $\Bdrp$ et $\Bst$ par $\Bstp$ quitte à tordre par une puissance du caractère cyclotomique.
\qed
\medskip
\begin{rema}
Le même argument permet de démontrer le théorème \ref{thm:petsyn} dans le cas où $X$ est propre, mais l'argument est plus direct puisqu'on peut appliquer les théorèmes de \cite{schphdg} à la place de la proposition \ref{prop:compaff}.
\end{rema}

\cleardoublepage
\part{Cohomologie isotriviale de la tour de Drinfeld}
\section{Le demi-plan $p$-adique et sa tour de revêtements}

\Subsection{Le demi-plan $p$-adique}
On note $C\coloneqq \wh{\bar{\BQ}}_p$ le complété d'une clôture algébrique de $\Qp$ et $G\coloneqq \GL_2(\Qp)$. On rappelle que le \emph{demi-plan $p$-adique} est l'espace rigide analytique, ouvert admissible de~$\BP^1_{\Qp}$, défini par
$$
\BH_{\Qp}\coloneqq \BP^1_{\Qp}\setminus \BP^1(\Qp).
$$
On note $\BH_{C}\coloneqq \BH_{\Qp}\otimes_{\Qp}C$ l'extension des scalaires à $C$. On note aussi $\pi\colon \BH_{\Qp} \hookrightarrow \BP^1_{\Qp}$ le plongement naturel. L'espace rigide analytique $\BH_{\Qp}$ est un espace Stein, puisqu'en effet, si l'on note pour $n\in \BN$ 
$$
\BH_{n} = \BP^1_{\Qp}\setminus\bigcup_{x\in \BP^1(\BZ/p^n)} \rmB(x,n),
$$
où $\rmB(x,n)$ est la boule ouverte centrée en $x$ et de rayon $p^{-n}$, \ie  si $\tilde x=a_0 +\dots +a_np^n$ a pour réduction  $x\in \BZ/p^n$, les $C$-points sont $\rmB(x,n)(C) = \{z\in C\mid v_{p}(\tilde x-z)\geqslant n\}$ et si $\tilde x$ a pour réduction $x^{-1}\in (\BZ/p^n)$  alors $\rmB(x,n)(C) = \{z\in C\mid v_{p}(\tilde x^{-1}-z^{-1})\geqslant n\}$. On a alors $\BH_{\Qp}=\varinjlim_n \BH_{n}$. Ainsi, les espaces des sections globales $\CO\coloneqq\CO(\BH_{\Qp})=\varprojlim_n\CO(\BH_{n})$ et $\Omega^1\coloneqq\Omega^1(\BH_{\Qp})$ sont des espaces de Fréchet. On note $z\in \CO$ la coordonnée naturelle de $\BP^1_{\Qp}$. L'action naturelle de $G$ sur $\BP^1_{\Qp}$ est donnée sur $z$ par 
$$
g=\begin{pmatrix} a&b\\ c& d \end{pmatrix}\in G,\quad g\cdot z = \frac{dz-b}{a-cz}.
$$
Cette action se restreint en une action sur $\BH_{\Qp}$ et munit $\CO$ d'une action linéaire de $G$.
Soient $k,m\in \BZ$ des entiers, on définit $\CO\{k,m\}$ comme $\Qp$-représentation de $G$ de sorte qu'en tant que $\Qp$-espace vectoriel topologique $\CO\{k,m\}=\CO$ muni de l'action de $G$ donnée par
$$
f\in \CO\{k,m\}(\BH_{\Qp}),\quad (g\star f)(z)=j(z,g)^{-k}\det(g)^{m}f(g\cdot z),
$$
où $j$ est le facteur d'automorphie donné pour $g=\begin{pmatrix} a&b\\ c& d \end{pmatrix}$ par $j(z,g) \coloneqq a-cz$. On a par exemple, $\Omega^1\cong \CO\{2,1\}$ 

\Subsection{L'espace de Drinfeld}

\subsubsection{$\rmOD$-modules formels spéciaux}\label{subsub:sfdrecap}
Soit $\rmD$ l'algèbre des quaternions non scindé sur $\Qp$, \ie  l'unique corps gauche tel que $\inv_{\Qp}\rmD=\frac 1 2$. On note $\rmOD\subset \rmD$ l'unique ordre maximal et on fixe $\varpi_{D}\in \rmO_{\rmD}$ tel que $\varpi_{D}^2=p$. Si l'on choisit un plongement $\Qpd\hookrightarrow \rmD$, où $\Qpd/\Qp$ est l'unique extension quadratique non ramifiée, on a $\rmD=\Qpd[\varpi_{D}]$ et $\rmOD=\Zpd[\varpi_{D}]$. On note $\czG\coloneqq \rmD^{\times}$ le groupe des inversibles de $D$.

Soit $S$ un schéma formel sur $\Spf (\Zp)$. Un \emph{$\rmOD$-module $p$-divisible} sur $S$ est un groupe $p$-divisible formel $\CG$ sur $S$ muni d'une action de $\rmOD$, \ie  muni d'un morphisme $\iota\colon\rmOD\rightarrow \End_S(\CG)$. En particulier, $\Lie(\CG)$ est un $\Zpd\otimes_{\Zp}\CO_S$-module et on dit que $\CG$ est un \emph{$\rmOD$-module formel spécial} si $\Lie(\CG)$ est un $\Zpd\otimes_{\Zp}\CO_S$-module localement libre de rang $1$.

En suivant \cite[3.58]{razi}, on explicite maintenant le module de Dieudonné d'un $\rmOD$-module formel sur $\Fpbar$, qui est unique à isogénie près d'après \cite[Lemma 3.60]{razi}. Rappelons qu'on note $\Zpbr\coloneqq\rmW(\Fpbar)$, l'anneau des entiers de $\Qpbr\coloneqq \wh{\Qp^{\nr}}$, le complété de l'extension maximale non-ramifiée de $\Qp$, qui est muni d'un automorphisme de Frobenius noté $\sigma$. Posons le $\Zpbr$-module
$$
\brv{\bM}^+\coloneqq\rmOD\otimes_{\Zp}\Zpbr,\text{ muni de } \bV=\varpi_{D}\otimes\sigma^{-1}.
$$
Ces données définissent un groupe $p$-divisible qui est naturellement un $\rmOD$-module formel spécial sur $\Fpbar$, que l'on note $\bG$. L'isocristal associé est $\brv{\bM}\coloneqq \brv{\bM}^+\otimes_{\Zp}\Qp=\rmD\otimes_{\Qp}\Qpbr$, muni du Frobenius $\bF=\varpi_{D}\otimes \sigma$. Cet isocristal est muni d'une action de $G$ qui s'identifie au groupe des automorphismes $\rmOD$-invariants (\cf \cite[Lemma 3.60]{razi}).


\subsubsection{L'espace de Drinfeld}
Soit $\Nilp_{\Zpbr}$ la catégorie des $\Zpbr$-algèbres telles que $p$ est Zariski-localement nilpotent. On définit un foncteur 
$$
R\mapsto\left \{ (X,\rho) \mid \begin{gathered} X \text{ un $\rmOD$-module formel spécial sur } R\\ \rho\colon \bG\otimes_{\Fpbar} R/p \dashrightarrow X\otimes_{R}R/p \text{ une quasi-isogénie $\rmOD$-linéaire} \end{gathered}\right \}.
$$
D'après \cite[Proposition 3.63]{razi}, ce foncteur est représentable par un schéma formel $p$-adique sur $\Zpbr$, que l'on note $\sM_{\Dr}$. Alors, $\sM_{\Dr}$ est muni d'une action naturelle de $G$ par précomposition. Cette action est donnée pour $g\in G$ sur les $R$-points par $(X,\rho)\mapsto (X,\rho\circ g)$. On peut décomposer $\sM_{\Dr}$ suivant la hauteur de la quasi-isogénie 
$$
\sM_{\Dr}=\bigsqcup_{n\in\BZ}\sM_{\Dr}[2n],
$$
où les composantes sont isomorphes et définies sur $\Zpbr$. L'action de $G$ sur les composantes connexes est donnée pour $g\in G$ par $n\mapsto n+v_{p}(\det(g))$. On a une donnée de descente à $\BZ_p$ qui est définie par la multiplication $p\colon \sM_{\Dr}[2n]\rightarrow \sM_{\Dr}[2n+4]$ vu comme élément du centre de $G$. Le quotient $\presp{\sM_{\Dr}}\coloneqq\sM_{\Dr}/{p}^{\BZ}$ admet un modèle sur $\Zp$. On note $\rmM^0_{\Dr}$ la fibre générique de $\sM_{\Dr}[0]$ et $\brv{\rmM}^0_{\Dr}$ la fibre générique de $\sM_{\Dr}$ puis on note $\rmM^0_{C}\coloneqq \rmM^0_{\Dr}\otimes_{\Qp} C$ et $\brv{\rmM}^0_{C}\coloneqq \brv{\rmM}^0_{\Dr}\otimes_{\Qp} C$ leur extension des scalaires à $C$. Notons que $\brv{\rmM}^0_{\Dr}\cong \bigsqcup_{n\in \BZ}\rmM_{\Dr}^0$ ce qui permet d'exprimer le quotient $\presp{\rmM^0_{\Dr}}\coloneqq\brv{\rmM}^0_{\Dr}/p^{\BZ}$ à partir de deux copies de $\rmM^0_{\Dr}$. De plus, $\presp{\rmM^0_{\Dr}}$ admet un modèle sur $\Qp$ que l'on note $\presp{\rmM^0_{\Qp}}$ et $\presp{\rmM}^0_{C}\coloneqq \presp{\rmM^0_{\Qp}}\otimes_{\Qp} C$ son extension des scalaires à $C$.

\subsubsection{La famille universelle}\label{subsub:unimif}
L'espace $\sM_{\Dr}$ est muni d'une famille universelle $\CG_{\Dr}\rightarrow \sM_{\Dr}$ qui est un $\rmOD$-module formel spécial sur $\sM_{\Dr}$. Le module de Tate de ce groupe $p$-divisible universel définit sur $\brv{\rmM}^0_{\Dr}$ un $\Zp$-système local étale noté $\BV_{\Dr}^+\coloneqq \BT_p(\CG_{\Dr})$ (\cf \ref{subsubsec:grppd}). D'après le lemme \ref{lem:pdivisoc}, il définit un $\Qp$-système local étale, noté $\BV_{\Dr}\coloneqq\BV_p(\CG_{\Dr})$ qui est isotrivial de $\varphi$-module associé $\brv{\bM}[-1]$ et de poids de Hodge-Tate $0$ et $1$ (chacun de multiplicité $2$).  

on a un morphisme des périodes $\pi \colon \rmM_{C}^0\rightarrow \BP^1_C$ qui est un plongement d'image l'ouvert admissible $\BH_{C}\subset \BP^1_C$ d'après le théorème de Drinfeld (\cf \cite{drin}, \cite[Theorem 3.72]{razi}). Ainsi, on a des isomorphismes $\rmM_{C}^0\cong \BH_C$ et $\presp{\rmM}^0_{C}\cong \BH_C\sqcup \BH_C$. Ce morphisme est caractérisé par la propriété 
\begin{equation}\label{eq:propunivperiodmor}
\Zpd\otimes_{\Zp}\pi^*\CO_{\BP^1}(1)\cong \Lie\ \sG_{\Dr}
\end{equation}
où $\CO_{\BP^1}(1)$ est le fibré en droite ample usuel de la droite projective et $\Lie\ \sG_{\Dr}$ est le $\Zpd\otimes_{\Zp}\CO$-module inversible donné par l'algèbre de Lie du $\rmOD$-module formel spécial universel.

\subsection{La tour de revêtements}
\subsubsection{Définition}
Soit $n\geqslant 1$ un entier, la famille universelle définit un revêtement étale $\CG_{\Dr}[\varpi_{\rmD}^n]_C\rightarrow \rmM_{C}^0$. Ce revêtement est étale puisque toute algèbre de Hopf de rang finie en caractéristique~$0$ est étale. On définit le \emph{$n$-ième étage de la tour de Drinfeld} par 
$$
\rmM_{C}^n\coloneqq \CG_{\Dr}[\varpi_{\rmD}^n]_C\setminus \CG_{\Dr}[\varpi_{\rmD}^{n-1}]_{C}\rightarrow \rmM_{C}^0.
$$
En terme de problème de modules : posons $V_{\Dr}^+\coloneqq \rmOD$, l'espace $\rmM_{C}^n$ classifie alors les isomorphismes $\rmODt$-équivariants entre $V_{\Dr}^+/\varpi_{\rmD}^n$ et $\BV_{\Dr}^+/\varpi_{\rmD}^n$. D'après \cite{scwe}, il existe un espace perfectoïde  $\wh{\rmM^{\infty}_{C}}$ tel que  $\wh{\rmM^{\infty}_{C}}\sim \varprojlim_{n}\rmM^n_{C}$ et $\wh{\brv{\rmM}^{\infty}_{C}}$ tel que  $\wh{\brv{\rmM}^{\infty}_{C}}\sim \varprojlim_{n}\brv{\rmM}^n_{C}$. Le revêtement $\rmM_{C}^n\rightarrow \rmM_{C}^0$ a pour groupe de Galois $\rmODt/({1+\varpi_{\rmD}^n\rmOD})$ et on obtient un diagramme
\begin{center}
\begin{tikzcd}
\wh{\brv{\rmM}^{\infty}_{C}}\ar[r]\ar[d]\ar[dr,"\czG"]&\wh{\rmM^{\infty}_{C}}\ar[d,"\rmODt"]\ar[dr,"\pi"]\\
\brv{\rmM}_{C}^0\ar[r,"\varpi_{\rmD}^{\BZ}"]&\rmM_{C}^0\ar[r,hook]&\BP^1_C
\end{tikzcd}
\end{center}
Le $\Qp$-système local étale isotrivial $\BV_{\Dr}$ se définit sur $\rmM_{C}^n$ de la même manière que précédemment. Sur $\wh{\rmM_{C}^{\infty}}$, le système local $\BV_{\Dr}$ devient trivial. On note simplement $\brv{\rmM}^{\infty}_{C}$ et $\rmM^{\infty}_{C}$ les systèmes projectifs que l'on distinguera des espaces perfectoïdes complétés correspondant. On appellera abusivement ces systèmes projectifs des \emph{tours (de revêtement)} et c'est eux dont on veut calculer la cohomologie. La tour de revêtements $\brv{\rmM}^{\infty}_{C}$ est muni d'une action de $G\times \czG$.

Comme précédemment, on définit le quotient $\presp{\rmM}^n_{C}\coloneqq \brv{\rmM}_{C}^{n}/p^{\BZ}$ qui admet un modèle sur $\Qp$ que l'on note $\presp{\rmM}^n_{\Qp}$ et qui définit une tour de revêtements $\presp{\rmM^{\infty}_{\Qp}}$ de $\presp{\rmM}^0_{\Qp}$. Ainsi, on obtient une action de $\sW_{\Qp}$, ce qui défini finalement sur le système projectif $\presp{\rmM^{\infty}_{C}}$ une action du produit 
$$
\BG\coloneqq \sW_{\Qp}\times G\times \czG.
$$
On peut définir une action de $\sW_{\Qp}$ sur $\brv{\rmM}^{\infty}_{C}$ qui donne l'action décrite sur son quotient $\presp{\rmM^n_{C}}$ et définit ainsi une action de $\BG$ tout entier sur $\brv{\rmM}^{\infty}_{C}$ : il suffit de remarquer que le carré du Frobenius décrit la donné de descente effective (\cf \cite[Theorem 3.72]{razi} et \cite[Theorem 3.49]{razi}).
On note $\CO^n\coloneqq \CO(\presp{\rmM_{\Qp}^n})$ qui est une $\Qp$-représentation de $G\times \czG$. De même que pour le demi-plan, on définit pour $k\geqslant 0$ un entier et $m\in\BZ$ la représentation $\CO^n\{k,m\}$ : en tant que $\Qp[\czG]$-module topologique on pose $\CO^n\{k,m\}=\CO^n$, et on définit l'action de $g\in G$ pour $f\in\CO^n$ par 
$$
g = 
\begin{pmatrix}
 a & b \\ c & d
\end{pmatrix}\in G,\quad 
(g\star f)(z) = j(z,g)^{-k}\det(g)^{m}f(g\cdot z),
$$
où l'on note $\pi \colon \presp{\rmM_{\Qp}^n}\rightarrow \BH_{\Qp}$ la projection naturelle et $j(z,g) = (a-c\pi(z))\in\CO$ le facteur d'automorphie. L'action considérée a un sens puisque $\CO^n$ est naturellement un $\CO$-module.
\subsubsection{Composantes connexes}\label{subsubsec:compco}
Décrivons les composantes connexes de $\brv{\rmM}_{C}^{\infty}$. Rappelons que, d'après Strauch (\cf  \cite{str}) on a une surjection $\rmM_{C}^{\infty}\rightarrow \Zp^{\times}$ donnant les composantes connexes de la tour. Plus précisément, si l'on choisit un générateur de $\Zp(1)$, on obtient un isomorphisme
$$
\pi_0(\rmM_{C}^{\infty})\cong\Isom(\Zp,\Zp(1))\cong \Zp^{\times}.
$$
Ainsi, on en déduit un isomorphisme $\pi_0(\brv{\rmM}_{C}^{\infty})\cong \Qpt$ et une surjection $\brv{\rmM}_{C}^{\infty}\rightarrow \Qp^{\times}$. Comme $\BG$ opère sur $\brv{\rmM}_{C}^{\infty}$ cette action induit une action sur $\Qpt$. Si l'on note, comme dans l'introduction, $\nu \colon \BG\rightarrow \Qpt$ le morphisme donné par\footnote{Rappelons que le morphisme de réciprocité est normalisé de sorte à ce que le Frobenius arithmétique soit envoyé sur $p$.}
$$
(\sigma, g, \cz{g}) \mapsto \rec (\sigma)\otimes \det (g)^{-1}\otimes \nrd( \cz{g}),
$$
alors $\BG$ opère sur $\Qpt$ par multiplication par $\nu$. De plus, on introduit les groupes 
$$
\BG^{+}\coloneqq \nu^{-1}(\Zpt),\quad \BG^{\circ}\coloneqq \ker \nu.
$$
Alors, $\BG^{+}$ stabilise $\rmM_{C}^{\infty}$ et $\BG^{\circ}$ stabilise les composantes connexes.

Le lemme suivant résume ces observations, déduites de \cite[Theorem 4.4]{str} : 
\medskip
\begin{lemm}\label{lem:deccar}
Soit $L/\Qp$ une extension finie. En tant que $L$-représentation de $\BG$,
$$
\rmH^0_{\et}\Harg{\brv{\rmM}_{C}^{\infty}}{L}=\Ind_{\BG^+}^{\BG}\rmH^0_{\et}\Harg{\rmM_{C}^{\infty}}{L}.
$$
De plus,
$$
\rmH^0_{\et}\Harg{\rmM_{C}^{\infty}}{L}=\bigoplus_{\chi\colon \tim{\BZ}_p\rightarrow L^{\times}} (\chi\circ \nu)
$$
où la somme directe porte sur l'ensemble des caractères lisses de $\tim{\BZ}_p$ à valeurs dans $L^{\times}$.
\end{lemm}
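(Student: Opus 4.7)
The plan is to deduce both assertions formally from Strauch's identification $\pi_0(\brv{\rmM}_{C}^{\infty}) \cong \Qpt$ (cited as \cite[Theorem 4.4]{str}), which is $\BG$-equivariant with $\BG$ acting via multiplication by $\nu$, together with the observation that for a pro-rigid space $Y$ the group $\rmH^0_{\et}\Harg{Y}{L}$ computes the $L$-valued locally constant functions on the pro-discrete set $\pi_0(Y)$. In particular,
\[
\rmH^0_{\et}\Harg{\brv{\rmM}_{C}^{\infty}}{L} = C_{lc}(\Qpt, L), \qquad \rmH^0_{\et}\Harg{\rmM_{C}^{\infty}}{L} = C_{lc}(\Zpt, L),
\]
where $\Zpt \subset \Qpt$ is the preimage of $0 \in \BZ = \pi_0(\brv{\rmM}_{C}^{0})$ under the valuation map.

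For the first equality I would use the partition $\Qpt = \bigsqcup_{k \in \BZ} p^k \Zpt$ into $\BG^+$-stable subsets. Because $\BG^+ = \nu^{-1}(\Zpt)$ stabilises $\Zpt$ and the quotient $\BG/\BG^+ \cong \BZ$ (identified via $\nu$ modulo $\Zpt$) transitively permutes these pieces, one obtains
\[
C_{lc}(\Qpt, L) \;\cong\; \bigoplus_{k \in \BZ} C_{lc}(p^k\Zpt, L) \;=\; \Ind_{\BG^+}^{\BG} C_{lc}(\Zpt, L),
\]
which is exactly the asserted induction formula. This step is essentially bookkeeping once the $\BG$-action on $\pi_0$ is known.

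For the second equality, the profinite abelian group $\Zpt$ acts on $C_{lc}(\Zpt, L)$ by translation, and the $\BG^+$-action is the pullback of this along $\nu\colon \BG^+ \twoheadrightarrow \Zpt$. Writing $C_{lc}(\Zpt, L) = \varinjlim_n L[\Zpt/(1+p^n\Zp)]$ and decomposing each finite group algebra via the Fourier transform (Maschke/orthogonality for the finite abelian quotients), one obtains $\bigoplus_\chi L\cdot \chi$ over smooth characters $\chi : \Zpt \to L^{\times}$, each pulling back via $\nu$ to the asserted one-dimensional summand of $\BG^+$.

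The only genuine subtlety is the standing hypothesis that $L$ is sufficiently large: the Fourier decomposition requires $L$ to contain roots of unity of order dividing $(p-1)p^n$ for every $n$ in order for every smooth character of $\Zpt$ to be defined over $L$. In practice the sum over smooth characters should be read as taking place at a fixed level $n$, with $L$ chosen accordingly, and the identity in the lemma as the limit. Once this is granted, both steps are formal and no further geometric input beyond Strauch's theorem is needed.
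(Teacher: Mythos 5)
Your approach is the paper's implicit one: the lemma is a repackaging of Strauch's identification $\pi_0(\brv{\rmM}_{C}^{\infty}) \cong \Qpt$ (with $\BG$ acting through $\nu$ and $\BG^+ = \nu^{-1}(\Zpt)$ stabilising $\Zpt$), and the paper does not spell out the deduction further. Reducing $\rmH^0_{\et}$ to locally constant functions on $\pi_0$ and then Fourier--decomposing at each finite level is exactly what is intended, and your caveat about $L$ needing enough roots of unity for the smooth characters of $\Zpt$ to be defined over $L$ is a real subtlety worth keeping: read literally, the second equality is valid level by level provided $L$ is chosen accordingly.

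However, there is a genuine error in your first step. Since $\Qpt = \bigsqcup_{k\in\BZ}p^k\Zpt$ is a partition into clopen pieces with no support condition, the locally constant $L$-valued functions form the \emph{product} $\prod_{k\in\BZ}C_{lc}(p^k\Zpt, L)$, not the direct sum. Concretely, already at finite level $\rmH^0_{\et}\Harg{\brv{\rmM}_{C}^{n}}{L}$ is a product over the countably infinite set $\pi_0(\brv{\rmM}_{C}^{n}) \cong \Qpt/(1+p^n\Zp)$, and passing to the colimit over $n$ keeps it a product. Your $\bigoplus_{k\in\BZ}$ would give the compact induction, i.e.\ those functions vanishing on all but finitely many cosets $p^k\Zpt$, which is a proper subspace. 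The induction in the lemma is the full (continuous) one, as the paper itself confirms in Remark \ref{rem:concomp}(1): $\Ind_{\BG^+}^{\BG}\rmH^0_{\et}$ is identified there with \emph{all} $\rmH^0_{\et}$-valued functions on $\BZ$, i.e.\ $\prod_{i\in\BZ}\rmH^0_{\et}$. Replace $\bigoplus$ with $\prod$ (equivalently, read $\Ind$ as full rather than compact induction) and the argument goes through.
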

\medskip
\begin{rema}\label{rem:concomp}
\begin{enumerate}
\item Dans cette remarque, à partir du résultat pour $\rmH^0_{\et}\coloneqq\rmH^0_{\et}\Harg{\rmM_{C}^{\infty}}{L}$,
on va expliquer que tous les caractères lisses de $\Qpt$ apparaissent en sous-objet de $\brv{\rmH}^0_{\et}\coloneqq\rmH^0_{\et}\Harg{\brv{\rmM}_{C}^{\infty}}{L}$.
 Notons que comme $\BG/\BG^+\cong p^{\BZ}$ via $\nu$, l'induction $\Ind_{\BG^+}^{\BG}\rmH^0_{\et}$ s'identifie aux fonctions sur $\BZ$ à valeurs dans $\rmH^0_{\et}$ et même, puisque ce dernier n'est constitué que de caractères, on a un isomorphisme 
$$
\brv{\rmH}^0_{\et}\cong \prod_{i\in \BZ}\rmH^0_{\et}
$$
où l'action de $\BG$ décale les facteurs. On en déduit que l'inclusion diagonale $\rmH^0_{\et}\rightarrow \brv{\rmH}^0_{\et}$ est stable par $\BG$. On peut tordre cette représentation par les caractères de $p^{\BZ}$ pour finalement obtenir une inclusion
$$
\bigoplus_{\chi\colon \Qpt\rightarrow L^{\times}} (\chi \circ \nu)\incl \brv{\rmH}^0_{\et},
$$
où la somme à gauche porte sur l'ensemble des caractères lisses $\chi \colon \Qpt \rightarrow L^{\times}$.
\item Dans la description précédente, l'inclusion $\rmH^0_{\et}\incl\brv{\rmH}^0_{\et}$ stable par $\BG$ n'est pas l'inclusion naturelle donné géométriquement. Le même phénomène apparait pour l'inclusion du caractère trivial $\boldsymbol{1} \circ \nu
\incl \rmH^0_{\et}$ qui n'est pas donné par la projection sur une composante connexe ; cette dernière n'est pas stable par $\BG^+$.
\item Comme on travaillera surtout avec $\presp{\rmM}_{C}^{\infty}$, notons que l'on a 
$$
\presp{\rmH}_{\et}^0\coloneqq\rmH^0_{\et}\Harg{\presp{\rmM}_{C}^{\infty}}{L} = \rmH^0_{\et}\oplus \rmH^0_{\et}\otimes (\chi_2\circ \nu)
$$
où $\chi_2\colon \Qpt \rightarrow L^{\times}$ est l'unique caractère quadratique non-ramifié, \ie $\chi_2(p)=-1$. Comme pour le point précédent, l'inclusion $ \rmH^0_{\et} \incl \presp{\rmH}^0_{\et}$ de la décomposition n'est pas donné par géométriquement par une projection.
\end{enumerate}
\end{rema}
Le lemme \ref{lem:deccar} ainsi que le second point de la remarque nous permet de décomposer les groupes de cohomologie étale suivant les caractères qui apparaissent dans le $\brv{\rmH}_{\et}^0$, comme on le fera au paragraphe \ref{secsec:uncomp}.
\subsubsection{Le $L$-système local universel}\label{subsub:univit}
Soit $\nu_1\colon G\times \czG\rightarrow \Qp^{\times}$ la restriction de $\nu$ à $G\times \czG$. On note  $\BV\coloneqq \BV_{\Dr}\otimes_{\Qp} L\cdot \lvert \nu_1 \rvert_p^{1/2}$ qui définit un $L$-système local isotrivial sur la tour $\presp{\rmM}_{\Qp}^{\infty}$ et on définit
$$
\Sym \BV\coloneqq \bigoplus_{k\geqslant 0} \BV_k,\quad \BV_k\coloneqq\Sym_{L}^k\BV.
$$
On note $\BD$ l'isocristal associé à $\BV$ qui est un $L$-espace vectoriel. Le formalisme Tannakien fait de $\BD$ une $L$-représentation de $G\times \czG$. De plus, si on pose 
$$
\Sym \BD\coloneqq\bigoplus_{k\geqslant 0} \BD_k,\quad \BD_{k}\coloneqq \Sym_L^k\BD,
$$
alors, $\BD_k$ est l'isocristal associé à $\BV_k$ et on peut considérer $\Sym \BD$ comme l'isocristal associé à $\Sym \BV$. D'après le paragraphe \ref{subsub:unimif}, on a bien $\BD_{k}\otimes_{\Qp}\Qpbr=\Sym_{\Zpbr}^k\brv{\bM}[-k]\otimes_{\Qp}L\cdot \lvert \nu_1 \rvert^{k/2}$. Notons que $\Sym \BV$ définit aussi un $L$-système local isotrivial sur $\brv{\rmM}_{C}^{\infty}$ et sur ${\rmM}_{C}^{\infty}$ mais l'isocristal associé est alors $\Sym \BD \otimes_{\Qp}\Qpbr$.

\section{Représentations de $G$ et $\czG$}
Rappelons que l'on note $G\coloneqq \GL_2(\Qp)$, $\czG\coloneqq\rmD^{\times}$ et $L/\Qp$ une extension finie assez grande (\ie contenant les extensions quadratiques de $\Qp$). On note finalement $B\subset G$ le sous-groupe de Borel des matrices triangulaires supérieurs et $T\subset B$ le sous-groupe des matrices diagonales ; on a un quotient naturel $B\rightarrow T$.
\subsection{Représentations (localement) algébriques}

Rappelons que pour $H$ un groupe $p$-adique et $V$ une $L$-représentation de $H$, un vecteur $v\in V$ est dit (localement) \emph{algébrique} si l'application orbitale $h\mapsto h\cdot v$ est une fonction (localement) algébrique sur $H$.  Si, comme dans les cas qui nous intéressent, $H$ est l'ensemble des $\Qp$-points d'un groupe algébrique, ces représentations algébriques proviennent directement des représentations du groupe algébrique sous-jacent. Les représentations que l'on va introduire ne sont localement algébriques mais se sont simplement des tordues de représentations algébriques par un caractère lisse.

On définit les \emph{poids réguliers} comme l'ensemble 
$$
P\coloneqq \{(\lambda_1,\lambda_2)\in \BZ^2\mid \lambda_2>\lambda_1\}.
$$
On pose de plus $P_+\subset P$ l'ensemble des \emph{poids réguliers positifs}, \ie les $\lambda=(\lambda_1,\lambda_2)\in P$ tels que $\lambda_1\geqslant 0$. Pour $\lambda\in P$, posons $w(\lambda)\coloneqq \lambda_2-\lambda_1\in \BN$ et $\lvert\lambda \rvert = \lambda_2+\lambda_1$. Pour $k\geqslant 1$ un entier, $K/\Qp$ une extension finie de $\Qp$, on note simplement $\Sym^k_{K}\coloneqq \Sym^k_{K}K^2$, la puissance symétrique $k$-ième de la représentation standard de $\GL_2(K)$.
\subsubsection{Représentations algébriques irréductibles de $G$}\label{subsubsec:raig}
Soit $\lambda\in P$. On définit une $L$-représentation de $G$ par
$$
W^{*}_{\lambda}\coloneqq \Sym_{L}^{w(\lambda)-1}\otimes_{L}{\det}^{\lambda_1}\otimes_L \lvert \det \rvert_p^{\frac{\lvert \lambda \rvert-1}{2}}.
$$
C'est une $L$-représentation localement algébrique irréductible de $G$. Rappelons que $\Sym_{L}^1=L e^*_0\oplus L e^*_1$ et pour $g\in G$ on a 
\begin{equation}
g=
\begin{pmatrix}
 a & b \\ c  & d \\
\end{pmatrix},\quad
g\cdot e_0^*=ae_0^*+ce_1^*,\quad g\cdot e_1^*=be_0^*+de_1^*.
\end{equation}
Ainsi, une base de $W^*_{\lambda}$ est donnée par $\{(e_0^*)^i(e_1^*)^{j}\}_{i+j=w(\lambda)-1}$. On note $W_{\lambda}$ le dual de $W^*_{\lambda}$ qui a pour base $\{(e_0)^i(e_1)^{j}\}_{i+j=w(\lambda)-1}$ et qui est construit à partir de $\left ( \Sym_{L}^1\right )^*=\Qp e_0\oplus \Qp e_1$. Pour $g\in G$ on a 
$$
g=
\begin{pmatrix}
 a & b \\ c  & d \\
\end{pmatrix},\quad 
g\cdot e_0=\frac 1 {\det(g)}(de_0-be_1),\quad g\cdot e_1=\frac 1 {\det(g)}(-ce_0+ae_1),
$$
On remarque que $W^*_{\lambda}\cong W_{\lambda^*}$ où $\lambda^*\coloneqq (1-\lambda_2,1-\lambda_1)$ ; en particulier $w(\lambda) = w(\lambda^*)$.
\subsubsection{Représentations algébriques irréductibles de $\czG$}\label{subsubsec:raiczg}
On a un isomorphisme 
$$
\rmD\otimes_{\Qp}L\cong M_2(L),
$$
ce qui fournit un plongement $\czG\incl \GL_2(L)$ et permet de définir $\Sym^1_L$, la standard, comme $L$-représentation de $\czG$. Remarquons que sous cet isomorphisme la norme réduite s'identifie au déterminant. Ainsi, pour $\lambda\in P$, on définit une $L$-représentation localement algébrique irréductible de $\czG$ par
$$
\cz{W}_{\lambda}\coloneqq\Sym_{L}^{w(\lambda)-1}\otimes_{L}{\nrd}^{\lambda_1}\otimes_L \lvert \nrd \rvert_p^{\frac{\lvert \lambda \rvert-1}{2}},
$$
et on note $\cz{W}^*_{\lambda}$ son dual (on a inversé les conventions par rapport à $G$). On a toujours $\cz{W}^{*}_{\lambda}\cong \cz{W}_{\lambda^*}$.
\subsection{L'isocristal de Drinfeld}
Rappelons qu'au numéro \ref{subsub:univit}, on a introduit $\Sym \BD$ comme l'isocristal associé au $L$-système local $\Sym \BV$ qui a naturellement une structure de $L$-représentation localement algébrique de $G\times \czG$. Le but de ce numéro est de décrire cette représentation. On commence par factoriser $\BD$, puis on décompose $\Sym\BD$ à l'aide de foncteurs de Schur.

\subsubsection{Factorisation de $\BD$}
\medskip
\begin{lemm}\label{lem:dec}
Soit $L/{\Qp}$ une extension finie contenant une extension quadratique de ${\Qp}$ et $\rho=(0,2)\in P_+$. En tant que $L$-représentation de $G\times \cz G$ on a 
$$
\BD\cong W_{\rho}\otimes_L \cz{W}_{\rho}.
$$
\end{lemm}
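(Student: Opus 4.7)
The plan is to prove the decomposition in three main steps: split the $\cz{G}$-action using that $L$ splits the quaternion algebra, identify the resulting multiplicity space as a representation of $G$ via the period map, and finally reconcile the various twists.

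Since $L$ contains the unramified quadratic extension $\Qpd$, the fixed embedding $\rmD \hookrightarrow M_2(L)$ extends to an $L$-algebra isomorphism $\rmD \otimes_{\Qp} L \xrightarrow{\sim} M_2(L)$. Recall from \ref{subsub:sfdrecap} that $\brv{\bM} = \rmD \otimes_{\Qp} \Qpbr$, equipped with the $\cz{G} = \rmD^{\times}$-action by left multiplication on the $\rmD$-factor. First, I would trace through the construction in \ref{subsub:univit} to describe $\BD$ concretely as a $4$-dimensional $L$-vector space carrying commuting actions of $G$ and $\cz{G}$ together with the normalising twist by $\lvert \nu_1 \rvert_p^{1/2}$. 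Under the above splitting, $\cz{G}$ acts on $\BD$ through $M_2(L)$, and Morita equivalence yields a canonical $G \times \cz{G}$-equivariant decomposition
\[
\BD \cong V' \otimes_L \Sym^1_L,
\]
where $\Sym^1_L$ carries the standard action of $\cz{G}$ via $\cz{G} \hookrightarrow \GL_2(L)$, and $V'$ is a $2$-dimensional $L$-vector space on which $\cz{G}$ acts trivially and $G$ acts.

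The key step is to identify the $G$-representation $V'$. For this I would use the period map $\pi\colon \rmM^0_C \cong \BH_C \hookrightarrow \BP^1_C$, which is $G$-equivariant for the natural $G$-action on $\BP^1$, and exploit the characterising property (\ref{eq:propunivperiodmor}), namely $\Zpd \otimes_{\Zp} \pi^*\CO_{\BP^1}(1) \cong \Lie\, \sG_{\Dr}$. The Hodge filtration cuts out a $G$-equivariant subbundle of $\wh{\CO} \otimes_{\Qp} \brv{\bM}$ which, after the Morita reduction above, is pulled back from $\CO_{\BP^1}(1) \subset \CO_{\BP^1} \otimes_L V'$; the $G$-equivariance of this inclusion then forces $V'$, up to a one-dimensional character, to be the standard representation $\Sym^1_L$ of $G$. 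Any residual one-dimensional ambiguity is pinned down by the determinant: since $\nu_1 = \det^{-1} \otimes \nrd$, the shift $[-1]$ in the isocrystal combined with the factor $\lvert \nu_1 \rvert_p^{1/2} = \lvert \det \rvert_p^{-1/2} \otimes \lvert \nrd \rvert_p^{1/2}$ and the two occurrences of $\det$ implicit in the Morita identification (once from the $\Sym^1_L$ of $G$, once from $\Sym^1_L$ of $\cz{G}$) assemble into exactly $\lvert \det \rvert_p^{1/2} \otimes \lvert \nrd \rvert_p^{1/2}$.

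Putting the three pieces together produces
\[
\BD \;\cong\; \bigl(\Sym^1_L \otimes \lvert \det \rvert_p^{1/2}\bigr) \otimes_L \bigl(\Sym^1_L \otimes \lvert \nrd \rvert_p^{1/2}\bigr) \;=\; W_{\rho} \otimes_L \cz{W}_{\rho},
\]
since for $\rho=(0,2)$ we have $w(\rho)-1=1$, $\rho_1 = 0$ and $(\lvert \rho \rvert -1)/2 = 1/2$, so the definitions in \ref{subsubsec:raig} and \ref{subsubsec:raiczg} give precisely $W_{\rho} = \Sym^1_L \otimes \lvert \det \rvert_p^{1/2}$ and $\cz{W}_{\rho} = \Sym^1_L \otimes \lvert \nrd \rvert_p^{1/2}$. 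The main obstacle is the bookkeeping of all twists: the Tate-like shift $[-1]$ built into $\BD_k \otimes_{\Qp} \Qpbr = \Sym^k \brv{\bM}[-k] \otimes_{\Qp} L \cdot \lvert \nu_1 \rvert_p^{k/2}$, the normalisation of the period map, and the two possible embeddings $\Qpd \hookrightarrow L$ (which interchange $\Sym^1_L$ with its Galois conjugate on the $\cz{G}$-side). Fixing conventions consistently is where care will be required; once done, each step is essentially forced by Morita equivalence and the equivariance of the Drinfeld uniformisation.
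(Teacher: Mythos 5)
Your Morita decomposition $\rmD \otimes_{\Qp} L \cong M_2(L)$ is exactly the paper's starting point, but after that the two arguments diverge. The paper's proof is purely algebraic: it observes that $M_2(L) = \End_L(V) \cong V \otimes_L V^*$ is canonically $\GL(V) \times \GL(V)$-equivariant, with left multiplication (the $\czG$-side) acting on $V$ via the standard $\Sym^1_L$ and right multiplication (the $G$-side) acting on the $V^*$-factor, hence via $(\Sym^1_L)^*$; the twist by $\lvert \nu_1 \rvert_p^{1/2} = \lvert\det\rvert_p^{-1/2} \otimes \lvert\nrd\rvert_p^{1/2}$ then converts this directly into $W_\rho \otimes_L \cz{W}_\rho$. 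No geometry is needed. Your detour through the period map and the equivariance of the Hodge filtration is essentially the alternative discussed in Remark~\ref{rem:phdgint}, and it can be made to work, but it is both more technically demanding than the lemma requires and, as you acknowledge, only determines $V'$ up to an unspecified character, which then has to be pinned down by the same algebraic bookkeeping anyway.

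The final step of your bookkeeping contains a genuine error. You write that the definitions of \ref{subsubsec:raig} give $W_\rho = \Sym^1_L \otimes \lvert\det\rvert_p^{1/2}$, but the paper defines $W^*_\lambda := \Sym^{w(\lambda)-1}_L \otimes \det^{\lambda_1} \otimes \lvert\det\rvert_p^{(\lvert\lambda\rvert-1)/2}$ and then $W_\lambda$ as its \emph{dual}; so for $\rho = (0,2)$ what you have written down is $W_\rho^*$, not $W_\rho$, and in fact $W_\rho \cong (\Sym^1_L)^* \otimes \lvert\det\rvert_p^{-1/2} \cong \Sym^1_L \otimes \det^{-1} \lvert\det\rvert_p^{-1/2}$. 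These two differ by the nontrivial character $\det^{-1}\lvert\det\rvert_p^{-1} = \omega^{-1}$, so your computed $\BD$ is off by $\omega^{-1}$ on the $G$-factor. The missing ingredient is exactly that the $G$-action factors through right multiplication on $\End_L(V)$, hence through the \emph{contragredient} $(\Sym^1_L)^*$ on the $V^*$-tensor factor — you assemble your twists into $\lvert\det\rvert_p^{1/2}$ acting on the standard $\Sym^1_L$, whereas the correct combination is $\lvert\det\rvert_p^{-1/2}$ acting on $(\Sym^1_L)^*$. Also note that the shift $[-1]$ you invoke only renormalises the Frobenius $\varphi$, not the $G\times\czG$-action, so it should not appear in this part of the argument.
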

\begin{proof}
On donne une preuve conceptuelle de ce résultat qui peut aussi se démontrer à la main avec une base adaptée (\cf \cite{van}). Via l'isomorphisme $\rmD\otimes_{\Qp} L\cong \rmM_2(L)$ on obtient deux actions (l'une à gauche et l'autre à droite) de $\GL_2(L)$ qui commutent. Ces actions se restreignent en une représentation de $G\times \czG$ via le plongement de ces groupes dans $\GL_2(L)$. Maintenant, si $V$ est un $L$-espace vectoriel de dimension finie, $\End_L(V)\cong V\otimes V^*$ et l'action de $\GL(V)\times \GL(V)$ est à gauche sur le premier facteur $V$ et à droite sur le second facteur $V^*$. En ce sens, l'isomorphisme $\End_L(V)\cong V\otimes V^*$ est $\GL(V)\times \GL(V)$-équivariant. Ceci nous donne le lemme en prenant $V=L^{2}$ et en tordant par $\lvert \nu_1 \rvert_p^{1/2}=\lvert {\det}^{-1}\otimes \nrd \rvert_p^{1/2}$.
\end{proof}
\begin{rema}\label{rem:phdgint}
Ce résultat peut aussi se démontrer à la mains en utilisant une base adapté. Donnons une dernière interprétation de ce résultat en terme de théorie de Hodge $p$-adique. On reprend les notations du numéro \ref{subsub:sfdrecap}. Rappelons qu'à isomorphisme près, il existe un unique groupe formel de hauteur $2$ et de dimension $1$ sur $\Fpbar$ ; notons le $\bH$. Soit $\brv{\bN}^+$ le module de Dieudonné de $\bH$ et $\brv{\bN}$ l'isocristal associé. Alors $\brv{\bN}^+\cong\rmOD\otimes_{\Zpd}\Zpbr$ muni du Frobenius $\bF=\varpi_{\rmD}\otimes\sigma$. Ainsi,
$$
\brv{\bM}^+=\rmOD\otimes_{\Zp}\Zpbr=\rmOD\otimes_{\Zpd}(\Zpd\otimes_{\Zp}\Zpbr)\cong \rmOD\otimes_{\Zpd}(\Zpbr\times \Zpbr)\cong \brv{\bN}^+\oplus \brv{\bN}^+.
$$
De ce petit calcul on récupère plusieurs conséquences :
\begin{itemize}
\itemb On a un isomorphisme $\bG\cong \bH\times \bH$ en tant que groupe $p$-divisible.
\itemb En inversant $p$, on peut écrire $\brv{\bM}\cong (\Sym^1_{\Qp})^* \otimes_{\Qp}\brv{\bN}$, ce qui muni $\brv{\bM}$ d'une action de $G$. Le calcul montre que cette action coïncide avec l'action par automorphisme.
\itemb La description de $\brv{\bN}^+$ permet de retrouver que les automorphismes de $\bH$ sont $\rmOD$. On obtient que $\rmOD$ agit diagonalement sur $\brv{\bM}^+\cong \brv{\bN}^+\oplus \brv{\bN}^+$ et on recupère que l'action par $\czG$ sur les composantes $\brv{\bN}$ est la standard.
\end{itemize}
En conclusion, $\brv{\bM}\cong (\Sym^1_{\Qp})^* \otimes_{\Qp}\brv{\bN}$ comme $\Qpbr$-représentation de $G\times\czG$ et on a montré que
$$
\brv{\bM}\otimes_{\Qp}L\cdot\lvert \nu_1 \rvert_p^{1/2} \cong (W_{\rho}\otimes_L \cz{W}_{\rho})\otimes_{\Qp}\Qpbr.
$$
On renvoit à \cite[Théorème 8.1.5]{fafo} qui résume la théorie de Hodge $p$-adique des $\rmOD$-modules formels spéciaux.
\end{rema}
\subsubsection{Décomposition de l'isocristal}
On veut maintenant décomposer $\Sym \BD$. Commençons par quelques rappels sur les foncteurs de Schur. Soit $K$ un corps de caractéristique $0$, pour tout entier $k\geqslant 0$ et toute partition $\mu\dashv k$ on a un \emph{foncteur de Schur} $\BS_{\mu}\colon \Vect_{K}\rightarrow \Vect_{K}$ qui permet de construire les représentations irréductibles du groupe symétrique $\fkS_k$ et les représentations du groupe général linéaire. Les partitions de $k$ sont représentés par des diagrammes de Young. Pour plus de détails sur ces constructions et le résultat qui va suivre, on renvoie à \cite[Lecture~6]{fuha}. Soient $V$ et $W$ deux $K$-espaces vectoriels. Alors d'après \cite[Exercice 6.11]{fuha}, on a 
$$
\Sym_{K}^k(V\otimes_{K} W) = \bigoplus_{\mu\dashv k}\BS_{\mu}V\otimes_{K} \BS_{\mu}W,
$$
où la somme porte sur l'ensemble des tableaux de Young $\mu$ tels que le nombre de lignes de $\mu$ est inférieur à $\dim_{K} V$ et à $\dim_{K} W$, \ie  on considère les partitions en au plus $\min\{\dim_{K}V,\dim_{K} W\}$ entiers. De plus, pour $V$ un ${K}$-espace vectoriel de dimension $2$ et $\mu=(\mu_1,\mu_2)$ on a 
$$
\BS_{\mu}V = \Sym^{\mu_1-\mu_2}V\otimes_{K}{\det}^{\mu_2}.
$$
Combiné au lemme \ref{lem:dec} on en déduit le résultat suivant :
\medskip
\begin{coro}\label{cor:schurep}
En tant que $L$-représentation de $G\times \czG$, on a 
$$
\Sym \BD\cong \bigoplus_{\lambda\in P_+}W_{\lambda}\otimes_{L}\cz{W}_{\lambda}.
$$
De plus, pour $k\in \BN$, $\BD_k\cong \bigoplus_{\lvert \lambda\rvert = k+1}W_{\lambda}\otimes_{L}\cz{W}_{\lambda}$.
\end{coro}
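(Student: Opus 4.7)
The plan is to reduce everything to the decomposition of Lemma \ref{lem:dec}, which provides a $(G\times\czG)$-equivariant isomorphism $\BD\cong W_\rho\otimes_L\cz W_\rho$ with $\rho=(0,2)$. Substituting this into $\Sym^k\BD$ and applying the Cauchy-type decomposition of the symmetric algebra of a tensor product recalled in the excerpt, namely
$$
\Sym^k_L(V\otimes_L W)=\bigoplus_{\mu\dashv k}\BS_\mu V\otimes_L\BS_\mu W,
$$
with the sum ranging over partitions of $k$ into at most $\min(\dim V,\dim W)$ parts, one gets
$$
\BD_k\cong\bigoplus_{\mu=(\mu_1,\mu_2),\ \mu_1\geqslant\mu_2\geqslant 0,\ \mu_1+\mu_2=k}\BS_\mu W_\rho\otimes_L\BS_\mu\cz W_\rho.
$$
Summing over $k\geqslant 0$ will then yield the global $\Sym\BD$.

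The second step is to identify $\BS_\mu W_\rho$ with $W_\lambda$ and $\BS_\mu\cz W_\rho$ with $\cz W_\lambda$ under the bijection
$$
\mu=(\mu_1,\mu_2)\ \longleftrightarrow\ \lambda=(\mu_2,\mu_1+1),
$$
between partitions of $k$ with at most $2$ parts and elements $\lambda\in P_+$ with $|\lambda|=k+1$; note that under this bijection $w(\lambda)=\mu_1-\mu_2+1$ and $\lambda_1=\mu_2\geqslant 0$. For this I would use the standard formula $\BS_\mu V=\Sym^{\mu_1-\mu_2}V\otimes(\wedge^2 V)^{\otimes\mu_2}$ valid for any $2$-dimensional $V$, together with the interaction of Schur functors with a character twist, $\BS_\mu(V\otimes\chi)=\BS_\mu(V)\otimes\chi^{|\mu|}$. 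Recalling that, as a $G$-representation, $W_\rho\cong\Sym^1_L\otimes\det^{-1}\otimes|\det|_p^{-1/2}$ (obtained by dualising $W^*_\rho=\Sym^1_L\otimes|\det|_p^{1/2}$ using $(\Sym^1_L)^*\cong\Sym^1_L\otimes\det^{-1}$), and that, as a $\czG$-representation, $\cz W_\rho\cong\Sym^1_L\otimes|\nrd|_p^{1/2}$, a direct computation of the resulting $\det^{?}$ and $|\det|^{?}$ twists matches the definitions of $W_\lambda$ in \ref{subsubsec:raig} and $\cz W_\lambda$ in \ref{subsubsec:raiczg} exactly; this is the one calculation I would actually carry out on paper.

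The first statement then follows by summing over $k$, since the union of the fibers $\{\lambda\in P_+\mid|\lambda|=k+1\}$ as $k$ ranges over $\BN$ exhausts $P_+$ bijectively via the indexing above. The main obstacle is purely bookkeeping: keeping track of the asymmetric twists (the $|\nu_1|_p^{1/2}$ appearing in the definition of $\BV$, and the different conventions used to define $W_\lambda$ as a dual versus $\cz W_\lambda$ directly). There is no conceptual difficulty, and no point-counting or highest-weight argument is needed beyond the two general Schur-functor identities cited above.
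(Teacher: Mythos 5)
Your proposal is correct and follows essentially the same route as the paper: the paper likewise obtains the corollary by combining the Cauchy-type decomposition $\Sym^k(V\otimes W)=\bigoplus_{\mu\dashv k}\BS_\mu V\otimes\BS_\mu W$ and the $2$-dimensional Schur-functor formula $\BS_\mu V=\Sym^{\mu_1-\mu_2}V\otimes\det^{\mu_2}$ with Lemma~\ref{lem:dec}, leaving the twist bookkeeping implicit. Your explicit bijection $\mu=(\mu_1,\mu_2)\leftrightarrow\lambda=(\mu_2,\mu_1+1)$ and the verification that the $\det$ and $|\det|_p$ (resp.\ $\nrd$ and $|\nrd|_p$) powers match the definitions of $W_\lambda$ and $\cz W_\lambda$ are exactly the computations the paper elides.
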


\Subsection{La série speciale et la Steinberg}\label{sub:steinberg}
\subsubsection{Définitions et dévisages}\label{subsubsec:spean}
Rappelons qu'une $L$-représentation de Steinberg de $G$ est construite à partir des fonctions d'une certaine régularité sur $\BP^1(\Qp)$  à valeur dans $L$ dont on prend le quotient par le sous espace des fonctions constantes. 

On définit la \emph{Steinberg lisse} sur $L$ par 
$$
\St^{\infty}\coloneqq\sC^{\infty}\intn{\BP^1(\Qp),L}/L.
$$
C'est une $L$-représentation lisse de $G$ de type $LB$ (limite inductive de Banach) qui est irréductible. 

De même, on définit la \emph{Steinberg localement analytique} sur $L$ par
$$
\St^{\lan}\coloneqq\sC^{\lan}\intn{\BP^1(\Qp),L}/L.
$$ 
C'est une $L$-représentation localement analytique de $G$ de type $LB$. Ce n'est pas une représentation irréductible puisque $\St^{\infty}\subset \St^{\lan}$. Il s'avère que le quotient est irréductible et qu'il s'interprète en termes de fonctions sur le demi-plan $p$-adique. Plus précisément, le quotient s'identifie au dual de $(\CO/\Qp)\otimes_{\Qp}L$.  On va étendre ces constructions en poids supérieurs. 

Pour $\lambda\in P$, on définit la \emph{Steinberg localement algébrique de poids $\lambda$} par
$$
\St^{\lalg}_{\lambda}\coloneqq \St^{\infty}\otimes_L W^*_{\lambda}.
$$
C'est une $L$-représentation localement algébrique de $G$ de type $LB$ qui est irréductible.
 
Pour $\lambda\in P$, on définit finalement la \emph{Steinberg localement analytique de poids $\lambda$} par
$$
\St^{\lan}_{\lambda}\coloneqq \St^{\lan}\otimes_L W^*_{\lambda}.
$$
C'est une $L$-représentation de $G$ de type $LB$. Ces représentations admettent aussi des descriptions en termes d'induite localement analytiques (\cite[Remarque 2.2]{colan}). Pour tout $i\in\BN$, notons $x^i$ le caractère algébrique $\Qp^{\times} \rightarrow L^{\times}$, $a\mapsto a^i$. Posons
$$
B_{\lambda}\coloneqq \Ind_B^G(x^{\lambda_1}\otimes x^{\lambda_2-1})\otimes_L \lvert \det \rvert_p^{\frac{\lvert \lambda\rvert -1}{2}}, \quad B^{\lambda}\coloneqq \Ind_B^G( x^{\lambda_2-1}\otimes x^{\lambda_1})\otimes_L \lvert \det \rvert_p^{\frac{\lvert \lambda\rvert -1}{2}}.
$$
Alors $B^{\lambda}$ est irréductible et $W^*_{\lambda}\subset B_{\lambda}$, qui n'est donc pas irréductible. On obtient
$$
\St^{\lan}_{\lambda}= B_{\lambda}/W^*_{\lambda}.
$$
On note $\Ext^1_{L[\bar G]}$ le groupe des extensions dans la catégorie des $L$-représentations localement analytiques de caractère central fixé\footnote{Si on note $\zeta_{\lambda}\colon \Qpt \rightarrow L^{\times}$ le caractère central de $W_{\lambda}^*$ et $\St_{\lambda}^{\lan}$, un petit calcul montre que $\zeta(a)=\omega(a)^{\lvert \lambda \rvert-1}$ où $\omega\colon \Qpt \rightarrow L^{\times}$ est le caractère cyclotomique défini par $\omega(a)\coloneqq a\lvert a \rvert$.}. D'après \cite[Proposition 2.6]{colan} et \cite[Théorème 2.7]{colan} on obtient le lemme suivant :
\medskip
\begin{lemm}\label{lem:extserispeci}
\
\begin{enumerate}
\item L'action par dérivation, donnée par l'élément $u^{+}\coloneqq\begin{pmatrix}0&1\\0&0\end{pmatrix} \in\fkg$ de l'algèbre de Lie de $G$, induit une suite exacte 
$$
0\rightarrow \St^{\lalg}_{\lambda}\rightarrow \St^{\lan}_{\lambda}\xrightarrow{(u^+)^{w(\lambda)}}B^{\lambda}\rightarrow 0.
$$
En particulier, $\St^{\lan}_{\lambda}$ a deux composantes de Jordan-Hölder.
\item On a un isomorphisme naturel $\Hom(\BQ^{\times}_p,L)\xrightarrow{\sim} \Ext^1_{L[\bar G]}\intnn{W_{\lambda}^*}{\St^{\lan}_{\lambda}}$. 
\end{enumerate}
\end{lemm}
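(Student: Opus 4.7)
The plan is to deduce both statements from the explicit description $\St^{\lan}_\lambda = B_\lambda/W^*_\lambda$ together with the standard computation of $\Ext$-groups between locally analytic principal series, following the strategy of Colmez in \cite{colan}. I will treat the two parts in order.

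For the first part, the idea is to realize $(u^+)^{w(\lambda)}$ as (a scalar multiple of) an intertwining operator between principal series. Working on the big cell $\bar N \cong \Qp \subset \BP^1$ with coordinate $t$, the action of $u^+$ in the induced model of $B_\lambda$ becomes (up to a lower-order twist coming from the central character) the derivative $\partial_t$. Hence $(u^+)^{w(\lambda)}$ is an explicit differential operator of order $w(\lambda)$ whose kernel on locally analytic functions consists precisely of polynomials of degree $<w(\lambda)$ — which is exactly $W^*_\lambda\subset B_\lambda$. So $\Ker\bigl((u^+)^{w(\lambda)}\bigr) = W^*_\lambda + \St^{\lalg}_\lambda$ (the locally algebraic vectors being those killed by enough derivatives modulo $W^*_\lambda$). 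The image is a submodule of $B_\lambda$ that is stable under $G$, and one identifies it with $B^\lambda$ using the intertwining formalism (or equivalently one checks on the highest weight that the image is the principal series with the Weyl-reflected characters, incorporating the twist by the appropriate power of the cyclotomic/norm character to match central characters). Passing to the quotient $\St^{\lan}_\lambda = B_\lambda/W^*_\lambda$ then yields the announced exact sequence; surjectivity onto $B^\lambda$ is checked at the level of Lie-algebra actions using irreducibility of $B^\lambda$.

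For the second part, I apply $\Hom_{L[\bar G]}(W^*_\lambda,-)$ to the exact sequence of part $(1)$. Since $W^*_\lambda$ is irreducible and $\St^{\lalg}_\lambda = \St^\infty\otimes W^*_\lambda$ has no nonzero $G$-invariants in $\Hom(W^*_\lambda,\cdot)$, we have $\Hom(W^*_\lambda,\St^{\lan}_\lambda)=0$. The resulting long exact sequence reads
\[
0 \to \Hom_{L[\bar G]}(W^*_\lambda, B^\lambda) \to \Ext^1_{L[\bar G]}(W^*_\lambda,\St^{\lalg}_\lambda) \to \Ext^1_{L[\bar G]}(W^*_\lambda,\St^{\lan}_\lambda) \to \Ext^1_{L[\bar G]}(W^*_\lambda, B^\lambda).
\]
Since $B^\lambda$ is an irreducible principal series with infinitesimal character different from that of $W^*_\lambda$, we have $\Hom_{L[\bar G]}(W^*_\lambda, B^\lambda)=0$ and $\Ext^1_{L[\bar G]}(W^*_\lambda, B^\lambda)=0$ (in the category with fixed central character — this is standard and proved, e.g., via the BGG-type resolution of $W^*_\lambda$ by Verma modules, or by Kohlhaase's computation of locally analytic Ext's). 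Hence $\Ext^1_{L[\bar G]}(W^*_\lambda,\St^{\lan}_\lambda) \cong \Ext^1_{L[\bar G]}(W^*_\lambda,\St^{\lalg}_\lambda)$. The latter group is identified with $\Hom(\Qp^\times,L)$: an extension of $W^*_\lambda$ by $\St^{\lalg}_\lambda = \St^\infty\otimes W^*_\lambda$ is obtained by twisting the inclusion $L\hookrightarrow \sC^\infty(\BP^1(\Qp),L)$ by a continuous additive character, which by the determinant corresponds to an element of $\Hom(\Qp^\times,L)$. Naturality and the isomorphism follow from tracing through the composition.

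The main obstacle is the vanishing $\Ext^1_{L[\bar G]}(W^*_\lambda,B^\lambda)=0$ with fixed central character: this requires a careful computation in the locally analytic category (not in the category of abstract representations), ensuring that all potential extensions are trivialized. The cleanest way is to invoke the identification of $\Ext^1$ via $\rmH^1(\fkg,B;-)$ for a Borel $B$, combined with the explicit Bruhat-like filtration on $B^\lambda$; alternatively one cites the relevant result of Orlik--Strauch or Kohlhaase. All other steps are formal manipulations once the action of $u^+$ on $B_\lambda$ has been explicated.
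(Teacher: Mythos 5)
The paper does not prove this lemma; it is cited directly from \cite[Proposition 2.6, Th\'eor\`eme 2.7]{colan}. Your treatment of part (1) is in the right spirit: $(u^+)^{w(\lambda)}$ is indeed a differential operator of order $w(\lambda)$ on the big cell, its kernel in $B_\lambda$ is the preimage of the locally algebraic vectors, and its image is the Weyl-conjugate principal series, which is exactly the content of Colmez's Proposition 2.6. (Your parenthetical that the kernel on locally analytic functions consists only of polynomials of degree $<w(\lambda)$ is too small---every \emph{locally} polynomial function of that degree is killed---but your next sentence corrects for this.)

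Part (2) has a concrete gap. You claim $\Ext^1_{L[\bar G]}\intnn{W^*_\lambda}{B^\lambda}=0$ because ``$B^\lambda$ is an irreducible principal series with infinitesimal character different from that of $W^*_\lambda$''. That is false: by part (1), $B^\lambda$ is the cosocle of $\St^{\lan}_\lambda=B_\lambda/W^*_\lambda$, so both $W^*_\lambda$ and $B^\lambda$ are subquotients of the single principal series $B_\lambda$, and the centre of $U(\fkg)$ acts on $B_\lambda$ (induced from a character of the Borel) by a single character, hence by that same character on every subquotient. So $W^*_\lambda$ and $B^\lambda$ have the \emph{same} infinitesimal character, and your justification for the vanishing collapses. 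Without that vanishing, the long exact sequence only gives an injection $\Ext^1\intnn{W^*_\lambda}{\St^{\lalg}_\lambda}\hookrightarrow\Ext^1\intnn{W^*_\lambda}{\St^{\lan}_\lambda}$, which is insufficient. In fact the paper's own Lemme~\ref{lem:schr}, item (4), warns you off this route: the image of the locally algebraic $\Ext^1\intnn{W_\lambda^*}{\St^{\lalg}_\lambda}$ inside $\Ext^1\intnn{W_\lambda^*}{\St^{\lan}_\lambda}\cong\Hom(\Qp^\times,L)$ is only the line spanned by $v_p$, not the full two-dimensional space, so one should expect $\Ext^1\intnn{W^*_\lambda}{B^\lambda}\neq 0$. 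Your closing identification of the locally analytic $\Ext^1_{L[\bar G]}\intnn{W^*_\lambda}{\St^{\lalg}_\lambda}$ with $\Hom(\Qp^\times,L)$ is likewise asserted, not proved, and needs to be carefully distinguished from the one-dimensional locally algebraic $\Ext^1$. The isomorphism in part (2) is established in \cite{colan} by a direct cocycle computation; it does not follow from part (1) by the long exact sequence you wrote down.
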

\medskip
Le membre de gauche de cet isomorphisme est de dimension $2$ sur $L$, engendré par la valuation $p$-adique $v_p$ et le logarithme $\log$ normalisé par $\log(p)=0$. La base $(v_p,\log)$ donne une identification $\BP(\Hom(\BQ^{\times}_p,L))\cong \BP^1(L)$ ce qui permet de définir pour $\CL\in L\subset \BP^1(L)$ une représentation localement analytique de type $LB$ que l'on note $\Sigma_{\CL}^{\lambda}$ et qui est par définition contenue dans une suite exacte de $L$-représentations de $G$ non scindées
$$
0\rightarrow \St^{\lan}_{\lambda} \rightarrow \Sigma_{\CL}^{\lambda}\rightarrow W_{\lambda}^*\rightarrow 0.
$$
En particulier, $\Sigma_{\CL}^{\lambda}$ a trois composantes de Jordan-Hölder. Rappelons de plus que $\Sigma_{\CL}^{\lambda}$ est la représentation localement analytique construite par Breuil dans \cite{bre} où il démontre les résultats suivants :
\medskip
\begin{lemm}
On a,
\begin{itemize}
\itemb $\End_L\intn{\Sigma_{\CL}^{\lambda}}=L$,
\itemb si $(\lambda,\CL,\chi)\neq (\lambda',\CL',\chi')$ alors 
$$
\Sigma_{\CL}^{\lambda}\otimes (\chi\circ \det) \not \cong \Sigma_{\CL'}^{\lambda'}\otimes (\chi'\circ \det),
$$
où $\chi,\chi'\colon \Qpt\rightarrow L^{\times}$ sont des caractères lisses,
\itemb les vecteurs localement algébriques de $\Sigma_{\CL}^{\lambda}$ sont $\St_{\lambda}^{\lalg}$.
\end{itemize}
\end{lemm}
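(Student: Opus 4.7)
La stratégie est d'exploiter la filtration de Jordan--Hölder de $\Sigma_{\CL}^{\lambda}$ dont les trois constituants simples sont, de bas en haut, $\St_{\lambda}^{\lalg}$, $B^{\lambda}$ et $W_{\lambda}^*$ (le premier étant le socle grâce à la sous-représentation $\St_{\lambda}^{\lan}\subset \Sigma_{\CL}^{\lambda}$ et à la suite exacte du premier point du lemme \ref{lem:extserispeci}). Chacun de ces trois constituants est absolument irréductible et admissible, donc son anneau des endomorphismes est $L$ par le lemme de Schur. L'idée est ensuite d'utiliser que les deux extensions successives qui composent $\Sigma_{\CL}^{\lambda}$ sont non scindées, ce qui est assuré d'une part par le choix $\CL\in L$ (\ie $\CL\neq \infty$) via la paramétrisation $\Ext^1_{L[\bar G]}(W_{\lambda}^*,\St_{\lambda}^{\lan})\cong \Hom(\Qpt,L)$, et d'autre part par le fait que l'extension $\St_{\lambda}^{\lan}$ de $B^{\lambda}$ par $\St_{\lambda}^{\lalg}$ est non scindée.

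Pour le premier point, on considère $\phi\in \End_L(\Sigma_{\CL}^{\lambda})$. Comme $\phi$ préserve le socle $\St_{\lambda}^{\lalg}$, il agit dessus par un scalaire $a\in L$ ; l'endomorphisme $\phi-a\cdot\id$ s'annule sur le socle et induit donc un endomorphisme du quotient $\Sigma_{\CL}^{\lambda}/\St_{\lambda}^{\lalg}$ qui est une extension non triviale de $W_{\lambda}^*$ par $B^{\lambda}$. On répète l'argument~: il suffit de montrer qu'un tel endomorphisme est forcément nul, ce qui résulte de la non-trivialité de l'extension et du fait que $\Hom_G(W_{\lambda}^*,B^{\lambda})=0=\Hom_G(B^{\lambda},W_{\lambda}^*)$. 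Pour le deuxième point, la considération des constituants de Jordan--Hölder traite immédiatement le cas $\lambda\neq \lambda'$ ; l'examen du caractère central permet ensuite de réduire au cas $\chi=\chi'$ et alors on invoque la paramétrisation du lemme~\ref{lem:extserispeci} (2), en notant que la bijection $L\ni\CL\mapsto [\Sigma_{\CL}^{\lambda}]$ est injective à torsion près par un caractère, ces torsions correspondant aux changements de base dans $\Hom(\Qpt,L)\cong \BP^1(L)$ qui fixent la droite engendrée par $v_p$.

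Pour le troisième point, notons $V\coloneqq (\Sigma_{\CL}^{\lambda})^{\lalg}$ la sous-représentation des vecteurs localement algébriques. On a clairement $\St_{\lambda}^{\lalg}\subset V$. Pour l'inclusion réciproque, on observe que $B^{\lambda}$, étant une série principale localement analytique associée à un caractère non algébrique de $T$, ne contient aucun vecteur localement algébrique non nul ; ainsi l'image de $V$ dans le sous-quotient $B^{\lambda}$ est nulle, donc $V$ est contenu dans le noyau $\St_{\lambda}^{\lalg}$ de la composée $\Sigma_{\CL}^{\lambda}\to \Sigma_{\CL}^{\lambda}/\St_{\lambda}^{\lalg}\to W_{\lambda}^*$ \emph{à condition} que $V$ ne surjecte pas sur $W_{\lambda}^*$. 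Cette non-surjection est le point clé~: une surjection fournirait, après passage au quotient par $\St_{\lambda}^{\lalg}$, un scindage localement algébrique de l'extension de $W_{\lambda}^*$ par $B^{\lambda}$, donc un scindage tout court (puisque $W_{\lambda}^*$ est déjà localement algébrique), ce qui contredit la non-trivialité de cette extension venant de $\CL\in L$. L'obstacle principal est précisément de rendre rigoureuse cette dernière étape, notamment de contrôler le fait que l'extension non triviale de $W_{\lambda}^*$ par $B^{\lambda}$ induite par $\Sigma_{\CL}^{\lambda}$ ne scinde jamais, ce qui revient à vérifier que le morphisme $\Ext^1_{L[\bar G]}(W_{\lambda}^*,\St_{\lambda}^{\lan})\to \Ext^1_{L[\bar G]}(W_{\lambda}^*,B^{\lambda})$ est injectif, ou du moins que $\CL\in L$ évite son noyau.
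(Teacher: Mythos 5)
Le texte ne démontre pas ce lemme~: il renvoie simplement à Breuil~\cite{bre}. Votre proposition est donc une tentative indépendante de redémontrer l'énoncé à partir de la structure de Jordan--H\"older, et la stratégie générale est la bonne. Deux remarques et un complément pour combler le trou que vous signalez.

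Sur le point (1)~: $\phi-a\cdot\id$, s'annulant sur le socle, n'induit pas un endomorphisme du quotient mais une \emph{flèche} $\bar\psi\colon \Sigma_{\CL}^{\lambda}/\St_{\lambda}^{\lalg} \to \Sigma_{\CL}^{\lambda}$, et on conclut en appliquant deux fois le principe du socle~: $\bar\psi$ s'annule sur $B^{\lambda}$ (sinon $B^{\lambda}$ se plongerait dans $\Sigma_{\CL}^{\lambda}$, dont l'unique sous-objet irréductible est $\St_{\lambda}^{\lalg}\not\cong B^{\lambda}$), donc se factorise par $W_{\lambda}^{*}$, qui ne se plonge pas non plus~; d'où $\bar\psi=0$.

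Sur le point (2), le recours au caractère central ne suffit pas (il ne détermine que $\chi^2$), et le rôle des \og torsions \fg que vous évoquez reste confus. Plus simplement, la comparaison des socles $\St_{\lambda}^{\lalg}\otimes(\chi\circ\det)\cong \St_{\lambda'}^{\lalg}\otimes(\chi'\circ\det)$ détermine directement $\lambda=\lambda'$ et $\chi=\chi'$ par unicité de la factorisation lisse--algébrique~; puis $\CL=\CL'$ découle de ce que $\End(W_{\lambda}^*)=\End(\St_{\lambda}^{\lan})=L$, de sorte qu'un isomorphisme entre les deux extensions ne peut que multiplier la classe dans $\Ext^1_{L[\bar G]}$ par un scalaire et préserve donc la droite.

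Sur le point (3), le trou que vous signalez est en réalité déjà colmaté par le lemme~\ref{lem:schr}. En appliquant $\Hom_{L[\bar G]}\intnn{W_{\lambda}^{*}}{\ \cdot\ }$ à la suite exacte $0\to\St_{\lambda}^{\lalg}\to\St_{\lambda}^{\lan}\to B^{\lambda}\to 0$ et en utilisant $\Hom_{L[\bar G]}\intnn{W_{\lambda}^{*}}{B^{\lambda}}=0$, on obtient une suite exacte
$$
0\to \Ext^1_{L[\bar G]}\intnn{W_{\lambda}^{*}}{\St_{\lambda}^{\lalg}} \to \Ext^1_{L[\bar G]}\intnn{W_{\lambda}^{*}}{\St_{\lambda}^{\lan}} \to \Ext^1_{L[\bar G]}\intnn{W_{\lambda}^{*}}{B^{\lambda}}.
$$
D'après le dernier point du lemme~\ref{lem:schr}, l'image de la première flèche est exactement la droite $L\cdot v_p$~; le noyau du morphisme de poussette vers $\Ext^1_{L[\bar G]}\intnn{W_{\lambda}^{*}}{B^{\lambda}}$ est donc précisément $L\cdot v_p$. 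Or $\CL\in L\subsetneq \BP^1(L)$ est par définition associé à une droite \emph{autre} que $L\cdot v_p$ (c'est le sens de l'exclusion du point à l'infini), donc la classe de $\Sigma_{\CL}^{\lambda}$ a une image non nulle dans $\Ext^1_{L[\bar G]}\intnn{W_{\lambda}^{*}}{B^{\lambda}}$~: l'extension $\Sigma_{\CL}^{\lambda}/\St_{\lambda}^{\lalg}$ est non scindée et votre argument pour $(\Sigma_{\CL}^{\lambda})^{\lalg}=\St_{\lambda}^{\lalg}$ se termine sans encombre.
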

\medskip
\subsubsection{Dualité de Morita}\label{subsub:morita}
La dualité de Morita donne un isomorphisme explicite entre les formes différentielles sur le demi-plan $p$-adique et les distributions localement analytiques sur l'espace projectif. Plus précisément, on a un isomorphisme topologique $G$-équivariant $\intn{\St^{\lan}}'\cong \Omega^1\otimes_{\Qp}L\cong \CO\{2,1\}$, donné explicitement par \emph{le noyau de Poisson}
$$
\mu\in \intn{\St^{\lan}}'\mapsto \int_{\BP^1({\Qp})}\frac{dz}{z-x}\mu(x),
$$
où l'intégrale à droite signifie l'accouplement entre la distribution $\mu$ et la fonction $x\in\BP^1({\Qp})\mapsto \frac{dz}{z-x}\in\Omega^1$ à valeurs dans les différentielles sur $\BH_{\Qp}$. Breuil a étendu cette dualité à la série spéciale $p$-adique (\cf \cite[Théorème 3.2.3]{bre}). On aura besoin d'une version tordue de la dualité de Morita, qui apparait chez Breuil (\cf \cite[Théorème 3.1.2]{bre}) mais aussi chez Dasgupta-Teitelbaum (\cf \cite[Theorem 2.2.1]{date}). Nous énonçons le théorème en utilisant le noyau de Poisson, pour lequel on renvoit à \cite[Proposition 2.2.6]{date}. Soit $\lambda\in P$, notons
$$
\omega_{\lambda}\coloneqq \CO\{1+w(\lambda),1-\lambda_1\}\otimes_L \lvert \det \rvert^{\frac{1-\lvert \lambda \rvert}{2}}.
$$
\medskip
\begin{prop}\label{prop:morita}
Soit $\lambda \in P_+$.  Alors on a un isomorphisme topologique entre $L$-représentations de $G$, $\intn{\St_{\lambda}^{\lan}}'\cong \omega_{\lambda}$ donné par le noyau de Poisson
$$
\mu\in \intn{\St_{\lambda}^{\lan}}'\mapsto \int_{\BP^1({\Qp})}\frac{1}{z-x}\mu(x).
$$
\end{prop}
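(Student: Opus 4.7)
The plan is to reduce Proposition \ref{prop:morita} to the untwisted case of Morita duality and then carefully track the $G$-action through an algebraic identification that absorbs the tensor factor $W_\lambda$ into a twist of the automorphy factor.

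First I would observe that by definition $\St^{\lan}_\lambda = \St^{\lan}\otimes_L W^*_\lambda$ with $W^*_\lambda$ a finite-dimensional $L$-representation of $G$, so the stereotypical dual decomposes as a topological $G$-representation
$$
\intn{\St^{\lan}_\lambda}' \cong \intn{\St^{\lan}}'\otimes_L W_\lambda.
$$
Applying the untwisted Morita duality recalled just before the proposition gives a topological $G$-equivariant isomorphism $\intn{\St^{\lan}_\lambda}' \cong \CO\{2,1\}\otimes_L W_\lambda$ via $\mu\otimes v \mapsto \big(\int_{\BP^1(\Qp)} \frac{dz}{z-x}\mu(x)\big)\otimes v$, where a distribution $\mu\in\intn{\St^{\lan}_\lambda}'$ is regarded as a $W_\lambda$-valued distribution on $\BP^1(\Qp)$ killing constants.

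The heart of the argument is then an algebraic $G$-equivariant identification
$$
\CO\{2,1\}\otimes_L W_\lambda \xrightarrow{\sim}\omega_\lambda = \CO\{1+w(\lambda),1-\lambda_1\}\otimes_L \lvert \det\rvert_p^{\frac{1-\lvert\lambda\rvert}{2}}.
$$
To construct it, I would use the basis $\{e_0^ie_1^j\}_{i+j=w(\lambda)-1}$ of $W_\lambda$ from \ref{subsubsec:raig} and send $f\otimes e_0^ie_1^j$ to the function $z\mapsto f(z)\cdot z^i\cdot 1^j$ (up to explicit rescalings from the $\det$- and $\lvert\det\rvert$-twists). Under this rule, the transformation $e_0\mapsto \frac{1}{\det g}(d e_0-be_1)$, $e_1\mapsto \frac{1}{\det g}(-ce_0+ae_1)$ converts monomials in $e_0,e_1$ into powers of the linear forms $d-bz$ and $-c+az = -j(z,g^{-1})\cdot(\text{stuff})$; combined with the twist $\{2,1\}$ on $\CO$, the total automorphy factor on a generator of weight $w(\lambda)-1$ becomes $j(z,g)^{-(w(\lambda)+1)}\det(g)^{1-\lambda_1}$, which matches the definition of $\omega_\lambda$ once the smooth character $\lvert\det\rvert_p^{(1-\lvert\lambda\rvert)/2}$ is absorbed (this matches the analogous smooth twist in $W^*_\lambda$). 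Composing with the Morita isomorphism yields exactly the Poisson kernel formula, once one writes $\int_{\BP^1(\Qp)}\frac{1}{z-x}\mu(x)$ as a $W_\lambda$-valued function of $z$, which is then viewed in $\omega_\lambda$ via the above identification.

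The main obstacle will be checking the bookkeeping of characters and powers of $\det$: one must verify that the $\det^{\lambda_1}$ arising from dualizing $W^*_\lambda$, the factor coming from the automorphy $\{2,1\}$, and the normalizing character $\lvert\det\rvert_p^{(\lvert\lambda\rvert-1)/2}$ built into both $\St^{\lalg}_\lambda$ and $\omega_\lambda$ combine to give precisely the twist $\{1+w(\lambda),1-\lambda_1\}\otimes\lvert\det\rvert_p^{(1-\lvert\lambda\rvert)/2}$. Once this verification is carried out on the basis vectors $e_0^ie_1^{w(\lambda)-1-i}$ and compared with the effect of $g$ on $\frac{1}{z-x}$, namely $\frac{1}{z-gx}=\frac{j(z,g)j(x,g)}{\det(g)(z-x)}$ combined with the $G$-action on $\mu$ by $(g^{-1})^*$, one obtains the desired $G$-equivariant topological isomorphism. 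Strictness of the isomorphism is automatic since all the intermediate maps are either the Morita isomorphism (known strict) or algebraic twists of identities on $\CO$ and finite-dimensional spaces.
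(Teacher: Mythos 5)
Your first two steps, $\intn{\St^{\lan}_\lambda}' \cong \intn{\St^{\lan}}'\otimes_L W_\lambda\cong\CO\{2,1\}\otimes_L W_\lambda$, are correct but only restate the problem; the gap is in the claimed $G$-equivariant identification $\CO\{2,1\}\otimes_L W_\lambda\xrightarrow{\sim}\omega_\lambda$ via $h\otimes e_0^ie_1^j\mapsto h\cdot z^i$, and your bookkeeping of the automorphy factor has the sign of $w(\lambda)$ wrong. With $g=\left(\begin{smallmatrix}a&b\\c&d\end{smallmatrix}\right)$, your rule sends $(de_0-be_1)^i(-ce_0+ae_1)^j$ to $(dz-b)^i(a-cz)^j=j(z,g)^{w(\lambda)-1}(g\cdot z)^i$ (using $a-cz=j(z,g)$ and $dz-b=j(z,g)(g\cdot z)$); combining with the $j(z,g)^{-2}\det(g)$ from $\CO\{2,1\}$ and the $\det(g)^{1-w(\lambda)-\lambda_1}\lvert\det(g)\rvert^{(1-\lvert\lambda\rvert)/2}$ coming from $W_\lambda$, the total automorphy factor your map would have to intertwine is $j(z,g)^{w(\lambda)-3}\det(g)^{2-w(\lambda)-\lambda_1}\lvert\det(g)\rvert^{(1-\lvert\lambda\rvert)/2}$, which is not the required $j(z,g)^{-(1+w(\lambda))}\det(g)^{1-\lambda_1}\lvert\det(g)\rvert^{(1-\lvert\lambda\rvert)/2}$ of $\omega_\lambda$ unless $w(\lambda)=1$. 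A concrete failure: for $\lambda=(0,3)$, $g=\left(\begin{smallmatrix}1&0\\1&1\end{smallmatrix}\right)$, $v=e_0$, your map gives $(1-z)^{-2}\,z\,h(z/(1-z))$ against the required $(1-z)^{-4}\,z\,h(z/(1-z))$. Since $W_\lambda$ is built from a symmetric power of the standard, the evaluation $e_0\mapsto z$, $e_1\mapsto 1$ contributes a \emph{positive} power $j(z,g)^{w(\lambda)-1}$ that adds to, rather than cancels, the $j(z,g)^{-2}$ from the untwisted Morita. Moreover the map you write down is $\CO$-linear, hence a surjection of a free $\CO$-module of rank $w(\lambda)$ onto one of rank $1$, with kernel containing the line through the element $f=ze_1-e_0$ of \ref{par:filt}, so it cannot be injective for $w(\lambda)>1$.

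In fact no $\CO$-linear $G$-isomorphism $\CO\{2,1\}\otimes W_\lambda\cong\omega_\lambda$ can exist; what \ref{par:filt} yields, reading $\CO\{2,1\}\otimes W_\lambda\cong\Omega_{\CE_\lambda}$, is a $G$-equivariant \emph{inclusion} $\omega_\lambda\cong\Gr_{\lambda_1}\Omega_{\CE_\lambda}\hookrightarrow\Omega_{\CE_\lambda}$ (the bottom step of the Hodge filtration) and a surjection onto $\Gr_{\lambda_2-1}\Omega_{\CE_\lambda}\cong\CO\{3-w(\lambda),2-\lambda_2\}\otimes\lvert\det\rvert_p^{(1-\lvert\lambda\rvert)/2}$, neither of which is the isomorphism you need. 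The isomorphism asserted in the proposition is genuinely analytic; the paper does not prove it but defers to Breuil and to Dasgupta--Teitelbaum, whose route first transposes the twist to the $\BP^1(\Qp)$-side, pairing $W^*_\lambda$-valued locally analytic functions with the tautological section $x\mapsto(xe_1-e_0)^{w(\lambda)-1}\in W_\lambda$ so that $L\otimes W^*_\lambda$ becomes the polynomials of degree $<w(\lambda)$, and then establishes a twisted Poisson duality directly by a residue argument. Your plan has the right skeleton but places the algebraic twist on the wrong side of the Poisson kernel.
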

\medskip
\Subsection{Préliminaires techniques}
\subsubsection{Cohomologie de $\czG^+(n)$ à valeur dans une induite}
On note $\czG^+\coloneqq \rmODt$, $\czG^1\coloneqq \ker (\nrd)$, $\czG(n)\coloneqq1+\varpi_{\rmD}^n\rmOD$ et $\czG^1(n)\coloneqq \czG(n)\cap \czG^1$. Le but de ce paragraphe est de démontrer la proposition suivante, concernant la cohomologie continue d'une induite continue :
\medskip
\begin{prop}\label{prop:anulcog}
Soient $n\geqslant 0$ et $k\geqslant 0$ deux entiers. Alors 
$$
\rmH^1\Harg{\czG(n)}{\Ind_{\czG^1}^{\czG^+}\Sym_{\Zp}^k\rmOD}
$$
est un $\Zp$-module de torsion $p$-primaire et de type fini. En particulier, il est tué par une puissance de $p$.
\end{prop}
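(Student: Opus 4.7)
The plan is to reduce, via a continuous Mackey decomposition and Shapiro's lemma, to computing the continuous cohomology of the compact $p$-adic Lie group $\czG^1(n)$ with values in $\Sym^k_{\Zp}\rmOD$, and then kill the rational part via Lazard's theorem together with Whitehead's first lemma applied to the semisimple Lie algebra $\Lie\czG^1$. First, I would note that both $\czG^1$ and $\czG(n)$ are normal in $\czG^+$ and that the reduced norm identifies $\czG^1\backslash\czG^+/\czG(n)$ with the finite group $\Zpt/\nrd(\czG(n))$, so Mackey's formula yields a finite direct sum decomposition
\[
\Res_{\czG(n)}^{\czG^+}\Ind_{\czG^1}^{\czG^+}\Sym^k_{\Zp}\rmOD \;\cong\; \bigoplus_{g\in \czG^1\backslash\czG^+/\czG(n)} \Ind_{\czG^1(n)}^{\czG(n)}\bigl((\Sym^k_{\Zp}\rmOD)^g\bigr),
\]
where $(\,\cdot\,)^g$ denotes the twist of the $\czG^1$-action by conjugation. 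Because $\Sym^k_{\Zp}\rmOD$ already extends to a $\czG^+$-module (via the left action of $\czG^+$ on $\rmOD$) and $\czG^1$ is normal, the twist $(\Sym^k_{\Zp}\rmOD)^g$ is isomorphic to $\Sym^k_{\Zp}\rmOD$ as a $\czG^1$-module (through $v\mapsto g\cdot v$). Shapiro's lemma for continuous cohomology then gives, for each summand, an isomorphism with $\rmH^1(\czG^1(n),\Sym^k_{\Zp}\rmOD)$, so it suffices to prove that this group is of finite type over $\Zp$ and torsion.

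Next, since $\czG^1(n)$ is a compact $p$-adic Lie group (pro-$p$ for $n$ large) and $\Sym^k_{\Zp}\rmOD$ is a finitely generated $\Zp$-module, standard results on continuous cohomology give that $\rmH^1(\czG^1(n),\Sym^k_{\Zp}\rmOD)$ is finitely generated over $\Zp$. To see that it is torsion, I would tensor with $\Qp$ and apply Lazard's comparison between continuous group cohomology and Lie algebra cohomology:
\[
\rmH^1\bigl(\czG^1(n),\Sym^k_{\Zp}\rmOD\bigr)\otimes_{\Zp}\Qp \;\cong\; \rmH^1\bigl(\Lie\czG^1,\Sym^k_{\Qp}\rmD\bigr).
\]

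Finally, the Lie algebra $\Lie\czG^1=\{x\in\rmD\mid \Tr(x)=0\}$ becomes $\fksl_2(L)$ after extending scalars along a splitting $\rmD\otimes_{\Qp}L\cong M_2(L)$, so it is a semisimple $\Qp$-Lie algebra. By Corollary \ref{cor:schurep}, the $\czG$-module $\Sym^k_{\Qp}\rmD\otimes_{\Qp}L$ decomposes as a finite direct sum of the $W_\lambda\otimes_L\cz W_\lambda$, each of which restricts to a finite-dimensional (hence semisimple) representation of the semisimple group $\czG^1$. Whitehead's first lemma then gives $\rmH^1(\fkg,V)=0$ for every finite-dimensional representation $V$ of a semisimple Lie algebra $\fkg$, so the rational cohomology vanishes and the integral group $\rmH^1(\czG^1(n),\Sym^k_{\Zp}\rmOD)$ is torsion; being finitely generated over $\Zp$, it is killed by a power of $p$, which finishes the proof.

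The main obstacle is to check carefully that the continuous Mackey/Shapiro formalism applies in the setting of induced continuous representations of locally profinite groups, but since $[\czG^+:\czG(n)]<\infty$ and every subgroup in sight is closed, this reduces to a bookkeeping exercise rather than a substantive difficulty; the conceptual input is entirely the Lie-algebra-cohomology vanishing via semisimplicity.
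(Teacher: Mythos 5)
Your proposal is correct and follows the paper's strategy in all essentials: reduce via Shapiro's lemma to $\rmH^1\Harg{\czG^1(n)}{\Sym^k_{\Zp}\rmOD}$, establish finite generation over $\Zp$, and kill the rational part via Lazard's comparison with Lie-algebra cohomology together with Whitehead's lemma for the semisimple Lie algebra $\Lie\czG^1$. The two places where you diverge are worth noting. For the reduction, you invoke Mackey's formula over the double cosets $\czG^1\backslash\czG^+/\czG(n)$; the paper argues more concretely by identifying $\Ind_{\czG^1}^{\czG^+}\Sym^k_{\Zp}\rmOD$ with $\sC^0(\Zpt,\Zp)\otimes_{\Zp}\Sym^k_{\Zp}\rmOD$ and decomposing $\Zpt$ into finitely many cosets of $1+p^n\Zp$, so that the induced module breaks as a finite direct sum of copies of $\Ind_{\czG^1(n)}^{\czG(n)}\Sym^k_{\Zp}\rmOD$ as a $\czG(n)$-module. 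Both give the same conclusion, and the Mackey formulation is abstractly cleaner provided you verify — as you sketch — that the double coset space is finite (which holds because $\nrd(\czG(n))$ is open in $\Zpt$) and that the conjugation twists are trivial.

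The more substantive point is the finite generation of $\rmH^1\Harg{\czG^1(n)}{\Sym^k_{\Zp}\rmOD}$, which you assert as a standard fact about compact $p$-adic analytic groups. This is true but not free: the paper devotes a separate lemma (Lemma \ref{lem:tfgrpco}) to it, using Lazard's theorem that $\czG^1(n)$ is a Poincaré pro-$p$ group for $n\geqslant 1$ (so $\rmH^1\Harg{\czG^1(n)}{\BF_p}$ is finite), a Nakayama argument to promote this to finite generation over $\Zp$, and an inflation–restriction argument to handle $n=0$ via the prime-to-$p$ quotient $\czG^1/\czG^1(1)$. If you intend your proof to be self-contained you should either reproduce this dévissage or give a precise reference; in particular the case $n=0$, where $\czG^1$ is not pro-$p$, is a point your one-line assertion glosses over. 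Finally, your detour through Corollary \ref{cor:schurep} in the last step is unnecessary: Whitehead's first lemma applies directly to the finite-dimensional representation $\Sym^k_{\Qp}\rmD$ of $\Lie\czG^1$, with no need to decompose it first.
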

La preuve repose sur deux lemmes issues de résultats de Lazard (\cf \cite{laz}) que l'on utilisera après avoir appliqué le lemme de Shapiro. Le premier lemme permet de montrer que la cohomologie est de $p$-torsion.
\medskip
\begin{lemm}\label{lem:cohalg}
Soit $H\subset \czG^1$ un sous-groupe ouvert compact. Soit $V$ une $\Qp$-représentation algébrique de $H$ de dimension finie, alors
$$
\rmH^1\Harg{H}{V}=0.
$$
\end{lemm}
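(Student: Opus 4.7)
The plan is to reduce the vanishing to Lie algebra cohomology via Lazard's theorem and then invoke Whitehead's first lemma for the semisimple Lie algebra of $\czG^1$.

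First, I would reduce to the case where $H$ is a uniform pro-$p$ group. Indeed, since $\czG^1$ has the descending filtration by the open subgroups $\czG^1(n)$, each $H' \coloneqq H \cap \czG^1(n)$ for $n$ large enough is a uniform pro-$p$ open normal subgroup of $H$, of finite index. Because $V$ is a $\Qp$-vector space and $H/H'$ is a finite group, the Hochschild--Serre spectral sequence
\[
\rmH^p\bigl(H/H', \rmH^q(H', V)\bigr) \Longrightarrow \rmH^{p+q}(H, V)
\]
collapses enough (all higher cohomology of the finite group $H/H'$ with $\Qp$-coefficients vanishes by the Maschke-type averaging argument, since $\lvert H/H' \rvert$ is invertible in $\Qp$) to give an isomorphism $\rmH^1(H, V) \cong \rmH^1(H', V)^{H/H'}$. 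Hence it suffices to show $\rmH^1(H', V) = 0$ for $H'$ uniform pro-$p$.

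Next, I would apply Lazard's theorem: for $H'$ a uniform pro-$p$ group of Lie algebra $\fkh = \Lie(H') \otimes_{\Zp} \Qp$, and $V$ a finite-dimensional $\Qp$-representation (continuous), there is a canonical isomorphism
\[
\rmH^{\bullet}_{\cont}(H', V) \xrightarrow{\ \sim\ } \rmH^{\bullet}(\fkh, V),
\]
where the right-hand side is Chevalley--Eilenberg cohomology. For an algebraic representation the continuity is automatic, so we are reduced to showing $\rmH^1(\fkh, V) = 0$.

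Finally, I would invoke Whitehead's first lemma. The Lie algebra $\fkh$ is the Lie algebra of $\czG^1 = \ker(\nrd\colon \rmD^{\times} \to \Qp^{\times})$, which is the $\Qp$-Lie algebra of trace-zero elements of $\rmD$; this is an inner form of $\fksl_2$, in particular semisimple. Since $V$ is finite-dimensional (algebraic, hence automatically a rational $\fkh$-module via differentiation), Whitehead's first lemma for semisimple Lie algebras in characteristic $0$ gives $\rmH^1(\fkh, V) = 0$. The only mild subtlety is the reduction to the uniform case and the verification that the isomorphism of Lazard preserves the algebraic $\fkh$-module structure on $V$; both are routine, so no substantive obstacle remains.
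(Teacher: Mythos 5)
Your proof is correct and takes essentially the same route as the paper: reduce to Lie algebra cohomology via Lazard's theorem, identify the Lie algebra of $H$ with the trace-zero elements $\rmD^1$ (a form of $\fksl_2$, hence semisimple), and conclude by Whitehead's first lemma. The only difference is your preliminary reduction to a uniform pro-$p$ open normal subgroup $H'$ via Hochschild--Serre and a Maschke argument, whereas the paper applies Lazard's theorem \cite[Ch.\ V, Th.\ 2.4.9]{laz} directly to $H$; your version is a slightly more cautious phrasing of the same idea, and is harmless.
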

\begin{proof}
Premièrement, remarquons que l'algèbre de Lie de $H$ est la même que celle de $\czG^1$, qui est $\rmD^1\subset \rmD$, la sous-algèbre de Lie des éléments dont la trace réduite est nulle. De plus, $H$ est un groupe de Lie $p$-adique et $V$ est a fortiori une représentation localement analytique. Ainsi, d'après un théorème de Lazard, \cite[Chapitre V Théorème 2.4.9]{laz}, la cohomologie de groupe est calculée par la cohomologie de l'algèbre de Lie. Plus précisément,
$$
\rmH^1\Harg{H}{V}\cong\rmH^1\Harg{\rmD^1}{V}.
$$
Donc, il suffit de montrer que $\rmH^1\Harg{\rmD^1}{V}=0$. Or $\rmD^1$ est une algèbre de Lie semi-simple et $V$ est une représentation de dimension finie. On utilise maintenant le théorème de Whitehead (\cf  \cite[III. Theorem 13]{jac}) pour conclure que $\rmH^1\Harg{\rmD^1}{V}=0$.
\end{proof}
Le second lemme permet de montrer que le groupe de cohomologie en question est de type fini.
\begin{lemm}\label{lem:tfgrpco}
Soient $n\geqslant 0$ et $k\geqslant 0$ deux entiers. Alors,
$$
\rmH^1\Harg{\czG^1(n)}{\Sym_{\Zp}^k\rmOD},
$$
est un $\Zp$-module de type fini.
\end{lemm}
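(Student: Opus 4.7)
Plan. On pose $M\coloneqq \Sym^k_{\Zp}\rmOD$, qui est un $\Zp$-module libre de rang fini (de rang $\binom{k+3}{3}$) muni d'une action continue de $\czG^1(n)$. Ce dernier est un groupe de Lie $p$-adique compact de dimension $3$ sur $\Qp$, d'algèbre de Lie $\rmD^1$. La stratégie est classique : se ramener à un sous-groupe uniforme pro-$p$, puis exploiter une résolution libre finie de $\Zp$ sur l'algèbre d'Iwasawa correspondante.

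D'abord, je me ramènerais au cas d'un sous-groupe ouvert uniforme pro-$p$. Pour $m\geqslant n$ assez grand, le sous-groupe $H\coloneqq \czG^1(m)\subset \czG^1(n)$ est ouvert, d'indice fini et uniforme pro-$p$ de dimension $3$. La suite spectrale de Hochschild-Serre
$$
\rmH^i\Harg{\czG^1(n)/H}{\rmH^j(H,M)}\Rightarrow \rmH^{i+j}\Harg{\czG^1(n)}{M},
$$
combinée au fait que la cohomologie d'un groupe fini à coefficients dans un $\Zp$-module de type fini est un $\Zp$-module de type fini, et qu'une extension de tels modules reste de type fini, ramène l'énoncé à la finitude de $\rmH^1(H,M)$.

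Ensuite, pour $H$ uniforme pro-$p$ de dimension $3$, l'algèbre d'Iwasawa $\Zp\dbr{H}$ est noethérienne et le $\Zp\dbr{H}$-module trivial $\Zp$ admet une résolution libre finie par des $\Zp\dbr{H}$-modules libres de rang fini, d'après les théorèmes de Lazard (\cf \cite[Chapitre V]{laz}). En appliquant $\Hom_{\Zp\dbr{H}}(-,M)$ à cette résolution, j'obtiendrais un complexe de $\Zp$-modules de type fini (chaque terme étant une somme finie de copies de $M$) dont la cohomologie calcule les groupes de cohomologie continue $\rmH^*(H,M)$ ; en particulier $\rmH^1(H,M)$ serait sous-quotient d'un $\Zp$-module de type fini, donc de type fini.

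L'obstacle principal sera la vérification soigneuse de l'existence de cette résolution libre intégrale de longueur finie à termes de rang fini sur $\Zp\dbr{H}$ : le théorème de Lazard fournit un isomorphisme entre cohomologie continue de $H$ et cohomologie d'algèbre de Lie après inversion de $p$, mais la version intégrale demande un peu plus de soin. Alternativement, on peut construire explicitement la résolution via le choix de $3$ générateurs topologiques de $H$ fournissant une identification de $\Zp\dbr{H}$ à une algèbre d'Iwasawa non-commutative en $3$ variables, ce qui donne un complexe de type Koszul en degrés $0,1,2,3$ à termes libres de rang $\binom{3}{i}$, et permet de conclure sans ambiguïté.
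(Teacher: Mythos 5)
Your proof is correct, but it takes a genuinely different route from the paper's. The paper works directly with $\czG^1(n)$: it reduces modulo $\varpi_{\rmD}$, shows $\rmH^1\Harg{\czG^1(n)}{\bar S}$ is finite using Lazard's theorem that compact $p$-adic analytic groups without torsion are Poincaré duality groups (handling $n=0$ separately by inflation-restriction since $\czG^1/\czG^1(1)$ has order prime to $p$), and then climbs back to the integral statement via $\rmH^1 = \varprojlim_m \rmH^1/\varpi_{\rmD}^m$ and Nakayama. You instead shrink to a uniform pro-$p$ open subgroup $H$ via Hochschild-Serre and invoke the existence of a finite free resolution of $\Zp$ over the noetherian Iwasawa algebra $\Zp\dbr{H}$, which computes continuous cohomology as $\Ext^{\ast}_{\Zp\dbr{H}}(\Zp, M)$ and immediately gives finite generation of all cohomology groups at once. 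Both arguments ultimately rest on Lazard's work, but at different levels: the paper uses the finiteness of $\rmH^{\ast}(-,\BF_p)$ (Poincaré duality) and needs the $\varprojlim$/Nakayama bootstrap; yours uses the finer ring-theoretic structure of the Iwasawa algebra, which buys uniformity in cohomological degree and avoids the case split at $n=0$. The only point you flag yourself that deserves care is the Koszul-type presentation for non-abelian uniform $H$: the minimal free resolution of $\Zp$ over $\Zp\dbr{H}$ indeed has ranks $\binom{3}{i}$ (since $\dim_{\BF_p}\rmH^i(H,\BF_p)=\binom{3}{i}$ for $H$ uniform of dimension $3$), but the complex is not the literal commutative Koszul complex; citing the projective dimension of $\Zp$ over $\Zp\dbr{H}$ for uniform $H$ is the cleaner formulation.
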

\medskip
\begin{proof}
Notons $S\coloneqq \Sym_{\BF_p}^k\rmOD$, $\rmH^1 \coloneqq \rmH^1\Harg{\czG^1(n)}{S}$ et $\bar{S}\coloneqq S/\varpi_{\rmD}\cong \Sym_{\BF_p}^k\BF_{p^2}$. Alors $\rmH^1\coloneqq \varprojlim_m \rmH^1/{\varpi_{\rmD}}^m$ (\cf par exemple \cite[Corollary 2.7.6]{nescwi}) et
$$
\rmH^1/{\varpi_{\rmD}} =  \rmH^1\Harg{\czG^1(n)}{\bar S}.
$$ 
On va montrer que $\rmH^1/{\varpi_{\rmD}}$ est fini. 

Supposons que $n\geqslant 1$, alors l'action de $\czG^1(n)$ est triviale sur $\bar S$. Donc, pour montrer que $\rmH^1/{\varpi_{\rmD}}$ est de dimension finie, il suffit de justifier que $\rmH^1\Harg{\czG^1(n)}{\BF_p}$ est de dimension finie. Mais d'après un résultat de Lazard (\cf \cite[Chapitre V, Théorème 2.5.8]{laz}), comme $\czG^1(n)$ est un groupe analytique $p$-adique compact et sans-torsion, c'est un groupe de Poincaré ; en particulier $\rmH^1\Harg{\czG^1(n)}{\BF_p}$ est fini et donc $\rmH^1/{\varpi_{\rmD}}$ est fini dès que $n\geqslant 1$. 

Si $n=0$, comme $\czG^1/\czG^1(1)$ est un groupe d'ordre $p+1$, donc premier à $p$, on a $\rmH^i\Harg{\czG^1/\czG^1(1)}{\bar S}=0$ pour $i\geqslant 1$ et la suite d'inflation restriction donne un isomorphisme
$$
\rmH^1\Harg{\czG^1}{\bar S}\cong \rmH^1\Harg{\czG^1(1)}{\bar S}^{\czG},
$$
et on vient de montrer que le terme de droite est fini. Ainsi $\rmH^1/{\varpi_{\rmD}}$ est fini pour tout entier $n\geqslant 0$.

Finalement, d'après le lemme de Nakayama, $\rmH^1/{\varpi_{\rmD}}^m$ est un $\BZ/p^m$-module, et donc un $\Zp$-module, de type fini engendré par $\dim_{\BF_p} \rmH^1\Harg{\czG^1(n)}{\bar S}$ générateurs. On en déduit que $\rmH^1\coloneqq \varprojlim_m \rmH^1/{\varpi_{\rmD}}^m$ est de type fini.
\end{proof}
On démontre maintenant la proposition \ref{prop:anulcog}.
\begin{proof}
Notons $S\coloneqq \Sym_{\Zp}^k\rmOD$, $I(1)\coloneqq \Ind_{\czG^1}^{\czG^+}S$ et $I(n)\coloneqq \Ind_{\czG^1(n)}^{\czG(n)}S$. On commence par deux observations :
\begin{itemize}
\itemb On a $I(1)\cong \sC^0\intnn{\Zp^{\times}}{\Zp}\otimes_{\Zp}S$ puisque $S$ est un $\Zp$-module libre de rang fini. De même, $I(n)\cong \sC^0\intnn{1+p^n\Zp}{\Zp}\otimes_{\Zp}S$.
\itemb Par le lemme de Shapiro,
$$
\rmH^1\Harg{\czG(n)}{I(n)}\cong \rmH^1\Harg{\czG^1(n)}{S}.
$$
\end{itemize}
Or, par le lemme \ref{lem:cohalg}, $\rmH^1\Harg{\czG^1(n)}{S}\otimes_{\Zp}\Qp=0$ ce qui signifie que $\rmH^1\Harg{\czG^1(n)}{S}$ est de torsion $p$-primaire et le lemme \ref{lem:tfgrpco} nous assure qu'il est de type fini. Ainsi $\rmH^1\Harg{\czG(n)}{I(n)}$ est de torsion $p$-primaire et de type fini. Or,
$$
I(1)\cong \bigoplus_{a\in (\BZ/p^n)^{\times}}\sC^0\intnn{a+p^n\Zp}{\Zp}\otimes_{\Zp}S\cong \bigoplus_{a\in (\BZ/p^n)^{\times}}I(n)
$$
en tant que représentation de $\czG(n)$. Ainsi $\rmH^1\Harg{\czG(n)}{I(1)}$ est de torsion $p$-primaire et de type fini.
\end{proof}
\subsubsection{Annulation d'un groupe d'extension localement analytique}\label{subsub:anulocext}
Commençons par motiver ce que l'on va faire, quitte à sortir légèrement du cadre de ce paragraphe. On introduit la $L$-représentation~$\Pi_{\CL}^{\lambda}$ de $G$ en \ref{subsec:seriespe}. Les vecteurs localement analytiques de $\Pi_{\CL}^{\lambda}$ ne sont pas exactement ${\Sigma}_{\CL}^{\lambda}$~; ils ont une composante de Jordan-Hölder supplémentaire dont on va montrer ici qu'elle n'interviendra pas dans nos futurs calculs. Plus précisément, la $L$-representation localement analytique $\Pilan_{\CL}^{\lambda}$ de $G$ a $4$ composantes de Jordan-Hölder et on a une suite exacte non scindée
$$
0\rightarrow {\Sigma}_{\CL}^{\lambda}\rightarrow \Pilan_{\CL}^{\lambda}\rightarrow {\wt{B}}^{\lambda}\rightarrow 0,
$$
où $\wt{B}^{\lambda}\coloneqq \Ind_B^G(\omega x^{\lambda_2-1}\otimes \omega^{-1}x^{\lambda_1})\otimes_L\lvert \det \rvert_p^{\frac{\lvert \lambda \rvert -1}{2}}$, avec $\omega(x)=x\lvert x \rvert$ le caractère cyclotomique, qui est une $L$-représentation localement analytique de $G$. On va montrer le résultat suivant :
\medskip
\begin{lemm}\label{lem:anuprinc}
Soit $\lambda\in P_+$. Alors 
$$
\Ext^1_{L[\bar G]}\intnn{{W}_{\lambda}^*}{{\wt{B}}^{\lambda}}=0.
$$
\end{lemm}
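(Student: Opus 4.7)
L'idée est de déduire l'annulation du fait que $W_{\lambda}^{*}$ et $\wt{B}^{\lambda}$ ont des caractères infinitésimaux distincts et appartiennent donc à des blocs différents de la catégorie des $L$-représentations localement analytiques de $G$ à caractère central fixé. Rappelons que le centre de Harish-Chandra $Z(U(\fkg_{L}))$, où $\fkg=\fkgl_{2}(\Qp)$, agit sur toute représentation localement analytique et commute avec l'action de $G$ ; si deux représentations admettent des caractères infinitésimaux distincts, alors en choisissant $z\in Z(U(\fkg_{L}))$ sur lequel ils diffèrent, l'action induite sur tout $\Ext^{i}$ coïncide avec deux scalaires différents (agissant via la source et via le but), ce qui force l'annulation de tous les $\Ext^{i}$.

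Premièrement, je vérifierais que la question a un sens : évaluer $W_{\lambda}^{*}$ et $\wt{B}^{\lambda}$ sur un élément central $\diag(a,a)$ donne dans les deux cas $\omega(a)^{|\lambda|-1}$, et les deux représentations appartiennent donc bien à la même sous-catégorie à caractère central fixé, celle dans laquelle $\Ext^{1}_{L[\bar G]}$ est calculé.

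Deuxièmement, je calculerais les caractères infinitésimaux via la description par induction parabolique. On a $W_{\lambda}^{*}\subset B_{\lambda}=\Ind_{B}^{G}(x^{\lambda_{1}}\otimes x^{\lambda_{2}-1})\otimes_L|\det|_p^{(|\lambda|-1)/2}$, donc le caractère infinitésimal de $W_{\lambda}^{*}$ est l'orbite de Weyl du couple des dérivées $(\lambda_{1},\lambda_{2}-1)$. Pour $\wt{B}^{\lambda}=\Ind_{B}^{G}(\omega x^{\lambda_{2}-1}\otimes \omega^{-1}x^{\lambda_{1}})\otimes_L|\det|_p^{(|\lambda|-1)/2}$, la relation $\omega(x)=x|x|$ donne la dérivée $1$ pour $\omega$, et les dérivées des caractères inducteurs sont donc $(\lambda_{2},\lambda_{1}-1)$. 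Les deux orbites de Weyl
\[
\bigl\{(\lambda_{1},\lambda_{2}-1),\,(\lambda_{2}-1,\lambda_{1})\bigr\}\qquad\text{et}\qquad\bigl\{(\lambda_{2},\lambda_{1}-1),\,(\lambda_{1}-1,\lambda_{2})\bigr\}
\]
sont disjointes : leur coïncidence forcerait $\lambda_{1}\in\{\lambda_{2},\lambda_{1}-1\}$, contredisant $\lambda_{1}<\lambda_{2}$.

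La dernière étape est d'invoquer l'argument standard de décomposition en blocs pour conclure que $\Ext^{1}=0$ dans la catégorie des $\sD(G,L)$-modules à caractère central fixé. Le contenu conceptuel est simple, et l'obstacle principal est comptable : il faut soigneusement vérifier que les normalisations (la torsion $|\det|_p^{(|\lambda|-1)/2}$, la partie lisse de $\omega$, l'éventuel $\rho$-décalage de Harish-Chandra) sont cohérentes à travers le texte, de sorte que les caractères infinitésimaux soient comparés dans le même cadre. Cela fait, l'annulation est formelle et peut se récupérer d'arguments analogues à ceux utilisés dans \cite{sch}.
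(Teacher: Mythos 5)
Votre démonstration est correcte et emprunte une voie réellement différente de celle du texte. Le texte applique le lemme de Shapiro pour identifier $\Ext^1_{L[\bar G]}\intnn{W_\lambda^*}{\wt{B}^\lambda}$ à de la cohomologie de $B$ à valeurs dans une extension successive de caractères localement algébriques à partie lisse non triviale, puis se ramène par Hochschild--Serre et Künneth à la cohomologie de $\Qpt$, qu'on décompose en $p^{\BZ}\times\mu_{p-1}\times\Zp$ (c'est le lemme \ref{lem:anucar}, qui donne l'annulation de tous les $\rmH^i$). Vous, au contraire, utilisez l'action du centre $Z(U(\fkg_L))$ : les deux objets ont des caractères infinitésimaux distincts — la disjonction des orbites de Weyl $\{(\lambda_1,\lambda_2-1),(\lambda_2-1,\lambda_1)\}$ et $\{(\lambda_2,\lambda_1-1),(\lambda_1-1,\lambda_2)\}$ découle bien de $\lambda_1<\lambda_2$, et le décalage de Harish-Chandra s'annule dans la comparaison puisque les deux inductions sont prises depuis le même Borel — et tout élément $z\in Z(U(\fkg_L))$ agit sur $\Ext^i$ à la fois par $\chi_{W_\lambda^*}(z)$ (via la source) et par $\chi_{\wt{B}^\lambda}(z)$ (via le but), forçant l'annulation. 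Les deux approches sont valides : celle du texte est plus élémentaire et autonome, la vôtre est plus conceptuelle et donne la même conclusion pour tous les $\Ext^i$ d'un coup.

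Deux points méritent d'être verrouillés si vous rédigez cette preuve. Premièrement, il faut justifier que $Z(U(\fkg_L))$ est central non seulement dans $U(\fkg_L)$ mais dans toute l'algèbre $\sD(G,L)$ — cela suit de l'$\Ad(G)$-invariance de $Z(U(\fkg_L))$, elle-même conséquence du fait que $\GL_2$ est connexe comme groupe algébrique, de sorte qu'un élément de $U(\fkg_L)$ annulé par $\ad(\fkg_L)$ l'est aussi par $\Ad(g)$ pour tout $g\in G$. Deuxièmement, il faut s'assurer que $\wt{B}^\lambda$ admet bien un caractère infinitésimal : c'est le cas car c'est une série principale localement analytique (et de surcroît une composante de Jordan--Hölder de $\Pilan_{\CL}^\lambda$, donc irréductible), tandis que pour $W_\lambda^*$ c'est le lemme de Schur appliqué au $\fkgl_2$-module irréductible $\Sym^{w(\lambda)-1}\otimes{\det}^{\lambda_1}$, les torsions lisses ne modifiant rien.
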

\medskip
Avant de démontrer ce lemme on a besoin d'un peu de préparation. Rappelons que qu'on note $T\subset G$ le sous-groupe des matrices diagonales que l'on voit comme un quotient $B\rightarrow T$ du Borel des matrices triangulaires supérieures.
\medskip
\begin{lemm}\label{lem:anucar}
Soit $\delta\colon T\rightarrow L^{\times}$ un caractère localement analytique mais non algébrique (en particulier non trivial), que l'on voit comme un caractère de $B$ via $B\rightarrow T$. Alors, pour tout entier $i\geqslant 0$, 
$$
\rmH^i\Harg{B}{\delta}=0.
$$
\end{lemm}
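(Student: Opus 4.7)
La stratégie est d'exploiter la décomposition $B=T\ltimes U$, où $U$ désigne le radical unipotent de $B$, identifié à $\Qp$ via $\begin{pmatrix}1&x\\0&1\end{pmatrix}\mapsto x$. La suite exacte $1\to U\to B\to T\to 1$ donne lieu à une suite spectrale de Hochschild-Serre dans le cadre localement analytique
\begin{equation*}
E_2^{p,q}=\rmH^p\Harg{T}{\rmH^q\Harg{U}{\delta}}\Rightarrow \rmH^{p+q}\Harg{B}{\delta}.
\end{equation*}
Comme $\delta$ se factorise par $T$, $U$ opère trivialement sur $\delta$, d'où $\rmH^q\Harg{U}{\delta}\cong \rmH^q\Harg{U}{L}\otimes_L\delta$. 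Par Lazard appliqué à $U^{\circ}=\Zp\subset U$ (qui donne $\rmH^{\ast}\Harg{U^{\circ}}{L}\cong\wedge^{\ast}\fku^{\ast}$), puis passage à $U$ via Hochschild-Serre sur $U/U^{\circ}\cong \Qp/\Zp$ (groupe discret de torsion, dont la cohomologie à coefficients $L$ est concentrée en degré $0$), on obtient $\rmH^{0}\Harg{U}{L}=L$, $\rmH^{1}\Harg{U}{L}\cong\fku^{\ast}$ et zéro en degré supérieur. L'action adjointe de $t=\diag(a,d)$ sur $U$ étant la multiplication par $a/d$, on a $\fku^{\ast}\cong L(\alpha^{-1})$ comme $T$-module, où $\alpha^{-1}\colon t\mapsto d/a$.

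La suite spectrale dégénère alors en une suite exacte longue
\begin{equation*}
\cdots\to \rmH^p\Harg{T}{\delta}\to \rmH^p\Harg{B}{\delta}\to \rmH^{p-1}\Harg{T}{\delta\alpha^{-1}}\to \rmH^{p+1}\Harg{T}{\delta}\to\cdots
\end{equation*}
et il suffit de montrer que $\rmH^{\ast}\Harg{T}{\delta}=\rmH^{\ast}\Harg{T}{\delta\alpha^{-1}}=0$. Par hypothèse, $\delta$ est non-algébrique, donc non-trivial ; et si $\delta\alpha^{-1}=1$, alors $\delta=\alpha$ serait algébrique, contradiction. Le problème se réduit donc à démontrer le sous-résultat suivant : \emph{pour tout caractère localement analytique non-trivial $\chi$ de $T$, on a $\rmH^{\ast}\Harg{T}{\chi}=0$}.

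Pour cela, on introduit $T^{\circ}\coloneqq(1+p\Zp)^2\subset T$ (pour $p$ impair ; adaptation immédiate si $p=2$), sous-groupe ouvert compact pro-$p$, de sorte que $T/T^{\circ}\cong \mu_{p-1}^2\times\BZ^2$. Une nouvelle suite spectrale de Hochschild-Serre ramène le calcul à celui de $\rmH^{\ast}\Harg{T^{\circ}}{\chi}$ suivi de la cohomologie de $T/T^{\circ}$. Par Lazard, $\rmH^{\ast}\Harg{T^{\circ}}{\chi}\cong\rmH^{\ast}(\fkt,\chi)$ est calculé par un complexe de Koszul dont les différentielles sont données par $d\chi\in\fkt^{\ast}$ ; si $d\chi\neq 0$, ce complexe est exact, et on conclut. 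Si $d\chi=0$, $\chi$ est lisse (donc trivial sur $T^{\circ}$ puisque $\exp\colon\fkt\to T^{\circ}$ est surjectif), et $\rmH^{\ast}\Harg{T^{\circ}}{\chi}\cong\wedge^{\ast}\fkt^{\ast}$ avec action résiduelle de $T/T^{\circ}$ par $\chi$. Le facteur $\mu_{p-1}^2$ étant d'ordre premier à $p$, sa cohomologie se réduit aux invariants, nuls dès que $\chi$ y est non-trivial ; pour le facteur $\BZ^2$, la cohomologie avec coefficients dans un caractère non-trivial est calculée par un complexe de Koszul sur $(\chi(p_1)-1,\chi(p_2)-1)$, qui est exact. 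Comme $\chi$ est non-trivial sur $T/T^{\circ}$, une analyse par cas selon la composante où $\chi$ est non-trivial conclut.

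L'obstacle principal sera la justification rigoureuse des manipulations précédentes dans le cadre adéquat (sans doute $\Ext^{\ast}$ dans la catégorie des $\sD(B,L)$-modules à caractère central fixé) : convergence des suites spectrales de Hochschild-Serre localement analytiques pour les sous-groupes fermés $U\subset B$ et $T^{\circ}\subset T$, compatibilité de la cohomologie à coefficients $\delta$ avec celle de sous-groupes non-compacts, et traitement uniforme du cas smooth ($d\chi=0$) avec le cas de dérivée non-triviale.
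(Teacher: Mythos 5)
Your proposal follows the paper's overall structure: Hochschild--Serre for $U \triangleleft B$, a reduction to showing $\rmH^*\Harg{T}{\chi}=0$ for every non-trivial locally analytic character $\chi$ of $T$, then a decomposition of the torus. Your explicit computation of $\rmH^*\Harg{U}{L}$ (Lazard on $\Zp\subset U$ plus the discrete quotient $\Qp/\Zp$) and the identification of the two twists $\delta,\delta\alpha^{-1}$ are more precise than the paper's bare assertion that $\rmH^j\Harg{N}{\Qp}$ is an algebraic character; up to this point the two arguments match.

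The gap is in the torus step, where you appeal to Lazard's comparison $\rmH^*\Harg{T^\circ}{\chi}\cong\rmH^*(\fkt,\chi)$. The claim that $d\chi=0$ forces $\chi|_{T^\circ}$ to be trivial ``puisque $\exp$ est surjectif'' is false: $\chi\circ\exp$ is then locally constant on a totally disconnected domain, which need not be constant, and indeed $T^\circ\cong\Zp^2$ admits plenty of non-trivial smooth characters. For such a $\chi$ the Lazard isomorphism simply fails: the group-cohomology side is zero (on each $\Zp$-factor a topological generator $\gamma$ has $\gamma-1$ acting by $\chi(\gamma)-1\neq 0$, hence invertibly, killing $\rmH^0$ and $\rmH^1$), while the Lie-algebra side is $\wedge^*\fkt^*\otimes\chi\neq 0$. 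Since $T\cong T^\circ\times(T/T^\circ)$ one can even take $\chi$ trivial on $T/T^\circ$, and your argument would then output $\rmH^*\Harg{T}{\chi}\cong\wedge^*\fkt^*\neq 0$, the wrong answer. The paper sidesteps this entirely: it decomposes $T=(\Qpt)^2$ by K\"unneth, each $\Qpt$ as $p^{\BZ}\times\mu_{p-1}\times(1+p\Zp)$, and computes every rank-one factor directly, using only the invertibility of $\gamma-1$ on the $(1+p\Zp)$-piece --- no Lazard, hence no case distinction on $d\chi$. To repair your version you would have to treat the case ``$d\chi=0$, $\chi|_{T^\circ}\neq 1$'' separately by exactly this one-variable computation, at which point the detour through Lazard on $T^\circ$ buys nothing.
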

\medskip
\begin{proof}
On commence par justifier qu'il suffit de montrer que pour tout caractère localement analytique non trivial $\delta \colon T \rightarrow L^{\times}$, on a $\rmH^i\Harg{T}{\delta}=0$. En effet, la suite spectrale d'Hochschild-Serre s'écrit
$$
E_2^{i,j} \coloneqq \rmH^{i}\Harg{T}{\rmH^j\Harg{N}{\delta}}\implies \rmH^{i+j}\Harg{B}{\delta}
$$
et en tant que représentation de $T$, on a $\rmH^j\Harg{N}{\delta}=\rmH^j\Harg{N}{\Qp}\otimes_{\Qp}\delta$. Mais, en tant que représentation de $T$,  $\rmH^j\Harg{N}{\Qp}$ est un caractère algébrique donc, par hypothèse,  $\rmH^j\Harg{N}{\delta}$ est un caractère non algébrique. Ainsi, il suffit de montrer que l'on a $\rmH^i\Harg{T}{\delta}=0$ pour tout entier $i\geqslant 0$ comme annoncé. 

Par la formule de Künneth, en écrivant $\delta=\delta_1\otimes \delta_2$ on obtient 
$$
\rmH^i\Harg{T}{\delta}=\bigoplus_{k_1+k_2=i}\rmH^{k_1}\Harg{\Qp^{\times}}{\delta_1}\otimes_L\rmH^{k_2}\Harg{\Qp^{\times}}{\delta_2},
$$
et on se ramène donc à montrer que pour tout caractère unitaire $\delta_0\colon \Qp^{\times}\rightarrow L^{\times}$ et pour tout entier $i\geqslant 0$ on a 
$$
\rmH^i\Harg{\Qp^{\times}}{\delta_0}=0.
$$
Pour démontrer ce résultat, on décompose $\Qp^{\times}=p^{\BZ}\cdot\mu_{p-1}\cdot(1+p\BZ_p)\cong \BZ\times \mu_{p-1}\times \BZ_p$ et la décomposition de Künneth donne
$$
\rmH^i\Harg{\Qp^{\times}}{\delta_0}\cong \bigoplus_{k_1+k_2+k_3=i}\rmH^{k_1}\Harg{\BZ}{\delta_0}\otimes_L\rmH^{k_2}\Harg{\mu_{p-1}}{\delta_0}\otimes_L\rmH^{k_3}\Harg{\BZ_p}{\delta_0}.
$$
On analyse les termes un à un pour justifier que dans chaque terme de la somme au moins l'un des groupes de cohomologie s'annule.
\begin{itemize}
\itemb On a $\rmH^{k_1}\Harg{\BZ}{\delta_0}=0$ pour $k_1\geqslant 2$ ou pour tout entier $k_1\geqslant 0$ tel que $\delta_0(p)\neq 1$.
\itemb On a $\rmH^{k_2}\Harg{\mu_{p-1}}{\delta_0}=0$ pour $k_2\geqslant 1$ puisque les représentations de groupes finis en caractéristique zéro sont semi-simples, puis $\rmH^{0}\Harg{\mu_{p-1}}{\delta_0}=0$ si $\restr{\delta_0}{\mu_{p-1}}\neq 1$.
\itemb On a $\rmH^{k_3}\Harg{\BZ_p}{\delta_0}=0$ pour $k_3\geqslant 2$, ou pour tout entier $k_3\geqslant 0$ tel que $\restr{\delta_0}{(1+p\BZ_p)}\neq 1$.
\end{itemize}
Puisque $\delta_0\neq 1$, l'une des restrictions considérées n'est pas triviale, donc l'un des trois groupes de cohomologie est toujours nul ce qui permet de conclure que $\rmH^i\Harg{\Qp^{\times}}{\delta_0}=0$ pour tout entier $i\geqslant 0$.
\end{proof}
On peut maintenant passer à la démonstration du lemme \ref{lem:anuprinc}.
\begin{proof}
D'après le Lemme de Shapiro on a 
$$
\Ext^1_{L[\bar G]}\intnn{{W}_{\lambda}^*}{{\wt{B}}^{\lambda}}\cong \rmH^1\Harg{B}{W_{\lambda}\otimes_L(\omega x^{\lambda_2-1}\otimes \omega^{-1}x^{\lambda_1})\otimes_L \lvert \det \rvert_p^{\frac{\lvert \lambda \rvert -1}{2}}},
$$
où la cohomologie à droite est la cohomologie localement analytique de groupes. Or, en tant que représentation de $B$, $\Sym^{w(\lambda)-1}$ est une extension successive de caractères algébriques et donc $W_{\lambda}\otimes_L(\omega x^{\lambda_2-1}\otimes \omega^{-1}x^{\lambda_1})\otimes_L \lvert \det \rvert_p^{\frac{\lvert \lambda \rvert -1}{2}}$ est une extension successive de caractères localement algébriques dont la partie lisse n'est pas triviale. Ainsi, il suffit de montrer que pour tout entier $i\geqslant 0$ et pour tout caractère localement algébrique non trivial $\delta\colon \Qp^{\times}\rightarrow L^{\times}$ on a 
$$
\rmH^i\Harg{B}{\delta}=0.
$$
Or, c'est le contenu du lemme \ref{lem:anucar}, ce qui conclut la preuve du lemme.
\end{proof}

\section{Calculs de cohomologie}
Le but de cette section est de décomposer les différentes cohomologies isotriviales de la tour de Drinfeld suivant les poids pour se ramener à des opers dont on sait maintenant calculer la cohomologie (\cf la première partie). On traitera aussi des torsions par des caractères lisses qui apparaissent au travers des composantes connexes.
\Subsection{Décomposition des cohomologies}\label{subsec:decompcoh}
\subsubsection{Rappels des définitions}
Faisons quelques rappels tout en introduisant de nouvelles notations. On a la tour d'espaces rigides $\brv{\rmM}_C^{\infty}$ muni d'une action de $\BG=\sW_{\Qp}\times G \times \czG$. Son quotient par $p\in G$, noté $\presp{\rmM}_C^{\infty}$, est la somme de deux copies de $\rmM_C^{\infty}$ et ce dernier est uniquement muni d'une action de $\BG^+\coloneqq \nu^{-1}(\Zpt)$. Sur ces différentes tours, on définit le $L$-système local $\BV\coloneqq \BV_{\Dr}\otimes_{\Qp}L\cdot\lvert \nu \rvert_p^{1/2}$ où $\BV_{\Dr}$ est le module de Tate rationnel du $\rmOD$-module formel spécial universel. Le module de Tate $\BV_{\Dr}^+$ définit un réseau $\BV^+$ et on pose
$$
\Sym \BV\coloneqq \bigoplus_{k\geqslant 0} \Sym_L^k\BV,\quad \Sym \BV^+\coloneqq \bigoplus_{k\geqslant 0} \Sym_{\rmO_L}^k\BV^+.
$$
On définit la cohomologie étale de $\Sym \BV(1)$ comme étant la somme directe des cohomologies étales des $\Sym_L^k\BV(1)$. Soit
$$
\brv{\rmH}^1_{\et}=\rmH^1_{\et}\Harg{\brv{\rmM}_{C}^{\infty}}{\Sym \BV(1)}\coloneqq \bigoplus_{k\geqslant 0}\varinjlim_n\rmH^1_{\et}\Harg{\brv{\rmM}_{C}^{n}}{\Sym_{\rmO_L}^k \BV^+(1)}\otimes_{\rmO_L}L.
$$
De façon similaire,
$$
\rmH^1_{\et}\coloneqq \rmH^1_{\et}\Harg{\rmM_{C}^{\infty}}{\Sym \BV(1)},\quad \pbrv{\rmH}^1_{\et}=\presp{\rmH}^1_{\et}\coloneqq \rmH^1_{\et}\Harg{\presp{\rmM}_{C}^{\infty}}{\Sym \BV(1)}.
$$
Ici, $\brv{\rmH}^1_{\et}$ et $\presp{\rmH}^1_{\et}$ (resp. $\rmH^1_{\et}$) sont des $L$-représentations de $\BG$ (resp. $\BG^+$). De même, les cohomologies proétales sont définies par 
$$
\brv{\rmH}^1_{\pet}=\rmH^1_{\et}\Harg{\brv{\rmM}_{C}^{\infty}}{\Sym \BV(1)}\coloneqq\bigoplus_{k\geqslant 0}\varinjlim_n\rmH^1_{\pet}\Harg{\brv{\rmM}_{C}^{n}}{\Sym_L^k \BV(1)},
$$
et de façon similaire 
$$
\rmH^1_{\pet}\coloneqq \rmH^1_{\pet}\Harg{\rmM_{C}^{\infty}}{\Sym \BV(1)},\quad \pbrv{\rmH}^1_{\pet}=\presp{\rmH}^1_{\pet}\coloneqq \rmH^1_{\pet}\Harg{\presp{\rmM}_{C}^{\infty}}{\Sym \BV(1)}.
$$
De même, $\brv{\rmH}^1_{\pet}$ $\presp{\rmH}^1_{\pet}$ (resp. $\rmH^1_{\pet}$) sont des $L$-représentations de $\BG$ (resp. $\BG^+$). Pour les cohomologies de de Rham et de Hyodo-Kato on va surtout utiliser $\presp{\rmM}_{\Qp}^{\infty}$, comme on a besoin d'un modèle sur une extension finie de $\Qp$ pour appliquer ce qui à été développé dans la première partie. On associe à $\BV$ un fibré plat filtré $\CE=(\CE,\nabla,\Fil)$ et comme $\BV$ est fortement isotrivial de fibré associé $\BD$ on obtient que $\CE\coloneqq \BD\otimes_{\Qp}\CO^{\infty}$ (on note temporairement $\CO^n$ le faisceau des fonctions sur $\presp{\rmM}_{\Qp}^{n}$) et $\nabla = \id\otimes d$. Le fibré associé à $\Sym^k_L\BV$ est donné par la puissance symmétrique $k$-ième de $\CE$. Ainsi, on définit
$$
\Sym \CE \coloneqq \bigoplus_{k\geqslant 0}\Sym_{\CO^{\infty}}^k\CE,\quad \Sym \BD \coloneqq \bigoplus_{k\geqslant 0}\Sym_L^k \BD,
$$
et
$$
\presp{\rmH}_{\dR}^1\coloneqq \bigoplus_{k\geqslant 0}\varinjlim_n\rmH_{\dR}^1\Harg{\presp{\rmM}_{\Qp}^{n}}{\Sym_{\CO^n}^k\CE},\quad \presp{\rmH}_{\HK}^1\coloneqq \bigoplus_{k\geqslant 0}\varinjlim_n\rmH_{\HK}^1\Harg{\presp{\rmM}_{\Qp}^{n}}{\Sym_{L}^k\BD}.
$$
se sont des $L$-représentations de $\BG$ (où $\sW_{\Qp}$ agit trivialement) et $\presp{\rmH}_{\HK}^1\cong\presp{\rmH}_{\dR}^1$ par isomorphisme de Hyodo-Kato. De plus, $\presp{\rmH}_{\dR}^1$ est muni d'une filtration $L[\czG]$-stable et $\presp{\rmH}_{\HK}^1$ est muni d'un endomorphisme $L$-linéaire $\varphi$ et d'une monodromie, qui s'avère être triviale (c'est un $1$-morphisme non trivial vers $\rmH^0_{\HK}$ comme on le verra en \ref{subsub:nontrivmono}). On définit aussi
$$
\begin{gathered}
\pbrv{\rmH}^1_{\dR}\coloneqq  \bigoplus_{k\geqslant 0}\varinjlim_n\rmH_{\dR}\Harg{\presp{\rmM}_{C}^{n}}{\Sym_{\CO^n}\CE}, \quad  \pbrv{\rmH}_{\HK}^1\coloneqq \bigoplus_{k\geqslant 0}\varinjlim_n\rmH_{\HK}^1\Harg{\presp{\rmM}_{C}^{n}}{\Sym_{L}^k\BD},
\end{gathered}
$$
où le fibré plat est définit de la même façon. Alors $\pbrv{\rmH}^1_{\dR}\cong\presp{\rmH}^1_{\dR}\otimes_{\Qp}C$ qui est donc un un $C$-espace vectoriel muni d'une action de $\BG$ et d'une filtration $L[\sW_{\Qp}\times \czG]$-stable ; ${\pbrv{\rmH}}^1_{\HK}\cong \presp{\rmH}_{\HK}\otimes_{\Qp}\Qpbr$ est un $L\otimes_{\Qp}\Qpbr$-espace vectoriel, muni d'une action de $\BG$ et d'un endomorphisme $\sigma$-linéaire. Notons que $\sW_{\Qp}$ n'agit plus trivialement et l'isomorphisme de Hyodo-Kato s'écrit $\brv{\rmH}^1_{\HK}\otimes_{\Qpbr}C\cong\brv{\rmH}^1_{\dR}$. Les définitions de $\brv{\rmH}^1_{\dR}$ et $\brv{\rmH}^1_{\HK}$ se font à partir de $\brv{\rmM}_{C}^{\infty}$ puis $\rmH^1_{\dR}$ et $\rmH^1_{\HK}$ se font à partir de $\rmM_{C}^{\infty}$ ; elles sont très similaires et ne seront pas détaillés.

\subsubsection{Torsion par des caractères lisses et composantes connexes}\label{secsec:uncomp}
On va utiliser la décomposition de $\brv{\rmH}^0_{\et}$ (\cf lemme \ref{lem:deccar}) pour se débarasser des caractères et se restreindre au calcul des $\presp{\rmH}^1_{\star}$ (voire $\rmH^1_{\star}$) pour le reste de l'article. 
\medskip
\begin{lemm}\label{lem:uncomp}
Pour $\star=\{\et,\pet,\dR,\HK\}$ on a  :
$$
\brv{\rmH}^{1}_{\star}=\Ind_{\BG^+}^{\BG}\rmH^1_{\star},
$$
où l'induite est continue, et 
$$
\rmH^{1}_{\star}=\bigoplus_{\chi\colon \Zpt\rightarrow L}\rmH^1_{\star}[\chi],
$$
où la somme porte sur les caractères lisses $\chi \colon \Zpt\rightarrow L^{\times}$. De plus, $\rmH^1_{\star}[\chi]\cong \rmH^1_{\star}[\boldsymbol{1}]\otimes (\chi\circ \nu)$ où $\boldsymbol{1}$ désigne le caractère trivial.
\end{lemm}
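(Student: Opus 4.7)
The plan is to deduce both statements from the geometric description of the connected components of the tower together with Lemma~\ref{lem:deccar}. The key input is the $\BG$-equivariant surjection $\brv{\rmM}_C^{\infty}\to\pi_0(\brv{\rmM}_C^{\infty})\cong\Qpt$, along which $\BG$ acts via $\nu$, combined with the fact that $\BG^+=\nu^{-1}(\Zpt)$ is the stabilizer of $\rmM_C^{\infty}$ (the preimage of $\Zpt$).

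For the induction formula, I would fix $p\in\BG$ as a representative of the generator of $\BG/\BG^+\cong p^{\BZ}\cong\BZ$, which yields a $\BG$-equivariant decomposition
$$
\brv{\rmM}_C^{\infty}=\bigsqcup_{n\in\BZ}\,p^n\cdot\rmM_C^{\infty},
$$
with $\BG^+$ stabilizing each piece and $p^{\BZ}$ permuting them. The system local $\Sym\BV$ is $\BG$-equivariant, so its pullback to $p^n\cdot\rmM_C^{\infty}$ is carried isomorphically to its pullback to $\rmM_C^{\infty}$ by the $p^n$-action. Each of the four cohomology theories is additive on disjoint unions of quasi-compact pieces (and compatible with the $\BG$-action), which gives
$$
\brv{\rmH}^1_{\star}\cong\prod_{n\in\BZ}\rmH^1_{\star},
$$
with the obvious permutation action. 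Since $\BG/\BG^+\cong\BZ$ is discrete, this direct product is exactly the continuous induction $\Ind_{\BG^+}^{\BG}\rmH^1_{\star}$.

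For the character decomposition, I would use the subgroup $\BG^{\circ}=\ker\nu\subset\BG^+$, which stabilizes each connected component of $\rmM_C^{\infty}$, so that the quotient $\BG^+/\BG^{\circ}\cong\Zpt$ acts naturally on the cohomology of each component. By Lemma~\ref{lem:deccar}, the idempotent decomposition of $\rmH^0_{\et}(\rmM_C^{\infty},L)=\bigoplus_{\chi}(\chi\circ\nu)$ produces, via the $\rmH^0$-module structure on $\rmH^1_{\star}$ (cup product in the (pro)étale cases, multiplication by global horizontal sections in the de Rham/Hyodo-Kato cases), a decomposition $\rmH^1_{\star}=\bigoplus_{\chi}\rmH^1_{\star}[\chi]$. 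For each $\chi$, choose a class $\xi_{\chi}\in\rmH^0_{\et}(\rmM_C^{\infty},L)$ spanning the line $\chi\circ\nu$; then cup/multiplication with $\xi_{\chi}$ and $\xi_{\chi^{-1}}$ give mutually inverse maps
$$
\rmH^1_{\star}[\boldsymbol{1}]\otimes(\chi\circ\nu)\xrightarrow{\ \sim\ }\rmH^1_{\star}[\chi],
$$
$\BG^+$-equivariant by construction.

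The main obstacle will be handling the non-étale theories uniformly. For de Rham and Hyodo-Kato the decomposition must be carried out on a $\Qp$-model (here $\presp{\rmM}_{\Qp}^{\infty}$), and one has to check that the $\rmH^0$-action by the classes $\xi_{\chi}$ makes sense through the flat bundle $\CE=\BD\otimes_{\Qp}\CO^{\infty}$ and the analogous isocrystalline structure, using that $\chi$ is smooth and hence descends correctly. A secondary technical point is that the four cohomology groups carry different natural topologies, so one must verify that the decompositions above are strict — but this follows from the fact that both the $\BZ$-indexed decomposition and the finite $\Zpt$-isotypic decomposition are Hausdorff and split by topologically complemented closed subspaces.
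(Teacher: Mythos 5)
Your proof is correct and follows the route the paper has in mind: the lemma is stated in the paper without proof, as an immediate extension of Lemma~\ref{lem:deccar} and the geometry of the connected components of the tower, and your combination of the $p^{\BZ}$-translate decomposition for the induction formula, the $\Zpt$-isotypic decomposition of the smooth action (through $\nu$) for the direct sum, and the cup-/multiplication-action of $\rmH^0$ for the twist isomorphism $\rmH^1_{\star}[\chi]\cong\rmH^1_{\star}[\boldsymbol{1}]\otimes(\chi\circ\nu)$ is exactly the natural realization. One minor imprecision: the translates $p^n\cdot\rmM_C^{\infty}$ are Stein, not quasi-compact, but additivity of all four theories over arbitrary disjoint unions holds regardless (sections over a disjoint union form the product of sections, which is exact), so the argument is unaffected; it is also worth making explicit that $\xi_{\chi}$ can be taken to be $\chi$ itself viewed as a function on $\pi_0\cong\Zpt$, hence a unit in $C^{\infty}(\Zpt,L)$ since characters never vanish, which is why multiplication by it is invertible.
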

\medskip
\begin{rema}\label{rema:sumchar}
\begin{enumerate}
\item Pour $\chi\colon \Qpt \rightarrow L^{\times}$ un caractère lisse on pose $\rmH^1_{\star}[\chi]\coloneqq \rmH^1_{\star}[\boldsymbol{1}]\otimes (\chi\circ \nu)$. En utilisant le premier point de la remarque \ref{rem:concomp} on obtient un plongement $\BG$-équivariant. 
$$
\bigoplus_{\chi\colon \Qpt\rightarrow L}\rmH^1_{\star}[\chi]\incl \brv{\rmH}^{1}_{\star}.
$$
Donc en tant que $L$-représentation de $\BG$, le socle de $\brv{\rmH}^{1}_{\star}$ est contenu dans le membre de gauche.
\item On déduit de la remarque précédente qu'on a
$$
\pbrv{\rmH}^1_{\star}\cong \rmH^1_{\star}\otimes(\boldsymbol{1}\oplus \chi_2)\circ \nu.
$$
où $\chi_2\colon \Qpt\rightarrow L^{\times}$ est le caractère non-ramifié d'ordre $2$.
\end{enumerate}
\end{rema}
\medskip
\begin{lemm}\label{lem:fkcompcon}
Soit $\star \in \{\et,\pet,\dR,\HK\}$.
\begin{enumerate}
\item On a une inclusion $\pbrv{\rmH}^1_{\star}\incl \brv{\rmH}^1_{\star}$ qui identifie $\presp{\rmH}^1_{\star}$ aux vecteurs fixés par $p\in \BG$ vu comme élément de $G$ ou $\czG$. En d'autres termes, si $W$ est une $L$-représentation de $\BG$ tel que $p\in \BG$ agit trivialement, alors 
$$
\Hom_{L[\BG]}\intnn{W}{\brv{\rmH}^1_{\star}} = \Hom_{L[\BG]}\intnn{W}{\pbrv{\rmH}^1_{\star}}.
$$
\item Soit $\chi \colon \Qpt \rightarrow L^{\times}$ un caractère lisse, et $W$ une représentation de $\BG$, alors
$$
\Hom_{L[\BG]}\intnn{W}{\brv{\rmH}^1_{\star}} \cong \Hom_{L[\BG]}\intnn{W\otimes (\chi\circ \nu)}{\brv{\rmH}^1_{\star}}.
$$
\end{enumerate}
\end{lemm}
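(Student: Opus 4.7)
The plan is to reduce both assertions, via Lemma \ref{lem:uncomp} and Remark \ref{rema:sumchar}(2), to formal manipulations of induced representations.

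For (1), the inclusion $\pbrv{\rmH}^1_{\star}\incl \brv{\rmH}^1_{\star}$ is the pullback of cohomology along the $p^{\BZ}$-Galois cover $\brv{\rmM}_{C}^{\infty}\to \presp{\rmM}_{C}^{\infty}$. To identify the image with the $p$-fixed subspace I would work with the explicit description $\brv{\rmH}^{1}_{\star}=\Ind_{\BG^+}^{\BG}\rmH^1_{\star}$ of Lemma \ref{lem:uncomp}. Since $\BG^+\triangleleft\BG$ is normal with quotient $\BG/\BG^+\cong p^{\BZ}$ via $\nu$, and since $p\in G$ satisfies $\nu(p)=p^{-2}$ (resp. $p\in\czG$ satisfies $\nu(p)=p^{2}$), the element $p$ acts on the induction by translation by $\mp 2$ on the coset indexing set $\BZ$. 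A function in the induction is thus $p$-invariant if and only if it has period $2$, and the subspace of period-$2$ functions is naturally identified with $\rmH^1_{\star}\oplus \rmH^1_{\star}\otimes (\chi_2\circ\nu)$, the second copy being twisted by the unramified quadratic character $\chi_2$ of $\Qpt$ encoding the cocycle $h_{0,2}\in\BG^+$ relating the coset representatives $g_0$ and $g_2$. By Remark \ref{rema:sumchar}(2), this is exactly $\pbrv{\rmH}^1_{\star}$. The equality of Hom spaces for $W$ on which $p$ acts trivially is then immediate: any $\BG$-equivariant morphism $W\to \brv{\rmH}^1_{\star}$ lands automatically in the $p$-fixed subspace.

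For (2), the projection formula for induction gives
$$
\brv{\rmH}^1_{\star}\otimes (\chi\circ\nu)\cong \Ind_{\BG^+}^{\BG}\bigl(\rmH^1_{\star}\otimes \restr{\chi\circ\nu}{\BG^+}\bigr).
$$
Since $\restr{\nu}{\BG^+}$ lands in $\Zpt$, the restricted character is $\chi'\circ\nu$ with $\chi'\coloneqq \restr{\chi}{\Zpt}$. The decomposition $\rmH^1_{\star}=\bigoplus_{\psi}\rmH^1_{\star}[\psi]$ and the identifications $\rmH^1_{\star}[\psi]\otimes \chi'\circ\nu \cong \rmH^1_{\star}[\psi\chi']$ of Lemma \ref{lem:uncomp} show that tensoring by $\chi'\circ\nu$ merely permutes the summands, so $\rmH^1_{\star}\otimes \chi'\circ\nu\cong \rmH^1_{\star}$ as $\BG^+$-modules. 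Hence $\brv{\rmH}^1_{\star}\otimes (\chi\circ\nu)\cong \brv{\rmH}^1_{\star}$ as $\BG$-modules, and the Hom identity follows by tensor-Hom adjunction applied to $(\chi\circ\nu)^{\pm 1}$ (which is again a smooth character, so one can replace $\chi$ by $\chi^{-1}$ if needed).

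The only substantive point is the geometric compatibility in (1): one must check that the deck-transformation action of $p^{\BZ}$ on $\brv{\rmH}^1_{\star}$ induced by the quotient $\brv{\rmM}_C^{\infty}\to\presp{\rmM}_C^{\infty}$ agrees with the action coming from $p\in G$ or $p\in\czG$ acting on the tower, and that the character $\chi_2$ appearing in the second summand is indeed the unramified quadratic one. This is essentially a bookkeeping verification based on the description of $\pi_0(\brv{\rmM}_C^{\infty})$ in \S 7.3.2 together with the formula of Lemma \ref{lem:deccar}, but it is the only place where the geometry of the tower enters rather than pure formal representation theory.
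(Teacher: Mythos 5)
Your approach to part (1) diverges genuinely from the paper's and has a gap. The paper applies the Hochschild--Serre spectral sequence for the $p^{\BZ}$-cover $\brv{\rmM}_C^{\infty}\to\presp{\rmM}_C^{\infty}$, which yields the low-degree exact sequence
$$
0\rightarrow \rmH^1\Harg{p^{\BZ}}{\brv{\rmH}^0_{\star}}\rightarrow \pbrv{\rmH}^1_{\star}\rightarrow\intn{\brv{\rmH}^1_{\star}}^{[p]=1}\rightarrow\rmH^2\Harg{p^{\BZ}}{\brv{\rmH}^0_{\star}},
$$
and then observes that $\brv{\rmH}^0_\star=\brv{\rmH}^0_\et=\Ind_{\BG^+}^{\BG}\rmH^0_\et$ (Lemma \ref{lem:deccar}) is a $p^{\BZ}$-acyclic $L[p^{\BZ}]$-module, so the two outer terms vanish. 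Your argument instead computes directly the $[p]$-fixed subspace of $\Ind_{\BG^+}^{\BG}\rmH^1_\star$ as period-$2$ functions, identifies it abstractly with $\rmH^1_\star\oplus\rmH^1_\star\otimes(\chi_2\circ\nu)$, and invokes Remark \ref{rema:sumchar}(2). But this only produces an abstract isomorphism between $\pbrv{\rmH}^1_\star$ and $(\brv{\rmH}^1_\star)^{[p]=1}$; it does not show that the \emph{geometric pullback map} $\pbrv{\rmH}^1_\star\to\brv{\rmH}^1_\star$ is injective and has image exactly the $p$-fixed subspace. You assert the map is an ``inclusion'' with no justification, yet for a $\Gamma$-cover the kernel of the pullback on $\rmH^1$ is precisely $\rmH^1(\Gamma,\rmH^0)$ of the total space, which is the nontrivial vanishing that the Hochschild--Serre computation supplies. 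Your closing paragraph, which labels this a ``bookkeeping verification,'' conflates two distinct issues: (i) compatibility of the deck-transformation action with the $G$- or $\czG$-action on the tower (true, and easy), and (ii) injectivity and exactness of the five-term sequence (the actual content of the lemma). Item (ii) is not addressed by your computation of period-$2$ functions, since two infinite-dimensional $\BG$-modules can be abstractly isomorphic without the natural comparison map between them being an isomorphism.

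Your treatment of part (2) is correct and, modulo the projection-formula rewriting, is essentially the paper's one-line argument: Remark \ref{rema:sumchar}(1) already gives $\brv{\rmH}^1_\star\otimes(\chi\circ\nu)\cong\brv{\rmH}^1_\star$ as $\BG$-modules, from which the statement about $\Hom$ spaces is immediate by tensoring with $(\chi\circ\nu)^{-1}$.
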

\medskip
\begin{proof}
La second point du lemme est une conséquence immédiate du premier point de la remarque \ref{rema:sumchar} puisqu'elle implique que $\brv{\rmH}^1_{\star}\otimes \chi=\brv{\rmH}^1_{\star}$. On démontre la première partie du lemme. Pour éviter toute confusion, notons $[p]$ l'endomorphisme de $\brv{\rmH}^1_{\star}$ induit par $p\in\BG$. On écrit la suite spectrale de Hochschild-Serre pour le recouvrement $\brv{\rmM}_C^{\infty}\rightarrow \presp{\rmM}_C^{\infty}$, qui donne, en bas degrés, une suite exacte
$$
0\rightarrow  \rmH^1\Harg{p^{\BZ}}{\brv{\rmH}^0_{\star}}\rightarrow \pbrv{\rmH}^1_{\star}\rightarrow\left ( \brv{\rmH}^1_{\star}\right )^{[p]=1}\rightarrow\rmH^2\Harg{p^{\BZ}}{\brv{\rmH}^0_{\star}}.
$$ 
Donc, il suffit de montrer que $ \rmH^1\Harg{p^{\BZ}}{\brv{\rmH}^0_{\star}}=0$ pour tout entier $i\geqslant 0$. Or, $\brv{\rmH}^0_{\star}=\brv{\rmH}^0_{\et}$ et comme $\brv{\rmH}^0_{\et}=\Ind_{\BG^+}^{\BG}\rmH^0_{\et}$ d'après le lemme \ref{lem:deccar}, c'est un $L[p^{\BZ}]$-module acyclique. Donc 
$$
\rmH^i\Harg{p^{\BZ}}{\brv{\rmH}^0_{\star}}=0,
$$
ce qui prouve la première partie. 
\end{proof}
Le lemme précédent permet, en tordant par un caractère, de ramener le calcul des multiplicités dans $\brv{\rmH}^1_{\star}$ au calcul des multiplicités dans $\presp{\rmH}^1_{\star}$ ; on met cet énoncé sous forme d'un corollaire :
\medskip
\begin{coro}\label{cor:fkcompcon}
Soit $\star \in \{\et,\pet,\dR,\HK\}$ et soit $W$ une $L$-représentation absolument irréductible de $\BG$, alors il existe un caractère lisse $\chi \colon \Qpt \rightarrow L^{\times}$ tel que
$$
\Hom_{L[\BG]}\intnn{W}{\brv{\rmH}^1_{\star}} \cong \Hom_{L[\BG]}\intnn{W\otimes (\chi\circ \nu)}{\pbrv{\rmH}^1_{\star}}
$$
\end{coro}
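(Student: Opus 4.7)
The plan is to combine the two parts of Lemma \ref{lem:fkcompcon} in a purely formal way. First I would invoke Schur's lemma: since $W$ is absolutely irreducible as an $L[\BG]$-module and the element $p \in \BG$ (viewed either in $G$ or in $\czG$) is central, it must act on $W$ as multiplication by a scalar $\alpha \in L^{\times}$.

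Next I would construct a smooth character $\chi \colon \Qpt \rightarrow L^{\times}$ such that $(\chi \circ \nu)(p) = \alpha^{-1}$. Since $\nu(p) = p^{\pm 2}$ (depending on whether we view $p \in G$ or $p \in \czG$), this amounts to prescribing $\chi(p^{\pm 2})$; because a smooth character $\chi$ is determined freely by its value on $p$ together with its restriction to $\Zpt$, such a $\chi$ exists under our standing assumption that $L$ is large enough. The twisted representation $W \otimes (\chi \circ \nu)$ then carries a trivial action of $p \in \BG$.

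Finally, Lemma \ref{lem:fkcompcon}(2) applied to this particular $\chi$ yields
$$
\Hom_{L[\BG]}\intnn{W}{\brv{\rmH}^1_{\star}} \cong \Hom_{L[\BG]}\intnn{W \otimes (\chi \circ \nu)}{\brv{\rmH}^1_{\star}},
$$
and the right-hand side can be analyzed via Lemma \ref{lem:fkcompcon}(1): every $\BG$-equivariant morphism from $W \otimes (\chi \circ \nu)$ to $\brv{\rmH}^1_{\star}$ must factor through the subspace of vectors fixed by $p$, since $p$ acts trivially on the source; by part (1) of the lemma that subspace is exactly $\pbrv{\rmH}^1_{\star}$. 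Combining the two identifications gives the desired isomorphism. The argument is essentially formal once Lemma \ref{lem:fkcompcon} is in hand; the only mild verification needed is the existence of a character $\chi$ realizing $\alpha^{-1}$, which causes no serious obstacle because the smooth characters of $\Qpt$ valued in $L^{\times}$ exhaust $\Hom(\Qpt, L^{\times})$.
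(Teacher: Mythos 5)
Your overall strategy — apply Schur's lemma to get the scalar $\alpha$ by which $p$ acts on $W$, twist by a suitable $\chi\circ\nu$ so that $p$ acts trivially, then invoke part (2) of Lemma~\ref{lem:fkcompcon} to pass the twist across the Hom, and finally part (1) to replace $\brv{\rmH}^1_{\star}$ by the $p$-fixed subspace $\pbrv{\rmH}^1_{\star}$ — is exactly the argument the paper has in mind; the corollary is stated with no separate proof precisely because it is this routine combination of the two parts of the lemma.

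There is, however, a gap in the existence step. You want $\chi$ with $(\chi\circ\nu)(p)=\alpha^{-1}$, and since $\nu(p)=p^{\mp 2}$ this means $\chi(p)^{\mp 2}=\alpha^{-1}$, i.e.\ you need a square root of $\alpha^{\pm 1}$ inside $L^{\times}$. The paper's ``$L$ assez grand'' only asks that $L$ contain the quadratic extensions of $\Qp$; it does not make every element of $L^{\times}$ a square, so the sentence ``such a $\chi$ exists under our standing assumption that $L$ is large enough'' is not justified, and the closing remark that smooth characters ``exhaust $\Hom(\Qpt,L^{\times})$'' does not address the square root at all. The statement nonetheless remains true: if $\alpha$ has no square root in $L^{\times}$, then $W$ cannot embed in $\brv{\rmH}^{1}_{\star}$ — by remark~\ref{rema:sumchar} the socle of $\brv{\rmH}^{1}_{\star}$ is contained in $\bigoplus_{\chi}\rmH^{1}_{\star}[\chi]$, and on each $\rmH^{1}_{\star}[\chi]$ the element $p$ acts by $\chi(\nu(p))=\chi(p)^{\mp 2}$, which is a square — so $\Hom_{L[\BG]}(W,\brv{\rmH}^{1}_{\star})=0$, and for any $\chi$ the other side also vanishes because $\pbrv{\rmH}^{1}_{\star}\subset\brv{\rmH}^{1}_{\star}$ and Lemma~\ref{lem:fkcompcon}(2) applies. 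Adding this dichotomy (either $\alpha$ is a square and your construction of $\chi$ goes through, or $\alpha$ is not a square and both sides vanish) would make the argument complete.
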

\subsubsection{Décomposition du système local}
Le but de ce paragraphe est de décomposer le système local suivant les différents poids. On commence par observer que le corollaire \ref{cor:schurep} donne une décomposition de $\Sym\BD$ et donc du fibré $\Sym\CE$,
$$
\Sym\BD = \bigoplus_{\lambda\in P_+}\BD_{\lambda}\otimes_{L}\cz{W}_{\lambda},\quad \Sym \CE=\bigoplus_{\lambda\in P_+}\CE_{\lambda}\otimes_{L}\cz{W}_{\lambda},
$$
où 
\begin{itemize}
\itemb $\BD_{\lambda}$ est un $\varphi$-module sur $\Qp$, mais aussi un  $L[G]$-module isomorphe à $W_{\lambda}$ et
\itemb $\CE_{\lambda}$ est un fibré plat filtré $G$-équivariant, tel que $\CE_{\lambda}=\CO^{\infty}\otimes_{\Qp}W_{\lambda}$ en tant que fibré $G$-équivariant, muni de la connexion $\nabla=d\otimes \id$ et de sa filtration de Hodge induite par la puissance symétrique.
\end{itemize}
Le théorème de reconstruction des systèmes locaux isotriviaux (\cf  la proposition \ref{prop:reconstr}) nous donne directement : 
\medskip
\begin{prop}\label{prop:calcdec}
Pour tout $\lambda\in P_+$, il existe un $L$-système local fortement isotrivial $\BV_{\lambda}$, $G$-équivariant et tel que 
$$
\Sym \BV=\bigoplus_{\lambda\in P_+}\BV_{\lambda}\otimes_{L}\cz{W}_{\lambda}.
$$
De plus, en notant $\rho\in P_+$ le poids $\rho=(0,2)$ ce système local est donné explicitement par 
$$
\BV_{\lambda}=\Sym_{L}^{w(\lambda)-1}\BV_{\rho}\otimes_{L}{\det}^{\lambda_1}.
$$
Finalement $\BD_{\lambda}=\BD(\BV_{\lambda})$, l'action de $\varphi$ sur $\BD_{\lambda}$ est telle que $\varphi^2=p^{1-\lvert \lambda\rvert}$ et $\BD_{\lambda}=\BD_{(0,w(\lambda))}[-\lambda_1]$ en tant que $\varphi$-module.
\end{prop}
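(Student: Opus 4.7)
The plan is to combine two inputs that have already been set up: on the isocrystal side, the Schur--Weyl decomposition from Corollary \ref{cor:schurep}, and on the local system side, the reconstruction result of Proposition \ref{prop:reconstr} for fibrés plats filtrés. First, I would observe that the decomposition $\Sym \BD = \bigoplus_{\lambda \in P_+} \BD_\lambda \otimes_L \cz{W}_\lambda$ of $L[G\times\czG]$-modules is compatible with the Frobenius on $\Sym \BD \otimes_{\Qp} \Qpbr$, since $\czG$ acts trivially on $\BD_\lambda$ and the Frobenius is built from the $\varphi$-module structure on $\brv\bM$. Hence each $\BD_\lambda$ is canonically a $\varphi$-module over $\Qpbr$ with an $L[G]$-linear structure isomorphic to $W_\lambda$.

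Next, I would transport the Hodge filtration along this decomposition. The fibré plat filtré associated to $\Sym \BV$ is $\Sym \CE$, and the isotriviality gives $\Sym \CE = \Sym \BD \otimes_L \CO^\infty$ with trivial connection. Under the Schur decomposition this splits as $\bigoplus_\lambda \CE_\lambda \otimes_L \cz W_\lambda$ where $\CE_\lambda = \BD_\lambda \otimes_L \CO^\infty$, equipped with the induced filtration $\Fil^\bullet \CE_\lambda$ and the trivial connection. The filtration on $\Sym \CE$ is the one inherited from $\BV$, which is pointwise weakly admissible (since $\Sym \BV$ is of de Rham coming from an étale local system), and this property is preserved on each direct summand $(\BD_\lambda, \Fil^\bullet \CE_\lambda)$.

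Then I would apply Proposition \ref{prop:reconstr} to each pair $(\BD_\lambda, \Fil^\bullet)$ to produce a strongly isotrivial $\Qp$-local system $\BV_\lambda$ with isocrystal $\BD_\lambda$, fitting in the fundamental exact sequence. The $G$-equivariance of the construction (Proposition \ref{prop:reconstr} is functorial in the pair) ensures that $\BV_\lambda$ is naturally $G$-equivariant with trivial $\czG$ action, and the decomposition of the short exact sequence
$$
0 \to \Sym\BV \to \BX_{\cri}(\Sym \BD) \to \BX_{\dR}^-(\Sym \CE) \to 0
$$
along the $(\lambda)$-summands identifies $\bigoplus_\lambda \BV_\lambda \otimes_L \cz W_\lambda$ with $\Sym \BV$. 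The explicit formula $\BV_\lambda \cong \Sym_L^{w(\lambda)-1}\BV_\rho \otimes_L \det^{\lambda_1}$ then follows by Tannakian considerations: both sides are isotrivial, $G$-equivariant with $\czG$-invariant of rank $w(\lambda)$, and by Proposition \ref{prop:reconstr} they are determined by their isocrystal together with the Hodge filtration, both of which agree with $\Sym^{w(\lambda)-1}\BD_\rho \otimes \det^{\lambda_1}$ by Lemma \ref{lem:dec} and the classical Schur functor identities for $\Sym^k(V \otimes W)$.

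Finally, for the Frobenius structure I would use the normalization of Remark \ref{rem:phdgint}: after the twist by $|\nu_1|_p^{1/2}$, the isocrystal $\BD$ satisfies $\varphi^2 = p^{-1}$ (reflecting that $\brv\bM$ has slopes $\tfrac12, \tfrac12$ and the twist shifts by $-1$). Then $\BD_\lambda$ sits as a Schur summand of $\BD^{\otimes(w(\lambda)-1)} \otimes \det^{\lambda_1}$, and since $\det \BD$ has $\varphi^2$-eigenvalue $p^{-1}$, one computes $\varphi^2 = p^{-(w(\lambda)-1)} \cdot p^{-\lambda_1} = p^{1-|\lambda|}$ on $\BD_\lambda$. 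The assertion $\BD_\lambda \cong \BD_{(0, w(\lambda))}[-\lambda_1]$ as $\varphi$-modules is immediate from this Frobenius computation combined with the $G$-linear isomorphism $W_\lambda \cong W_{(0,w(\lambda))}\otimes \det^{\lambda_1}$. The only slightly delicate point is to verify that the concrete Schur identity used above is compatible with the Frobenius equivariance; but this is formal because the Frobenius commutes with the $G\times\czG$-action and all Schur projectors are $G\times\czG$-equivariant.
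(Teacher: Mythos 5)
Your proposal follows a genuinely different route from the paper: you try to \emph{reconstruct} each $\BV_\lambda$ from the Schur summand $(\BD_\lambda,\Fil^\bullet)$ via Proposition~\ref{prop:reconstr}, and then verify post hoc that the reconstructions recombine to $\Sym\BV$. The paper instead reduces formally to the single case $\lambda=\rho$, proves $\BV\cong\BV_\rho\otimes_L\cz W_\rho$ by a fiber-wise monodromy argument (the fiber at a geometric point is $\cz W_\rho\oplus\cz W_\rho$ as a $\pi_1$-representation via $\czG^1(n)$, forcing $\BV=\BV_\rho\otimes_L U$ for some trivial rank-$2$ local system $U$), and then pins down $\BD(\BV_\rho)\cong W_\rho$ by contradiction: if $\BD(\BV_\rho)\cong\cz W_\rho$, the $G$-equivariant filtration on $\cz W_\rho\otimes_L\CO^n$ would have to be constant, making $\BV_\rho$ trivial.

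The gap in your argument sits exactly at that last step, which you replace by the assertion that the Frobenius of $\Sym\BD$ decomposes as $\varphi_\lambda\otimes\id_{\cz W_\lambda}$ with $\varphi_\lambda$ living on $\BD_\lambda\cong W_\lambda$, ``since $\czG$ acts trivially on $\BD_\lambda$ and the Frobenius is built from the $\varphi$-module structure on $\brv\bM$.'' This is the very reasoning the paper's remark following the proposition flags as deceptive: the Frobenius on $\brv\bM$ is defined via $\varpi_D\in\rmOD$, so one might equally well expect it to ``live on'' the $\czG$-factor $\cz W_\lambda$ (indeed, in the $(\Sym^1_\Qp)^*\otimes\brv\bN$ model of Remark~\ref{rem:phdgint}, the Frobenius visibly sits on the $\brv\bN$-side). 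The whole point of the proposition is that the $\varphi$-module structure of $\BD(\BV_\rho)$ is determined by the $\pi_1$-action, not by the $\rmOD$-action, and the filtration/equivariance argument is what makes that identification rigorous. Without some replacement for it, your construction of $\BV_\lambda$ via Proposition~\ref{prop:reconstr} does not have a well-defined $\varphi$-module as input, and the claim that your $\BV_\lambda$ recombine to $\Sym\BV$ cannot be checked. A minor additional issue: the explicit computation ``$\det\BD$ has $\varphi^2$-eigenvalue $p^{-1}$'' is wrong for the $4$-dimensional $\BD$ (there $\varphi^2=p^{-1}\cdot\id$ forces $\det(\varphi)^2=p^{-4}$); you presumably mean the determinant of the $2$-dimensional Schur factor, but the imprecision makes the slope bookkeeping hard to follow.
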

\medskip
\begin{proof}
En utilisant la décomposition selon les poids, il suffit de montrer $\BV\cong \BV_{\rho}\otimes_L \cz{W}_{\rho}$ en tant que $L$-système local. Rappelons que $\BV$ est muni d'une action de $\rmOD$ dont la trivialisation $\czG^1$-équivariante fournit la tour de revêtements étales $\rmM^{\infty}_C\rightarrow\rmM^0_C$. Pour $\bar x$ un point géométrique de $\rmM^n_C$, cette tour définit le morphisme $\pi_1(\rmM^n_C,\bar x)\rightarrow \czG^1(n)$ et la fibre de $\BV$ en $\bar x$ est décrite par 
$$
\BV(\bar x) \cong D\otimes_{\Qp}L\cong \cz{W}_{\rho}\oplus \cz{W}_{\rho}.
$$
Donc $\BV\cong \BV_{\rho}\otimes_L U$ où $U$ est un $L$-espace vectoriel correspondant à un système local trivial et $\BV_{\rho}(\bar x)=\cz{W}_{\rho}$. Il reste à montrer que $\BD(\BV_{\rho})\cong W_{\rho}$ ou, de manière équivalente, que $U\cong \cz{W}_{\rho}$.

Supposons le contraire, \ie $\BD(\BV_{\rho})\cong \cz{W}_{\rho}$. Alors le fibré plat filtré associé à $\BV_{\rho}$ est $\cz{W}_{\rho}\otimes_L\CO^{n}\otimes_{\Qp}C$ et comme sa filtration est $G$-équivariante, celle-ci est constante. C'est une contradiction puisque $\BV_{\rho}$ n'est pas un système local trivial.
\end{proof}
\begin{rema}
La proposition \ref{prop:calcdec} (ainsi que sa preuve) peut porter à confusion puisqu'on pourrait rétorquer que \og l'action de $\czG^1(n)$ sur $\BV_{\rho}$ ne peut pas être triviale puisque c'est l'action du $\pi_1$ et que le système local n'est pas trivial \fg. Or l'action du $\pi_1$ sur le système local est bien triviale (par définition) mais ne l'est pas sur les fibres. Le système local $\BV$ est muni d'une action de $\rmOD$ qui permet d'expliciter l'action du $\pi_1$ sur les fibres géométrique de $\BV$ mais ces deux actions sont différentes. La différence apparait puisque le $\pi_1$ agit sur les trivialisation du système local alors que $\rmOD$ agit sur le système local.

On pourrait aussi rétorquer que \og comme le Frobenius sur $\BD(\BV)$ est défini à partir de l'action de $\rmOD$ et comme l'action de $\rmOD$ est triviale sur $\BD(\BV_{\rho})$, c'est un isocristal trivial \fg. L'erreur dans cet énoncé provient une fois de plus de la différence entre l'action du $\pi_1$ et l'action de $\rmOD$ : c'est l'action du $\pi_1$ qui définit la structure d'isocristal sur $\BD(\BV_{\rho})$.
\end{rema}
\medskip
Cette décomposition permet de factoriser la partie algébrique de l'action de $\czG$ et de réduire le calcul de la cohomologie du système local . On note pour $\lambda\in P_+$ 
\begin{equation}\label{eq:notanul}
\begin{gathered}
 \brv{\rmH}^{\lambda}_{\et}\coloneqq \rmH^1_{\et}\Harg{\brv{\rmM}_{C}^{\infty}}{\BV_{\lambda}(1)},\quad  \presp{\rmH}^{\lambda}_{\et}\coloneqq  \rmH^1_{\et}\Harg{\presp{\rmM}_{C}^{\infty}}{\BV_{\lambda}(1)}\\
\brv{\rmH}^{\lambda}_{\pet}\coloneqq \rmH^1_{\pet}\Harg{\brv{\rmM}_{C}^{\infty}}{\BV_{\lambda}(1)},\quad \presp{\rmH}^{\lambda}_{\pet}\coloneqq \rmH^1_{\pet}\Harg{\presp{\rmM}_{C}^{\infty}}{\BV_{\lambda}(1)}\\
\brv{\rmH}^{\lambda}_{\dR}\coloneqq \rmH^1_{\dR}\Harg{\brv{\rmM}_{C}^{\infty}}{\CE_{\lambda}},\quad \presp{\rmH}^{\lambda}_{\dR}\coloneqq \rmH^1_{\dR}\Harg{\presp{\rmM}_{\Qp}^{\infty}}{\CE_{\lambda}}\\
\brv{\rmH}^{\lambda}_{\HK}\coloneqq \rmH^1_{\HK}\Harg{\brv{\rmM}_{C}^{\infty}}{\BD_{\lambda}},\quad \presp{\rmH}^{\lambda}_{\HK}\coloneqq \rmH^1_{\HK}\Harg{\presp{\rmM}_{\Qp}^{\infty}}{\BD_{\lambda}}
\end{gathered}
\end{equation}
On a préféré la notation $\rmH^{\lambda}_{\star}$ à $\rmH^{1,\lambda}_{\star}$ en espérant qu'elle n'induise pas de confusion. On pourrait aussi définir les $\pbrv{\rmH}^{\lambda}_{\star}$ . Une conséquence direct de la proposition \ref{prop:calcdec} sont les décompositions suivantes :
\medskip
\begin{coro}\label{coro:decomppoid}
Pour $\star \in \{\et,\pet,\dR,\HK\}$ on a des décompostion $\BG$-équivariantes
$$
\brv{\rmH}^1_{\star}=\bigoplus_{\lambda\in P_+}\brv{\rmH}^{\lambda}_{\star}\otimes_{L} \cz{W}_{\lambda}, \quad \presp{\rmH}^1_{\star}=\bigoplus_{\lambda\in P_+}\presp{\rmH}^{\lambda}_{\star}\otimes_{L} \cz{W}_{\lambda}.
$$
\end{coro}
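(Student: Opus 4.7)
L'idée directrice est de passer à la cohomologie dans la décomposition du système local (resp.\ du fibré plat filtré, resp.\ de l'isocristal) fournie par la proposition \ref{prop:calcdec} et ses variantes cohérentes. Concrètement, pour le cas étale ou proétale, on part de $\Sym^k\BV \cong \bigoplus_{\lvert\lambda\rvert=k+1}\BV_{\lambda}\otimes_L\cz{W}_{\lambda}$, qui est une somme directe \emph{finie} pour chaque $k\geqslant 0$ fixé ; puisque chaque $\cz{W}_{\lambda}$ est un $L$-espace vectoriel de dimension finie, on le factorise hors de la cohomologie en tant que système local constant, soit
$$
\rmH^1_{\star}\Harg{\brv{\rmM}_{C}^n}{\BV_{\lambda}(1)\otimes_L\cz{W}_{\lambda}} \cong \rmH^1_{\star}\Harg{\brv{\rmM}_{C}^n}{\BV_{\lambda}(1)}\otimes_L\cz{W}_{\lambda}.
$$
La colimite filtrée en $n$ et la somme directe externe en $k$ commutent aux sommes directes finies ; en sommant sur $\lambda$ avec $\lvert\lambda\rvert=k+1$ puis sur $k\geqslant 0$, on obtient la décomposition annoncée pour $\brv{\rmH}^1_{\et}$ et $\brv{\rmH}^1_{\pet}$.

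Pour la cohomologie de de Rham (resp.\ de Hyodo-Kato), on applique le même principe aux décompositions parallèles $\Sym\CE = \bigoplus_{\lambda\in P_+}\CE_{\lambda}\otimes_L\cz{W}_{\lambda}$ (resp.\ $\Sym\BD = \bigoplus_{\lambda\in P_+}\BD_{\lambda}\otimes_L\cz{W}_{\lambda}$, via le corollaire \ref{cor:schurep}) rappelées au début du numéro \ref{subsec:decompcoh}. Pour chaque $k\geqslant 0$, la somme restreinte à $\lvert\lambda\rvert=k+1$ est finie et les facteurs constants $\cz{W}_{\lambda}$ se factorisent hors des foncteurs $\rmR\Gamma_{\!\dR}$ et $\rmR\Gamma_{\!\HK}$ par $L$-linéarité. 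L'équivariance sous $\BG$ de toutes ces décompositions est automatique : les $\cz{W}_{\lambda}$ portent l'action algébrique de $\czG$ tandis que $\sW_{\Qp}\times G$ agit sur les $\BV_{\lambda}$ (respectivement sur les $\CE_{\lambda}$ et les $\BD_{\lambda}$) par fonctorialité, en accord avec la proposition \ref{prop:calcdec}.

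La version pour $\presp{\rmH}^1_{\star}$ résulte du même argument en remplaçant $\brv{\rmM}_{C}^n$ par $\presp{\rmM}_{C}^n$ (ou $\presp{\rmM}_{\Qp}^n$ dans les cas de de Rham et Hyodo-Kato). Il n'y a donc pas d'obstacle substantiel à surmonter : tout le travail conceptuel a été effectué dans la construction de la décomposition du système local lui-même (proposition \ref{prop:calcdec}), et le reste relève d'un formalisme essentiellement immédiat, sous réserve de vérifier que chaque niveau fini produit bien une somme directe finie à laquelle on peut appliquer sans précaution la cohomologie.
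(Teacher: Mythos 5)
Votre proposition est correcte et suit essentiellement la même voie que le texte, qui se contente d'annoncer que le corollaire est \og une conséquence directe de la proposition \ref{prop:calcdec}\fg : passage à la cohomologie dans la décomposition finie de $\Sym^k\BV$ (resp.\ $\Sym^k\CE$, $\Sym^k\BD$), factorisation du facteur constant $\cz{W}_{\lambda}$, puis commutation avec les colimites en $n$ et les sommes directes en $k$.

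Une seule imprécision mineure à signaler : vous écrivez que \og$\sW_{\Qp}\times G$ agit sur les $\BV_{\lambda}$\fg\ tandis que $\czG$ agit sur $\cz{W}_{\lambda}$, comme si $\czG$ n'intervenait pas sur $\brv{\rmH}^{\lambda}_{\star}$. En réalité $\czG$ opère trivialement sur le système local $\BV_{\lambda}$ mais non trivialement sur sa cohomologie $\brv{\rmH}^{\lambda}_{\star}$, via son action sur la tour (c'est une action lisse à travers les quotients finis), ce qui est d'ailleurs crucial pour la suite (foncteur $Z\mapsto Z[M]$). Cela n'affecte pas la validité de l'argument : la décomposition $\Sym\BV\cong\bigoplus_{\lambda}\BV_{\lambda}\otimes_{L}\cz{W}_{\lambda}$ est $\BG$-équivariante, et la cohomologie, étant fonctorielle pour les équivariances, en hérite terme à terme ; mais la formulation que vous donnez de l'équivariance doit être lue en ce sens.
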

\medskip

\Subsection{L'oper de Drinfeld}
\subsubsection{Filtration de $\CO^n\otimes W_{\lambda}$}\label{par:filt}
Rappelons qu'on note $\rho=(0,2)$. On commence par supposer que $\lambda=(0,k+1)$ pour $k\in \BN$ mais il est facile d'en déduire les formules générales en tordant par le déterminant ; c'est ce que l'on fera pour énoncer les résultats. On suit de près \cite[Paragraphe 5]{scst} mais on décale la filtration pour obtenir celle de $\CE_{\lambda}$. On définit l'élément $f = (\pi(z)e_1-e_0)$ qui vit dans $\CO^n\otimes_{\Qp} W_{\rho}$ et qui vérifie, pour $g\in G$, la relation $g\cdot f = j(g,z)^{-1}f$ ; c'est une forme modulaire de poids $-1$. Ainsi, on définit sur $\CO^n\otimes_{\Qp} W_{\lambda}$ une filtration décroissante de $\CO^n$-modules qui est $G$-équivariante et $\czG$-invariante, $\CO^n\otimes_{\Qp} W_{\lambda}=F^{-k}\supsetneq F^{-k+1} \supsetneq \dots \supsetneq F^{0}\supsetneq 0$, définie pour $q\in \BZ$ tel que $0\leqslant q \leqslant k+1$ par
$$
F^{q-k} = \bigoplus_{j=0}^{k-q}\CO^nf^{q}e_1^{k-q-j}e_0^{j},
$$
 $F^{q-k}=0$ si $q>k+1$ et $F^{q-k}=F^{-k}$ si $q<0$. De plus, les $e_0^je_1^{k-j}$ forment une $\CO^n$-base de $\CO^n\otimes_{\Qp} W_{\lambda}$ et de la formule $e_0e_1^j=\pi(z)e_1^{j+1}-fe_1^j$, on déduit par récurrence que pour $j\in\BN$ tel que $0\leqslant j \leqslant q+1$,
$$
f^{q}e_1^{k-q-j}e_0^j\in \CO^nf^{q-\lambda_1}e_1^{k-q}+F^{q-k+1}.
$$
Ainsi, les $f^qe_1^{k-q}$ avec $0\leqslant q \leqslant k$ forment une $\CO^n$-base de $\CO^n\otimes_{\Qp} W_{\lambda}$ et donc on définit un isomorphisme de $\CO^n$-modules $\Theta_q \colon \CO^n\rightarrow F^{q-k}/F^{q-k+1}$ par $\Theta_q(h) \coloneqq hf^qe_1^{k-q}$. Un petit calcul montre qu'il donne un isomorphisme $G$-équivariant $\Theta_q \colon \CO^n\{2q-k,q-k\}\xrightarrow{\sim} F^{q-k}/F^{q-k+1}$. En effet, soit $h\in\CO^n$, on a pour $g = \begin{pmatrix}a & b \\ c & d \end{pmatrix}\in G$
$$
g\cdot [hf^qe_1^{k-q}]= j(g,z)^{-q}\det(g)^{q-k}g\cdot hf^q\underbrace{(-ce_0+ae_1)^{k-q}}_{(j(g,z)e_1+cf)^{k-q}} \in j(g,z)^{k-2q}\det(g)^{q-k}g\cdot hf^qe_1^{k-q}+F^{q-k+1}.
$$
Soit $\Omega^n\coloneqq \Omega^1(\presp{\rmM_{\Qp}^n})$, rappelons que $\Omega^n\cong \CO^n\{2,1\}$ d'après le numéro \ref{subsub:morita}. Naturellement, on a une filtration $G$-équivariante $\Omega^n\otimes_{\Qp} W_{\lambda}=\Omega^n\otimes_{\CO^n}F^{-k}\supsetneq \Omega^n\otimes_{\CO^n}F^{-k+1} \supsetneq \dots\supsetneq  \Omega^n\otimes_{\CO^n}F^{1}=0$ et les quotients successifs sont décrits par $\Theta'_q\colon \CO^n\{2(q+1)-k,q+1-k\}\xrightarrow{\sim}\Omega^n\otimes_{\CO^n}F^{q-k}/\Omega^n\otimes_{\CO^n}F^{q-k+1}$. De plus, la dérivée donne
$$
(d\otimes \id) (hf^qe_1^{k-q}) = dz\otimes q f^{q-1}e_1^{k-q+1}+dh\otimes f^qe_1^{k-q}.
$$
Ainsi,
$$
(d\otimes \id) (hf^qe_1^{k-q})\in dz\otimes q f^{q-1}e_1^{k-q+1}+\Omega^n\otimes_{\CO^n}F^{q-k}.
$$
On en déduit que $d\otimes \id \colon F^{q-k}/F^{q-k+1}\rightarrow \Omega^n\otimes_{\CO^n}F^{q-k-1}/\Omega^n\otimes_{\CO^n}F^{q-k}$ s'identifie à la multiplication par $q$ si $1\leqslant q \leqslant k$. Finalement, on a montré le lemme suivant : 
\medskip
\begin{lemm}
Si $\lambda=(0,k+1)$, le fibré plat filtré $(\CO^n\otimes_{\Qp}W_{\lambda}, d\otimes id, F^{\bullet})$ est un oper de poids $(0,k)$.
\end{lemm}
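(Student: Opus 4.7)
The lemma is essentially a direct reading of the four conditions in Definition \ref{def:oper} against the local computations carried out in the preceding paragraph, so the plan is simply to verify them in order, with each condition matching a line that has already been established. The key identification is $\Fil^{-i} \coloneqq F^{-i}$, turning the filtration $F^{-k} \supsetneq F^{-k+1} \supsetneq \cdots \supsetneq F^{0} \supsetneq 0$ into $\CE = \Fil^{-k} \supsetneq \Fil^{-k+1} \supsetneq \cdots \supsetneq \Fil^{0} \supsetneq 0$, which is exactly the filtration pattern required for an oper of weight $(0,k)$.

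First I would check the rank condition: $\CO^n \otimes_{\Qp} W_\lambda$ has rank $\dim_L W_\lambda = w(\lambda) = k+1 = b-a+1$, as required. Then I would invoke the explicit $\CO^n$-basis $\{f^q e_1^{k-q}\}_{0 \leqslant q \leqslant k}$ constructed above, which shows that each graded piece $\Gr_{k-q}\CE = F^{q-k}/F^{q-k+1}$ is freely generated over $\CO^n$ by the class of $f^q e_1^{k-q}$, so the isomorphisms $\Theta_q$ identify each $\Gr_{k-q}\CE$ with the invertible sheaf $\CO^n\{2q-k, q-k\}$. In particular, every intermediate quotient is a line bundle, giving the second and third bullets of Definition \ref{def:oper}.

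For the Griffiths transversality condition that makes $(\CE,\nabla,\Filb)$ a filtered flat bundle (needed so that the notion of oper even applies), I would note that $\nabla = d \otimes \id$ shifts $F^{q-k}$ into $\Omega^n \otimes_{\CO^n} F^{q-k-1}$: this is immediate from the derivation formula $(d\otimes \id)(hf^q e_1^{k-q}) = dh \otimes f^q e_1^{k-q} + q\, dz \otimes f^{q-1} e_1^{k-q+1}$ already computed above, which shows that the image lies one step lower in the filtration of $\Omega^n \otimes W_\lambda$.

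The crucial last step is the Higgs-field condition. Under the identifications $\Theta_q$ and $\Theta'_{q-1}$, the induced map $\theta_{k-q} \colon \Gr_{k-q}\CE \to \Gr_{k-q+1}\CE \otimes \Omega^1$ is, by the same derivation formula, given by multiplication by the integer $q$. Since $p$ does not divide $q$ for $1 \leqslant q \leqslant k$ (one can take $n$ and the coefficient field large enough; actually multiplication by any nonzero rational is an isomorphism between invertible sheaves on a characteristic-zero rigid space), this map is an isomorphism in precisely the range $1 \leqslant q \leqslant k$, i.e.\ on the intermediate gradings indexed by $a \leqslant k-q \leqslant b-1$. This completes the verification of all items of Definition \ref{def:oper}, so $(\CO^n \otimes_{\Qp} W_\lambda, d\otimes \id, F^{\bullet})$ is an oper of weight $(0,k)$. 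There is no real obstacle here; the only subtlety worth flagging is the shift conventions in the filtration indexing, which I would spell out once at the start so that the matching between $F^{q-k}$ and $\Fil^{-i}$ is unambiguous.
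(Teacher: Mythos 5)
Your proof is correct and follows essentially the same route as the paper, which presents the local computations — the $\CO^n$-basis $\{f^q e_1^{k-q}\}$, the isomorphisms $\Theta_q$, the derivation formula, and the resulting multiplication-by-$q$ map on graded pieces — and then records the lemma as their upshot. The one slip is the claim that ``$p$ does not divide $q$ for $1\leqslant q\leqslant k$'', which is false once $k\geqslant p$, but you immediately correct this in the parenthetical: over $\Qp$ any nonzero integer is invertible, so multiplication by $q$ is an isomorphism of invertible sheaves, and the argument stands.
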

\medskip
\subsubsection{L'oper $\CE_{\lambda}$}\label{subsub:operdedrin}
On tord maintenant tous les calculs précédents par ${\det}^{-\lambda_1}$. On a $\CE_{\lambda}=\CO^n\otimes W_{\lambda}$ et la filtration que l'on a construite coïncide naturellement avec la filtration de Hodge définit par $\BV_{\lambda}$ sur $\CE_{\lambda}$ d'après la formule (\ref{eq:propunivperiodmor}). En particulier, cette filtration est $G\times \czG$-équivariante et on en a calculé les quotients. On obtient la proposition suivante :
\medskip
\begin{prop}\label{prop:equifil}
Soit $\lambda \in P_+$, la filtration de Hodge $\CE_{\lambda}=F^{-\lambda_2+1}\supsetneq F^{-\lambda_2+2}\supsetneq \dots \supsetneq F^{-\lambda_1}\supsetneq 0$ est telle que 
$$
F^q/F^{q+1}\cong \CO^n\{2q+\lvert \lambda \rvert - 1,q\}\otimes_{\Qp}L\cdot \lvert \nu_1 \rvert_p^{\frac{1-\lvert \lambda \rvert}{2}}.
$$
En particulier, $\BV_{\lambda}$ est un $L$-oper de poids $(\lambda_1,\lambda_2-1)$
\end{prop}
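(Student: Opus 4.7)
The plan is to deduce Proposition~\ref{prop:equifil} essentially as a bookkeeping consequence of the explicit computations already carried out in~\ref{par:filt} for the special case $\lambda=(0,k+1)$, together with the identification of the filtration with the Hodge filtration via the universal property~\eqref{eq:propunivperiodmor} of the period morphism.

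First I would treat the case $\lambda=(0,k+1)$ with $k=w(\lambda)-1$. The paragraph~\ref{par:filt} already constructs an explicit decreasing filtration $F^{\bullet}$ on $\CO^n\otimes_{\Qp} W_{\lambda}$ using powers of the modular form $f=\pi(z)e_1-e_0$ of weight $-1$, and provides $G$-equivariant isomorphisms $\Theta_q\colon \CO^n\{2q-k,q-k\}\xrightarrow{\sim} F^{q-k}/F^{q-k+1}$. The computation of $(d\otimes\id)(hf^qe_1^{k-q})$ done there shows that the Higgs field induced by the connection on successive quotients is multiplication by the nonzero scalar $q$ for $1\leqslant q\leqslant k$; hence $(\CO^n\otimes W_\lambda,d\otimes\id,F^\bullet)$ satisfies Definition~\ref{def:oper} and is an oper of weight $(0,k)$. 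Then I would check that the filtration $F^\bullet$ so defined actually coincides with the Hodge filtration of $\CE_\lambda$. This is the point where~\eqref{eq:propunivperiodmor} is used: the period morphism is characterized by $\Zpd\otimes_{\Zp}\pi^*\CO_{\BP^1}(1)\cong \Lie\,\sG_{\Dr}$, so the Hodge line in $\CE_\rho=\CO^n\otimes W_\rho$ is precisely the line generated by $f$, and hence the filtration on symmetric powers $\CE_{(0,k+1)}=\Sym^k\CE_\rho\otimes{\det}^{0}$ (twisted appropriately by the smooth character $|\nu_1|_p^{1/2}$) is the one built from powers of $f$.

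Next I would pass to general $\lambda\in P_+$ simply by applying the twist by ${\det}^{-\lambda_1}$, since by Proposition~\ref{prop:calcdec} one has $\BV_\lambda=\Sym_L^{w(\lambda)-1}\BV_\rho\otimes_L{\det}^{\lambda_1}$. Twisting shifts the filtration indices by $-\lambda_1$ and multiplies the character $\{2q-k,q-k\}$ by ${\det}^{\lambda_1}$, which in the conventions of the paper yields precisely $\{2q+|\lambda|-1,q\}$ (after incorporating the normalizing factor $|\nu_1|_p^{(1-|\lambda|)/2}$ coming from the definition of $\BV$ in~\ref{subsub:univit}). A short index check then gives the claimed range $F^{-\lambda_2+1}\supsetneq\cdots\supsetneq F^{-\lambda_1}\supsetneq 0$ and the stated formula for the graded pieces.

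Finally, the oper property for $\BV_\lambda$ follows: the successive graded pieces are line bundles (they are twists of $\CO^n$), the Higgs fields are the scalars $q\neq 0$ for $q$ in the intermediate range, hence isomorphisms between graded line bundles, and the extremal jumps occur in degrees $-(\lambda_2-1)$ and $-\lambda_1$, so the weight is $(\lambda_1,\lambda_2-1)$ as required. The only real thing to watch is that the tensor twist by a smooth character does not alter the connection or the graded pieces as $\CO^n$-modules (it only modifies the $G\times\czG$-equivariant structure), so the oper conditions of Definition~\ref{def:oper}, which are purely $\CO^n$-linear/differential, transfer verbatim. I do not anticipate any real obstacle here: the statement is essentially a clean repackaging of the explicit Schneider--Stuhler-type computation of~\ref{par:filt}, and the main care needed is to keep track of the normalisation factors $|\nu_1|_p^{(1-|\lambda|)/2}$ and of the shift induced by the ${\det}^{\lambda_1}$ twist.
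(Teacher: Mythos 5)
Your proposal is correct and follows essentially the same route as the paper: the explicit filtration on $\CO^n\otimes W_{(0,k+1)}$ built from powers of the automorphic form $f$ in~\ref{par:filt}, the computation of the Higgs field as multiplication by $q$ on successive graded pieces, the identification with the Hodge filtration via the characterization~\eqref{eq:propunivperiodmor} of the period morphism, and finally the twist by ${\det}^{-\lambda_1}$ to pass to general $\lambda\in P_+$. The paper itself treats the proposition as an immediate bookkeeping corollary of~\ref{par:filt}, exactly as you describe.
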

\medskip
On peut ainsi définir \emph{petit complexe de de Rham}\footnote{Schneider-Stuhler l'appellent le complexe de de Rham réduit.} (\cf le numéro \ref{subsub:lepetitcom}) qui prend ici la forme:
$$
\rmR\Gamma_{\!\Op}\Harg{\presp{\rmM}_{\Qp}^n}{\CE_{\lambda}}\coloneqq \CO^n\{1-w(\lambda),1-\lambda_2\}\otimes_{\Qp}L\cdot\lvert \nu_1 \rvert_p^{\frac{1-\lvert \lambda \rvert}{2}}\xrightarrow{\partial^{w(\lambda)}}\CO^n\{1+w(\lambda),1-\lambda_1\}\otimes_{\Qp}L\cdot \lvert \nu_1 \rvert_p^{\frac{1-\lvert \lambda \rvert}{2}}
$$
où $\partial=u^+\in \fkg$ opère sur $h\in\CO^n$ par $u^+(h)=h'$, où $h'$ est défini par $d(h) = h'dz$. La proposition \ref{prop:drtoop} nous assure que ce complexe est quasi-isomorphe au complexe de de Rham. Plus précisément on a un quasi-isomorphisme $\rmR\Gamma_{\!\dR}\Harg{\presp{\rmM}_{\Qp}^n}{\CE_{\lambda}}\cong \rmR\Gamma_{\!\Op}\Harg{\presp{\rmM}_{\Qp}^n}{\CE_{\lambda}}$ qui est $G\times \czG$-équivariant.
\subsubsection{Diagramme fondamental pour l'espace de Drinfeld}\label{subsubsec:diagfond}
Pour réduire la taille du diagramme, on introduit quelques notations supplémentaires. Pour $\lambda \in P_+$ on note 
\begin{equation}\label{eq:notab}
\begin{gathered}
\CO_{\lambda}^{\infty}\coloneqq \varinjlim_n\CO_{\lambda}^n\quad \CO_{\lambda}^n\coloneqq \CO^n\{1-w(\lambda),1-\lambda_2\}\otimes_{\Qp}L\cdot\lvert \nu \rvert_p^{\frac{1-\lvert \lambda \rvert}{2}},\\ 
\omega_{\lambda}^{\infty}\coloneqq \varinjlim_n \omega_{\lambda}^n\quad  \omega_{\lambda}^n\coloneqq \CO^n\{1+w(\lambda),1-\lambda_1\}\otimes_{\Qp}L\cdot\lvert \nu \rvert_p^{\frac{1-\lvert \lambda \rvert}{2}},\\
{W}_{\lambda}^{\pi_0^{\infty}} \coloneqq \varinjlim_n {W}_{\lambda}^{\pi_0^{n}}\quad \pi_0^{n}\coloneqq \pi_0(\presp{\rmM}^n_{C}).
\end{gathered}
\end{equation}
On note de plus
$$
\rmB_{\lambda}\coloneqq t^{\lambda_1}\Bdrp/t^{\lambda_2}\Bdrp,\quad \rmU_{\lambda}^0\coloneqq t^{\lambda_1}\intn{\Bcrisp}^{\varphi^2=p^{w(\lambda)+1}},\quad \rmU_{\lambda}^1\coloneqq t^{\lambda_1}\intn{\Bcrisp}^{\varphi^2=p^{w(\lambda)-1}}.
$$
On applique maintenant le diagramme fondamental et le théorème de comparaison, soit la proposition \ref{prop:diagfondop} et le théorème \ref{thm:petsyn} à la tour d'espaces Stein $\presp{\rmM}_{\Qp}^{\infty}$ ; on utilise aussi les notations de (\ref{eq:notanul}).
\medskip
\begin{theo}\label{theo:fondrin}
Soit $\lambda\in P_+$. On a un diagramme commutatif, $\BG$-équivariant d'espaces de Fréchet, dont les lignes sont exactes
 {
\begin{center} 
\begin{tikzcd}[column sep=small]
&\rmU_{\lambda}^0\otimes_{\Qpd}{W}_{\lambda}^{\pi_0^{\infty}} \ar[r]\ar[d]&\rmB_{\lambda}\wotimes_{\Qp}\CO_{\lambda}^{\infty}\ar[r]\ar[d,equal]& \presp{\rmH}^{\lambda}_{\pet}\ar[r]\ar[d]&t^{\lambda_1}X^1_{\st}\intn{\presp{\rmH}^{\lambda}_{\HK}[\lambda_1]}\ar[r]\ar[d,hook] &0\\
0\ar[r]&\rmB_{\lambda}\otimes_{\Qp}{W}_{\lambda}^{\pi_0^{\infty}}\ar[r]&\rmB_{\lambda}\wotimes_{\Qp}\CO_{\lambda}^{\infty}\ar[r] & \rmB_{\lambda}\wotimes_{\Qp}\omega_{\lambda}^{\infty}\ar[r]& \rmB_{\lambda}\wotimes_{\Qp}\presp{\rmH}^{\lambda}_{\dR}\ar[r]& 0.
\end{tikzcd}
\end{center}}
De plus, les flèches verticales sont d'images fermés et la première flèche verticale (resp. horizontale) est injective si et seulement si $w(\lambda)\neq1$.
\end{theo}
\medskip
\begin{rema}\label{rem:inclsymp}
Si $w(\lambda)\neq 1$, l'inclusion $\rmU_{\lambda}^0\otimes_{\Qpd}L\rightarrow \rmB_{\lambda}\otimes_{\Qp}L$ peut surprendre mais elle a une interprétation en terme de théorie de Hodge $p$-adique. Donnons là pour $\lambda = \rho \coloneqq (0,2)$. Reprenons les notations de la remarque \ref{rem:phdgint}, alors 
$$
{\Bcrisp}^{\varphi^2=p^3} =(\brv{\bN}\otimes_{\Qpbr}{\Bcrisp})^{\varphi=p^2}
$$
De plus, on définit $\bN_{\dR}\cong (\brv{\bN}\otimes_{\Qpbr}C)^{\GQp}$ qui est un $\Qp$-espace vectoriel de dimension $2$. Ainsi, on a inclusion ${\Bcrisp}^{\varphi^2=p^3}\incl \bN_{\dR}\otimes_{\Qp} \Bdrp$, mais comme ${\Bcrisp}^{\varphi^2=p^3}\cap(\bN_{\dR}\otimes_{\Qp} t^2\Bdrp) =0$, on obtient une inclusion ${\Bcrisp}^{\varphi^2=p^3}\incl \bN_{\dR}\otimes_{\Qp} \Bdrp/t^2$. Finalement, on peut identifier $\bN_{\dR}\cong \Qpd$ pour obtenir une inclusion
$$
{\Bcrisp}^{\varphi^2=p^3}\otimes_{\Qpd}L\incl \Bdrp/t^2\otimes_{\Qp} L.
$$
\end{rema}

\section{Cohomologie étale isotriviale de $\presp{\rmM}_{C}^{\infty}$ et vecteurs bornés}
Dans cette sectio on montre que le sous-espace des vecteurs $G$-bornés de la cohomologie proétale isotriviale de la tour de Drinfeld coïncide avec la cohomologie étale isotrivial. Rappelons qu'un vecteur $v\in \Pi$ d'une $L$-représentation $\Pi$ de $G$ est dit \emph{$G$-borné} si l'ensemble $\{g\cdot v\}_{g\in G}$ est borné. On dit qu'un sous-ensemble $X$ d'un $L$-espace vectoriel topologique est \emph{borné} si pour toute suite $(x_n)_{n\in \BN}$ d'éléments de $X$ on a $p^nx_n\xrightarrow{n\rightarrow+\infty}0$. Dans un espace Fréchet, où la topologie est définie par une famille de valuations $(v_k)_{k\in\BN}$, ceci équivaut à l'existence, pour tout $k\in\BN$ d'un $N_k\in\BZ$ tel que $v_k(x)\geqslant N_k$ pour tout $x\in X$. On note $\Pi^{G\text{-}\rmb}\subset \Pi$ le sous-espace des vecteurs $G$-bornés de $\Pi$, qui définit une sous-$L$-représentation de $\Pi$.
\medskip
\begin{theo}\label{thm:proetoet}
L'application $\rmH^1_{\et}\Harg{\presp{\rmM}_{C}^{\infty}}{\Sym\BV(1)}\rightarrow \rmH^1_{\pet}\Harg{\presp{\rmM}_{C}^{\infty}}{\Sym\BV(1)}$ est injective et induit un isomorphisme
$$
\rmH^1_{\pet}\Harg{\presp{\rmM}_{C}^{\infty}}{\Sym\BV(1)}^{G\text{-}{{\rmb}}}\cong\rmH^1_{\et}\Harg{\presp{\rmM}_{C}^{\infty}}{\Sym\BV(1)}.
$$
\end{theo}
Rappelons que pour $k\in \BN$ on note $\BV_k\coloneqq \Sym_L^k\BV$ qui admet un réseau $\BV_k^+\coloneqq \Sym_L^k\BV^+$. La preuve se fait en deux temps. On peut alors se restreindre à démontrer le théorème \ref{thm:proetoet} en niveau $n\in \BN$ pour les $\BV_k$. Dans un premier temps, on montre que $\rmH^1_{\et}\Harg{\presp{\rmM}_{C}^{n}}{\BV^+_{k}(1)}$ est de torsion bornée, donc qu'il définit un réseau invariant dans $\rmH^1_{\et}\Harg{\presp{\rmM}_{C}^{n}}{\BV_{k}(1)}$. Ceci montre une inclusion et on conclut alors en expliquant que l'argument de \cite[Proposition 2.12]{codoni} s'adapte sans difficultés. 
\Subsection{Annulations}
Dans ce numéro, on montre l'annulation de différents groupes de cohomologie proétale. Plus précisément, on montre le résultat suivant :
\medskip
\begin{theo}\label{thm:anulco}
Soit $\lambda\in P_+$. Si $i\geqslant 2$ alors 
$$
\rmH^i_{\pet}\Harg{\presp{\rmM}_{C}^{\infty}}{\BV_{\lambda}(1)}=0.
$$
De plus,
$$
\rmH^0_{\pet}\Harg{\presp{\rmM}_{C}^{\infty}}{\BV_{\lambda}(1)}=\rmH^0_{\et}\Harg{\presp{\rmM}_{C}^{\infty}}{\BV_{\lambda}(1)}= 
\begin{cases}
0 & \text{ si $w(\lambda)\neq 1$,}\\
\Qp(\lambda_1+1)^{\pi_0^{\infty}} & \text{ si $w(\lambda)=1$.}
\end{cases}
$$
\end{theo}
On va montrer ce théorème pour la cohomologie syntomique et utilise le théorème de comparaison proétale-syntomique (\cf le théorème \ref{thm:petsyn}) pour conclure.
\subsubsection{Annulation de $\rmH^0_{\et}$}
On commence par montrer la seconde partie du théorème \ref{thm:anulco}.
\medskip
\begin{prop}\label{prop:anulco0}
Soit $n\geqslant 0$ et $\lambda\in P_+$. On a 
$$
\rmH^0_{\et}\Harg{\presp{\rmM}_{C}^{n}}{\BV_{\lambda}(1)}=\rmH^0_{\pet}\Harg{\presp{\rmM}_{C}^{n}}{\BV_{\lambda}(1)}=
\begin{cases}
0 & \text{ si $w(\lambda)\neq1$,}\\
\Qp(\lambda_1+1)^{\pi_0^n} & \text{ si $w(\lambda)=1$.}
\end{cases}
$$
\end{prop}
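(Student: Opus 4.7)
I will reduce the computation of $\rmH^0_{\pet}\Harg{\presp{\rmM}_{C}^{n}}{\BV_{\lambda}(1)}$ to that of $\rmH^0_{\syn}$ via the comparison theorem \ref{thm:petsyn}, which applies since $\BV_{\lambda}(1)$ has strictly positive Hodge--Tate weights $\lambda_1+1,\ldots,\lambda_2$ by Proposition \ref{prop:equifil}. The identification $\rmH^0_{\et}=\rmH^0_{\pet}$ holds because, in degree $0$, both compute global sections of a local system on the finitely many geometrically connected components of $\presp{\rmM}_{C}^{n}$. It therefore suffices to compute $\rmH^0_{\syn}$ on each such component, then sum over $\pi_0^n$.

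Applying Example \ref{ex:h0syn} to a single geometrically connected component identifies $\wt{\rmH}^0_{\syn}$ with the kernel of
$$
X^+_{\st}\bigl(\BD_{\lambda}(1)\bigr)\longrightarrow \bigl(\BD_{\lambda}(1)\otimes_{\Qp}\Bdrp\bigr)\big/\Fil^{0},
$$
where the Hodge filtration on $\BD_{\lambda}(1)=\rmH^0_{\dR}(\,\cdot\,,\CE_{\lambda}(1))$ is inherited from the pointwise filtration on $\CE_{\lambda}(1)$. The crucial point is that a constant horizontal section $w\in\BD_{\lambda}$ lies in $\Fil^{q}$ of $\rmH^0_{\dR}$ if and only if it lies in $\Fil^{q}\CE_{\lambda}(z)$ at every point $z$; since that pointwise filtration is generated by the varying form $f=\pi(z)e_1-e_0$ from Section \ref{par:filt}, for $w(\lambda)\geqslant 2$ the induced filtration on $\BD_\lambda(1)$ collapses to a single jump at position $-\lambda_2$, whereas for $w(\lambda)=1$ the module $\BD_{\lambda}(1)$ is one-dimensional and carries the standard admissible rank-one structure of weight $\lambda_1+1$.

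For $w(\lambda)=1$, Fontaine's functor applied to the weakly admissible filtered $\varphi$-module $\BD_{\lambda}(1)$ recovers the Galois representation $\Qp(\lambda_1+1)$, and summing over the $\pi_0^n$ components yields the stated answer. For $w(\lambda)\geqslant 2$, the degenerate filtration gives $\Fil^{0}(\BD_{\lambda}(1)\otimes\Bdrp)=\BD_{\lambda}(1)\otimes t^{\lambda_2}\Bdrp$, so the kernel sits inside
$$
\bigl(\BD_{\lambda}(1)\otimes t^{\lambda_2}\Bcrisp\bigr)^{\varphi=1}\cong \bigl(\BD_{\lambda}(1)\otimes\Bcrisp\bigr)^{\varphi=p^{-\lambda_2}}.
$$
Since $\BD_{\lambda}(1)$ is isoclinic of slope $-(1+\lvert\lambda\rvert)/2$ (by Proposition \ref{prop:calcdec} together with the Tate twist), a standard Banach--Colmez slope calculation shows that this space is nonzero if and only if $\lambda_2\leqslant(1+\lvert\lambda\rvert)/2$, i.e.\ $w(\lambda)\leqslant 1$; hence it vanishes. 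The main subtlety is verifying the degeneracy claim on the Hodge filtration of $\BD_\lambda(1)$ for $w(\lambda)\geqslant 2$, which rests on the fact that the flag defined by $f=\pi(z)e_1-e_0$ genuinely moves with $z\in\BH_{\Qp}$; the slope comparison in the final step, while elementary, is what cleanly distinguishes the two cases.
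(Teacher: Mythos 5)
Your proof is correct, and it travels a genuinely different intermediate road than the paper even though both routes converge on the same slope calculation in $\Bcrisp$. The paper invokes the fundamental diagram (Theorem \ref{theo:fondrin}) to identify the kernel computing $\rmH^0_{\syn}$ with the kernel of $\rmU_{\lambda}^0\otimes_{\Qpd}W_{\lambda}^{\pi_0^n}\to\rmB_{\lambda}\wotimes\CO_{\lambda}^n$, then appeals to Remark \ref{rem:inclsymp} for the injectivity of $\rmU_{\lambda}^0\hookrightarrow\rmB_{\lambda}$ when $w(\lambda)\neq 1$ — the same slope computation you perform. You instead apply Example \ref{ex:h0syn} directly and analyse the Hodge filtration on $\rmH^0_{\dR}=\BD_{\lambda}(1)$. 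Your key observation — that for $w(\lambda)\geq 2$ the induced filtration on the horizontal sections collapses to a single jump at $-\lambda_2$, because the intermediate steps $F^{-\lambda_2+2}\subsetneq\cdots\subsetneq F^{-\lambda_1}$ are spanned pointwise by powers of $f=\pi(z)e_1-e_0$, and a fixed degree-$(w(\lambda)-1)$ polynomial divisible by $ze_1-e_0$ for infinitely many $z$ must vanish — is sound and not made explicit in the paper. What your route buys: it isolates the geometric mechanism (the Hodge flag truly varies) and sidesteps the full machinery of the fundamental diagram; what the paper's route buys: Theorem \ref{theo:fondrin} is already established and needed downstream, so the paper's proof comes essentially for free and keeps a single point of contact with the Banach--Colmez slope arithmetic (Remark \ref{rem:inclsymp}) instead of redoing it. One small caveat you should tighten: invoking Example \ref{ex:h0syn} requires each component to be Stein and geometrically irreducible over the base field of the model, and the identification $\rmH^0_{\et}=\rmH^0_{\pet}$ is better justified by $\nu_*\nu^*\BV=\BV$ than by a count of connected components (the space is Stein, not quasi-compact, so Lemma 4.2 of the paper does not apply directly).
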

\medskip
\begin{proof}
Si $w(\lambda)=1$, alors $\BV_{\lambda}=\Qp(\lambda_1)$ et le résultat est claire donc on suppose que $w(\lambda)\neq 1$. On sait, par définition, que $\rmH^0_{\syn}\Harg{\presp{\rmM}_{C}^{n}}{\BV_{\lambda}(1)}$ est le noyau de l'application 
$$
\left ( \rmH^0_{\HK}\Harg{\presp{\rmM}_{C}^{n}}{\BD_{\lambda}}\wotimes_{\brv{\BQ}_p}\Bstp\right ) ^{\varphi=p}\rightarrow \rmH^0\DR\left ( \presp{\rmM}_{C}^{n},\CE_{\lambda}\right ).
$$
Or, d'après le théorème \ref{theo:fondrin}, le noyau de cette application est le même que celui de 
$$
\rmU_{\lambda}^0\otimes_{\Qpd}{W}_{\lambda}^{\pi_0^n} \rightarrow\rmB_{\lambda}\wotimes_{\Qp} \CO_{\lambda}^n,
$$
qui est injective (\cf la remarque \ref{rem:inclsymp}) puisque $w(\lambda)\neq 1$. D'après le théorème \ref{thm:petsyn}
$$
\rmH^0_{\pet}\Harg{\presp{\rmM}_{C}^{n}}{\BV_{\lambda}(1)}\cong \rmH^0_{\syn}\Harg{\presp{\rmM}_{C}^{n}}{\BV_{\lambda}(1)}= 0.
$$
\end{proof}
\begin{rema}
On peut proposer une autre preuve de ce résultat en utilisant le groupe fondamental étale. Pour un point géométrique $x\colon \Spa(C,C^+)\rightarrow Y$, on sait que $\BV(x)$ correspond à la la $L$-représentation $\Sym_L^{w(\lambda)-1}$ de $\pi_1^{\et}\Harg{\presp{\rmM}_{C}^{n}}{x}\rightarrow \czG^1(n)\subset \czG$. Cette représentation est triviale si et seulement si $w(\lambda)=1$ et n'admet des invariants sous $\czG^1(n)$ que dans ce cas là.
\end{rema}
\subsubsection{Annulation de $\rmH^2_{\pet}$}
\medskip
\begin{prop}\label{prop:anulco}
Soit $n\geqslant 0$ un entier,
$$
\rmH^2_{\pet}\Harg{\presp{\rmM}_{C}^{n}}{\Sym\BV(1)}=0.
$$
\end{prop}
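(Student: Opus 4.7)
Le plan est de se ramener, via la décomposition en poids, au calcul de la cohomologie proétale des systèmes locaux isotriviaux $\BV_\lambda(1)$, puis d'exploiter la suite exacte fondamentale et les théorèmes de comparaison déjà établis dans la première partie.

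D'abord, j'utiliserais le corollaire \ref{coro:decomppoid} pour décomposer
\[
\rmH^2_{\pet}\Harg{\presp{\rmM}_{C}^{n}}{\Sym\BV(1)}\cong\bigoplus_{\lambda\in P_+}\rmH^2_{\pet}\Harg{\presp{\rmM}_{C}^{n}}{\BV_{\lambda}(1)}\otimes_{L}\cz{W}_{\lambda},
\]
ce qui ramène à montrer que chaque facteur $\rmH^2_{\pet}\Harg{\presp{\rmM}_{C}^{n}}{\BV_{\lambda}(1)}$ s'annule pour $\lambda\in P_+$ fixé. Remarquons que la situation n'est pas couverte directement par le théorème \ref{thm:petsyn}, qui ne donne l'isomorphisme avec la cohomologie syntomique que jusqu'au degré $i=1$ pour le twist $\BV_\lambda(1)$ ; il faut donc procéder autrement.

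Ensuite, j'appliquerais le foncteur $\rmR\Gamma_{\!\pet}\Harg{\presp{\rmM}_{C}^{n}}{\ \cdot\ }$ à la suite exacte fondamentale de la proposition \ref{prop:suitexfond},
\[
0\rightarrow \BV_\lambda(1)\rightarrow \BX_{\cri}(\BD_\lambda[-1])\rightarrow \BX_{\dR}^-(\CE_\lambda(1))\rightarrow 0,
\]
pour obtenir la suite exacte longue associée. D'après la proposition \ref{prop:compaff}, on identifie
\[
\rmR\Gamma_{\!\pet}\Harg{\presp{\rmM}_{C}^{n}}{\BX_{\dR}^{-}(\CE_\lambda(1))}\cong\frac{\rmR\Gamma_{\!\dR}\Harg{\presp{\rmM}_{\Qp}^{n}}{\CE_\lambda(1)}\wotimes^{R}_{K}\Bdr}{\Fil^0}
\]
et
\[
\rmR\Gamma_{\!\pet}\Harg{\presp{\rmM}_{C}^{n}}{\BX_{\cri}(\BD_\lambda[-1])}\cong\bigl[\rmR\Gamma_{\!\HK}\Harg{\presp{\rmM}_{\Qp}^{n}}{\BD_\lambda}\wotimes^{R}_{K_0}\Bst\bigr]^{\varphi=p,\,N=0}.
\]
Comme $\presp{\rmM}_{C}^{n}$ est une courbe rigide Stein lisse, les complexes $\rmR\Gamma_{\!\dR}\Harg{\presp{\rmM}_{\Qp}^{n}}{\CE_\lambda}$ et $\rmR\Gamma_{\!\HK}\Harg{\presp{\rmM}_{\Qp}^{n}}{\BD_\lambda}$ sont concentrés en degrés $[0,1]$. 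Il en résulte immédiatement que $\rmH^2_{\pet}\Harg{\presp{\rmM}_{C}^{n}}{\BX_{\dR}^{-}(\CE_\lambda(1))}=0$, ce qui ramène, via la suite exacte longue, à montrer
\[
\rmH^2_{\pet}\Harg{\presp{\rmM}_{C}^{n}}{\BX_{\cri}(\BD_\lambda[-1])}=0.
\]

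L'étape principale, et le point le plus délicat, consiste à démontrer cette dernière annulation. Le complexe définissant cette cohomologie proétale est la fibre homotopique d'un carré commutatif en $(\varphi-p,N)$ appliqué à un complexe concentré en degrés $[0,1]$ ; ainsi $\rmH^2$ se calcule comme un conoyau à partir de $\rmH^1_{\HK}\Harg{\presp{\rmM}_{\Qp}^{n}}{\BD_\lambda}\wotimes_{K_0}\whBstp$. L'argument-clé est la surjectivité de l'opérateur $\varphi-p^{1-\lambda_1}$ sur ce module, qui se démontre en adaptant la remarque \ref{rem:suitexho} et la preuve de \cite[Proposition 3.36]{codoniste} : on utilise que $\rmH^1_{\HK}\Harg{\presp{\rmM}_{\Qp}^{n}}{\BD_\lambda}$ s'écrit comme limite projective des cohomologies HK d'un recouvrement Stein par affinoïdes surconvergents, dont les pentes sont bornées supérieurement, ce qui rend le calcul de dimension à la Fargues-Fontaine suffisant pour conclure la surjectivité. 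Une fois cette annulation établie, la suite exacte longue termine la preuve.
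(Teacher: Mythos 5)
Ton approche par la suite exacte fondamentale est défendable dans son principe — c'est d'ailleurs exactement ce que déplie la preuve du théorème \ref{thm:petsyn} — mais elle comporte une lacune logique précise. Quand tu écris que l'annulation de $\rmH^2_{\pet}\Harg{\presp{\rmM}_{C}^{n}}{\BX_{\dR}^{-}(\CE_\lambda(1))}$ \og ramène, via la suite exacte longue, à montrer $\rmH^2_{\pet}(\BX_{\cri}(\cdots))=0$\fg, tu oublies que la suite exacte longue donne
$$
\rmH^1(\BX_{\cri})\rightarrow \rmH^1(\BX_{\dR}^-)\rightarrow \rmH^2(\BV_\lambda(1))\rightarrow \rmH^2(\BX_{\cri})\rightarrow 0,
$$
de sorte que $\rmH^2(\BV_\lambda(1))=0$ équivaut à \emph{deux} conditions : l'annulation de $\rmH^2(\BX_{\cri})$ \textbf{et} la surjectivité de $\rmH^1(\BX_{\cri})\rightarrow \rmH^1(\BX_{\dR}^-)$. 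Cette seconde condition, que tu passes sous silence, est précisément le cœur de la démonstration : c'est (modulo les identifications de la proposition \ref{prop:compaff}) la surjectivité de $\rmH^1\HK(\presp{\rmM}_{C}^{n},\BD_\lambda)\rightarrow \rmH^1\DR(\presp{\rmM}_{C}^{n},\CE_\lambda)$, que le papier établit via la remarque \ref{rem:suitexho} en bornant les pentes de $\rmH^1_{\HK}\Harg{\presp{\rmM}_{\Qp}^n}{\BD_\lambda}$ par $-\lambda_1+1$.

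Par ailleurs, l'argument de surjectivité que tu proposes (celui de $\varphi-p^{1-\lambda_1}$ sur $\rmH^1_{\HK}\wotimes_{K_0}\whBstp$) ne vise pas la bonne flèche : il s'agirait là d'une étape de la preuve de la classicité de la cohomologie de $\HK(X_C,D)$ (le lemme \ref{lem:steinhk}), lequel donne directement $\rmH^2(\BX_{\cri})=0$ à partir de $\rmH^2_{\HK}(\presp{\rmM}_{\Qp}^n)=0$ (courbe Stein) sans qu'un argument supplémentaire soit nécessaire ici. Enfin, note que la solution du papier est plus directe : comme on peut remplacer librement $\BV_\lambda(1)$ par $\BV_\lambda(2)$ (la torsion par $\Qp(1)$ est inversible et ne change pas la nullité), on peut appliquer le théorème \ref{thm:petsyn} avec $i=2$ directement, obtenir $\rmH^2_{\pet}\cong\rmH^2_{\syn}$, et conclure par la suite exacte longue de la fibre syntomique $[\HK\to\DR]$, où $\rmH^2\HK=0$ par classicité et la surjectivité de $\rmH^1\HK\to\rmH^1\DR$ est donnée par la remarque \ref{rem:suitexho}.
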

\medskip
\begin{proof}
Soit $\lambda\in P_+$. Il suffit de montrer que $\rmH^2_{\pet}\Harg{\presp{\rmM}_{C}^{n}}{\BV_{\lambda}(2)}$ est nul. En vertu du théorème \ref{thm:petsyn}), il suffit de montrer que 
$$
\rmH^2_{\syn}\Harg{\presp{\rmM}_{C}^{n}}{\BV_{\lambda}(2)}=0.
$$
La définition de la cohomologie syntomique nous donne une suite exacte longue
$$
\rmH^1\HK(\presp{\rmM}_{C}^{n},\BD_{\lambda})\rightarrow\rmH^1\DR(\presp{\rmM}_{C}^{n},\CE_{\lambda})\rightarrow \rmH^2_{\syn}\Harg{\presp{\rmM}_{C}^{n}}{\BV_{\lambda}(1)}\rightarrow \rmH^2\HK(\presp{\rmM}_{C}^{n},{{\BD}_{\lambda}}).
$$
Premièrement, comme $\presp{\rmM}_{\Qp}^{n}$ est une courbe Stein, on a $\rmH^2_{\HK}(\presp{\rmM}_{\Qp}^{n})=0$ et donc 
$$
\rmH^2\HK(\presp{\rmM}_{C}^{n},\BD_{\lambda})=0.
$$
Ainsi, il suffit de montrer que l'application $\rmH^1\HK(\presp{\rmM}_{\Qp}^{n},\BD_{\lambda})\rightarrow\rmH^2\DR(\presp{\rmM}_{C}^{n},\CE_{\lambda})$ est surjective. Or, le corollaire \ref{lem:calch0} et le lemme \ref{lem:steinhk} nous donnent
$$
\begin{gathered}
\rmH^1\DR(\presp{\rmM}_{\Qp}^{n},\CE_{\lambda}) \cong\rmH^1_{\dR}\Harg{\rmM_{\Qp}^n}{\CE_{\lambda}}\wotimes_{\Qp}\rmB_{\lambda_1}\\
\rmH^1\HK(\presp{\rmM}_{C}^{n},\BD_{\lambda}) \cong \intn{\rmH^1_{\HK}\Harg{\presp{\rmM}_{C}^{n}}{\BD_{\lambda}}\wotimes_{\Qpbr}\Bstp}^{\varphi=p,N=0}.
\end{gathered}
$$
Puisque les pentes de $\rmH^1_{\HK}\Harg{\presp{\rmM}_{C}^{n}}{\BD_{\lambda}}$ sont $\leqslant -\lambda_1+1$, la remarque \ref{rem:suitexho} nous assure que
$$
 \intn{\rmH^1_{\HK}\Harg{\presp{\rmM}_{C}^{n}}{\BD_{\lambda}}\wotimes_{\Qpbr}\Bstp}^{\varphi=p,N=0}\rightarrow \rmH^1_{\dR}\Harg{\rmM_{\Qp}^n}{\CE_{\lambda}}\wotimes_{\Qp}\rmB_{\lambda_1}
$$
est surjective, ce qui permet de conclure la preuve.
\end{proof}
\begin{rema}
Comme pour la cohomologie à coefficients triviaux, il n'est pas clair du tout que $\rmH^2_{\et}\Harg{\presp{\rmM}_{C}^{n}}{\BV_{\lambda}(1)}=0$.
\end{rema}
\medskip
Le théorème de comparaison syntomique-proétale assure que, pour $i\geqslant 2$, on a 
$$
\rmH^i_{\pet}\Harg{\presp{\rmM}_{C}^{n}}{\BV_{\lambda}(i)}\cong\rmH^i_{\syn}\Harg{\presp{\rmM}_{C}^{n}}{\BV_{\lambda}(i)}=0.
$$
En effet, les termes qui apparaissent dans la cohomologie syntomique sont nuls en degrés $\geqslant i$. Ainsi, quitte à tordre par une puissance du caractère cyclotomique, les groupes de cohomologie proétales supérieurs sont nuls\footnote{On peut aussi raisonner en utilisant que les affinoïdes sont de dimension cohomologique $1$ et donc que les espaces Stein sont de dimension cohomologique au plus $2$.}, ce qui permet d'énoncer le corollaire suivant, qui synthétise les résultats précédents :
\medskip
\begin{coro}\label{cor:anulco}
Soit $\lambda\in P_+$ tel que $w(\lambda)>1$. Alors $\rmH^i_{\pet}\Harg{\presp{\rmM}_{C}^{\infty}}{\BV_{\lambda}(1)}\neq 0$ si et seulement si $i=1$.
\end{coro}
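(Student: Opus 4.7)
The strategy is simply to assemble the two vanishing results already established at finite level and then to exhibit a nonzero class in degree one from the fundamental diagram of Theorem~\ref{theo:fondrin}.

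First I would handle the vanishings. The cohomology $\rmH^i_{\pet}\Harg{\presp{\rmM}_{C}^{\infty}}{\BV_{\lambda}(1)}$ is by definition $\varinjlim_n \rmH^i_{\pet}\Harg{\presp{\rmM}_{C}^{n}}{\BV_{\lambda}(1)}$, so it suffices to show vanishing at each finite level $n$. For $i=0$, the hypothesis $w(\lambda)>1$ combined with Proposition~\ref{prop:anulco0} gives $\rmH^0_{\pet}\Harg{\presp{\rmM}_{C}^{n}}{\BV_{\lambda}(1)}=0$, and the filtered colimit of zero is zero. For $i=2$, Proposition~\ref{prop:anulco} yields the same conclusion for $\Sym\BV(1)$, hence for the direct summand $\BV_{\lambda}(1)\otimes_L \cz{W}_{\lambda}$, and thus for $\BV_{\lambda}(1)$ itself. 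For $i\geqslant 3$, one tensors by a power of the cyclotomic character to reduce to the syntomic computation: the syntomic complex is built from the Hyodo--Kato complex and the de Rham complex of a Stein curve, both concentrated in degrees $\leqslant 1$, so $\rmR\Gamma_{\!\syn}\Harg{\presp{\rmM}_{C}^{n}}{\BV_{\lambda}(i)}$ sits in degrees $\leqslant 2$, and Theorem~\ref{thm:petsyn} (together with the classical fact that Stein spaces have cohomological dimension $\leqslant 2$) transfers this vanishing to proétale cohomology.

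For the nonvanishing in degree one, I would use the top row of the fundamental diagram in Theorem~\ref{theo:fondrin}, which gives a surjection
\[
\presp{\rmH}^{\lambda}_{\pet} \twoheadrightarrow t^{\lambda_1}X^1_{\st}\intn{\presp{\rmH}^{\lambda}_{\HK}[\lambda_1]}.
\]
The target is nonzero: the Hyodo--Kato cohomology $\rmH^1_{\HK}\Harg{\presp{\rmM}_{\Qp}^{\infty}}{\BD_{\lambda}}$ is a direct factor of $\presp{\rmH}^1_{\HK}$, which by the Hyodo--Kato isomorphism is identified (after $\otimes K$) with the de Rham cohomology of a Stein curve with many holes, hence is huge; the operator $\varphi-p^{1-\lambda_1}$ on the tensor product with $\Bstp$ is surjective on a nonzero space, so the fixed points are nonzero. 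Alternatively, one can observe that the second term $\rmB_{\lambda}\wotimes_{\Qp}\CO^{\infty}_{\lambda}$ is huge and the map to it has large image; combined with the exactness of the top row, this forces $\presp{\rmH}^{\lambda}_{\pet}$ to be nonzero.

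The only delicate points are checking that filtered colimits preserve the vanishings uniformly (which is automatic since cohomology of systems commutes with filtered colimits on Stein spaces) and that the Hyodo--Kato side truly is nonzero at infinite level; the latter is inherited from the fact that $\presp{\rmH}^{\lambda}_{\HK}$ is a colimit of nonzero Fréchet spaces via $\varinjlim_n \rmH^1_{\HK}\Harg{\presp{\rmM}^n_{\Qp}}{\BD_{\lambda}}$. With these in hand, the ``if and only if'' statement of the corollary follows immediately.
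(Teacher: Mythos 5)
Your assembly of the vanishing results is exactly what the paper does: Proposition~\ref{prop:anulco0} kills $\rmH^0$ when $w(\lambda)>1$, Proposition~\ref{prop:anulco} (applied to the direct summand $\BV_\lambda(1)$ of $\Sym\BV(1)$) kills $\rmH^2$, and for $i\geqslant 3$ the syntomic comparison together with the degree bound on the syntomic complex (or, as the paper notes in a footnote, the fact that Stein spaces have proétale cohomological dimension at most $2$) finishes the job. The corollary in the paper is stated with no further argument precisely because these inputs make the ``only if'' direction immediate.

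However, your primary argument for the nonvanishing in degree $1$ has a genuine logical gap: a surjective endomorphism need not have a nontrivial kernel, so the claim ``$\varphi-p^{1-\lambda_1}$ on $\rmH^1_{\HK}\wotimes\Bstp$ is surjective, hence the fixed points are nonzero'' does not follow as stated. What rescues it is the slope bound recorded in the proof of Proposition~\ref{prop:anulco}: the slopes of $\rmH^1_{\HK}\Harg{\presp{\rmM}^n_{\Qp}}{\BD_\lambda}$ are $\leqslant 1-\lambda_1$, and that is exactly the hypothesis (cf.\ Lemma~\ref{lem:suitexho} and Remark~\ref{rem:suitexho}) under which $\intn{\rmH^1_{\HK}\otimes\Bstp}^{N=0,\varphi=p^{1-\lambda_1}}$ is nonzero. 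Your alternative route is actually cleaner and gap-free: in the top row of Theorem~\ref{theo:fondrin}, the left commuting square shows that the image of $\rmU_\lambda^0\otimes_{\Qpd}W_\lambda^{\pi_0^\infty}$ in $\rmB_\lambda\wotimes_{\Qp}\CO_\lambda^\infty$ factors through the proper closed subspace $\rmB_\lambda\otimes_{\Qp}W_\lambda^{\pi_0^\infty}$, so exactness of the top row forces $\presp{\rmH}^\lambda_{\pet}\neq 0$. Either repair yields the corollary; just do not infer a nonzero kernel from surjectivity alone.
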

\medskip
\Subsection{Torsion dans la cohomologie étale}
Dans ce numéro, on montre que la cohomologie étale du système local est de torsion bornée. Soit $n\geqslant 0$ et $k\geqslant 0$ des entier, rappelons que $\BV_k^+\coloneqq \Sym^k_{\rmO_L}\BV^+$. Il s'agit de montrer que $\rmH^1_{\et}\Harg{\rmM_{C}^{n}}{\BV_{k}^+}$ est de torsion bornée\footnote{Ceci est équivalent à ce que $\rmH^1_{\et}\Harg{\presp{\rmM}_{C}^{n}}{\BV_{k}^+(1)}$ soit de torsion borné puisque cet cet espace est la somme de deux copies de $\rmH^1_{\et}\Harg{\rmM_{C}^{n}}{\BV_{k}^+}$ par la $3$ de la remarque \ref{rem:concomp}. La raison de ce changement dans ce numéro est d'alléger la preuve de proposition \ref{prop:pprimbor}.}. Pour cela on va utiliser le résultat pour les coefficients triviaux, que l'on rappelle. 
\medskip
\begin{lemm}\label{lem:cohtor}
Soit $Y_C$ un espace adique sur $C$. Alors $\rmH_{\et}^1\Harg{Y_C}{\Zp(1)}$ est un $\Zp$-module sans torsion.
\end{lemm}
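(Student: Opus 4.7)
La preuve repose sur la suite exacte courte de faisceaux proétales
$$
0 \to \wh{\BZ}_p(1) \xrightarrow{\cdot p} \wh{\BZ}_p(1) \to \mu_p \to 0,
$$
où $\wh{\BZ}_p(1) \coloneqq \varprojlim_n \mu_{p^n}$ et $\mu_p = \wh{\BZ}_p(1)/p$. En prenant la suite exacte longue en cohomologie proétale (que l'on ramène à la cohomologie étale via le lemme \ref{lem:keliproetoet}), on obtient une identification
$$
\rmH^1_{\et}\Harg{Y_C}{\Zp(1)}[p] \cong \coker\Big[\rmH^0_{\et}\Harg{Y_C}{\Zp(1)} \xrightarrow{\bmod p} \rmH^0_{\et}\Harg{Y_C}{\mu_p}\Big].
$$
Le plan est donc de vérifier la surjectivité de cette réduction modulo $p$ ; ceci impliquera aussitôt l'absence de $p$-torsion dans $\rmH^1_{\et}\Harg{Y_C}{\Zp(1)}$, et donc sa liberté comme $\Zp$-module.

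Comme $C$ contient toutes les racines $p^n$-ièmes de l'unité, le faisceau $\mu_{p^n}$ sur $Y_{C,\et}$ est (après choix d'un générateur de $\Zp(1)$) isomorphe au faisceau constant associé à $\BZ/p^n$. On en déduit que $\rmH^0_{\et}\Harg{Y_C}{\mu_{p^n}}$ s'identifie à l'ensemble des fonctions localement constantes de l'espace profini $\pi_0(Y_C)$ vers $\mu_{p^n}$ et, en passant à la limite projective, que $\rmH^0_{\et}\Harg{Y_C}{\Zp(1)}$ s'identifie à celui des fonctions continues $\pi_0(Y_C) \to \Zp(1)$.

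Étant donné une fonction localement constante $f : \pi_0(Y_C) \to \mu_p$, celle-ci se factorise par un quotient fini $\pi_0(Y_C) \twoheadrightarrow S$ puisque $\pi_0(Y_C)$ est profini et $\mu_p$ fini discret ; pour chaque $s \in S$, on construit alors compatiblement des relèvements $f_n(s) \in \mu_{p^n}$ grâce à la surjectivité des flèches de transition $\mu_{p^{n+1}} \twoheadrightarrow \mu_{p^n}$, ce qui définit une fonction continue $\pi_0(Y_C) \to \Zp(1)$ relevant $f$. Le seul point à surveiller est qu'un éventuel terme $\lim^1_n \rmH^0_{\et}\Harg{Y_C}{\mu_{p^n}}$, qui pourrait intervenir dans la définition de $\rmH^1_{\et}\Harg{Y_C}{\Zp(1)}$ comme limite homotopique, s'annule bien : cela découle immédiatement de la surjectivité des transitions au niveau des $\rmH^0$ et du critère de Mittag-Leffler, et ne constitue pas un réel obstacle.
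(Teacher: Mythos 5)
Votre démonstration est correcte mais emprunte une voie sensiblement différente de celle du texte. Le texte reste au niveau de la cohomologie étale finie et utilise la suite de Kummer $0\to\mu_{p^n}\to\bG_{\rmm}\xrightarrow{[p^n]}\bG_{\rmm}\to 0$ pour écrire $\rmH^1_{\et}\Harg{Y_C}{\Zp(1)}$ comme extension de $T_p(\Pic)$ (un module de Tate, donc sans torsion) par $(\CO/C)^{\times}\wotimes_{\BZ}\Zp$ (complété $p$-adique d'un groupe sans torsion, donc sans torsion). Vous contournez entièrement Kummer et le groupe de Picard : vous identifiez directement la $p$-torsion de $\rmH^1$ au conoyau de la réduction modulo $p$ sur les $\rmH^0$, puis vous tuez ce conoyau par un argument topologique élémentaire de relèvement sur $\pi_0(Y_C)$. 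L'approche du texte fournit gratuitement une description structurelle de $\rmH^1$ ; la vôtre est plus économe car elle n'invoque que la suite exacte longue et la description de $\rmH^0$ d'un faisceau constant. Deux remarques : premièrement, l'invocation du lemme \ref{lem:keliproetoet} n'est pas le bon appui — ce lemme porte sur l'équivalence des catégories de systèmes locaux, pas sur la comparaison des cohomologies ; ce qu'il vous faut est plutôt l'identification $\rmR\Gamma_{\!\pet}\Harg{Y_C}{\wh{\BZ}_p(1)}\cong \holim_m\rmR\Gamma_{\!\et}\Harg{Y_C}{\mu_{p^m}}$, qui rend légitime le passage par la suite exacte courte de faisceaux proétales $0\to\wh{\BZ}_p(1)\xrightarrow{p}\wh{\BZ}_p(1)\to\mu_p\to 0$ (multiplication par $p$ non injective au niveau fini $\mu_{p^m}$, mais injective sur le faisceau limite). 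Deuxièmement, l'absence de $p$-torsion entraîne que le $\Zp$-module est sans torsion — ce que demande le lemme — mais non sa liberté ; c'est une formulation trop forte, quoique sans conséquence ici.
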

\medskip
\begin{proof}
On peut supposer que $Y_C$ est connexe puisque 
$$
\rmH_{\et}^1\Harg{Y_C}{\Zp(1)}=\prod_{X\in \pi_0(Y_C)}\rmH_{\et}^1\Harg{X}{\Zp(1)}.
$$
La preuve utilise la suite exacte de Kummer. Pour $n\geqslant 1$ un entier, la suite exacte longue associée à la suite exacte de faisceaux étales
$$
0 \rightarrow \BZ/p^n(1)\rightarrow \XBGm{Y_C}\xrightarrow{[p^n]}\XBGm{Y_C}\rightarrow 0
$$
fournit
$$
0\rightarrow (\CO/C)^{\times}\otimes_{\BZ}\BZ/p^n\rightarrow \rmH_{\et}^1\Harg{Y_C}{\BZ/p^n(1)}\rightarrow \Pic[p^n]\rightarrow 0,
$$
où $(\CO/C)^{\times}\coloneqq \CO(Y_C)^{\times}/C^{\times}$ est le groupe des fonctions inversibles modulo les constantes et $\Pic \coloneqq \rmH^1_{\et}\Harg{Y_C}{\XBGm{Y_C}}$ est le groupe de Picard (où plutôt ses $C$-points) de $Y_C$. En passant à la limite, comme le système de gauche satisfait Mittag-Leffler, on obtient une suite exacte courte
\begin{equation}\label{eq:kumexaseq}
0\rightarrow (\CO/C)^{\times}\wotimes_{\BZ}\Zp \rightarrow \rmH_{\et}^1\Harg{Y_C}{\Zp(1)}\rightarrow T_p\intn{\Pic}\rightarrow 0,
\end{equation}
où $(\CO/C)^{\times}\wotimes_{\BZ}\Zp$ est le complété $p$-adique du groupe $(\CO/C)^{\times}$ et $T_p\intn{\Pic}=\varprojlim_n\Pic[p^n]$ est le module de Tate de $\Pic$. Or, $(\CO/C)^{\times}$ est sans torsion et le complété $p$-adique d'un module sans torsion est sans torsion donc $(\CO/C)^{\times}\wotimes_{\BZ}\Zp$ est sans torsion. De plus, si $M$ est un groupe abélien alors $T_pM$ est sans torsion et donc $T_p\intn{\Pic}$ est sans torsion. Finalement, la suite exacte (\ref{eq:kumexaseq}) montre que $\rmH_{\et}^1\Harg{Y_C}{\Zp(1)}$ est sans torsion.
\end{proof}
Pour utiliser ce résultat on doit trivialiser le système local. Pour cela, on utilise le revêtement proétale 
$$
\wh{\rmM_{C}^{\infty}}\rightarrow \rmM_{C}^{n},
$$
dont le groupe de Galois est $\czG(n)=(1+\varpi_{\rmD}^n\rmOD)$ auquel on applique la suite spectrale de Hochschild-Serre.
\medskip
\begin{prop}\label{prop:pprimbor}
Soit $n\geqslant 0$ un entier. Le noyau de l'application naturelle 
$$
\rmH^1_{\et}\Harg{\presp{\rmM}_{C}^{n}}{\BV_{k}^+}\rightarrow \rmH^1_{\et}\Harg{\presp{\rmM}_{C}^{n}}{\BV_{k}},
$$
est de torsion $p$-primaire bornée, et l'image de l'application définit un réseau.
\end{prop}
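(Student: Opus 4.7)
D'après la remarque \ref{rem:concomp}, on est ramené à démontrer l'énoncé analogue pour $\rmM_{C}^n$ à la place de $\presp{\rmM}_{C}^n$. On va se servir du revêtement proétale
$$
\wh{\rmM_{C}^{\infty}}\to \rmM_{C}^n,
$$
qui est galoisien de groupe $\czG(n)=1+\varpi_{\rmD}^n\rmOD$ et qui trivialise le $\Zp$-système local $\BV^+$~: en effet, la famille universelle devient trivialisée à la limite et sa fibre s'identifie au $\Zp$-module libre $\rmOD$, munie d'une action naturelle de $\czG$ par multiplication à gauche. Par la suite spectrale de Hochschild-Serre associée à ce revêtement, on obtient une suite exacte
\begin{equation}\label{eq:planHS}
0\to \rmH^1\Harg{\czG(n)}{\rmH^0_{\et}\Harg{\wh{\rmM_{C}^{\infty}}}{\BV_k^+}}\to \rmH^1_{\et}\Harg{\rmM_{C}^n}{\BV_k^+}\to \rmH^0\Harg{\czG(n)}{\rmH^1_{\et}\Harg{\wh{\rmM_{C}^{\infty}}}{\BV_k^+}}.
\end{equation}

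Je montrerais d'abord que le dernier terme est sans $p$-torsion. Comme la trivialisation identifie $\BV_k^+$ au $\Zp$-module constant $\Sym^k_{\Zp}\rmOD$, on a, après avoir tordu par une puissance du caractère cyclotomique pour faire apparaître $\Zp(1)$ (cette torsion est inoffensive, car elle est sans $p$-torsion et ne change pas les arguments modulo un décalage),
$$
\rmH^1_{\et}\Harg{\wh{\rmM_{C}^{\infty}}}{\BV_k^+}\cong \rmH^1_{\et}\Harg{\wh{\rmM_{C}^{\infty}}}{\Zp(1)}\otimes_{\Zp}\Sym^k_{\Zp}\rmOD\otimes_{\Zp}\Zp(-1),
$$
et le lemme \ref{lem:cohtor} assure que $\rmH^1_{\et}\Harg{\wh{\rmM_{C}^{\infty}}}{\Zp(1)}$ est sans torsion~; comme $\Sym^k_{\Zp}\rmOD$ est un $\Zp$-module libre, le produit tensoriel reste sans torsion. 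Donc la $p$-torsion de $\rmH^1_{\et}\Harg{\rmM_{C}^n}{\BV_k^+}$ est entièrement contenue dans le premier terme de (\ref{eq:planHS}).

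Ensuite, comme $\wh{\rmM_{C}^{\infty}}=\bigsqcup_{i\in\BZ}\wh{\rmM_{C}^{\infty}}_i$ se décompose selon ses composantes connexes (hauteurs de la quasi-isogénie), on a
$$
\rmH^0_{\et}\Harg{\wh{\rmM_{C}^{\infty}}}{\BV_k^+}\cong \Func(\BZ,\Sym^k_{\Zp}\rmOD)\cong \Ind_{\czG^1}^{\czG^+}\Sym^k_{\Zp}\rmOD,
$$
la description en termes d'induite provenant de ce que l'action de $\czG^+$ sur $\pi_0(\wh{\rmM_{C}^{\infty}})\cong\BZ$ se factorise par $\nrd\colon\czG^+\to\Zpt\to\BZ$, de stabilisateur $\czG^1$. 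On peut alors invoquer directement la proposition~\ref{prop:anulcog} pour conclure que
$$
\rmH^1\Harg{\czG(n)}{\Ind_{\czG^1}^{\czG^+}\Sym^k_{\Zp}\rmOD}
$$
est un $\Zp$-module de torsion $p$-primaire et de type fini, donc tué par une puissance fixe de $p$. Il en résulte que le noyau de la flèche $\rmH^1_{\et}\Harg{\rmM_{C}^n}{\BV_k^+}\to \rmH^1_{\et}\Harg{\rmM_{C}^n}{\BV_k}$ (qui coïncide avec la torsion, car on passe à l'inversion de $p$) est de torsion bornée. En particulier, l'image définit un réseau invariant.

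Le principal point délicat dans cette preuve est le contrôle de la torsion du premier terme de (\ref{eq:planHS}) par la proposition \ref{prop:anulcog}, ce qui requiert de bien identifier la structure d'induite portée par $\rmH^0_{\et}\Harg{\wh{\rmM_{C}^{\infty}}}{\BV_k^+}$. Le reste est une application de Hochschild-Serre et du lemme \ref{lem:cohtor}.
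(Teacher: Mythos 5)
Votre preuve suit essentiellement la même stratégie que celle du texte (réduction à $\rmM_{C}^n$, suite spectrale de Hochschild-Serre pour le revêtement $\wh{\rmM_{C}^{\infty}}\to \rmM_{C}^n$, absence de torsion du terme de droite via le lemme \ref{lem:cohtor}, puis identification du terme de gauche comme une induite pour invoquer la proposition \ref{prop:anulcog}), et la conclusion est correcte. Toutefois, l'étape intermédiaire où vous identifiez les composantes connexes est inexacte~: vous écrivez $\wh{\rmM_{C}^{\infty}}=\bigsqcup_{i\in\BZ}\wh{\rmM_{C}^{\infty}}_i$ en invoquant la hauteur de la quasi-isogénie, et $\rmH^0_{\et}\Harg{\wh{\rmM_{C}^{\infty}}}{\BV_k^+}\cong \Func(\BZ,\Sym^k_{\Zp}\rmOD)$. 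Or la décomposition par hauteurs de quasi-isogénie concerne $\brv{\rmM}_{\Dr}$ en niveau $0$, non la tour $\rmM_C^{\infty}$~: d'après Strauch (\cf \ref{subsubsec:compco}), on a $\pi_0(\rmM_{C}^{\infty})\cong \Zpt$ et non $\BZ$, et c'est l'ensemble $\Zpt\cong \czG^+/\czG^1$ qui donne la structure d'induite. L'identification $\Func(\BZ,\,\cdot\,)\cong\Ind_{\czG^1}^{\czG^+}(\,\cdot\,)$ n'a pas de sens tel quel (le quotient $\czG^+/\czG^1$ n'est pas $\BZ$), même si la formule finale $\BV_k^+(\wh{\rmM_{C}^{\infty}})\cong \Ind_{\czG^1}^{\czG^+}\Sym_{\Zp}^k\rmOD$ que vous utilisez pour appliquer la proposition \ref{prop:anulcog} est, elle, exacte. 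Il faut donc corriger cette justification géométrique, en remplaçant $\BZ$ par $\Zpt$, pour que l'argument soit complet. Notons enfin que la torsion par le caractère cyclotomique dans la première partie est inutile puisqu'on travaille sur $C$, où $\Zp(1)\cong\Zp$ non canoniquement~; le lemme \ref{lem:cohtor} donne directement l'absence de torsion.
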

\medskip
\begin{proof}
D'après la note de bas de page (\thefootnote), on peut remplacer $\presp{\rmM}_{C}^{n}$ par $\rmM_{C}^{n}$ dans l'énoncé. Par Hochschild-Serre appliqué au revêtement $\wh{\rmM_{C}^{\infty}}\rightarrow \rmM_{C}^n$ on a 
$$
\rmR\Gamma\Harg{\czG(n)}{\rmR\Gamma_{\!\et}\Harg{\wh{\rmM_{C}^{\infty}}}{\BV_{k}^+}}=\rmR\Gamma_{\!\et}\Harg{\rmM_{C}^n}{\BV_{k}^+}.
$$
Ceci donne une suite spectrale dont la suite des premiers termes est
$$
0\rightarrow \rmH^1\Harg{\czG(n)}{\BV_{k}^+(\wh{\rmM_{C}^{\infty}})}\rightarrow \rmH^1_{\et}\Harg{\rmM^n_{C}}{\BV_{k}^+}\rightarrow \rmH^1_{\et}\Harg{\wh{\rmM_{C}^{\infty}}}{\BV_{k}^+}^{\czG(n)}.
$$
Rappelons que le revêtement considéré trivialise le système local. Ainsi $\rmH^1_{\et}\Harg{\wh{\rmM_{C}^{\infty}}}{\BV_{k}^+}$ est sans torsion d'après le lemme \ref{lem:cohtor} et donc a fortiori $\rmH^1_{\et}\Harg{\wh{\rmM_{C}^{\infty}}}{\BV_{k}^+}^{\czG(n)}$ est sans torsion. Montrons maintenant que $\rmH^1\Harg{\czG(n)}{\BV_{k}^+(\rmM_{C}^{\infty})}$ est de torsion $p$-primaire et de type fini.

Rappelons (\cf  \ref{subsubsec:compco}) qu'on a une application $\wh{\rmM_{C}^{\infty}}\rightarrow \Zp^{\times}$, donnant les composantes connexes de la tour, comme l'action de $\czG$ sur $\rmM_{C}^{\infty}$ induit sur $\Zp^{\times}$ l'action de la norme réduite et donc~$\czG^1$ fixe les fibres de l'application $\rmM_{C}^{\infty}\rightarrow \Zp^{\times}$. De plus, on sait qu'en un point de $\rmM_{C}^{\infty}$, le germe du système local, trivial sur cet espace, est simplement $\Sym^k_{\Zp}\rmOD\otimes_{\Zp}\rmO_L$. On en déduit que $\BV_{k}^+(\wh{\rmM_{C}^{\infty}})=\Ind_{\czG^1}^{\czG^+}\intn{\Sym^k_{\Zp}\rmOD}\otimes_{\Zp}\rmO_L$ en tant que représentation de $\czG(n)\subset \rmODt$. Or, d'après la proposition \ref{prop:anulcog}, $\rmH^1\Harg{\czG(n)}{\Ind_{\czG^1}^{\czG}\intn{\Sym^k_{\Zp}\rmOD}}\otimes_{\Zp}\rmO_L$ est de torsion $p$-primaire bornée. Ceci achève la preuve. 
\end{proof}
\begin{rema}
 On pourrait se demander pourquoi le groupe $\czG^+$ intervient et non $\czG^1$, le noyau de la norme réduite puisque c'est le quotient naturel de $\pi_1^{\et}(\rmM_C^0,\bar x)$ donné par la tour. La raison est que le quotient $\pi_1^{\et}(\rmM_{C}^0,\bar x )\rightarrow \czG^1$ décrit une composante connexe de la tour ; c'est la situation géométrique qui ne voit pas les différentes composantes connexes. Plus précisément, on a un quotient $\pi_1^{\et}(\presp{\rmM}_{\Qp}^0,\bar x )\rightarrow \czG^+$ qui décrit la situation arithmétique qui intervient ici. 
\end{rema}
\Subsection{Vecteurs bornés de la cohomologie proétale isotriviale}
\subsubsection{Fin de la preuve}
On termine la démonstration du théorème \ref{thm:proetoet}. Montrons que tout vecteur $G$-borné de $\presp{\rmH}^1_{\pet}\coloneqq \rmH^1_{\pet}\Harg{\presp{\rmM}_{C}^{\infty}}{\Sym \BV(1)}$ appartient à  $\rmH^1_{\et}\Harg{\presp{\rmM}_{C}^{\infty}}{\Sym \BV(1)}$. Soit $n\geqslant 0$ un entier, la preuve se fait exactement comme celle de \cite[Proposition 2.12]{codoni} (\cf \cite{van} pour plus de détails): on choisit $\Gamma\subset \PGL_2(\Qp)$ cocompact puis à l'aide d'un recouvrement adapté et de la suite exacte de Čech on construit un affinoïde $Y\subset \presp{\rmM}_C^{n}$ tel que
{ 
\begin{equation}\label{eq:defrezo}
\rmH^1_{\et}\Harg{\presp{\rmM}^n_{C}}{\Sym{\BV}(1)}=\varinjlim_{m}\left \{v\in \presp{\rmH}^1_{\pet}\mid \forall \gamma \in \Gamma,\ p^m\Res_{Y}(\gamma\cdot v)\in \rmH^1_{\et}\Harg{Y}{\Sym{\BV^+}(1)}\right \},
\end{equation}}
où $\Res$ est le morphisme en cohomologie de restriction à un affinoïde. En dehors de la structure de l'espace, le point clé de cet égalité est que la cohomologie étale est séparée ; ce qui est exactement ce qu'on a montré dans la proposition \ref{prop:pprimbor}.
\qed
\medskip
\begin{rema}
Notons que $\rmH^1_{\et}\Harg{\presp{\rmM}_{C}^{\infty}}{\Sym{\BV^+}(1)}$ n'est pas un réseau stable par $G$. Dans la preuve, on a défini un réseau dépendant d'un sous-groupe cocompact $\Gamma\subset \PGL_2(\Qp)$, d'un affinoïde $Y\subset \presp{\rmM}_C^{n}$ et d'un entier $m\in \BN$ par
$$
\left \{v\in \presp{\rmH}^1_{\pet}\mid \forall \gamma \in \Gamma,\ p^m\Res_{Y}(\gamma\cdot v)\in \rmH^1_{\et}\Harg{Y}{\Sym{\BV^+}}\right \}.
$$
Cette définition rappelle celle donnée par Breuil des réseaux pour la série spéciale dans \cite{bre}. Il est cependant possible de construire un réseau différement en remarquant que pour $\lambda\in P_+$
$$
\presp{\rmH}^{\lambda}_{\et}=\Hom_{\czG(n)}\intnn{W_{\lambda}}{\rmH^1_{\et}\Harg{\widehat{\presp{\rmM}^{\infty}_C}}{\Qp(1)}}.
$$
On choisit un réseau $W_{\lambda}^+\subset W_{\lambda}$ stable sous $\czG$ puis on définit un réseau $G$-stable de $\presp{\rmH}^{\lambda}_{\et}$ comme l'image de $\Hom_{\czG(n)}\intnn{W^+_{\lambda}}{\rmH^1_{\et}\Harg{\widehat{\presp{\rmM}^{\infty}_C}}{\Zp(1)}}$ par l'application naturelle, qui est simplement cet espace modulo sa torsion. Il est possible de montrer que l'on obtient des réseaux comesurables par ces deux définitions.
\end{rema}
\medskip
\subsubsection{Reformulation}
On reformule le théorème principal en termes des multiplicités des représentations de $G$. La preuve du corollaire suivant est la même que celle de \cite[Lemme 5.9]{codonifac}.
\medskip
\begin{coro}\label{cor:complocan}
Soient $\Pi$ une $L$-représentation de Banach unitaire de $G$. Alors les injections  $\rmH^1_{\et}\Harg{\presp{\rmM_{C}^{\infty}}}{\Sym\BV(1)}\incl \rmH^1_{\pet}\Harg{\presp{\rmM_{C}^{\infty}}}{\Sym\BV(1)}$ et $\Pi'\incl (\Pi^{\lan})'$ induisent un isomorphisme naturel
$$
\Hom_G\intnn{\Pi'}{\rmH^1_{\et}\Harg{\presp{\rmM_{C}^{\infty}}}{\Sym\BV(1)}}\cong \Hom_G\intnn{(\Pi^{\lan})'}{\rmH^1_{\pet}\Harg{\presp{\rmM_{C}^{\infty}}}{\Sym\BV(1)}}.
$$
\end{coro}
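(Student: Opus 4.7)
The plan is to adapt verbatim the strategy of \cite[Lemme 5.9]{codonifac}, substituting Theorem \ref{thm:proetoet} (which identifies $\rmH^1_{\et}$ with the $G$-bounded vectors of $\rmH^1_{\pet}$ in the isotrivial setting) for its trivial-coefficient analogue \cite[Proposition 2.12]{codoni}. I would construct natural maps in both directions and then verify that they are mutually inverse.

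The forward direction is direct. Given a continuous $G$-equivariant morphism $\phi\colon (\Pi^{\lan})'\to \rmH^1_{\pet}\Harg{\presp{\rmM_C^{\infty}}}{\Sym\BV(1)}$, precompose with the natural map $\Pi' \hookrightarrow (\Pi^{\lan})'$ dual to the dense continuous inclusion $\Pi^{\lan} \subset \Pi$. Since $\Pi$ is a unitary Banach representation, a $G$-invariant norm on $\Pi$ produces a $G$-invariant norm on $\Pi'$, so every $v'\in \Pi'$ has bounded orbit. Continuity and $G$-equivariance of $\phi$ force $\phi(v')$ to be a $G$-bounded vector of $\rmH^1_{\pet}$, hence to lie in $\rmH^1_{\et}\Harg{\presp{\rmM_C^{\infty}}}{\Sym\BV(1)}$ by Theorem \ref{thm:proetoet}. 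The restriction $\phi|_{\Pi'}$ therefore factors through the étale cohomology and yields the desired element of the left-hand side.

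The backward direction is where the real work is. Starting from $\psi\colon \Pi'\to \rmH^1_{\et}$ and composing with the inclusion $\rmH^1_{\et}\hookrightarrow \rmH^1_{\pet}$, one must extend along $\Pi'\hookrightarrow (\Pi^{\lan})'$. Fixing a compact open subgroup $G_0\subset G$, the idea is to exploit the base-change identification $(\Pi^{\lan})' \cong \Pi' \wh{\otimes}_{\Lambda(G_0)} \sD(G_0,L)$ coming from the theory of Schneider--Teitelbaum (with $\Lambda(G_0)=\rmO_L[\![G_0]\!][1/p]$ the Iwasawa algebra and $\sD(G_0,L)$ the distribution algebra) together with the natural topological $\sD(G_0,L)$-action on $\rmH^1_{\pet}$ refining its $G$-action. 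Since $\psi$ is tautologically $\Lambda(G_0)$-linear and continuous, base change along $\Lambda(G_0)\to \sD(G_0,L)$ produces a continuous $\sD(G_0,L)$-linear extension $(\Pi^{\lan})'\to \rmH^1_{\pet}$, which is automatically $G$-equivariant.

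The main obstacle is to check the topological details of this extension: that $\rmH^1_{\pet}\Harg{\presp{\rmM_C^{\infty}}}{\Sym\BV(1)}$ carries a $\sD(G_0,L)$-module structure compatible with its Fréchet topology, and that the base-change construction actually produces a continuous map into $\rmH^1_{\pet}$ rather than only into some completion. Once this is settled, the two constructions are formally inverse to one another: in one direction this is by construction, and in the other it is immediate, since any two $G$-equivariant continuous extensions to $(\Pi^{\lan})'$ agree on the dense image of $\Pi'$ (for the weak topology) and $\rmH^1_{\pet}$ is separated. Finally, the adverb ``naturelle'' is automatic, as both maps are manifestly functorial in $\Pi$.
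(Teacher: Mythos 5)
Your proposal is correct and takes exactly the route the paper intends: the paper offers no proof beyond invoking \cite[Lemme 5.9]{codonifac}, and your reconstruction of that argument — restriction to $\Pi'$ landing in the $G$-bounded vectors via Theorem \ref{thm:proetoet} for one direction, and the Schneider--Teitelbaum base change $(\Pi^{\lan})'\cong \sD(G_0,L)\wh\otimes_{\Lambda(G_0)}\Pi'$ together with the $\sD(G_0,L)$-module structure on $\rmH^1_{\pet}$ for the other — is the standard one. The only caveat worth recording is that the Schneider--Teitelbaum identification (and the ``automatically continuous'' step at the end) requires $\Pi$ to be admissible; this hypothesis is implicit in the statement and in all applications in the paper, but is needed for the backward direction to make sense.
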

\Subsection{Résultat de finitude}
Dans ce paragraphe on démontre le résultat de finitude suivant :
\medskip
\begin{prop}\label{prop:longfinui}
Soient $k\geqslant 0$ et $n\geqslant 0$ des entiers et soit $K$ une extension finie de $\Qp$. L'espace de cohomologie étale suivant, 
$$
\rmH^1_{\et}\Harg{\presp{\rmM^n_{K}}}{\BV_{k}^+/\varpi_{D}},
$$
est le dual stéréotypique d'un $\rmO_L[G]$-module lisse de longueur finie.
\end{prop}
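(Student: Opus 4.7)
The plan is to apply the Hochschild--Serre spectral sequence to the pro-étale Galois covering $\wh{\presp{\rmM^{\infty}_K}} \to \presp{\rmM^n_K}$ of group $\czG(n)$, following the strategy of Proposition~\ref{prop:pprimbor}. Since this covering trivialises the system local $\BV^+_k$ (and hence its reduction $V \coloneqq \BV^+_k/\varpi_D$), writing $V_0$ for the fiber of $V$ viewed as a finite rank free $\rmO_L/\varpi_D$-module equipped with a smooth $\czG(n)$-action, the five-term exact sequence reads
\begin{equation*}
0 \to \rmH^1(\czG(n), \rmH^0_{\et}\Harg{\wh{\presp{\rmM^{\infty}_K}}}{\rmO_L/\varpi_D} \otimes V_0) \to \rmH^1_{\et}\Harg{\presp{\rmM^n_K}}{V} \to \intn{\rmH^1_{\et}\Harg{\wh{\presp{\rmM^{\infty}_K}}}{\rmO_L/\varpi_D} \otimes V_0}^{\czG(n)} \to \rmH^2(\czG(n), \cdots).
\end{equation*}
The strategy is to show that each of the outer terms is the stereotypic dual of a smooth $\rmO_L[G]$-module of finite length; the middle term, being sandwiched between two such objects, inherits the same property.

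First I would handle the $\rmH^0$ piece. Using Lemma~\ref{lem:deccar} to describe $\rmH^0_{\et}\Harg{\wh{\presp{\rmM^{\infty}_K}}}{\rmO_L/\varpi_D}$ in terms of the connected components of the tower (a sum of characters factoring through $\nu$), the group cohomology of $\czG(n)$ on its tensor product with the finite-rank module $V_0$ reduces, up to a finite-index argument in $G/\BG^+$, to a finite direct sum of cohomologies of characters twisted by $V_0$. This is a smooth admissible $G$-module of finite length.

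Second, I would handle the $\rmH^1$ piece using Kummer theory on the perfectoid cover, as in Lemma~\ref{lem:cohtor}: twisting suitably by a power of the cyclotomic character to absorb the Tate twist, $\rmH^1_{\et}\Harg{\wh{\presp{\rmM^{\infty}_K}}}{\rmO_L/\varpi_D(1)}$ fits in an exact sequence involving units modulo $p$-th powers and $p$-torsion of the Picard group of $\wh{\presp{\rmM^{\infty}_K}}$, both inheriting natural $G$- and $\czG(n)$-actions from geometry. Taking $\czG(n)$-invariants of its tensor product with $V_0$ then produces the second outer term of Hochschild--Serre.

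The main obstacle will be establishing the finite length of the $G$-module structure on this $\rmH^1$ piece. For this I would adapt the argument of Colmez--Dospinescu--Nizio{\l} in \cite{codoni} for the constant coefficient case, where the corresponding question reduces to a finite-dimensional analysis on the tree of components of the Drinfeld space; the finite-rank twist by $V_0$, whose $\czG(n)$-action is smooth since $\czG(n)$ is profinite and $V_0$ is a finite $\rmO_L/\varpi_D$-module, multiplies length by at most a bounded factor. Combined with the Hochschild--Serre sandwich, this yields the desired finite length as smooth $\rmO_L[G]$-module.
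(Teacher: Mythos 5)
Your proposal misses the key observation that makes the paper's proof short, and the route you propose instead has a genuine gap that you acknowledge but do not close.

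The crucial fact, which you overlook, is that $\BV_k^+/\varpi_D \cong \Sym^k_{\BF_p}\CG_{\Dr}[\varpi_D]\otimes_{\Zp}\rmO_L$ is already a \emph{constant} $\BF_p$-local system on $\presp{\rmM}_K^1$ (and a fortiori on $\presp{\rmM}_K^n$ for $n\geqslant 1$), because it is killed by $\varpi_D$ and the first level of the tower trivialises $\BV_{\Dr}^+/\varpi_D$. Once one sees this, the statement for $n\geqslant 1$ is an immediate corollary of Théorème~4.1 of \cite{codonifac} applied to a constant coefficient system, and the case $n=0$ follows by a finite Hochschild--Serre descent along the \emph{single} step $\presp{\rmM}_K^1 \to \presp{\rmM}_K^0$, which is Galois of group $\rmODt/(1+\varpi_D\rmOD)\cong\BF_{p^2}^\times$: since this group is finite of order prime to $p$, higher group cohomology with $\BF_p$-coefficients vanishes, so $\rmH^1_{\et}(\presp{\rmM}_K^0,\BV_k^+/\varpi_D)$ is the subspace of $\BF_{p^2}^\times$-invariants of $\rmH^1_{\et}(\presp{\rmM}_K^1,\BV_k^+/\varpi_D)$, which inherits the desired property by closedness.

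By contrast, you descend all the way to the perfectoid cover $\wh{\presp{\rmM^{\infty}_K}}$ with Galois group $\czG(n)$, which is much too coarse here. The five-term exact sequence does sandwich $\rmH^1_{\et}(\presp{\rmM}_K^n,V)$ as you say, but the burden then shifts to showing that $\intn{\rmH^1_{\et}\Harg{\wh{\presp{\rmM^{\infty}_K}}}{\rmO_L/\varpi_D}\otimes V_0}^{\czG(n)}$ is the dual of a finite-length smooth $G$-module. This is precisely where your argument stalls: the étale $\rmH^1$ of the perfectoid cover is an enormous object, the $\czG(n)$-action on $V_0$ makes the invariants not simply a twist by $V_0$ of the constant-coefficient invariants, and the cited finite-length theorem of \cite{codonifac} is stated for constant coefficients at finite level, not for $\czG(n)$-invariants on the perfectoid cover. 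Saying you would "adapt" that theorem does not close the gap; it essentially restates the claim to be proved. The correct move is to exploit that the \emph{finite-level} covering $\presp{\rmM}_K^1 \to \presp{\rmM}_K^n$ (resp.\ its trivial case $n\geqslant 1$) already carries all the information needed, because the coefficient system is $\varpi_D$-torsion.
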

\medskip
\begin{proof}
À coefficients dans $k_L$, c'est le théorème $4.1$ de \cite{codonifac}. De plus, ce résultat nous dit que si $\BL$ est un $\BF_p$-système local étale constant, \ie  $\BL\cong \ud{M}$ où $M$ est un $\BF_p$-espace vectoriel de dimension finie, alors pour tout entier $n\geqslant 0$, l'espace de cohomologie étale
$$
\rmH^1_{\et}\Harg{\presp{\rmM}_{K}^n}{\BL}
$$
est le dual d'un $\Zp[G]$-module de longueur finie. On fixe maintenant $k\geqslant 0$ un entier. On sait que par définition, $\BV_{k}^+/\varpi_{D}\cong \Sym^k_{\BF_p}\CG[\varpi_{D}]\otimes_{\Zp}\rmO_L$ est un $\BF_p$-système local trivial sur $\presp{\rmM}_{K}^1$ et donc a fortiori sur $\presp{\rmM}_{K}^n$ dès que $n\geqslant 1$. 

Ainsi, on a montré le résultat dès que $n\geqslant 1$ et il reste à le démontrer pour $n=0$, ce que l'on va faire en appliquant la suite spectrale de Hochschild-Serre au revêtement $\presp{\rmM}^1_{K}\rightarrow\presp{\rmM}^0_{K}$. C'est un revêtement galoisien de groupe $\rmODt/(1+\varpi_{D}\rmOD)\cong \BF_{p^2}^{\times}$ et la suite spectrale s'écrit
$$
E_2^{i,j} \coloneqq \rmH^i\Harg{\BF_{p^2}^{\times}}{\rmH^j_{\et}\Harg{\presp{\rmM}_{K}^1}{\BV_{k}^+/\varpi_{D}}}\implies \rmH^{i+j}_{\et}\Harg{\presp{\rmM}_{K}^0}{\BV_{k}^+/\varpi_{D}}.
$$
Or, comme $\BF_{p^2}^{\times}$ est un groupe fini d'ordre premier à $p$, la cohomologie des $\BF_p$-représentations de $\BF_{p^2}^{\times}$ s'annule en degré $\geqslant 1$. Ainsi, on obtient un isomorphisme 
$$
\rmH^{1}_{\et}\Harg{\presp{\rmM}_{K}^0}{\BV_{k}^+/\varpi_{D}}\xrightarrow{\sim}\rmH^{1}_{\et}\Harg{\presp{\rmM}_{K}^1}{\BV_{k}^+/\varpi_{D}}^{\BF_{p^2}^{\times}}\subset\rmH^{1}_{\et}\Harg{\presp{\rmM}_{K}^1}{\BV_{k}^+/\varpi_{D}}.
$$
On a déjà montré que le terme de droite était le dual d'une représentation lisse de $G$, admissible et de longueur finie donc il en est de même pour le terme de gauche, puisque comme $\BF_{p^2}^{\times}$ agit continument, ce dernier est un sous-espace fermé.


\end{proof}
\begin{rema}
On peut déduire de ce théorème que pour tout entier $l\geqslant 0$, l'espace de cohomologie étale 
$$
\rmH^1_{\et}\Harg{\presp{\rmM}^n_{K}}{\BV_{k}^+/\varpi_{D}^l}
$$
est le dual stéréotypique d'un $\BZ_p[G]$-module lisse de longueur finie.
\end{rema}

\section{Le cas cuspidal}
On conserve les notations des sections précédentes.
\Subsection{Représentations spéciales et cuspidales}\label{subsec:rapcusp}
Commençons par énoncé en terme de $L$-$(\varphi,N,\GQp)$-modules les définitions de \emph{spéciale} et \emph{cuspidale}. Dans cette section, on s'intéresse surtout au cas cuspidal et on détaillera le cas spécial dans la prochaine (\cf \ref{sec:caspe}).
\medskip
\begin{defi}\label{def:mcuspispe}
Soit $M$ un $L$-$(\varphi,N,\GQp)$-module, libre sur $L\otimes_{\Qp}\Qp^{\nr}$ de rang $2$. Alors, on dit que $M$ est 
\begin{itemize}
\itemb \emph{cuspidale} si la représentation de Weil-Deligne associée $\WD_M$ est absolument irréductible,
\itemb \emph{spéciale} si $N\neq 0$.
\end{itemize}
\end{defi}
\medskip
Remarquons que $\WD_M$ est absolument indécomposable si et seulement si $M$ est spécial ou cuspidal.

Si $M$ est cuspidal, alors les pentes de $M$ sont toutes égales à un même nombre rationnel que l'on appelle \emph{la pente} de $M$. Si $M$ est spécial, alors les pentes diffèrent de $1$ et on appellera \emph{la pente} la moyenne de ses pentes. 

Pour $M$ un $L$-$(\varphi,N,\GQp)$-module, notons $M_{\dR}\coloneqq (M\otimes_{\Qp^{\nr}} C)^{\GQp}$, qui est un $L$-espace vectoriel de dimension $2$,  et pour $k\in \BN$,
\begin{equation}
X^+_{\st}(M)=(\Bstp\otimes_{\Qp^{\nr}}M)^{N=0,\varphi=1},\quad X^k_{\st}(M)=(\Bstp\otimes_{\Qp^{\nr}}M)^{N=0,\varphi=p^k},
\end{equation}
qui définissent des $L$-espaces de Banach-Colmez. On rappelle finalement que $M[k]$ est obtenu en multipliant le Frobenius de $M$ par $p^k$ et qu'ainsi $ X^{j+k}_{\st}(M[k])= X^j_{\st}(M)$ pour $j\in \BN$ assez grand.
\subsubsection{Représentations de $\GQp$}
Soit $V$ une $L$-représentation de $\GQp$ de Rham de dimension $2$ à poids de Hodge-Tate $\lambda\in P_+$. Dans ce cas, $V$ est \emph{cuspidale} (resp. \emph{spéciale}) si et seulement si $M\coloneqq \bD_{\pst}(V)[1]$ est cuspidal (resp. spécial) au sens de la définition \ref{def:mcuspispe}. Alors, $M$ est de pente $1-\lvert \lambda\rvert /2$ et d'après \cite{cofo} il existe $0\subsetneq\CL\subsetneq \bD_{\dR}(V)=M_{\dR}$ tel que
$$
V\cong 
V_{M,\CL}^{\lambda}\coloneqq \Ker\left (X_{\st}^{1}(M)\rightarrow \frac{M_{\dR}\otimes_{\Qp}\Bdrp}{\CL\otimes_{\Qp} t^{\lambda_1}\Bdrp+M_{\dR}\otimes_{\Qp} t^{\lambda_2}\Bdrp}\right ).
$$
Ainsi, $V$ est déterminé par le triplet $(M,\CL,\lambda)$ et si $(M,\CL,\lambda)\neq (M',\CL',\lambda')$ alors $V_{M,\CL}^{\lambda}\not \cong V_{M',\CL'}^{\lambda'}$. De plus, $V_{M,\CL}^{\lambda}$ est irréductible si et seulement si $M$ est cuspidal ou $M$ est spécial et $w(\lambda)\neq 1$.
\subsubsection{Correspondances localement algébriques}
Soit $M$ un $L$-$(\varphi,N,\GQp)$-module libre de rang $2$, absolument indécomposable, auquel on associe $\WD_M$ puis, par la correspondance de Langlands locale, $\LL_M\coloneqq\LL(\WD_M)$ une $L$-représentation lisse irréductible de $G$. Si $M$ est cuspidale, alors $\LL(\WD_M)$ est cuspidale au sens usuel. Pour $\lambda\in P_+$, posons (\cf \ref{subsubsec:raig}) 
$$
\LL_M^{\lambda}\coloneqq \LL_M\otimes_L \Sym_L^{w(\lambda)-1}\otimes_L {\det}^{\lambda_1},
$$
qui définit une $L$-représentation localement algébrique unitaire irréductible de $G$. De même, en notant $\JL_M\coloneqq \JL(\LL_M)$ la $L$-représentation lisse irréductible de $\czG$ associée à $\LL_M$ par la correspondance de Jacquet-Langlands, on définit une $L$-représentation localement algébrique, unitaire, irréductible de $\czG$ par (\cf \ref{subsubsec:raiczg})
$$
\JL_M^{\lambda}\coloneqq \JL_M\otimes_L\Sym_L^{w(\lambda)-1}\otimes_L \nrd^{\lambda_1}.
$$
Ces constructions permettent de définir des correspondances localement algébriques en associant à une $L$-représentation spéciale ou cuspidale de $\GQp$ de la forme $V=V_{M,\CL}^{\lambda}$ les $L$-représentations $\JL_M^{\lambda}$ et $\LL_M^{\lambda}$. Ces dernières perdent l'information du paramètre de la filtration. Remarquons que, dans ce cas, $M$ et $\lambda$ ne sont pas indépendants puisque $M$ est de pente $1-\lvert \lambda \rvert/2$.

On introduit la notion suivante, qui généralise \og $p$-compatible \fg dans \cite{codoni}, pour traiter les torsions par des caractères (\cf \ref{rema:tchaocar}) : 
\medskip
\begin{defi}
 Soit $k\geqslant$ un entier, on dit qu'un $L$-$(\varphi,N,\GQp)$-module libre de rang $2$, absolument indécomposable, est \emph{$p$-compatible de poids $k$} si $p\in\czG$ agit trivialement sur $\JL_M^{\lambda}$ pour $\lambda\in P_+$ tel que $\vert \lambda \rvert =k+1$. On note $\Phi N^{p}_{k}$ l'ensemble des $L$-$(\varphi,N,\GQp)$-modules $p$-compatible de poids $k$.
\end{defi}
Un élément de $\Phi N^{p}_{k}$ est en particulier de pente $-(k-1) /2$ et réciproquement, tout $L$-$(\varphi,N,\GQp)$-module absolument indécomposable est le tordu par un caractère lisse d'un élément de $\Phi N^{p}_{k}$.

Si $M$ est un $L$-$(\varphi,N,\GQp)$-module libre de rang $2$, absolument indécomposable on note $Z\mapsto Z[M]\coloneqq\Hom_{L[\czG]}\intnn{\JL_M}{Z}$ comme foncteur sur les $L[\czG]$-modules.
\subsubsection{Représentations de $G$}\label{subsubsec:repcuspg}
Soit $\Pi$ une $L$-représentation de Banach unitaire admissible et absolument indécomposable de $G$, alors, on dit que $\Pi$ est \emph{cuspidale} (resp. \emph{spéciale}) si, ses vecteurs localement algébriques ne sont pas triviaux et sont de la forme
$$
\Pi^{\lalg}\cong \LL_M^{\lambda},
$$
où $\lambda \in P_+$ et $M$ est un $L$-$(\varphi,N,\GQp)$-module cuspidal (resp. spéciale) de pente $1-\lvert \lambda \rvert /2$.

La correspondance de Langlands $p$-adique (\cf \cite{colp}, \cite{codopa}) définit un $L$-Banach cuspidal ou spécial (sous entendu unitaire admissible absolument indécomposable) de $G$ à partir d'un triplet $(M,\CL,\lambda)$ avec $M$ cuspidal (resp. spécial) et de pente $1-\lvert \lambda \rvert /2$ par
$$
\Pi_{M,\CL}^{\lambda}\coloneqq \bPi(V_{M,\CL}^{\lambda}).
$$
La compatibilité entre la correspondance de Langlands locale et $p$-adique (\cf \cite{colp}) assure que les vecteurs localement algébriques de $\Pi_{M,\CL}^{\lambda}$ sont donnés par
\begin{equation}\label{eq:compatlocp}
 \prescript{\lalg}{}{\Pi}_{M,\CL}^{\lambda}\cong \LL_M^{\lambda},
\end{equation}
ce qui fait de $\Pi_{M,\CL}^{\lambda}$ un complété unitaire admissible de $\LL_M^{\lambda}$. De plus, $\Pi_{M,\CL}^{\lambda}$ est irréductible sauf si $M$ est spécial et $w(\lambda)=1$. 

Enfin, on note $\Pilan_{M,\CL}^{\lambda}\subset \Pi_{M,\CL}^{\lambda}$ le sous-espace des vecteurs localement analytiques, qui est un sous-espace de type $LB$, donc son dual $\intn{\Pilan_{M,\CL}^{\lambda}}'$ est un espace de Fréchet. Alors, d'après \cite{codo}, $\Pi_{M,\CL}^{\lambda}$ est le complété unitaire universel de $\Pilan_{M,\CL}^{\lambda}$ et $(\Pi_{M,\CL}^{\lambda})'$ est le sous-espace des vecteurs $G$-bornés de $(\Pilan_{M,\CL}^{\lambda})'$.

\Subsection{Cohomologie de Hyodo-Kato et de de Rham de la tour}

On calcule la cohomologie de Hyodo-Kato et de de Rham de la tour de Drinfeld à coefficients isotriviaux. Ces cohomologies s'expriment simplement en termes des cohomologies à coefficients constants et on obtient la proposition suivante :
\medskip
\begin{prop}\label{prop:drhkdr}
Soit $\lambda\in P_+$, $M$ un $L$-$(\varphi,N,\GQp)$-module cuspidal. On a un carré commutatif $\sW_{\Qp}\times G$-équivariant de $L$-espaces de Fréchet
\begin{center}
\begin{tikzcd}
\Hom_{L[\czG]}\intnn{\JL_M^{\lambda}}{\rmH^1_{\HK}\Harg{\brv{\rmM}_{C}^{\infty}}{\Sym\BD}}\ar[r,"\sim"]\ar[d,"\iota_{\HK}"]& (\Qpbr\otimes_{\Qp^{nr}}M)\wotimes_L{\LL_M^{\lambda}}'\ar[d]\\
\Hom_{L[\czG]}\intnn{\JL_M^{\lambda}}{\rmH^1_{\dR}\Harg{\brv{\rmM}_{C}^{\infty}}{\Sym \CE}}\ar[r,"\sim"]& (C\otimes_{\Qp}M_{\dR})\wotimes_L{\LL_M^{\lambda}}'
\end{tikzcd}
\end{center}
\end{prop}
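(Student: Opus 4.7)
The plan is to reduce the statement to the trivial-coefficient case already treated in \cite{codoni} (for Hyodo--Kato) and \cite{dolb} (for de Rham, in weight~$2$), using the Schur decomposition of $\Sym\BV$ together with the isotrivial Hyodo--Kato and de Rham formalism developed in the first part.

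First, Proposition \ref{prop:calcdec} and Corollary \ref{coro:decomppoid} give a $\sW_{\Qp}\times G\times\czG$-equivariant decomposition
$$
\rmH^1_{\star}\Harg{\brv{\rmM}_C^{\infty}}{\Sym\BV}\cong \bigoplus_{\lambda'\in P_+}\brv{\rmH}^{\lambda'}_{\star}\otimes_L \cz{W}_{\lambda'},\qquad \star\in\{\HK,\dR\},
$$
in which $\czG$ acts smoothly on each $\brv{\rmH}^{\lambda'}_{\star}$ (because $\BV_{\lambda'}$ is $G$-equivariant with trivial $\czG$-action) and algebraically on each $\cz{W}_{\lambda'}$. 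Writing $\JL_M^{\lambda}\cong\JL_M\otimes_L\cz{W}_{\lambda}$ as a tensor of a smooth and an algebraic irreducible $\czG$-representation, the Hom-space splits as the product of a smooth Hom and an algebraic Hom, and Schur irreducibility of the $\cz{W}_{\lambda'}$ forces $\lambda'=\lambda$. This reduces the proposition to producing, for each fixed $\lambda\in P_+$, a commutative square of $\sW_{\Qp}\times G$-equivariant Fréchet isomorphisms
$$
\Hom_{L[\czG]}\intnn{\JL_M}{\brv{\rmH}^{\lambda}_{\HK}}\cong \intn{\Qpbr\otimes_{\Qp^{\nr}}M}\wotimes_L{\LL_M^{\lambda}}',
$$
and analogously for de Rham, intertwined by $\iota_{\HK}$.

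Next, isotriviality of $\BV_{\lambda}$, together with the isotrivial Hyodo--Kato isomorphism of \S\ref{subsubsec:hkisot} applied to the tower, yields $\sW_{\Qp}\times G\times\czG$-equivariant identifications
$$
\brv{\rmH}^{\lambda}_{\HK}\cong \rmH^1_{\HK}\intn{\brv{\rmM}_C^{\infty}}\otimes_L\BD_{\lambda},\qquad \brv{\rmH}^{\lambda}_{\dR}\cong \rmH^1_{\dR}\intn{\brv{\rmM}_C^{\infty}}\otimes_L\BD_{\lambda},
$$
where $\BD_{\lambda}\cong W_{\lambda}$ as an $L[G]$-module and carries the trivial $\czG$-action. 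Factoring $\BD_{\lambda}$ out of the Hom-space reduces the computation to $\Hom_{L[\czG]}\intnn{\JL_M}{\rmH^1_{\HK}(\brv{\rmM}_C^{\infty})}$, respectively to the de Rham analogue, after which one reattaches $W_{\lambda}$ via the canonical identification $\LL_M'\otimes_L W_{\lambda}\cong{\LL_M^{\lambda}}'$, up to the unitary normalisation fixed in \S\ref{subsec:rapcusp}. The Hyodo--Kato computation at $\Qp$-coefficients is then exactly the result of \cite{codoni}, producing $\intn{\Qpbr\otimes_{\Qp^{\nr}}M}\wotimes_L\LL_M'$ and hence the top isomorphism. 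For the de Rham side, I would invoke the Breuil--Strauch conjecture, proved in weight $\rho=(0,2)$ by \cite{dolb}, together with the change-of-weight argument of \cite{colw}, which bootstraps the weight-$2$ statement through symmetric powers of the universal $p$-divisible group on $\brv{\rmM}_C^{\infty}$, yielding $\intn{C\otimes_{\Qp}M_{\dR}}\wotimes_L\LL_M'$ and hence the bottom isomorphism. Commutativity of the square is automatic: both descriptions come from the same isotrivial Hyodo--Kato factorisation, so $\iota_{\HK}$ on the left reduces to the classical Hyodo--Kato morphism on trivial coefficients tensored with $\id_{W_{\lambda}}$ on the right, whose compatibility with the Langlands--Jacquet Hom is part of \cite{codoni}.

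The main obstacle is the higher-weight Breuil--Strauch input. Pushing the weight-$2$ identification through the symmetric-power construction requires tracking precisely the $\czG$-action and the unitary twists fixed in \S\ref{subsec:rapcusp}, and checking strictness of the resulting isomorphism in the Fréchet topology so that it fits into a commutative square with the Hyodo--Kato side. Once this identification is pinned down, the remainder is formal bookkeeping on the decompositions above.
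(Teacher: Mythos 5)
Your reduction via Corollary \ref{coro:decomppoid}, the use of Schur's lemma on the algebraic $\czG$-parts to pin $\lambda'=\lambda$, and the passage to the trivial-coefficient computation of \cite{codoni} for the Hyodo--Kato side are all essentially the paper's argument. Two points deserve attention.

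First, the Frobenius normalisation is not a mere bookkeeping detail you can wave at. When you write $\brv{\rmH}^{\lambda}_{\HK}\cong \rmH^1_{\HK}(\brv{\rmM}_C^{\infty})\otimes_L\BD_{\lambda}$ and ``factor $\BD_\lambda$ out,'' the Frobenius on $\BD_\lambda$ does not disappear. The paper keeps track of it by writing $\pbrv{\rmH}_{\HK}^{\lambda}=\pbrv{\rmH}_{\HK}^{\lambda_0}[-s]\otimes_L W_{\lambda}$ with $\lambda_0=(0,1)$ and $s=\tfrac{\lvert\lambda\rvert-1}{2}$, and then computes $\Hom_{\czG}(\JL_{M[s]},\pbrv{\rmH}_{\HK}^{\lambda_0})$. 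The twist $M[s]$ is essential: it is what makes the slope equal to $1/2$, the hypothesis under which \cite[Théorème~0.4]{codoni} applies. Without making the twist explicit, the appeal to that theorem is to a module of the wrong slope. One then recovers ${\LL_M^{\lambda}}'$ from the identity $\LL_{M[s]}\otimes_L W_\lambda^*=\LL_M^\lambda$.

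Second, your de Rham route is genuinely different from the paper's, and a detour. The paper says the de Rham argument is ``exactement le même'' as the Hyodo--Kato one: \cite[Théorème~0.4]{codoni} already gives the multiplicities both in HK and in de Rham at trivial coefficients, and the isotrivial Hyodo--Kato isomorphism transports one to the other; the identification $C\otimes_{\Qp}M_{\dR}\cong C\otimes_{\Qp^{\nr}}M$ then produces the stated form of the bottom row. No Breuil--Strauch input is needed here. What you propose — using \cite{dolb} in weight $\rho$, the change-of-weight machinery of \cite{colw}, and then passing to the cokernel of the de Rham complex — is valid in principle and is in fact how the paper, \emph{later}, proves the finer Corollary~\ref{cor:witch} concerning $\CO_\lambda^\infty$ and $\omega_\lambda^\infty$ (the de Rham \emph{complex}, not just its $\rmH^1$). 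But for the present proposition it buys nothing and costs you strictness/exactness checks you would otherwise get for free. You should also state up front the twist reduction of Remarque \ref{rema:tchaocar} replacing $\brv{\rmM}_C^{\infty}$ by $\presp{\rmM}_C^{\infty}$ so that \cite{codoni} applies in its normalised form; this is where the $p$-compatibility of $M$ matters.
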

\medskip
Avant de commencer la preuve de cette proposition, commençons par une remarque pour justifier que l'on peut remplacer $\brv{\rmM}_C^{\infty}$ par $\presp{\rmM}_C^{\infty}$ dans la preuve. Cette remarque est déjà présente dans \cite[5.1]{codoni}.
\begin{rema}\label{rema:tchaocar}
Soit $\star \in \{\dR,\HK\}$. Notons que pour $M\in \Phi N^{p}_{\lvert \lambda \rvert}$,  en vertu du lemme \ref{lem:fkcompcon} et avec les mêmes notations, on a un isomorphisme de $L$-représentations de $\sW_{\Qp}\times G$ :
$$
\Hom_{L[\czG]}\Harg{\JL_M^{\lambda}}{\brv{\rmH}_{\star}}\cong\Hom_{L[\czG]}\Harg{\JL_M^{\lambda}}{\pbrv{\rmH}_{\star}}.
$$
Pour $M$ un $L$-$(\varphi,N,\GQp)$-module cuspidal, il existe un caractère $\chi \colon \Qpt \rightarrow L$ non-ramifié, vu comme caractère du groupe de Weil, tel que $M\otimes \chi\in \Phi N_{\lvert \lambda \rvert}^p$. La compatibilité à la torsions dans la correspondance de Langlands locale et le lemme \ref{lem:fkcompcon} assurent alors que
$$
\Hom_{L[\czG]}\Harg{\JL_M^{\lambda}}{\brv{\rmH}_{\star}}\cong\Hom_{L[\czG]}\Harg{\JL_M^{\lambda}\otimes (\chi\circ \nrd)}{\pbrv{\rmH}_{\star}}\otimes (\chi\circ \nu_3).
$$
où $\nu_3$ est la restriction de $\nu$ à $\sW_{\Qp}\times G$.
\end{rema}
\medskip
\begin{proof}
Notons que ce résultat est l'équivalent à coefficient de \cite[Théorème 0.4]{codoni}, que l'on va utiliser. On montre l'isomorphisme pour la cohomologie de Hyodo-Kato, l'argument pour la cohomologie de de Rham est exactement le même. Comme au numéro \ref{subsec:decompcoh}, notons
$$
\pbrv{\rmH}_{\HK}^{1}\coloneqq \rmH^1_{\HK}\Harg{\brv{\rmM}_{C}^{\infty}}{\Sym\BD}.
$$
Soit $\lambda\in P_+$ et $M\in \Phi N^{p}_{\lvert \lambda \rvert}$, on veut calculer 
$$
\rmH_M^{\lambda}\coloneqq\Hom_{L[\czG]}\intnn{\JL_M^{\lambda}}{\pbrv{\rmH}_{\HK}^{1}}
$$
D'après le corollaire \ref{coro:decomppoid},
$$
\pbrv{\rmH}_{\HK}^{1}=\bigoplus_{\lambda'\in P_+}\pbrv{\rmH}_{\HK}^{\lambda'}\otimes_L \cz{W}_{\lambda'},\quad \pbrv{\rmH}_{\HK}^{\lambda'}\coloneqq \rmH^1_{\HK}\Harg{\presp{\rmM}_{C}^{\infty}}{\BD_{\lambda}}.
$$
Notons $s\coloneqq \frac{\lvert \lambda \rvert-1}{2}$. Pour $\lambda'\in P_+$, on a
$$
\Hom_{L[\czG]}\intnn{\JL_M^{\lambda}}{\pbrv{\rmH}_{\HK}^{\lambda'}\otimes \cz{W}_{\lambda'}}=\big (\underbrace{\pbrv{\rmH}_{\HK}^{\lambda'}\otimes_L\JL_M'\otimes \lvert \nrd\rvert_p^{s}}_{\text{lisse}}\otimes_L \underbrace{\cz{W}_{\lambda}^*\otimes_L\cz{W}_{\lambda'}}_{\text{loc. algébrique}}\big )^{\czG}
$$
Notons que l'action de $\czG$ sur $\pbrv{\rmH}_{\HK}^{\lambda'}$ est bien lisse puisque c'est une colimite de $L$-représentations de $\czG$ sur lesquels l'action de $\czG$ se factorise par un quotient fini. Or, le terme de terme de droite est en particulier contenu dans les éléments tués par $\cz{\fkg}$ l'algèbre de Lie de $\czG$. Comme la partie lisse est annulée par $\cz{\fkg}$, on calcule à l'aide du lemme de Schur :
$$
(\cz{W}_{\lambda}^*\otimes_L\cz{W}_{\lambda'})^{\cz{\fkg}}=\Hom_{\cz{\fkg}}\intnn{\cz{W}_{\lambda}}{\cz{W}_{\lambda'}}=
\begin{cases}
L & \text{ si } \lambda = \lambda'\\
0  & \text{ sinon}.
\end{cases}
$$
Comme $\JL_M'\otimes \lvert \nrd\rvert_p^{s}=\JL_{M[s]}'$, on en déduit que $\rmH_M^{\lambda}=\Hom_{\czG}\intnn{\JL_{M[s]}}{\pbrv{\rmH}_{\HK}^{\lambda}}$. Si $\lambda_0\coloneqq (0,1)$, remarquons que 
$$
\pbrv{\rmH}_{\HK}^{\lambda}=\pbrv{\rmH}_{\HK}^{\lambda_0}[-s]\otimes_L W_{\lambda}
$$
par la proposition \ref{prop:calcdec}. Mais par \cite[Théorème 0.4]{codoni}, comme $M[s]$ est de pente $1-\tfrac{\lvert \lambda \rvert}{2}+s=\tfrac 1 2$ on obtient 
$$
\Hom_{\czG}\intnn{\JL_{M[s]}}{\pbrv{\rmH}_{\HK}^{\lambda_0}}=(\Qpbr\otimes_{\Qp^{\nr}}M[s])\wotimes_L\LL_{M[s]}.
$$
Finalement,
$$
\pbrv{\rmH}_{\HK}^{\lambda}=(\Qpbr\otimes_{\Qp^{\nr}}M)\wotimes_L\LL_{M[s]}'\otimes W_{\lambda}=(\Qpbr\otimes_{\Qp^{\nr}}M)\wotimes_L{\LL_{M}^{\lambda}}',
$$
puisque $\LL_{M[s]}\otimes_L W_{\lambda}^*=\LL_{M}\otimes_L \lvert \det \rvert_p^{-s}\otimes_L W_{\lambda}^*=\LL_M^{\lambda}$.
\end{proof}
Une autre formulation du résultat précédent dans le cas de Rham est la décomposition de la cohomologie de de Rham à support compact de la tour ; on omet la preuve qui est {\it mutatis mutandis} la même que celle de \cite[Théorème 5.8]{codoni}. 
\medskip
\begin{coro}
On a un isomorphisme de $L$-représentation localement algébrique de $\BG$
$$
\rmH^1_{\dR,c}\Harg{\presp{\rmM_{C}^{\infty}}}{\Sym \CE}=\bigoplus_{\lambda\in P_+}\bigoplus_{M\in\Phi N_{\lvert \lambda \rvert}^{p}}C\otimes_{\Qp}M\otimes_L{\LL_M^{\lambda}}^{\vee}\otimes_L\JL_M^{\lambda},
$$
où ${\LL_M^{\lambda}}^{\vee}$ est le $L$-dual localement algébrique de ${\LL_M^{\lambda}}$.
\end{coro}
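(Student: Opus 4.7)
Le plan suit mutatis mutandis la preuve de \cite[Théorème 5.8]{codoni}, en substituant aux coefficients constants le système local isotrivial $\Sym \BV$ et son fibré plat filtré associé $\Sym \CE$. Premièrement, la décomposition de Schur (proposition \ref{prop:calcdec}) donne
$$
\rmH^1_{\dR,c}\Harg{\presp{\rmM}_{C}^{\infty}}{\Sym \CE} \cong \bigoplus_{\lambda \in P_+} \rmH^1_{\dR,c}\Harg{\presp{\rmM}_{C}^{\infty}}{\CE_{\lambda}} \otimes_L \cz{W}_{\lambda}
$$
en tant que $\BG$-représentations, ce qui ramène le problème au calcul $\sW_{\Qp}\times G\times \czG$-équivariant de chaque morceau $\rmH^1_{\dR,c}\Harg{\presp{\rmM}_{C}^{\infty}}{\CE_{\lambda}}$, pour $\lambda \in P_+$ fixé.

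Deuxièmement, on exploite la dualité de Serre sur chaque courbe Stein $\presp{\rmM}_{C}^n$ qui identifie $\rmH^1_{\dR,c}\Harg{\presp{\rmM}_{C}^n}{\CE_{\lambda}}$ au dual topologique de $\rmH^0_{\dR}\Harg{\presp{\rmM}_C^n}{\CE_{\lambda}^{\vee} \otimes \Omega^1}$. Les calculs explicites du numéro \ref{subsub:operdedrin} et la dualité de Morita tordue (proposition \ref{prop:morita}) permettent de décrire $\CE_{\lambda}^{\vee}\otimes \Omega^1$ comme un oper isotrivial directement relié à $\CE_{\lambda}$. En passant à la colimite sur $n$, on obtient un plongement naturel $\rmH^1_{\dR,c}\Harg{\presp{\rmM}_{C}^{\infty}}{\CE_{\lambda}} \hookrightarrow \rmH^1_{\dR}\Harg{\presp{\rmM}_{C}^{\infty}}{\CE_{\lambda}}$ dont l'image sera caractérisée comme le sous-espace des vecteurs $G$-localement algébriques à caractère central fixé.

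Troisièmement, on conclut par la proposition \ref{prop:drhkdr}, qui fournit pour $M$ cuspidal dans $\Phi N^p_{\lvert \lambda \rvert}$ l'isomorphisme
$$
\Hom_{L[\czG]}\intnn{\JL_{M}^{\lambda}}{\rmH^1_{\dR}\Harg{\brv{\rmM}_{C}^{\infty}}{\Sym \CE}} \cong (C\otimes_{\Qp}M_{\dR}) \wotimes_L (\LL_M^{\lambda})',
$$
combiné à la compatibilité entre correspondance de Langlands lisse et localement algébrique rappelée en \ref{subsubsec:repcuspg} : passer aux vecteurs $G$-localement algébriques dans $(\LL_M^{\lambda})'$ donne le dual localement algébrique $(\LL_M^{\lambda})^{\vee}$. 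La remarque \ref{rema:tchaocar}, qui permet de passer de $\brv{\rmM}_{C}^{\infty}$ à $\presp{\rmM}_{C}^{\infty}$ modulo torsion par un caractère non-ramifié adéquat, puis la sommation sur $M \in \Phi N^p_{\lvert \lambda \rvert}$, recollent ces calculs en la décomposition voulue.

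L'étape délicate est la deuxième, l'identification de $\rmH^1_{\dR,c}\Harg{\presp{\rmM}_{C}^{\infty}}{\CE_{\lambda}}$ au sous-espace $G$-localement algébrique de $\rmH^1_{\dR}\Harg{\presp{\rmM}_{C}^{\infty}}{\CE_{\lambda}}$ : il faudra contrôler avec soin les topologies en présence (Fréchet d'un côté, $LB$ de l'autre) et vérifier la compatibilité de la dualité de Serre avec les structures de fibrés plats filtrés équivariants à travers toute la tour. Ceci étant, comme l'ensemble de l'argument est la transposition directe du cas $\lambda = (0,1)$ (coefficients constants à torsion près) traité dans \cite{codoni}, cette adaptation se fait sans obstacle conceptuel majeur au delà de l'organisation des torsions par les représentations algébriques $\cz{W}_{\lambda}$ et $W_{\lambda}$.
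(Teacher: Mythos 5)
Le corollaire est énoncé dans l'article sans preuve, avec renvoi à \cite[Théorème 5.8]{codoni} pour un argument {\it mutatis mutandis} ; ton plan général (décomposition de Schur, dualité sur une courbe Stein, puis invocation de la proposition \ref{prop:drhkdr}) correspond bien à cette stratégie. Cependant, ton étape 2 contient une erreur concrète : tu affirmes que la dualité de Serre identifie $\rmH^1_{\dR,c}\Harg{\presp{\rmM}_{C}^n}{\CE_{\lambda}}$ au dual topologique de $\rmH^0_{\dR}\Harg{\presp{\rmM}_C^n}{\CE_{\lambda}^{\vee} \otimes \Omega^1}$. Or $\rmH^0_{\dR}$ d'un fibré plat sur une courbe Stein connexe est l'espace des sections horizontales, qui est de dimension finie (bornée par le rang du fibré), donc son dual est aussi de dimension finie, alors que $\rmH^1_{\dR,c}$ est un espace de type $LB$ de dimension infinie. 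La dualité de Poincaré pour une courbe Stein $X$ apparie $\rmH^1_{\dR,c}(X,\CE)$ avec $\rmH^1_{\dR}(X,\CE^{\vee})$ (et non $\rmH^0_{\dR}$) ; c'est cette dualité qu'il faut invoquer, en prenant garde que pour la tour (limite inductive d'espaces Stein) la commutation de la dualité au passage à la colimite sur $n$ demande un soin topologique, et que $\CE_{\lambda}^{\vee}$ n'est pas de la forme $\CE_{\mu}$ avec $\mu\in P_+$ sans une torsion supplémentaire exploitant l'autodualité du système local de Drinfeld.

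Une seconde lacune : la proposition \ref{prop:drhkdr} que tu invoques à l'étape 3 n'est énoncée que pour $M$ cuspidal, alors que la somme du corollaire porte sur tous les $M\in\Phi N^p_{\lvert\lambda\rvert}$, donc aussi sur les $M$ spéciaux. Pour ce cas, il faut invoquer les calculs propres aux Steinberg (remarque \ref{rema:morita} et la suite exacte (\ref{eq:extdR})), qui donnent $\rmH^1_{\dR}\Harg{\BH_{\Qp}}{\CE_\lambda}\cong(\St^{\lalg}_\lambda)'$ ; tu ne le mentionnes pas. Ta conclusion sur le passage de $(\LL_M^\lambda)'$ à $(\LL_M^\lambda)^\vee$ par restriction aux vecteurs $G$-localement algébriques est, elle, correcte comme principe, mais elle n'est pas établie par l'énoncé erroné de dualité que tu utilises pour la justifier.
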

\medskip

\Subsection{La conjecture de Breuil-Strauch}
Le but de ce numéro est d'expliquer comment étendre la conjecture de Breuil-Strauch en poids supérieur en suivant \cite{colw} à partir de \cite{dolb}. On commence par quelques rappels sur le changement de poids.
\subsubsection{Changement de poids}
Soit $M$ un $L$-$(\varphi,N,\GQp )$-module de rang $2$. Dans \cite{colw}, Colmez associe à $M$ une représentation localement analytique de $G$ notée $\Pi(M)$ et décrit comment on peut en \og changer les poids \fg, \ie  définir des représentations que l'on notera $\wt{\Pi}_M^{\lambda}$ et ${\Pi}_M^{\lambda}$. On rappelle brièvement la construction de $\Pi(M,k)$ à partir de $\Pi(M)$. Dans \cite{colw}, Colmez définit un isomorphisme d'espaces vectoriels topologiques
$$
\partial \colon \Pi(M)\rightarrow \Pi(M),
$$
tel que pour tout couple $(a,c)\in \Qp^2\setminus\{(0,0)\}$, l'application $(a-c\partial)\colon \Pi(M)\rightarrow \Pi(M)$ est un isomorphisme. Ceci permet, pour tout $k\in\BZ$, de définir une nouvelle action linéaire de $G$ sur $\Pi(M)$ en posant 
$$
\begin{pmatrix}
 a&b\\
 c&d
\end{pmatrix}
\cdot_k v = (a-c\partial)^{k}
\left ( 
\begin{pmatrix}
 a&b\\
 c&d
\end{pmatrix}
\cdot v
\right ).
$$
On note $\Pi(M,k)$ la $L$-représentation de $G$ ainsi obtenue. Finalement, pour $\lambda\in P_+$, on pose 
$$
\wt{\Pi}_M^{\lambda}\coloneqq \Pi(M,-w(\lambda))\otimes_L {\det}^{\lambda_2},\quad \Pi_M^{\lambda}\coloneqq \Pi(M,w(\lambda))\otimes_L {\det}^{\lambda_1}.
$$
On rappelle la proposition suivante\footnote{Les conventions sont les duales des conventions des références. On suit plutôt la convention de \cite{codoni}.} (\cf  \cite[Théorème 0.6, Théorème 0.3, Corollaire 0.4]{colw}) : 
\medskip
\begin{prop}\label{prop:weighchange}
Soit $\lambda \in P_+$ et soit $M$ un $L$-$(\varphi,N,\GQp )$-module cuspidal de pente $1-\lvert \lambda \rvert /2$. Alors, $\prescript{\lalg}{}{\Pi}_{M}^{\lambda}\cong M_{\dR}^*\otimes_L \LL_M^{\lambda}$ et on a un diagramme commutatif, à lignes exactes, de $L$-représentations localement analytiques
\begin{center}
\begin{tikzcd}
0\ar[r]&\CL^{\bot}\otimes_L \LL_M^{\lambda}\ar[r]\ar[d,hook]&\Pi_M^{\lambda}\ar[r]\ar[d,equal]&\Pilan_{M,\CL}^{\lambda} \ar[r]\ar[d,two heads]& 0\\
0\ar[r]&\prescript{\lalg}{}{\Pi}_{M}^{\lambda}\ar[r]&\Pi_M^{\lambda}\ar[r]&\wt{\Pi}_M^{\lambda}\ar[r]&0.
\end{tikzcd}
\end{center}
De plus, On a $\prescript{\lalg}{}{\Pi}_{M,\CL}^{\lambda}\cong (M_{\dR}^*/\CL^{\bot})\otimes_L \LL_M^{\lambda}$ et $\Pilan_{M,\CL}^{\lambda}$ est de longueur $2$.
\end{prop}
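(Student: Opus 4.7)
The plan is to derive this proposition almost entirely by translating and specialising results of Colmez from \cite{colw}, where the weight-change machinery $\partial$ is constructed. I would proceed in four steps.

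First, I would set up the dictionary between the conventions of \cite{colw} and ours. The key point is that Colmez defines the operator $\partial\colon \Pi(M)\to \Pi(M)$ and the twisted $G$-action $\cdot_k$, and shows that after twisting by $\det^{m}$ one recovers representations whose locally algebraic vectors have the expected weight. Since $\wt{\Pi}_M^{\lambda}=\Pi(M,-w(\lambda))\otimes_L{\det}^{\lambda_2}$ and $\Pi_M^{\lambda}=\Pi(M,w(\lambda))\otimes_L{\det}^{\lambda_1}$, I would verify that our hypothesis (namely that $M$ has slope $1-\lvert\lambda\rvert/2$) is exactly the condition which places $\Pi_M^{\lambda}$ in the de Rham range with Hodge--Tate weights $(\lambda_1,\lambda_2)$, so that \cite[Théorème 0.3]{colw} applies and gives the identification $\prescript{\lalg}{}{\Pi}_{M}^{\lambda}\cong M_{\dR}^*\otimes_L \LL_M^{\lambda}$ together with the short exact sequence of locally analytic representations
$$
0\rightarrow \prescript{\lalg}{}{\Pi}_{M}^{\lambda}\rightarrow \Pi_M^{\lambda}\rightarrow \wt{\Pi}_M^{\lambda}\rightarrow 0.
$$

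Second, I would exploit the fact that in the cuspidal case the $p$-adic Langlands correspondence is compatible with the weight change, so that $\Pilan_{M,\CL}^{\lambda}$ is obtained from $\Pi_M^{\lambda}$ by modding out the subspace $\CL^{\bot}\otimes_L\LL_M^{\lambda}\subset M_{\dR}^*\otimes_L\LL_M^{\lambda}$ corresponding to the filtration $\CL\subset M_{\dR}$ via the Fontaine recipe $V_{M,\CL}^{\lambda}$. This is the content of \cite[Corollaire 0.4]{colw}. From this identification the bottom row of the diagram follows by construction, and the top row is obtained by restricting the quotient map to $\CL^{\bot}\otimes_L\LL_M^{\lambda}$ and to the resulting quotient $\Pilan_{M,\CL}^{\lambda}$, which then sits in the diagram with exact rows as stated.

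Third, I would read off the two remaining statements. The isomorphism $\prescript{\lalg}{}{\Pi}_{M,\CL}^{\lambda}\cong (M_{\dR}^*/\CL^{\bot})\otimes_L\LL_M^{\lambda}$ follows from the snake lemma applied to the diagram, since the kernel of the right-hand surjection is exactly $\CL^{\bot}\otimes_L\LL_M^{\lambda}$ and the middle term has $M_{\dR}^*\otimes_L\LL_M^{\lambda}$ as its space of locally algebraic vectors. The length $2$ statement for $\Pilan_{M,\CL}^{\lambda}$ then follows: its socle is $\prescript{\lalg}{}{\Pi}_{M,\CL}^{\lambda}$, which is irreducible since $(M_{\dR}^*/\CL^{\bot})$ is one-dimensional and $\LL_M^{\lambda}$ is irreducible (cuspidality of $M$), and the quotient is the image of $\wt{\Pi}_M^{\lambda}$, which in the cuspidal range is known to be topologically irreducible by \cite[Théorème 0.6]{colw}.

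The main obstacle I expect is bookkeeping with the twists and normalisations: the $\lvert\det\rvert_p^{\frac{\lvert\lambda\rvert-1}{2}}$ factor in $\LL_M^{\lambda}$ and the slope convention (pente $1-\lvert\lambda\rvert/2$ rather than $-(k-1)/2$ of \cite{colw}) must be matched carefully, and one must check that the shift $M\rightsquigarrow M[s]$ used in the proof of Proposition \ref{prop:drhkdr} is consistent with the weight change $\Pi(M)\rightsquigarrow \Pi(M,k)$. Once this dictionary is in place, there is no further content to prove beyond citing \cite{colw}.
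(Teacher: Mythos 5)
Your proposal takes essentially the same route as the paper: the statement is introduced there with the words \og On rappelle la proposition suivante (cf. \cite[Théorème~0.6, Théorème~0.3, Corollaire~0.4]{colw})\fg\ and is given no further proof, so the whole content is indeed the translation from Colmez's conventions (the paper's footnote flags that they use the dual of \cite{colw}'s conventions, following \cite{codoni}) plus the unwinding of Théorème~0.3 for the bottom row and the identification $\prescript{\lalg}{}{\Pi}_M^{\lambda}\cong M_{\dR}^*\otimes_L\LL_M^{\lambda}$, Corollaire~0.4 for the top row and the induced surjection $\Pilan_{M,\CL}^{\lambda}\twoheadrightarrow\wt{\Pi}_M^{\lambda}$, and Théorème~0.6 for the topological irreducibility of $\wt{\Pi}_M^{\lambda}$ in the cuspidal range, which with the snake lemma gives the length-$2$ assertion. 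Your remark on reconciling $M\rightsquigarrow M[s]$ with $\Pi(M)\rightsquigarrow\Pi(M,k)$ is not needed for this proposition per se (it matters for the applications downstream such as Corollaire~\ref{cor:witch}), but it is not wrong to flag it.
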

\medskip
Rappelons (\cf \ref{subsub:operdedrin}) que l'on a défini $\rmR\Gamma_{\!\Op}\Harg{\presp{\rmM}_{\Qp}^{\infty}}{\CE_{\lambda}}$ qui est quasi-isomorphe à $\rmR\Gamma_{\!\dR}\Harg{\presp{\rmM}_{\Qp}^{\infty}}{\CE_{\lambda}}$. Ainsi, on obtient une suite exacte courte
\begin{equation}\label{eq:extdR}
0\rightarrow \CO_{\lambda}^{\infty}/W_{\lambda}\rightarrow \omega^{\infty}_{\lambda}\rightarrow\rmH^1_{\dR}\Harg{\presp{\rmM}_{\Qp}^{\infty}}{\CE_{\lambda}}\rightarrow 0.
\end{equation}
On a le corollaire suivant (\cf \cite[Conjecture 0.10]{colw} qui est en réalité un théorème par \cite[Remarque 0.11]{colw} ; c'est une conséquence directe du \cite[Théorème 1.4]{dolb}, de la définition des $\Pi(M,k)$ et de \cite[Théorème 1.7]{dolb} pour \og $\partial = z $\fg) :
\medskip
\begin{coro}\label{cor:witch}
Soit $\lambda \in P_+$ et soit $M$ un $L$-$(\varphi,N,\GQp )$-module cuspidal de pente $1-\lvert \lambda \rvert /2$. On a des isomorphismes topologiques de $L$-représentations coadmissibles de $G$
$$
\begin{gathered}
\Hom_{L[\czG]}\intnn{\JL_M}{\CO^{\infty}_{\lambda}}\cong \intn{\wt{\Pi}_M^{\lambda}}', \quad \Hom_{L[\czG]}\intnn{\JL_M}{\omega^{\infty}_{\lambda}}\cong \intn{\Pi_M^{\lambda}}'\\
 \Hom_{L[\czG]}\intnn{\JL_M}{\rmH^1_{\dR}\Harg{\presp{\rmM}_{\Qp}^{\infty}}{\CE_{\lambda}}}\cong M_{\dR}\otimes_L {\LL_M^{\lambda}}'.
\end{gathered}
$$
En appliquant $\Hom_{L[\czG]}\intnn{\JL_M}{\ \cdot \ }$ à (\ref{eq:extdR}), on obtient le dual de la première suite exacte du bas dans le diagramme de la proposition \ref{prop:weighchange}, soit
$$
0\rightarrow \intn{\wt{\Pi}_M^{\lambda}}'\rightarrow \intn{{\Pi}_M^{\lambda}}'\rightarrow M_{\dR}\otimes_L {\LL_M^{\lambda}}'\rightarrow 0.
$$
\end{coro}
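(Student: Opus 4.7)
The plan is to bootstrap from the weight $(0,1)$ case (the classical Breuil--Strauch conjecture) to general $\lambda \in P_+$ via Colmez's weight-change machinery. The essential geometric input is the description of $\CE_\lambda$ given in Section \ref{subsub:operdedrin}: the oper complex for $\CE_\lambda$ takes the explicit form
$$
\CO_\lambda^{\infty} \xrightarrow{\partial^{w(\lambda)}} \omega_\lambda^{\infty},
$$
where $\partial$ is the derivation $h \mapsto h'$ and the spaces $\CO_\lambda^{\infty}$, $\omega_\lambda^{\infty}$ are the functions and one-forms on the tower twisted by an automorphy factor (involving $w(\lambda)$, $\lambda_1$, and $\lvert \nu \rvert_p$).

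First, I would treat the base case $\lambda_0 = (0,1)$, where $\CE_{\lambda_0}$ is the trivial line bundle, $\CO_{\lambda_0}^{\infty} = \CO^{\infty}$, and $\omega_{\lambda_0}^{\infty} = \omega^{\infty} = (\St^{\lan})'$ (modulo the twist by $\lvert \nu_1 \rvert_p$). This is exactly the range of the Breuil--Strauch conjecture, proved by Dospinescu--Le Bras [dolb, Theorem 1.4], which yields
$$
\Hom_{L[\czG]}(\JL_M, \CO^{\infty}) \cong \Pi(M)', \qquad \Hom_{L[\czG]}(\JL_M, \omega^{\infty}) \cong \Pi(M)'_{[1]},
$$
where the two sides are related by a shift corresponding to $k=0$ versus $k=1$ in Colmez's family $\Pi(M,k)$.

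Second, for general $\lambda$, I would apply the functor $\Hom_{L[\czG]}(\JL_M, -)$ to the oper complex. By the analogue of Proposition \ref{prop:drhkdr} applied at the level of the graded pieces, and using that $\JL_M$ pairs non-trivially only with the cuspidal piece of pente $1-\lvert\lambda\rvert/2$, we get factors of $\Pi(M,k)$ at each step where $k$ is tracked by the automorphy twist defining $\CO_\lambda^{\infty}$ and $\omega_\lambda^{\infty}$. Concretely, the weight $\{1-w(\lambda),1-\lambda_2\}$ twist in $\CO_\lambda^{\infty}$ corresponds to $k = -w(\lambda)$ followed by a $\det^{\lambda_2}$ twist, giving $\wt{\Pi}_M^{\lambda} = \Pi(M,-w(\lambda)) \otimes \det^{\lambda_2}$; symmetrically for $\omega_\lambda^{\infty}$ one gets $\Pi_M^{\lambda}$.

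The crucial identification, and what I expect to be the main obstacle, is the compatibility between the geometric connecting map $\partial^{w(\lambda)}$ in the oper complex and Colmez's weight-change operator $\partial^{w(\lambda)}$ used to pass from $\Pi(M,-w(\lambda))$ to $\Pi(M,w(\lambda))$. This is precisely the content of [dolb, Theorem 1.7], which identifies the action of the algebraic $\partial$ on the space of locally analytic vectors of $\Pi(M)$ with the geometric multiplication by $z$ (equivalently, the derivation on functions on $\BH_{\Qp}$) under the Morita-type duality. Granting this identification, the first two isomorphisms of the corollary follow directly. The third isomorphism, for $\rmH^1_{\dR}$, then falls out by taking cokernels: applying the exact sequence (\ref{eq:extdR}) and comparing with the bottom row of the diagram in Proposition \ref{prop:weighchange} shows that
$$
\Hom_{L[\czG]}(\JL_M, \rmH^1_{\dR}(\presp{\rmM}_{\Qp}^{\infty}, \CE_\lambda)) \cong M_{\dR} \otimes_L (\LL_M^{\lambda})',
$$
since $\prescript{\lalg}{}{\Pi}_M^{\lambda} \cong M_{\dR}^* \otimes_L \LL_M^{\lambda}$ sits as the socle of $\Pi_M^{\lambda}$ with cokernel $\wt{\Pi}_M^{\lambda}$.
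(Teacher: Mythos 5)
Your proposal follows essentially the same route as the paper: the corollary is stated there as a direct consequence of Colmez's weight-change result (the higher-weight Breuil–Strauch conjecture, [colw, Remarque 0.11]), which Colmez in turn reduces to the base case [dolb, Théorème 1.4], the definition of $\Pi(M,k)$, and the identification of the geometric $\partial$ with Colmez's weight-change operator via [dolb, Théorème 1.7] ("$\partial = z$") — precisely the three ingredients you identify and assemble.
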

\medskip

\medskip
\subsubsection{L'invariant $\CL$ et les différentielles}
Notons qu'on a une décomposition
$$
\omega_{\lambda}^{\infty}=\bigoplus_{M\in \Phi N_{\lvert \lambda \rvert}^p}\omega_{\lambda}^{\infty}[M]\otimes_L\JL_M,\quad\omega_{\lambda}^{\infty}[M]\coloneqq \Hom_{L[\czG]}\intnn{\JL_M}{\omega_{\lambda}^{\infty}}.
$$

\begin{theo}\label{thm:cuspautmult}
Soit $\lambda\in P_+$ et $M\in \Phi N_{\lvert \lambda \rvert}^p$ cuspidal. Soit $\Pi$ une $L$-représentation de Banach unitaire admissible et absolument irréductible de $G$. Alors 
$$
\Hom_G\intnn{\Pi'}{\omega_{\lambda}^{\infty}[M]}=
\begin{cases}
 \CL& \text{si $\Pi=\Pi_{M,\CL}^{\lambda}$,}\\
 0& \text{si $\Pi$ n'est pas de cette forme.}\\
\end{cases}
$$
\end{theo}
\begin{proof}
On peut supposer que $\Pi=\bPi(V)$ où $V$ est une $L$-représentation galoisienne absolument irréductible de dimension $2$. 
D'après le corollaire \ref{cor:witch}, on a un isomorphisme de $L$-représentations de $G$ 
$$
\Hom_{L[\czG]}\intnn{\JL_M}{\rmH^1_{\dR}\Harg{\presp{\rmM}^{\infty}_{\Qp}}{\CE_{\lambda}}}\cong M_{\dR}\otimes_L {\LL_M^{\lambda}}'.
$$
et pour chaque $L$-droite $\CL\subset M_{\dR}$, une suite exacte 
\begin{equation}\label{eq:suitexx}
0\rightarrow  (\Pilan_{M,\CL}^{\lambda} )'\rightarrow \omega_{\lambda}^{\infty}[M]\rightarrow (M_{\dR}/\CL) \otimes_L {\LL_M^{\lambda}}'\rightarrow 0.
\end{equation}
Or, d'après \cite{codo} on a $(\Pilan')^{G-{\rm b}}\cong \Pi'$ et on en déduit que 
$$
\Hom_G\intnn{\Pi'}{(\Pilan_{M,\CL}^{\lambda})'}\cong \Hom_G\intnn{\Pi'}{(\Pi_{M,\CL}^{\lambda})'}\cong \Hom_G\intnn{\Pi_{M,\CL}^{\lambda}}{\Pi}.
$$
L'injectivité de la correspondance $V\mapsto \bPi(V)$ pour les représentations galoisiennes irréductibles de dimension $2$ et le lemme de Schur assurent que
\begin{equation}\label{eq:calcen1}
\Hom_G\intnn{\Pi'}{(\Pilan_{M,\CL}^{\lambda} )'}\cong
\begin{cases}
L & \text{ si } V\cong V_{M,\CL}^{\lambda},\\
0 & \text{ sinon. }
\end{cases}
\end{equation}
Comme $\LL_M^{\lambda}$ est irréductible, (\ref{eq:compatlocp}) implique que
\begin{equation}\label{eq:calcen2}
\Hom_G\intnn{\Pi'}{{\LL_M^{\lambda}}'}\cong\Hom_G\intnn{{\LL_M^{\lambda}}}{\Pi}\cong
\begin{cases}
L & \text{ si $V$ est de type }(M,\lambda), \\
0 & \text{ sinon. }
\end{cases}
\end{equation}
De ces deux calculs, on peut déduire le théorème.
\begin{itemize}
\itemb Si $V$ n'est pas de type $M$ ou de poids $\lambda$, alors on obtient
$$
\Hom_G\intnn{\Pi'}{\omega_{\lambda}^{\infty}[M]}=0.
$$
\itemb Supposons maintenant que $V$ est de type $M$ et de poids $\lambda$, plus précisément supposons que $V=V_{M,\CL_1}^{\lambda}$  pour $\CL_1\subset M_{\dR}$. On applique le foncteur $\Hom_G(\Pi', \ \cdot \ )$ à la suite exacte (\ref{eq:suitexx}) pour $\CL\neq \CL_1$, ce qui nous assure d'après (\ref{eq:calcen1}) qu'on a une inclusion
$$
\Hom_G\intnn{\Pi'}{\omega_{\lambda}^{\infty}[M]}\incl \Hom_G\intn{\Pi',(M_{\dR}/\CL)\otimes_L {\LL_M^{\lambda}}'}.
$$
Donc, d'après (\ref{eq:calcen2}), $\Hom_G\intnn{\Pi'}{\omega_{\lambda}^{\infty}[M]}$ est de $L$-dimension $\leqslant 1$. On applique maintenant le même foncteur à la suite exacte (\ref{eq:suitexx}) pour $\CL=\CL_1$. D'après (\ref{eq:calcen1}), on en déduit que 
$$
\Hom_G\intnn{\Pi'}{\omega_{\lambda}^{\infty}[M]}\neq 0,
$$
et donc que $\Hom_G\intnn{\Pi'}{\omega_{\lambda}^{\infty}[M]}$ est de $L$-dimension $1$. Il reste à justifier que $\Hom_G\intnn{\Pi'}{\omega_{\lambda}^{\infty}[M]}\cong \CL_1$, mais on a obtenu la suite exacte
$$
0 \rightarrow\underbrace{\Hom_G\intnn{\Pi'}{\left (\Pilan_{M,\CL}^{\lambda}\right )'}}_{\cong L}\xrightarrow{\sim}\Hom_G\intnn{\Pi'}{\omega_{\lambda}^{\infty}[M]}\xrightarrow{0}\underbrace{\Hom_G\intn{\Pi',(M_{\dR}/\CL)\otimes_L {\LL_M^{\lambda}}'}}_{\cong M_{\dR}/\CL_1}.
$$
Comme la dernière flèche est induite par l'inclusion naturelle $\Hom_G\intnn{\Pi'}{\omega_{\lambda}^{\infty}[M]}\incl \Hom_G\intn{\Pi',M_{\dR}\otimes_L {\LL_M^{\lambda}}'}$, on en déduit une inclusion $\Hom_G\intnn{\Pi'}{\omega_{\lambda}^{\infty}[M]}\incl \CL_1$ et donc finalement, on a bien montré que
$$
\Hom_G\intnn{\Pi'}{\omega_{\lambda}^{\infty}[M]}= \CL_1.
$$
\end{itemize}
\end{proof}
\Subsection{Entrelacements cuspidaux}
Dans ce numéro on démontre le théorème suivant :
\medskip
\begin{theo}\label{theo:multcusptot}
Soit $\lambda \in P_+$, $M$ un $L$-$(\varphi,N,\GQp)$-module cuspidal de pente $1-\lvert \lambda \rvert /2$ et $0\subsetneq \CL \subsetneq M_{\dR}$. 
\begin{itemize}
\itemb On a un isomorphisme topologique de $L$-représentations de $G\times \czG$ 
\begin{equation}\label{eq:cuspgal}
\Hom_{L[\sW_{\Qp}]}\intnn{V_{M,\CL}^{\lambda}}{\rmH^1_{\et}\Harg{\brv{\rmM}^{\infty}_{C}}{\Sym\BV(1)}}\cong (\Pi_{M,\CL}^{\lambda})'\otimes_L \JL_M^{\lambda}.
\end{equation}
\itemb On a un ismorphisme de $L$-représentations de $\sW_{\Qp}\times \czG$
\begin{equation}\label{eq:cuspaut}
\Hom_{L[G]}\intnn{(\Pi_{M,\CL}^{\lambda})'}{\rmH^1_{\et}\Harg{\brv{\rmM}^{\infty}_{C}}{\Sym\BV(1)}}\cong V_{M,\CL}^{\lambda}\otimes_L \JL_M^{\lambda}.
\end{equation}
\end{itemize}
\end{theo}
Pour $\star\in \{\et,\pet\}$ notons
$$
\rmH^{\lambda}_{\star}[M]\coloneqq \Hom_{L[\czG]}\intnn{\JL_M}{\rmH^1_{\star}\Harg{\presp{\rmM^{\infty}_{C}}}{\BV_{\lambda}(1)}}.
$$
On va maintenant calculer les multiplicités dans $\rmH^{\lambda}_{\pet}[M]$ et utiliser le théorème \ref{thm:proetoet} et son corollaire \ref{cor:complocan} pour en déduire les multiplicités dans $\rmH^{\lambda}_{\et}[M]$. La remarque suivante justifie qu'il suffit de calculer la multiplicité dans $\rmH^{\lambda}_{\et}[M]$ pour obtenir le théorème.
\medskip
\begin{rema}
On utilise les notations du numéro \ref{subsec:decompcoh}. Premièrement, notons qu'on a 
$$
\presp{\rmH}^{1}_{\star}=\bigoplus_{\lambda \in P_+}\bigoplus_{M\in \Phi N^{p}_{\lvert \lambda \rvert}}\presp{\rmH}^{\lambda}_{\star}[M]\otimes_L\JL_M^{\lambda}
$$
Ainsi pour $W$ une $L$-représentation de $\BG$ absolument irréductible il existe $\lambda\in P_+$ et $M\in \Phi N^{p}_{\lvert \lambda \rvert}$ tel que
$$
\Hom_{L[\BG]}\intnn{W}{\presp{\rmH}^{1}_{\star}}\cong\Hom_{L[\BG]}\intnn{W}{\presp{\rmH}^{\lambda}_{\star}[M]\otimes_L \JL_M^{\lambda}}.
$$
Finalement, d'après \ref{lem:fkcompcon} il existe un caractère lisse $\chi \colon \Qpt \rightarrow L^{\times}$ tel que
$$
\Hom_{L[\BG]}\intnn{W}{\brv{\rmH}^{1}_{\star}}=\Hom_{L[\BG]}\intnn{W\otimes \chi}{\presp{\rmH}^{1}_{\star}}.
$$
On choisit alors $W=V_{M,\CL}^{\lambda}\otimes_L(\Pi_{M,\CL}^{\lambda})'\otimes_L \JL_M^{\lambda}$.
\end{rema}

\subsubsection{Multiplicité des $V_{M,\CL}^{\lambda}$}

\medskip
\begin{coro}\label{cor:cusp}
Soit $\lambda \in P_+$ et $M\in \Phi N_{\lvert \lambda \rvert}^p$. Le diagramme suivant $\GQp\times G$-équivariant de $L$-espaces de Fréchet est commutatif :
{
\begin{center}
\begin{tikzcd}
0\ar[r]&\rmB_{\lambda}\wotimes_{\Qp}\CO_{\lambda}^{\infty}[M]\ar[r]\ar[d,equal]&\presp{\rmH}^{\lambda}_{\pet}[M]\ar[r]\ar[d,hook]&t^{\lambda_1}X_{\st}^{1}(M[\lambda_1])\wotimes_L{\LL_M^{\lambda}}'\ar[r]\ar[d,hook]&0\\
0\ar[r]&\rmB_{\lambda}\wotimes_{\Qp}\CO_{\lambda}^{\infty}[M]\ar[r]&\rmB_{\lambda}\wotimes_{\Qp}\omega_{\lambda}^{\infty}[M]\ar[r]&(\rmB_{\lambda}\otimes_{\Qp}M_{\dR})\wotimes_L{\LL_M^{\lambda}}'\ar[r]&0
\end{tikzcd}
\end{center}}
De plus, les lignes sont exactes strictes et les flèches verticales sont d'images fermés.
\end{coro}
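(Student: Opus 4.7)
My plan is to obtain the diagram by applying the functor $X \mapsto X[M] = \Hom_{L[\czG]}(\JL_M, X)$ to the fundamental diagram for the Drinfeld tower provided by Theorem~\ref{theo:fondrin}, then identifying each of the resulting terms using the computations of the preceding subsections. The functor $X \mapsto X[M]$ is left exact and continuous, so it will transform the two horizontal exact sequences of Theorem~\ref{theo:fondrin} into left exact sequences; the exactness on the right will have to be argued separately in each row.

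The key vanishing observation is that $\Hom_{L[\czG]}(\JL_M, W_\lambda^{\pi_0^\infty}) = 0$ when $M$ is cuspidal. Indeed, $\JL_M$ is smooth, irreducible and infinite-dimensional, while $W_\lambda^{\pi_0^\infty}$ is a (countable) direct sum of copies of the finite-dimensional algebraic $L$-representation $W_\lambda$ of $\czG$; any $\czG$-equivariant morphism from the former to the latter would have smooth image inside $W_\lambda$, hence would be zero. The same vanishing applies to $\rmU_\lambda^0\otimes_{\Qpd}W_\lambda^{\pi_0^\infty}$, since tensoring with the period space $\rmU_\lambda^0$ over $\Qpd$ does not affect the $\czG$-action. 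Thus, after applying $[M]$, the first non-trivial term in each row is $\rmB_\lambda \wotimes_{\Qp}\CO_\lambda^\infty[M]$, and the leftmost vertical map becomes the identity, which is exactly what the diagram requires.

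The other terms are identified as follows. For the bottom row, Corollary~\ref{cor:witch} yields $\omega_\lambda^\infty[M]\cong (\Pi_M^\lambda)'$ and $\presp{\rmH}^\lambda_{\dR}[M]\cong M_{\dR}\otimes_L({\LL_M^\lambda})'$, and tensoring with $\rmB_\lambda$ preserves the exact sequence (the isomorphism $M_{\dR}\otimes \rmB_\lambda \wotimes (\LL_M^\lambda)'$ identifying the last term). For the top row, Proposition~\ref{prop:drhkdr} identifies $\presp{\rmH}^\lambda_{\HK}[M]\cong (\Qpbr\otimes_{\Qp^{\nr}}M)\wotimes_L(\LL_M^\lambda)'$ in a way compatible with the Frobenius and monodromy actions; applying the exact functor $t^{\lambda_1}(\ \cdot\ \otimes_{\Qpbr}\Bstp)^{N=0,\varphi=p^{1-\lambda_1}}$ (i.e. $X^1_{\st}(\ \cdot \ [\lambda_1])$) then produces the term $t^{\lambda_1}X^1_{\st}(M[\lambda_1])\wotimes_L(\LL_M^\lambda)'$ on the right. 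The commutativity of the right square reduces to the compatibility of the Hyodo--Kato isomorphism, which was already encoded in Proposition~\ref{prop:drhkdr} and Corollary~\ref{cor:witch}. Finally, the injectivity of the middle and rightmost vertical maps follows from the corresponding injectivity in Theorem~\ref{theo:fondrin}, which survives after applying the left-exact functor $[M]$.

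The main delicate point is not the diagram chase itself but verifying the \emph{topological} assertions: that the rows remain strictly exact, that the vertical maps still have closed image, and that all the spaces remain Fréchet after taking $\Hom_{L[\czG]}(\JL_M, \cdot)$. For this, I would use that $\JL_M$ is an irreducible smooth representation of $\czG$, so passing to the $M$-isotypic component is realized by an idempotent acting continuously on the locally algebraic $\czG$-summands of the original diagram; combined with the closed-image property already established in Theorem~\ref{theo:fondrin} and the fact that strictness of morphisms between Fréchet spaces is preserved by continuous projection onto closed direct summands, this yields the required strictness. The right-exactness of the top row is the only non-formal part: it is inherited from Theorem~\ref{theo:fondrin} together with the vanishing of the next cohomological term, which in our setting is governed by $\Hom_{L[\czG]}(\JL_M, W_\lambda^{\pi_0^\infty})$ and hence is zero, as argued above.
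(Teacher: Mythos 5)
Your overall strategy coincides with the paper's: apply the functor $X\mapsto X[M]=\Hom_{L[\czG]}(\JL_M, X)$ to the fundamental diagram of Theorem~\ref{theo:fondrin}, kill the two leftmost terms by a vanishing of $\Hom_{L[\czG]}(\JL_M, W_\lambda^{\pi_0^\infty})$, and identify the rightmost terms via Corollary~\ref{cor:witch} and Proposition~\ref{prop:drhkdr}. That much is correct.

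However, your justification of the key vanishing is wrong: you write that $\JL_M$ is ``smooth, irreducible and infinite-dimensional''. Since $\czG=\rmD^\times$ is compact modulo center, \emph{every} smooth irreducible $L$-representation of $\czG$ is finite-dimensional, and $\JL_M$ in particular is finite-dimensional. Your argument (``any morphism from $\JL_M$ would have smooth image inside $W_\lambda$, hence be zero'') therefore does not go through. The correct reason, which is the one the paper uses, is representation-theoretic and uses the cuspidality of $M$: the action of $\czG$ on $\pi_0^n$ factors through the reduced norm, so the smooth $\czG$-subrepresentation of $W_\lambda^{\pi_0^\infty}$ (and of $\rmU_\lambda^0\otimes_{\Qpd}W_\lambda^{\pi_0^\infty}$) is a direct sum of smooth characters of $\czG$ factoring through $\nrd$, while $\JL_M$ is irreducible of dimension $\geq 2$ when $M$ is cuspidal, hence admits no nonzero $\czG$-map into such a direct sum of characters. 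Note also that this vanishing genuinely requires cuspidality: if $M$ is special then $\JL_M$ \emph{is} a smooth character of $\czG$ and the vanishing fails, which is exactly the source of the different behaviour in Section~\ref{sec:caspe}. Finally, the paper simply asserts that $[M]$ is exact (on the category of smooth $L[\czG]$-modules in play it is, since the action factors through finite quotients and $L$ has characteristic zero), so your more cautious treatment of right exactness, while not wrong in spirit, is unnecessary; and your proposed justification for it---referring back to the vanishing of the same $\Hom$ group---conflates the vanishing of a $\Hom$ with the vanishing of an $\Ext^1$ and is not a correct argument for right exactness.
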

\begin{proof}
On obtient le diagramme en appliquant $Z\mapsto Z[M]\coloneqq \Hom_{L[\czG]}\intnn{\JL_M}{Z}$ au diagramme du théorème \ref{theo:fondrin}. Comme ce foncteur est exact, l'exactitude des suites est conservée et de plus :
\begin{itemize}
\itemb Comme l'action de $\czG$ sur les composantes connexes est donnée par la norme réduite on a $\Hom_{L[\czG]}\intnn{\JL_M}{{W}_{\lambda}^{\pi_0^{\infty}}}=0$ pour tout $n\in \BN$, ce qui montre que le foncteur tue les deux termes tout à gauche du diagramme dans le théorème \ref{theo:fondrin}.
\itemb Les deux termes à droite sont donnés par la proposition \ref{prop:drhkdr}.
\end{itemize}
\end{proof}

Posons
$$
H_{M,\CL}^{\lambda}\coloneqq\Hom_{L[\GQp]}(V_{M,\CL}^{\lambda},t^{\lambda_1}X_{\st}^{1}(M[\lambda_1])),\quad H_{C,\CL}^{\lambda}\coloneqq\Hom_{L[\GQp]}(V_{M,\CL}^{\lambda},L\otimes_{\Qp}\rmB_{\lambda}).
$$
On a naturellement une inclusion $t^{\lambda_1}X_{\st}^{1}(M[\lambda_1])\rightarrow  M_{\dR}\otimes_{\Qp}\rmB_{\lambda}$ qui induit une inclusion $H_{M,\CL}^{\lambda}\incl H_{M,\CL}^{\lambda}\otimes_L M_{\dR}$.
\medskip
\begin{lemm}\label{lem:mulgalcusp}
Les $L$-espaces vectoriels $H_{M,\CL}^{\lambda}$ et $H_{C,\CL}^{\lambda}$ sont de dimensions $1$ et l'inclusion naturelle composé à la trivialisation, $\iota\colon H_{M,\CL}^{\lambda}\incl H_{C,\CL}^{\lambda}\otimes_L M_{\dR}\cong M_{\dR}$ identifie $\iota(H_{M,\CL}^{\lambda})=\CL\subset M_{\dR}$.
\end{lemm}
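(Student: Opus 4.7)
The plan is to establish the lemma via a canonical inclusion together with an identification of the corresponding Hom-spaces, treating $H_{C,\CL}^{\lambda}$ via Hodge-Tate theory and $H_{M,\CL}^{\lambda}$ via Fontaine's equivalence of categories.

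First I would exhibit a canonical element $\iota_0 \in H_{M,\CL}^{\lambda}$: the defining filtration condition on $V = V_{M,\CL}^{\lambda} \subset X_{\st}^1(M)$ forces any $v \in V$ to have image in $M_{\dR}\otimes_{\Qp}\Bdrp$ contained in $\CL\otimes_{\Qp} t^{\lambda_1}\Bdrp + M_{\dR}\otimes_{\Qp} t^{\lambda_2}\Bdrp \subset t^{\lambda_1}\Bdrp\otimes_{\Qp} M_{\dR}$. Together with the kernel-vanishing statement of Proposition \ref{prop:diagfondop} applied to the pente of $M$, this yields the natural inclusion $V \hookrightarrow t^{\lambda_1}X_{\st}^1(M[\lambda_1])$. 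Composing $\iota_0$ with the inclusion $t^{\lambda_1}X_{\st}^1(M[\lambda_1]) \hookrightarrow L\otimes_{\Qp}\rmB_{\lambda}\otimes_L M_{\dR}$ (itself injective by the same proposition), the image of $V$ lies by construction in $L\otimes_{\Qp}\rmB_{\lambda}\otimes_L\CL$, so that under the identification $H_{C,\CL}^{\lambda}\otimes_L M_{\dR} \cong M_{\dR}$ the element $\iota(\iota_0)$ is a nonzero element of $\CL$.

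To compute $H_{C,\CL}^{\lambda}$, I would filter $L\otimes_{\Qp}\rmB_{\lambda}$ by the subobjects $L\otimes_{\Qp} t^{\lambda_1+j}\Bdrp/t^{\lambda_2}\Bdrp$ for $j=0,\ldots,\lambda_2-\lambda_1$, whose successive graded pieces are the Tate twists $L\otimes_{\Qp} C(\lambda_1+j)$. Since $V$ has Hodge-Tate weights exactly $\{\lambda_1,\lambda_2\}$ with $\Gr^{-\lambda_1}\bD_{\dR}(V)=\CL$, Tate's theorem gives $\Hom_{\GQp}(V, L\otimes C(i))=0$ and $\Ext^1_{\GQp}(V, L\otimes C(i))=0$ for all $i\in[\lambda_1+1,\lambda_2-1]$, while $\Hom_{\GQp}(V, L\otimes C(\lambda_1))=\CL^{\vee}$. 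An iterated dévissage along this filtration therefore identifies the top-projection $H_{C,\CL}^{\lambda}\xrightarrow{\sim}\CL^{\vee}$, showing $\dim_L H_{C,\CL}^{\lambda}=1$.

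For $H_{M,\CL}^{\lambda}$, the decisive input is Fontaine's equivalence of categories between potentially semi-stable $\GQp$-representations and admissible filtered $(\varphi,N,\GQp)$-modules. A $\GQp$-equivariant morphism $V \to X_{\st}^1(M)=(\Bstp\otimes_{\Qp^{\nr}}M)^{\varphi=1,N=0}$ corresponds, after extending $\Bst$-linearly and using the comparison isomorphism $V\otimes\Bst\cong\bD_{\pst}(V)\otimes\Bst$, to a morphism of $(\varphi,N,\GQp)$-modules $\bD_{\pst}(V)\to M[-1]$; since $\bD_{\pst}(V)=M[-1]$ and $M$ is absolutely indecomposable as a $(\varphi,N,\GQp)$-module (cuspidality of $M$), the latter space is one-dimensional over $L$, so $\Hom_{\GQp}(V, X_{\st}^1(M))=L$ and a fortiori $\dim_L H_{M,\CL}^{\lambda}\leqslant 1$. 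Combined with the existence of $\iota_0$ and the computation of its image, this gives $H_{M,\CL}^{\lambda}=L\cdot\iota_0$ and $\iota(H_{M,\CL}^{\lambda})=\CL$. The main obstacle is making this last argument rigorous: $X_{\st}^1(M)$ is an infinite-dimensional Banach-Colmez space rather than a finite-dimensional Galois representation, so one must carefully justify the $\Bst$-linear extension step and the passage from $\GQp$-equivariant maps into $X_{\st}^1(M)$ to morphisms of filtered $(\varphi,N,\GQp)$-modules.
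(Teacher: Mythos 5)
Your proposal follows the same global strategy as the paper: compute the two one-dimensional spaces $H_{C,\CL}^{\lambda}$ and $H_{M,\CL}^{\lambda}$ separately and track the inclusion. The differences are organizational. For $H_{C,\CL}^{\lambda}$, you run a d\'evissage of $L\otimes_{\Qp}\rmB_{\lambda}$ by its Tate-twist graded pieces and invoke Tate's vanishing of $\rmH^{0}$ and $\rmH^{1}$ with $C(i)$-coefficients in the intermediate range; the paper instead writes $V^{*}\otimes_{\Qp}\rmB_{\lambda}$ directly as $\Fil^{0}(\bD_{\dR}(V^{*})\otimes\Bdr)/\Fil^{w}$ via the de~Rham comparison isomorphism and reads off the Galois invariants from a filtration-adapted basis. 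Both are correct and in fact rest on the same input (Tate's $\rmH^{i}(\GQp,C(j))$); the de~Rham formulation is slightly more economical because it packages the whole d\'evissage into the single identity $\rmH^{0}(\GQp,t^{a}\Bdrp/t^{b}\Bdrp)=\Qp$ for $a\leqslant 0<b$, and $0$ otherwise.

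For $H_{M,\CL}^{\lambda}$, the gap you flag at the end is genuine: a $\GQp$-equivariant map $V\to X_{\st}^{1}(M)$ is not a morphism of finite-dimensional Galois representations, so Fontaine's equivalence for admissible filtered $(\varphi,N,\GQp)$-modules does not apply directly, and the ``$\Bst$-linear extension'' heuristic needs justification. This is exactly the content of \cite[Proposition 2.5]{codoni} (together with \cite[Remarque 2.6]{codoni} for the twisted/$L$-linear variant used here), which identifies $\Hom_{\GQp}(V,X_{\st}^{1}(M))$ with $\Hom_{L[\sWD_{\Qp}]}(M^{*}[-1],\bD_{\pst}(V)^{*})$; the paper's proof simply cites this and then applies Schur's lemma. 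You should cite it rather than attempt to reprove it; trying to rederive the passage from scratch amounts to reproving a nontrivial chunk of the theory of Banach--Colmez spaces.

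Your construction of the canonical nonzero element $\iota_{0}\in H_{M,\CL}^{\lambda}$ from the defining inclusion $V\hookrightarrow X_{\st}^{1}(M)$, and the observation that by the filtration condition on $V$ its image in $\rmB_{\lambda}\otimes_{L}M_{\dR}$ lies in $\rmB_{\lambda}\otimes_{L}\CL$, is a nice constructive complement to the formal deduction at the end of the paper's proof. One small caveat: the inclusion $V\hookrightarrow t^{\lambda_{1}}X_{\st}^{1}(M[\lambda_{1}])$ follows directly from the fact that $\iota(V)\subset M_{\dR}\otimes t^{\lambda_{1}}\Bdrp$ (which is part of the definition of $V_{M,\CL}^{\lambda}$), with no need to invoke the kernel computation of Proposition~\ref{prop:diagfondop}; that proposition concerns $\rmH^{1}$-level data in the fundamental diagram and is not what produces the inclusion here.
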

\medskip
\begin{proof}
On note $V=V_{M,\CL}^{\lambda}$ et quitte a tordre, on peut supposer que $\lambda_1=0$ ; on note donc $w\coloneqq w(\lambda)=\lambda_2$. On choisit une base de $M_{\dR}$ compatible à la filtration, \ie $M_{\dR}=Lf_0\oplus Lf_{w}$ de sorte que $\CL=Lf_{w}$. Alors, en passant au dual, on obtient $M_{\dR}^*=Lf_0^*\oplus Lf_{w}^*$ et $\CL^{\bot}=Lf_0^*$. On calcule $H_{C,\CL}^{\lambda}=(V\otimes_{\Qp} \rmB_{\lambda})^{\GQp}$. Or,
$$
V\otimes_{\Qp} \rmB_{\lambda}=\Fil^0(M_{\dR}^*\otimes_{\Qp}\Bdr)/\Fil^w=f_0^*\rmB_w\oplus f_w^*t^{-w}\rmB_w,
$$
et donc $H_{C,\CL}^{\lambda}=f_0^*=\CL^{\bot}$. 

Pour $H_{M,\CL}^{\lambda}$ on utilise une généralisation immédiate de \cite[Proposition 2.5]{codoni} combiné à \cite[Remarque 2.6]{codoni}  pour obtenir
$$
H_{M,\CL}^{\lambda} = \Hom_{L[\sWD_{\Qp}]}\intnn{M^*[-1]}{\bD_{\pst}(V)^*}=\Hom_{L[\sWD_{\Qp}]}\intnn{M^*}{M^*}=L
$$
Finalement, l'inclusion $H_{M,\CL}^{\lambda}\incl M_{\dR}$ est obtenue en appliquant $\otimes_{L}\CL$ à $H_{M,\CL}^{\lambda}\incl H_{C,\CL}^{\lambda}\otimes_L M_{\dR}$ et ainsi $\iota(H_{M,\CL}^{\lambda})=\CL\subset M_{\dR}$.
\end{proof}
On déduit de ce lemme qu'en appliquant $\Hom_{L[\GQp]}(V_{M,\CL}^{\lambda}, \cdot )$ au diagramme, la flèche verticale de droite devient l'inclusion $\CL \otimes_L{\LL_M^{\lambda}}'\incl M_{\dR} \otimes_L{\LL_M^{\lambda}}'$. On en déduit le lemme suivant :
\medskip
\begin{prop}
Le diagramme suivant $G$-équivariant de $L$-espaces de Fréchet est commutatif :
{
\begin{center}
\begin{tikzcd}
0\ar[r]& \CO_{\lambda}^{\infty}[M]\ar[r]\ar[d,equal]&\Hom_{L[\GQp]}(V_{M,\CL}^{\lambda},\rmH^{\lambda}_{\pet}[M])\ar[r]\ar[d,hook]&\CL \otimes_L{\LL_M^{\lambda}}'\ar[d,hook]\ar[r]&0\\
0\ar[r]& \CO_{\lambda}^{\infty}[M]\ar[r]&\omega_{\lambda}^{\infty}[M]\ar[r]&M_{\dR}\otimes_L {\LL_M^{\lambda}}'\ar[r]& 0
\end{tikzcd}
\end{center}}
Dans ce diagramme, les flèches verticales sont injectives et d'image fermée.
\end{prop}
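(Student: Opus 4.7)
My plan is to apply the left-exact functor $F(-)\coloneqq\Hom_{L[\GQp]}(V_{M,\CL}^{\lambda},-)$ to the commutative diagram of Corollary \ref{cor:cusp} and identify each resulting term using Lemma \ref{lem:mulgalcusp}. The spaces $\CO_{\lambda}^{\infty}[M]$, $\omega_{\lambda}^{\infty}[M]$ and ${\LL_M^{\lambda}}'$ carry trivial $\GQp$-action, so $F$ commutes with tensoring by them: one gets $F(\rmB_{\lambda}\wotimes_{\Qp}\CO_{\lambda}^{\infty}[M])\cong H_{C,\CL}^{\lambda}\wotimes_L\CO_{\lambda}^{\infty}[M]\cong\CO_{\lambda}^{\infty}[M]$ (and likewise for $\omega_{\lambda}^{\infty}[M]$), using that $H_{C,\CL}^{\lambda}$ is one-dimensional by Lemma \ref{lem:mulgalcusp}. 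Similarly $F((\rmB_{\lambda}\otimes_{\Qp}M_{\dR})\wotimes_L{\LL_M^{\lambda}}')\cong M_{\dR}\otimes_L{\LL_M^{\lambda}}'$, while $F(t^{\lambda_1}X_{\st}^{1}(M[\lambda_1])\wotimes_L{\LL_M^{\lambda}}')\cong H_{M,\CL}^{\lambda}\otimes_L{\LL_M^{\lambda}}'\cong\CL\otimes_L{\LL_M^{\lambda}}'$; under these identifications, Lemma \ref{lem:mulgalcusp} shows the right vertical arrow becomes exactly the inclusion $\CL\otimes_L{\LL_M^{\lambda}}'\hookrightarrow M_{\dR}\otimes_L{\LL_M^{\lambda}}'$, as already noted just before the statement. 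Commutativity of the resulting square is then immediate from functoriality of $F$.

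For the exactness of the rows, the bottom row is obtained from the exact bottom row of Corollary \ref{cor:cusp} by tensoring with the one-dimensional space $H_{C,\CL}^{\lambda}$, which preserves strict exactness. The top row is more delicate: left-exactness of $F$ provides exactness at the first two terms, but surjectivity onto $\CL\otimes_L{\LL_M^{\lambda}}'$ is the main nontrivial point. I would prove it by a diagram chase, using that the right-hand square is commutative with surjective bottom map: any $v\in\CL\otimes_L{\LL_M^{\lambda}}'\subset M_{\dR}\otimes_L{\LL_M^{\lambda}}'$ lifts to $w\in\omega_{\lambda}^{\infty}[M]$, and the key point is that $w$ must then lie in the image of the middle vertical arrow. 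Equivalently, the connecting map from $F$ of the right-hand quotient to $\Ext^{1}_{L[\GQp]}(V_{M,\CL}^{\lambda},\rmB_{\lambda}\wotimes_{\Qp}\CO_{\lambda}^{\infty}[M])$ vanishes, which I plan to deduce from the very construction of $V_{M,\CL}^{\lambda}$ as the kernel of the semi-stable period map recalled in \ref{subsec:rapcusp}: the datum $\CL\subset M_{\dR}$ is, by definition, attained by an element of $X_{\st}^{1}(M[\lambda_1])$.

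Finally, injectivity with closed image of the vertical arrows in the resulting diagram is inherited from Corollary \ref{cor:cusp}: applying the left-exact functor $F$ preserves injections, and closedness is preserved because the middle map is the restriction of a closed-image injection to a closed subspace obtained by imposing Galois equivariance against the finite-dimensional $V_{M,\CL}^{\lambda}$. The hardest step, as indicated, will be the rigorous verification of the surjectivity in the top row; once this $\Ext^{1}$-vanishing is checked, the topological strictness follows formally from the strictness assertions in Corollary \ref{cor:cusp} and the closedness of the identifications furnished by Lemma \ref{lem:mulgalcusp}.
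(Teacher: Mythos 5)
Your overall framework coincides with the paper's: apply $\Hom_{L[\GQp]}(V_{M,\CL}^{\lambda},-)$ to the diagram of Corollary~\ref{cor:cusp} and identify the resulting terms through Lemma~\ref{lem:mulgalcusp}. The divergence, and the gap, is in how you show that exactness of the rows is preserved. For the bottom row you assert that it \emph{is obtained by tensoring with the one-dimensional space $H_{C,\CL}^{\lambda}$}; this begs the question, since applying the left-exact functor does not a priori produce the tensored sequence unless one first rules out a nonzero connecting map into $\rmH^1\big(\GQp, \rmB_{\lambda}\wotimes_{\Qp}\CO_{\lambda}^{\infty}[M]\otimes_L V^{*}\big)$, where $V=V_{M,\CL}^{\lambda}$. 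The paper rules it out by showing the natural map $\rmH^1\big(\GQp, \rmB_{\lambda}\wotimes_{\Qp}\CO_{\lambda}^{\infty}[M]\otimes_L V^{*}\big)\to \rmH^1\big(\GQp, \rmB_{\lambda}\wotimes_{\Qp}\omega_{\lambda}^{\infty}[M]\otimes_L V^{*}\big)$ is injective, via the identifications $\rmH^1\big(\GQp, \rmB_{\lambda}\wotimes X\otimes V^{*}\big)\cong \rmH^1\big(\GQp, \rmB_{\lambda}\otimes V^{*}\big)\otimes_L X$ and injectivity of $\CO_{\lambda}^{\infty}[M]\to\omega_{\lambda}^{\infty}[M]$.

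You correctly flag right-exactness of the top row as the crux, but the route you propose for killing the connecting class --- appealing to the construction of $V_{M,\CL}^{\lambda}$ as the kernel of a period map --- is not worked out, and I do not see that it suffices as stated: one must annihilate a class in $\Ext^{1}_{L[\GQp]}\big(V_{M,\CL}^{\lambda},\, \rmB_{\lambda}\wotimes_{\Qp}\CO_{\lambda}^{\infty}[M]\big)$, an infinite-dimensional target, and the remark that $\CL$ is \emph{attained} by an element of $X^{1}_{\st}(M[\lambda_1])$ does not, on its own, accomplish this. What does is the commutativity of the left square of Corollary~\ref{cor:cusp}, which you record but do not exploit: the composite $\rmH^1(\rmB_{\lambda}\wotimes\CO_{\lambda}^{\infty}[M]\otimes V^{*})\to\rmH^1(\rmH^{\lambda}_{\pet}[M]\otimes V^{*})\to\rmH^1(\rmB_{\lambda}\wotimes\omega_{\lambda}^{\infty}[M]\otimes V^{*})$ agrees with the injective map from the previous paragraph, so its first factor is already injective and the top-row connecting map vanishes. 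Replacing your period-map heuristic by this two-line deduction is the content of the paper's proof and closes the gap.
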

\medskip
\begin{proof}
Notons $V=V_{M,\CL}^{\lambda}$. D'après le lemme \ref{lem:mulgalcusp}, il suffit de montrer que $\Hom_{L[\GQp]}(V,\ \cdot\ )\cong (\ \cdot\  \otimes_{L}V^*)^{\GQp}$ conserve l'exactitude des lignes du diagramme \ref{cor:cusp}. L'application naturelle
$$
\rmH^1\Harg{\GQp}{\rmB_{\lambda}\wotimes_{\Qp}\CO_{\lambda}^{\infty}[M]\otimes_LV^{*}}\rightarrow \rmH^1\Harg{\GQp}{\rmB_{\lambda}\wotimes_{\Qp}\omega_{\lambda}^{\infty}[M]\otimes_LV^{*}}
$$
est injective puisqu'on a\footnote{À l'aide des presque-C-représentation on peut même montrer que $\rmH^1\Harg{\GQp}{\rmB_{\lambda}\otimes_{\Qp} V^{*}}$ est un $L$-espace vectoriel de dimension $1$.}
$$
\begin{gathered}
\rmH^1\Harg{\GQp}{\rmB_{\lambda}\wotimes_{\Qp}\CO_{\lambda}^{\infty}[M]\otimes_LV^{*}}\cong\rmH^1\Harg{\GQp}{\rmB_{\lambda}\otimes_{\Qp} V^{*}}\otimes_{L}\CO_{\lambda}^{\infty}[M],\\
\rmH^1\Harg{\GQp}{\rmB_{\lambda}\wotimes_{\Qp}\omega_{\lambda}^{\infty}[M]\otimes_LV^{*}}\cong \rmH^1\Harg{\GQp}{\rmB_{\lambda}\otimes_{\Qp}V^{*}}\otimes_{L}\omega_{\lambda}^{\infty}[M],
\end{gathered}
$$
car l'application $\CO_{\lambda}^{\infty}[M]\rightarrow \omega_{\lambda}^{\infty}[M]$ est injective. Ceci justifie l'exactitude de la ligne du bas. Par la commutativité du diagramme \ref{cor:cusp} on en déduit que l'application
$$
\rmH^1\Harg{\GQp}{\rmB_{\lambda}\wotimes_{\Qp}\CO_{\lambda}^{\infty}[M]\otimes_LV^{*}}\rightarrow \rmH^1\Harg{\GQp}{\rmH^{\lambda}_{\pet}[M]\otimes_LV^{*}},
$$
est injective.
\end{proof}
 Or, d'après la proposition \ref{cor:witch} en considérant l'image inverse de $\CL\otimes_L {\LL_M^{\lambda}}'$ dans $\omega_{\lambda}^{\infty}[M]$ on obtient la suite exacte
$$
0\rightarrow \CO_{\lambda}^{\infty}[M]\rightarrow \intn{\Pilan^{\lambda}_{M,\CL}}'\rightarrow \CL \otimes_L {\LL_M^{\lambda}}'\rightarrow 0.
$$
\begin{coro}\label{prop:invlcusp}
On a un isomorphisme topologique entre $L$-représentations $G$
$$
\intn{\Pilan^{\lambda}_{M,\CL}}'\cong \Hom_{L[\GQp]}\intnn{V_{M,\CL}^{\lambda}}{\rmH^{\lambda}_{\pet}[M]}.
$$
\end{coro}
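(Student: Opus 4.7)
My plan is to identify both $\intn{\Pilan^{\lambda}_{M,\CL}}'$ and $\Hom_{L[\GQp]}\intnn{V_{M,\CL}^{\lambda}}{\rmH^{\lambda}_{\pet}[M]}$ as the same sub-$G$-representation of $\omega_{\lambda}^{\infty}[M]$, namely the preimage of $\CL\otimes_L(\LL_M^{\lambda})'$ under the bottom-right surjection in the previous commutative diagram. The two short exact sequences displayed just before the statement have the same outer terms and the same first arrow $\CO_{\lambda}^{\infty}[M]\hookrightarrow\intn{\Pilan^{\lambda}_{M,\CL}}'$ coming from Corollary \ref{cor:witch} applied to (\ref{eq:extdR}); so the claim will amount to showing that these two extensions of $\CL\otimes_L(\LL_M^{\lambda})'$ by $\CO_{\lambda}^{\infty}[M]$ represent the same class.

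First I would invoke the proposition preceding the statement, whose middle vertical arrow is an injection with closed image and whose right vertical arrow is, by Lemma \ref{lem:mulgalcusp}, precisely the inclusion $\CL\otimes_L(\LL_M^{\lambda})'\hookrightarrow M_{\dR}\otimes_L(\LL_M^{\lambda})'$. This realises $\Hom_{L[\GQp]}\intnn{V_{M,\CL}^{\lambda}}{\rmH^{\lambda}_{\pet}[M]}$ as the pullback of $\CL\otimes_L(\LL_M^{\lambda})'$ along the surjection $\omega_{\lambda}^{\infty}[M]\twoheadrightarrow M_{\dR}\otimes_L(\LL_M^{\lambda})'$, \emph{compatibly} with the filtration by $\CO_{\lambda}^{\infty}[M]$.

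Next I would use Corollary \ref{cor:witch} to identify the bottom row of that diagram with the dual of the upper short exact sequence of Proposition \ref{prop:weighchange}, namely
\[
0\to(\wt{\Pi}_M^{\lambda})'\to(\Pi_M^{\lambda})'\to M_{\dR}\otimes_L(\LL_M^{\lambda})'\to 0.
\]
Dualising the lower short exact sequence of Proposition \ref{prop:weighchange},
\[
0\to\CL^{\bot}\otimes_L\LL_M^{\lambda}\to\Pi_M^{\lambda}\to\Pilan_{M,\CL}^{\lambda}\to 0,
\]
and noting that the annihilator of $\CL^{\bot}\subset M_{\dR}^*$ inside $M_{\dR}$ is exactly $\CL$, one sees that $(\Pilan_{M,\CL}^{\lambda})'$ is, under the identification $(\Pi_M^{\lambda})'\cong\omega_{\lambda}^{\infty}[M]$, precisely the preimage of $\CL\otimes_L(\LL_M^{\lambda})'$. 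Combining the two descriptions yields the desired isomorphism, which is automatically a topological isomorphism since all the arrows in sight are strict with closed image.

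The only real point to check carefully is that the identification $\omega_{\lambda}^{\infty}[M]\cong(\Pi_M^{\lambda})'$ from Corollary \ref{cor:witch} is compatible with the surjection onto $M_{\dR}\otimes_L(\LL_M^{\lambda})'$ used in the two diagrams (so that the filtrations by $\CO_{\lambda}^{\infty}[M]=(\wt\Pi_M^{\lambda})'$ match). This compatibility is already built into the last assertion of Corollary \ref{cor:witch} — the application of $\Hom_{L[\czG]}(\JL_M,-)$ to (\ref{eq:extdR}) \emph{is} the dualised top row of Proposition \ref{prop:weighchange} — so no new work is required; what looks like an obstacle is in fact absorbed into the preceding results, and the corollary drops out by chasing the two pullbacks.
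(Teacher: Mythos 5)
Your proof is correct and follows essentially the same route as the paper, namely realising both sides as the preimage of $\CL\otimes_L(\LL_M^{\lambda})'$ inside $\omega_{\lambda}^{\infty}[M]$, using Lemma \ref{lem:mulgalcusp} and the preceding diagram for the cohomological side, and Corollary \ref{cor:witch} together with the dual of Proposition \ref{prop:weighchange} for the automorphic side. One small terminological slip: you have interchanged \emph{upper} and \emph{lower} for the two rows of Proposition \ref{prop:weighchange} (the $\wt{\Pi}$-sequence is the bottom row and the $\CL^{\bot}$-sequence the top), but the displayed formulas are written correctly so the argument is unaffected.
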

Finalement, comme on a montré au théorème \ref{thm:proetoet} que $\rmH^{\lambda}_{\et}[M]$ est le le sous-espace des vecteurs $G$-bornés de $\rmH^{\lambda}_{\et}[M]$ et que, comme on l'a rappelé au numéro \ref{subsubsec:repcuspg}, les vecteurs $G$-bornés de $\intn{\Pilan^{\lambda}_{M,\CL}}'$ sont $\intn{\Pi^{\lambda}_{M,\CL}}'$, ceci démontre l'isomorphisme (\ref{eq:cuspgal}) du théorème \ref{theo:multcusptot}.
\Subsubsection{Multiplicité des $\Pi_{M,\CL}^{\lambda}$}

\medskip
\begin{prop}
Soit $\lambda \in P_+$, $M\in \Phi N_{\lvert \lambda \rvert}^p$ cuspidal et soit $\Pi$ une $L$-représentation de Banach unitaire, admissible, absolument irréductible de $G$. Alors
{
$$
\Hom_{L[G]}\intnn{(\Pi^{\lan})'}{\rmH^{\lambda}_{\pet}[M]}=
\begin{cases}
 V_{M,\CL}^{\lambda}\otimes_L \JL_M^{\lambda} & \text{si $\Pi=\Pi_{M,\CL}^{\lambda}$,}\\
 0& \text{sinon.}\\
\end{cases}
$$}
\end{prop}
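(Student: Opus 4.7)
The plan is to apply the exact functor $\Hom_{L[G]}((\Pi^{\lan})',-)$ to the commutative diagram with exact rows provided by Corollary \ref{cor:cusp} and to read off the multiplicity from the resulting diagram. The bottom row of that diagram is, via Corollary \ref{cor:witch}, the $\rmB_\lambda$-completion of the dual of the short exact sequence
$$
0 \to \wt{\Pi}_M^{\lambda} \to \Pi_M^{\lambda} \to M_{\dR}\otimes_L \LL_M^{\lambda} \to 0,
$$
while Proposition \ref{prop:weighchange} recasts $\Pi_M^{\lambda}$, for each choice of line $\CL\subset M_{\dR}$, as an extension
$$
0 \to \CL^{\bot}\otimes_L \LL_M^{\lambda} \to \Pi_M^{\lambda} \to \Pilan_{M,\CL}^{\lambda} \to 0.
$$
Dually, $(\Pilan_{M,\CL}^{\lambda})'$ is the kernel of $(\Pi_M^{\lambda})' \to (\CL^{\bot}\otimes_L\LL_M^{\lambda})'$; under the identification $\omega_\lambda^\infty[M]\cong (\Pi_M^\lambda)'$, its image in $M_{\dR}\otimes_L(\LL_M^\lambda)'$ is precisely $\CL\otimes_L(\LL_M^\lambda)'$.

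For the vanishing case, suppose $\Pi$ is not of the form $\Pi_{M,\CL'}^{\lambda}$ for any line $\CL'\subset M_{\dR}$. Then by Breuil's classification of the representations $\Sigma_{\CL}^{\lambda}$ recalled in Section \ref{sub:steinberg}, together with the injectivity of the $p$-adic Langlands correspondence for irreducible objects and the irreducibility of $\LL_M^{\lambda}$, one obtains $\Hom_G((\Pi^{\lan})',(\Pi_M^{\lambda})')=0$ and $\Hom_G((\Pi^{\lan})',(\wt\Pi_M^{\lambda})')=0$, so the long exact sequence forces $\Hom_G((\Pi^{\lan})',\rmH^{\lambda}_{\pet}[M])=0$. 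A parallel argument excludes $\Pi$ having different infinitesimal/Weil--Deligne parameter via the locally algebraic part, using Corollary \ref{cor:complocan} to pass between $(\Pi^{\lan})'$ and $\Pi'$.

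For the nonvanishing case $\Pi=\Pi_{M,\CL}^{\lambda}$, we have $(\Pi^{\lan})'=(\Pilan_{M,\CL}^{\lambda})'$. Applying $\Hom_G((\Pilan_{M,\CL}^{\lambda})',-)$ to the bottom row of Corollary \ref{cor:cusp}, the computation just recalled shows that the induced map between the first and third terms sends $\rmB_\lambda$ onto $\CL\otimes_L\rmB_\lambda \subset M_{\dR}\otimes_L\rmB_\lambda$. Lifting this to the top row via the closed injection $\presp{\rmH}^{\lambda}_{\pet}[M]\hookrightarrow \rmB_\lambda\wotimes_{\Qp}\omega_\lambda^\infty[M]$ and using that the Galois piece sits as $t^{\lambda_1}X_{\st}^{1}(M[\lambda_1])\subset \rmB_\lambda\otimes M_{\dR}$, the resulting $\sW_{\Qp}$-representation is
$$
t^{\lambda_1}X_{\st}^{1}(M[\lambda_1])\,\cap\,(\CL\otimes_L\rmB_\lambda),
$$
which by the Colmez--Fontaine recipe recalled in \ref{subsec:rapcusp} is precisely $V_{M,\CL}^{\lambda}$. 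Keeping track of the residual $\czG$-action, which acts on the $(\LL_M^{\lambda})'$ factor through a twist and combines with the already factored out $\JL_M$ and $\cz{W}_\lambda$, produces the tensor factor $\JL_M^{\lambda}$ in the statement.

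The main obstacle is the precise matching in the third paragraph: identifying the image of $(\Pilan_{M,\CL}^{\lambda})'$ inside $(\Pi_M^{\lambda})'$ with the filtration line $\CL\subset M_{\dR}$, and then verifying that intersecting with $t^{\lambda_1}X_{\st}^{1}(M[\lambda_1])$ reproduces exactly the defining exact sequence of $V_{M,\CL}^{\lambda}$ as a Galois representation (this is the $G$-equivariant analogue of Lemma \ref{lem:mulgalcusp}, with the roles of the automorphic and Galois sides interchanged). Once this identification is in place, the rest of the argument is a diagram chase.
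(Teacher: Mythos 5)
Your proposal follows essentially the same route as the paper's proof: both apply $\Hom_G((\Pi^{\lan})',-)$ to the diagram of Corollary~\ref{cor:cusp}, reduce the computation to the multiplicity in $\omega_\lambda^\infty[M]\cong(\Pi_M^\lambda)'$, and finish by the intersection $t^{\lambda_1}X_{\st}^1(M[\lambda_1])\cap(\CL\otimes_L\rmB_\lambda)=V_{M,\CL}^\lambda$. The one organizational difference is that the paper first collapses the diagram to a single short exact sequence
$0 \to \rmH^\lambda_{\pet}[M] \to \rmB_\lambda\wotimes_{\Qp}\omega_\lambda^\infty[M] \to Q_M^\lambda\wotimes_L(\LL_M^\lambda)' \to 0$
with $Q_M^\lambda=(\rmB_\lambda\otimes_{\Qp}M_{\dR})/t^{\lambda_1}X_{\st}^1(M[\lambda_1])$, which makes the diagram chase a one-liner.

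The ``main obstacle'' you flag --- identifying the image of $(\Pilan_{M,\CL}^\lambda)'$ inside $\omega_\lambda^\infty[M]$ with the line $\CL\subset M_{\dR}$, in both the nonvanishing case and (as vanishing) when $\Pi$ is not of that form --- is not an open step: it is exactly the statement of Theorem~\ref{thm:cuspautmult}, proved two pages earlier, that
$\Hom_G\bigl((\Pi^{\lan})',\omega_\lambda^\infty[M]\bigr)=\CL$ if $\Pi=\Pi_{M,\CL}^\lambda$ and $0$ otherwise. Citing that theorem directly eliminates both your ``Breuil classification'' detour for the vanishing case and the need to re-argue the matching of $\CL$. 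With that substitution your sketch becomes complete, and the residual $\JL_M^\lambda$-factor in the statement is purely bookkeeping from the decomposition $\presp{\rmH}^1_{\pet}=\bigoplus_M\presp{\rmH}^\lambda_{\pet}[M]\otimes_L\JL_M^\lambda$, exactly as you indicate.
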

\medskip
\begin{proof}
 Pour $M$ cuspidal, on a montré l'existence d'un diagramme $G$-équivariant de $L$-espaces de Fréchet, 
 \begin{center}
\begin{tikzcd}
0\ar[r]&\rmB_{\lambda}\wotimes_{\Qp}\CO_{\lambda}^{\infty}[M]\ar[r]\ar[d,equal]&\rmH^{\lambda}_{\pet}[M]\ar[r]\ar[d,hook]&t^
{\lambda_1}X_{\st}^{1}(M[\lambda_1])\wotimes_L{\LL_M^{\lambda}}'\ar[r]\ar[d,hook]&0\\
0\ar[r]&\rmB_{\lambda}\wotimes_{\Qp}\CO_{\lambda}^{\infty}[M]\ar[r]&\rmB_{\lambda}\wotimes_{\Qp}\omega_{\lambda}^{\infty}[M]\ar[r]&(\rmB_{\lambda}\otimes_{\Qp}M_{\dR})\wotimes_L{\LL_M^{\lambda}}'\ar[r]&0
\end{tikzcd}
\end{center}
Si on note 
$$
Q_M^{\lambda}\coloneqq\frac {(\rmB_{\lambda}\otimes_{\Qp}M_{\dR})}{t^{\lambda_1}X_{\st}^{1}(M[\lambda_1])},
$$
on en déduit la suite exacte 
$$
0 \rightarrow \rmH^{\lambda}_{\pet}[M] \rightarrow \rmB_{\lambda}\wotimes_{\Qp}\omega_{\lambda}^{\infty}[M]\rightarrow Q_M^{\lambda}\wotimes_L{\LL_M^{\lambda}}' \rightarrow 0.
$$
On applique $\Hom_G\intnn{(\Pi^{\lan})'}{\ \cdot \ }$ à cette suite exacte pour obtenir 
$$
\begin{gathered}
\Hom_G\intnn{(\Pi^{\lan})'}{\rmH^{\lambda}_{\pet}[M]} = \\
\Ker\left ( \Hom_G\intnn{(\Pi^{\lan})'}{\rmB_{\lambda}\wotimes_{\Qp}\omega_{\lambda}^{\infty}[M]}\rightarrow \Hom_G\left ((\Pi^{\lan})',Q_M^{\lambda}\wotimes_L{\LL_M^{\lambda}}'\right )\right )  =\\
\Ker\left ( \Hom_G\intnn{(\Pi^{\lan})'}{\omega_{\lambda}^{\infty}[M]}\wotimes_{\Qp}\rmB_{\lambda}\rightarrow \Hom_G\intnn{(\Pi^{\lan})'}{{\LL_M^{\lambda}}'}\wotimes_LQ_M^{\lambda}\right )  .
\end{gathered}
$$
Or, d'après le théorème \ref{thm:cuspautmult}
$$
\Hom_G\intnn{(\Pi^{\lan})'}{\omega_{\lambda}^{\infty}[M]} =
\begin{cases}
 \CL & \text{si } \Pi=\Pi_{M,\CL}^{\lambda},\\
 0& \text{sinon.}
\end{cases}
$$
Ainsi, on peut supposer que $\Pi=\Pi_{M,\CL}^{\lambda}$. Dans ce cas, $\Hom_G\intnn{(\Pi^{\lan})'}{{\LL_M^{\lambda}}'}$ est de dimension~$1$, ce qui donne 
$$
\begin{gathered}
 \Hom_G\intnn{(\Pi^{\lan})'}{\rmH^{\lambda}_{\pet}[M]}=\Ker\left (\CL\otimes_{\Qp}\rmB_{\lambda}\rightarrow Q_M^{\lambda}\right )\\
 =t^{\lambda_1}X_{\st}^{1}(M[\lambda_1])\cap (\CL\otimes_{\Qp}\rmB_{\lambda})= V_{M,\CL}^{\lambda}.
\end{gathered}
$$
\end{proof}
Cette proposition couplée au corollaire \ref{cor:complocan} nous donne la seconde partie du théorème \ref{theo:multcusptot} soit :
\medskip
\begin{coro}\label{cor:multautcuspet}
Soit $\lambda \in P_+$, $M\in \Phi N_{\lvert \lambda \rvert}^p$ cuspidal et $\Pi$ une $L$-représentation de Banach unitaire, admissible, absolument irréductible de $G$. Alors
{
$$
\Hom_{L[G]}\intnn{\Pi'}{\rmH^{\lambda}_{\et}[M]}=
\begin{cases}
 V_{M,\CL}^{\lambda}\otimes_L \JL_M^{\lambda} & \text{si $\Pi=\Pi_{M,\CL}^{\lambda}$,}\\
 0& \text{sinon.}\\
\end{cases}
$$}
\end{coro}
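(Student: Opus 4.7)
The plan is to deduce this statement directly from the immediately preceding proposition (which computes $\Hom_{L[G]}\intnn{(\Pi^{\lan})'}{\rmH^{\lambda}_{\pet}[M]}$) together with corollary \ref{cor:complocan}, which compares homomorphisms from $\Pi'$ into étale cohomology with homomorphisms from $(\Pi^{\lan})'$ into proétale cohomology. In other words, all the real work has already been done: cuspidal multiplicity in proétale cohomology has been identified in the previous proposition, theorem \ref{thm:proetoet} has shown that étale cohomology is the $G$-bounded subspace of proétale cohomology, and corollary \ref{cor:complocan} packages these into the statement that
\[
\Hom_G\intnn{\Pi'}{\rmH^1_{\et}\Harg{\presp{\rmM_C^{\infty}}}{\Sym\BV(1)}}\cong\Hom_G\intnn{(\Pi^{\lan})'}{\rmH^1_{\pet}\Harg{\presp{\rmM_C^{\infty}}}{\Sym\BV(1)}}.
\]

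First I would invoke the Schur-type decomposition from corollary \ref{coro:decomppoid} together with proposition \ref{prop:drhkdr}'s argument (applied verbatim in the étale/proétale setting, using the action of $\cz{\fkg}$ on the locally algebraic factor $\cz{W}_\lambda$) to isolate, inside $\rmH^1_{\pet}\Harg{\presp{\rmM_C^{\infty}}}{\Sym\BV(1)}$, the summand $\rmH^{\lambda}_{\pet}[M]\otimes_L \JL_M^{\lambda}$ as the $\czG$-isotypic component corresponding to $\JL_M^{\lambda}$. The same decomposition holds in the étale setting and the comparison map of corollary \ref{cor:complocan} is $G\times\czG$-equivariant, so it restricts to an isomorphism
\[
\Hom_G\intnn{\Pi'}{\rmH^{\lambda}_{\et}[M]}\cong\Hom_G\intnn{(\Pi^{\lan})'}{\rmH^{\lambda}_{\pet}[M]}
\]
(one can equivalently take $\Hom_{L[\czG]}(\JL_M^{\lambda},-)$ of the comparison isomorphism; exactness of this functor in the $\czG$-smooth setting ensures that it commutes with $\Hom_G(\Pi',-)$ and $\Hom_G((\Pi^{\lan})',-)$).

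Then I would simply substitute the value of the right-hand side provided by the preceding proposition. If $\Pi=\Pi^{\lambda}_{M,\CL}$ for some admissible line $\CL\subset M_{\dR}$, this gives $V_{M,\CL}^{\lambda}\otimes_L\JL_M^{\lambda}$; otherwise it vanishes. This gives precisely the statement of the corollary.

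There is no real obstacle here: every ingredient is already in place. The only small point requiring care is to verify that the comparison isomorphism of corollary \ref{cor:complocan} is indeed $\czG$-equivariant so that it descends to the $M$-isotypic components; this follows from the fact that the natural maps $\rmH^1_{\et}\hookrightarrow \rmH^1_{\pet}$ and $\Pi'\hookrightarrow (\Pi^{\lan})'$ are equivariant for the full group $\sW_{\Qp}\times G\times\czG$ and $G$ respectively, and hence compatible with the $\czG$-action used to extract the $\JL_M$-isotypic piece.
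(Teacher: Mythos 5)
Your proposal is correct and takes essentially the same route as the paper: the paper's own proof is a one‑line remark that Corollaire \ref{cor:multautcuspet} follows from the preceding proposition together with Corollaire \ref{cor:complocan}, exactly the combination you describe. Your added explanation of why the comparison isomorphism of Corollaire \ref{cor:complocan} descends to the $\cz{W}_\lambda$- and $\JL_M$-isotypic components via the equivariance of the maps and the exactness of $\Hom_{L[\czG]}(\JL_M^{\lambda},-)$ is a sensible precaution that the paper leaves implicit (it uses the same exactness argument in the proof of Corollaire \ref{cor:cusp}), but it does not alter the substance of the argument.
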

\medskip

\section{Le cas spécial}\label{sec:caspe}
On conserve les notations des sections précédentes.
\Subsection{La série spéciale $p$-adique}\label{subsec:seriespe}
Dans ce numéro on fait quelques rappels sur la série spéciale pour compléter ce que l'on a expliqué au numéro \ref{subsec:rapcusp}.

Soit $k\in \BZ$ un entier, on définit le $L$-$(\varphi,N)$-module spécial $\Sp_L(k)$ comme le $L$-espace vectoriel $\Sp_L(k)=Le_0\oplus L e_1$ muni des endomorphismes
$$
\begin{cases}
\varphi(e_1)=p^{-\frac{k-1}{2}}e_1\\
\varphi(e_0)=p^{-\frac{k+1}{2}}e_0,
\end{cases}
\quad
\begin{cases}
Ne_1=e_0\\
Ne_0=0.
\end{cases}
$$
Sa pente est $-\tfrac k 2$ et $\Sp_L(k)[i]=\Sp_L(k-2i)$. Alors, tout $L$-$(\varphi,N,\GQp)$-module spécial, au sens de la définition \ref{def:mcuspispe}, est de la forme $(\Sp_L(k)\otimes_{\Qp}\Qp^{\nr})\otimes_L \chi$ pour $\chi \colon \Qpt \rightarrow L^{\times}$ un caractère (\cf la preuve de \cite[Proposition 1.3]{sch}). Soit $\lambda\in P$ et fixons $M$ de la forme précédente avec $k=\lvert \lambda \rvert -2$
\begin{itemize}
\itemb $\LL_M^{\lambda}= \St^{\lalg}_{\lambda}\otimes_L (\chi\circ \det)$ où l'on rappelle que $\St^{\lalg}_{\lambda}=\St^{\infty}\otimes W_{\lambda}^*$ est la Steinberg localement algébrique de poids $\lambda$ sur $L$ (\cf \ref{subsubsec:spean}),
\itemb $\JL_M^{\lambda}\coloneqq \cz{W}_{\lambda}\otimes_L(\chi\circ \nrd)$.
\end{itemize}

Soit $\lambda\in P_+$ et $\CL\in L$, alors on définit $D_{\CL}^{\lambda}$ le $L$-$(\varphi,N)$-module filtré de $L$-$(\varphi,N)$-module sous-jacent $\Sp_L(\lvert \lambda \rvert -2)$ et de filtration 
$$
\Fil^iD_{\CL}^{\lambda}\coloneqq 
\begin{cases}
D_{\CL}^{\lambda}&\text{ si } i\leqslant -\lambda_2+1,\\
\sL=L(e_1-\CL e_0) &\text{ si } -\lambda_2+2\leqslant i\leqslant -\lambda_1+1,\\
0&\text{ si }  -\lambda_1+2\leqslant i.
\end{cases}
$$
Alors, en prenant $M=D_{\CL}^{\lambda}\otimes_{\Qp}\Qp^{\nr}$ et $\sL=L(e_0-\CL e_1)\subset M_{\dR}$ définit par $\CL\in L$ comme ci-dessus, on a $M\in\Phi N_{\lvert \lambda \rvert}^p$ et on note simplement $V_{\CL}^{\lambda}=V_{M,\sL}^{\lambda}$ et $\Pi_{\CL}^{\lambda}=\Pi_{M,\sL}^{\lambda}$.

Coté Galoisien, toute $L$-représentation semi-stable non cristabéline de dimension $2$ (\ie spéciale) à poids de Hodge-Tate $\lambda\in P_+$ est de la forme $V_{\CL}^{\lambda}\otimes \chi$ pour $\chi \colon \GQp\rightarrow L^{\times}$ un caractère lisse. La représentation $V^{\lambda}_{\CL}$ est irréductible si $w(\lambda)\neq 1$ et si $w(\lambda)=1$ on a une suite exacte courte.
$$
0\rightarrow L(\lambda_1+1)\rightarrow V^{\lambda}_{\CL}\rightarrow L(\lambda_1)\rightarrow 0.
$$

Coté automorphe, toute représentation spéciale est de la forme $\Pi_{\CL}^{\lambda}\otimes (\chi\circ \det)$ pour $\chi \colon \Qpt \rightarrow L^{\times}$. La représentation $\Pi_{\CL}^{\lambda}$ est une $L$-représentation irréductible si $w(\lambda)\neq 1$ et si $w(\lambda)=1$ on a une suite exacte courte
$$
0\rightarrow \St^0\otimes_L (\det)^{\lambda_1}\rightarrow \Pi_{\CL}^{\lambda}\rightarrow (\det)^{\lambda_1}\rightarrow 0,
$$
où $\St^0$ est la Steinberg continue à coefficients dans $L$. En tant que $L$-représentation $\Pilan_{\CL}^{\lambda}$ à quatres composantes de Jordan-Hölder que l'on a déjà rencontré (\cf \ref{sub:steinberg} et \ref{subsub:anulocext} pour les énoncés et notations), et que l'on résume dans le diagramme\footnote{Dans cette notation, chaque terme du diagramme est une composante irréductible, le premier est le socle, le second est le socle du quotient par le socle, etc. En particulier, la composante la plus à droite est le cosocle. } suivant :
\newcommand\extline{\mathrel{\relbar\joinrel\relbar}}

{\large
\begin{equation}\label{eq:jhcompspe}
 \Pilan_{\CL}^{\lambda} \ \colon\  \overbrace{\underbrace{\St_{\lambda}^{\lalg}\extline B^{\lambda}}_{\St_{\lambda}^{\lan}}\extline W_{\lambda}^{*}}^{\Sigma_{\CL}^{\lambda}}\extline \wt{B}^{\lambda}.
\end{equation}
}
Rappelons en particulier, d'après \cite{codopa}, que le complété unitaire universel de ${\Sigma}_{\CL}^{\lambda}$ et $\Pi_{\CL}^{\lambda}$.
\Subsection{Cohomologie isotriviale du demi-plan $p$-adique}
\Subsubsection{Cohomologie de de Rham et de Hyodo-Kato}
On va maintenant décrire les cohomologies de de Rham et de Hyodo-Kato en niveau zéro. La difficulté est que côté automorphe, on doit calculer un entrelacement dérivé. Cette difficulté apparaît déjà dans les travaux de Schraen (\cf \cite{sch}.), dont on va expliquer et reformuler légèrement le résultat. Notons $\sD(G,L)$ l'algèbre des distributions localement analytiques sur $G$ à valeurs dans $L$. On travaillera dans la catégorie dérivée des modules sur $\sD(G,L)$ de caractère central fixé\footnote{Comme expliqué au numéro \ref{subsubsec:spean}, on ne précise pas le caractère central mais il n'y aura pas d'ambiguité : en poids $\lambda$ ce caractère central est $\omega^{1-\lvert \lambda \rvert}$ avec les $L$-représentations de Fréchet de $G$ qui apparaissent (comme dans la proposition \ref{prop:schra}) et $\omega^{\lvert \lambda \rvert-1}$ avec leurs duaux localement analytiques (comme au lemme \ref{lem:schr}).} et on note $\bHom\ $ les homomorphismes dans cette catégorie : c'est en particulier le $\rmH^0$ du bifoncteur dérivé $\rmR\Hom$. De même, on note $\Ext^1_{L[\bar G]}$ le groupe des extensions dans la catégorie des $\sD(G,L)$-modules à caractère central fixé.
\medskip
\begin{prop}\label{prop:schraen}
L'isomorphisme de Hyodo-Kato,
$$
\rmR\Gamma_{\!\dR}\Harg{\BH_{\Qp}}{\CE_{\lambda}}\cong\rmR\Gamma_{\!\HK}\Harg{\BH_{\Qp}}{\BD_{\lambda}},
$$
munit le complexe de de Rham d'une structure de $(\varphi,N)$-module filtré et on a un isomorphisme
$$
\bHom\ \intnn{\intn{\Pilan_{\CL}^{\lambda}}'[-1]}{\rmR\Gamma_{\!\dR}\Harg{\BH_{\Qp}}{\CE_{\lambda}}}\cong D_{\CL}^{\lambda}.
$$
\end{prop}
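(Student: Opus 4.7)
Le plan est d'appliquer la stratégie de Schraen en se ramenant au complexe des opers, puis de traiter séparément la composante supplémentaire de $\Pilan_{\CL}^\lambda$ par rapport à la sous-représentation $\Sigma_{\CL}^\lambda$ sur laquelle porte directement le calcul de \cite{sch}. Premièrement, comme $\CE_\lambda$ est un oper de poids $(\lambda_1,\lambda_2-1)$ sur la courbe Stein $\BH_{\Qp}$ (\cf proposition \ref{prop:equifil}), la proposition \ref{prop:drtoop} fournit un quasi-isomorphisme strict filtré $G$-équivariant
$$
\rmR\Gamma_{\!\dR}\Harg{\BH_{\Qp}}{\CE_\lambda}\simeq\bigl[\,\CO_\lambda\xrightarrow{\partial^{w(\lambda)}}\omega_\lambda\,\bigr],
$$
dans lequel la dualité de Morita (proposition \ref{prop:morita}) identifie $\omega_\lambda\cong(\St^{\lan}_{\lambda})'$. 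On calcule donc $\bHom\ $ contre ce complexe à deux termes de $L$-Fréchet $G$-équivariants, dans lequel les duaux des composantes de Jordan-Hölder $\St^{\lalg}_{\lambda}$, $B^\lambda$ et $W^*_\lambda$ de $\Sigma^\lambda_{\CL}$ apparaissent naturellement via le dévissage du lemme \ref{lem:extserispeci}.

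Deuxièmement, la suite exacte courte $0\to\Sigma_{\CL}^\lambda\to\Pilan_{\CL}^\lambda\to\wt{B}^\lambda\to 0$ se dualise en un triangle distingué dans la catégorie dérivée des $\sD(G,L)$-modules à caractère central fixé. Une adaptation directe du calcul principal de \cite[Théorème 5.4]{sch} au complexe des opers donne
$$
\bHom\ \intnn{(\Sigma_{\CL}^\lambda)'[-1]}{\rmR\Gamma_{\!\dR}\Harg{\BH_{\Qp}}{\CE_\lambda}}\cong D_{\CL}^\lambda,
$$
ainsi que l'annulation des entrelacements dérivés en degrés supérieurs ; dans cette identification, le paramètre $\CL$ intervient à la fois comme paramètre de l'extension de $W_\lambda^*$ par $\St^{\lan}_{\lambda}$ définissant $\Sigma_{\CL}^\lambda$ et comme droite définissant la filtration de Hodge de $D_\CL^\lambda$, les deux se correspondant tautologiquement dans la construction de Schraen.

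Le point principal consiste alors à établir l'annulation
$$
\bHom\ \intnn{(\wt{B}^\lambda)'[-1]}{\rmR\Gamma_{\!\dR}\Harg{\BH_{\Qp}}{\CE_\lambda}}=0,
$$
qui, via le triangle distingué ci-dessus, assurera que la restriction $\bHom\ ((\Pilan_{\CL}^\lambda)'[-1],-)\to\bHom\ ((\Sigma_{\CL}^\lambda)'[-1],-)$ est un isomorphisme et conclura la proposition. Par dévissage du complexe à deux termes ci-dessus, cette annulation se ramène à celle de $\bHom\ ((\wt{B}^\lambda)'[-1],V)$ pour $V\in\{\CO_\lambda,(\St^{\lan}_{\lambda})'\}$. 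Par le lemme de Shapiro (adapté à $\sD(G,L)$ via $\wt{B}^\lambda=\Ind_B^G(\omega x^{\lambda_2-1}\otimes\omega^{-1}x^{\lambda_1})\otimes_L\lvert\det\rvert_p^{(\lvert\lambda\rvert-1)/2}$), ces entrelacements dérivés se réécrivent comme cohomologie localement analytique de $B$ à valeurs dans des extensions successives de caractères localement algébriques non algébriques ; le lemme \ref{lem:anucar} garantit l'annulation en tout degré, et le lemme \ref{lem:anuprinc} traite le degré $1$ critique.

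Enfin, la structure de $(\varphi,N)$-module filtré sur l'entrelacement dérivé provient du complexe de Hyodo-Kato : par $\iota_{\HK}$ (\cf \ref{subsubsec:hkisot}), le Frobenius et l'opérateur de monodromie se transportent depuis $\rmR\Gamma_{\!\HK}\Harg{\BH_{\Qp}}{\BD_\lambda}$, tandis que la filtration est la filtration de Hodge de $\CE_\lambda$ construite au numéro \ref{par:filt}. La difficulté technique principale sera le suivi précis, à travers l'adaptation du calcul de Schraen et la dualité de Morita tordue, de la compatibilité entre l'invariant $\CL$ automorphe (paramètre de $\Sigma_{\CL}^\lambda$) et l'invariant $\CL$ galoisien (droite filtrante de $D_{\CL}^\lambda$) ; cette compatibilité fine sera exploitée au théorème \ref{thm:mugalspe} pour identifier les deux invariants.
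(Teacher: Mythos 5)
Votre proposition suit globalement la même stratégie que la démonstration du papier : réduction au complexe des opers et à la dualité de Morita pour se raccrocher au calcul de Schraen (proposition \ref{prop:schra}), puis dévissage via la suite exacte $0\to\Sigma_{\CL}^\lambda\to\Pilan_{\CL}^\lambda\to\wt{B}^\lambda\to 0$ et annulation de la contribution de $\wt{B}^\lambda$ au moyen du lemme \ref{lem:anuprinc}. Cependant, la proposition laisse de côté une étape substantielle que le papier traite explicitement et qui n'est pas optionnelle.

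Le point central que vous escamotez est la vérification que la structure de $(\varphi,N)$-module obtenue via $\iota_{\HK}$ à partir de $\rmR\Gamma_{\!\HK}\Harg{\BH_{\Qp}}{\BD_\lambda}$ donne bien $\Sp_L(\lvert\lambda\rvert-2)$. Vous affirmez que \og le Frobenius et l'opérateur de monodromie se transportent depuis $\rmR\Gamma_{\!\HK}$ \fg, ce qui est vrai mais ne suffit pas : il faut d'une part contrôler les valeurs propres du Frobenius sur les facteurs $D_0$ et $D_1$ (en tenant compte du twist $\varphi^2=p^{1-\lvert\lambda\rvert}$ sur $\BD_\lambda$ issu de la proposition \ref{prop:calcdec}), et d'autre part — c'est là le point le plus délicat — montrer que l'opérateur de monodromie $N\colon\rmH^1_{\HK}[-1]\to\rmH^0_{\HK}$ est non nul, faute de quoi on n'obtiendrait pas un module spécial mais un module scindé. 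Le papier y parvient par un argument de descente via un sous-groupe de Schottky cocompact $\Gamma\subset\PSL_2(\Qp)$ et le résultat de de Shalit (\cite{des}) sur la non-trivialité de la monodromie pour les courbes $p$-adiquement uniformisées $X_\Gamma=\BH_{\Qp}/\Gamma$. Sans cette vérification, l'égalité $\bHom\ \intnn{(\Sigma_{\CL}^\lambda)'[-1]}{\rmR\Gamma_{\!\HK}}\cong\Sp_L(\lvert\lambda\rvert-2)$ n'est pas établie ; la définition \og à la main \fg de Schraen et la structure issue de Hyodo-Kato doivent être identifiées, et ce n'est pas une tautologie.

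Un second point, de moindre importance : pour le dévissage de $\bHom\ ((\wt{B}^\lambda)'[-1],\rmR\Gamma_{\!\dR})$, le papier tire parti du scindage $\rmR\Gamma_{\!\dR}(\BH_{\Qp})\otimes W_\lambda\simeq W_\lambda\oplus(\St^{\lalg}_\lambda)'[-1]$, ce qui ramène directement à $\Hom_{L[G]}(\St^{\lalg}_\lambda,\wt{B}^\lambda)=0$ et $\Ext^1_{L[\bar G]}(W^*_\lambda,\wt{B}^\lambda)=0$. Votre dévissage via le complexe $\bigl[\CO_\lambda\to\omega_\lambda\bigr]$ aboutira aux mêmes annulations mais au prix d'une suite exacte longue supplémentaire et d'un dévissage de $\CO_\lambda$ lui-même (dont la structure de Jordan-Hölder fait intervenir $W_\lambda$ et $(B^\lambda)'$, cf.\ remarque \ref{rema:morita}) ; ce n'est pas faux, mais c'est moins direct. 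Enfin, la référence à \cite[Théorème 5.4]{sch} devrait être \cite[Théorème 5.3]{sch}.
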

\medskip
\subsubsection{Rappels du résultat de Schraen}\label{subsubsec:schraen}
Dans ce paragraphe, on explique le résultat principal de \cite{sch}, mais en tordant par une puissance du déterminant ce qui ne change rien à l'argument.
\medskip
\begin{prop}\label{prop:schra}
On a un isomorphisme de $(\varphi,N)$-modules filtrés
$$
\bHom\ \intnn{\intn{\Sigma_{\CL}^{\lambda}}'[-1]}{\rmR\Gamma_{\!\dR}(\BH_{\Qp})\otimes_{\Qp}{W}_{\lambda}}\cong D_{\CL}^{\lambda}.
$$
\end{prop}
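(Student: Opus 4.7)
The plan is to follow Schraen's original strategy, presenting it in the formalism of the present paper. First, I would use the oper-to-de Rham comparison (Proposition \ref{prop:drtoop}) applied to the oper $\CE_\lambda$ of paragraph \ref{subsub:operdedrin}. Since $\CE_\lambda = \CO\otimes W_\lambda$ as $G$-equivariant $\CO$-module with connection $d\otimes \id$, the underlying complex of $G$-representations agrees with $\rmR\Gamma_{\!\dR}(\BH_{\Qp})\otimes_{\Qp}W_\lambda$, and the filtered $(\varphi,N)$ structure is transported through the Hyodo-Kato isomorphism of \ref{subsubsec:hkisot}. One can therefore work with the two-term small de Rham (oper) complex
$$
\bigl(\CO_\lambda \xrightarrow{\partial^{w(\lambda)}} \omega_\lambda\bigr)
$$
with filtration described by Proposition \ref{prop:equifil}.

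Next, I would unravel $\Sigma_\CL^\lambda$ using the Breuil sequence of \ref{subsec:seriespe} and Morita duality. Dualizing
$$
0 \to \St^{\lan}_\lambda \to \Sigma_\CL^\lambda \to W_\lambda^* \to 0,
$$
and applying Proposition \ref{prop:morita} to identify $(\St^{\lan}_\lambda)'\cong \omega_\lambda$, one obtains the exact sequence of Fréchet $\sD(G,L)$-modules
$$
0 \to W_\lambda \to (\Sigma_\CL^\lambda)' \to \omega_\lambda \to 0.
$$
Apply the bifunctor $\bHom(\,\cdot\,[-1],\CO_\lambda \to \omega_\lambda)$ to this sequence and read off the resulting long exact sequence. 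The term $\bHom(\omega_\lambda[-1],\,\cdot\,)$ contains a canonical element coming from the identity $\omega_\lambda \to \omega_\lambda$, producing one basis vector $e_1$; the term $\bHom(W_\lambda[-1],\,\cdot\,)$ reduces to a finite-dimensional computation using Lemma \ref{lem:anucar} in a neighboring setting and contributes the second basis vector $e_0$.

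It then remains to check that the Frobenius, monodromy and filtration on the resulting rank-two $L$-space coincide with those of $D_\CL^\lambda$. The Frobenius eigenvalues come from the Hyodo-Kato comparison on $\BH_{\Qp}$ (via Corollary \ref{cor:witch} and the weight-shifting formalism of \cite{colw}), which yields $\varphi(e_1) = p^{-(\lvert\lambda\rvert-1)/2}e_1$ and $\varphi(e_0) = p^{-(\lvert\lambda\rvert+1)/2}e_0$; the monodromy $N(e_1) = e_0$ arises from the non-triviality of the Hyodo-Kato operator on $\BH_{\Qp}$ (the residue along the boundary, as in the underlying Steinberg picture); and the Hodge filtration is inherited from the oper filtration of Proposition \ref{prop:equifil}.

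The main obstacle, and the real content of the proposition, is to identify the parameter $\CL$ of the Breuil extension with the Hodge line defining $D_\CL^\lambda$. Concretely, this requires tracing the connecting morphism $\bHom(W_\lambda[-1],\cdot)\to \bHom((\Sigma_\CL^\lambda)'[-1],\cdot)$ and matching it with the Fontaine recipe for filtered $(\varphi,N)$-modules; this is precisely Schraen's theorem in \cite{sch}, which we invoke, leaving only to check the compatibility of the weight-$\lambda$ twist with the derived Hom, a formal computation given that the tensor factor $W_\lambda$ is finite-dimensional algebraic.
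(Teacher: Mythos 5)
Your route inverts the paper's: instead of splitting the \emph{target} complex $\rmR\Gamma_{\!\dR}(\BH_{\Qp})\otimes W_\lambda \cong W_\lambda \oplus (\St_\lambda^{\lalg})'[-1]$ and applying $\bHom\intnn{(\Sigma_\CL^\lambda)'[-1]}{-}$ to each summand — which immediately produces the two one-dimensional pieces $D_0 = \Ext^1_{L[\bar G]}(W_\lambda^*,\Sigma_\CL^\lambda)$ and $D_1 = \Hom_{L[G]}(\St_\lambda^{\lalg},\Sigma_\CL^\lambda)$ via Lemma \ref{lem:schr} — you split the \emph{source} via Morita and the Breuil sequence $0\to W_\lambda \to (\Sigma_\CL^\lambda)'\to \omega_\lambda \to 0$. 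That variant would have the merit of exposing the parameter $\CL$ as the extension class appearing in a connecting map, but the execution has real gaps.

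The dimension counts are wrong: $\bHom(\omega_\lambda[-1],-)$ does not contribute ``one basis vector''; using the target splitting it equals $\Ext^1_{L[\bar G]}(W_\lambda^*,\St_\lambda^{\lan})\oplus\Hom_{L[G]}(\St_\lambda^{\lalg},\St_\lambda^{\lan})$, which is $3$-dimensional. One only recovers the final $2$-dimensional answer after quotienting by the image of the preceding connecting map $\Hom(W_\lambda,W_\lambda)\to\Ext^1(\omega_\lambda,W_\lambda)$, whose image is the line spanned by the Breuil class of $\Sigma_\CL^\lambda$ — exactly the bookkeeping your sketch omits. The appeal to Lemma \ref{lem:anucar} for the $W_\lambda$ term is misplaced: that lemma gives vanishing for \emph{non-algebraic} characters of $T$, whereas $W_\lambda$ is algebraic, so it says nothing about $\Ext^1_{L[\bar G]}(W_\lambda,W_\lambda)$. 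The $N(e_1)=e_0$ you attribute to an HK residue is in fact defined by hand à la Schraen in this proposition (it is the push-forward of $v_p$ under $\Ext^1_{L[\bar G]}(W_\lambda^*,\St_\lambda^{\lalg})\to\Ext^1_{L[\bar G]}(W_\lambda^*,\Sigma_\CL^\lambda)$); the comparison with the actual HK monodromy is deferred to \ref{subsub:nontrivmono} and Proposition \ref{prop:schraen}. Finally, ``invoke Schraen's theorem'' is circular, since Proposition \ref{prop:schra} \emph{is} (a determinant twist of) Schraen's theorem; the paper reproves it, computing $\bF = \Hom_{L[\bar G]}(\St_\lambda^{\lan},\Sigma_\CL^\lambda)$ via the oper filtration, taking its two projections $p_0\colon\bF\to D_0$ and $p_1\colon\bF\to D_1$, and using the relation $\log_0 = \CL\,v_p$ in $\Ext^1_{L[\bar G]}(W_\lambda^*,\Sigma_\CL^\lambda)$ to pin down $\Fil = L(e_1-\CL e_0)$.
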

\medskip
Dans cet énoncé, le membre de gauche est muni d'un Frobenius, d'une monodromie et d'une filtration \og à la main \fg ; on va les définir en expliquant la preuve. On commence par rappeler plusieurs résultats préliminaires démontrés dans \cite{sch} :
\medskip
\begin{lemm}\label{lem:schr}
Soit $\lambda\in P_+$, alors 
\begin{itemize}
\itemb $\Ext^1_{L[\bar G]}\intnn{W^*_{\lambda}}{\Sigma_{\CL}^{\lambda}}$ est un $L$-espace vectoriel de dimension $1$,
\itemb $\Hom_{L[ G]}\intnn{\St^{\lalg}_{\lambda}}{\Sigma_{\CL}^{\lambda}}$ et $\Hom_{L[ G]}\intnn{\St^{\lan}_{\lambda}}{\Sigma_{\CL}^{\lambda}}$ sont des $L$-espaces vectoriels de dimension $1$,
\itemb l'application naturelle $\Ext^1_{L[\bar G]}\intnn{\boldsymbol{1}}{\St^{\infty}}\rightarrow \Ext^1_{L[\bar G]}\intnn{W_{\lambda}^*}{\St^{\lalg}_{\lambda}}$, où les extensions sont considérées dans la catégorie des $L$-représentations localement algébriques de $G$, est un isomorphisme, 
\itemb l'image de l'application naturelle $\Ext^1_{L[\bar G]}\intnn{W_{\lambda}^*}{\St^{\lalg}_{\lambda}}\rightarrow\Ext^1_{L[\bar G]}\intnn{W_{\lambda}^*}{\St^{\lan}_{\lambda}}\cong \Hom\intnn{\Qp^{\times}}{L}$ est la $L$-droite engendrée par $v_p$.
\end{itemize}
\end{lemm}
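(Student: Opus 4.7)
Le plan est d'appliquer des foncteurs $\rmR\Hom$ appropriés à la suite exacte courte définissant $\Sigma_{\CL}^{\lambda}$,
$$
0\longrightarrow \St_{\lambda}^{\lan}\longrightarrow \Sigma_{\CL}^{\lambda}\longrightarrow W_{\lambda}^{*}\longrightarrow 0,
$$
et d'exploiter les constituants de Jordan-Hölder rappelés en (\ref{eq:jhcompspe}) ainsi que le lemme \ref{lem:extserispeci}. La partie délicate sera l'identification précise des classes dans le calcul de (4).

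Pour le premier point, je propose d'appliquer $\rmR\Hom_{L[\bar{G}]}(W_{\lambda}^{*},-)$ à la suite ci-dessus. Le lemme de Schur donne $\Hom(W_{\lambda}^{*},W_{\lambda}^{*})=L$ ; comme $W_{\lambda}^{*}$ n'est pas constituant de $\St_{\lambda}^{\lan}$, on a $\Hom(W_{\lambda}^{*},\St_{\lambda}^{\lan})=0$ ; enfin, le lemme \ref{lem:extserispeci}(2) fournit $\Ext^{1}_{L[\bar G]}(W_{\lambda}^{*},\St_{\lambda}^{\lan})\cong \Hom(\Qpt,L)$, de dimension $2$. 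Par construction, le morphisme de connexion envoie $\id_{W_{\lambda}^{*}}$ sur la classe d'extension représentant $\Sigma_{\CL}^{\lambda}$, c'est-à-dire précisément le paramètre $\CL$. Le quotient de l'espace de dimension $2$ par cette droite est de dimension $1$, et un argument d'annulation de $\Ext^{1}_{L[\bar G]}(W_{\lambda}^{*},W_{\lambda}^{*})$ (qui se ramène à de la cohomologie localement analytique d'un caractère algébrique de $B$ via Shapiro, du type de celle traitée au lemme \ref{lem:anucar}) assurera que $\Ext^{1}_{L[\bar G]}(W_{\lambda}^{*},\Sigma_{\CL}^{\lambda})$ est exactement de dimension $1$.

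Pour le second point, j'appliquerai $\Hom_{L[G]}(\St_{\lambda}^{\lan},-)$ et $\Hom_{L[G]}(\St_{\lambda}^{\lalg},-)$ à la même suite. Dans les deux cas, la partie $\Hom(-,W_{\lambda}^{*})$ s'annule (toujours parce que $W_{\lambda}^{*}$ n'intervient pas dans les quotients concernés), si bien que ces espaces s'identifient à $\Hom(\St_{\lambda}^{\lan},\St_{\lambda}^{\lan})$, resp. $\Hom(\St_{\lambda}^{\lalg},\St_{\lambda}^{\lan})$. L'irréductibilité de $\St_{\lambda}^{\lan}$ dans la catégorie des $L$-représentations localement analytiques et l'irréductibilité de $\St_{\lambda}^{\lalg}$, combinées au fait que $\St_{\lambda}^{\lalg}$ est exactement le socle localement algébrique de $\St_{\lambda}^{\lan}$ (cf. le lemme \ref{lem:extserispeci}(1)), donnent des $L$-espaces de dimension $1$ engendrés respectivement par l'identité et l'inclusion canonique.

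Pour le troisième point, je noterai que la tensorisation par $W_{\lambda}^{*}$ définit une équivalence entre la catégorie des $L$-représentations lisses admissibles de $G$ (à caractère central fixé) et celle des représentations localement algébriques de poids $\lambda$ ; cette équivalence est exacte et transforme l'extension tautologique $0\to \St^{\infty}\to \Sigma^{\infty}\to \boldsymbol{1}\to 0$ paramétrée par un homomorphisme de $\Qpt$ dans $L$ en l'extension correspondante de $W_{\lambda}^{*}$ par $\St_{\lambda}^{\lalg}$. Pour le quatrième point, je vérifierai en parcourant cette identification que les extensions d'origine lisse correspondent, via $\Hom(\Qpt,L)=L\log\oplus L v_{p}$, aux morphismes lisses $\Qpt\to L$ ; or seuls les multiples de $v_{p}$ sont localement constants, ce qui donnera bien que l'image de l'application naturelle est $Lv_{p}$. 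L'étape qui demandera le plus d'attention sera ce dernier point, car il faut exhiber explicitement la cochaîne lisse produisant l'extension $\Sigma^{\infty}$ et calculer son image dans $\Hom(\Qpt,L)$, l'identification $\Ext^{1}(W_{\lambda}^{*},\St_{\lambda}^{\lan})\cong\Hom(\Qpt,L)$ étant donnée par une suite spectrale de Hochschild--Serre relative au Borel.
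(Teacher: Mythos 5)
Le texte de l'article ne donne pas de démonstration : il renvoie à Schraen (\cite[Corollaire~4.11, Proposition~4.14, Corollaire~4.16]{sch}) pour les points~1, 3 et~4, et se contente d'invoquer la filière de Jordan--Hölder~(\ref{eq:jhcompspe}) pour le point~2. Votre proposition tente donc une preuve autonome, dans le même esprit que les arguments de Schraen (longue suite exacte issue de la suite définissant $\Sigma_{\CL}^{\lambda}$, équivalence « tensorisation par $W_{\lambda}^{*}$ » entre lisses et localement algébriques, calcul du paramètre via $\Hom(\Qpt,L)$). L'approche est la bonne, mais deux étapes posent problème.

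D'abord, pour le premier point, vous reposez sur l'annulation de $\Ext^{1}_{L[\bar G]}\intnn{W_{\lambda}^{*}}{W_{\lambda}^{*}}$ en affirmant qu'elle « se ramène à de la cohomologie localement analytique d'un caractère algébrique de $B$ via Shapiro, du type de celle traitée au lemme~\ref{lem:anucar} ». Ceci ne fonctionne pas tel quel : $W_{\lambda}^{*}$ n'est pas une induite de $B$, donc Shapiro ne s'applique pas directement à ce couple, et surtout le lemme~\ref{lem:anucar} exige explicitement que le caractère soit \emph{non algébrique} — or ici on tomberait précisément sur des caractères algébriques de $T$. Il faut un argument séparé (par exemple, ramener au cas de la représentation triviale par $\Ext^{1}(W_{\lambda}^{*},W_{\lambda}^{*})\cong\Ext^{1}(\boldsymbol 1, W_{\lambda}^{*}\otimes_L W_{\lambda})$, puis utiliser que les homomorphismes localement analytiques $G\to L$ triviaux sur le centre sont nuls parce que $\PGL_2(\Qp)^{\abe}$ est fini, combiné avec le lemme de Whitehead pour la partie $\fksl_2$ non triviale). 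Tel quel, cette étape est un trou.

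Ensuite, pour le second point, vous affirmez « l'irréductibilité de $\St_{\lambda}^{\lan}$ dans la catégorie des $L$-représentations localement analytiques ». C'est faux : d'après le lemme~\ref{lem:extserispeci}(1) et la filière~(\ref{eq:jhcompspe}), $\St_{\lambda}^{\lan}$ a exactement deux constituants, $\St_{\lambda}^{\lalg}$ et $B^{\lambda}$. La conclusion de dimension~$1$ reste néanmoins correcte, mais pour une autre raison : le socle de $\St_{\lambda}^{\lan}$ est $\St_{\lambda}^{\lalg}$ avec multiplicité un, donc un endomorphisme de $\St_{\lambda}^{\lan}$ (resp.\ un morphisme $\St_{\lambda}^{\lalg}\to\St_{\lambda}^{\lan}$) est déterminé par sa restriction au socle, d'où des espaces de dimension~$1$. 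Corrigez la justification, même si elle ne change pas l'énoncé. Les points~3 et~4 de votre plan sont raisonnables ; comme vous l'indiquez, le point~4 reste celui qui demande le travail le plus explicite.
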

\begin{proof}
Le premier point est le contenu de \cite[Corollaire 4.11]{sch}.

Le second point provient de la description des composantes de Jordan-Hölder de $\Sigma_{\CL}^{\lambda}$ (\cf (\ref{eq:jhcompspe}).

Le troisième point est le contenu de \cite[Proposition 4.14]{sch}.

Le dernier point est le contenu de \cite[Corollaire 4.16]{sch}.
\end{proof}
On explique maintenant la preuve de la proposition \ref{prop:schra} en suivant la démonstration de \cite[Théorème 5.3]{sch}.
\begin{proof}
Le complexe de de Rham de $\BH_{\Qp}$ se scinde et s'écrit $\rmR\Gamma_{\!\dR}(\BH_{\Qp})\cong \rmH^0_{\dR}(\BH_{\Qp})\oplus \rmH^1_{\dR}(\BH_{\Qp})[-1]$. Or $\rmH^0_{\dR}(\BH_{\Qp})\cong \Qp$ et $\rmH^1_{\dR}(\BH_{\Qp})\cong (\St^{\infty}_{\Qp})'$. Ainsi, il est naturel de définir le complexe de $L$-représentations localement algébriques
\begin{equation}\label{eq:hlambda}
\CH_{\lambda}\coloneqq W_{\lambda}\oplus \St^{\lalg}_{\lambda}[-1],
\end{equation}
qui est quasi-isomorphe à $\rmR\Gamma_{\!\dR}(\BH_{\Qp})\otimes_{\Qp}W_{\lambda}$. Ainsi, on obtient que 
\begin{equation}\label{eq:dedcdeun}
D\coloneqq \bHom\ \intnn{\intn{\Sigma_{\CL}^{\lambda}}'[-1]}{\CH_{\lambda}} = D_0 \oplus D_1,
\end{equation}
où 
\begin{itemize}
\itemb $D_0= \Ext^1_{L[\bar G]}\intnn{\intn{\Sigma_{\CL}^{\lambda}}'}{W_{\lambda}}\cong  \Ext^1_{L[\bar G]}\intnn{W_{\lambda}^*}{\Sigma_{\CL}^{\lambda}}$ qui est un $L$-espace vectoriel de dimension $1$ d'après le lemme \ref{lem:schr},
\itemb $D_1=\Hom_{L[\bar G]}\intnn{\intn{\Sigma_{\CL}^{\lambda}}'}{\intn{\St^{\lalg}_{\lambda}}'}\cong \Hom_{L[\bar G]}\intnn{\St^{\lalg}_{\lambda}}{\Sigma_{\CL}^{\lambda}}$ qui est un $L$-espace vectoriel de dimension $1$ d'après le lemme \ref{lem:schr}.
\end{itemize}
Schraen munit alors $D$ d'une structure de $(\varphi,N)$-module : en posant $\varphi\coloneqq p^{-\frac{\lvert \lambda \rvert-1}{2}}$ sur $D_0$ et~$\varphi\coloneqq p^{-\frac{\lvert \lambda \rvert -3}{2}}$ sur $D_1$ puis en définissant un opérateur $N\colon D_1\rightarrow D_0$ que l'on va expliciter. Soit $f\in D_1$ non nul que l'on voit comme une application $L$-linéaire non nulle $f \colon\St^{\lalg}_{\lambda}\rightarrow \Sigma_{\CL}^{\lambda}$. Alors $N(f)$ est l'image de $v_p$ par l'application induite par $f$
$$
\Ext^1_{L[\bar G]}\intnn{W^*_{\lambda}}{\St_{\lambda}^{\lalg}}\xrightarrow{f_*}\Ext^1_{L[\bar G]}\intnn{W^*_{\lambda}}{\Sigma_{\CL}^{\lambda}}.
$$
D'après le dernier point du lemme \ref{lem:schr} $N(f)\neq 0$, ce qui finit la démonstration que $D\cong \Sp_L(\lvert \lambda \rvert)$.
On définit une filtration sur $\CH_{\lambda}$ à partir de l'isomorphisme avec $\rmR\Gamma_{\!\dR}(\BH_{\Qp})\otimes_{\Qp}W_{\lambda}$ par 
$$
\Fil^i\CH_{\lambda}=
\begin{cases}
\CH_{\lambda}& \text{ si } i\leqslant -\lambda_2+1,\\
0\rightarrow \omega_{\lambda} & \text{ si } -\lambda_2+2\leqslant i \leqslant -\lambda_1+1,\\
0 & \text{ si } -\lambda_1+2\leqslant i.
\end{cases}
$$
Ainsi, pour $i$ un entier tel que $-\lambda_2+2\leqslant i \leqslant -\lambda_1+1$, on obtient par la proposition \ref{prop:morita}
$$
\bF\coloneqq\bHom\intnn{\intn{\Sigma_{\CL}^{\lambda}}'[-1]}{\Fil^i\CH_{\lambda}}\cong \Hom_{L[\bar G]}\intnn{\intn{\Sigma_{\CL}^{\lambda}}'}{\intn{\St^{\lan}_{\lambda}}'}.
$$
D'après le lemme \ref{lem:schr}, $\bF$ est un $L$-espace vectoriel de dimension $1$. On a les deux projections 
\begin{center}
\begin{tikzcd}
&\ar[dl,"p_0"]\bF=\Hom_{L[\bar G]}\intnn{\St^{\lan}_{\lambda}}{\Sigma_{\CL}^{\lambda}}\ar[dr,"p_1"]&\\
D_0=\Ext^1_{L[\bar G]}\intnn{W_{\lambda}^*}{\Sigma_{\CL}^{\lambda}}&&D_1=\Hom_{L[G]}\intnn{\St_{\lambda}^{\lalg}}{\Sigma_{\CL}^{\lambda}}.
\end{tikzcd}
\end{center}
On choisit $v\in \bF$ l'unique élément tel que $v_1\coloneqq p_1(v)$ correspond à l'application naturelle $\St^{\lalg}_{\lambda}\rightarrow \Sigma_{\CL}^{\lambda}$. On a 
\begin{itemize}
\itemb $N(v_1) = v_p$, naturellement obtenu par l'application $\Ext^1_{L[\bar G]}\intnn{W_{\lambda}}{\Sigma_{\CL}^{\lambda}}\rightarrow \Ext^1_{L[\bar G]}\intnn{W_{\lambda}^*}{\St_{\lambda}^{\lan}}$
\itemb $v_0\coloneqq p_0(v) = -\log_0$, obtenu par l'application naturelle $\Ext^1_{L[\bar G]}\intnn{W_{\lambda}}{\St^{\lan}_{\lambda}}\rightarrow \Ext^1_{L[\bar G]}\intnn{W_{\lambda}}{\Sigma_{\CL}^{\lambda}}$.
\end{itemize}
Or, dans $\Ext^1_{L[\bar G]}\intnn{W_{\lambda}}{\Sigma_{\CL}^{\lambda}}$, on a par définition $\log_0=\CL v_p$. Ainsi $v_0=-\CL v_1$ et donc $v=v_1-\CL N(v_1)$ dans $D$, ce qui conclut que la filtration est bien celle de $D_{\CL}^{\lambda}$.
\end{proof}
\subsubsection{Preuve de la proposition \ref{prop:schraen}}\label{subsub:nontrivmono}
Notons qu'une première différence avec le résultat de Schraen est une torsion du Frobenius. Pour résumer, il faut justifier deux choses pour obtenir la proposition \ref{prop:schraen} à partir de la proposition \ref{prop:schra}. On doit montrer :
\begin{enumerate}
\item que l'on obtient le $(\varphi,N)$-module $\Sp_L(\lvert \lambda \rvert -2 )$ lorsqu'on calcule l'entrelacement avec la cohomologie de Hyodo-Kato isotriviale de $\BH_{\Qp}$ à coefficient dans $\BD_{\lambda}$,
\item que l'on obtient le même résultat en remplaçant $\Sigma^{\lambda}_{\CL}$ par $\Pilan_{\CL}^{\lambda}$.
\end{enumerate}
Pour le premier point on veut montrer que 
$$
\bHom\ \intnn{\intn{{\Sigma}_{\CL}^{\lambda}}'[-1]}{\rmR\Gamma_{\!\HK}\Harg{\BH_{\Qp}}{\BD_{\lambda}}}\cong \Sp_L(\lvert \lambda \rvert -2).
$$
Notons $D$ le terme de gauche. On va décomposer le calcul en deux étapes en utilisant la preuve de \ref{prop:schra}.
\begin{itemize}
\itemb De l'isomorphisme de Hyodo-Kato, on obtient un isomorphisme de complexe de $L$-représentations localement algébriques
$$
\rmR\Gamma_{\!\HK}\Harg{\BH_{\Qp}}{\BD_{\lambda}} \cong \CH_{\lambda}.
$$
Montrons que les Frobenius coïncident. Rappelons que d'après \ref{prop:calcdec} $\varphi^2=p^{1-\lvert \lambda \rvert}$ sur $\BD_{\lambda}$. D'après (\ref{eq:dedcdeun}) on obtient la décomposition $D=D_0\oplus D_1$ avec $D_0$ et $D_1$ de dimension $1$ sur $L$. On a : 
\begin{itemize}
\item[$\diamond$] $D_0\coloneqq \Ext^1_{L[\bar{G}]}\intnn{({\Sigma}_{\CL}^{\lambda})'}{\rmH^0_{\HK}\Harg{\BH_{\Qp}}{\BD_{\lambda}}}$ et comme $\varphi=1$ sur $\rmH^0_{\HK}(\BH_{\Qp})$ on obtient que $\varphi^2=p^{1-\lvert \lambda \rvert}$ sur $D_0$. Comme ce dernier est de dimension $1$, on obtient $\varphi=p^{\frac{1-\lvert \lambda \rvert}{2}}$.
\item[$\diamond$] $D_1\coloneqq \Hom_{L[G]}\intnn{({\Sigma}_{\CL}^{\lambda})'}{\rmH^1_{\HK}\Harg{\BH_{\Qp}}{\BD_{\lambda}}}$ et comme $\varphi=p$ sur $\rmH^1_{\HK}(\BH_{\Qp})$ on obtient que $\varphi^2=p^{3-\lvert \lambda \rvert}$ sur $D_1$. Comme ce dernier est de dimension $1$, on obtient $\varphi=p^{\frac{3-\lvert \lambda \rvert}{2}}$.
\end{itemize}
Ceci montre que les $\varphi$-modules coïncident bien.
\itemb Reste a montrer que les opérateurs de monodromie coïncident. On va commencer par montrer que le morphisme
$$
N\colon \rmR\Gamma_{\!\HK}\Harg{\BH_{\Qp}}{\BD_{\lambda}}\rightarrow \rmR\Gamma_{\!\HK}\Harg{\BH_{\Qp}}{\BD_{\lambda}},
$$
n'est pas nul. Il suffit de le montrer à coefficient triviaux \ie montrer que $N\colon \rmR\Gamma_{\!\HK}\rightarrow \rmR\Gamma_{\!\HK}$ n'est pas nul, où l'on note $\rmR\Gamma_{\!\HK}\coloneqq \rmR\Gamma_{\!\HK}(\BH_{\Qp})$ et $\rmH_{\HK}^{i}\coloneqq \rmH_{\HK}^{i}(\BH_{\Qp})$ pour simplifier. Soit $\Gamma\subset \PSL_2(\Qp)$ un sous-groupe de Schottky cocompact et considérons la courbe rigide $X_{\Gamma}\coloneqq \BH_{\Qp}/\Gamma$, naturellement muni d'un modèle à réduction semi-stable. On sait d'après \cite{des} que $N_{\Gamma}\colon \rmR\Gamma_{\!\HK}(X_{\Gamma})\rightarrow \rmR\Gamma_{\!\HK}(X_{\Gamma})$ n'est pas nul.
De plus, on a un quasi-isomorphisme
$$
\rmR\Gamma\intnn{\Gamma}{ \rmR\Gamma_{\!\HK}}\cong \rmR\Gamma_{\!\HK}(X_{\Gamma}).
$$
Mais la fonctorialité de la cohomologie de Hyodo-Kato nous dit que l'opérateur de monodromie $N_{\Gamma}\colon  \rmR\Gamma_{\!\HK}(X_{\Gamma})\rightarrow  \rmR\Gamma_{\!\HK}(X_{\Gamma})$ est induit par l'opérateur de monodromie $N$ ; en particulier si $N$ était nul, $N_{\Gamma}$ serait nul, ce qui n'est pas le cas. De plus, $\rmR\Gamma_{\!\HK}(\BH_{\Qp})\cong \rmH^0_{\HK}\oplus  \rmH^1_{\HK}[-1]$ et puisque $N\varphi= p\varphi N$ on en déduit que $N$ se restreint en $N\colon \rmH^1_{\HK}[-1]\rightarrow \rmH^0_{\HK}$ qui n'est donc pas nul. Puisque $\Ext^1_{\Qp[\bar G]}\intnn{  \rmH^1_{\HK}}{  \rmH_{\HK}^0}\cong \Ext^1_{\Qp[\bar G]}\intnn{\Qp}{\St_{\Qp}^{\infty}}$ on remarque d'après le lemme \ref{lem:schr} que cette classe est donnée par la valuation $p$-adique $v_p$, ce qui est en accord avec la définition de Schraen.
\end{itemize}
Ceci fini de prouver le premier point.

Pour le second point, on utilise la suite exacte 
$$
0\rightarrow {\Sigma}_{\CL}^{\lambda}\rightarrow \Pilan_{\CL}^{\lambda}\rightarrow {\wt{B}}^{\lambda}\rightarrow 0.
$$
Ainsi, il suffit de montrer que 
$$
\bHom\ \intnn{\intn{{\wt{B}}_{\lambda}}'[-1]}{\rmR\Gamma_{\!\dR}\Harg{\BH_{\Qp}}{\CE_{\lambda}}}\cong 0.
$$
Mais puisque le complexe de de Rham est scindé, on écrit cet entrelacement dérivé comme la somme directe 
$$
\Hom_{L[G]}\intnn{{\St}^{\lalg}_{\lambda}}{{{\wt{B}}^{\lambda}}}\oplus \Ext^1_{L[\bar G]}\intnn{{W}^*_{\lambda}}{{{\wt{B}}^{\lambda}}},
$$
où l'on rappelle que les entrelacements et extensions sont considérés dans la catégorie des $L$-représentations localement analytiques à caractère central fixé. Le terme de droite, le groupe des extensions, est nul d'après le lemme \ref{lem:anuprinc}. Il est clair que le terme de gauche est nul : se sont deux représentations irréductibles non isomorphes.

Ceci achève la démonstration de la proposition \ref{prop:schraen}.

\qed
\subsubsection{Le diagramme fondamental}
On rappelle la forme que prend le diagramme dans ce cas. Les notations sont presque les mêmes qu'au numéro \ref{subsubsec:diagfond}, la différence est que l'on travaille avec $\BH_{\Qp}$ plutôt que $\presp{\rmM}_{\Qp}^{\infty}$. On note en particulier 
$$
\begin{gathered}
\CO_{\lambda}\coloneqq \CO\{1-w(\lambda),1-\lambda_1\}\otimes_{\Qp}L\cdot \lvert \det \rvert_p^{\frac{1-\lvert \lambda \rvert}{2}},\\
\omega_{\lambda}\coloneqq\CO\{1+w(\lambda),1-\lambda_2\}\otimes_{\Qp}L\cdot \lvert \det \rvert_p^{\frac{1-\lvert \lambda \rvert}{2}},\\
\rmH^{\lambda}_{\pet}\coloneqq \rmH^1_{\pet}\Harg{\BH_C}{\BV_{\lambda}(1)}.
\end{gathered}
$$
Rappelons qu'on a l'isomorphisme de Morita $\omega_{\lambda}\cong \intn{{\St}^{\lan}_{\lambda}}'$ (\cf \ref{subsub:morita}). On note de plus (\cf la remarque \ref{rem:inclsymp} pour les inclusions qui définissent les $Q_{\lambda}^i$).
$$
\begin{gathered}
\rmB_{\lambda}\coloneqq t^{\lambda_1}\Bdrp/t^{\lambda_2}\Bdrp,\quad \rmU_{\lambda}^0\coloneqq t^{\lambda_1}\intn{\Bcrisp}^{\varphi^2=p^{w(\lambda)+1}},\quad \rmU_{\lambda}^1(L)\coloneqq t^{\lambda_1}\intn{\Bcrisp}^{\varphi^2=p^{w(\lambda)-1}}, \\
Q_{\lambda}^0\coloneqq \frac{\rmB_{\lambda}\otimes_{\Qp}L}{\rmU_{\lambda}^0\otimes_{\Qpd}L},\quad Q_{\lambda}^1\coloneqq \frac{\rmB_{\lambda}\otimes_{\Qp}L}{\rmU_{\lambda}^1\otimes_{\Qpd}L}.
\end{gathered}
$$

 La proposition est la suivante, elle découle directement du théorème \ref{theo:fondrin}.
\medskip
\begin{prop}\label{prop:diagspe}
Soit $\lambda \in P_+$ tel que $w(\lambda)>1$. On a un diagramme commutatif, $\GQp\times G$-équivariant d'espaces de Fréchet dont les lignes sont exactes 
{
\begin{center} 
\begin{tikzcd}
0\ar[r]&\rmU_{\lambda}^0\otimes_{\Qpd}{W}_{\lambda} \ar[d,hook]\ar[r]&\rmB_{\lambda}\wotimes_{\Qp}\CO_{\lambda}\ar[r]\ar[d,equal]& \rmH^{\lambda}_{\pet}\ar[r]\ar[d]&\rmU_{\lambda}^1\wotimes_{\Qpd} \intn{{\St}^{\lalg}_{\lambda}}'\ar[r]\ar[d,hook] &0\\
 0\ar[r]&\rmB_{\lambda}\wotimes_{\Qp}{W}_{\lambda}\ar[r]&\rmB_{\lambda}\wotimes_{\Qp}\CO_{\lambda}\ar[r] &\rmB_{\lambda} \wotimes_{\Qp}\omega_{\lambda}\ar[r]& \rmB_{\lambda}\wotimes_{\Qp}\intn{{\St}^{\lalg}_{\lambda}}'\ar[r]& 0
\end{tikzcd}
\end{center}}
Les flèches verticales sont d'images fermés.
\end{prop}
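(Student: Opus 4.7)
My plan is to obtain this proposition as a direct specialization of the general fundamental diagram of Proposition~\ref{prop:diagfondop} (combined with the comparison Theorem~\ref{thm:compint}), applied to the Stein curve $\BH_{\Qp}$ itself, with coefficients in the local system $\BV_{\lambda}$. This is legitimate because $\BV_{\lambda}$ already lives on $\BH_{\Qp}$ via the decomposition of Proposition~\ref{prop:calcdec}, and there it is a strongly isotrivial $L$-oper of weight $(\lambda_1,\lambda_2-1)$ by Proposition~\ref{prop:equifil}, with associated isocristal $\BD_{\lambda}$ and filtered flat bundle $\CE_{\lambda}$. The condition $\lambda\in P_+$ ensures $\lambda_1\geqslant 0$, so the positivity hypothesis of Proposition~\ref{prop:diagfondop} is satisfied, and the resulting diagram is automatically $\GQp\times G$-equivariant (the $G$-equivariance being that of $\BV_{\lambda}$).

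Once the general diagram has been written down, identifying the terms is essentially a bookkeeping exercise. Since $\BH_C$ is geometrically connected, $\pi_0(\BH_C)$ is trivial and the invariants $W_{\lambda}^{\pi_0}$ collapse to $W_{\lambda}$. On the de Rham side, $\CO_{\lambda}$ and $\omega_{\lambda}$ are by definition the global sections of the top and bottom graded pieces of the oper $\CE_{\lambda}$ over $\BH_{\Qp}$; Morita duality (Proposition~\ref{prop:morita}) gives $\omega_{\lambda}\cong (\St^{\lan}_{\lambda})'$, and the Schneider--Stuhler reduced de Rham complex identifies $\rmH^1_{\dR}(\BH_{\Qp},\CE_{\lambda})\cong (\St^{\lalg}_{\lambda})'$, so that the surjection $\omega_{\lambda}\twoheadrightarrow \rmH^1_{\dR}(\BH_{\Qp},\CE_{\lambda})$ in the bottom row corresponds to the natural quotient $(\St^{\lan}_{\lambda})'\twoheadrightarrow (\St^{\lalg}_{\lambda})'$ coming from Lemma~\ref{lem:extserispeci}.

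The main bookkeeping step will be to match the Hyodo--Kato corner $t^{\lambda_1}X^1_{\st}(\rmH^1_{\HK}(\BH_{\Qp},\BD_{\lambda})[\lambda_1])$ produced by Proposition~\ref{prop:diagfondop} with $\rmU_{\lambda}^1\wotimes_{\Qpd}(\St^{\lalg}_{\lambda})'$. For this I would use the isotriviality identification $\rmH^1_{\HK}(\BH_{\Qp},\BD_{\lambda})\cong \rmH^1_{\HK}(\BH_{\Qp})\otimes_{\Qpbr}\BD_{\lambda}$, the description $\rmH^1_{\HK}(\BH_{\Qp})\cong (\St^{\infty})'\otimes_{\Qp}\Qpbr$ with $\varphi=p$ and $N=0$ (coming from the fact that, as a $G$-module, $\rmH^1_{\dR}(\BH_{\Qp})\cong (\St^{\infty})'$ by Morita), and the Frobenius/$G$-structure on $\BD_{\lambda}$ given by Proposition~\ref{prop:calcdec} (namely $\varphi^2=p^{1-|\lambda|}$ and $\BD_{\lambda}\cong W_{\lambda}$ as a $G$-module). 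A direct slope computation then shows that the $(\varphi=p^{1-\lambda_1},N=0)$-eigenspace of the relevant $\Bstp$-tensor is exactly $(\Bcrisp)^{\varphi^2=p^{w(\lambda)-1}}\otimes_{\Qpd}(\St^{\lalg}_{\lambda})'$, and multiplying by $t^{\lambda_1}$ produces the desired $\rmU_{\lambda}^1\wotimes_{\Qpd}(\St^{\lalg}_{\lambda})'$.

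Finally, the hypothesis $w(\lambda)>1$ enters twice. First, Remark~\ref{rem:inclsymp} shows that the natural map $\rmU_{\lambda}^0\otimes_{\Qpd}L\hookrightarrow \rmB_{\lambda}\otimes_{\Qp}L$ is a strict injection, which is what makes the first horizontal arrow of the top row injective. Second, the slope condition of Proposition~\ref{prop:diagfondop} is satisfied (the slopes of $\rmH^1_{\HK}(\BH_{\Qp},\BD_{\lambda})$ lie strictly above $1-|\lambda|$), so the rightmost vertical arrow is injective. Closedness and strictness of the vertical images transport from Proposition~\ref{prop:diagfondop}, so no additional topological argument is required; the principal obstacle, as indicated, is the Frobenius/slope bookkeeping on the Hyodo--Kato side, the remainder being essentially a matter of unwinding definitions.
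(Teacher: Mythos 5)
Your route — apply Proposition~\ref{prop:diagfondop} and Theorem~\ref{thm:compint} directly to the Stein curve $\BH_{\Qp}$ with coefficients in the strongly isotrivial oper $\BV_{\lambda}$ of weight $(\lambda_1,\lambda_2-1)$, then unwind the terms via Morita duality, the Schneider--Stuhler reduced de Rham complex, and a Frobenius-eigenvalue bookkeeping — is essentially the paper's approach. The paper phrases Proposition~\ref{prop:diagspe} as a direct consequence of Theorem~\ref{theo:fondrin} (the same fundamental diagram applied to the whole tower $\presp{\rmM}_{\Qp}^{\infty}$), but the underlying content is exactly the instantiation you describe; your version simply specializes to level~$0$ from the start rather than passing through the tower statement. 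One small slip worth flagging: since $\BH_{\Qp}$ is defined over $\Qp$ (so $K_0=\Qp$), one has $\rmH^1_{\HK}(\BH_{\Qp})\cong(\St^{\infty}_{\Qp})'$ as a $\Qp$-vector space with $\varphi=p$, without the tensor by $\Qpbr$ you wrote — that extra factor belongs to the tower version $\brv{\rmH}^1_{\HK}$, not the base. This does not affect your slope computation, which correctly produces $\varphi^2_{\Bcrisp}=p^{w(\lambda)+1}$ and $p^{w(\lambda)-1}$ for the two eigenspaces.
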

Si on prend le quotient par les deux termes de gauche dans le diagramme et que l'on applique le lemme du serpent, on obtient le corollaire suivant :
\medskip
\begin{coro}\label{cor:suitexcool}
On a une suite exacte stricte $\GQp\times G$-équivariante d'espaces de Fréchet
$$
0\rightarrow Q_{\lambda}^0\otimes_L W_{\lambda}\rightarrow  \rmH^{\lambda}_{\pet}\rightarrow \rmB_{\lambda}\wotimes_{\Qp}\omega_{\lambda}\rightarrow Q_{\lambda}^1\wotimes \intn{{\St}^{\lalg}_{\lambda}}'\rightarrow 0.
$$
\end{coro}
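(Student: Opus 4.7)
The plan is to extract the four-term sequence directly from the commutative diagram of Proposition \ref{prop:diagspe} via two successive applications of the snake lemma, following the one-line indication in the paragraph preceding the corollary. First I would split each row of the diagram at its common middle term $\rmB_{\lambda}\wotimes_{\Qp}\CO_{\lambda}$. Writing $K_1$ for the image of this middle term in $\rmH^{\lambda}_{\pet}$ (top row) and $K_2$ for its image in $\rmB_{\lambda}\wotimes_{\Qp}\omega_{\lambda}$ (bottom row), each row decomposes into two short exact sequences, and the original diagram induces maps between the resulting pairs in which the middle column is the identity.

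I would then apply the snake lemma to the first pair of short exact sequences
\[
\begin{tikzcd}[column sep=small]
0\ar[r]&\rmU_{\lambda}^0\otimes_{\Qpd}W_{\lambda}\ar[r]\ar[d,hook]&\rmB_{\lambda}\wotimes_{\Qp}\CO_{\lambda}\ar[r]\ar[d,equal]&K_1\ar[r]\ar[d,"h"]&0\\
0\ar[r]&\rmB_{\lambda}\wotimes_{\Qp}W_{\lambda}\ar[r]&\rmB_{\lambda}\wotimes_{\Qp}\CO_{\lambda}\ar[r]&K_2\ar[r]&0.
\end{tikzcd}
\]
The left vertical map has zero kernel and cokernel exactly $Q_{\lambda}^0\otimes_LW_{\lambda}$, by the definition of $Q_{\lambda}^0$, while the middle is the identity. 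Hence the snake sequence collapses to the short exact sequence
\[
0\rightarrow Q_{\lambda}^0\otimes_LW_{\lambda}\rightarrow K_1\xrightarrow{h}K_2\rightarrow 0,
\]
so $h$ is surjective with kernel $Q_{\lambda}^0\otimes_LW_{\lambda}$.

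Next I would apply the snake lemma to the second pair of short exact sequences, whose columns are $K_1\hookrightarrow\rmH^{\lambda}_{\pet}\twoheadrightarrow\rmU_{\lambda}^1\wotimes_{\Qpd}\intn{\St_{\lambda}^{\lalg}}'$ and $K_2\hookrightarrow\rmB_{\lambda}\wotimes_{\Qp}\omega_{\lambda}\twoheadrightarrow\rmB_{\lambda}\wotimes_{\Qp}\intn{\St_{\lambda}^{\lalg}}'$, the right column being the inclusion of the $\varphi^2=p^{w(\lambda)-1}$ part; this right map has zero kernel and cokernel $Q_{\lambda}^1\wotimes\intn{\St_{\lambda}^{\lalg}}'$. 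Combined with the surjectivity of $h$ established above, the snake sequence identifies the kernel of the middle vertical map with $Q_{\lambda}^0\otimes_LW_{\lambda}$ and its cokernel with $Q_{\lambda}^1\wotimes\intn{\St_{\lambda}^{\lalg}}'$, which is precisely the desired four-term exact sequence.

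The only point requiring care is strict exactness in the category of Fréchet spaces: one must ensure that $K_1$, $K_2$, $Q_{\lambda}^0$ and $Q_{\lambda}^1$ inherit well-defined Fréchet structures and that all maps are strict. This follows from the last sentence of Proposition \ref{prop:diagspe}, which asserts that the vertical maps have closed images (so $K_1$ and $K_2$ are closed Fréchet subspaces of $\rmH^{\lambda}_{\pet}$ and $\rmB_{\lambda}\wotimes_{\Qp}\omega_{\lambda}$), together with the fact that $\rmU_{\lambda}^0\otimes_{\Qpd}L$ and $\rmU_{\lambda}^1\otimes_{\Qpd}L$ are closed in $\rmB_{\lambda}\otimes_{\Qp}L$, making $Q_{\lambda}^0$ and $Q_{\lambda}^1$ Fréchet quotients. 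The $\GQp\times G$-equivariance is automatic since the snake lemma is applied to an equivariant diagram, so the only genuine subtlety is topological; algebraically the corollary is a direct consequence of the two snake arguments.
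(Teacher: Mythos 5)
Your argument is correct and is essentially the paper's own one-line proof---quotient each row by its left-hand term and apply the snake lemma once---unpacked into two successive snake-lemma steps. One small slip in the strictness discussion: $K_1$ and $K_2$ are closed because they are the kernels of the (strict) row maps onto $\rmU_{\lambda}^1\wotimes_{\Qpd}\intn{{\St}^{\lalg}_{\lambda}}'$ and $\rmB_{\lambda}\wotimes_{\Qp}\intn{{\St}^{\lalg}_{\lambda}}'$, not because the vertical maps of Proposition~\ref{prop:diagspe} have closed images (those vertical maps are unrelated to the subspaces $K_1$, $K_2$).
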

\medskip
\begin{rema}\label{rema:morita}
Par l'isomorphisme de Morita (\cf \ref{prop:morita}), la suite exacte courte obtenue à partir du petit complexe de de Rham
$$
0\rightarrow \CO_{\lambda}/W_{\lambda}\xrightarrow{\partial^{w(\lambda)}} \omega_{\lambda}\rightarrow (\St_{\lambda}^{\lalg})'\rightarrow 0,
$$
est le dual de la suite exacte du lemme \ref{lem:extserispeci}
$$
0\rightarrow \St^{\lalg}_{\lambda}\rightarrow \St^{\lan}_{\lambda}\xrightarrow{(u^+)^{w(\lambda)}}B^{\lambda}\rightarrow 0.
$$
\end{rema}
\Subsection{Entrelacements automorphes}
Le but de ce numéro est de démontrer le théorème suivant : 
\medskip
\begin{theo}\label{thm:muautspe}
Soit $\lambda\in P_+$ tel que $w(\lambda)\neq 1$. Soit $\Pi$ une $L$-représentation de Banach unitaire, admissible et absolument irréductible de $G$. Alors, en tant que $L$-représentation de $\GQp$ on a 
$$
\Hom_{L[G]}\intnn{(\Pi^{\lan})'}{\rmH^1_{\pet}\Harg{\BH_C}{\Sym \BV(1)}}\cong
\begin{cases}
 V^{\lambda}_{\CL} & \text{si $\Pi=\Pi_{\CL}^{\lambda}$,}\\
 0& \text{si $\Pi$ n'est pas spéciale.}\\
\end{cases}
$$
\end{theo}
Ce qui combiné au corollaire \ref{cor:complocan} donne :
\medskip
\begin{coro}\label{cor:muautspe}
Soit $\lambda\in P_+$ tel que $w(\lambda)\neq 1$. Soit $\Pi$ une $L$-représentation de Banach unitaire, admissible et absolument irréductible de $G$. Alors, en tant que $L$-représentation de $\GQp$ on a
$$
\Hom_{L[G]}\intnn{\Pi'}{\rmH^1_{\et}\Harg{\BH_C}{\Sym\BV(1)}}\cong
\begin{cases}
 V^{\lambda}_{\CL} & \text{si $\Pi=\Pi_{\CL}^{\lambda}$,}\\
 0& \text{si $\Pi$ n'est pas spéciale.}\\
\end{cases}
$$
\end{coro}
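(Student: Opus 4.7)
Le plan de preuve consiste à déduire le corollaire du théorème \ref{thm:muautspe} en identifiant, des deux côtés de la flèche $\Hom_G$, un sous-espace de vecteurs $G$-bornés, exactement comme dans la démonstration du corollaire \ref{cor:complocan}. Plus précisément, on dispose de deux ingrédients duaux : d'une part, la représentation unitaire $\Pi'$ est le sous-espace des vecteurs $G$-bornés de $(\Pi^{\lan})'$ (conséquence de \cite{codo}, déjà rappelée au numéro \ref{subsubsec:repcuspg}) ; d'autre part, la cohomologie étale $\rmH^1_{\et}\Harg{\BH_{C}}{\Sym\BV(1)}$ doit s'identifier au sous-espace des vecteurs $G$-bornés de la cohomologie proétale. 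Cette seconde identification est l'analogue du théorème \ref{thm:proetoet}, mais pour $\BH_C$ au lieu de $\presp{\rmM}_C^\infty$.

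Première étape : on démontre l'analogue de \ref{thm:proetoet} pour $\BH_C$. L'argument est le même que dans le cas de $\presp{\rmM}_C^\infty$. On trivialise le système local $\Sym \BV$ sur le revêtement pro\'etale $\wh{\rmM_C^{\infty}} \to \BH_C$ (qui factorise via $\rmM_C^0$), puis on applique la suite spectrale de Hochschild-Serre combinée au lemme \ref{lem:cohtor} sur l'absence de torsion dans $\rmH^1_{\et}\Harg{\wh{\rmM_C^{\infty}}}{\Zp(1)}$ et à la proposition \ref{prop:anulcog} pour montrer que $\rmH^1_{\et}\Harg{\BH_C}{\Sym\BV^+}$ est de torsion $p$-primaire bornée. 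Ensuite, l'argument de \cite[Proposition 2.12]{codoni} reproduit dans la preuve de \ref{thm:proetoet}, utilisant un sous-groupe cocompact $\Gamma\subset \PGL_2(\Qp)$ et un recouvrement admissible de $\BH_C$ par un affinoïde via la suite exacte de Čech, fournit exactement l'égalité (\ref{eq:defrezo}) adaptée à $\BH_C$.

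Deuxième étape : on déduit par le même raisonnement que celui de \cite[Lemme 5.9]{codonifac} (l'argument qui établit le corollaire \ref{cor:complocan}) un isomorphisme naturel
$$
\Hom_G\intnn{\Pi'}{\rmH^1_{\et}\Harg{\BH_C}{\Sym\BV(1)}}\cong \Hom_G\intnn{(\Pi^{\lan})'}{\rmH^1_{\pet}\Harg{\BH_C}{\Sym\BV(1)}}.
$$
La compatibilité des deux identifications de vecteurs $G$-bornés donne cette égalité en tirant l'inclusion naturelle $\Pi' \hookrightarrow (\Pi^{\lan})'$ sur l'inclusion $\rmH^1_{\et}\hookrightarrow \rmH^1_{\pet}$ : un morphisme $G$-équivariant $(\Pi^{\lan})'\rightarrow \rmH^1_{\pet}$ envoie automatiquement vecteurs bornés sur vecteurs bornés.

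Troisième étape : on applique directement le théorème \ref{thm:muautspe} au membre de droite pour obtenir la conclusion recherchée. Le principal obstacle est la première étape, à savoir l'adaptation à $\BH_C$ de l'argument de finitude de la proposition \ref{prop:pprimbor}, mais aucune difficulté nouvelle ne survient : la tour pro\'etale $\wh{\rmM_C^{\infty}}\to \BH_C$ a les mêmes propriétés que $\wh{\rmM_C^{\infty}}\to \presp{\rmM}_C^n$ (trivialisation du système local, description explicite des fibres), et l'annulation de cohomologie de groupe fournie par la proposition \ref{prop:anulcog} s'applique sans modification.
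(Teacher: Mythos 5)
Your proof is correct and takes essentially the same approach as the paper: one combines Theorem \ref{thm:muautspe} with the étale/proétale comparison (Theorem \ref{thm:proetoet} and Corollary \ref{cor:complocan}) transported to $\BH_C$. The paper is terse in citing Corollary \ref{cor:complocan} --- which is stated for $\presp{\rmM}_C^\infty$ --- for a statement about $\BH_C$; you correctly identify and fill in this implicit step, and in fact Proposition \ref{prop:pprimbor} already covers $\BH_C \cong \rmM_C^0$ directly (its proof reduces to $\rmM_C^n$ via the footnote), so your first step requires no genuinely new argument.
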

Rappelons que pour $\star\in \{\et,\pet\}$ on a
$$
\rmH^1_{\star}\Harg{\BH_C}{\Sym\BV(1)}=\bigoplus_{\lambda\in P_+} \rmH^1_{\star}\Harg{\BH_C}{\BV_{\lambda}(1)},
$$
Ainsi, il suffit de calculer les entrelacement avec $\rmH^1_{\star}\Harg{\BH_C}{\BV_{\lambda}(1)}$, comme on le fera.
\subsubsection{Réduction de l'entrelacement dérivé}\label{par:redder}
On commence par un lemme pour montrer qu'on peut se ramener au calcul dans la catégorie dérivée.
\medskip
\begin{lemm}\label{lem:dertoproet}
Soit $\lambda \in P_+$ tel que $w(\lambda)\neq 1$ et soit $\Pi$ une $L$-représentation de Banach unitaire, admissible, absolument irréductible de $G$. Alors 
{
$$
\Hom_{L[G]}\intnn{(\Pi^{\lan})'}{\rmH^1_{\pet}\Harg{\BH_C}{\BV_{\lambda}(1)}}=\bHom\ \intnn{(\Pi^{\lan})'[-1]}{\rmR\Gamma_{\!\pet}\Harg{\BH_C}{\BV_{\lambda}(1)}},
$$}

\end{lemm}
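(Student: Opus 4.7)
The plan is to exploit the vanishing results established earlier in the paper to reduce the derived $\bHom$ to an ordinary $\Hom$ in degree $1$. More precisely, the hypothesis $w(\lambda)\neq 1$ will force the proétale cohomology complex $\rmR\Gamma_{\!\pet}\Harg{\BH_C}{\BV_{\lambda}(1)}$ to be concentrated in a single cohomological degree, namely degree $1$, which immediately collapses the derived calculation to a classical one.

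First, I would invoke Proposition \ref{prop:anulco0} applied to $\BH_{\Qp}$ (viewed as the niveau $0$ piece, which equals $\presp{\rmM}^0_C$ up to a disjoint union) to conclude that since $w(\lambda)\neq 1$ we have
$$
\rmH^0_{\pet}\Harg{\BH_C}{\BV_{\lambda}(1)}=0.
$$
Next, I would invoke Proposition \ref{prop:anulco} (and its immediate generalisation to $\BH_C$, or its consequence Corollary \ref{cor:anulco}) together with the fact that $\BH_C$ is a Stein curve hence of cohomological dimension at most $2$, to conclude that $\rmH^i_{\pet}\Harg{\BH_C}{\BV_{\lambda}(1)}=0$ for all $i\geqslant 2$. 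Consequently, the proétale complex has cohomology concentrated in degree $1$, and the natural truncation map furnishes a strict quasi-isomorphism
$$
\rmR\Gamma_{\!\pet}\Harg{\BH_C}{\BV_{\lambda}(1)}\cong \rmH^1_{\pet}\Harg{\BH_C}{\BV_{\lambda}(1)}[-1]
$$
in the appropriate derived category $\sD(\rmC_L)$; this quasi-isomorphism is manifestly $G$-equivariant since the truncation functors preserve the $G$-action.

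The final step is purely formal. Since morphisms in the derived category are invariant under simultaneous shifts, one has
$$
\bHom\ \intnn{(\Pi^{\lan})'[-1]}{\rmH^1_{\pet}\Harg{\BH_C}{\BV_{\lambda}(1)}[-1]}=\bHom\ \intnn{(\Pi^{\lan})'}{\rmH^1_{\pet}\Harg{\BH_C}{\BV_{\lambda}(1)}},
$$
and for two objects of the heart of the natural $t$-structure on the derived category of $\sD(G,L)$-modules of fixed central character, morphisms in the derived category coincide with morphisms in the abelian category, yielding the desired identification with $\Hom_{L[G]}$. The main (and really the only) obstacle is a bookkeeping one: one must ensure that the quasi-isomorphism above can be promoted to one in the derived category of $\sD(G,L)$-modules with the correct central character (so that $\bHom$ is computed in the category where $(\Pi^{\lan})'$ naturally lives); this is routine since the $G$-action and the central character are preserved throughout the truncations, but should be spelled out to avoid any ambiguity.
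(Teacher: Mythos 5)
Your proposal is correct and follows essentially the same route as the paper: both exploit that Proposition \ref{prop:anulco0} and Corollary \ref{cor:anulco} force $\rmH^i_{\pet}\Harg{\BH_C}{\BV_{\lambda}(1)}$ to vanish for $i\neq 1$ when $w(\lambda)\neq 1$. The only cosmetic difference is that the paper organises the final bookkeeping via the hypercohomology spectral sequence $E_2^{i+1,j}=\Ext^{i+1}_{L[\bar G]}\intnn{(\Pi^{\lan})'}{\rmH^j_{\pet}}\Rightarrow \rmH^{i+j}\rmR\Hom_{L[\bar G]}\intnn{(\Pi^{\lan})'[-1]}{\rmR\Gamma_{\!\pet}}$, concentrated in the column $j=1$, whereas you pass directly through the truncation quasi-isomorphism $\rmR\Gamma_{\!\pet}\cong\rmH^1_{\pet}[-1]$ and then use shift-invariance together with the fact that $\Hom$ between heart objects in the derived category coincides with $\Hom$ in the abelian category; these are two presentations of the same argument.
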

\medskip
\begin{rema}
Dans le cas où $w(\lambda)=1$, le $\rmH^0$ n'est pas nul et ce résultat est faux. Dans le cas des coefficients non-triviaux, toute l'information du complexe proétale est contenue dans le premier groupe de cohomologie.
\end{rema}
\medskip
\begin{proof}
Notons $\rmH^i_{\pet}\coloneqq \rmH^i_{\pet}\Harg{\BH_C}{\BV_{\lambda}(1)}$ et $\rmR\Gamma_{\!\pet}\coloneqq\rmR\Gamma_{\!\pet}\Harg{\BH_C}{\BV_{\lambda}(1)}$. On a une suite spectrale qui calcule le terme de droite 
$$
E^{i+1,j}_2\coloneqq \Ext^{i+1}_{L[\bar G]}\intnn{(\Pi^{\lan})'}{\rmH^j_{\pet}}\implies \rmH^{i+j}\rmR\Hom_{L[\bar G]}\intnn{(\Pi^{\lan})'[-1]}{\rmR\Gamma_{\!\pet}}.
$$
Or, puisque $\rmH^j_{\pet}=0$ si $j\neq 0$ d'après le corollaire \ref{cor:anulco}, cette suite spectrale est concentrée sur la colonne $j=1$ et dégénère dès la seconde page. On en déduit le résultat, puisque le seul terme qui contribue au $\rmH^0$ à droite est $\Hom_{L[G]}\intnn{(\Pi^{\lan})'}{\rmH^1_{\pet}}$.
\end{proof}
\subsubsection{Le cas $\Pi=\Pi_{\CL}^{\lambda}$}
À partir de la définition de la cohomologie syntomique et de l'isomorphisme de comparaison, on déduit de la proposition \ref{prop:schraen} la proposition suivante. En combinant cette proposition au lemme \ref{lem:dertoproet}, on obtient la première partie du théorème \ref{thm:muautspe}.
\medskip
\begin{theo}\label{prop:muautder}
Soit $\lambda\in P_+$ et $\CL\in L$, alors 
$$
\bHom\ \intnn{\intn{\Pilan_{\CL}^{\lambda}}'[-1]}{\rmR\Gamma_{\!\pet}\Harg{\BH_C}{\BV_{\lambda}(1)}}=V_{\CL}^{\lambda}.
$$
\end{theo}
\medskip
\begin{proof}
Notons $\Pi={\Pi}_{\CL}^{\lambda}$ et $\rmR\Gamma_{\star}$ le complexe de cohomologie isotriviale de $\BH_C$ à coefficients dans $\BV_{\lambda}(1)$ pour $\star\in\{\pet,\syn,\HK,\dR \}$. Par définition on a un triangle distingué
$$
\rmR\Gamma_{\syn}\rightarrow \left (\rmR\Gamma_{\!\HK}\wotimes_{\Qp}\Bstp\right )^{\varphi=p}\rightarrow \intn{\rmR\Gamma_{\!\dR}\wotimes_{\Qp}\Bdrp}/\Fil^1.
$$
On applique $\rmR\Hom\intnn{\big (\Pi^{\lan}\big )'}{\cdot}$. D'après la proposition, \ref{prop:schraen} on a 
$$
\begin{gathered}
\bHom\ \intnn{\intn{\Pi^{\lan}}'[-1]}{\rmR\Gamma_{\!\HK}}=D_{\CL}^{\lambda},\\
\bHom\ \intnn{\intn{\Pi^{\lan}}'[-1]}{\rmR\Gamma_{\!\dR}}=D_{\CL}^{\lambda}.
\end{gathered}
$$
On en déduit la suite exacte 
$$
0\rightarrow \bHom\ \intnn{\big (\Pi^{\lan}\big )'[-1]}{\rmR\Gamma_{\syn}}\rightarrow X_{\st}^1 ( D_{\CL}^{\lambda} )\rightarrow ( D_{\CL}^{\lambda}\otimes_{\Qp}\Bdrp)/\Fil^1.
$$
Or, la dernière flèche provient de l'inclusion naturelle $X_{\st}^1\left (D_{\CL}^{\lambda}\right )\incl ( D_{\CL}^{\lambda}\otimes_{\Qp}\Bdrp)$, donc la définition de $V_{\CL}^{\lambda}$ assure que
$$
\bHom\ \intnn{\intn{\Pi^{\lan}}'[-1]}{\rmR\Gamma_{\syn}}\cong V_{\CL}^{\lambda}.
$$
Mais la suite spectrale $E_2^{i,j}$ qui calcule le terme de gauche est nulle en dehors des colonnes $j=0,1$ en vertu du corollaire \ref{cor:anulco}.
Finalement, le théorème \ref{thm:petsyn} nous donne un isomorphisme
$$
\bHom\ \intnn{\intn{\Pi^{\lan}}'[-1]}{\rmR\Gamma_{\syn}}\cong \bHom\ \intnn{\intn{\Pi^{\lan}}'[-1]}{\rmR\Gamma_{\!\pet}},
$$
ce qui achève la preuve.
\end{proof}
\subsubsection{Exclusivité}
On montre maintenant la seconde partie du théorème \ref{thm:muautspe}. 
\medskip
\begin{prop}
Soit $\lambda\in P_+$ tel que $w(\lambda)\neq 1$. Soit $\Pi$ une $L$-représentation de Banach unitaire admissible absolument irréductible de $G$. Si $\Pi$ n'est pas de la forme $\Pi_{\CL}^{\lambda}$, alors
$$
\Hom_{L[G]}\intnn{(\Pi^{\lan})'}{\rmH^1_{\pet}\Harg{\BH_C}{\BV_{\lambda}(1)}}=0.
$$
\end{prop}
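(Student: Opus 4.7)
La stratégie consiste à appliquer le foncteur $\Hom_{L[G]}\intnn{(\Pi^{\lan})'}{\ \cdot\ }$ à la suite exacte à quatre termes du corollaire~\ref{cor:suitexcool} et à en annuler les deux contributions extrémales. Posons
$$
K\coloneqq \Ker\intn{\rmB_{\lambda}\wotimes_{\Qp}\omega_{\lambda}\rightarrow Q_{\lambda}^1\wotimes_{\Qpd}\intn{\St_{\lambda}^{\lalg}}'},
$$
de sorte que la suite exacte se scinde en deux suites exactes courtes d'espaces de Fréchet
$$
0\rightarrow Q_{\lambda}^0\otimes_L W_{\lambda}\rightarrow \rmH^{\lambda}_{\pet}\rightarrow K\rightarrow 0,\quad 0\rightarrow K\rightarrow \rmB_{\lambda}\wotimes_{\Qp}\omega_{\lambda}\rightarrow Q_{\lambda}^1\wotimes_{\Qpd}\intn{\St_{\lambda}^{\lalg}}'\rightarrow 0.
$$
L'application de $\Hom_{L[G]}\intnn{(\Pi^{\lan})'}{\ \cdot\ }$ à ces deux suites fournit un complexe
$$
\Hom_{L[G]}\intnn{(\Pi^{\lan})'}{Q_{\lambda}^0\otimes_L W_{\lambda}}\rightarrow \Hom_{L[G]}\intnn{(\Pi^{\lan})'}{\rmH^{\lambda}_{\pet}}\rightarrow \Hom_{L[G]}\intnn{(\Pi^{\lan})'}{\rmB_{\lambda}\wotimes_{\Qp}\omega_{\lambda}},
$$
et il suffit d'annuler les deux extrémités sous l'hypothèse que $\Pi$ n'est pas de la forme $\Pi_{\CL}^{\lambda}$.

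Pour la première annulation, la finitude de la dimension de $W_{\lambda}$ et la réflexivité de $\Pi^{\lan}$ (qui est un $L$-espace de type $LB$) permettent d'identifier
$$
\Hom_{L[G]}\intnn{(\Pi^{\lan})'}{W_{\lambda}}\cong\Hom_{L[G]}\intnn{W_{\lambda}^*}{\Pi^{\lan}}.
$$
Une application non nulle à droite fournirait une sous-représentation de $\Pi^{\lan}$ isomorphe à la représentation algébrique irréductible $W_{\lambda}^*$, de dimension $w(\lambda)\geqslant 2$. Or, par admissibilité et irréductibilité absolue de $\Pi$, soit $\Pi$ est un caractère (cas où $W_{\lambda}^*\incl \Pi^{\lan}$ forcerait $w(\lambda)=1$, exclu par hypothèse), soit $\Pi$ est de dimension infinie, auquel cas toute sous-représentation de dimension finie de $\Pi^{\lan}$ serait une sous-représentation fermée de~$\Pi$, contredisant l'irréductibilité. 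D'où $\Hom_{L[G]}\intnn{W_{\lambda}^*}{\Pi^{\lan}}=0$ dans tous les cas.

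Pour la seconde annulation, l'isomorphisme de Morita $\omega_{\lambda}\cong \intn{\St_{\lambda}^{\lan}}'$ (proposition~\ref{prop:morita}), la trivialité de l'action de $G$ sur $\rmB_{\lambda}$ et la réflexivité de $\Pi^{\lan}$ fournissent
$$
\Hom_{L[G]}\intnn{(\Pi^{\lan})'}{\rmB_{\lambda}\wotimes_{\Qp}\omega_{\lambda}}\cong \rmB_{\lambda}\wotimes_{\Qp}\Hom_{L[G]}\intnn{\St_{\lambda}^{\lan}}{\Pi^{\lan}}.
$$
Une application non nulle $\St_{\lambda}^{\lan}\rightarrow \Pi^{\lan}$ se restreint de manière non nulle à sa socle $\St_{\lambda}^{\lalg}$ (lemme~\ref{lem:extserispeci}), produisant une inclusion $\St_{\lambda}^{\lalg}\incl \prescript{\lalg}{}{\Pi}$. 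Puisque $\St_{\lambda}^{\lalg}$ est irréductible et que $\prescript{\lalg}{}{\Pi}$ est irréductible ou nul, on aurait $\prescript{\lalg}{}{\Pi}\cong \St_{\lambda}^{\lalg}$. La classification des représentations unitaires absolument irréductibles admissibles de $G$ en fonction de leur partie localement algébrique (\cf les numéros~\ref{subsubsec:repcuspg} et~\ref{subsec:seriespe}) impose alors $\Pi\cong \Pi_{\CL}^{\lambda}$ pour un certain $\CL\in L$, en particulier sans torsion par un caractère non trivial (ce que l'on vérifie en comparant les caractères centraux). Lorsque $\Pi$ n'est pas de cette forme, la seconde contribution s'annule donc également, et la proposition en découle.

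La principale difficulté est d'ordre technique~: il faut manipuler correctement les identifications topologiques (réflexivité des $LB$-espaces, sortie de $\rmB_{\lambda}$ du $\Hom$ via la trivialité de l'action de $G$), mais celles-ci sont standards dans le contexte de l'article et analogues à celles utilisées dans le cas cuspidal (\cf preuve du théorème~\ref{thm:cuspautmult}). Le cœur mathématique de l'argument est l'invocation de la classification issue de la correspondance de Langlands $p$-adique, qui permet de traduire la présence de $\St_{\lambda}^{\lalg}$ dans la socle de $\Pi^{\lan}$ en une contrainte rigide sur la structure de~$\Pi$.
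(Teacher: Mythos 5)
Your strategy is essentially the same as the paper's (same exact sequence from Corollaire~\ref{cor:suitexcool}, same reduction via Morita and the classification of unitary completions of $\St_{\lambda}^{\lalg}$), but there is a genuine gap in the treatment of the second annulation.

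You assert that a nonzero map $f\colon\St^{\lan}_{\lambda}\rightarrow\Pi^{\lan}$ \emph{se restreint de manière non nulle à sa socle $\St^{\lalg}_{\lambda}$}, citing le lemme~\ref{lem:extserispeci}. That lemma only provides the exact sequence $0\rightarrow\St^{\lalg}_{\lambda}\rightarrow\St^{\lan}_{\lambda}\rightarrow B^{\lambda}\rightarrow 0$; it does not rule out the case where $f$ vanishes on the socle and therefore factors through the irreducible locally analytic principal series $B^{\lambda}$. To exclude that case one must show $\Hom_{L[G]}\intnn{B^{\lambda}}{\Pi^{\lan}}=0$. This is the real content here: the paper obtains it by noting that any such map composed with $\Pi^{\lan}\incl\Pi$ factors through the universal unitary completion of $B^{\lambda}$, which vanishes by the criterion of \cite[Lemma 2.1]{emep}. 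Without this step the argument does not close, because a priori $\Pi^{\lan}$ could contain $B^{\lambda}$ as a closed subrepresentation. As a secondary point, your claim that $\prescript{\lalg}{}{\Pi}$ is irreducible or zero is left unjustified; the paper sidesteps it by going directly from $\St^{\lalg}_{\lambda}\incl\Pi$ to the statement that $\Pi$ is an admissible unitary completion of $\St^{\lalg}_{\lambda}$, then applying \cite[Theorem 1.3]{codopa}.

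Your handling of the first annulation (ruling out $W_{\lambda}^{*}\incl\Pi^{\lan}$ by admissibility and absolute irreducibility of $\Pi$, distinguishing the character case from the infinite-dimensional case) is a valid variant of the paper's shorter appeal to unitarity of $\Pi$, and both routes work.
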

\medskip
\begin{proof}
On utilise la suite exacte du corollaire \ref{cor:suitexcool}, issue du diagramme fondamental, pour obtenir une suite exacte courte :
$$
0\rightarrow Q_{\lambda}^0\otimes_L W_{\lambda}\rightarrow  \rmH^{\lambda}_{\pet}\rightarrow K\rightarrow 0
$$
où $K\coloneqq \Ker(\rmB_{\lambda}\otimes_{\Qp}\omega_{\lambda}\rightarrow Q_{\lambda}^1\wotimes_L \intn{{\St}^{\lalg}_{\lambda}}')$. De plus, la dualité de Morita assure que $\omega_{\lambda}\cong\intn{{\St}^{\lan}_{\lambda}}'$ et comme $\Pi^{\lan}\not \cong W_{\lambda}$ puisque $\Pi$ est supposé unitaire on obtient des plongements fermés
\begin{equation}\label{eq:injj}
\Hom_{L[G]}\intnn{(\Pi^{\lan})'}{\rmH^{\lambda}_{\pet}}\incl \Hom_{L[G]}\intnn{(\Pi^{\lan})'}{K}\incl \Hom_{L[G]}\intnn{{\St}_{\lambda}^{\lan}}{\Pi^{\lan}}\wotimes_{\Qp}\rmB_{\lambda}.
\end{equation}
Montrons que si $\Pi^{\lalg} = 0$, alors $\Hom_{L[G]}\intnn{{\St}_{\lambda}^{\lan}}{\Pi^{\lan}} = 0$. Puisque d'après le lemme \ref{lem:extserispeci} on a la suite exacte 
$$
0\rightarrow{\St}^{\lalg}_{\lambda}\rightarrow {\St}^{\lan}_{\lambda}\xrightarrow{(u^+)^{w(\lambda)}}B^{\lambda}\rightarrow 0,
$$
et que $\Hom_{L[G]}\intnn{{\St}^{\lalg}_{\lambda}}{\Pi^{\lan}} =\Hom_{L[G]}\intnn{{\St}^{\lalg}_{\lambda}}{\Pi^{\lalg}}=0$, on en déduit que 
$$
\Hom_{L[G]}\intnn{{\St}_{\lambda}^{\lan}}{\Pi^{\lan}}\cong \Hom_{L[G]}\intnn{B^{\lambda}}{\Pi^{\lan}}.
$$
Il faut justifier que $\Hom_{L[G]}\intnn{B^{\lambda}}{\Pi^{\lan}}=0$. Soit $B^{\lambda}\rightarrow \Pi^{\lan}$ une application $L$-linéaire, continue et $G$-équivariante. Par la propriété universelle du complété unitaire universel, la composée $B^{\lambda}\rightarrow \Pi^{\lan}\incl \Pi$ se factorise à travers le complété unitaire universel de $B^{\lambda}$. Or, ce complété est nul d'après le critère donné par \cite[Lemma 2.1]{emep} et donc la flèche initiale $B^{\lambda}\rightarrow \Pi^{\lan}$ est nulle.

Supposons que $\Hom_{L[G]}\intnn{(\Pi^{\lan})'}{\rmH^{\lambda}_{\pet}}\neq 0$. On a montré que $\Pi^{\lalg}\neq 0$, ce qui signifie que le socle de $\Pi^{\lan}$ est contenu dans $\Pi^{\lalg}$. On déduit de l'injection (\ref{eq:injj}) que \hbox{$\Hom_{L[G]}\intnn{{\St}_{\lambda}^{\lan}}{\Pi^{\lan}}\neq 0$}. Soit $f\colon {\St}_{\lambda}^{\lan}\rightarrow{\Pi^{\lan}}$ une application non nulle. Le socle de ${\St}_{\lambda}^{\lan}$ est ${\St}_{\lambda}^{\lalg}$ mais comme $f\neq 0$ et $\Hom_{L[G]}\intnn{B^{\lambda}}{\Pi^{\lan}}=0$, $f$ se restreint en une application non nulle ${\St}_{\lambda}^{\lalg}\rightarrow \Pi^{\lan}$ qui est injective puisque ${\St}_{\lambda}^{\lalg}$ est irréductible. Cette application induit, en composant avec l'injection $\Pi^{\lan}\incl \Pi$, une injection ${\St}_{\lambda}^{\lalg}\incl \Pi$. Mais d'après  \cite[Theorem 1.3]{codopa}, les complétés unitaires admissibles absolument irréductibles de ${\St}_{\lambda}^{\lalg}$ sont les $\Pi_{\CL}^{\lambda}$ et on en déduit qu'il existe un $\CL\in L$ tel que  $\Pi_{\CL}^{\lambda}\cong \Pi$.

\end{proof}
\subsubsection{Le cas réductible}
Dans le cas où $w(\lambda)=1$ les représentations $V_{\CL}^{\lambda}$ et $\Pi_{\CL}^{\lambda}$ sont réductibles et le théorème \ref{thm:muautspe} et son corollaire tombent en défaut sous cette forme puisque comme $\rmH^0_{\pet}\Harg{\BH_C}{\Qp(\lambda_1)}\neq 0$. Néanmoins, à partir du théorème \ref{prop:muautder}, on peut toutefois calculer la multiplicité d'une représentation indécomposable de $G$ dans le complexe de cohomologie étale et obtenir le résultat attendu. On n'obtient qu'une exclusivité partielle que l'on énonce à l'aide de la théorie des blocs de Paskunas (\cf  \cite{pas}). 
\medskip
\begin{prop}\label{prop:multred}
Soit $\lambda\in P_+$. Soit $\Pi$ une $L$-représentation unitaire, admissible et indécomposable de $G$, alors
$$
\bHom\ \intnn{\Pi'[-1]}{\rmR\Gamma_{\!\et}\Harg{\BH_C}{\BV_{\lambda}(1)}}=
\begin{cases}
  V^{\lambda}_{\CL} & \text{ si $\Pi$ est de la forme $\Pi_{\CL}^{\lambda}$,}\\
  0 & \text{ si $\Pi$ n'est pas dans le bloc de la Steinberg.}
\end{cases}
$$
\end{prop}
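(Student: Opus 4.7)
The plan is to split the proof according to whether $w(\lambda)>1$ (irreducible case) or $w(\lambda)=1$ (reducible case, essentially trivial coefficients up to a twist by a character of $\GQp$). The overall strategy is to compute the derived interlacing in the pro-étale setting first, then pass to étale cohomology via the bounded-vectors identification of Theorem \ref{thm:proetoet} and Corollary \ref{cor:complocan}.

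For $w(\lambda)>1$, the representations $V_{\CL}^{\lambda}$ and $\Pi_{\CL}^{\lambda}$ are irreducible, so any indecomposable unitary admissible $\Pi$ falling in the statement is in fact irreducible, and one can reuse what is already done for Theorem \ref{thm:muautspe}. By Corollary \ref{cor:anulco}, $\rmH^j_{\pet}\Harg{\BH_C}{\BV_{\lambda}(1)}$ vanishes for $j\neq1$, and the same argument as in Lemma \ref{lem:dertoproet} forces the derived interlacing to collapse to the ordinary $\Hom$ with $\rmH^1_{\et}$. Applying Corollary \ref{cor:muautspe} then finishes this case.

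For $w(\lambda)=1$, I would not try to read the answer off a single cohomology group; instead, following the analogy with Dat's theorem mentioned in the remark after Theorem \ref{thm:speder}, I would compute the derived interlacing directly on the complex $\rmR\Gamma_{\!\pet}$ and then descend to étale cohomology. Concretely, apply $\rmR\Hom_{L[\bar G]}(\Pi'[-1],-)$ to the fibre sequence defining $\rmR\Gamma_{\!\syn}\Harg{\BH_C}{\BV_{\lambda}(1)}$, use the Schraen-type computation of Proposition \ref{prop:schraen} (which is stated for arbitrary $\lambda\in P_+$ and in particular applies when $w(\lambda)=1$) to identify
\[
\bHom\bigl((\Pi_{\CL}^{\lambda})^{\lan\prime}[-1],\rmR\Gamma_{\!\HK}\Harg{\BH_{\Qp}}{\BD_{\lambda}}\bigr)\cong D_{\CL}^{\lambda}\cong\bHom\bigl((\Pi_{\CL}^{\lambda})^{\lan\prime}[-1],\rmR\Gamma_{\!\dR}\Harg{\BH_{\Qp}}{\CE_{\lambda}}\bigr),
\]
and deduce by the very definition of $V_{\CL}^{\lambda}$ via the Fontaine recipe that the resulting pro-étale derived interlacing equals $V_{\CL}^{\lambda}$, exactly as in the proof of Theorem \ref{prop:muautder}. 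The passage from pro-étale to étale is then effected by the bounded-vectors statement: since $\Pi_{\CL}^{\lambda}$ is the universal unitary completion of its locally analytic vectors, one obtains the étale version by restriction to $G$-bounded vectors, and the cohomological concentration of $\rmR\Gamma_{\!\et}$ in degrees $0,1$ combined with vanishing of the relevant $\Ext^2$ groups ensures that no information is lost. The exclusivity statement outside the Steinberg block is extracted from Paskunas' block decomposition: every Jordan-Hölder factor of $\rmH^0_{\et}$ and of the constituents of $\rmH^1_{\et}$ hit by the computation above lies in the Steinberg block, so if $\Pi$ is not in that block, then $\bHom_{L[\bar G]}(\Pi'[-1],\rmH^j_{\et})$ vanishes for all $j$ by standard orthogonality of blocks.

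The main obstacle will be the reducible case: one must carefully track the two-step extension underlying $V_{\CL}^{\lambda}\cong (L(\lambda_1+1)\to L(\lambda_1))$ through the derived interlacing, verifying that the connecting maps in the associated spectral sequence (pairing $\Hom_G(\Pi',\rmH^1_{\et})$ with $\Ext^1_G(\Pi',\rmH^0_{\et})$) produce exactly the extension class corresponding to the invariant $\CL$ and not a trivial one. This amounts to checking that the Schraen-type computation remains non-degenerate at $w(\lambda)=1$, so that the filtered $(\varphi,N)$-module structure on $D_{\CL}^{\lambda}$ forces, through Fontaine's recipe, the correct reducible Galois representation rather than a semisimplification. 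Once this is done, the vanishing for $\Pi$ outside the Steinberg block follows formally from Paskunas' block decomposition as indicated above.
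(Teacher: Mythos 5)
Your outline is sound on the pro-étale side: Theorem \ref{prop:muautder} does indeed give $\bHom\bigl((\Pilan_{\CL}^{\lambda})'[-1],\rmR\Gamma_{\!\pet}\bigr)\cong V_{\CL}^{\lambda}$ for all $\lambda\in P_+$, including $w(\lambda)=1$, and the exclusivity outside the Steinberg block by Paskunas' blocks is the right mechanism. However, the ``main obstacle'' you identify is misplaced, and you gloss over the step where the real work is. Proposition \ref{prop:schraen} is stated and proved for arbitrary $\lambda\in P_+$, so there is nothing to verify about ``non-degeneracy at $w(\lambda)=1$.'' What is not automatic is the pro-étale-to-étale comparison: Corollary \ref{cor:complocan} only identifies $\Hom_G\intnn{\Pi'}{\rmH^1_{\et}}$ with $\Hom_G\intnn{(\Pi^{\lan})'}{\rmH^1_{\pet}}$, whereas when $w(\lambda)=1$ the derived interlacing also has a term $\Ext^1_{L[\bar G]}\intnn{\Pi'}{\rmH^0_{\et}}=\rmH^1\Harg{G}{\Pi}(1)$ on the étale side (with $\rmH^0_{\et}\cong L(1)$ non-zero), to be compared with $\rmH^1\Harg{G}{\Pi^{\lan}}(1)$ on the pro-étale side. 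Your sentence about ``vanishing of the relevant $\Ext^2$ groups ensuring no information is lost'' does not address this, and in fact the issue is not an $\Ext^2$ one.

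Concretely, what the paper must and does prove, and what is missing from your sketch, is: (i) the natural map $\rmH^1\Harg{G}{\Pi}(1)\to\rmH^1\Harg{G}{\Pi^{\lan}}(1)$ is injective — this requires showing that a Banach extension of $L$ by $\Pi$ which splits after taking locally analytic vectors is itself split (obtained by taking universal unitary completions); and (ii) both of these $\rmH^1$ groups are $1$-dimensional, the non-vanishing of $\rmH^1\Harg{G}{\Pi}$ being forced by the fact that $V_{\CL}^{[0,1]}$ admits no $\GQp$-equivariant copy of $L$. Only then does the map of short exact sequences force $\bHom\intnn{\Pi'[-1]}{\rmR\Gamma_{\!\et}}\cong V_{\CL}^{[0,1]}$. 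Without (i) and (ii), the ``bounded vectors'' passage yields at most an injection into $V_{\CL}^{\lambda}$, and one cannot rule out that the extension of $L(\lambda_1+1)$ by $L(\lambda_1)$ one actually obtains étale-side is a proper subobject (e.g.\ only the sub $L(\lambda_1+1)$). So the proposal has the right skeleton but leaves precisely the delicate verification — the one that distinguishes the reducible Galois extension $V_{\CL}^{\lambda}$ from its semisimplification — unaddressed.
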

\begin{proof}
On a démontré ce résultat dans le cas où $w(\lambda)\neq 1$ donc on peut supposer $w(\lambda)=1$ et même $\lambda_1=0$, quitte à tordre par une puissance du caractère cyclotomique. Notons $\rmH^i_{\et}=\rmH^i_{\et}\Harg{\BH_C}{\Qp(1)}$, alors la suite spectrale
$$
E^{i+1,j}_2\coloneqq \Ext^{i+1}_{L[\bar G]}\intnn{\Pi'}{\rmH^j_{\et}}\implies \rmH^{i+j}\rmR\Hom\intnn{\Pi'[-1]}{\rmR\Gamma_{\!\et}}
$$
nous donne la suite exacte courte 
$$
0\rightarrow \Ext^{1}_{L[\bar G]}\intnn{\Pi'}{\rmH^0_{\et}}\rightarrow \bHom\ \intnn{\Pi'[-1]}{\rmR\Gamma_{\!\et}}\rightarrow\Hom_{L[G]}\intnn{\Pi'}{\rmH^1_{\et}}\rightarrow 0,
$$
où $\bH_{\et}\coloneqq  \bHom\ \intnn{\Pi'[-1]}{\rmR\Gamma_{\!\et}}$.
Puisque $\rmH^0_{\et}\cong L(1)$ et $\rmH^1_{\et}\cong \intn{\St^0}'$, la suite exacte ci-dessus s'écrit 
$$
0\rightarrow \rmH^1\Harg{G}{\Pi}(1)\rightarrow \bH_{\et}\rightarrow\Hom_{L[G]}\intnn{\St^0_L}{\Pi}\rightarrow 0.
$$
\begin{itemize}
\itemb Si $\Pi$ n'est pas dans le bloc de la Steinberg, alors $\Hom_{L[G]}\intnn{\St^0_L}{\Pi}=0$ et comme la représentation triviale est aussi dans ce bloc, on obtient $\rmH^1\Harg{G}{\Pi}=0$. Ainsi $\bH_{\et}=0$ dans ce cas. 
\itemb Supposons maintenant que $\Pi=\Pi_{\CL}^{[0,1]}$ pour $\CL\in L$. On obtient la même suite exacte que précédemment pour la cohomologie proétale :
$$
0\rightarrow \rmH^1\Harg{G}{\Pi^{\lan}}(1)\rightarrow \bH_{\pet}\rightarrow\Hom_{L[G]}\intnn{\intn{\Pi^{\lan}}'}{\rmH^1_{\pet}}\rightarrow 0.
$$
Montrons que $\Hom_{L[G]}\intnn{\intn{\Pi^{\lan}}'}{\rmH^1_{\pet}}\cong \Hom_{L[G]}\intnn{\St^{\infty}}{\Pi^{\lan}}$. On a une flèche naturelle, qui provient du diagramme de la proposition \ref{prop:diagspe}
$$
\Hom_{L[G]}\intnn{\intn{\Pi^{\lan}}'}{\rmH^1_{\pet}}\rightarrow \Hom_{L[G]}\intnn{\St^{\infty}}{\Pi^{\lan}}.
$$
Pour montrer que cette flèche est un isomorphisme on considère le diagramme commutatif suivant :
\begin{center} 
\begin{tikzcd}
\Hom_{L[G]}\intnn{\Pi'}{\rmH^1_{\et}}\ar[d]\ar[r] &\Hom_{L[G]}\intnn{\St^{0}}{\Pi}\ar[d]\\
\Hom_{L[G]}\intnn{\intn{\Pi^{\lan}}'}{\rmH^1_{\pet}}\ar[r] &\Hom_{L[G]}\intnn{\St^{\infty}}{\Pi^{\lan}}
\end{tikzcd}
\end{center}
La flèche horizontale en haut est un isomorphisme, puisque $\rmH^1_{\et}\cong \intn{\St^{0}}'$. Comme $\St^0$ est le complété unitaire universel de $\St^{\infty}$ et $\Pi$ celui de $\Pi^{\lan}$ d'après \cite{codo}, la flèche verticale de droite est un isomorphisme. Finalement, d'après le corollaire \ref{cor:complocan}, la flèche verticale de gauche est un isomorphisme. On en déduit que la flèche horizontale en bas est un isomorphisme, comme on voulait. 

D'après le théorème \ref{prop:muautder}, on sait que $\bH_{\pet}\cong V_{\CL}^{[0,1]}$ et on obtient un diagramme commutatif, $\GQp$-équivariant à lignes exactes
{
\begin{center} 
\begin{tikzcd}
0 \ar[r]& \rmH^1\Harg{G}{\Pi}(1)\ar[d,hook]\ar[r]&\bH_{\et}\ar[d]\ar[r]&\Hom_{L[G]}\intnn{\St^0_L}{\Pi}\ar[d,equal]\ar[r]& 0\\
0 \ar[r]& \rmH^1\Harg{G}{\Pi^{\lan}}(1)\ar[r]&V_{\CL}^{[0,1]}\ar[r]&\Hom_{L[G]}\intnn{\St^{\infty}_L}{\Pi^{\lan}}\ar[r]& 0
\end{tikzcd}
\end{center}}
Pour justifier que la flèche verticale de gauche est bien injective, il faut montrer qu'une extension de $L$ par $\Pi$, qui se scinde lorsqu'on prend les vecteurs localement analytiques, est scindée. Supposons que $\bE$ est une $L$-représentation de Banach unitaire de $G$, extension de $L$ par $\Pi$ :
$$
0\rightarrow \Pi \rightarrow \bE\rightarrow L\rightarrow 0.
$$
Supposons que cette extension se scinde lorsqu'on prend les vecteurs localement analytiques, \ie  $\bE^{\lan}\cong\Pi^{\lan}\oplus L$. On prend maintenant le complété unitaire universel pour obtenir $\bE\cong\Pi\oplus L$ (\cf  \cite{codo}), c'est-à-dire que l'extension initiale était scindée.

De plus, notons que $\Hom_{L[G]}\intnn{\St^{\infty}_L}{\Pi}$ est un $L$-espace vectoriel de dimension $1$ et donc $\rmH^1\Harg{G}{\Pi^{\lan}}$ est un $L$-espace vectoriel de dimension $1$. Mais à partir du diagramme, on obtient de plus que $\rmH^1\Harg{G}{\Pi}$ est un $L$-espace vectoriel de dimension $1$ puisque s'il était nul, on obtiendrait une injection $\GQp$-équivariante $L\incl V_{\CL}^{[0,1]}$ ce qui est impossible. Les flèches verticales dans le diagramme ci-dessus sont donc des isomorphismes et on a bien démontré que $\bH_{\et}\cong V_{\CL}^{[0,1]}$.
\end{itemize}
\end{proof}

\Subsection{Entrelacement des $V_{\CL}^{\lambda}$}
Le but de ce numéro est de montrer le théorème suivant :
\medskip
\begin{theo}\label{thm:mugalspe}
Soit $\lambda \in P_+$ tel que $w(\lambda)\neq1$,
$$
\Hom_{L[\GQp ]}\intnn{V_{\CL}^{\lambda}}{\rmH^1_{\pet}\Harg{\BH_C}{\BV_{\lambda}(1)}}\cong \intn{{\Sigma}_{\CL}^{\lambda}}'\oplus {W}_{\lambda}.
$$
\end{theo}
\medskip
une conséquence de ce théorème et du théorème \ref{thm:proetoet} est la multiplicité étale : en effet, ${W}_{\lambda}$ n'a pas de vecteurs $G$-bornés comme $w(\lambda)>1$ et le complété unitaire universel de ${\Sigma}_{\CL}^{\lambda}$ et $\Pi_{\CL}^{\lambda}$ comme on l'a rappelé au numéro \ref{subsec:seriespe}.
\medskip
\begin{coro}\label{thm:mugalspeet}
Soit $\lambda \in P_+$ tel que $w(\lambda)\neq1$,
$$
\Hom_{L[\GQp ]}\intnn{V_{\CL}^{\lambda}}{\rmH^1_{\et}\Harg{\BH_{\Qp}}{\Sym\BV(1)}}\cong (\Pi_{\CL}^{\lambda})'.
$$
\end{coro}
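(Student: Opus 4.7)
The plan is to deduce this corollary from Theorem \ref{thm:mugalspe} by passing from the proétale to the étale cohomology via Theorem \ref{thm:proetoet}, and then extracting the $G$-bounded vectors in the two summands that appear in the proétale multiplicity. First, using the decomposition $\Sym\BV = \bigoplus_{\lambda' \in P_+} \BV_{\lambda'} \otimes_L \cz{W}_{\lambda'}$ of Proposition \ref{prop:calcdec}, the space $\Hom_{L[\GQp]}\intnn{V_\CL^\lambda}{\rmH^1_{\et}\Harg{\BH_{\Qp}}{\Sym\BV(1)}}$ splits as a sum indexed by $\lambda' \in P_+$; only the $\lambda' = \lambda$ summand can contribute, since the Hodge-Tate weights of $V_\CL^\lambda$ are $\{\lambda_1, \lambda_2\}$ while those of the $\lambda'$-isotypic component of the cohomology are controlled by $\lambda'$. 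This reduces the problem to computing the multiplicity with coefficients in $\BV_\lambda(1)$.

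Next, since the $\GQp$- and $G$-actions on $\rmH^1_{\pet}\Harg{\BH_C}{\BV_\lambda(1)}$ commute, the functor $\Hom_{L[\GQp]}\intnn{V_\CL^\lambda}{-}$ commutes with extracting $G$-bounded vectors. Combined with Theorem \ref{thm:proetoet}, which identifies $\rmH^1_{\et}$ with the subspace of $G$-bounded vectors in $\rmH^1_{\pet}$, this yields
$$
\Hom_{L[\GQp]}\intnn{V_\CL^\lambda}{\rmH^1_{\et}\Harg{\BH_{\Qp}}{\BV_\lambda(1)}} \cong \Hom_{L[\GQp]}\intnn{V_\CL^\lambda}{\rmH^1_{\pet}\Harg{\BH_C}{\BV_\lambda(1)}}^{G\text{-}\rmb}.
$$
By Theorem \ref{thm:mugalspe}, the right-hand side (before extracting $G$-bounded vectors) is isomorphic, as a $G$-representation, to $\intn{\Sigma_\CL^\lambda}' \oplus W_\lambda$, so the computation reduces to determining the $G$-bounded vectors in each of these two summands.

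For the summand $W_\lambda$: since $w(\lambda) > 1$, this finite-dimensional algebraic representation of $\GL_2(\Qp)$ is nontrivial, and the orbit of any nonzero vector under the diagonal matrices $\diag(p^n, 1)$ is unbounded as $n \to +\infty$; hence $W_\lambda^{G\text{-}\rmb} = 0$. For the summand $\intn{\Sigma_\CL^\lambda}'$: as recalled in \S\ref{subsec:seriespe}, the theorem of Colmez-Dospinescu \cite{codo} identifies $\Pi_\CL^\lambda$ with the universal unitary completion of $\Sigma_\CL^\lambda$, so the $G$-bounded vectors in the strong dual $\intn{\Sigma_\CL^\lambda}'$ coincide with $\intn{\Pi_\CL^\lambda}'$. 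Combining these two computations yields the claimed isomorphism. The substantive input is Theorem \ref{thm:mugalspe}; once it is in hand, the passage to the étale level and the identification of $G$-bounded vectors are essentially formal, so no real obstacle remains at this stage.
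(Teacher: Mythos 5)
Your proposal is correct and follows essentially the same route as the paper, which deduces the corollary from Theorem~\ref{thm:mugalspe} and Theorem~\ref{thm:proetoet} by observing that $W_{\lambda}$ has no $G$-bounded vectors and that the universal unitary completion of $\Sigma_{\CL}^{\lambda}$ is $\Pi_{\CL}^{\lambda}$. One small imprecision: the orbit of a nonzero $v\in W_{\lambda}$ under $\mathrm{diag}(p^n,1)$, $n\to+\infty$, need not be unbounded — if $w(\lambda)$ is odd and $v$ lies in the zero-weight space of the diagonal torus, that one-parameter orbit is in fact fixed (the central character of $W_{\lambda}$ is trivial) — so to conclude $W_{\lambda}^{G\text{-b}}=0$ one should also invoke, say, the unipotent subgroup, or simply the general fact that a nontrivial finite-dimensional algebraic representation of $G$ is nonunitary.
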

\medskip
On supposera toujours que $w(\lambda)\neq1$.

\subsubsection{Les $L$-presque-$C$-représentations}
On va utiliser $\CC_L$ la catégorie des $L$-presque-$C$-représentations, construite à partir de la catégorie des presque-$C$-représentations (cf. \cite{focp}), noté $\CC$, munis d'une structure de $L$-espace vectoriel. Comme pour les presques-$C$-représentations, $\CC_L$ est muni d'une $L$-hauteur $h_L$ et d'une dimension principale $d$. Si $X$ et $Y$ sont des $L$-presque-$C$-représentations, les groupes $\Ext^n_{\CC_L}(X,Y)$ sont des $L$-espaces vectoriels de dimension finie, nuls si $n\geqslant 3$ et pour $i=0,1,2$, on a une dualité parfaite
$$
\Ext_{\CC_L}^i(X,Y)\times \Ext_{\CC_L}^{2-i}(Y,X(1))\rightarrow L.
$$ 
où $X(1)$ est le tordu de $X$ par le caractère cyclotomique. De plus, on a une formule d'Euler-Poincaré qui s'écrit $\chi_L(X,Y) = -h_L(X)h_L(Y)$ où
$$
\chi_L(X,Y)\coloneqq \sum_{i=0}^2(-1)^i\dim_{L}\Ext_{\CC_L}^i(X,Y).
$$
Tous ces résultats se déduisent directement des résultats correspondant dans $\CC$.

On peut réinterpréter les presque-$C$-représentations (et surement aussi les $L$-presque-$C$-représentations ce qui ne sera pas nécessaires) en termes de fibrés sur la courbe de Fargues-Fontaine$\CX_{\FF}=\CX_{\FF,C}$. Notons $\CM$ la catégorie des $\CO_{\FF}\coloneqq \CO_{\CX_{\FF}}$-modules cohérents $\GQp$-équivariants sur la courbe. Cette catégorie a une théorie des pentes de Harder-Narasimhan et on peut considérer la sous catégorie $\CM^+\subset \CM$ des fibrés effectifs, \ie ceux qui sont de pentes positives (\cf \cite{fafo}). On rappelle le théorème $A$ de \cite{fove} sous la forme d'une proposition
\medskip
\begin{prop}\label{prop:fove}
Il existe un foncteur $\CF\mapsto \rmH^0\Harg{\CX_{\FF}}{\sF}$ de \emph{sections globales}, à valeurs dans~$\CC$, qui induit une équivalence de catégories $\CM^+\cong\CC^+$.
\end{prop}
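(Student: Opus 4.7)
L'idée est de construire le foncteur des sections globales sur les fibrés vectoriels de pentes positives, puis d'utiliser la classification des fibrés sur $\CX_{\FF}$ ainsi que la dévissage par la filtration de Harder-Narasimhan pour se ramener aux fibrés en droites $\CO(\lambda)$ avec $\lambda \geqslant 0$. Pour ces derniers, les sections globales sont les espaces de Banach-Colmez usuels, qui sont par définition dans $\CC^+$.

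Plus précisément, je procéderais en quatre étapes. D'abord, pour un fibré effectif $\sF\in \CM^+$, la filtration de Harder-Narasimhan fournit une suite exacte de $\CO_{\FF}$-modules $\GQp$-équivariants
$$
0\rightarrow \sF'\rightarrow \sF\rightarrow \sF''\rightarrow 0,
$$
où $\sF''\cong \CO(\lambda)^r$ est la partie de pente maximale et $\sF'$ a des pentes strictement inférieures, toutes positives. Comme $\rmH^1(\CX_{\FF},\sF')=0$ lorsque les pentes de $\sF'$ sont $\geqslant 0$ (c'est une conséquence du théorème de Fargues-Fontaine sur l'annulation cohomologique), on obtient une suite exacte de sections globales
$$
0\rightarrow \rmH^0(\CX_{\FF},\sF')\rightarrow\rmH^0(\CX_{\FF},\sF)\rightarrow\rmH^0(\CX_{\FF},\sF'')\rightarrow 0.
$$
Par récurrence sur le rang, il suffit de montrer que $\rmH^0(\CX_{\FF},\CO(\lambda))$ est un objet de $\CC^+$ pour $\lambda = d/h \geqslant 0$ ; or ce $\Qp$-espace vectoriel topologique s'identifie à $(\Bcrisp)^{\varphi^h = p^d}$, qui est précisément un espace de Banach-Colmez effectif, donc un objet de $\CC^+$. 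Ceci démontre que le foncteur est bien défini à valeurs dans $\CC^+$.

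Ensuite, pour la pleine fidélité, j'utiliserais le fait que pour $\sF,\sG\in \CM^+$, le faisceau $\CHom(\sF,\sG)\in \CM$ peut se décomposer via sa filtration de Harder-Narasimhan. L'annulation de $\rmH^1$ pour les pentes positives permet de calculer
$$
\Hom_{\CM^+}(\sF,\sG)=\rmH^0(\CX_{\FF},\CHom(\sF,\sG)),
$$
et on compare cela à $\Hom_{\CC^+}(\rmH^0(\CX_{\FF},\sF),\rmH^0(\CX_{\FF},\sG))$ via la formule d'Euler-Poincaré dans $\CC_L$ et la dualité parfaite entre $\Ext^i$ et $\Ext^{2-i}$. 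L'égalité des dimensions et la naturalité de la flèche fourniront la pleine fidélité.

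Le point le plus délicat sera l'essentielle surjectivité : étant donné $X\in \CC^+$, il faut construire un fibré $\sF\in \CM^+$ tel que $\rmH^0(\CX_{\FF},\sF)\cong X$. L'approche serait de raisonner par récurrence sur la hauteur $h_L(X)$ et la dimension principale $d(X)$. Les objets élémentaires de $\CC^+$ (du type $(\Bcrisp)^{\varphi^h=p^d}$, ou les $C$-représentations de dimension finie) proviennent tous de fibrés en droites $\CO(\lambda)$ ou de faisceaux gratte-ciel. Pour un objet général, on utilise qu'il s'insère dans une suite exacte où les termes extrémaux sont plus simples et on montre que l'extension se relève en une extension de fibrés à l'aide du calcul des $\Ext^1$ dans $\CM^+$. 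C'est essentiellement ici que résidera la difficulté technique, car il faut contrôler que l'extension obtenue soit encore effective, ce qui nécessite d'étudier précisément quels objets de $\CC^+$ sont représentables par quels types de fibrés (cristallins, isoclines, etc.) et d'invoquer la classification complète des fibrés sur $\CX_{\FF}$.
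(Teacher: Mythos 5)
Le texte du papier ne d\'emontre pas cette proposition : elle est simplement rappel\'ee comme \'etant le \emph{Th\'eor\`eme A} de Fontaine \cite{fove} et utilis\'ee comme une bo\^ite noire. Votre \og plan de preuve \fg\ n'est donc comparable \`a aucun argument du papier ; il faut l'\'evaluer sur ses propres m\'erites.

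Sur le fond, l'orientation g\'en\'erale (d\'evissage de Harder--Narasimhan pour ramener le calcul de $\rmH^0$ aux fibr\'es semi-stables de pente $\geqslant 0$, annulation de $\rmH^1$ pour les pentes positives, identification de $\rmH^0(\CX_{\FF},\CO(\lambda))$ avec un espace de Banach--Colmez effectif) est correcte. Deux impr\'ecisions cependant : (i) la cat\'egorie $\CM^+$ est form\'ee de faisceaux coh\'erents $\GQp$-\'equivariants ; la partie semi-stable d'une filtration HN n'est donc pas $\CO(\lambda)^r$ mais un tordu de la forme $\CO(\lambda)\otimes V$ avec $V$ une $\GQp$-repr\'esentation, et la donn\'ee \'equivariante doit \^etre trait\'ee explicitement ; (ii) $\CM$ comprend aussi les faisceaux de torsion (gratte-ciel), que vous ne mentionnez qu'en passant alors qu'ils sont essentiels du c\^ot\'e $\CC^+$ (ce sont les $C$-repr\'esentations de dimension finie). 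Plus s\'erieusement, l'argument de pleine fid\'elit\'e par comparaison des dimensions via Euler--Poincar\'e est incomplet : une \'egalit\'e de dimensions ne donne pas un isomorphisme sans \'etablir s\'epar\'ement l'injectivit\'e (ou la surjectivit\'e) de la fl\`eche naturelle $\Hom_{\CM^+}(\sF,\sG)\to\Hom_{\CC^+}(\rmH^0\sF,\rmH^0\sG)$, et c'est l\`a tout le contenu. Enfin, vous reconnaissez vous-m\^eme que l'essentielle surjectivit\'e --- le point cl\'e --- reste \`a faire (\og c'est essentiellement ici que r\'esidera la difficult\'e technique \fg) ; la strat\'egie par r\'ecurrence et rel\`evement d'extensions suppose de savoir a priori que le foncteur est surjectif sur les $\Ext^1$, ce qui n'est pas plus facile. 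La d\'emarche standard pour le Th\'eor\`eme A de \cite{fove} consiste au contraire \`a construire directement un foncteur inverse : \`a partir d'une presque-$C$-repr\'esentation effective $X$, on fabrique un $B_e$-module muni de $\varphi$ et un r\'eseau $B_{\dR}^+$, puis on recolle en un faisceau coh\'erent sur la courbe. En l'\'etat, votre proposition est une bonne orientation mais non une preuve.
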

\medskip

Soit $d=[L\coloneqq \Qp]$, alors par hypothèse, les presque-$C$-représentations qui apparaissent dans le diagramme sont les suivantes :
\begin{itemize}
\itemb $\rmB_{\lambda}(L)\coloneqq \rmB_{\lambda}\otimes_{\Qp}L$ de dimension principale $d\cdot w(\lambda)$ et de $L$-hauteur $0$,
\itemb $\rmU_{\lambda}^i(L)\coloneqq \rmU_{\lambda}^0\otimes_{\Qpd }L$, pour $i=0,1$, de $L$-hauteur $1$ et de dimension principale $\frac{d}{2}\cdot w(\lambda)$ ; rappelons qu'on a la suite exacte
$$
0\rightarrow \Qpd t^{\lambda_1+1}t_2^{w(\lambda)-1}\rightarrow \rmU_{\lambda}^0\rightarrow \rmB_{\lambda}\rightarrow 0,
$$
\itemb $Q_{\lambda}^i=\rmB_{\lambda}(L)/\rmU_{\lambda}^i(L)$ pour $i=0,1$ est de $L$-hauteur $-1$ et de dimension principale $\frac{d}{2}\cdot w(\lambda)$
\end{itemize}
On commence par un lemme préliminaire :
\medskip
\begin{lemm}\label{lem:pcr}
Soit $V$ une $L$-représentation de $\GQp$ de dimension finie et $U\in\{\rmU_{\lambda}^0(L),\rmB_{\lambda}(L),\rmB_{\lambda}(L)/\rmU_{\lambda}^0(L)\}$. Alors
$$
\Ext^2_{\CC_{L}}\intnn{V}{U}=0.
$$
\end{lemm}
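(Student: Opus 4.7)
The plan is to reduce the statement to a vanishing of $\Hom$ via the perfect Serre-type duality in $\CC_L$ and then treat the three cases in a cascading fashion, leaving the case $U=\rmU_\lambda^0(L)$ as the genuine one. By duality,
\[
\Ext^2_{\CC_L}(V,U)\cong \Hom_{\CC_L}(U,V(1))^*,
\]
so it is equivalent to show $\Hom_{\CC_L}(U,V(1))=0$ (or, symmetrically, $\rmH^2(\GQp,V^*\otimes_L U)=0$). I will mostly use the latter cohomological form together with the long exact sequences attached to the natural short exact sequences in the problem.

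First, for $U=\rmB_\lambda(L)=t^{\lambda_1}\Bdrp/t^{\lambda_2}\Bdrp\otimes_\Qp L$, I use the $t$-adic filtration, whose graded pieces are isomorphic to $C(-k)\otimes_\Qp L$ for $k=\lambda_1,\dots,\lambda_2-1$. By Sen's theorem, $V^*\otimes_\Qp C$ is $C$-isomorphic to a direct sum of $C(n_i)$, and Tate--Sen's classical vanishing $\rmH^2(\GQp,C(n))=0$ for every integer $n$ then gives $\rmH^2(\GQp,V^*\otimes_\Qp C(-k))=0$. Dévissage along the filtration yields $\Ext^2_{\CC_L}(V,\rmB_\lambda(L))=0$.

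Second, for $U=Q_\lambda^0=\rmB_\lambda(L)/\rmU_\lambda^0(L)$, the long exact $\Ext$ sequence attached to $0\to\rmU_\lambda^0(L)\to\rmB_\lambda(L)\to Q_\lambda^0\to 0$ contains the fragment
\[
\Ext^2(V,\rmB_\lambda(L))\to\Ext^2(V,Q_\lambda^0)\to\Ext^3(V,\rmU_\lambda^0(L)).
\]
The leftmost term vanishes by the previous paragraph, and the rightmost by the cohomological dimension $2$ of $\CC_L$, giving the desired vanishing for free.

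Third, and this is where the real work is, for $U=\rmU_\lambda^0(L)$ I use the short exact sequence $0\to W_0\to \rmU_\lambda^0(L)\to \rmB_\lambda(L)\to 0$ recalled just before the lemma, where $W_0=\Qpd\, t^{\lambda_1+1}t_2^{w(\lambda)-1}\otimes_{\Qpd}L$ is a finite-dimensional $L$-representation. Since $\rmH^2(V^*\otimes\rmB_\lambda(L))=0$, the long exact sequence realises $\Ext^2(V,\rmU_\lambda^0(L))$ as the cokernel of
\[
\delta\colon \rmH^1(\GQp,V^*\otimes_L\rmB_\lambda(L))\longrightarrow \rmH^2(\GQp,V^*\otimes_L W_0),
\]
and it remains to prove $\delta$ is surjective. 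Dually (by Tate local duality applied to the finite-dimensional $W_0$), this amounts to showing that the pushforward along the extension class $[e]\in\Ext^1_{\CC_L}(\rmB_\lambda(L),W_0)$ induces an injection $\Hom_{\GQp}(W_0,V(1))\hookrightarrow \Ext^1_{\CC_L}(\rmB_\lambda(L),V(1))$. The main obstacle of the proof is verifying this non-degeneracy of $[e]$; I expect the cleanest route is the Fargues--Fontaine interpretation (Proposition \ref{prop:fove}): $\rmU_\lambda^0$ corresponds to the global sections of an indecomposable line bundle on $\CX_{\FF}$ whose only $\Qp$-rational subsheaf of finite length is exactly $W_0$, so that any $\GQp$-equivariant continuous $L$-linear map $\rmU_\lambda^0(L)\to V(1)$ must factor through this extremal sub and then through $\rmB_\lambda(L)$, contradicting $\Hom(\rmB_\lambda(L),V(1))=0$ unless it is zero; equivalently $\Hom_{\CC_L}(\rmU_\lambda^0(L),V(1))=0$, which by Serre duality is $\Ext^2_{\CC_L}(V,\rmU_\lambda^0(L))=0$.
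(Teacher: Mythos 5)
Your overall reduction — use the perfect duality $\Ext^2_{\CC_L}(V,U)\cong\Hom_{\CC_L}(U,V(1))^*$ — is the right one, and in the hardest case $U=\rmU_\lambda^0(L)$ you eventually arrive at exactly the key point of the paper: $\rmU_\lambda^0$ is the space of global sections of the positive-slope line bundle $\CO\bigl(\tfrac{w+1}{2}\bigr)$ on $\CX_{\FF}$, while a finite-dimensional $L$-representation is the global sections of a slope-$0$ bundle $\CO\otimes_{\Qp}V(1)$, so $\Hom_{\CX_{\FF}}$ vanishes by Harder--Narasimhan and hence $\Hom_{\CC_L}(\rmU_\lambda^0(L),V(1))=0$ by Proposition \ref{prop:fove}. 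The paper proves exactly this and nothing more for $\rmU_\lambda^0$; it then disposes of the other two cases in one line each, by observing that the surjections $\rmU_\lambda^0\twoheadrightarrow\rmB_\lambda\twoheadrightarrow Q_\lambda^0$ give injections $\Hom_{\CC_L}(Q_\lambda^0,V)\hookrightarrow\Hom_{\CC_L}(\rmB_\lambda(L),V)\hookrightarrow\Hom_{\CC_L}(\rmU_\lambda^0(L),V)=0$. Your proposal proves the cases in the opposite order, which forces you to invent independent arguments for $\rmB_\lambda$ and $Q_\lambda^0$ that turn out to be unnecessary once the $\rmU_\lambda^0$ case is in hand.

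Two of the steps you do take have genuine problems. For $\rmB_\lambda(L)$ you invoke Sen's theorem in the form ``$V^*\otimes_{\Qp}C$ is $C$-isomorphic to a direct sum of $C(n_i)$''; this only holds when $V$ is Hodge--Tate, whereas the lemma is stated for an arbitrary finite-dimensional $L$-representation $V$ (and duality in $\CC_L$ requires it to hold for $V(1)$ with $V$ arbitrary as well). What is true, and what would repair the step, is that $\rmH^2(\GQp,W)=0$ for any $C$-representation $W$, by Tate's theorem (almost-étale descent plus $\mathrm{cd}(\Gamma)=1$), without any Sen decomposition; but as written the justification is wrong. For $\rmU_\lambda^0(L)$, the detour through the surjectivity of $\delta\colon\rmH^1(\GQp,V^*\otimes\rmB_\lambda(L))\to\rmH^2(\GQp,V^*\otimes W_0)$ is circular: you admit it reduces to precisely the statement you then prove directly, and the Fargues--Fontaine gloss you offer (``indecomposable line bundle whose only $\Qp$-rational subsheaf of finite length is exactly $W_0$'', maps ``factoring through this extremal sub and then through $\rmB_\lambda(L)$'') does not parse — a line bundle has no nonzero torsion subsheaves, and a map factoring through the sub $W_0$ would not factor through the quotient $\rmB_\lambda(L)$. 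The correct and complete justification is the one-line slope argument: $\Hom(\CO(\tfrac{w+1}{2}),\CO\otimes V(1))=0$ because $\tfrac{w+1}{2}>0$. Start with that, and the entire lemma falls out via the two surjections, exactly as in the paper.
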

\medskip
\begin{proof}
On pose $w\coloneqq w(\lambda)$, il suffit de montrer le résultat pour $\lambda_1=0$, quitte à tordre par $t^{\lambda_1}$ (ainsi $w=\lambda_2$). En utilisant la dualité de Poincaré, il suffit de montrer que $\Hom_{\CC_{L}}\intnn{U}{V}=0$. Commençons par le montrer pour $\rmU_{\lambda}^0(L)$ ce qui revient à le montrer dans $\CC$ pour $U=\rmU_{\lambda}^0$. Or $U$ est une presque-$C$-représentation effective et on utilise la proposition \ref{prop:fove}, ce qui donne 
$$
\rmH^0\Harg{\CX_{\FF}}{\CO\intn{\tfrac {w+1} 2}}= U,\ \rmH^0\Harg{\CX_{\FF}}{\CO\otimes_{\Qp} V}= V.
$$
Comme $\frac {w+1} 2>0$, on a
$$
\Hom_{\CX_{\FF}}\intnn{\CO\intn{\tfrac {w+1} 2}}{\CO\otimes_{\Qp} V}=0.
$$
Ainsi, d'après la proposition \ref{prop:fove}, a fortiori $\Hom_{\CC}\intnn{U}{V}=0$ et de même \hbox{$\Hom_{\CC_{L}}\intnn{\rmU_{\lambda}^0(L)}{V}=0$}. Pour $U=\rmB_{\lambda}$, comme on a une surjection $\rmU_{\lambda}^0\rightarrow \rmB_{\lambda}$ on en déduit une injection 
$$
\Hom_{\CC}\intnn{\rmB_{\lambda}}{V}\incl\Hom_{\CC}\intnn{\rmU_{\lambda}^0}{V}=0,
$$
et donc $\Hom_{\CC}\intnn{\rmB_{\lambda}}{V}=0$ et de même $\Hom_{\CC_L}\intnn{\rmB_{\lambda}(L)}{V}=0$. Finalement, la surjection $\rmB_{\lambda}(L)\rightarrow Q_{\lambda}^0$ donne l'injection 
$$
\Hom_{\CC_L}\intnn{Q_{\lambda}^0}{V}\incl\Hom_{\CC_L}\intnn{\rmB_{\lambda}(L)}{V}=0,
$$
ce qui démontre que $\Hom_{\CC_L}\intnn{Q_{\lambda}^0}{V}=0$.
\end{proof}
\begin{lemm}\label{lem:galcalc}
Soit $V=V_{\CL}^{\lambda}$. Alors on a 
\begin{enumerate}
\item $\Hom_{L[\GQp ]}\intnn{V}{\rmU_{\lambda}^i(L)}$ est un $L$-espace vectoriel de dimension $1$ pour $i=0,1$,
\item $\Hom_{L[\GQp ]}\intnn{V}{\rmB_{\lambda}(L)}$ et $\Ext^1_{L[\GQp ]}\intnn{V}{\rmB_{\lambda}(L)}$ sont des $L$-espaces vectoriels de dimension~$1$.
\item $\Ext^1_{L[\GQp ]}\intnn{V}{\rmU_{\lambda}^0(L)}$ est un $L$-espace vectoriel de dimension $3$,
\item $\Ext^1_{L[\GQp ]}\intnn{V}{Q_{\lambda}^0}=0$ et $\Hom_{L[\GQp ]}\intnn{V}{Q_{\lambda}^0}$ est un $L$-espace vectoriel de dimension $2$. 
\end{enumerate}
\end{lemm}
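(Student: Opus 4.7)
The proof combines three ingredients: (i) the Euler–Poincaré identity $\chi_L(V,U)=-h_L(V)h_L(U)=-2\,h_L(U)$ in the category $\CC_L$ of $L$-presque-$C$-représentations (\cite{focp}); (ii) the vanishing $\Ext^2_{\CC_L}(V,U)=0$ for $U\in\{\rmU_\lambda^0(L),\rmB_\lambda(L),Q_\lambda^0\}$ established in Lemma~\ref{lem:pcr}; and (iii) the explicit embedding $V=V_\CL^\lambda\hookrightarrow t^{\lambda_1}X_\st^1(M[\lambda_1])$ into period rings. With (i) and (ii), each target $\Ext^1$ is determined by the corresponding $\Hom$, so the strategy is to compute the relevant Hom spaces directly and then read off the Ext groups by Euler–Poincaré; item (4) is finally deduced from the long exact sequence for $0\to\rmU_\lambda^0(L)\to\rmB_\lambda(L)\to Q_\lambda^0\to 0$.

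For item (2), the identification $\Hom_{L[\GQp]}(V,\rmB_\lambda(L))=(V^*\otimes_L\rmB_\lambda(L))^{\GQp}$ realises this Hom as the one-dimensional space $H_{C,\CL}^\lambda$ of Lemma~\ref{lem:mulgalcusp}, whose argument depends only on the Hodge–Tate weights of $V$ and therefore applies verbatim to the special case. Since $h_L(\rmB_\lambda(L))=0$, Euler–Poincaré yields $\dim\Ext^1_{L[\GQp]}(V,\rmB_\lambda(L))=1$. For items (1) and (3), the inclusion $\rmU_\lambda^i(L)\hookrightarrow\rmB_\lambda(L)$ gives $\dim\Hom(V,\rmU_\lambda^i(L))\leqslant 1$. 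To exhibit a nonzero element for each $i\in\{0,1\}$, I use the $\CC_L$-exact sequence
\[
0\to\rmU_\lambda^1(L)\to t^{\lambda_1}X_\st^1(M[\lambda_1])\to\rmU_\lambda^0(L)\to 0
\]
coming from $0\to Le_0\to M\to Le_1\to 0$ as $(\varphi,N)$-modules, after checking that the $\varphi^2$-eigenvalues on $M[\lambda_1]$ line up with the defining conditions $p^{w(\lambda)\pm1}$ of $\rmU_\lambda^i(L)$. The defining embedding $V\hookrightarrow t^{\lambda_1}X_\st^1(M[\lambda_1])$ has both nonzero image in $\rmU_\lambda^0(L)$ and nonzero intersection with $\rmU_\lambda^1(L)$: otherwise the Galois-stable line $V$ would lie inside one of the two $\varphi$-eigenspaces of $M$, forcing $V$ to be crystabélienne, contradicting the speciality ($N\neq 0$) of $V$. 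With $h_L(\rmU_\lambda^i(L))=1$, Euler–Poincaré then gives $\dim\Ext^1_{L[\GQp]}(V,\rmU_\lambda^0(L))=3$.

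For item (4), the long exact $\Ext^*_{\CC_L}(V,-)$-sequence attached to $0\to\rmU_\lambda^0(L)\to\rmB_\lambda(L)\to Q_\lambda^0\to 0$, combined with $\Ext^2(V,\rmU_\lambda^0(L))=0$, reads
\[
0\to L\xrightarrow{\alpha}L\to\Hom(V,Q_\lambda^0)\to L^3\xrightarrow{\gamma}L\to\Ext^1(V,Q_\lambda^0)\to 0.
\]
The map $\alpha$ is an isomorphism because the nonzero map $V\to\rmU_\lambda^0(L)$ from item (1) remains nonzero after composition with $\rmU_\lambda^0(L)\hookrightarrow\rmB_\lambda(L)$. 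To finish I show that $\gamma$ is surjective: the generator of $\Ext^1(V,\rmB_\lambda(L))=L$ is the Yoneda class of the extension obtained from the fundamental sequence defining $V$ by projection $t^{\lambda_1}X_\st^1(M[\lambda_1])\to\rmB_\lambda(L)$, and this extension lifts along $\rmU_\lambda^0(L)\hookrightarrow\rmB_\lambda(L)$ to an element of $\Ext^1(V,\rmU_\lambda^0(L))$ given by the same embedding of $V$ composed with the projection onto $\rmU_\lambda^0(L)$ in the displayed $\CC_L$-exact sequence above. Combined with the Euler–Poincaré identity $\dim\Hom(V,Q_\lambda^0)-\dim\Ext^1(V,Q_\lambda^0)=2$ (since $h_L(Q_\lambda^0)=-1$), surjectivity of $\gamma$ forces the pair $(\dim\Hom,\dim\Ext^1)=(2,0)$.

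The principal difficulty is the speciality-sensitive non-vanishing of $\Hom(V,\rmU_\lambda^1(L))$ in item (1), whose cuspidal analogue would in fact fail and which relies essentially on $N\neq 0$; the construction of the explicit lift used to establish surjectivity of $\gamma$ is a careful but direct matching of the fundamental extension defining~$V$ with the Yoneda class it induces on each side of the exact sequence $0\to\rmU_\lambda^0(L)\to\rmB_\lambda(L)\to Q_\lambda^0\to 0$.
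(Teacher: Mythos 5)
Your overall architecture — bounding $\Hom$ via the embedding into $\rmB_\lambda(L)$, pinning down $\Ext^1$ by Euler–Poincaré and $\Ext^2=0$ from Lemma~\ref{lem:pcr}, then running the long exact sequence for $0\to\rmU_\lambda^0(L)\to\rmB_\lambda(L)\to Q_\lambda^0\to 0$ — agrees with the paper on items (2), (3) and the skeleton of (4), but there are two genuine gaps.

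\textbf{Item (1): the non-vanishing argument is inconsistent.} You embed $V$ into $X_\st^1(M)$ and use the short exact sequence coming from $0\to Le_0\to M\to Le_1\to 0$. (Incidentally, matching $\varphi$-slopes gives $\rmU_\lambda^0(L)$ as the \emph{sub} and $\rmU_\lambda^1(L)$ as the \emph{quotient}, the opposite of what you wrote; this is a sign of confusion but not the core issue.) You then claim that the defining embedding of $V$ has simultaneously a nonzero image in the quotient and a nonzero intersection with the sub. Since $V$ is irreducible (we are in the range $w(\lambda)>1$), $V\cap(\text{sub})$ is either $0$ or $V$; in the first case $V$ maps injectively to the quotient, in the second $V$ lies inside the sub, and these are mutually exclusive. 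So your argument produces at most one of the two non-vanishings, not both, and in fact it \emph{cannot} produce the map to the sub from the defining embedding at all: since $N\neq 0$ on $\bD_\st(V)$, the inclusion $V\hookrightarrow X_\st^1(M)$ does \emph{not} factor through $\rmU_\lambda^0(L)$. The genuine map $V\to\rmU_\lambda^0(L)$ that makes $\Hom$ one-dimensional is an independent period. The paper sidesteps all of this by computing directly: writing $\bD_\st(V^*)=Le_0^*\oplus Le_1^*$ and using the decomposition $V^*\otimes_{\Qp}\Bcris\cong e_1^*\Bcris(L)\oplus(e_0^*+ue_1^*)\Bcris(L)$, one reads off that each of $V^*\otimes_{\Qp}(\Bcrisp)^{\varphi^2=p^{w\pm1}}$ contains exactly one copy of $(\Bcris(L))^{\varphi^2=1}$, hence one-dimensional $\GQp$-invariants. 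The period $e_0^*+ue_1^*$, involving the Fontaine element $u\in\Bstp$, is precisely what your argument misses; it is the place where the speciality ($N\neq 0$) actually enters.

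\textbf{Item (4): the surjectivity of $\gamma$ is not established.} You claim the generator of $\Ext^1(V,\rmB_\lambda(L))$ lifts along $\rmU_\lambda^0(L)\hookrightarrow\rmB_\lambda(L)$, but the description of the lift (``the same embedding of $V$ composed with the projection onto $\rmU_\lambda^0(L)$'') produces a morphism $V\to\rmU_\lambda^0(L)$, not a Yoneda class of an extension of $V$ by $\rmU_\lambda^0(L)$, and it is not explained why the resulting class in $\Ext^1(V,\rmB_\lambda(L))$ is nonzero. This surjectivity is in fact the delicate point of the lemma. The paper proves it in two steps: first, applying $\Hom(V,\cdot)$ to the period fundamental sequence $0\to L\cdot t_2^{w-1}t\to\rmU_\lambda^0(L)\to\rmB_\lambda\otimes_{\Qpd}L\to 0$ and using duality to kill $\Ext^2(V,L\cdot t_2^{w-1}t)\cong\Hom(L\cdot t_2^{w-1}t,V(1))=0$, it shows $\Ext^1(V,\rmU_\lambda^0(L))\twoheadrightarrow\Ext^1(V,\rmB_\lambda\otimes_{\Qpd}L)$; second, it shows the natural map $\Ext^1(V,\rmB_\lambda(L))\to\Ext^1(V,\rmB_\lambda\otimes_{\Qpd}L)$ is an isomorphism, via a careful analysis of $\Ker(\rmB_\lambda(L)\to\rmB_\lambda\otimes_{\Qpd}L)$ on which $\GQp$ acts nontrivially through the quadratic unramified character $\chi_2$. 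Neither of these two steps appears in your argument, and there does not seem to be a short-cut around them.

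For item (2) your reduction to Lemma~\ref{lem:mulgalcusp} is fine: the computation of $(V^*\otimes\rmB_\lambda)^{\GQp}$ there uses only the de Rham filtration and applies unchanged. Item (3) follows once item (1) is fixed.
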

\begin{proof}
On omettra les indices de $\Hom$ et $\Ext$, qui sont tous dans la catégorie $\CC_L$. Comme précédemment, posons $w\coloneqq w(\lambda)$ et supposons que $\lambda_1=0$ (ainsi $w=\lambda_2$); le cas $\lambda_1>0$ s'en déduit directement en tordant par un caractère. 
\begin{itemize}
\itemb Rappelons que $\bD_{\st}(V^*)=\Sp_L(w)^*=Le_0^{*}\oplus Le_1^{*}$ avec $\varphi(e_0^{*})=p^{(w+1)/2}e_0^*$, $\varphi(e_1^{*})=p^{(w-1)/2}e_1^*$, $Ne_0^*=e_1^*$ et $Ne_1^*=0$. De plus, $\Bstp=\Bcrisp[u]$ avec $Nu=-1$ et $\varphi(u)=p$. On commence par remarquer que, comme $V$ est semi-stable, on a 
$$
V^*\otimes_{\Qp }\Bcris\cong \intn{\Sp_L(w)^*\otimes_{\Qp^{\nr}}\Bstp}^{N=0}=e_1^*\Bcris(L)\oplus (e_0^*+ue_1^*)\Bcris(L),
$$
ce qui donne 
$$
\begin{gathered}
V^*\otimes_{\Qp}\intn{\Bcrisp}^{\varphi^2=p^{w+1}}\cong \intn{\Bcris(L)}^{\varphi^2=p^{2}}\oplus \intn{\Bcris(L)}^{\varphi^2=1},\\
V^*\otimes_{\Qp}\intn{\Bcrisp}^{\varphi^2=p^{w-1}}\cong \intn{\Bcris(L)}^{\varphi^2=1}\oplus \intn{\Bcris(L)}^{\varphi^2=p^{-2}}.
\end{gathered}
$$
Ceci nous donne le premier point puisque $\intn{\Bcris(L)}^{\varphi^2=p^{-2}}$ et $\intn{\Bcris(L)}^{\varphi^2=p^2}$ n'ont pas d'invariants sous $\GQp$ et les invariants sous $\GQp$ de $\intn{\Bcrisp(L)}^{\varphi^2=1}$ sont $L$.

\itemb Passons au point $2$. Comme $V$ est de Rham, on a 
$$
V^*\otimes_{L}\rmB_{\lambda}(L)\cong\Fil^0\intn{\bD_{\dR}(V^*)\otimes_{L}\Bdr(L)}/\Fil^w\intn{\bD_{\dR}(V^*)\otimes_L\Bdr(L)}.
$$
On choisit une base compatible avec la filtration $\bD_{\dR}(V^*)=Lf_{w}\oplus L f_0$, ce qui donne
$$
\begin{gathered}
\Fil^0\intn{\bD_{\dR}(V^*)\otimes_{\Qp}\Bdr}=t^{-w}f_w\Bdrp(L)\oplus f_0\Bdrp(L),\\
\Fil^w\intn{\bD_{\dR}(V^*)\otimes_{\Qp}\Bdr}=f_w\Bdrp(L)\oplus t^{w}f_0\Bdrp(L).
\end{gathered}
$$
Donc $V^*\otimes_{\Qp}\rmB_w \cong f_w t^{-w}\rmB_w\oplus f_0\rmB_w$, dont la première composante n'a pas d'invariant sous $\GQp$ et la seconde a exactement $L$ comme invariants sous $\GQp$ ; ainsi $\Hom\intnn{V}{\rmB_{\lambda}(L)}=L$. Pour la seconde partie du second point on remarque que $\chi_L(V,\rmB_{\lambda})=0$ mais comme par le lemme~\ref{lem:pcr} $\Ext^2\intnn{V}{\rmB_{\lambda}(L)}=0$, ceci donne $\dim_L\Ext^1\intnn{V}{\rmB_{\lambda}(L)}=\dim_L\Hom\intnn{V}{\rmB_{\lambda}(L)}=1$, ce qui finit la preuve du second point.

\itemb Pour le troisième point, la formule d'Euler-Poincaré donne $\chi_L(V,\rmU_{\lambda}^0(L))=-2$. Le lemme \ref{lem:pcr} donne $\Ext^2\intnn{V}{\rmU_{\lambda}^0(L)}=0$. Donc $\dim_L\Ext^1\intnn{V}{\rmU_{\lambda}^0(L)}=2+\dim_L\Hom\intnn{V}{\rmU_{\lambda}^0(L)}=3$ en vertu du premier point.

\itemb Le dernier point est plus délicat, commençons par montrer que les deux énoncés sont équivalents. La formule d'Euler-Poincaré nous assure que 
$$
\chi_L(V,Q_{\lambda}^0)=2
$$
et, par le lemme \ref{lem:pcr}, $\Ext^2\intnn{V}{Q_{\lambda}^0}=0$, ce qui donne
$$
\dim_L\Hom\intnn{V}{Q_{\lambda}^0}=2+\dim_L\Ext^1\intnn{V}{Q_{\lambda}^0}.
$$
Il reste à montrer $\Ext^1\intnn{V}{Q_{\lambda}^0}=0$. On applique $\Hom\intnn{V}{\ \cdot \ }$ à la suite exacte courte
$$
0\rightarrow \rmU_{\lambda}^0(L)\rightarrow \rmB_{\lambda}(L)\rightarrow Q_{\lambda}^0\rightarrow 0,
$$
pour obtenir la suite exacte 
$$
\Ext^1\intnn{V}{\rmU_{\lambda}^0(L)}\rightarrow \Ext^1\intnn{V}{\rmB_{\lambda}(L)}\rightarrow  \Ext^1\intnn{V}{Q_{\lambda}^0}\rightarrow 0,
$$
où le zéro à droite provient de l'annulation de $\Ext^2\intnn{V}{\rmU_{\lambda}^0(L)}$ donnée par le lemme \ref{lem:pcr}. Il suffit de montrer que l'application
$$
\Ext^1\intnn{V}{\rmU_{\lambda}^0(L)}\rightarrow \Ext^1\intnn{V}{\rmB_{\lambda}(L)}
$$
est surjective. On applique maintenant $\Hom\intnn{V}{\ \cdot \ }$ à la suite exacte fondamentale (à laquelle on a appliqué $\cdot \ \otimes_{\Qpd}L$)
$$
0\rightarrow L\cdot t_2^{w-1}t \rightarrow \rmU_{\lambda}^0(L)\rightarrow \rmB_{\lambda}\otimes_{\Qpd}L\rightarrow 0,
$$
pour obtenir la suite exacte
$$
\Ext^1\intnn{V}{\rmU_{\lambda}^0(L)}\rightarrow \Ext^1\intnn{V}{\rmB_{\lambda}\otimes_{\Qpd}L}\rightarrow \Ext^2\intnn{V}{L\cdot t_2^{w-1}t}.
$$
Mais par la dualité $\Ext^2\intnn{V}{L\cdot t_2^{w-1}t}\cong  \Hom\intnn{L\cdot t_2^{w-1}t}{V(1)}=0$ puisque les deux représentations sont irréductibles et ne sont pas isomorphes. Ainsi $\Ext^1\intnn{V}{\rmU_{\lambda}^0(L)}\rightarrow \Ext^1\intnn{V}{\rmB_{\lambda}\otimes_{\Qpd}L}$ est surjective. Il suffit de montrer que l'application naturelle 
$$ 
g\colon \Ext^1\intnn{V}{\rmB_{\lambda}(L)}\rightarrow \Ext^1\intnn{V}{\rmB_{\lambda}\otimes_{\Qpd}L}
$$
est un isomorphisme ; elle est induite par la surjection naturelle $p\colon \rmB_{\lambda}\otimes_{\Qp}L=\rmB_{\lambda}(L)\rightarrow \rmB_{\lambda}\otimes_{\Qpd}L$. Il existe $\alpha'\in \Bdrp$ et $\alpha\in L$ tel que le noyau de $p$ soit engendré par $e\coloneqq \alpha'\otimes 1 - 1\otimes \alpha$, \ie  $\Ker p = e\cdot \rmB_{\lambda}(L)$. Notons que $\GQp$ agit non-trivialement sur $e$ au travers de son quotient non-ramifié $\GQp\rightarrow \BZ/2\BZ$ (soit au travers du caractère non-ramifié $\chi_2$). Ainsi, par la suite exacte longue que l'on obtient en appliquant $\Hom\intnn{V}{\cdot}$ à
$$
0\rightarrow \Ker p \rightarrow \rmB_{\lambda}(L)\rightarrow \rmB_{\lambda}\otimes_{\Qpd}L\rightarrow 0,
$$
il suffit de montrer que $\Ext^1\intnn{V}{\Ker p}=0$ et $\Ext^2\intnn{V}{\Ker p} = 0$. Or, d'après le lemme \ref{lem:pcr}, $\Ext^2\intnn{V\otimes \chi_2}{\rmB_{\lambda}(L)} = 0$ et comme $\chi_L(V,\Ker p)=0$, il reste à montrer que $\Hom\intnn{V}{\Ker p}=0$ pour conclure, ce que l'on fait par la même méthode que pour le second point. En gardant les mêmes notations, on obtient 
$$
V^*\otimes_{\Qp}\Ker p \cong e f_w t^{-w}\rmB_{\lambda}(L)\oplus e f_0\rmB_{\lambda}(L),
$$
qui n'a pas d'invariant sous $\GQp$ puisqu'il agit non-trivialement sur $e$ au travers d'un quotient fini. Ainsi, $\Hom\intnn{V}{\Ker p}=0$ et on obtient que $g$ est un isomorphisme, ce qui achève la preuve du lemme.
\end{itemize}
\end{proof}

\subsubsection{Preuve du théorème \ref{thm:mugalspe}}
On se donne $\lambda\in P_+$ et $\CL\in L$. On commence par appliquer $\Hom_{L[\GQp ]}\intnn{V_{\CL}^{\lambda}}{\ \cdot\ }$ à la suite exacte du corollaire \ref{cor:suitexcool}. Posons
$$
E_2\coloneqq \Hom_{L[\GQp ]}\intnn{V_{\CL}^{\lambda}}{Q_{\lambda}^0},
$$
qui est un $L$-espace vectoriel de dimension $2$ d'après le $4$ du lemme \ref{lem:galcalc}. On obtient
\medskip
\begin{prop}\label{prop:mugalspe}
On a une suite exacte stricte de $L$-représentations de Fréchet de $G$
\begin{equation}\label{eq:extcoh}
0\rightarrow  E_2\otimes_L{W}_{\lambda}\rightarrow  \Hom_{L[\GQp ]}\intnn{V_{\CL}^{\lambda}}{\rmH^{\lambda}_{\pet}} \rightarrow \intn{{\St}^{\lan}_{\lambda}}'\rightarrow 0.
\end{equation}
\end{prop}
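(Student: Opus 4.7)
Le plan est d'appliquer le foncteur $\Hom_{L[\GQp]}(V_{\CL}^{\lambda}, \cdot)$ à la suite exacte à quatre termes du corollaire \ref{cor:suitexcool}, que je scinderai en deux suites exactes courtes. Notant $K$ l'image du morphisme $\rmH^{\lambda}_{\pet}\to \rmB_{\lambda}\wotimes_{\Qp}\omega_{\lambda}$, on obtient la suite exacte courte $(\rmA)\colon 0 \to Q_{\lambda}^0\otimes_L W_{\lambda} \to \rmH^{\lambda}_{\pet} \to K \to 0$ et la suite exacte courte $(\rmB)\colon 0 \to K \to \rmB_{\lambda}\wotimes_{\Qp}\omega_{\lambda} \to Q_{\lambda}^1\wotimes_{\Qp}(\St_{\lambda}^{\lalg})' \to 0$. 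En appliquant $\Hom_{L[\GQp]}(V_{\CL}^{\lambda}, \cdot)$ à $(\rmA)$ et en invoquant l'annulation $\Ext^1_{L[\GQp]}(V_{\CL}^{\lambda}, Q_{\lambda}^0) = 0$ du point $(4)$ du lemme \ref{lem:galcalc}, j'obtiendrai la suite exacte courte
$$0 \to E_2\otimes_L W_{\lambda} \to \Hom_{L[\GQp]}(V_{\CL}^{\lambda}, \rmH^{\lambda}_{\pet}) \to \Hom_{L[\GQp]}(V_{\CL}^{\lambda}, K) \to 0.$$
Le problème sera ainsi ramené à l'identification $\Hom_{L[\GQp]}(V_{\CL}^{\lambda}, K) \cong (\St_{\lambda}^{\lan})'$.

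Pour cette dernière identification, j'appliquerai $\Hom_{L[\GQp]}(V_{\CL}^{\lambda}, \cdot)$ à $(\rmB)$. Comme $\GQp$ agit trivialement sur $\omega_{\lambda}$ et sur $(\St_{\lambda}^{\lalg})'$, le morphisme qui apparaît entre les deux termes non-nuls s'écrit comme le produit tensoriel de l'application galoisienne $\Hom(V_{\CL}^{\lambda}, \rmB_{\lambda}) \to \Hom(V_{\CL}^{\lambda}, Q_{\lambda}^1)$ et de la surjection automorphe $\omega_{\lambda}\to (\St_{\lambda}^{\lalg})'$ donnée par la dualité de Morita (\cf remarque \ref{rema:morita}). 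Le point $(2)$ du lemme \ref{lem:galcalc} assure que $\Hom(V_{\CL}^{\lambda}, \rmB_{\lambda}) = L$, de sorte que le terme de départ s'identifie à $\omega_{\lambda}\cong (\St_{\lambda}^{\lan})'$ ; il suffira alors d'établir la nullité de l'application galoisienne, ou de façon équivalente, par la suite exacte $0\to \rmU_{\lambda}^1(L) \to \rmB_{\lambda}(L) \to Q_{\lambda}^1 \to 0$, la surjectivité de l'application $\iota_*\colon \Hom(V_{\CL}^{\lambda}, \rmU_{\lambda}^1(L))\to \Hom(V_{\CL}^{\lambda}, \rmB_{\lambda})$ induite par l'inclusion naturelle.

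L'étape délicate sera donc la non-nullité de $\iota_*$ : les deux espaces sont de dimension $1$ sur $L$ par les points $(1)$ et $(2)$ du lemme \ref{lem:galcalc}, et il faut justifier que la flèche entre eux est non nulle. Je compte procéder par un calcul d'Euler-Poincaré dans la catégorie des $L$-presque-$C$-représentations, parallèle à celui effectué pour $\rmU_{\lambda}^0(L)$ dans la preuve du lemme \ref{lem:galcalc} : la formule $\chi_L(V_{\CL}^{\lambda}, \rmU_{\lambda}^1(L))=-2$, combinée à l'annulation $\Ext^2_{\CC_L}(V_{\CL}^{\lambda}, \rmU_{\lambda}^1(L))=0$ obtenue par dualité (analogue au lemme \ref{lem:pcr}, valide puisque $w(\lambda)>1$), impose $\dim_L\Ext^1(V_{\CL}^{\lambda}, \rmU_{\lambda}^1(L)) = 3$. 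Si $\iota_*$ était nul, la suite exacte longue associée à $0\to \rmU_{\lambda}^1(L) \to \rmB_{\lambda}(L) \to Q_{\lambda}^1 \to 0$, avec les dimensions $1,1,2,3,1$ des $\Hom$ et $\Ext^1$, forcerait une flèche $\Ext^1(V_{\CL}^{\lambda}, \rmU_{\lambda}^1(L))\to \Ext^1(V_{\CL}^{\lambda}, \rmB_{\lambda})$ dont l'image serait de dimension $2$ dans un espace de dimension $1$, contradiction. Enfin, la stricité de la suite exacte finale proviendra de celle des suites $(\rmA)$ et $(\rmB)$, héritée du théorème \ref{theo:fondrin} où les images des flèches verticales sont fermées.
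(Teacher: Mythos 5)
Votre argument est correct et aboutit au même résultat, mais diffère de la preuve du papier sur un point clé : pour identifier $\Hom_{L[\GQp]}(V_{\CL}^{\lambda},K)$ avec $\omega_{\lambda}\cong(\St_{\lambda}^{\lan})'$, le papier dévisse $K$ lui-même par le lemme du serpent en une suite exacte $0\to\rmB_{\lambda}\wotimes_{L}\CO_{\lambda}/W_{\lambda}\to K\to\rmU_{\lambda}^1\otimes_L(\St_{\lambda}^{\lalg})'\to 0$, puis applique $\Hom(V,\cdot)$ en utilisant que le morphisme de connexion $(\St_{\lambda}^{\lalg})'\to\CO_{\lambda}/W_{\lambda}$, un $G$-morphisme entre irréductibles non isomorphes, est nul ; il conclut ensuite par une comparaison de composantes de Jordan-Hölder avec l'inclusion $\Hom(V,K)\incl\omega_{\lambda}$. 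Vous choisissez plutôt de garder $K$ comme noyau et de montrer que le morphisme $\Hom(V,\rmB_{\lambda}\wotimes\omega_{\lambda})\to\Hom(V,Q_{\lambda}^1\wotimes(\St_{\lambda}^{\lalg})')$ s'annule en le factorisant comme produit tensoriel d'une flèche galoisienne et d'une flèche automorphe et en tuant le facteur galoisien. Cette factorisation est valide (c'est bien $p\otimes q$ avec $p\colon\rmB_{\lambda}\to Q_{\lambda}^1$ et $q\colon\omega_{\lambda}\to(\St_{\lambda}^{\lalg})'$), et la réduction à la surjectivité de $\iota_*\colon\Hom(V,\rmU_{\lambda}^1(L))\to\Hom(V,\rmB_{\lambda}(L))$ est correcte.

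Votre détour par Euler--Poincaré pour établir la non-nullité de $\iota_*$ est toutefois à la fois superflu et incomplètement justifié. D'une part, l'inclusion $\rmU_{\lambda}^1(L)\incl\rmB_{\lambda}(L)$ (implicite dans la définition de $Q_{\lambda}^1$) et l'exactitude à gauche de $\Hom(V,\cdot)$ donnent immédiatement que $\iota_*$ est injective ; comme les deux espaces sont de dimension $1$ d'après les points $(1)$ et $(2)$ du lemme \ref{lem:galcalc}, $\iota_*$ est un isomorphisme, donc surjective — il n'y a rien de plus à vérifier. D'autre part, votre comptage de dimensions repose sur l'affirmation que $\dim_L\Hom(V,Q_{\lambda}^1)=2$, alors que le lemme \ref{lem:galcalc}(4) ne traite que de $Q_{\lambda}^0$ ; cette dimension n'est pas démontrée (et n'est d'ailleurs pas nécessaire). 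Si vous souhaitez conserver la structure de votre argument, remplacez simplement le paragraphe Euler--Poincaré par l'observation d'exactitude à gauche ci-dessus.
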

\begin{proof}
Notons $V=V_{\CL}^{\lambda}$ et simplement $\Hom=\Hom_{L[\GQp]}$ et $\Ext^1=\Ext^1_{L[\GQp]}$ comme précédemment. À partir du corollaire \ref{cor:suitexcool} on obtient la suite exacte 
\begin{equation}\label{eq:exactseqspes}
0\rightarrow Q_{\lambda}^0\otimes_L W_{\lambda}\rightarrow  \rmH^{\lambda}_{\pet}\rightarrow K\rightarrow 0
\end{equation}
avec $K\coloneqq \Ker(\rmB_{\lambda}(L)\otimes_{L}\omega_{\lambda}\rightarrow Q_{\lambda}^1\wotimes_L \intn{{\St}^{\lalg}_{\lambda}}')$. On va montrer que $\Hom_{L[\GQp ]}\intnn{V}{K}\cong \omega_{\lambda}$. Comme $K\incl \rmB_{\lambda}(L)\otimes_{L}\omega_{\lambda}$, on obtient une inclusion $\Hom_{L[\GQp ]}\intnn{V}{K}\incl \omega_{\lambda}$ puisque $\Hom_{L[\GQp ]}\intnn{V}{\rmB_{\lambda}(L)}\cong L$ d'après le lemme \ref{lem:galcalc}. On dévisse $K$ ; le lemme du serpent appliqué au diagramme commutatif suivant
\begin{center}
\begin{tikzcd}
0\ar[r]& \rmB_{\lambda}\wotimes_{\Qp} \CO_{\lambda}/W_{\lambda}\ar[r]\ar[d, hook] & \rmB_{\lambda}\wotimes_{\Qp} \omega_{\lambda} \ar[r]\ar[d,equal] &\rmB_{\lambda}\otimes_{\Qp}(\St_{\lambda}^{\lalg})'\ar[r]\ar[d,two heads]& 0\\
0\ar[r] & K\ar[r] & \rmB_{\lambda}\wotimes_{\Qp} \omega_{\lambda} \ar[r] &Q_{\lambda}^0\otimes_{L}(\St_{\lambda}^{\lalg})'\ar[r]& 0.
\end{tikzcd}
\end{center}
donne la suite exacte 
$$
0\rightarrow \rmB_{\lambda}(L)\wotimes_{L} \CO_{\lambda}/W_{\lambda} \rightarrow K \rightarrow \rmU_{\lambda}^1\otimes_{L}(\St_{\lambda}^{\lalg})'\rightarrow 0.
$$
On lui applique $\Hom(V,\cdot)$ pour obtenir à l'aide du lemme \ref{lem:galcalc} la suite exacte
\begin{equation}\label{eq:bbbb}
0\rightarrow \CO_{\lambda}/W_{\lambda}\rightarrow \Hom_{L[\sG_{\Qp}]}(V,K)\rightarrow (\St_{\lambda}^{\lalg})'\xrightarrow{0}\CO_{\lambda}/W_{\lambda},
\end{equation}
où la dernière flèche est nulle puisque $(\St_{\lambda}^{\lalg})'$ et $\CO_{\lambda}/W_{\lambda}$ sont deux $L$-représentations irréductibles de $G$ non-isomorphes. Or $\Hom_{L[\sG_{\Qp}]}(V,K)\incl \omega_{\lambda}$ et comme elles ont les mêmes composantes d'après la suite exacte (\ref{eq:bbbb}) (\cf la remarque \ref{rema:morita}) cette inclusion est un isomorphisme. Finalement, on applique $\Hom(V,\ \cdot \ )$ à \ref{eq:exactseqspes} pour obtenir la suite exacte
$$
0\rightarrow E_2\otimes_L W_{\lambda}\rightarrow  \Hom(V,\rmH^{\lambda}_{\pet})\rightarrow (\St_{\lambda}^{\lan})'\rightarrow 0
$$
puisque $\Ext^1(V,Q_{\lambda}^0)=0$ d'après le quatrième point du lemme \ref{lem:galcalc}, ce qui conclut la preuve de la proposition.
\end{proof}
On peut maintenant démontrer le théorème \ref{thm:mugalspe}. Notons $\rmH_{\CL}\coloneqq \Hom_{L[\GQp ]}\intnn{V_{\CL}^{\lambda}}{\rmH^{\lambda}_{\pet}}$. Il suffit d'analyser l'extension que l'on obtient dans la proposition \ref{prop:mugalspe}. Pour commencer, on applique $\Hom_{L[G]}\intnn{\intn{{\St}^{\lan}_{\lambda}}'}{\ \cdot\ }$ à (\ref{eq:extcoh}). Rappelons (\cf lemme \ref{lem:extserispeci}) que $\Ext^1_{L[\bar G]}\intnn{\intn{{\St}^{\lan}_{\lambda}}'}{{W}_{\lambda}}\cong \Ext^1_{L[\bar G]}\intnn{{W}^*_{\lambda}}{{\St}^{\lan}_{\lambda}}\cong \Hom\intnn{\Qp^{\times}}{L}$ qui est un $L$-espace vectoriel de dimension $2$. Ainsi, l'extension (\ref{eq:extcoh}) nous donne un sous-espace  $L\cdot e \subsetneq \BP\intn{\Hom\intnn{\Qp^{\times}}{L}}$ correspondant à $\Hom_{L[\GQp ]}\intnn{V_{\CL}^{\lambda}}{\rmH^{\lambda}_{\pet}}$ ; on veut montrer qu'il n'est pas nul et que par l'identification $\BP\intn{\Hom\intnn{\Qp^{\times}}{L}}\cong \BP^1(L)$ du paragraphe \ref{subsubsec:spean}, il correspond à $\CL\in L$.

Soit $\CL_1\in L\subset\BP^1(L)$ et considérons l'extension non-scindée associée à $\CL_1$ :
\begin{equation}\label{eq:extspe}
0\rightarrow {W}_{\lambda}\rightarrow \intn{{\Sigma}_{\CL_1}^{\lambda}}'\rightarrow \intn{{\St}^{\lan}_{\lambda}}'\rightarrow 0.
\end{equation}
Alors, d'après le théorème \ref{thm:muautspe}, et comme les actions de $\GQp $ et $G$ commutent sur $\rmH^{\lambda}_{\pet}$, on en déduit que
{
$$
\Hom_{L[G]}\intnn{\intn{{\Sigma}_{\CL_1}^{\lambda}}'}{\rmH_{\CL}}\cong \Hom_{L[\GQp ]}\intnn{V_{\CL}^{\lambda}}{\Hom_{L[G]}\intnn{\intn{{\Sigma}_{\CL_1}^{\lambda}}'}{\rmH^{\lambda}_{\pet}}} \cong
\begin{cases}
 L & \text{ si }\CL_1=\CL,\\
 0 & \text{ si }\CL_1\neq \CL. 
\end{cases}
$$}
En particulier, l'extension (\ref{eq:extcoh}) ne peut pas être triviale et il reste a montrer que ce n'est pas \og l'extension universelle \fg. Plus concrètement, on veut montrer $H_{\CL}\cong W_{\lambda}\oplus \intn{{\Sigma}_{\CL}^{\lambda}}'$. On vient de montrer que $\intn{{\Sigma}_{\CL}^{\lambda}}'\incl H_{\CL}$ et en analysant les composantes de Jordan-Hölder, donnée par la suite exacte (\ref{eq:extcoh}), on obtient la suite exacte courte 
\begin{equation}\label{eq:jspextriv}
0\rightarrow \intn{{\Sigma}_{\CL}^{\lambda}}'\rightarrow H_{\CL}\rightarrow W_{\lambda}\rightarrow 0.
\end{equation}
Il suffit de montrer que cette suite exacte est scindé ; pour cela, on lui applique $\Hom_G\intnn{W_{\lambda}}{\cdot}$ et on obtient
\begin{equation}\label{eq:pqextriv}
0\rightarrow {\Hom_G\intnn{W_{\lambda}}{{\Sigma}_{\CL}^{\lambda}}'}\rightarrow{\Hom_G\intnn{W_{\lambda}}{H_{\CL}}}\xrightarrow{f}{\Hom_G\intnn{W_{\lambda}}{W_{\lambda}}\xrightarrow{\delta}\Ext^1_G\intnn{W_{\lambda}}{{\Sigma}_{\CL}^{\lambda}}'}
\end{equation}
On va montrer que $f$ est surjective, ce qui implique $\delta=0$ comme on le veut. On calcule les trois premiers termes de la suite exacte précédente :
\begin{itemize}
\itemb ${\Hom_G\intnn{W_{\lambda}}{{\Sigma}_{\CL}^{\lambda}}'}=L$ d'après le lemme de Schur,
\itemb $\Hom_G\intnn{W_{\lambda}}{H_{\CL}}=E_2\cong L^2$ en appliquant $\Hom_G\intnn{W_{\lambda}}{\cdot}$ à la suite exacte (\ref{eq:extcoh}) et en utilisant que $\Hom_G\intnn{W_{\lambda}}{\intn{{\St}^{\lan}_{\lambda}}'}=0$ (\cf (\ref{eq:jhcompspe})).
\itemb $\Hom_G\intnn{W_{\lambda}}{W_{\lambda}}=L$ par le lemme de Schur.
\end{itemize}
Ainsi, la suite exacte (\ref{eq:pqextriv}) devient 
$$
0\rightarrow L \rightarrow L^2 \xrightarrow{f} L,
$$
ce qui montre bien que $f$ est surjective, donc que la suite exacte courte (\ref{eq:jspextriv}) est scindé. On obtient donc 
$$
H_{\CL}\cong W_{\lambda}\oplus \intn{{\Sigma}_{\CL}^{\lambda}}'
$$
comme on voulait.

\qed

\section{Fin de la preuve}
Le but de cette ultime section est de finir la démonstration du théorème principal :
\medskip
\begin{theo}\label{thm:chad}
Soit $V$ une $L$-représentation absolument irréductible de $\GQp $ de dimension~$\geqslant 2$.
\begin{enumerate}
\item Si $V$ est spéciale ou cuspidale, de dimension $2$, 
$$
\Hom_{L[\sW_{\Qp}]}\intnn{V}{\rmH^1_{\et}\Harg{\brv{\rmM}_{C}^{\infty}}{\Sym\BV(1)}} \cong \bPi(V)'\otimes_{L}\JL^{\lalg}(V).
$$
\item Dans tous les autres cas,
$$
\Hom_{L[\sW_{\Qp}]}\intnn{V}{\rmH^1_{\et}\Harg{\brv{\rmM}_{C}^{\infty}}{\Sym\BV(1)}} = 0.
$$
\end{enumerate}
 \end{theo}
 \medskip
Nos efforts précédents ont aboutis au corollaire suivant : 
\medskip
\begin{coro}\label{cor:excluspecusp}
Soit $\Pi$ une $L$-représentation de Banach, unitaire, admissible et absolument irréductible de $G$, alors
{
$$
\Hom_{L[G]}\intnn{\Pi'}{\rmH^1_{\et}\Harg{\brv{\rmM}^{\infty}_{C}}{\Sym\BV(1)}}\cong
\begin{cases}
 V_{M,\CL}^{\lambda}\otimes_L \JL_M^{\lambda} & \text{si $\Pi=\Pi_{M,\CL}^{\lambda}$ spéciale ou cuspidale,}\\
 0& \text{si $\Pi$ n'est pas du type ci-dessus.}
\end{cases}
$$}
\end{coro}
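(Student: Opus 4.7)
My plan is to assemble Corollary~\ref{cor:excluspecusp} from the piecewise results already obtained in the cuspidal and special cases, using the various decomposition results to transfer the computation from $\brv{\rmM}_{C}^{\infty}$ onto simpler pieces, one weight and one $L$-$(\varphi,N,\GQp)$-module at a time.

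First I would reduce from $\brv{\rmM}_{C}^{\infty}$ to $\presp{\rmM}_{C}^{\infty}$: by Corollary~\ref{cor:fkcompcon} applied to the absolutely irreducible $L[\BG]$-module $W\coloneqq \Pi'$, there exists a smooth character $\chi\colon\Qpt\to L^{\times}$ such that
$$\Hom_{L[G]}\intnn{\Pi'}{\brv{\rmH}^{1}_{\et}}\cong \Hom_{L[G]}\intnn{\Pi'\otimes(\chi\circ\nu)}{\presp{\rmH}^{1}_{\et}}.$$
Absorbing $\chi$ by twisting $\Pi$ (which preserves the cuspidal/special dichotomy and replaces $M$ by a twist of $M$), I may work throughout with $\presp{\rmH}^{1}_{\et}$. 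Next, Corollary~\ref{coro:decomppoid} gives a $\BG$-equivariant decomposition $\presp{\rmH}^{1}_{\et}=\bigoplus_{\lambda\in P_+}\presp{\rmH}^{\lambda}_{\et}\otimes_{L}\cz{W}_{\lambda}$, which reduces matters to computing $\Hom_{L[G]}(\Pi',\presp{\rmH}^{\lambda}_{\et})\otimes_L\cz{W}_{\lambda}$ for each $\lambda$. Finally, splitting $\presp{\rmH}^{\lambda}_{\et}$ according to the smooth $\czG$-action into isotypic pieces indexed by $M\in\Phi N^p_{\lvert\lambda\rvert}$, I obtain
$$\Hom_{L[G]}\intnn{\Pi'}{\presp{\rmH}^{1}_{\et}}\cong\bigoplus_{\lambda\in P_+}\bigoplus_{M\in\Phi N^p_{\lvert\lambda\rvert}}\Hom_{L[G]}\intnn{\Pi'}{\presp{\rmH}^{\lambda}_{\et}[M]}\otimes_L \JL_M^{\lambda}.$$

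Having reduced the question to evaluating each summand, I would appeal to the two exclusivity theorems already proved. If $M$ is cuspidal, Corollary~\ref{cor:multautcuspet} directly yields $V_{M,\CL}^{\lambda}$ when $\Pi=\Pi_{M,\CL}^{\lambda}$ and $0$ otherwise. If $M$ is special, I first use the second bullet of Theorem~\ref{thm:diagfondrint}, which identifies $\presp{\rmH}^{\lambda}_{\pet}[M]$ (and hence by Theorem~\ref{thm:proetoet} its $G$-bounded subspace $\presp{\rmH}^{\lambda}_{\et}[M]$) with $\rmH^{1}_{\et}\Harg{\BH_{C}}{\BV_{\lambda}(1)}\otimes\chi_M$ for an explicit character $\chi_M$ of $\sW_{\Qp}\times G$ (this is a consequence of the fact that for special $M$ the representation $\JL_M$ is a smooth character, so the $[M]$-isotypic picks out precisely the component fixed by $\czG^1$, i.e.\ the demi-plane $\BH_C$ itself). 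Then Corollary~\ref{cor:muautspe} takes care of the case $w(\lambda)>1$, giving $V_{M,\CL}^{\lambda}$ when $\Pi=\Pi_{M,\CL}^{\lambda}$ (twisted by $\chi_M$) and $0$ in all other cases. The remaining case $w(\lambda)=1$ (where $V_{M,\CL}^{\lambda}$ and $\Pi_{M,\CL}^{\lambda}$ are reducible) is not strictly inside the scope of the corollary as stated, since the absolutely irreducible unitary Banach constituents of $\Pi_{\CL}^{[0,1]}$ are only the Steinberg Banach and characters: Proposition~\ref{prop:multred} handles exactly this situation by computing the derived intertwining and showing that no absolutely irreducible~$\Pi$ outside the Steinberg block contributes.

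Assembling these piecewise computations, the only nonzero summands are those where $\Pi\cong\Pi_{M,\CL}^{\lambda}$ for some cuspidal or special $M$ with $w(\lambda)>1$ (the constraint being built into the absolute irreducibility of $\Pi$), and in that case exactly one $(\lambda,M,\CL)$-summand survives and contributes $V_{M,\CL}^{\lambda}\otimes_L\JL_M^{\lambda}$. The main obstacle I expect is bookkeeping: making sure the character twist introduced in the first reduction is absorbed compatibly with the exclusivity parameters $(M,\CL,\lambda)$, since the Langlands correspondence, the Jacquet-Langlands correspondence and the normalization $\nu=\rec\otimes\det^{-1}\otimes\nrd$ must all twist consistently. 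Once this is checked, all other ingredients (vanishing of higher Ext's via the diagram, $G$-boundedness via Theorem~\ref{thm:proetoet}, and the identifications of $\presp{\rmH}^{\lambda}_{\et}[M]$ with $\BH_C$-cohomology in the special case) are in place, and the corollary follows by direct summation.
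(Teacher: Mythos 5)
Your proposal follows the paper's own proof of Corollaire~\ref{cor:excluspecusp} very closely: (1) reduce to $\presp{\rmM}^{\infty}_{C}$ via Corollaire~\ref{cor:fkcompcon}, absorbing a smooth character twist; (2) decompose $\presp{\rmH}^1_{\et}$ first by weights through Corollaire~\ref{coro:decomppoid}, then further into $M$-isotypic pieces using the smoothness of the $\czG$-action, to obtain $\presp{\rmH}^1_{\et}=\bigoplus_{\lambda}\bigoplus_{M}\presp{\rmH}^{\lambda}_{\et}[M]\otimes_L\JL_M^{\lambda}$; (3) conclude via Corollaire~\ref{cor:multautcuspet} when $M$ is cuspidal and via Corollaire~\ref{cor:muautspe} when $M$ is special with $w(\lambda)>1$ (the paper identifies $\rmH^{\lambda}_{\et}[M]$ with $\rmH^1_{\et}\Harg{\BH_C}{\BV_{\lambda}(1)}$ directly through Hochschild--Serre and the vanishing $\rmH^0_{\et}=0$, which is the same mechanism you invoke through the second bullet of Théorème~\ref{thm:diagfondrint}). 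So the route is the same.

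The one place where you add something the paper does not is the case $M$ special with $w(\lambda)=1$, which the paper's proof simply excludes by writing \og on suppose $w(\lambda)\neq 1$\fg. Your attempt to close this with Proposition~\ref{prop:multred} is not quite complete as stated: that proposition rules out contributions from absolutely irreducible $\Pi$ \emph{outside} the Steinberg block and computes the derived intertwining for the indecomposable (reducible) $\Pi_{\CL}^{\lambda}$, but it does not directly say what $\Hom_{L[G]}\bigl(\Pi',\rmH^{\lambda}_{\et}[M]\bigr)$ is when $\Pi$ is an absolutely irreducible Banach representation \emph{inside} the Steinberg block that is not of the form $\Pi_{\CL}^{\lambda}$ (e.g.\ the continuous Steinberg $\St^0$ itself, which is special with $\Pi^{\lalg}\cong\St^{\infty}$ and $\lambda=(0,1)$). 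So your appeal to Proposition~\ref{prop:multred} leaves exactly the same residual case open as the paper's own proof does; in that respect you have reproduced the argument faithfully rather than genuinely strengthened it. Everything else is sound and matches the paper's logic step for step.
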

\medskip
\begin{proof}
On utilise les notations du numéro \ref{subsec:decompcoh}. D'après le corollaire \ref{cor:fkcompcon} il existe $\chi\colon \Qpt\rightarrow L^{\times}$ un caractère lisse tel que
$$
\Hom_{L[G]}\intnn{\Pi'}{\brv{\rmH}^1_{\et}} = \Hom_{L[G]}\intnn{\Pi'}{\presp{\rmH}^1_{\et}\otimes (\chi\circ \nu)},
$$
et quitte a tordre par $\chi$ on peut donc supposer que $\chi=1$. Comme l'action de $\czG$ est lisse sur $\presp{\rmH}^1_{\et}$ et se factorise par un quotient compact, on a 
\begin{equation}\label{eq:decompml}
\presp{\rmH}^1_{\et}=\bigoplus_{\lambda\in P_+}\bigoplus_{M\in \Phi N_{\lvert \lambda \rvert}^p}\presp{\rmH}^{\lambda}_{\et}[M]\otimes_L \JL_M^{\lambda}
\end{equation}
où $\presp{\rmH}^{\lambda}_{\et}[M]\coloneqq \Hom_{L[\czG]}\intnn{\JL_M^{\lambda}}{\presp{\rmH}^{1}_{\et}}$. Ainsi, il existe $\lambda\in P_+$ et $M\in \Phi N_{\lvert \lambda \rvert}^p$ tel que
$$
\Hom_{L[G]}\intnn{\Pi'}{\brv{\rmH}^1_{\et}}=\Hom_{L[G]}\intnn{\Pi'}{\rmH^{\lambda}_{\et}[M]}\otimes_L\JL_M^{\lambda}
$$
Supposons que cet espace n'est pas nul. 
\begin{itemize}
\itemb Si $M$ est cuspidal, alors d'après le théorème \ref{cor:multautcuspet}, $\Pi=\Pi_{M,\CL}^{\lambda}$ pour $\CL\subset M_{\dR}$ et 
$$
\Hom_{L[G]}\intnn{\Pi'}{\brv{\rmH}^1_{\et}}= V_{M,\CL}^{\lambda}\otimes_L\JL_M^{\lambda}.
$$
\itemb Si $M$ est spécial, on suppose $w(\lambda)\neq 1$ et comme $\chi=1$ on a $M= \Sp_L(\lvert \lambda \rvert)$. Donc $\rmH^{\lambda}_{\et}[M]=(\rmH^{\lambda}_{\et})^{\czG}\cong\rmH^1_{\et}\Harg{\BH_C}{\BV_{\lambda}(1)}$ (à partir de Hochschild-Serre, comme $\rmH^0_{\et}\Harg{\brv{\rmM}_C^{\infty}}{\BV_{\lambda}(1)}=0$ puisque $w(\lambda)>1$) . Alors, d'après le théorème \ref{cor:muautspe} on a $\Pi=\Pi_{\CL}^{\lambda}$ pour $\CL\in L$ et 
$$
\Hom_{L[G]}\intnn{\Pi'}{\brv{\rmH}^1_{\et}}= V_{\CL}^{\lambda}\otimes_L\cz{W}_{\lambda}.
$$
\end{itemize}

\end{proof}
On a déjà la première partie du théorème \ref{thm:chad} que l'on résume aussi sous la forme d'un corollaire ; il se déduit à partir des théorèmes \ref{theo:multcusptot} et \ref{thm:mugalspeet}. Comme la preuve est exactement la même que pour le corollaire \ref{cor:excluspecusp} on se permet de l'omettre.
\medskip
\begin{coro}\label{cor:fin}
Soit $M$ un $L$-$(\varphi,N,\GQp)$-module absolument irréductible de pente $1-\lvert \lambda \rvert/2$ et $\CL$ est une $L$-droite de $M_{\dR}$ tel que si $M$ est spécial alors $w(\lambda)\neq1$ (\ie $V_{M,\CL}^{\lambda}$ est absolument irréductible), on a
 $$
 \Hom_{L[\GQp ]}\intnn{V_{M,\CL}^{\lambda}}{\rmH^{1}_{\et}\Harg{\brv{\rmM}_C^{\infty}}{\Sym \BV(1)}}\cong {\Pi_{M,\CL}^{\lambda}}'\otimes \JL_M^{\lambda}.
 $$
En particulier, cet entrelacement n'est pas nul.
\end{coro}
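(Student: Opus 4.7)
The plan is to essentially run the proof of Corollary \ref{cor:excluspecusp} in reverse: instead of fixing $\Pi$ on the automorphic side, we fix $V_{M,\CL}^{\lambda}$ on the Galois side and extract its multiplicity using Theorems \ref{theo:multcusptot} and \ref{thm:mugalspeet}.

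First I would apply Corollary \ref{cor:fkcompcon} to find a smooth character $\chi\colon \Qpt\to L^{\times}$ such that
$$
\Hom_{L[\sW_{\Qp}]}\intnn{V_{M,\CL}^{\lambda}}{\brv{\rmH}^1_{\et}}\cong \Hom_{L[\sW_{\Qp}]}\intnn{V_{M,\CL}^{\lambda}\otimes(\chi\circ\rec)}{\presp{\rmH}^1_{\et}},
$$
and up to replacing $M$ by $M\otimes\chi$ (which does not change the dimension of the Hom space on the right, only shifts the relevant $M$ in the decomposition below) I may assume the module in question lies in $\Phi N^p_{\lvert\lambda\rvert}$, so that $p\in\czG$ acts trivially on $\JL_M^{\lambda}$.

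Next, combining the Schur decomposition $\Sym\BV=\bigoplus_{\lambda'\in P_+}\BV_{\lambda'}\otimes_L\cz{W}_{\lambda'}$ from Proposition \ref{prop:calcdec} with the fact that the $\czG$-action on $\presp{\rmH}^{\lambda'}_{\et}$ is smooth and factors through a profinite quotient, I get the $\BG$-equivariant decomposition
$$
\presp{\rmH}^1_{\et}=\bigoplus_{\lambda'\in P_+}\bigoplus_{M'\in\Phi N^p_{\lvert\lambda'\rvert}}\presp{\rmH}^{\lambda'}_{\et}[M']\otimes_L\JL_{M'}^{\lambda'}.
$$
Since $V_{M,\CL}^{\lambda}$ is absolutely irreducible, the injectivity of the Fontaine correspondence on the level of absolutely indecomposable $(\varphi,N,\GQp)$-modules (together with the poids of Hodge-Tate determining $\lambda$) shows that $\Hom_{L[\sW_{\Qp}]}(V_{M,\CL}^{\lambda},\presp{\rmH}^{\lambda'}_{\et}[M'])$ vanishes unless $(\lambda',M')=(\lambda,M)$. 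Thus the computation reduces to the single summand
$$
\Hom_{L[\sW_{\Qp}]}\intnn{V_{M,\CL}^{\lambda}}{\presp{\rmH}^{\lambda}_{\et}[M]}\otimes_L\JL_M^{\lambda}.
$$

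Finally I would apply the appropriate result from the previous sections. In the cuspidal case, the first part of Theorem \ref{theo:multcusptot}, combined with the $\JL_M^{\lambda}$ factor already extracted, yields $(\Pi_{M,\CL}^{\lambda})'\otimes_L\JL_M^{\lambda}$ directly. In the special case (with $w(\lambda)\neq 1$ by hypothesis), one has $M\cong\Sp_L(\lvert\lambda\rvert-2)\otimes\chi_0$ for some smooth character $\chi_0$, so $\JL_M$ is the character $\chi_0\circ\nrd$ and the relevant isotypic component $\presp{\rmH}^{\lambda}_{\et}[M]$ is (after untwisting) identified via Hochschild–Serre with $\rmH^1_{\et}\Harg{\BH_C}{\BV_{\lambda}(1)}$, since $\rmH^0_{\et}\Harg{\brv{\rmM}_C^{\infty}}{\BV_{\lambda}(1)}=0$ when $w(\lambda)>1$ by Proposition \ref{prop:anulco0}. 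Corollary \ref{thm:mugalspeet} then gives $(\Pi_{\CL}^{\lambda})'$, which after reassembling the $\JL_M^{\lambda}=\cz{W}_{\lambda}\otimes(\chi_0\circ\nrd)$ factor and undoing the initial twist by $\chi$ produces the stated isomorphism $(\Pi_{M,\CL}^{\lambda})'\otimes_L\JL_M^{\lambda}$.

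The main obstacle is purely bookkeeping: tracking the character twists through the passage $\brv{\rmM}^{\infty}_C\rightsquigarrow\presp{\rmM}^{\infty}_C$ and verifying that in the special case the factor $\cz{W}_{\lambda}$ coming from the Schur decomposition $\Sym\BV=\bigoplus\BV_{\lambda'}\otimes\cz{W}_{\lambda'}$ combines correctly with the character $\JL_M$ to produce the localized algebraic factor $\JL_M^{\lambda}$ predicted by the $p$-adic Jacquet–Langlands picture. Neither step requires new ideas beyond the two key theorems already established.
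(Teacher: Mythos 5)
Your proposal follows essentially the same route the paper takes: for $M$ cuspidal, Theorem~\ref{theo:multcusptot} (equation~(\ref{eq:cuspgal})) already \emph{is} the statement of the corollary, so no decomposition is even needed; for $M$ special, one reduces via Corollary~\ref{cor:fkcompcon} to $\presp{\rmM}$, extracts the $\JL_M^\lambda$-isotypic piece, and identifies it with $\rmH^1_{\et}(\BH_C,\BV_\lambda(1))$ by Hochschild--Serre (using $\rmH^0_{\et}(\brv{\rmM}_C^\infty,\BV_\lambda(1))=0$ for $w(\lambda)>1$), then applies Corollary~\ref{thm:mugalspeet}.

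One small imprecision: your claim that $\Hom_{L[\sW_{\Qp}]}(V_{M,\CL}^{\lambda},\presp{\rmH}^{\lambda'}_{\et}[M'])$ vanishes for $(\lambda',M')\neq(\lambda,M)$ ``by injectivity of the Fontaine correspondence'' is not a logical deduction from that injectivity alone. Knowing that $V\mapsto(M,\CL,\lambda)$ is injective tells you nothing a priori about which Galois representations occur inside $\presp{\rmH}^{\lambda'}_{\et}[M']$. The vanishing is instead a consequence of the multiplicity theorems already established: pick an absolutely irreducible admissible $G$-Banach quotient $\Pi$ of $\Hom_{\GQp}(V,\presp{\rmH}^{\lambda'}_{\et}[M'])$ (nonzero and of finite length by Proposition~\ref{prop:longfinui}), apply Corollaries~\ref{cor:multautcuspet} or~\ref{cor:muautspe} to pin $\Pi$ down as $\Pi_{M',\CL',\lambda'}$, and compare the resulting Galois multiplicity to force $(\lambda',M')=(\lambda,M)$ --- exactly the mechanism used in the proof of Corollary~\ref{cor:excluspecusp}. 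For the cuspidal case this bookkeeping is unnecessary since Theorem~\ref{theo:multcusptot} computes the Hom against the whole $\Sym\BV$ directly; in the special case, Corollary~\ref{thm:mugalspeet} similarly handles all $\lambda'$ at once once you are on $\BH_C$, so only the cuspidal $M'$ summands require the extra vanishing argument.
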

\medskip
On déduit de ces théorèmes que la cohomologie étale $p$-adique à coefficients isotriviaux encode la correspondance de Langlands locale $p$-adique pour les représentations spéciales et cuspidales de dimension $2$. Pour finir la démonstration du théorème \ref{thm:chad} Il reste à montrer qu'à part des caractères lisses, le socle de $\rmH^1_{\et}\Harg{\brv{\rmM}_{C}^{\infty}}{\Sym\BV(1)}$ ne contient que des représentations spéciales ou cuspidales de dimension $2$ de poids $\lambda\in P_+$. On le fait en suivant l'argument de \cite[5.10]{codonifac}.
\medskip
\begin{prop}\label{prop:chad}
 Soit $V$ une $L$-représentation absolument irréductible de $\GQp$ de dimension $\geqslant 2$. Si $\Hom_{L[\sW_{\Qp} ]}\intnn{V}{\rmH^1_{\et}\Harg{\brv{\rmM}_{C}^{\infty}}{\Sym \BV(1)}}\neq 0$ alors $V$ est de dimension $2$, spéciale ou cuspidale.
\end{prop}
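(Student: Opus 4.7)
The plan is to bootstrap from the $G$-equivariant multiplicity formula (Corollary \ref{cor:excluspecusp}) by extracting, from a hypothetical Galois embedding $V \hookrightarrow \rmH^1_{\et}$, an admissible unitary Banach subquotient on the $G$-side. Suppose $V$ is absolutely irreducible of dimension $\geqslant 2$ with $W \coloneqq \Hom_{L[\sW_{\Qp}]}\!\intnn{V}{\rmH^1_{\et}\Harg{\brv{\rmM}_{C}^{\infty}}{\Sym\BV(1)}} \neq 0$. First I would apply Corollary \ref{cor:fkcompcon}: up to twisting $V$ by a smooth character $\chi\circ\nu$ (which preserves absolute irreducibility and dimension), we may replace $\brv{\rmM}_C^{\infty}$ by $\presp{\rmM}_C^{\infty}$. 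Then using the decomposition \eqref{eq:decompml} according to weights $\lambda \in P_+$ and $L$-$(\varphi,N,\GQp)$-modules $M \in \Phi N^p_{\lvert\lambda\rvert}$, a nonzero $\varphi \in W$ has image in a finite subsum, so there exist $\lambda \in P_+$ and $M$ with $\Hom_{L[\sW_{\Qp}]}\!\intnn{V}{\presp{\rmH}^\lambda_{\et}[M]} \neq 0$. Call this space $W_{\lambda,M}$; it carries a continuous $G$-action commuting with that of $\sW_{\Qp}$.

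Next I would produce an irreducible admissible unitary Banach $L$-representation $\Pi_0$ of $G$ such that $\Pi_0'$ embeds $G$-equivariantly into $\presp{\rmH}^\lambda_{\et}[M]$ and $V$ occurs in $\Hom_{L[\sW_{\Qp}]}(V,\Pi_0')$. For this, pick $0 \neq v \in W_{\lambda,M}$. Its image spans a $G$-stable subspace of $\presp{\rmH}^\lambda_{\et}[M]$, which lies in the colimit over levels $n$ of $\Hom_{L[\czG]}(\JL_M,\rmH^1_{\et}\Harg{\presp{\rmM}_C^n}{\BV_\lambda(1)})$. At each finite level $n$, after choosing an $\rmO_L$-lattice, the reduction modulo $\varpi_D$ sits inside $\rmH^1_{\et}\Harg{\presp{\rmM}_C^n}{\BV_k^+/\varpi_D}$, which by Proposition \ref{prop:longfinui} is the stereotypical dual of a smooth admissible $\rmO_L[G]$-module of finite length. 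A standard socle/Zorn argument on the dual (as in \cite[§5]{codonifac}) then yields a nonzero irreducible $G$-quotient of the finitely generated module $L[G]\cdot v$; passing to its $p$-adic completion and dualizing gives the desired $\Pi_0$, together with a nonzero $G$-equivariant map $\Pi_0' \to \presp{\rmH}^\lambda_{\et}[M]$ whose image still contains a translate of $v$, so that $\Hom_{L[G]}(\Pi_0',\presp{\rmH}^\lambda_{\et}[M] \otimes_L \JL_M^\lambda)$ contains $V$ as a sub-$\sW_{\Qp}$-representation.

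Finally I would invoke Corollary \ref{cor:excluspecusp}: the only irreducible admissible unitary Banach $\Pi_0$ for which $\Hom_{L[G]}(\Pi_0',\presp{\rmH}^1_{\et})$ is nonzero is of the form $\Pi_{M',\CL}^{\lambda'}$ with $M'$ special or cuspidal, and in that case the $\sW_{\Qp}$-module is $V_{M',\CL}^{\lambda'} \otimes_L \JL_{M'}^{\lambda'}$, which is $2$-dimensional over $L$ as a Galois representation (up to tensoring with a finite-dimensional smooth $\JL_M^\lambda$-factor, which carries no Galois action). Since $V$ embeds $\sW_{\Qp}$-equivariantly into this, and $V_{M',\CL}^{\lambda'}$ is absolutely irreducible (because $V$ is and the Galois factor of the target is isotypic of type $V_{M',\CL}^{\lambda'}$), one obtains $V \cong V_{M',\CL}^{\lambda'}$. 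This forces $\dim_L V = 2$ and $V$ special or cuspidal, undoing the initial twist by $\chi\circ\nu$ to conclude.

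The main obstacle is the middle step: extracting an honest admissible Banach subquotient $\Pi_0$ of $G$ from a merely $G$-stable subspace of a direct limit of cohomologies. The finite-length statement of Proposition \ref{prop:longfinui} lives at each finite level $n$ and in characteristic $p$ coefficients, so the delicate point is to combine the Mittag-Leffler behavior along the tower with the absence of $p$-torsion obstructions (cf.\ Proposition \ref{prop:pprimbor}) to lift a residual irreducible quotient to characteristic $0$ while preserving admissibility; this is precisely where the techniques of \cite[Théorème 5.10]{codonifac} are needed, and adapting them to nonconstant isotrivial coefficients requires checking that the weight-and-$M$ decomposition is compatible with the reduction mod $\varpi_D$ used in that finiteness statement.
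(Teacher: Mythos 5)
Your plan has the right ingredients (twist to pass to $\presp{\rmM}_C^{\infty}$, invoke Proposition \ref{prop:longfinui} for finiteness, conclude via Corollary \ref{cor:excluspecusp}), but the middle step — producing an irreducible admissible unitary Banach $\Pi_0$ on the $G$-side — is both more convoluted than necessary and, as you yourself flag, left incomplete; that incompleteness is a genuine gap and not just a matter of detail.

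Instead of picking a vector $v$, forming $L[G]\cdot v$, reducing a lattice modulo $\varpi_D$, extracting a residual irreducible quotient, and then ``passing to its $p$-adic completion'', the cleaner and correct route is to set
$$
\Pi(V) \coloneqq \Hom_{L[\sW_{\Qp}]}\intnn{V}{\rmH^1_{\et}\Harg{\presp{\rmM}_{C}^{\infty}}{\Sym\BV(1)}}'
$$
all at once. This is a Banach $L$-representation of $G$ which carries a natural $G$-stable $\rmO_L$-lattice $\Pi^+(V)$, defined from a lattice in $V$ and the integral cohomologies built from $\Sym\BV^+$; its reduction modulo $\fkm_L$ has finite length by Proposition \ref{prop:longfinui} applied directly. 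Hence $\Pi(V)$ is admissible, unitary, of finite length, and one may simply take an irreducible admissible unitary Banach quotient $\Pi$. The resulting nonzero $G$-map $\Pi' \to \Hom_{L[\sW_{\Qp}]}(V,\rmH^1_{\et})$ yields the embedding $V \incl \Hom_G(\Pi',\rmH^1_{\et})$, contradicting Corollary \ref{cor:excluspecusp}.

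Two further remarks. First, your preliminary reduction to a single pair $(\lambda,M)$ via the decomposition \eqref{eq:decompml} is unnecessary: admissibility of $\Pi(V)$ can be established globally, and the decomposition is only needed inside Corollary \ref{cor:excluspecusp}, whose proof already takes care of it. Second, the difficulties you anticipate in your middle step (lifting a mod-$\varpi_D$ irreducible quotient to characteristic $0$ while controlling admissibility and unitarity) are real if one insists on working vector by vector; the point is precisely to avoid them by noticing that the whole dual Hom-space has the lattice to which the finiteness theorem applies.
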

\medskip
\begin{proof}
Comme précédemment, quitte à tordre par un caractère lisse, on peut remplacer $\brv{\rmM}_{C}^{\infty}$ par $\presp{\rmM}_{C}^{\infty}$. Montrons que si $V$ est une $L$-représentation absolument irréductible de $\GQp$ qui n'est ni spéciale, ni cuspidale ou bien de dimension $\geqslant 3$, alors 
$$
\Hom_{\GQp }\intnn{V}{\rmH^1_{\et}\Harg{\presp{\rmM}_{C}^{\infty}}{\Sym\BV(1)}} = 0.
$$
Supposons le contraire, on raisonne par contradiction. Posons
$$
\Pi(V)\coloneqq\Hom_{\GQp }\intnn{V}{\rmH^1_{\et}\Harg{\presp{\rmM}_{C}^{\infty}}{\Sym\BV(1)}}',
$$
qui définit une $L$-représentation de Banach contenant un réseau (défini à partir d'un réseau dans $V$ et des réseaux $\BV_k^+$) que l'on note $\Pi^+(V)$ et dont la réduction modulo l'idéal maximal $\fkm_L$ de $\rmO_L$ est de longueur finie par le théorème de finitude \ref{prop:longfinui} : $\Pi(V)$ est donc admissible. Ainsi, $\Pi(V)$ est une $L$-représentation de Banach unitaire admissible et de longueur finie. On peut donc supposer que $\Pi(V)$ a comme quotient une $L$-représentation de Banach unitaire, admissible et absolument irréductible que l'on note $\Pi$. On en déduit une injection $V\incl\Hom_{G}\intnn{\Pi'}{\rmH^1_{\et}\Harg{\presp{\rmM}_{C}^n}{\Sym\BV(1)}} $ ce qui est une contradiction avec le corollaire \ref{cor:excluspecusp}. 
\end{proof}


\begin{thebibliography}{10}


\bibitem{bedr}
A.~Beilinson and V.~Drinfeld.
\newblock preprint ; https://arxiv.org/abs/math/0501398
\newblock Opers, 2005.


\bibitem{bosc1}
G.~Bosco.
\newblock preprint ; https://arxiv.org/abs/2110.10683
\newblock On the $p$-adic pro-\'etale cohomology of {Drinfeld} symmetric
  spaces, 2023.

\bibitem{bosc2}
G.~Bosco.
\newblock preprint ; https://arxiv.org/abs/2306.06100
\newblock Rational $p$-adic hodge theory for rigid-analytic varieties, 2023.

\bibitem{leb}
A.-C.~L. Bras.
\newblock preprint ; https://arxiv.org/abs/1801.00429
\newblock Overconvergent relative de {Rham} cohomology over the
  {Fargues-Fontaine} curve, 2018.

\bibitem{bre}
C.~Breuil.
\newblock Invariant {$\mathcal L$} et série spéciale $p$-adique.
\newblock {\em Ann. Sci. {\'E}c. Norm. Sup{\'e}r. (4)}, 37(4):559--610, 2004.

\bibitem{bri}
O.~Brinon.
\newblock Représentations $p$-adiques cristallines de de {Rham} dans le cas
  relatif.
\newblock {\em M{\'e}m. Soc. Math. Fr., Nouv. S{\'e}r.}, 112:1--158, 2008.


\bibitem{colp}
P.~Colmez.
\newblock Représentations de $\text{GL}_2(\mathbb{Q}_p)$ et
  $(\varphi,\Gamma)$-modules.
\newblock In {\em Repr\'esentations $p$-adiques de groupes $p$-adiques II :
  Repr\'esentations de $\text{GL}_2 (\mathbb Q_p)$ et $(\varphi,
  \Gamma)$-modules}, pages 281--509. Paris : Soci{\'e}t{\'e} Math{\'e}matique
  de France, 2010.

\bibitem{colan}
P.~Colmez.
\newblock La série principale unitaire de $\mathrm{GL}_2(\mathbb{Q}_p)$ :
  vecteurs localement analytiques.
\newblock In {\em Automorphic forms and Galois representations. Proceedings of
  the 94th London Mathematical Society (LMS) -- EPSRC Durham symposium, Durham,
  UK, July 18--28, 2011. Volume 1}, pages 286--358. Cambridge : Cambridge
  University Press, 2014.

\bibitem{colw}
P.~Colmez.
\newblock Correspondance de {Langlands} locale $p$-adique et changement de
  poids.
\newblock {\em J. Eur. Math. Soc. (JEMS)}, 21(3):797--838, 2019.

\bibitem{codo}
P.~Colmez and G.~Dospinescu.
\newblock Complétés universels de représentations de
  $\mathrm{GL}_2(\mathbb{Q}_p)$.
\newblock {\em Algebra Number Theory}, 8(6):1447--1519, 2014.

\bibitem{codoni}
P.~Colmez, G.~Dospinescu, and W.~Nizio{\l}.
\newblock Cohomologie $p$-adique de la tour de {Drinfeld}~: Le cas de la
  dimension 1.
\newblock {\em J. Am. Math. Soc.}, 33(2):311--362, 2020.

\bibitem{codoniste}
P.~Colmez, G.~Dospinescu, and W.~Nizio{\l}.
\newblock Cohomology of $p$-adic {Stein} spaces.
\newblock {\em Invent. Math.}, 219(3):873--985, 2020.

\bibitem{codonifac}
P.~Colmez, G.~Dospinescu, and W.~Nizio{\l}.
\newblock Factorisation de la cohomologie étale $p$-adique de la tour de
  {Drinfeld}.
\newblock {\em Forum Math. Pi}, 11:62, 2023.
\newblock Id/No e16.

\bibitem{codopa}
P.~Colmez, G.~Dospinescu, and V.~Pa{\v{s}}k{\=u}nas.
\newblock The $p$-adic local {Langlands} correspondence for
  $\mathrm{GL}_2(\mathbb{Q}_p)$.
\newblock {\em Camb. J. Math.}, 2(1):1--47, 2014.

\bibitem{cofo}
P.~Colmez and J.-M. Fontaine.
\newblock Construction des représentations $p$-adiques semi-stables.
\newblock {\em Invent. Math.}, 140(1):1--43, 2000.

\bibitem{colniz}
P.~Colmez and W.~Nizio{\l}.
\newblock On {$p$}-adic comparison theorems for rigid analytic varieties, {I}.
\newblock {\em M{\"u}nster J. Math.}, 13(2):445--507, 2020

\bibitem{date}
S.~Dasgupta and J.~Teitelbaum.
\newblock The {$p$}-adic upper half plane.
\newblock In {\em {$p$}-adic geometry}, volume~45 of {\em Univ. Lecture Ser.},
  pages 65--121. Amer. Math. Soc., Providence, RI, 2008.
  
\bibitem{dat}
J.-F. Dat.
\newblock Théorie de {Lubin}-{Tate} non-abélienne et représentations
  elliptiques.
\newblock {\em Invent. Math.}, 169(1):75--152, 2007.

\bibitem{dej}
A.~J. de~Jong.
\newblock {\'E}tale fundamental groups of non-{Archimedean} analytic spaces.
\newblock {\em Compos. Math.}, 97(1-2):89--118, 1995.

\bibitem{des}
E.~de~Shalit.
\newblock The $p$-adic monodromy-weight conjecture for $p$-adically uniformized
  varieties.
\newblock {\em Compos. Math.}, 141(1):101--120, 2005.

\bibitem{dolb}
G.~Dospinescu and A.-C. Le~Bras.
\newblock Revêtement du demi-plan de {Drinfeld} et correspondance de
  {Langlands} $p$-adique.
\newblock {\em Ann. Math. (2)}, 186(2):321--411, 2017.

\bibitem{drin}
V.~G. Drinfel'd.
\newblock Coverings of p-adic symmetric regions.
\newblock {\em Funct. Anal. Appl.}, 10:107--115, 1976.

\bibitem{emep}
M.~Emerton.
\newblock $p$-adic $L$-functions and unitary completions of representations of
  $p$-adic reductive groups.
\newblock {\em Duke Math. J.}, 130(2):353--392, 2005.

\bibitem{emecomp}
M.~Emerton.
\newblock A local-global compatibility conjecture in the $p$-adic {Langlands}
  programme for $\text{GL}_{2}/\mathbb{Q}$.
\newblock {\em Pure Appl. Math. Q.}, 2(2):279--393, 2006.

\bibitem{fafo}
L.~Fargues and J.-M. Fontaine.
\newblock {\em Courbes et fibr{\'e}s vectoriels en th{\'e}orie de {Hodge}
  $p$-adique}, volume 406 of {\em Ast{\'e}risque}.
\newblock Paris : Soci{\'e}t{\'e} Math{\'e}matique de France (SMF), 2018.

\bibitem{fonl}
J.-M. Fontaine.
\newblock Exposé {VIII} : Représentations $\ell$-adiques potentiellement
  semi-stables.
\newblock In {\em P\'eriodes $p$-adiques. S\'eminaire du Bures-sur-Yvette,
  France, 1988.}, pages 321--347. Paris : Soci{\'e}t{\'e} Math{\'e}matique de
  France, 1994.

\bibitem{focp}
J.-M. Fontaine.
\newblock Presque $\mathbb{C}_p$-représentations.
\newblock {\em Doc. Math.}, Extra Vol.:285--385, 2003.

\bibitem{fove}
J.-M. Fontaine.
\newblock Almost $\mathbb{C}_p$ {Galois} representations and vector bundles.
\newblock {\em Tunis. J. Math.}, 2(3):667--732, 2020.

\bibitem{fuha}
W.~Fulton and J.~Harris.
\newblock {\em Representation theory. {A} first course}, volume 129 of {\em
  Grad. Texts Math.}
\newblock New York etc.\ : Springer-Verlag, 1991.

\bibitem{gkrig}
E.~Grosse-Kl\"{o}nne.
\newblock Rigid analytic spaces with overconvergent structure sheaf.
\newblock {\em J. Reine Angew. Math.}, 519:73--95, 2000.

\bibitem{grkl}
E.~Grosse-Kl{\"o}nne.
\newblock Frobenius and monodromy operators in rigid analysis, and
  {Drinfel}'d's symmetric space.
\newblock {\em J. Algebr. Geom.}, 14(3):391--437, 2005.

\bibitem{jac}
N.~Jacobson.
\newblock {\em Lie algebras}, volume~10 of {\em Intersci. Tracts Pure Appl.
  Math.}
\newblock Interscience Publishers, New York, NY, 1962.

\bibitem{keli1}
K.~S. Kedlaya and R.~Liu.
\newblock {\em Relative $p$-adic {Hodge} theory: foundations}, volume 371 of
  {\em Ast{\'e}risque}.
\newblock Paris : Soci{\'e}t{\'e} Math{\'e}matique de France (SMF), 2015.

\bibitem{laz}
M.~Lazard.
\newblock Groupes analytiques $p$-adiques.
\newblock {\em Publ. Math., Inst. Hautes {\'E}tud. Sci.}, 26:389--603, 1965.

\bibitem{nescwi}
J.~Ne{\"u}kirch, A.~Schmidt and K. Wingberg.
\newblock {\em Cohomology of number fields 2nd ed.}, volume 323 of {\em Grundlehren Math. Wiss.}
\newblock {\em Berlin: Springer}, 2008.

\bibitem{pas}
V.~Pa{\v{s}}k{\=u}nas.
\newblock The image of {Colmez}'s {Montreal} functor.
\newblock {\em Publ. Math., Inst. Hautes {\'E}tud. Sci.}, 118:1--191, 2013.

\bibitem{razi}
M.~Rapoport and T.~Zink.
\newblock {\em Period spaces for {{\(p\)}}-divisible groups}, volume  141 of {\em Ann. Math. Stud.}
\newblock {\em Princeton, NJ: Princeton Univ. Press}, 1996.

\bibitem{scst}
P.~Schneider and U.~Stuhler.
\newblock The cohomology of $p$-adic symmetric spaces.
\newblock {\em Invent. Math.}, 105(1):47--122, 1991.

\bibitem{scte}
P.~Schneider and J.~Teitelbaum.
\newblock Locally analytic distributions and $p$-adic representation theory,
  with applications to $\text{GL}_{2}$.
\newblock {\em J. Am. Math. Soc.}, 15(2):443--468, 2002.

  
\bibitem{schphdg}
P.~Scholze.
\newblock $p$-adic {Hodge} theory for rigid-analytic varieties.
\newblock {\em Forum Math. Pi}, 1:77, 2013.
\newblock Id/No e1.

\bibitem{scwe}
P.~Scholze and J.~Weinstein.
\newblock Moduli of $p$-divisible groups.
\newblock {\em Camb. J. Math.}, 1(2):145--237, 2013.

\bibitem{sch}
B.~Schraen.
\newblock Représentation $p$-adiques de $\text{GL}_2(L)$ et catégories
  dérivées.
\newblock {\em Isr. J. Math.}, 176:307--361, 2010.

\bibitem{shi}
K.~Shimizu.
\newblock Constancy of generalized {Hodge}-{Tate} weights of a local system.
\newblock {\em Compos. Math.}, 154(12):2606--2642, 2018.

  
\bibitem{str}
M.~Strauch.
\newblock Geometrically connected components of {Lubin}-{Tate} deformation
  spaces with level structures.
\newblock {\em Pure Appl. Math. Q.}, 4(4):1215--1232, 2008.

\bibitem{tantong}
F.~Tan and J.~Tong.
\newblock Crystalline comparison isomorphisms in $p$-adic {Hodge} theory: the
  absolutely unramified case.
\newblock {\em Algebra Number Theory}, 13(7):1509--1581, 2019.

\bibitem{van}
A.~Vanhaecke
\newblock Cohomologie de systèmes locaux $p$-adiques sur les revêtements du demi-plan de Drinfeld
\newblock {\em Thèse de Doctorat}, 2023.

\end{thebibliography}
\end{document}